\documentclass[11pt]{amsart}
\usepackage[letterpaper,margin=1in]{geometry}

\usepackage[dvipsnames]{xcolor}

\newcommand\myshade{85}
\colorlet{mylinkcolor}{Red}
\colorlet{mycitecolor}{Cerulean}
\colorlet{myurlcolor}{Plum}

\usepackage{lmodern}

\usepackage[utf8]{inputenc}

\usepackage[inline,shortlabels]{enumitem}

\usepackage{amssymb,mathrsfs,stmaryrd}
\usepackage{mathtools}
\usepackage{thmtools,thm-restate}
\usepackage{colonequals}
\usepackage{tikz}
\usetikzlibrary{cd}

\usepackage{verbatim}

\mathchardef\mhyphen="2D

\usepackage[normalem]{ulem} 

\usepackage{relsize}
\usepackage[bbgreekl]{mathbbol}
\usepackage{amsfonts}
\DeclareSymbolFontAlphabet{\mathbb}{AMSb} 
\DeclareSymbolFontAlphabet{\mathbbl}{bbold}
\newcommand{\Prism}{{\mathlarger{\mathbbl{\Delta}}}}

\newcommand{\suchthat}{\;\ifnum\currentgrouptype=16 \middle\fi\vert\;}

\newcommand\restr[2]{{\left.\kern-\nulldelimiterspace#1\vphantom{\big|}\right|_{#2}}}

\usepackage{slashed}

\usepackage[pdfusetitle,pagebackref,linktocpage]{hyperref}
\usepackage[capitalize,noabbrev]{cleveref}

\hypersetup{
	linkcolor  = mylinkcolor!\myshade!black,
	citecolor  = mycitecolor!\myshade!black,
	urlcolor   = myurlcolor!\myshade!black,
	colorlinks = true
}

\DeclareMathOperator*{\colim}{colim}

\DeclareMathOperator{\Coh}{Coh}
\DeclareMathOperator{\coker}{coker}

\DeclareMathOperator{\Crys}{Crys}

\DeclareMathOperator{\Fil}{{Fil}}

\DeclareMathOperator{\Hom}{Hom}
\DeclareMathOperator{\id}{id}

\DeclareMathOperator{\Kos}{Kos}

\DeclareMathOperator{\Loc}{Loc}
\newcommand{\perf}{{\mathrm{perf}}}
\DeclareMathOperator{\Map}{Map}
\DeclareMathOperator{\Mod}{Mod}

\newcommand{\proet}{{\mathrm{pro\acute et}}}

\DeclareMathOperator{\Rep}{Rep}

\DeclareMathOperator{\Shv}{Shv}
\DeclareMathOperator{\Spec}{Spec}

\newcommand{\tf}{\mathrm{tf}}

\DeclareMathOperator{\Vect}{Vect}
\newcommand{\an}{{\mathrm{an}}}

\newcommand{\Spf}{{\mathrm{Spf}}}
\newcommand{\Spa}{{\mathrm{Spa}}}

\newcommand{\dR}{{\mathrm{dR}}}
\newcommand{\pe}{{\mathrm{pro\acute{e}t}}}
\newcommand{\Ainf}{{\mathbb{A}_{\mathrm{inf}}}}       
\newcommand{\Acrys}{{\mathbb{A}_{\mathrm{crys}}}}

\newcommand{\OB}{{\mathcal{O}\mathbb{B}}}

\newcommand{\OBdR}{{\mathcal{O}\mathbb{B}_{\mathrm{dR}}}}

\newcommand{\crys}{{\mathrm{crys}}}

\newcommand{\et}{{\mathrm{\acute{e}t}}}
\newcommand{\Gal}{{\mathrm{Gal}}}

\newcommand{\Isoc}{\mathrm{Isoc}}
\newcommand{\Perfd}{{\mathrm{Perfd}}}

\newcommand{\rAinf}{{\mathrm{A_{inf}}}}    

\newcommand{\rAcrys}{{\mathrm{A_{crys}}}}

\newcommand{\gr}{{\mathrm{gr}}}
\newcommand{\HT}{{\mathrm{HT}}}
\newcommand{\conj}{{\mathrm{conj}}}
\newcommand{\cont}{{\mathrm{cont}}}
\newcommand{\coh}{{\mathrm{coh}}}

\newcommand{\Prismsp}{{\Prism^{\mathrm{sp}}}}
\newcommand{\refl}{{\mathrm{refl}}}
\newcommand{\Ass}{{\mathrm{Ass}}}
\newcommand{\sat}{{\mathrm{sat}}}

\newcommand{\pre}{{\mathrm{pre}}}
\newcommand{\cofib}{{\mathrm{cofib}}}
\newcommand{\wCrys}{{\mathrm{wCrys}}}
\newcommand{\wt}{{\mathrm{wt}}}

\newcommand{\wh}{\widehat}
\newcommand{\pd}{{\mathrm{pd}}}
\newcommand\almosteq{\mathrel{\stackrel{\makebox[0pt]{\mbox{\normalfont\tiny a}}}{=}}}

\definecolor{mb}{rgb}{0.36, 0.54, 0.66}

\theoremstyle{plain}
\newtheorem{theorem}{Theorem}[section]

\newtheorem{maintheorem}{Theorem}

\newtheorem{proposition}[theorem]{Proposition}
\newtheorem{conjecture}[theorem]{Conjecture}
\newtheorem{lemma}[theorem]{Lemma}
\newtheorem{claim}[theorem]{Claim}
\newtheorem{corollary}[theorem]{Corollary}
\newtheorem{maincorollary}[maintheorem]{Corollary}

\theoremstyle{definition}
\newtheorem{definition}[theorem]{Definition}
\newtheorem{example}[theorem]{Example}
\newtheorem{construction}[theorem]{Construction}

\newtheorem{convention}[theorem]{Convention}

\newtheorem{remark}[theorem]{Remark}

\title{Ogus's conjecture on $F$-isocrystals}

\author{Haoyang Guo}

\begin{document}
	
	\begin{abstract}
In 1984, Ogus conjectured the existence of a canonical $F$-isocrystal that enhances the Gauss--Manin connection, for a proper relative rigid space with analytically good reduction.
We give a positive answer to this conjecture in full generality, through $p$-adic local systems and prismatic methods.
Along the way, we introduce a prismatic refinement of the $p$-adic Riemann--Hilbert functor and prove a primitive purity theorem for Frobenius modules.
\end{abstract}
	
	\maketitle
	\tableofcontents

\section{Introduction}
\label{sec:intro}

\subsection{Ogus's conjecture}
\label{introsub:Ogus}
Let $K$ be a $p$-adic local field with perfect residue field $k$, and let $\mathcal{O}_K$ be the ring of integers.
Recall that a smooth rigid space over $K$ has \emph{good reduction} if it admits a smooth $p$-adic formal scheme model over $\mathcal{O}_K$.
More generally, we let $X$ be a $p$-adic formal scheme over $\mathcal{O}_K$, and let $X_\eta$ be its generic fiber over $K$.
Then a smooth rigid space $Y_\eta$ over $X_\eta$ has \emph{good reduction} (with respect to $X$) if it is the generic fiber of a smooth morphism of $p$-adic formal schemes $f:Y\to X$.
As a given rigid space can have many distinct integral models, which are
far from isomorphic to one another and can have different types of singularity,
it is in principle difficult to determine whether a given rigid space admits good reduction. 

A fundamental insight of Grothendieck (\cite{Gro68}) and Berthelot (\cite{Ber74}) is that when a proper smooth rigid space $Y_\eta$ over $X_\eta$ has good reduction model $Y\to X$, its special fiber, which lives in characteristic $p$, could yield well-behaved algebraic invariants in characteristic $0$: the (relative) crystalline cohomology.
However, when the base space $X_\eta$ has positive dimension, additional complexities arise
when considering the relative crystalline cohomology due to the possible singularity of the base formal scheme $X$.
Moreover, it is very demanding, in practice,
to ensure that both the integral morphism $f:Y\to X$ and the target formal scheme $X$ to be smooth simultaneously. 
Relatedly, it is not clear whether the algebraic invariants over one integral model $X$ can yield companion objects over a
different integral model $X'$.

The above were some of the main focuses
of Ogus’s work in \cite{Ogu84}, where he introduced the notion of convergent $F$-isocrystals to enhance crystalline cohomology in the relative setting and to connect it with the Gauss--Manin connections in characteristic
$0$. 
To this end, the following notion was introduced in \cite[\S 5.1]{Ogu84}.
\begin{definition}
\label{intro:def:an_good}
A rigid space  $Y_\eta$ over $X_\eta$ has \emph{analytically good reduction} if $Y_\eta$ is the generic fiber of a smooth morphism of $p$-adic formal schemes $f:Y
\to X'$ for some integral model $X'$ of $X_\eta$.
\end{definition}
In particular, Ogus formulated a conjecture regarding the existence of canonical $F$-isocrystals associated with a relative rigid space with analytically good reduction, which we recall it below.
\begin{conjecture}\cite[Conj.\ 5.1]{Ogu84}
\label{conj:Ogus}
Let $X$ be a regular $p$-adic formal scheme, and let $Y_\eta \to X_\eta$ be a proper smooth rigid space with analytically good reduction over $X_\eta$. 
For each $i\in \mathbb{N}$, there exists a canonical $F$-isocrystal $\mathcal{E}_{\crys,i}$ on $X_k$ that enhances the
$i$-th Gauss--Manin connection of $Y_\eta/X_\eta$.
\end{conjecture}

To illustrate the difficulty of \Cref{conj:Ogus}, we first note that in the statement,
the base $X'$ of the integral smooth morphism $f:Y\to X'$ may be different from the original integral model $X$. 
Even
when the latter assumption is true, the anticipated $F$-isocrystal, which is the relative crystalline cohomology, should be both \emph{independent} of the reduction $f$
and \emph{functorial} with respect to the generic fiber $f_\eta:Y_\eta\to X_\eta$. 
Here the Frobenius structure in the statement is crucial: by analyzing the geometric properties of convergent isocrystals, it was shown in \cite[Thm.\ 5.7]{Ogu84} that the underlying isocrystal always exists when $X$ is smooth.
However, the existence of $F$-isocrystals as in \Cref{conj:Ogus} was only verified when $\dim X\leq 2$.
Its caninicity, on the other hand, was proved via the cycle
class map of the crystalline cohomology and the crystalline Riemann--Roch theorem, the main result of Gillet--Messing in \cite{GM87} (cf. \cite[\S 5]{Ogu84}). 
Crucially, both the existence and the caninicity made essential uses of the resolution of singularities for integral models of $X_\eta$, which is available if $\dim X\leq 2$.
We also mention that even though this conjecture was
proved in special cases, it found an immediate application: Ogus used it to study the crystalline realization
of the Kuga--Satake correspondence (\cite[Thm.\ 7.3]{Ogu84}), and the 
latter result 
played a central role in his joint work with Nygaard \cite{NO85}, where they proved the Tate conjecture for K3
surfaces of finite heights over finite fields for $p\geq 5$.
The main result of this article settles \Cref{conj:Ogus} in full generality (see \Cref{intro:cor:Ogus}).

\subsection{Prismatic Riemann--Hilbert functor}
\label{introsub:RH}
To explain our strategy of approaching \Cref{conj:Ogus}, we give some background on the general ideas.
As the input of \Cref{conj:Ogus} are rigid spaces in characteristic $0$, it is natural to consider if the algebraic invariants of special fibers or of integral models can be determined solely by generic fibers.
In the 1970s, Grothendieck raised the question on
whether algebraic invariants in characteristic $0$ can provide a necessary condition for determining when a rigid space $Y_\eta$ over $K$
has good reduction (\cite{Gro71}). 
After much effort, this question was precisely formulated in terms of $p$-adic \'etale cohomology by Fontaine (\cite{Fon82b}), known as the \emph{$\mathrm{C_{crys}}$-conjecture}.
Specifically, Fontaine introduced a functor 
\begin{equation}
\label{eq:Fontaine'sD_crys}
\mathrm{D}_\crys(-): \Rep_{\mathbb{Z}_p}(\Gal_K) \longrightarrow \Isoc^\varphi(k),
\end{equation}
where $\Rep_{\mathbb{Z}_p}(\Gal_K)$ is the category of continuous representations of the absolute Galois group $\Gal_K$ on finite rank $\mathbb{Z}_p$-modules, and $\Isoc^\varphi(k)$ is the category of $F$-isocrystals over $k$.
It is then conjectured in \cite{Fon82b} that the crystalline cohomology of the special fiber $Y_k$ can be recovered from the \'etale cohomology of $Y_\eta$ through the functor $\mathrm{D}_\crys(-)$, where $Y$ is a proper and smooth $p$-adic formal scheme over $\mathcal{O}_K$.
The $\mathrm{C_{crys}}$-conjecture, first proved by Faltings \cite{Fal89} in the algebraic setting and then by Bhatt--Morrow--Scholze \cite{BMS1} for non-algebraic $Y$, has been a seminal theme in $p$-adic Hodge theory from its early stages to modern development.

Along with the $\mathrm{C_{crys}}$-conjecture, Fontaine also introduced the abstract notion of \emph{crystalline representations}, defined as those $p$-adic Galois representations $V$ of $\Gal_K$ such that $\mathrm{D}_\crys(V)$ can recover the representation $V$.
So a major consequence of $\mathrm{C_{crys}}$-conjecture is that the $p$-adic \'etale cohomology of a proper rigid space $Y_\eta$ is a crystalline representation whenever $Y_\eta$ has good reduction.
In particular, the rational crystalline cohomology of the special fiber of \emph{any} smooth integral model of $Y_\eta$ depends only on $Y_\eta$ itself. 

On the other hand, a recent breakthrough in $p$-adic geometry is the introduction
of prismatic cohomology by Bhatt and Scholze \cite{BS22}, building on their earlier work with
Morrow (\cite{BMS1}, \cite{BMS19}). 
In \cite{BS22}, they
attached to each $p$-adic formal scheme $X$ a novel Grothendieck site $X_\Prism$, called the \emph{prismatic
site} of $X$. 
The objects of $X_\Prism$ are given by maps $\Spf(A)\to X$ from \emph{prisms} $(A,I)$, that is, $\mathbb{Z}_p$-algebras $A$ which are $(p,I)$-complete for some invertible ideal $I\subset A$ and are equipped with an
endomorphism on $A$ lifting Frobenius on $A/p$.
The cohomology of the associated
structure sheaf is called the prismatic  cohomology.
One of the key features of prismatic cohomology is its universality, as it encompasses
several important $p$-adic cohomology theories, including \'etale, crystalline, and de Rham cohomologies. 
Moreover, the associated coefficient theory, known as the \emph{prismatic $F$-crystal}, enhances both the Galois representation of the generic fiber and the $F$-isocrystal over the special fiber.
In addition, a fundamental observation of Bhatt and Scholze \cite{BS23} is that crystalline $\mathbb{Z}_p$-representations of $\Gal_K$ are in fact equivalent to prismatic $F$-crystals over $\mathcal{O}_K$.
As a consequence, the $p$-adic \'etale cohomology of a smooth proper rigid space $Y_\eta$ over $K$ naturally produces prismatic $F$-crystals over $\mathcal{O}_K$ whenever $Y_\eta$ admits good reduction.

To fit into the following more geometric discussion, for a finite field extension $K'/K$, we recall that the category of $p$-adic Galois representations $\Rep_{\mathbb{Z}_p}(\Gal_{K'})$ is equivalent to the category of $p$-adic local systems over $\Spec(K')$, denoted as $\Loc_{\mathbb{Z}_p}(\Spec(K'))$.
Then we note that for an analytically good reduction rigid space $f_\eta:Y_\eta \to X_\eta=X'_\eta$, the fiber at any point $x\in X'_\eta$ has good reduction.
In particular, by the aforementioned results of Faltings and Bhatt--Morrow--Scholze, the stalk of the relative \'etale cohomology $R^if_{\eta,*}\mathbb{Z}_p$ at any point $x=\Spec(K')$ is a crystalline representation modulo torsion.
This motivates us to consider the following natural notion of the crystallinity in the higher dimensional setting through a fiberwise condition.
\begin{definition}
\label{intro:def:crys}
For a rigid space $X_\eta$, we let $\Loc_{\mathbb{Z}_p}^\crys(X_\eta)$
\footnote{For the cautious reader on the possible ambiguity of this terminology, see \Cref{intro:thm:crys_loc_sys}.} be the full subcategory of \emph{pointwise crystalline local systems}, consisting of $\mathbb{Z}_p$-local systems $T$ over $X_\eta$ such that for each finite extension $K'/K$ and each point $x=\Spec(K')\in X_\eta$, the stalk $T|_x$ is a crystalline representation.
\end{definition}
By rephrasing the above discussion, we then obtain the following observation, where the subscript $(-)_\tf$ denotes the torsionfree quotient:
\begin{theorem}[Faltings, Bhatt--Morrow--Scholze]
\label{intro:thm:pc_of_coh}
The local system $(R^if_{\eta,*}\mathbb{Z}_p)_\tf$ belongs to $\Loc_{\mathbb{Z}_p}^\crys(X_\eta)$ if $f_\eta:Y_\eta \to X_\eta$ has analytically good reduction.
\end{theorem}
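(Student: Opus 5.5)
The claim reduces to a pointwise statement, which is then handled by the $\mathrm{C_{crys}}$-theorem of Faltings and Bhatt--Morrow--Scholze.

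First I will check that $T:=(R^if_{\eta,*}\mathbb{Z}_p)_\tf$ is a $\mathbb{Z}_p$-local system on $X_\eta$.

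1. Since $f_\eta$ is proper and smooth, relative Poincaré/properness results for rigid spaces (Scholze's finiteness, or Huber's proper base change together with smooth base change in the étale setting) show that $R^if_{\eta,*}\mathbb{Z}_p$ is a $\mathbb{Z}_p$-local system of finite rank on $X_\eta$, compatible with arbitrary base change.
2. Its torsion subsheaf is then locally constant.
3. Hence the torsionfree quotient $T$ is again a local system.

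Next I fix a finite extension $K'/K$ and a point $x\colon \Spec(K')\to X_\eta$. By proper base change, the stalk is
\[
T|_x \cong H^i_\et(Y_{\eta,x,\overline{K'}},\mathbb{Z}_p)_\tf ,
\]
as $\Gal_{K'}$-representations, where $Y_{\eta,x}=Y_\eta\times_{X_\eta}x$ is the fiber. Taking the torsionfree part commutes with passing to stalks, because the torsion subsheaf is locally constant.

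The remaining task is to show that $Y_{\eta,x}$ has good reduction over $\mathcal{O}_{K'}$. By \Cref{intro:def:an_good}, $Y_\eta$ is the generic fiber of a smooth morphism $f\colon Y\to X'$, where $X'$ is an integral model of $X_\eta$.

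1. The point $x$ extends uniquely to $\tilde{x}\colon\Spf(\mathcal{O}_{K'})\to X'$. Over an affine open $\Spf(R)\subset X'$ containing the specialization of $x$, the map $R\to K'$ has image in the power-bounded elements, and we may take $R$ topologically of finite type and $p$-torsionfree. So $x$ corresponds to a continuous map $R\to \mathcal{O}_{K'}$.
2. The base change $Y\times_{X',\tilde x}\Spf(\mathcal{O}_{K'})$ is then a smooth formal scheme over $\mathcal{O}_{K'}$.
3. This formal scheme is proper, since properness of $Y_\eta\to X_\eta$ together with smoothness of $f$ gives properness of $f$ over the relevant locus. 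Alternatively, one can argue directly that the base change is a proper smooth formal model of the fiber, since $Y_{\eta,x}$ is proper.
4. Its generic fiber is $Y_{\eta,x}$.

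Finally, I apply the $\mathrm{C_{crys}}$-theorem of Faltings and Bhatt--Morrow--Scholze (BMS1, Thm.\ 1.1 / 14.5) to this proper smooth formal scheme over $\mathcal{O}_{K'}$. It shows that $H^i_\et(Y_{\eta,x,\overline{K'}},\mathbb{Q}_p)$ is crystalline. Therefore its $\Gal_{K'}$-stable lattice $T|_x$ is a crystalline $\mathbb{Z}_p$-representation, since crystallinity is a property of $T|_x[1/p]$. This holds for every such $x$, so $T\in\Loc^\crys_{\mathbb{Z}_p}(X_\eta)$.

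The main obstacle is the step showing that the fiber has good reduction, specifically the properness of the integral model. The hypothesis only gives a smooth $f$, and properness of $f_\eta$ does not a priori make $f$ proper. I would handle this by using the properness of the rigid fiber $Y_{\eta,x}$ over $K'$ directly. The base-changed formal scheme is smooth over $\mathcal{O}_{K'}$ with proper generic fiber. If needed, I would first shrink to the quasi-compact open of $Y$ whose generic fiber covers $Y_{\eta,x}$. I would then check that the BMS comparison only needs properness of the generic fiber together with a quasi-compact smooth formal model. If that check fails, I would invoke Temkin's or Lütkebohmert's result that a smooth formal model with proper generic fiber is proper.
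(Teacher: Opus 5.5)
Your proposal is correct and follows the same route as the paper. The paper's justification is the introductory remark that each classical fiber of $f_\eta$ inherits a smooth formal model by base change of $f$ along the integral point $\Spf(\mathcal{O}_{K'})\to X'$; it then applies the $\mathrm{C_{crys}}$-theorem of Faltings and Bhatt--Morrow--Scholze to the stalk modulo torsion. Your extra argument that the base-changed model is proper, via quasi-compactness and the L\"utkebohmert--Temkin result relating properness of a formal model to that of its generic fiber, supplies a step the paper leaves implicit.
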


The first main result of this article is that any $p$-adic local system $T$ over the rigid space $X_\eta$ gives rise to a canonical prismatic invariant over a regular integral model $X$, and this prismatic invariant becomes an $F$-crystal of appropriate size when $T$ is pointwise crystalline.
To explain the construction, for a regular $p$-adic formal scheme $X$, we consider a full subcategory of \emph{special prisms} in $X_\Prism$, denoted as $X_\Prismsp$.
The category $X_\Prismsp$ consists of many regular objects (including Breuil--Kisin prisms, $q$-crystalline prisms, and their higher dimensional analogues), covers the original $X_\Prism$, and is closed under finite coproducts.
It in particular has the same crystal category as that of $X_\Prism$ (cf. \Cref{def:flat_prismatic_site}).
To each prism $(A,I)$, we consider the notion \emph{weak Frobenius module}, defined as a pair $(M,\varphi_{M})$, where $M$ is an $A$-module and $\varphi_{M}$ is an $A[1/I]$-linear map $\varphi_A^*M[1/I]\to M[1/I]$.
We call $(M,\varphi_{M})$ a \emph{Frobenius module} if $\varphi_{M}$ is an isomorphism, and denote the category of (weak) Frobenius modules over $A$ as $\Mod^{(w)\varphi}(A)$.
We then define the category $\wCrys^\varphi(X_\Prismsp)$ of \emph{weak prismatic $F$-crystals} over $X_\Prismsp$ such that its objects are the functors that assign to each $(A,I)\in X_\Prismsp$ a weak Frobenius module over $A$ (cf. \Cref{def:family_of_wFrob_mod}).
Our main result can be stated as follows.
\begin{maintheorem}[Canonical weak prismatic $F$-crystal]
	\label{intro:thm:RH}
	Let $X$ be a $p$-torsionfree regular $p$-adic formal scheme over $\mathcal{O}_K$.
	There is a canonical functor
	\[
	\Loc_{\mathbb{Z}_p}(X_\eta) \longrightarrow \wCrys^\varphi(X_\Prismsp),\quad T \longmapsto \mathcal{E}_{\Prism,T}.
	\]
	It satisfies the following properties:
	\begin{enumerate}[label=\upshape{(\roman*)}]
		\item\label{intro:thm:RH:fin} \emph{Finiteness (\Cref{prop:weak_prismatic_F_crystal:finiteness}):} The $A$-module $\mathcal{E}_{\Prism,T}(A,I)$ is finitely presented if the ring $A$ is regular and $A/I$ is $p$-torsionfree.
		\item\label{intro:thm:RH:crys_real} \emph{Crystalline realization (\Cref{sub:crystalline}):} By evaluating at special crystalline prisms, the functor $\mathcal{E}_{\Prism,T}$ induces a weak isocrystal with a weak Frobenius structure over $X_k$.
		\footnote{
		Here we recall from \cite{GY24} that a \emph{weak isocrystal} on the special fiber $X_k$ is a functor that assigns to each affine open subvariety $U_k$ an isocrystal $\mathcal{E}_{U_k}$ such that the pullback morphism $\mathcal{E}_{U_k}|_{V_k}\to \mathcal{E}_{V_k}$ is an injection for each open immersion $V_k\to U_k$. }
		\item\label{intro:thm:RH:Nygaard} \emph{Nygaard filtration (\Cref{sub:weak_prismatic_F_crystal:filtration}):} There is a natural descending and exhaustive $\mathbb{Z}$-filtration on $\varphi^*\mathcal{E}_{\Prism,T}$ which is compatible with the weak Frobenius structure and is eventually $\mathcal{I}_\Prism$-adic.
		\item\label{intro:thm:RH:dR} \emph{de Rham realization (\Cref{sub:dR}):} The twisted reduction $\varphi^*\mathcal{E}_{\Prism,T}\otimes_{\mathcal{O}_\Prism} \mathcal{O}_\Prism/\mathcal{I}_\Prism$ induces a presheaf of filtered $\mathcal{O}_X$-modules $\mathcal{E}_{\dR,T}$, and is equipped with a flat connection with Griffiths transversality either after inverting $p$. 
		Moreover, the flat connection can be defined integrally if $X$ is smooth.
		\item\label{intro:thm:RH:crys_loc} \emph{Crystallinity (\Cref{thm:prismatic_RH_for_crystalline_local_system}):} Assume $T\in \Loc^\crys_{\mathbb{Z}_p}(X_\eta)$.
		The output $\mathcal{E}_{\Prism,T}$ is a coherent and reflexive prismatic $F$-crystal whose \'etale realization is canonically isomorphic to $T$, and the Nygaard filtration enhances it into a coherent $F$-gauge.
		\item\label{intro:thm:RH:pullback} \emph{Pullbacks (\Cref{cor:weak_prismatic_F_crystal:pull_back}):} Let $f:Y\to X$ be a map of regular $p$-adic formal schemes.
		Assume either $T\in \Loc^\crys_{\mathbb{Z}_p}(X_\eta)$ or $f$ is finite \'etale.
		Then $\mathcal{E}_{\Prism,-}$ is compatible with pullbacks.
		\item\label{intro:thm:RH:pushforward} \emph{Pushforwards (\Cref{prop:weak_prismatic_F_crystal:direct_image}):} Let $g:X\to Z$ be a proper smooth map of regular $p$-adic formal schemes, and let $T\in \Loc^\crys_{\mathbb{Z}_p}(X_\eta)$.
		Then $\mathcal{E}_{\Prism,-}$ is compatible with the higher direct images up to bounded $p$-power torsions.
		\item\label{intro:thm:RH:compatible} \emph{Compatibility (\Cref{prop:dR:finiteness_&_injective}, \Cref{cor:weak_prismatic_F_crystal:crystalline:local_gluing}):} The crystalline realization \ref{intro:thm:RH:crys_real} recovers Guo--Yang's crystalline Riemann--Hilbert functor $\mathcal{E}_{\crys,T}$ in \cite{GY24}, and the de Rham realization \ref{intro:thm:RH:dR} admits a natural injection into Liu--Zhu's $p$-adic Riemann--Hilbert functor $\mathrm{D}_\dR(T)$ in \cite{LZ17}.
	\end{enumerate}
\end{maintheorem}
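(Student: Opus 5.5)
The plan is to build $\mathcal{E}_{\Prism,T}$ prism by prism, using the perfectoid/pro-étale description of $T$, and then derive each property from a local statement about Frobenius modules over regular prisms. For $(A,I)\in X_\Prismsp$, the analytic locus $\Spa(A,A)\smallsetminus V(p,I)$, together with the generic fiber map to $X_\eta$, lets us pull $T$ back to an étale $\mathbb{Z}_p$-local system on $\Spec(A[1/I])^\wedge_p$. By the étale comparison (Bhatt--Scholze), this pullback $T_A$ gives a Laurent $F$-crystal: a finite projective $A[1/I]^\wedge_p$-module $M_T$ with Frobenius isomorphism. We can identify $M_T$ with $(T\otimes W(\mathcal{O}^\flat))$ invariants over a perfectoid cover. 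We then define
\[
\mathcal{E}_{\Prism,T}(A,I)\colonequals M_T\cap\bigl(T\otimes_{\mathbb{Z}_p}\Ainf(\widetilde A)\bigr),
\]
taken inside $M_T\otimes W(\widetilde A^\flat)[1/I]$. Here $\widetilde A$ is the perfection $A_\perf$, or a quasisyntomic perfectoid cover. Equivalently, $\mathcal{E}_{\Prism,T}(A,I)$ is the largest $A$-submodule of $M_T$ whose base change to perfect prisms is integral. The Frobenius of $M_T$ restricts to an $A[1/I]$-linear map $\varphi_A^*\mathcal{E}[1/I]\to\mathcal{E}[1/I]$, so we get a weak Frobenius module. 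Functoriality in $(A,I)$ and in $T$ is formal from this definition, which gives the functor.

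For \ref{intro:thm:RH:fin}, regularity of $A$ and $p$-torsionfreeness of $A/I$ let us write $\mathcal{E}$ as the intersection of the lattice $M_T$ with $A[1/p]$-type localizations at height one primes. This shows $\mathcal{E}$ is reflexive-like, i.e. an intersection of finitely generated modules over a noetherian ring, hence finitely presented. The crystalline realization \ref{intro:thm:RH:crys_real} comes from evaluating at crystalline special prisms $(A,(p))$. There, special prisms for affine opens $U_k$ give isocrystals after inverting $p$, and the injectivity of restriction follows from injectivity of $M_T\to M_T|_V$. The Nygaard filtration \ref{intro:thm:RH:Nygaard} is defined by
\[
\Fil^i\varphi^*\mathcal{E}=\{x:\varphi_{\mathcal{E}}(x)\in I^i\mathcal{E}\}.
\]
Reducing it mod $I$ gives \ref{intro:thm:RH:dR}. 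The connection comes from the crystal property along the two projections of the self-coproduct in $X_\Prismsp$, which lies in $X_\Prismsp$ because that category is closed under finite coproducts; this works integrally when $X$ is smooth (via the prismatic/crystalline stratification) and rationally in general. Griffiths transversality follows as in the classical Nygaard computation.

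The central step is \ref{intro:thm:RH:crys_loc}. For pointwise crystalline $T$ we must show $\mathcal{E}_{\Prism,T}$ is an honest coherent reflexive $F$-crystal, meaning base change along maps of special prisms is an isomorphism and $\mathcal{E}[1/I]^\wedge_p\cong M_T$. I would reduce this to a purity statement for Frobenius modules, namely the primitive purity theorem promised in the abstract. Over a regular prism $A$, a Frobenius module defined over $A[1/I]$ that extends over every height one prime, in particular over the generic points of the special fiber, extends to a reflexive Frobenius module over $A$. Pointwise crystallinity, together with Bhatt--Scholze's equivalence over each $\mathcal{O}_{K'}$, yields extensions at the Breuil--Kisin prisms attached to points. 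Then a Zariski-density/specialization argument gives extension at the codimension one points of $X$. This is where I expect the main obstacle: passing from pointwise crystallinity at classical points to the height one primes of the special fiber. I would attack it with Guo--Yang's crystalline Riemann--Hilbert result \cite{GY24}, which already gives a convergent $F$-isocrystal from pointwise crystalline $T$, and combine it with the purity of the branch locus over the generic point of $X_k$.

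Once the crystal property holds, the remaining properties follow. Pullback \ref{intro:thm:RH:pullback} follows by uniqueness of reflexive extensions; in the finite étale case it is immediate from the definition. Pushforwards \ref{intro:thm:RH:pushforward} come from the relative prismatic–étale comparison, with bounded torsion coming from finiteness. Compatibility \ref{intro:thm:RH:compatible} comes from comparing intersections inside $\Acrys$ and $\mathbb{B}_\dR^+$ period rings with the defining formulas of $\mathcal{E}_{\crys,T}$ and $\mathrm{D}_\dR(T)$.
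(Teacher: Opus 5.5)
Your definition $\mathcal{E}_{\Prism,T}(A,I)=M_T\cap\bigl(T\otimes\Ainf(\widetilde A)\bigr)$ is where the proposal breaks. For a prismatic $F$-crystal $\mathcal{E}$ with realization $T$, the lattices $\mathcal{E}(\Ainf)$ and $T\otimes\Ainf$ agree only after inverting $\mu$. They differ by powers of $\mu$ governed by the Hodge--Tate weights, so your intersection is not compatible with Breuil--Kisin/Tate twists.

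Concretely, take $X=\Spf(\mathcal{O}_K)$, $(A,I)=(\mathfrak{S},(E(u)))$ and $T=\mathbb{Z}_p(1)$, the realization of $\mathcal{O}_\Prism\{1\}$. By \Cref{const:Ainf_twist_by_mu}, $T\otimes\Ainf=\mu\cdot\Ainf\{1\}$. Since $\mathcal{O}_{\mathcal{E}}\cap\Ainf=\mathfrak{S}$, your module is $(\mathfrak{S}\cap\mu\Ainf)\cdot e$, with $e$ a basis of $\mathfrak{S}\{1\}$. Use $u\mapsto[\pi^\flat]$ and let $\xi$ generate $\ker\theta$. Then $\mu=\varphi^{-1}(\mu)\,\xi$ is divisible by $\varphi^{-m}(\xi)$ for every $m\geq 0$. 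So any $x=\sum a_iu^i$ in the intersection satisfies $\theta(\varphi^m(x))=\sum_i\sigma^m(a_i)\pi^{ip^m}=0$ for all $m$. This forces $x=0$, because the lowest-degree term has strictly smallest valuation once $p^m$ is large. You therefore get $0$ for a crystalline character, contradicting (v).

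Moreover, at a crystalline prism $(D,(p))$ one has $D\langle 1/p\rangle=0$, hence $M_T=0$. So (ii) and the crystalline half of (viii) cannot come from your definition at all.

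The missing idea is to replace $M_T$ and $\Ainf$ by the relative prismatic period sheaf, localized at $\mu$ rather than at $I$: $\mathcal{E}_{\Prism,T}(B,J)=\mathrm{H}^0_\pe\bigl(X_\eta,T\otimes\Prism_{(-)_{\overline B}/B}[1/\mu]\bigr)$. This is twist-invariant by (\ref{eq:rational_period_sheaf_alt}) and makes sense at crystalline prisms (\Cref{prop:global_section_crystalline}). It rests on the computation $B\simeq\mathrm{H}^0_\pe\bigl(X_\eta,\Prism_{(-)_{\overline B}/B}[1/\mu]\bigr)$ (\Cref{cor:global_section_rational}).

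Even with a correct definition, several downstream steps are asserted rather than proved.

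For (i), $M_T$ is not a finitely generated $A$-module, so nothing bounds your intersection. The paper first shows the $\mu$-localization stabilizes, $\mathcal{E}_{\Prism,T}=\mathcal{E}_{\Prism,n,T}$ (\Cref{prop:weak_prismatic_F_crystal_is_calculated_at_finite_mu_power}), using the graded structure of $\mu^n/\mu^{n+1}$ and Liu--Zhu's vanishing. It then proves $(J,p)$-regularity and bounds the reduction mod $J$ inside $\mathrm{H}^0$ of the Hodge-completed de Rham period sheaf. That module is finite by Liu--Zhu, bounded integral torsion, and the projection formula.

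For (iv), a general $T$ only yields a weak crystal: transition maps are injective, not isomorphisms. So the two projections of the self-coproduct give no stratification and hence no connection. The paper instead takes global sections of the Guo--Li derived connection on $\dR_{(-)/X}$.

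For (v), there are four problems:
\begin{enumerate}
\item Primitive purity only outputs a \emph{saturated} Frobenius module. Reflexivity needs the separate analytic local freeness theorem for framed regular prisms (\Cref{thm:Frob_mod_is_analyticall_loc_free}), which fails for general regular prisms (\Cref{eg:Frob_mod_over_general_Frob-reg_prism}).
\item The inputs at Shilov points live over discrete valuation rings with imperfect residue field. You need \cite{GR24} or \cite{DLMS24} there, not Bhatt--Scholze over $\mathcal{O}_{K'}$.
\item Passing from classical points to Shilov points is Guo--Yang's pointwise criterion followed by restriction, not a branch-locus argument.
\item Identifying the glued crystal with $\mathcal{E}_{\Prism,T}$ uses Guo--Reinecke's strong \'etale comparison $T\otimes\Ainf[1/\mu]\simeq\Ainf(\mathcal{E})[1/\mu]$ together with the global-section computation above. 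This is exactly where the $\mu$-localization is indispensable.
\end{enumerate}
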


In light of \Cref{intro:thm:RH}.\ref{intro:thm:RH:compatible}, we also call the functor $T\mapsto \mathcal{E}_{\Prism,T}$ the \emph{prismatic Riemann--Hilbert functor}.
By combining \Cref{intro:thm:RH}.(\ref{intro:thm:RH:crys_real}, \ref{intro:thm:RH:crys_loc}, \ref{intro:thm:RH:compatible}), the pointwise crystallinity of \'etale cohomology in \Cref{intro:thm:pc_of_coh}, together with the de Rham comparison theorem of Scholze \cite{Sch13}, we give a positive answer to Ogus's \Cref{conj:Ogus} in full generality.
\begin{maincorollary}
\label{intro:cor:Ogus}
Let $X$ be a regular $p$-adic formal scheme, and let $Y_\eta \to X_\eta$ be a proper smooth rigid space with analytically good reduction over $X_\eta$. 
The crystalline realization of $\mathcal{E}_{\Prism,(R^if_{\eta,*}\mathbb{Z}_p)_\tf}$ is an $F$-isocrystal on $X_k$ that enhances the $i$-th Gauss--Manin connection of $Y_\eta/X_\eta$, is independent of the good reduction morphism $f:Y\to X'$, and is functorial in the rigid space $Y_\eta$ over $X_\eta$.
\end{maincorollary}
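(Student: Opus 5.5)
We will deduce \Cref{intro:cor:Ogus} by applying \Cref{intro:thm:RH} to $T \colonequals (R^if_{\eta,*}\mathbb{Z}_p)_\tf$. By \Cref{intro:thm:pc_of_coh}, $T\in \Loc^\crys_{\mathbb{Z}_p}(X_\eta)$. Since $X$ is regular, \Cref{intro:thm:RH}.\ref{intro:thm:RH:crys_loc} then shows that $\mathcal{E}_{\Prism,T}$ is a coherent reflexive prismatic $F$-crystal, with Frobenius an isomorphism after inverting $\mathcal{I}_\Prism$. We first reduce to $X$ being $p$-torsionfree. The statement only involves $X_\eta$ and $X_k$, and replacing $X$ by its $p$-torsionfree quotient does not change $X_\eta$, while $X_k$ changes only nilpotently, which does not affect isocrystals. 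Some care is needed to keep regularity; if this fails we work with the flat closure and note that the relevant part of \Cref{intro:thm:RH} is local.

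Next we evaluate on special crystalline prisms, as in \Cref{intro:thm:RH}.\ref{intro:thm:RH:crys_real}. A priori this gives only a weak isocrystal with a weak Frobenius. However, because $\mathcal{E}_{\Prism,T}$ is an honest crystal, the transition maps are isomorphisms rather than mere injections, and $\varphi$ becomes an isomorphism after inverting $p$. So we obtain an honest $F$-isocrystal $\mathcal{E}_{\crys,i}$ on $X_k$. By \Cref{intro:thm:RH}.\ref{intro:thm:RH:compatible}, it agrees with the Guo--Yang crystalline Riemann--Hilbert output $\mathcal{E}_{\crys,T}$.

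\textbf{Enhancing the Gauss--Manin connection.} We must show that the isocrystal, realized on $X_\eta$ as a vector bundle with connection, is $R^if_{\eta,*}\Omega^\bullet_{Y_\eta/X_\eta}$ with its Gauss--Manin connection.
\begin{enumerate}[label=(\arabic*)]
\item The crystalline and de Rham realizations agree after inverting $p$, since both come from $\mathcal{E}_{\Prism,T}$ through Frobenius-twisted crystalline prisms.
\item By \Cref{intro:thm:RH}.\ref{intro:thm:RH:compatible}, the de Rham realization $\mathcal{E}_{\dR,T}[1/p]$ injects into Liu--Zhu's $\mathrm{D}_\dR(T)$, compatibly with connection and filtration.
\item Scholze's relative de Rham comparison \cite{Sch13} identifies $\mathrm{D}_\dR(T\otimes\mathbb{Q}_p)$ with the $i$-th relative de Rham cohomology together with its Gauss--Manin connection.
\item Both sides are vector bundles of rank $\operatorname{rk}T$: the left side by coherence and reflexivity over regular $X$ together with fiberwise crystallinity, the right side by the comparison. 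The injection is therefore an isomorphism on $X_\eta$. To see this, check it at points, where it reduces to the classical equality $\dim D_\crys=\dim D_\dR=\operatorname{rk}$ for crystalline representations; an injection of vector bundles of the same rank that is an isomorphism at all points is an isomorphism.
\end{enumerate}

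\textbf{Independence and functoriality.} Independence of the reduction $f\colon Y\to X'$ is immediate: the construction uses only the local system $T$ on $X_\eta$, and the integral model $X'$ enters only through \Cref{intro:thm:pc_of_coh}. For functoriality, a morphism $Y_\eta\to Y'_\eta$ over $X_\eta$ induces a map of local systems $R^if'_{\eta,*}\mathbb{Z}_p\to R^if_{\eta,*}\mathbb{Z}_p$. This passes to torsionfree quotients, and the functor $T\mapsto\mathcal{E}_{\Prism,T}$ of \Cref{intro:thm:RH} then gives the map of $F$-isocrystals. Canonicity under change of the regular model $X$ uses \Cref{intro:thm:RH}.\ref{intro:thm:RH:pullback} for crystalline $T$.

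The main obstacle is the Gauss--Manin identification. Specifically, we must check that the injection of \Cref{intro:thm:RH}.\ref{intro:thm:RH:compatible} respects connections, and that the rank comparison holds globally rather than only at classical points. Here we would use reflexivity and the Zariski density of classical points on $X_\eta$.
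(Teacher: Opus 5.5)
Your proposal is correct and follows essentially the paper's route. You apply the prismatic Riemann--Hilbert functor to the pointwise crystalline $T=(R^if_{\eta,*}\mathbb{Z}_p)_\tf$ and take its crystalline realization, then identify the de Rham realization with $\mathrm{D}_\dR(T)$, and hence, via Scholze's comparison, with the Gauss--Manin connection; independence and functoriality hold because the construction only sees $T$. Two small points. First, no $p$-torsionfree reduction is needed, since regularity in this paper already includes $p$-torsionfreeness; your claim that $X_k$ would change only nilpotently is also not right in general. Second, the final isomorphism $\mathcal{E}_{\dR,T}[1/p]\simeq\mathrm{D}_\dR(T)$ is closed as in the paper: a horizontal injection between flat connections of the same rank on the smooth rigid space $X_\eta$ has cokernel a flat connection of rank zero, hence zero. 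A pointwise dimension count only works if you also show that the fiber maps at classical points are injective, and Zariski density of classical points does not supply that.
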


To explain the construction of \Cref{intro:thm:RH}, for simplicity, we consider the special case when $X=\Spf(\mathcal{O}_K)$ is a mixed characteristic point; the general construction for an arbitrary special prism $(A,I)$ over regular $X$ is similar (cf. \Cref{sub:F-crys_const}).
Recall, it was first observed by Breuil \cite{Bre98} and then extended by Kisin \cite{Kis06} that a crystalline representation $T$ gives rise to integral linear algebraic datum, the so-called \emph{Breuil--Kisin module}.
In our terminology, a Breuil--Kisin module is a finitely generated Frobenius module over the prism $(\mathfrak{S}, (E(u)))$, where $\mathfrak{S}\colonequals W(k)\llbracket u \rrbracket$ such that the Frobenius structure sends $u$ to $u^p$, and $E(u)$ is an Eisenstein polynomial.
Despite being simple itself, the construction of the Breuil--Kisin module associated with $T$ is however very technical and non-explicit.
This is in contrast to Fontaine's original functor $\mathrm{D}_{\crys}(T)$ of (\ref{eq:Fontaine'sD_crys}) in the 1980s, defined as the Galois invariant of the tensor product $T\otimes_{\mathbb{Z}_p} \mathrm{B_{crys}}$, where $\mathrm{B}_\crys$ is Fontaine's crystalline period ring.
So it is natural to ask if a Fontaine-style formula but for Breuil--Kisin modules exists.

A pivotal observation of Beilinson (\cite{Bei12}) and Bhatt (\cite{Bha12}) (which was extended to general perfectoid algebras by our joint work with Li in \cite{GL21}) is that the period ring $\mathrm{B}_{\crys}$ admits a geometric interpretation: it is a localization of the $p$-adic derived de Rham cohomology of $\mathcal{O}_{C}$, where the latter is the completion of the ring of $p$-adic algberaic integers.
It in particular suggests a new perspective on Fontaine's functor, and leads to a much larger pool of period rings by considering other cohomologies of $\mathcal{O}_{C}$.
Our article then offers one such example that is new:
We let $\Prism_{\mathcal{O}_{C}/\mathfrak{S}}$ be the relative prismatic cohomology of $\mathcal{O}_{C}$ over the Breuil--Kisin prism $(\mathfrak{S},E(u))$ introduced by Bhatt and Scholze (\cite{BS22}), which naturally admits a Frobenius structure, a $\Gal_K$-action, and Nygaard filtration after the Frobenius twist.
Let $\mu=[\epsilon] -1 \in \rAinf =\Prism_{\mathcal{O}_\mathcal{C}}\subset \Prism_{\mathcal{O}_{C}/\mathfrak{S}}$ be a canonical element.
Then the construction of the $A$-module $\mathcal{E}_{\Prism,T}(A,I)$ for $(A,I)=(\mathfrak{S},E(u))$ in \Cref{intro:thm:RH} is in fact a one-line formula, and yields the Breuil--Kisin module when $T$ is crystalline.
\begin{maincorollary}
\label{intro:cor:BK}
Let $T$ be a $\mathbb{Z}_p$-representation of $\Gal_K$ of rank $n$.
The Galois invariant 
\[
(T\otimes_{\mathbb{Z}_p} \Prism_{\mathcal{O}_{C}/\mathfrak{S}} [1/\mu])^{\Gal_K}
\]
is a finite free weak Frobenius module over $\mathfrak{S}$ of rank $\leq n$, and is canonically isomorphic to the associated Breuil--Kisin module when $T\in \Rep^\crys_{\mathbb{Z}_p}(\Gal_K)$.
\end{maincorollary}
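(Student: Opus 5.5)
We will deduce \Cref{intro:cor:BK} from \Cref{intro:thm:RH} specialized to $X=\Spf(\mathcal{O}_K)$ and the special prism $(A,I)=(\mathfrak{S},(E(u)))$. The plan has three steps: identify the formula with the value of the functor at this prism, prove finite freeness of rank $\leq n$, and compare with the Breuil--Kisin module in the crystalline case.

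First, the formula. By construction (\Cref{sub:F-crys_const}), $\mathcal{E}_{\Prism,T}(A,I)$ is the module of $\Gal$-invariants of $T$ tensored with the relative prismatic cohomology of the perfectoid cover over $A$, localized at $\mu$. For $A=\mathfrak{S}$ the perfectoid cover is $\mathcal{O}_C$ and the group is $\Gal_K$, so the displayed invariant module is exactly $\mathcal{E}_{\Prism,T}(\mathfrak{S},(E))$. The weak Frobenius structure comes from the Frobenius on $\Prism_{\mathcal{O}_C/\mathfrak{S}}$. We must check that $\varphi(\mu)$ differs from $\mu$ by a unit after inverting $E$. This follows from the identity $\varphi(\mu)/\mu=\xi$ up to a unit and the fact that $\xi$ and $E$ generate the same ideal in $\Ainf$, hence in $\Prism_{\mathcal{O}_C/\mathfrak{S}}$. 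So $\varphi$ induces an $\mathfrak{S}[1/E]$-linear map $\varphi^*M[1/E]\to M[1/E]$.

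Second, finite freeness of rank $\leq n$. Since $\mathfrak{S}$ is a regular local ring of dimension $2$ and $\mathfrak{S}/E=\mathcal{O}_K$ is $p$-torsionfree, \Cref{intro:thm:RH}.\ref{intro:thm:RH:fin} shows that $M$ is finitely presented.
\begin{itemize}
\item \emph{Torsionfree of rank $\leq n$.} The module $M$ embeds into $T\otimes\Prism_{\mathcal{O}_C/\mathfrak{S}}[1/\mu]$, and $\Prism_{\mathcal{O}_C/\mathfrak{S}}$ is flat over $\mathfrak{S}$ (it is a completed base change of $\Ainf$-type data), so $M$ is torsionfree. For the rank bound, pass to the fraction field and use the standard argument that Galois invariants of $T\otimes B$ have dimension at most $\dim T$ over $B^{\Gal_K}$.
\item \emph{Reflexivity.} We show $M$ is saturated: if $x\in M[1/p]\cap M[1/u]$, then $x\in M$. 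This holds because $\Prism_{\mathcal{O}_C/\mathfrak{S}}[1/\mu]$ is itself saturated with respect to $p$ and $u$ inside a suitable ambient ring. Indeed, $(p,u)$ is a regular sequence on $\Prism_{\mathcal{O}_C/\mathfrak{S}}$, and $\mu$ is a nonzerodivisor modulo $p$ and modulo $u$, so intersections can be computed before taking invariants.
\end{itemize}
A finitely generated reflexive module over a $2$-dimensional regular local ring is free, so $M$ is finite free of rank $\leq n$.

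Third, the crystalline case. By \Cref{intro:thm:RH}.\ref{intro:thm:RH:crys_loc}, $\mathcal{E}_{\Prism,T}$ is a reflexive prismatic $F$-crystal with étale realization $T$. By Bhatt--Scholze \cite{BS23}, evaluating a prismatic $F$-crystal on $\mathcal{O}_K$ at the Breuil--Kisin prism gives the Breuil--Kisin module of its étale realization. Canonicity then follows from the full faithfulness of the étale realization on prismatic $F$-crystals. Concretely, the comparison map $\mathfrak{M}(T)\to (T\otimes\Prism_{\mathcal{O}_C/\mathfrak{S}}[1/\mu])^{\Gal_K}$ is induced by the Breuil--Kisin period isomorphism $\mathfrak{M}\otimes\Ainf[1/\mu]\simeq T\otimes\Ainf[1/\mu]$.

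The main obstacle is the reflexivity step, namely controlling $u$- and $p$-saturation of the invariants of the non-noetherian period ring $\Prism_{\mathcal{O}_C/\mathfrak{S}}[1/\mu]$. If a direct saturation argument fails, I would fall back on the primitive purity theorem for Frobenius modules mentioned in the abstract, applied to $M$ with its weak Frobenius.
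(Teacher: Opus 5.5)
Your architecture matches the paper's. You evaluate $\mathcal{E}_{\Prism,T}$ at $(\mathfrak{S},(E))$ over $X=\Spf(\mathcal{O}_K)$, get finite presentation from \Cref{prop:weak_prismatic_F_crystal:finiteness}, upgrade to freeness by depth two over the regular local ring $\mathfrak{S}$, and identify the result with the Breuil--Kisin module via \Cref{thm:prismatic_RH_for_crystalline_local_system}, \Cref{prop:uniqueness_of_the_associated_prismatic_F_crystal} and Bhatt--Scholze.

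The step you flag as the main obstacle is actually fine, and your argument for it is shorter than the paper's. Since $\Prism_{\mathcal{O}_C/\mathfrak{S}}$ is flat over $\mathfrak{S}$, the complex $\Kos(\Prism_{\mathcal{O}_C/\mathfrak{S}};p,u)$ is discrete, and this persists after inverting $\mu$. So $B\colonequals\Prism_{\mathcal{O}_C/\mathfrak{S}}[1/\mu]$ satisfies $B=B[1/p]\cap B[1/u]$ by \Cref{cor:Koszul_reg_imply_sat}. Because $p$ and $u$ are $\Gal_K$-invariant nonzerodivisors on $T\otimes_{\mathbb{Z}_p}B$, taking invariants commutes with inverting them, which gives $M=M[1/p]\cap M[1/u]$. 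The paper instead proves $(E,p)$-regularity in \Cref{lem:weak_prismatic_F_crystal:regularity}, after replacing $[1/\mu]$ by a finite twist $\mu^n$ (\Cref{prop:weak_prismatic_F_crystal_is_calculated_at_finite_mu_power}) and using the conjugate filtration. No appeal to primitive purity is needed. One minor slip: $\varphi(\mu)/\mu=\tilde{\xi}$ generates $\ker\widetilde{\theta}$, and the equality $\tilde{\xi}\Prism_{\mathcal{O}_C/\mathfrak{S}}=E\Prism_{\mathcal{O}_C/\mathfrak{S}}$ holds in the coproduct, not in $\Ainf$.

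The genuine gap is the rank bound. The ``standard argument'' is Fontaine's lemma. Its minimal-relation trick only produces coefficients in $\mathrm{Frac}(B)^{\Gal_K}$, so it needs $B$ to be a domain with $\mathrm{Frac}(B)^{\Gal_K}=\mathrm{Frac}(B^{\Gal_K})$. Without this the bound fails. For example, take $B=\mathbb{Q}_p[x,y]$ with $\Gal_K$ acting on both $x$ and $y$ through the cyclotomic character $\chi$, and $T=\mathbb{Z}_p(\chi^{-1})$. Then $B^{\Gal_K}=\mathbb{Q}_p$, while $(T\otimes B)^{\Gal_K}=\mathbb{Q}_p x\oplus\mathbb{Q}_p y$ is two-dimensional. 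For $B=\Prism_{\mathcal{O}_C/\mathfrak{S}}[1/\mu]$ nothing of this kind is established (even $B^{\Gal_K}=\mathfrak{S}$ is the nontrivial \Cref{cor:global_section_rational}), so the step is unsupported.

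The repair uses the freeness you already have and specializes to a classical period ring. Suppose $M$ is free of rank $r$. Since $u$ is a $\Gal_K$-invariant nonzerodivisor on $B$, left exactness of invariants gives an injection $M/uM\hookrightarrow(T\otimes_{\mathbb{Z}_p}B/uB)^{\Gal_K}$, and $M/uM$ is free of rank $r$ over $W(k)$. Here $B/uB$ is the $\mu$-localized base change along $(\mathfrak{S},(E))\to(W(k),(p))$. In it $p$ is inverted, because $\mu^{p-1}\in(\tilde{\xi},p)=(p)$ modulo $u$, and the paper identifies it with $\mathrm{B}_\crys$. Fontaine's bound $\dim_{K_0}(T\otimes\mathrm{B}_\crys)^{\Gal_K}\leq n$ then gives $r\leq n$. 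This is the route the introduction indicates; cf.\ \Cref{prop:weak_prismatic_F_crystal:transversal_to_crystalline}.

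Alternatively, the Frobenius-twisted reduction $M\otimes_{\mathfrak{S},\varphi}\mathcal{O}_K$, free of rank $r$, injects into $\mathrm{D}_\dR(T)$ by \Cref{prop:weak_prismatic_F_crystal:deRham} and the comparison with Liu--Zhu. Fontaine's lemma applies there directly, since $\mathrm{B}_\dR$ is a field with $\mathrm{B}_\dR^{\Gal_K}=K$.
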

Here we remind the reader that the relative prismatic cohomology $\Prism_{\mathcal{O}_{C}/\mathfrak{S}}$ lives in cohomological degree zero and can be identified with a coproduct of prisms (\Cref{cor:coproduct:perfect_with_framed}.)
Moreover, since the mod $u$ reduction of $\Prism_{\mathcal{O}_{C}/\mathfrak{S}} [1/\mu]$ is Fontaine's period ring $\mathrm{B_{crys}}$, the above $\mathfrak{S}$-module has rank $n$ if and only $T\in \Rep^\crys_{\mathbb{Z}_p}(\Gal_K)$, which yields a new and equivalent definition of the crystalline representation that is of integral nature.
It also worth mentioning that by replacing $(A,I)$ with other prisms, it leads to a formula of constructing the prismatic $F$-crystals \emph{directly} out of crystalline $p$-adic local systems, answering a question from Esnault.
This is in contrast to the \'etale realization functor of prismatic $F$-crystals which is of opposite (or precisely downstream) direction.

To prove \Cref{intro:cor:BK} and its higher dimensional generalization, the idea is to reduce the problem to the finiteness and the vanishing results of Tate--Sen (for $C$-linear representations) and more generally of Liu--Zhu \cite{LZ17} (cf. \Cref{thm:LZ_finiteness}).
However, this reduction process is one of the most technical components of the article.
Indeed, by extending an observation of Bhatt--Morrow--Scholze in \cite{BMS1}, the period ring $\Prism_{\mathcal{O}_{C}/\mathfrak{S}} [1/\mu]$ can be identified with an (Frobenius, Galois, and filtered) equivariant colimit of double-twisted relative prismatic cohomology $R\Gamma((\mathcal{O}_{C}/\mathfrak{S})_\Prism, \mathcal{O}\{n\})\otimes_{\mathbb{Z}_p}\mathbb{Z}_p(-n)$ for $n\in \mathbb{Z}$.
Then the essential work is to understand the Galois structure on the Nygaard filtration, where we make crucial use of a result from Bhatt--Lurie \cite{BL22a} on the relationship between the absolute and the relative de Rham realizations (cf. \Cref{thm:abs_and_rel_prismatic}), together with a careful d\'evissage argument that refines the difference of $\mu^n \Prism^{(1)}_{\mathcal{O}_{C}/\mathfrak{S}}$ and $\mu^{n+1}\Prism^{(1)}_{\mathcal{O}_{C}/\mathfrak{S}}$ through the Frobenius structure (\Cref{thm:structure_of_rational_period_sheaf}).

\begin{remark}[Pullback injectivity and the rank stratification]
As we discussed in \Cref{intro:thm:RH}.\ref{intro:thm:RH:pullback}, the prismatic Riemann--Hilbert functor interacts well with the pullbacks when the local system is crystalline.
In fact, the pullback compatibility extends to arbitrary local systems.
Specifically, given a map of regular $p$-adic formal schemes $f:X_2\to X_1$ and a map of prisms $(B_1,J_1)\to (B_2,J_2)$ such that $(B_i,J_i)\in (X_i)_\Prism$, there is an induced map of weak Frobenius modules over $B_2$:
\[
\mathcal{E}_{\Prism,T}(B_1,J_1)\otimes_{B_1} B_2 \longrightarrow \mathcal{E}_{\Prism,f^{-1}T}(B_2,J_2),
\]
where $T\in \Loc_{\mathbb{Z}_p}(X_{1,\eta})$.
Analogously to the crystalline Riemann--Hilbert functor in \cite{GY24}, we show in \Cref{thm:weak_prismatic_F_crystal:pull_back_wrt_prisms} that the map is generically injective when $(B_i,J_i)$ are special and regular.
In particular, as was first discovered in our previous work \cite[Def.\ 5.11]{GY24}, there is a rank function on the Zariski site of $X$ together with a canonical \emph{rank stratification}, which measures how the reduction (in the sense of monodromy) varies within the family.
This will be studied in depth in a subsequent work with Ziquan Yang.
\end{remark}

 \subsection{A primitive purity theorem for Frobenius modules}
\label{introsub:purity}
In the above discussions, one of the major ingredients in the proof of Ogus's \Cref{conj:Ogus} (i.e. \Cref{intro:cor:Ogus}) is the behavior of the prismatic Riemann--Hilbert functor at the local system $T=(R^if_{\eta,*} \mathbb{Z}_p)_\tf$, where the latter by \Cref{intro:thm:pc_of_coh} is pointwise crystalline.
In particular, our approach relies crucially on the understanding of the pointwise crystalline local systems.
We recall from our previous joint work with Yang \cite{GY24} that when the rigid space $X_\eta$ has a smooth integral model $X$, a pointwise crystalline local system $T$ over $X_\eta$ is crystalline \emph{globally}: there is an $F$-isocrystal over the entire special fiber $X_k$ that is associated to the local system $T$.
However, the smoothness of $X$ is crucial in \cite{GY24}, where we conducted a series of descending arguments for the $F$-isocrystal, and made essential use of its description in terms of a Frobenius module over a noetherian ring, a property that is only true when the special fiber $X_k$ is non-singular.
In fact, even the notion of (global) crystalline local system was traditionally defined only for good reduction setting, until in \cite[Def.\ 4.4]{GY24} where we extended it to general reduction.
So as theoretic advancements, our \Cref{intro:thm:RH}.\ref{intro:thm:RH:crys_loc} pushes the good behaviors of the pointwise crystalline local system to the regular reduction setting.
In fact, we show that several variants of the crystallinity for $p$-adic local systems are equivalent to each other.
\begin{maintheorem}[\Cref{cor:various_notion_of_crystallinity}]
\label{intro:thm:crys_loc_sys}
	Let $X$ be a $p$-torsionfree regular $p$-adic formal scheme over $\mathcal{O}_K$, and let $T\in \Loc_{\mathbb{Z}_p}(X_\eta)$.
	The following conditions are equivalent: 
	\begin{enumerate}[label=\upshape{(\alph*)}]
		\item The weak prismatic $F$-crystal $\mathcal{E}_{\Prism,T}$ is a prismatic $F$-crystal whose \'etale realization is isomorphic to $T$.
		\item The local system $T$ is crystalline with respect to the integral model $X$: there is an $F$-isocrystal on the special fiber $X_k$ that is associated to $T$, in the sense of \cite[Def.\ 4.4]{GY24}.
		\item There is a dense open smooth subscheme $U\subseteq X$ such that $T|_{U_\eta}$ is a crystalline local system with respect to $U$.
		\item The restriction $T|_{\Spa(L)}$ is a crystalline representation for each Shilov point $\Spa(L)$ of $X_\eta$.
		\item There is a dense open smooth subscheme $U\subseteq X$ such that the restriction $T|_x$ is a crystalline representation for each classical point $x\in U_\eta$.
	\end{enumerate}
	In particular, the above conditions are independent of the regular integral model $X$.
\end{maintheorem}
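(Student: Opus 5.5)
I will prove a cycle of implications, using \Cref{intro:thm:RH} for the prismatic input and \cite{GY24} for the smooth case.

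\textbf{The implications (a)$\Rightarrow$(b)$\Rightarrow$(c).} Assume (a). Evaluating $\mathcal{E}_{\Prism,T}$ at special crystalline prisms gives, by \Cref{intro:thm:RH}.\ref{intro:thm:RH:crys_real}, an object over $X_k$ with Frobenius structure. Since $\mathcal{E}_{\Prism,T}$ is an honest prismatic $F$-crystal, this object is a genuine $F$-isocrystal rather than only a weak one. Its étale realization is $T$, and the compatibility statement \Cref{intro:thm:RH}.\ref{intro:thm:RH:compatible} with the Guo--Yang functor $\mathcal{E}_{\crys,T}$ of \cite{GY24} then shows that this $F$-isocrystal is associated to $T$ in the sense of \cite[Def.\ 4.4]{GY24}. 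This gives (b). For (b)$\Rightarrow$(c), note that $X$ is regular and $p$-torsionfree, so its smooth locus over $\mathcal{O}_K$ contains a dense open $U$. Associatedness in the sense of \cite[Def.\ 4.4]{GY24} is local, so restricting the $F$-isocrystal to $U_k$ gives (c).

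\textbf{The implications (c)$\Rightarrow$(e) and (c)$\Rightarrow$(d).} For (c)$\Rightarrow$(e), every classical point of $U_\eta$ extends to a point $\Spf(\mathcal{O}_{K'})\to U$. Pulling the associated $F$-isocrystal back to this point, using the pullback compatibility of \cite{GY24}, shows that $T|_x$ is crystalline. For (c)$\Rightarrow$(d), a Shilov point of $X_\eta$ corresponds to a generic point of an irreducible component of $X_k$. Because $U$ is dense, that generic point lies in $U_k$. Crystallinity at the Shilov point then follows from the smooth-case results of \cite{GY24}, applied to the complete discretely valued field $L$, whose residue field is imperfect.

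\textbf{The return to (a).} For (e)$\Rightarrow$(c), apply the main theorem of \cite{GY24} on the smooth model $U$: pointwise crystallinity on $U_\eta$ implies crystallinity with respect to $U$. For (d)$\Rightarrow$(c), I would use the purity theorem from the introduction's section on Frobenius modules. After shrinking to a smooth open neighbourhood of the Shilov points, the Frobenius module given by $\mathcal{E}_{\Prism,T}$ has full rank $\operatorname{rk}T$ at the generic points. The goal is to spread this out to a dense open set.

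The key step is (c)$\Rightarrow$(a): from crystallinity on a dense smooth open $U$, deduce that $\mathcal{E}_{\Prism,T}$ is a full prismatic $F$-crystal on all of $X$ with étale realization $T$. My plan has three parts.
\begin{enumerate}
\item Over $U$, (c) and \cite{GY24} show that the restriction of $T$ to $U_\eta$ is pointwise crystalline. Then \Cref{intro:thm:RH}.\ref{intro:thm:RH:crys_loc} makes $\mathcal{E}_{\Prism,T}|_U$ a reflexive prismatic $F$-crystal of rank $\operatorname{rk}T$.
\item On any special regular prism $(A,I)$ over $X$, the module $\mathcal{E}_{\Prism,T}(A,I)$ is finitely presented by \Cref{intro:thm:RH}.\ref{intro:thm:RH:fin}. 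By \Cref{thm:weak_prismatic_F_crystal:pull_back_wrt_prisms} it injects generically into its localization over $U$, so it has rank $\operatorname{rk}T$ generically.
\item A weak Frobenius module over a regular ring that is a genuine Frobenius module away from a closed set of codimension $\ge 1$ in the special fiber should be a Frobenius module everywhere after reflexive hull. This extension step is the primitive purity theorem, and I would invoke it here.
\end{enumerate}
Since $\mathcal{E}_{\Prism,T}$ is defined by invariants, it is already reflexive, so these parts together give (a).

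\textbf{Independence and the main obstacle.} Conditions (d) and (e) depend only on $X_\eta$: Shilov points, and the points of a dense open of the generic fiber. Hence the equivalences give independence of the regular model $X$. The hardest step is the purity extension across the non-smooth locus of $X_k$. That locus can have codimension $1$ in $X$, so classical Zariski–Nagata purity does not apply directly. My plan there is to use the Frobenius structure: iterate $\varphi$ to show that the cokernel of $\varphi_M$ is killed by a power of $I$ and is supported in codimension $\ge 2$ of $\Spec A$. Reflexivity over the regular ring $A$ then forces that cokernel to vanish.
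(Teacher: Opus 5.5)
Your overall cycle matches the paper, including the use of \cite{GY24} for (c)$\Rightarrow$(d),(e) and for (e)$\Rightarrow$(c). The genuine gap is in the key step (c)$\Rightarrow$(a). In addition, (d)$\Rightarrow$(c) is never actually argued beyond ``spread this out''.

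You misread the primitive purity theorem \Cref{thm:primitive_purity_inf}. It does not say that a weak Frobenius module which is a genuine Frobenius module over a dense open of the special fibre becomes one after taking the reflexive hull. Its input is a compatible tuple:
\begin{itemize}
\item the finite projective \'etale Frobenius module $M_\et$ over $A\langle 1/I\rangle$ attached to $T$ by Bhatt--Scholze;
\item finite projective Frobenius modules $M_{\mathfrak p}$ over the two-dimensional localizations $A_{L_{\mathfrak p}}$, for $\mathfrak p\in\Ass(p,I)$.
\end{itemize}
Its output is a \emph{new} module $M_\et\cap\prod_{\mathfrak p}M_{\mathfrak p}$. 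Its Frobenius is bijective for free, by flatness of $\varphi_A$, but the module is only saturated.

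Your codimension argument fails because it never uses the \'etale side. Data over $U$ only see primes of $A$ whose closure meets $U_k$. This misses height-one primes in the analytic locus: for example $(x)\subset W\llbracket x,y\rrbracket$ when $V(p,I)=V(p,y)$ and $U_k=V(p,y)\setminus V(x)$. So the uncontrolled locus has codimension one, and reflexivity says nothing there. Moreover, ``the cokernel of $\varphi_M$ is killed by a power of $I$'' is, after inverting $I$, exactly the surjectivity you are trying to prove.

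Even a bijective Frobenius would not give (a). You also need the crystal property, since the transition maps of $\mathcal{E}_{\Prism,T}$ are only known to be injective, and you need to identify the \'etale realization with $T$. Two further points fail. Being defined by invariants gives only $(J,p)$-regularity, not analytic local freeness. And an injection into a module of rank $\operatorname{rk}T$ bounds the rank only from above.

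The paper never upgrades $\mathcal{E}_{\Prism,T}(A,I)$ directly; it builds the answer and then identifies it, starting from (d), which (c) implies.
\begin{itemize}
\item \'Etale locally, take a framed regular prism $(A,I)$ with $\overline A=R$. The Shilov points are the $\Spa(L_{\mathfrak p})$ with $\mathfrak p\in\Ass(R/pR)$. Crystallinity there produces the $M_{\mathfrak p}$, with descent data, by \cite[Thm.\ A]{GR24} or \cite[Thm.\ 1.3]{DLMS24}.
\item \Cref{thm:coproduct_in_general} gives complete freeness of $A\to A^n$, so the intersection commutes with base change along the \v{C}ech nerve. Together with primitive purity, this yields a saturated prismatic $F$-crystal $\mathcal{E}$ with \'etale realization $T$.
\item \Cref{thm:Frob_mod_is_analyticall_loc_free} makes $\mathcal{E}$ analytically locally free, hence reflexive. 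Its input at the two-dimensional quotient prisms, i.e.\ at $\mathcal{O}_{K'}$-points, is \cite[Prop.\ 4.3]{BMS1}. This step is genuinely needed, as \Cref{eg:Frob_mod_over_general_Frob-reg_prism} shows.
\item \Cref{prop:uniqueness_of_the_associated_prismatic_F_crystal} identifies $\mathcal{E}_{\Prism,T}\simeq\mathcal{E}$. It uses the strong \'etale comparison and \Cref{cor:global_section_rational}.
\end{itemize}
This proves (d)$\Rightarrow$(a) directly, and with your (a)$\Rightarrow$(b)$\Rightarrow$(c) it also closes the missing (d)$\Rightarrow$(c). The paper otherwise quotes (d)$\Rightarrow$(c) from Moon's purity theorem \cite{Moo24}.

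Two smaller points. First, applying \Cref{intro:thm:RH}.\ref{intro:thm:RH:crys_loc} on $U$ invokes the result being proved; for smooth $U$, cite \cite{GR24} instead. Second, (d) and (e) are not intrinsic to $X_\eta$ as stated, because Shilov points and the tube $U_\eta$ depend on the model. Independence of the model comes instead from (a): pulling back to $\mathcal{O}_{K'}$-points gives crystallinity at every classical point of $X_\eta$.
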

Building on our previous work with Yang \cite{GY24}, to obtain \Cref{intro:thm:crys_loc_sys}, we prove that crystalline local systems with respect to regular integral models satisfy the \emph{purity}, an observation first made by Tsuji in \cite{Tsu08}, reproved by Moon in \cite{Moo24} for crystalline local systems and smooth integral models, and then extended to the semi-stable local systems by Du--Liu--Moon--Shimizu \cite{DLMS24b}.
To prove this property for local systems, the essential ingredient is a statement on Frobenius modules over regular prisms.
To explain, we recall that a given prism $(A,I)$ is called transversal if $A/I$ is $p$-torsionfree, and is called regular if $A$ is a regular ring.
For an ideal $J\subset A$, we let $\mathrm{Ass}(J)$ be the set of associated prime ideals of $J$ in $A$.
When $A$ is noetherian, for each associated prime $\mathfrak{p}\in \mathrm{Ass}(p,I)$, we let $A_{L_\mathfrak{p}}$ be the $p$-complete localization of $A$ at $\mathfrak{p}\in \mathrm{Ass}(p,I)$ with the induced Frobenius structure.
\begin{maintheorem}[Primitive Purity, \Cref{thm:primitive_purity_inf}, \Cref{def:extendable_tuples}]
\label{intro:thm:purity}
Let $(A,I)$ be a transversal and regular prism.
Assume $(M_\et; (M_\mathfrak{p})_{\mathfrak{p}\in \mathrm{Ass}(p,I)})$ is a tuple of Frobenius modules, where $M_\et \in \Vect^\varphi(A[1/I]^\wedge_p)$, $M_\mathfrak{p}\in \Vect^\varphi(A_{L_\mathfrak{p}})$, and they are isomorphic after base changing to $A_{L_\mathfrak{p}}[1/I]^\wedge_p$.
There is a canonical Frobenius module $M\in \Mod^\varphi_\sat(A)$ such that 
\[
M[1/I]^\wedge_p\simeq M_\et, ~ \text{and}~M\otimes_A A_{L_\mathfrak{p}}\simeq M_\mathfrak{p}~ \text{up to bounded $I^\infty$-torsion}.
\]
\end{maintheorem}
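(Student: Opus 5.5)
Our plan is to construct $M$ as an intersection inside $M_\et$, in the spirit of the Beauville--Laszlo gluing and of the purity arguments of Tsuji and Moon, with the Frobenius supplying the needed finiteness. Set $M_\et$ and, for each $\mathfrak{p}\in\Ass(p,I)$, fix the identification $\iota_\mathfrak{p}\colon M_\et\otimes A_{L_\mathfrak{p}}[1/I]^\wedge_p\simeq M_\mathfrak{p}[1/I]^\wedge_p$. We define
\[
M\colonequals \{\, m\in M_\et \suchthat \iota_\mathfrak{p}(m)\in M_\mathfrak{p}[1/I]^{\wedge}_p \text{ lies in the image of } M_\mathfrak{p} \text{ for all } \mathfrak{p}\in \Ass(p,I)\,\}.
\]
More precisely, we first choose a finite free (or finitely generated) $A$-lattice $N\subset M_\et$ generating it after inverting $I$ and $p$-completing; such a lattice exists after possibly localizing, and in general we use the colimit over lattices, cutting down by the conditions at each $\mathfrak{p}$. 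Since $\varphi$ preserves each condition (the $M_\mathfrak{p}$ and $M_\et$ are Frobenius modules and $\iota_\mathfrak{p}$ is $\varphi$-equivariant), $M$ carries an induced map $\varphi_M\colon \varphi^*M[1/I]\to M[1/I]$, which is an isomorphism because it is one on $M_\et$ and on each $M_\mathfrak{p}$ after inverting $I$.

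The key steps, in order, are as follows. First, we reduce to the local situation: because $(A,I)$ is transversal and regular, the primes in $\Ass(p,I)$ are height-one primes containing $p$, $A_{L_\mathfrak{p}}$ is a $p$-complete DVR-like ring, and $\bigcap_{\mathfrak{p}} A_{\mathfrak{p}}\cap A[1/p]\cap A[1/I]$ recovers $A$ by normality (Hartogs for regular rings). Second, we prove finiteness: $M$ is sandwiched between $I^{a}N$ and $I^{-b}N$ for suitable $a,b$, using that the comparison $\iota_\mathfrak{p}$ moves lattices by a bounded power of $I$ (finitely many $\mathfrak{p}$), so $M$ is finitely generated over the noetherian $A$ and saturated, i.e.\ reflexive, hence in $\Mod^\varphi_\sat(A)$. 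Third, we verify the two comparisons: $M[1/I]^\wedge_p\simeq M_\et$ follows since the defining conditions become vacuous after inverting $I$; and $M\otimes_A A_{L_\mathfrak{p}}\to M_\mathfrak{p}$ is an isomorphism modulo bounded $I^\infty$-torsion by a weak approximation argument, choosing elements of $M$ approximating any element of $M_\mathfrak{p}$ at $\mathfrak{p}$ while remaining integral at the other associated primes. Finally, canonicity is immediate from the intrinsic definition as an intersection.

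The main obstacle is the second and third steps jointly: showing that the intersection is large enough, i.e.\ that $M\otimes A_{L_\mathfrak{p}}$ actually fills out $M_\mathfrak{p}$ up to $I^\infty$-torsion, without knowing a priori that $M_\et$ descends to a finite module over $A[1/I]$ (it only lives over the completion $A[1/I]^\wedge_p$). Here we would use the Frobenius: starting from any finite lattice, iterating $\varphi_M$ and using that $\varphi$ contracts the $I$-adic discrepancy (since $\varphi(I)\subset (I^p,p)$) to show the sequence of candidate lattices stabilizes, similarly to the descent arguments of \cite{GY24}. If this contraction argument fails to give honest finiteness, we would fall back on the bounded-torsion formulation in the statement, allowing the comparisons to hold only modulo $I^\infty$-torsion of bounded exponent.
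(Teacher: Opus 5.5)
Your $M$ is the same intersection $M_\et\cap\prod_{\mathfrak p}M_{\mathfrak p}$ that the paper uses. However, the argument has a genuine gap exactly at the step you call vacuous, and your finiteness step relies on an object you do not have.

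The map $A\to A\langle 1/I\rangle$ is a $p$-completed localization, not a localization, so inverting $I$ does not remove the conditions at the $\mathfrak p$. Flat base change only gives $M[1/I]=M_\et\cap\prod_{\mathfrak p}M_{\mathfrak p}[1/I]$ inside $\prod_{\mathfrak p}M_{\mathfrak p}\langle 1/I\rangle$. This is a non-complete submodule of $M_\et$ and is in general proper (already $A[1/I]\subsetneq A\langle 1/I\rangle$). So $M\langle 1/I\rangle\simeq M_\et$ is a density statement. Let $M_{\et,n}$, $M_{\mathfrak p,n}$ be the mod $p^n$ reductions and $M_n\colonequals M_{\et,n}\cap\prod_{\mathfrak p}M_{\mathfrak p,n}$. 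Then the density statement amounts to the cokernel of $M/pM\hookrightarrow M_1$ being $I^\infty$-torsion. The image of $M/pM$ lies in $\bigcap_n\mathrm{Im}(M_n\to M_1)$, a decreasing intersection of submodules each of which becomes all of $M_1$ after inverting $I$. Nothing in your proposal prevents the $I$-exponents of their cokernels from growing with $n$, in which case $M$ is too small. Similarly, an $A$-lattice $N\subset M_\et$ with $N\langle 1/I\rangle=M_\et$ is essentially what is being constructed, so the sandwich $I^aN\subseteq M\subseteq I^{-b}N$ is circular. The relevant Hartogs-type input is also not $\bigcap A_{\mathfrak p}\cap A[1/p]\cap A[1/I]=A$. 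It is $A/p^n=A/p^n[1/I]\cap\prod_{\mathfrak p}A_{L_\mathfrak p}/p^n$ from \Cref{cor:intersection_prism}. Note that the $\mathfrak p$ are height-two primes of $A$, and $A_{L_\mathfrak p}$ is a two-dimensional complete regular local ring whose reduction mod $I$ is the discrete valuation ring.

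The paper closes both gaps by working at finite level, where $A/p^n[1/I]$ \emph{is} a localization. There lattices exist and the \'etale condition really is vacuous. \Cref{thm:primitive_purity_reduction} shows that $M_n$ is finitely generated with $M_n[1/I]=M_{\et,n}$ and $M_n\otimes_AA_{L_\mathfrak p}=M_{\mathfrak p,n}$; the latter is where your weak-approximation idea belongs, cf.\ \Cref{lem:intersection_is_refl}. Finiteness of $M$ then comes from the injection $M/pM\hookrightarrow M_1$ and $p$-completeness.

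The density is supplied by \Cref{lem:difference_of_two_Frob_mod}. Consider an inclusion of $I$-torsionfree Frobenius modules over $A/p^n$ of height in $[a,b]$ that becomes an isomorphism after inverting $I$. Its cokernel is killed by $I^{r}$ with $r=r(n;a,b)$ independent of the modules. The reason is that pulling back along the faithfully flat $\varphi_A$ multiplies by $p$ the exponent of $I$ killing the cokernel (via $\varphi_A(d^s)\equiv d^{ps}\bmod p^n$), while the height bounds allow only an additive shift by $b-a$. Applied to $\mathrm{Im}(M_n\to M_1)\subseteq M_1$ for all $n$ (with $M=\lim_nM_n$), this gives $I^rM_1\subseteq M/pM$, hence $M\langle 1/I\rangle\simeq M_\et$. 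The $\mathfrak p$-comparison then follows: an injection $M\otimes_AA_{L_\mathfrak p}\hookrightarrow M_{\mathfrak p}$ that is an isomorphism after $\langle 1/I\rangle$.

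So your Frobenius-contraction heuristic is the right idea, but it belongs on the \'etale side and needs the height bounds to make the exponent uniform. What you call a ``fallback'' is in fact the key step.

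Two smaller points. First, the Frobenius structure on the intersection needs $\varphi_A$ to be flat (\Cref{prop:reg_imply_Frob-regular}) so that $\varphi_A^*$ commutes with the finite intersection. Second, \emph{saturated} is not the same as \emph{reflexive} here: reflexivity also requires analytic local freeness, which can fail (\Cref{eg:Frob_mod_over_general_Frob-reg_prism}) and is not asserted.
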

Here $\Mod_\sat^\varphi(A)$ denotes the subcategory of Frobenius modules over $A$ that are finite presented and saturated; cf. \Cref{def:ref_Frob_mod}.
The complete localization $A_{L_\mathfrak{p}}$ is a two-dimensional regular $\delta$-ring with $(A_{L_\mathfrak{p}},IA_{L_\mathfrak{p}})$ a transversal prism, and reduction $\mathcal{O}_{L_\mathfrak{p}}\colonequals A_{L_{\mathfrak{p}}}/IA_{L_\mathfrak{p}}$ is a $p$-adic discrete valuation ring with possibly imperfect residue field (\Cref{prop:localization_injection}).
We also note that the analogous statement holds true for Frobenius modules over the mod $p^n$-reductions, which we prove in \Cref{thm:primitive_purity_reduction}.

The general strategy of proving \Cref{thm:primitive_purity_inf} is similar to that of Tsuji, Moon, and Du--Liu--Moon--Shimizu and is of elementary nature, in the sense that one constructs the Frobenius module over $A$ via the intersections, and proves that it satisfies the expected structural results.
However, unlike in previous work where the reduction $A/I$ is smooth or strictly semi-stable, several major properties of Breuil--Kisin modules fail dramatically.
These include the fully faithfulness of the \'etale realization functor $\Vect^\varphi(A)\to \Vect^\varphi(A[1/I]^\wedge_p)$ (proved by Kisin in \cite[Prop.\ 2.1.12]{Kis06} and Bhatt--Scholze in \cite[Thm.\ 7.2]{BS23}), together with the projectivity of the $p$-inverted localizations which is implicitly pivotal in the purity of local systems.
Indeed, as we illustrate in \Cref{eg:Frob_mod_over_general_Frob-reg_prism}, these pathological behaviors are essentially due to the vast range of possible Frobenius structures on an arbitrary regular ring.
It is in contrast to the case of Breuil--Kisin ring and its higher dimensional analogues in all the prior work, where the Frobenius structure sends a coordinate variable to its $p$-th power and in particular is much simpler.
We however mention that as we prove in \Cref{thm:Frob_mod_is_analyticall_loc_free}, when the regular prism $A$ is special (or precisely is \emph{framed}), the $p$-inverted localizations of a Frobenius module over $A$ are finite projective, provided an appropriate pointwise locally freeness condition holds.
Our proof of this locally freeness, on the other hand, is through a new strategy, which in spirit is closer to our previous work on the Frobenius isogeny of the relative prismatic cohomology in \cite[Thm.\ 8.1, Prop.\ 8.9]{GR24}.

\begin{remark}[Primitivity]
We name \Cref{intro:thm:purity} a \emph{primitive} purity result, with two main reasons:
One is due to the elementary nature of its statement, in comparison to the purity of $p$-adic local systems or of prismatic $F$-crystals.
Another is the observation that the purity results of those more complicated objects are often the consequences of the purity for Frobenius modules.
The latter is illustrated in our proof of \Cref{thm:prismatic_RH_for_crystalline_local_system}.
In addition, in an upcoming joint work with Inoue and Yang \cite{GIY26}, we will apply \Cref{intro:thm:purity} to prove the pointwise criteria and the purity for the log-crystalline local systems that encompass both geometric and arithmetic monodromy.
\end{remark}

Finally, we give another application to the extension problem for $p$-divisible groups.
Let $X_0$ be a finite type regular and flat scheme over $\mathcal{O}_K$, let $Z_0\subset X_0$ be a closed subscheme of codimension $\geq 2$ (in $X_0$) that is supported in the special fiber, and let $G$ be a $p$-divisible group over $U_0\colonequals X_0\backslash Z_0$.
It was known by examples of Raynaud--Ogus--Gabber (cf.  \cite[\S 6]{dJO97}) that $G$ may not extend to a $p$-divisible group over entire $X_0$; in other words, the purity for $p$-divisible groups over $X_0$ could fail.
This phenomenon was later analyzed by Vasiu--Zink \cite{VZ10}, where they studied the extension problem of \emph{all} the $p$-divisible groups over a fixed base.
Yet it remains a question on whether a \emph{given} $p$-divisible group could extend or not.

Using the results in this article, we provide a method to answer the above question.
We let $X$ be the $p$-adic completion of $X_0$, which is a regular and flat $p$-adic formal scheme and is geometrically the tubular neighborhood of the special fiber.
By the assumption of $G$ and \Cref{intro:thm:crys_loc_sys}, the relative Tate module $\mathrm{T}_p(G)$, regarded as a local system over $X_\eta$, is crystalline with respect to $X$.
So by \Cref{intro:thm:RH}, it yields a canonical coherent prismatic $F$-crystal $\mathcal{E}_{\Prism, \mathrm{T}_p(G)}$ over $X$ of Hodge--Tate weight $[0, 1]$ whose \'etale realization is $\mathrm{T}_p(G)$.
Using the prismatic Dieudonne theory (\cite{ALB23}), we then obtain the following linear-algebraic criterion on the extension problem of $G$.
\begin{maincorollary}[Extension of $p$-divisible group]
\label{intro:cor:p-divisible}
In the above setting, the $p$-divisible group $G$ extends to $X_0$ (equivalently, to $X$) if and only if the coherent prismatic $F$-crystal $\mathcal{E}_{\Prism, \mathrm{T}_p(G)}$ is locally free.
\end{maincorollary}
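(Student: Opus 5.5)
The strategy is to reduce both directions of \Cref{intro:cor:p-divisible} to the prismatic Dieudonn\'e theory of Ansch\"utz--Le Bras \cite{ALB23}. Over a quasi-syntomic (in particular regular, $p$-torsionfree) base, \cite{ALB23} gives an equivalence between $p$-divisible groups and admissible prismatic Dieudonn\'e crystals: locally free prismatic $F$-crystals of Hodge--Tate weights in $[0,1]$. This equivalence is compatible with the \'etale realization, which sends $G$ to $\mathrm{T}_p(G)$ (up to the usual duality and twist conventions). We also use that $X_0$ and $X$ have the same $p$-divisible groups over the relevant opens, by de Jong's formal-to-algebraic comparison together with the fact that $Z_0$ lies in the special fiber. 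Hence it suffices to work on the formal scheme $X$ and its open $U=X\backslash Z$.

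For the direction ``extension $\Rightarrow$ locally free'', let $\widetilde G$ be an extension over $X$. Its prismatic Dieudonn\'e crystal $\mathcal{M}(\widetilde G)$ is a locally free prismatic $F$-crystal of weights $[0,1]$ with \'etale realization $\mathrm{T}_p(G)$, so $\mathrm{T}_p(G)$ is crystalline in the sense of \Cref{intro:thm:crys_loc_sys}(a). By \Cref{intro:thm:RH}\ref{intro:thm:RH:crys_loc}, $\mathcal{E}_{\Prism,\mathrm{T}_p(G)}$ is a reflexive coherent $F$-crystal with the same \'etale realization. I would then show $\mathcal{E}_{\Prism,\mathrm{T}_p(G)}\simeq \mathcal{M}(\widetilde G)$, evaluating at special regular prisms. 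Both are saturated Frobenius modules: the first by construction as the canonical object of \Cref{intro:thm:purity}, the second because it is locally free over a regular ring. They agree after inverting $I$ and completing, and at the localizations $A_{L_\mathfrak{p}}$. For the latter, $A_{L_\mathfrak{p}}/I$ is a DVR, where Breuil--Kisin type full faithfulness holds, so both sides recover the unique lattice. The uniqueness in the primitive purity theorem then identifies them, and local freeness follows.

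For the converse, assume $\mathcal{E}=\mathcal{E}_{\Prism,\mathrm{T}_p(G)}$ is locally free. Its weights lie in $[0,1]$, which I would check on $U$: there $\mathcal{E}|_U\simeq\mathcal{M}(G)$ by the argument above, and the weight condition is a condition on the cokernel of $\varphi$ relative to $I$, which is determined in codimension one. This codimension-one determination is where $\operatorname{codim} Z\ge 2$ enters. Admissibility, meaning the Nygaard-type condition on $\varphi^*\mathcal{E}$, should come from the Nygaard filtration of \Cref{intro:thm:RH}\ref{intro:thm:RH:Nygaard}, which promotes $\mathcal{E}$ to an $F$-gauge; alternatively, admissibility is automatic for weights $[0,1]$ over quasi-syntomic bases. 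Then \cite{ALB23} produces a $p$-divisible group $\widetilde G$ over $X$ whose restriction to $U$ has Dieudonn\'e crystal $\mathcal{E}|_U\simeq\mathcal{M}(G)$. Full faithfulness of Dieudonn\'e theory over $U$ gives $\widetilde G|_U\simeq G$, and we algebraize to $X_0$.

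I expect the main obstacle to be the identification $\mathcal{E}_{\Prism,\mathrm{T}_p(G)}|_U\simeq \mathcal{M}(G)$, together with its global counterpart. Since $U$ need not be covered by special regular prisms in a way compatible with $X$, I would first reduce via \Cref{intro:thm:RH}\ref{intro:thm:RH:pullback} for crystalline $T$ to affine opens where both sides are computed by saturated Frobenius modules. I would then invoke the uniqueness part of \Cref{intro:thm:purity}, whose tuple data agree because they agree at the height-one primes in $\mathrm{Ass}(p,I)$.
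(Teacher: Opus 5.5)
Your outline has the same shape as the paper's argument: crystallinity of $\mathrm{T}_p(G)$, reflexivity of $\mathcal{E}_{\Prism,\mathrm{T}_p(G)}$ via \Cref{thm:prismatic_RH_for_crystalline_local_system}, then \cite{ALB23}. The gap is in the step you yourself flag as the main obstacle, namely the identification $\mathcal{E}_{\Prism,\mathrm{T}_p(G)}\simeq\mathcal{M}(\widetilde G)$ and its analogue on $U$. You claim that at $A_{L_\mathfrak p}$ ``Breuil--Kisin type full faithfulness holds, so both sides recover the unique lattice''. The paper warns against exactly this. $A_{L_\mathfrak p}$ is a two-dimensional regular prism whose Frobenius need not have Breuil--Kisin shape (\Cref{rmk:Frob_reg_prism_of_dim_2}), and over such prisms the \'etale realization of Frobenius modules is not full. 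For instance, with the Frobenius of \Cref{eg:Frob_mod_over_general_Frob-reg_prism}, the inclusion $tA\subset A$ is $\varphi$-equivariant and becomes an isomorphism over $A\langle 1/I\rangle$. Yet $tA$ has Frobenius height $p-1$ while $A$ has height $0$, so the two lattices are not isomorphic. This is why the remark after \Cref{thm:primitive_purity_inf} treats full faithfulness at each $A_{L_\mathfrak p}$ as an extra hypothesis. You could try to recover uniqueness for prismatic $F$-crystals over the DVR $\mathcal{O}_{L_\mathfrak p}$ (whose residue field is imperfect), but that needs a separate full-faithfulness theorem you do not invoke. Moreover, \Cref{thm:primitive_purity_inf} has no uniqueness clause; it only gives a faithful functor and an injection $M\otimes_A A_{L_\mathfrak p}\hookrightarrow M_\mathfrak p$. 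You would also need both $\mathcal{E}_{\Prism,\mathrm{T}_p(G)}(A,I)$ and $\mathcal{M}(\widetilde G)(A,I)$ to \emph{equal} the intersection $M\langle 1/I\rangle\cap\prod_{\mathfrak p}M\otimes_A A_{L_\mathfrak p}$. For the locally free one this follows from \Cref{cor:intersection_prism}; for the other only by unwinding the proof of \Cref{thm:prismatic_RH_for_crystalline_local_system}. Finally, the resulting isomorphism would have to be checked against the descent data on the \v{C}ech nerve.

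The detour is unnecessary. \Cref{prop:uniqueness_of_the_associated_prismatic_F_crystal} (packaged as \Cref{cor:etale_realization_is_equiv}) proves directly that $\mathcal{E}_{\Prism,T}\simeq\mathcal{E}$ for \emph{any} reflexive prismatic $F$-crystal $\mathcal{E}$ with $\mathrm{T}_\et(\mathcal{E})\simeq T$. Its proof combines the strong \'etale comparison, which gives $T\otimes\Prism_{(-)_{\overline B}/B}[1/\mu]\simeq\mathcal{E}(B,J)\otimes_B\Prism_{(-)_{\overline B}/B}[1/\mu]$, with \Cref{cor:global_section_rational} and saturatedness. For the forward direction, apply it to the suitably dualized or twisted $\mathcal{M}(\widetilde G)$, which is locally free and hence reflexive. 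On $U$, apply it to $\mathcal{M}(G|_U)$, using \Cref{prop:weak_prismatic_F_crystal:localization} to identify $\mathcal{E}_{\Prism,\mathrm{T}_p(G)}$ with $\mathcal{E}_{\Prism,\mathrm{T}_p(G)|_{U_\eta}}$ on special prisms of $U$.

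There are also two smaller corrections. First, $\operatorname{codim}Z\geq 2$ is not what the height check uses. For locally free $\mathcal{E}$ that check only involves height-one primes of $A$ containing $I$, and these lie over the generic fiber whatever $Z$ is. The hypothesis really serves to put the Shilov points of $X_\eta$ inside $U$, so that $\mathrm{T}_p(G)$ is crystalline by \Cref{cor:various_notion_of_crystallinity} and $\mathcal{E}_{\Prism,\mathrm{T}_p(G)}$ is a coherent crystal at all. Second, do not rely on admissibility in the sense of \cite{ALB23} being automatic over arbitrary quasi-syntomic bases. Over this regular base, derive it instead, for example from the saturated Nygaard filtration of \Cref{rmk:F-gauge}.
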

The latter condition admits a very explicit description: by choosing finitely many framed regular prisms $(A_i,I_i)$ whose reductions $A_i/I_i$ cover $X$ jointly (which always exist by \Cref{thm:regular prism}), it suffices to check that each Frobenius module $\mathcal{E}_{\Prism,\mathrm{T}_p(G)}(A_i,I_i)$ is finite projective.

\begin{remark}
Finally, it worth mentioning that the prismatic site is indispensable in the proof of Ogus's \Cref{conj:Ogus}.
An elementary yet fundamental reason is that for any given regular $p$-adic formal scheme $X$, its prismatic site admits regular, hence noetherian, covering objects (\Cref{thm:regular prism}), which fails very often for the crystalline site of the special fiber.
Relatedly, unlike the special case when $X$ is smooth, we are unaware of any method that can prove the purity result for $p$-adic local systems in the generality of \Cref{intro:thm:crys_loc_sys} without using the prismatic theory.
\end{remark}

\subsection{Convention}
\label{sub:convention}
Unless otherwise mentioned, we will fix a discretely valued $p$-adic field $K$ with the ring of integers $\mathcal{O}_K$ and the perfect residue field $k$.
We assume any prism used in the article is an object in $W(k)_\Prism$.
For a prism $(A,I)$, we use $\overline{A}$ to denote the reduction $A/I$.
We say that a map of prisms $(A_1,I_1)\to (A_2,I_2)$ is completely (faithfully) flat if the map of the underlying rings $A_1\to A_2$ is $(p,I_1)$-completely (faithfully) flat.
We also consider a more general notion called the $\delta$-pair $(A,I)$, which consists of a $\delta$-ring $A$ together with an ideal $I$.
Given a map of rings $A\to B$ such that $A$ underlies a $\delta$-pair $(A,I)$, for simplicity we may use $(B,(I))$ to denote the induced $\delta$-pair $(B,IB)$.

We let $C$ be a fixed complete algebraic closure of $K$, let $\mathcal{O}_C$ be its ring of integers, so that the absolute Galois group $\Gal_K$ of the field $K$ acts continuously on them.
We choose a compatible system $\zeta_{p^n}$ of $p^n$-th roots of unity in $\mathcal{O}_C$ and let $\epsilon=(1,\zeta_p,\ldots)$ be the corresponding element of $\mathcal{O}_C^\flat=\lim_{x\mapsto x^p} \mathcal{O}_C$.
Let $\rAinf = \rAinf (\mathcal{O}_C) \colonequals W(\mathcal{O}_C^\flat)$ and $q \colonequals [\epsilon] \in \rAinf$ be the Teichm\"uller lift.
Then the Galois group $\Gal_K$ acts on $q$ via the cyclotomic character $\chi:\Gal_K\to \mathbb{Z}_p^*$, such that $g(q)=q^{\chi(g)}$ for $g\in \Gal_K$.
We set $\mu \colonequals q - 1$ and $\tilde{\xi} \colonequals [p]_q = \frac{q^p-1}{q-1}$.
So by the above explicit formula, the invertible ideal $\mu\rAinf$ is invariant under the action of $\Gal_K$.

Given a ring $R$ and an ideal $I$, unless otherwise mentioned, we use $R^\wedge_I$ to denote the derived $I$-adic completion of $R$.
If $R$ is complete under the $I$-adic topology, we use $R\langle t_1,\ldots,t_n \rangle$ to denote the complete polynomial ring over $R$, namely the $I$-adic completion of $R[t_1,\ldots,t_n]$.
If $R$ is in addition equipped with a $\delta$-structure, we use $R\{t_1,\ldots,t_n\}$ to denote the complete $\delta$-polynomial ring over $R$  (i.e. the complete free $\delta$-ring of $n$ variables over $R$, in the sense of \cite[Lem.\ 2.11]{BS22}).
We also use $R\{t_1,\ldots,t_n\}^{\pd}$ to denote the $p$-complete divided power polynomial ring over $R$.
To simplify the notation, we sometimes use $R\langle t_i \rangle$ to abbreviate $R\langle t_1,\ldots,t_n \rangle$ when the context of variables is clear, and similarly for the other variants.
In the special case when $(A,I)$ is a bounded prism, we use $A\langle 1/I \rangle$ to denote the $p$-complete localization $(A[1/I])^\wedge_p$.

For a characteristic $p$ point $x$ of a $p$-adic formal scheme, we use $k(x)$ to denote the residue field of $x$.
On the other hand, for a characteristic $0$ point $x_\eta$ of a rigid space, we use $K(x_\eta)$ to denote its characteristic zero complete residue field in the sense of rigid analytic geometry.

\subsection{Acknowledgement}
One of the major insights that make our approach to Ogus's \Cref{conj:Ogus} possible is the pointwise criteria of $p$-adic local systems, which was first discovered jointly with Ziquan Yang in \cite{GY24}. 
I thank Ziquan heartfully for the enjoyable collaborations and look forward to more to come.
The question of constructing the prismatic $F$-crystal directly from the local system was first mentioned to me by H\'el\`ene Esnault, during my visit to the University of Copenhagen in January 2023. I am grateful to H\'el\`ene for the question and encouragement at that time.
I am grateful to Bhargav Bhatt for several discussions on regular prisms and on Galois cohomology, both of which play pivotal roles in this article.
I also thank Hui Gao for the helpful correspondence on Hodge--Tate period ring, Shizhang Li for suggesting the argument on the seperatedness of de Rham cohomology, Alexander Petrov for inspiring discussions in early stage, and Emanuel Reinecke for helpful discussion on the higher direct image.
Further thanks go to Christophe Breuil, Toby Gee, Mark Kisin, Tong Liu, and Peter Scholze for their interest in this work, encouraging communications, and/or helpful suggestions on the manuscript.
Finally, I thank Arthur Ogus for posting \Cref{conj:Ogus} in his article \cite{Ogu84}, which naturally connects with many of my interests and expertises, and motivates the results in the prismatic theory developed in this article.

I thank the University of Chicago and the University of Minnesota for the financial support and the nice working environments in the past three years, during which the project was worked out.

\section{Regular prisms}
\label{sec:prism}
In this section, we analyze the prisms whose underlying rings are regular.
As we will see, such objects cover the prismatic site of a regular $p$-adic formal scheme, and play a crucial role throughout the article.
In addition, we study the localizations and the coproducts of regular prisms in the prismatic site.

We assume the basics of the $\delta$-ring and the prism as in \cite[\S2, \S3]{BS22}.
\subsection{Regular prism and framed regular prism}
\label{sub:regular prism}
In this subsection, we introduce the notion of \emph{regular prism}, together with a better-behaved notion called the \emph{framed regular prism}.
We show that up to \'etale localization, there always exists a framed regular prism that lifts the given regular $p$-adic formal scheme.

We first setup a convention.
\begin{definition}
	\label{def:regular_p-adic_formal_sch}
	We say a $p$-adic formal scheme $X$ over $\mathcal{O}_K$ is \emph{regular} if it is $p$-torsionfree and topologically of finite type over $\mathcal{O}_K$, such that for each affine open subscheme $\Spf(R)\subset X$, the ring $R$ is regular.
\end{definition}
The main construction of this subsection is the following.
\begin{theorem}
	\footnote{We thank Bhargav Bhatt for suggesting to us the construction.}
	\label{thm:regular prism}
	Let $X$ be a regular $p$-adic formal scheme of relative dimension $n-1$ over $\mathcal{O}_K$.
	For each closed point $x\in X$, there is a transversal and orientable prism $(A,I)\in X_\Prism$, such that:
	\begin{enumerate}[label=\upshape{(\alph*)}]
		\item The $p$-adic formal scheme $\Spf(\overline{A})$ is a $p$-completely \'etale neighborhood of $x\in X$.
			\item\label{thm:regular prism framing} We equip the ring $W(k) \langle t_1^{\pm1},\ldots,t_{n}^{\pm1} \rangle$ with the trivial $\delta$-structure on $t_i$. There is a map of $p$-completely \'etale map of $\delta$-rings $W(k) \langle t_1^{\pm1},\ldots,t_{n}^{\pm1} \rangle\to A_0$ together with a principal invertible ideal $I_0\subset A_0$ such that $(A,I)=((A_0)^\wedge_{I_0},I_0(A_0)^\wedge_{I_0})$.
	\end{enumerate} 
\end{theorem}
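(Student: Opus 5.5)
The plan is to build $A_0$ as an étale $\delta$-algebra over a Laurent-polynomial $\delta$-ring that cuts out an étale neighborhood of $x$ as a Cartier divisor, and then to check distinguishedness of the divisor at the point $y$ lying over $x$. The whole argument takes place in an affine open $\Spf(R)\ni x$, and we are free to shrink it Zariski or étale locally.

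Since $X$ is regular, flat and topologically of finite type over $\mathcal{O}_K$, the local ring $R_x$ is regular of dimension $n$. Its residue field $k(x)$ is finite over $k$, hence perfect, and $p\in\mathfrak m_x$.

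\textbf{Step 1: parameters and a map from a Laurent $\delta$-ring.} After a finite étale base change we may replace $W(k)$ by $W(k(x))$, since $W(k(x))$ is étale over $W(k)$ and so still gives a $p$-completely étale map. Choose units $u_1,\dots,u_n\in R$ and $c_i\in k(x)^\times$ such that the elements $u_i-[c_i]$ form a basis of $\mathfrak m_x/\mathfrak m_x^2$. This is possible because $\dim_{k(x)}\mathfrak m_x/\mathfrak m_x^2=n$; if some generator is not a unit, add a Teichmüller constant to it.

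Let $P=W(k)\langle t_1^{\pm1},\dots,t_n^{\pm1}\rangle$ with $\delta(t_i)=0$, and let $P\to R$ send $t_i\mapsto u_i$. Let $y$ be the point of $\Spf(P)$ lying under $x$, with maximal ideal $\mathfrak m_y=(p,t_i-[c_i])$. Then $P_y$ is regular of dimension $n+1$, and the map $\mathfrak m_y/\mathfrak m_y^2\to\mathfrak m_x/\mathfrak m_x^2$ is surjective. Hence $\widehat P_y\to\widehat R_x$ is surjective, so $P\to R$ is unramified at $x$.

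\textbf{Step 2: étale-local closed immersion.} By the local structure theorem for unramified morphisms (in its $p$-complete form, e.g. via Elkik's algebraization), there is a $p$-completely étale $P$-algebra $A_0$ together with a surjection $A_0\to R'$, where $\Spf(R')$ is a $p$-completely étale neighborhood of $x$. Because $A_0$ is étale over $P$, the $\delta$-structure on $P$ extends uniquely to $A_0$.

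The kernel $I_0$ of $A_0\to R'$ is, locally at $x$, the kernel of a surjection of regular local rings of dimensions $n+1$ and $n$. It is therefore generated by a single element $E\notin\mathfrak m^2$, and after Zariski shrinking $I_0=(E)$ is principal and invertible.

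Moreover, since the $u_i-[c_i]$ already span $\mathfrak m_x/\mathfrak m_x^2$, the kernel of $\mathfrak m_y/\mathfrak m_y^2\to\mathfrak m_x/\mathfrak m_x^2$ is spanned by $p-\sum a_i(t_i-[c_i])$ for suitable $a_i$. So we can choose
\[E\equiv p-\sum_i a_i(t_i-[c_i]) \pmod{\mathfrak m_y^2}.\]

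\textbf{Step 3 (main obstacle): distinguishedness.} We must show that $\delta(E)$ is a unit near $x$. I would argue modulo $\mathfrak m_y$, using the following facts.
\begin{itemize}
\item $\delta$ sends $\mathfrak m_y^2$ into $\mathfrak m_y$, by the product rule.
\item $\delta(t_i-[c_i])=\bigl(t_i^p-[c_i]^p-(t_i-[c_i])^p\bigr)/p$ vanishes along $t_i=[c_i]$, so it lies in $(t_i-[c_i])\subset\mathfrak m_y$.
\item The cross terms in the additivity formula for $\delta$ also lie in $\mathfrak m_y$.
\end{itemize}
Together these give $\delta(E)\equiv\delta(p)=1-p^{p-1}\equiv 1\pmod{\mathfrak m_y}$. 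Hence $\delta(E)$ is a unit after localizing, and $(E)$ is distinguished.

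Set $A=(A_0)^\wedge_{I_0}$ and $I=I_0A$. Then $(A,I)$ is an orientable prism with $\overline A=R'^\wedge$, which gives (a); and (b) holds by construction. Transversality follows because $\overline A$ is étale over the $p$-torsionfree ring $R$, so $A/I$ is $p$-torsionfree.

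The delicate points are the choice of parameters that forces $E\equiv p$ modulo the parameter directions, which is exactly what makes $\delta(E)$ a unit, and the care needed with completions in the unramified-to-étale-closed-immersion step.
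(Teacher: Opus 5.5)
Your proof is correct. It shares the paper's skeleton: send the $t_i$ to units whose translates by Teichm\"uller constants span $\mathfrak m_x/\mathfrak m_x^2$, realize an \'etale neighbourhood of $x$ as a principal Cartier divisor $(E)$, and get distinguishedness because $E$ agrees with $p$ up to the parameter directions.

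\emph{Distinguishedness.} The paper works in the complete local ring. It finds $p-g\in\ker\widehat\psi$ with $g\in(t_1-1,\ldots,t_n-1)$, evaluates $\delta(p-g)$ at $t_i=1$, and passes distinguishedness to the generator of the kernel via \cite[Lem.\ 2.24]{BS22}. It then spreads out the condition $p\in I+\varphi(I)A$ by flat base change from the completion. Your direct computation of $\delta(E)$ modulo $\mathfrak m_y$ has the same content.

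\emph{Step 2 is the genuine difference.} The paper stays Zariski-local on $\Spf(W(k)\langle t_1^{\pm1},\ldots,t_n^{\pm1}\rangle)$. It uses Zariski's Main Theorem to make $\psi$ finite over an open neighbourhood of the image point $y$, then kills the cokernel by comparing with $\widehat P_y\to\widehat R_x$. That comparison tacitly needs two things:
\begin{enumerate}
\item $x$ is the only point of the fibre over $y$;
\item $\psi$ is finite over a Zariski neighbourhood of $y$, which in general holds only after an \'etale base change of the target.
\end{enumerate}
With a careless choice of lifts both fail. In relative dimension one, take $X=\Spf(W\langle s,(1+s-s^2)^{-1}\rangle)$ with $t_1\mapsto 1+p$ and $t_2\mapsto 1+s-s^2$. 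Both $s=0$ and $s=1$ lie over $y$, and deleting $s=1$ destroys finiteness over every Zariski neighbourhood of $y$.

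Your use of the \'etale-local structure theorem for unramified morphisms builds in exactly the needed \'etale localization on the target. Statement (b) only asks that $A_0$ be $p$-completely \'etale over $W(k)\langle t_1^{\pm1},\ldots,t_n^{\pm1}\rangle$, so nothing is lost. The only cost is that $\Spf(A_0)$ is no longer an open of the torus.

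Two small simplifications:
\begin{itemize}
\item You do not need Elkik. Apply the structure theorem to $\Spec(R/p)\to\Spec(P/p)$ and lift the two \'etale neighbourhoods uniquely to $p$-completely \'etale algebras $A_0$ over $P$ and $R'$ over $R$. Lift $A_0/p\to R'/p$ to $A_0\to R'$ by formal \'etaleness of $A_0$ over $P$, and get surjectivity from derived Nakayama.
\item $\overline A=A_0/I_0=R'$ is already $p$-complete, so no further completion appears in (a).
\end{itemize}
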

Before we discuss the proof, we mention that \Cref{thm:regular prism} in particular implies that any regular $p$-adic formal scheme over $\mathcal{O}_K$ is a local complete intersection, a folklore statement that we cannot find in literature.
\begin{corollary}
	\label{cor:regular_implies_lci}
	Any regular $p$-adic formal scheme over $\mathcal{O}_K$ is also quasi-syntomic over $W(k)$.
\end{corollary}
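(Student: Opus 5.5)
The plan is to deduce this from \Cref{thm:regular prism} by exhibiting $X$ locally as a quotient of a $p$-completely smooth $W(k)$-algebra by a regular sequence. Quasi-syntomicity over $W(k)$ is local for the $p$-completely étale topology, in the sense of \cite[\S 4]{BMS19}. So it suffices, for each closed point $x\in X$, to check it on the $p$-completely étale neighborhood $\Spf(\overline{A})$ produced by \Cref{thm:regular prism}. Closed points suffice because $X$ is quasi-compact locally, and an open containing all closed points of a $p$-adic formal scheme is everything.

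Next I use the description $(A,I)=((A_0)^\wedge_{I_0}, I_0(A_0)^\wedge_{I_0})$ with $I_0=(d)$ principal and $A_0$ $p$-completely étale over $W(k)\langle t_1^{\pm1},\ldots,t_n^{\pm1}\rangle$. Then $A_0$ is $p$-completely smooth over $W(k)$. Since $(A,I)$ is a prism, it is bounded and $d$ is a nonzerodivisor. Moreover $\overline{A}=A/dA = A_0/dA_0$, because completion along $d$ does not change the quotient by $d$.

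I then need $d$ to be a nonzerodivisor in $A_0$ (at least $p$-completely), which gives $\overline{A}= A_0/(d)$ with $A_0$ $p$-completely smooth. The cotangent complex $L_{\overline{A}/W(k)}$ then sits in the transitivity triangle with $L_{A_0/W(k)}\otimes \overline{A}$, which after $p$-completion is a finite projective module in degree $0$, and $L_{\overline{A}/A_0}\simeq (d)/(d^2)[1]$, which is projective in degree $-1$. Hence $L_{\overline{A}/W(k)}\otimes^L_{\overline{A}}\overline{A}/p$ has Tor amplitude in $[-1,0]$. Since $\overline{A}$ is $p$-torsionfree by transversality, hence $p$-completely flat over $W(k)$, $\overline{A}$ is quasi-syntomic over $W(k)$.

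The main obstacle is the regularity of $d$ on $A_0$ itself, as opposed to on its $d$-completion. To handle it I would replace $A_0$ by its localization (or Zariski neighborhood) at the maximal ideal over $x$. $A_0$ is noetherian and regular, being étale over a regular ring after $p$-completion. Also $d\in\mathfrak m$, since $p,d$ lie in the maximal ideal. So $d$ is a nonzerodivisor locally near $x$ as long as $d\neq 0$ there, and this holds because $A_0/d$ has the correct dimension (it is the regular $\overline{A}$ of relative dimension $n-1$). Alternatively, I could argue directly with the flat map $A_0\to A$: it is faithfully flat near $\mathfrak m$ as a completion of a noetherian ring, and $d$ is regular on $A$, so $d$ is regular on $A_0$ locally. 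Either route finishes the local statement, and gluing over the étale neighborhoods gives the corollary.
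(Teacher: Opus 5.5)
Your proposal is correct and follows the route the paper intends: \Cref{thm:regular prism} exhibits $X$, $p$-completely \'etale locally, as $\Spf(A_0/I_0)$ with $A_0$ $p$-completely smooth over $W(k)$ and $I_0$ generated by a single nonzerodivisor, so $X$ is a local complete intersection over $W(k)$ and its cotangent complex has $p$-complete Tor amplitude in $[-1,0]$. The step you flag as the main obstacle is already supplied by the theorem: $I_0$ is a principal \emph{invertible} ideal, and the proof explicitly shrinks $\Spf(A_0)$ until $I_0$ is generated by a regular element. Your localization argument for this point is nevertheless also valid.
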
	
In the proof below, by \emph{shrinking} a $p$-adic formal scheme $X$ (around a point $x\in X$), we mean replacing $X$ by an open dense $p$-adic formal scheme of $X$ (that contains $x\in X$).
\begin{proof}[Proof of \Cref{thm:regular prism}]
	We may assume $X=\Spf(R)$ is affine and the point $x$ corresponds to a maximal ideal $m\subset R$.
	By assumption, since the ring $R$ is regular, the cotangent space $m/m^2$ is a $k(x)$-vector space of dimension $n$, where $k(x)$ is the residue field of $X$ at $x$ and is in particular a finite extension of $k$.
	So by taking the finite \'etale base change along $W(k)\to W(k(x))$ if necessary, we may assume $k(x)=k$ and the map $W(k)\to R \to k(x)$ induces an isomorphism on residue fields.
	By lifting elements from $m/m^2$ to $R$, we can then produce a map of $p$-complete $W=W(k)$-algebras
	\[
	\psi:W\langle t_1^{\pm1},\ldots,t_n^{\pm1} \rangle \longrightarrow R,
	\]
	where the image of $\{t_1-1,\ldots,t_n-1\}$ forms a $k$-basis of $m/m^2$.
	Moreover, by taking the base change along the complete localization map $W\langle t_1^{\pm1}, \ldots,t_n^{\pm1} \rangle \to W\llbracket t_1-1,\ldots,t_n-1 \rrbracket$ and by Cohen's structure theorem \cite[\href{https://stacks.math.columbia.edu/tag/00NO}{Tag 00NO}]{stacks-project}, the above induces a surjection 
	\[
	\wh{\psi}: W\llbracket t_1-1,\ldots,t_n-1 \rrbracket \longrightarrow R^\wedge_m,
	\]
	such that the ideal $\ker(\wh{\psi})$ is principally generated by a regular element $f\in(p, t_1-1,\ldots,t_n-1)\subset W\llbracket t_1-1,\ldots,t_n-1 \rrbracket$ (\cite[\href{https://stacks.math.columbia.edu/tag/00NR}{Tag 00NR}]{stacks-project}, \cite[\href{https://stacks.math.columbia.edu/tag/00NQ}{Tag 00NQ}]{stacks-project}).
	Here we note that the ring $W\llbracket t_1-1,\ldots,t_n-1 \rrbracket$ naturally admits a $\delta$-structure, defined by $\delta(t_i)=0$ (or equivalently, its Frobenius endomorphism $\varphi_{W\llbracket t_1-1,\ldots,t_n-1 \rrbracket}$ sends $t_i$ onto $t_i^p$ and extends Witt vector Frobenius of $W$).
	\begin{claim}
		\label{claim:W[[t]] is a prism}
		The $\delta$-pair $(W\llbracket t_1-1,\ldots,t_n-1 \rrbracket, \ker(\wh{\psi}))$ is a prism in $X_\Prism$.
	\end{claim}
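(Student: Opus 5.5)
To show that $(B,J)\colonequals(W\llbracket t_1-1,\ldots,t_n-1 \rrbracket, (f))$ is a prism, I will check three things in turn: $B$ is a $\delta$-ring that is $(p,J)$-complete, $J$ is an invertible ideal, and $p\in (J,\varphi(J))$. The membership in $X_\Prism$ is then read off from the structure map $\Spf(B/J)=\Spf(R^\wedge_m)\to\Spf(R)=X$.

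The first two are quick. $B$ is a complete regular local ring with maximal ideal $(p,t_i-1)$, and $f$ lies in that ideal, so $B$ is $(p,f)$-adically complete. Since $B$ is a domain and $f\neq 0$ (recall $\ker\wh\psi$ is principal on a regular element), the ideal $J=(f)$ is invertible. The $\delta$-structure with $\delta(t_i)=0$ extends from $W\langle t_i^{\pm1}\rangle$ to the completion at $(p,t_i-1)$, because Frobenius preserves this ideal modulo $p$.

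The real content is the distinguishedness of $f$, which by \cite[Lem.\ 2.25]{BS22} means $\delta(f)\in B^\times$. Since $B$ is local, this amounts to $\delta(f)\notin \mathfrak m_B=(p,t_1-1,\ldots,t_n-1)$. I will extract it from the two structural facts about $R$.
\begin{enumerate}[label=\upshape{(\arabic*)}]
\item $R$ is $p$-torsionfree, so $p\notin(f)$. Moreover $R^\wedge_m$ is regular of dimension $n$ with cotangent space already spanned by the images of $t_i-1$, which forces $f\in\mathfrak m_B^2$, since otherwise the quotient would have embedding dimension $n$ but dimension $n$ as well, contradicting $\dim B/f=n$.
\item Hence $p\in \mathfrak m_{R^\wedge_m}=(t_i-1)$ in $R^\wedge_m$, so we can write $p=\sum a_i(t_i-1)+gf$ in $B$.
\end{enumerate}

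I expect the main obstacle to be turning (2) into control of $f$ modulo $\mathfrak m_B^2$ together with $p$. The plan is to work in $\mathfrak m_B/\mathfrak m_B^2$, which has basis $p,t_i-1$. Since $p\notin \mathfrak m_B^2+(t_i-1)$, the relation in (2) forces $g$ to be a unit, so $f\equiv u\,p \pmod{(t_i-1)+\mathfrak m_B^2}$ with $u$ a unit. This contradicts $f\in\mathfrak m_B^2$ from (1), so the correct conclusion is that $f\notin\mathfrak m_B^2$ and $f= up+h$ with $h\in (t_1-1,\ldots,t_n-1)+\mathfrak m_B^2$. The reasoning in (1) should therefore be discarded: the right count is $\dim_k \mathfrak m_B/\mathfrak m_B^2=n+1$, the quotient has dimension $n$, and $f$ kills exactly one direction, namely $p$.

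Next I compute $\delta(f)$ modulo $\mathfrak m_B$, using $\delta(x+y)=\delta(x)+\delta(y)-\sum_{0<i<p}\frac{1}{p}\binom pi x^iy^{p-i}$.
\begin{itemize}
\item $\delta(up)=u^p\delta(p)+p^p\delta(u)+p\,\delta(u)\delta(p)\cdot p\equiv u^p(1-p^{p-1})$, which is a unit modulo $\mathfrak m_B$.
\item For $h\in\mathfrak m_B^2$ we have $\delta(h)\in\mathfrak m_B$: indeed $\delta(xy)=x^p\delta(y)+y^p\delta(x)+p\delta(x)\delta(y)$ and $\delta$ of a sum differs by multiples of elements of $\mathfrak m_B$.
\item For $h$ in the ideal $(t_i-1)$ we also have $\delta(h)\in\mathfrak m_B$, using $\delta(t_i-1)=\frac{t_i^p-1-(t_i-1)^p}{p}$-type identities, with $\delta(t_i-1)\equiv 0$ modulo $(t_i-1)$ after expanding, and the product formula.
\item The cross terms lie in $\mathfrak m_B$.
\end{itemize}
So $\delta(f)\equiv u^p\not\equiv 0$, $f$ is distinguished, and $(B,J)$ is a (transversal) prism over $X$.
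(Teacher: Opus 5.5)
Your final argument is correct, but the write-up contains a false step. The inference in (1) that $f\in\mathfrak m_B^2$ is wrong, as you yourself later notice, and should be deleted. Step (2) only needs the part of (1) that is true: $p\in\mathfrak m_{R^\wedge_m}$, and by Nakayama this ideal is generated by the images of the $t_i-1$.

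With (1) removed, your route and the paper's share the same core. Both write $p=\sum a_i(t_i-1)+gf$ in $B$, so that $f$ divides $p$ minus an element of $(t_1-1,\ldots,t_n-1)$. They differ in how distinguishedness reaches $f$.

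\begin{itemize}
\item The paper shows that $p-\sum a_i(t_i-1)$ is itself distinguished. It pushes $\delta$ through the $\delta$-equivariant specialization $B\to W$, $t_i\mapsto 1$, which kills $(t_1-1,\ldots,t_n-1)$; the image of $\delta(p-\sum a_i(t_i-1))$ is therefore $\delta(p)=1-p^{p-1}$, a unit. It then transfers distinguishedness to the factor $f\in\mathrm{rad}(B)$ via \cite[Lem.\ 2.24]{BS22}, which automatically makes the cofactor $g$ a unit.
\item You instead prove $g\in B^\times$ by hand, using the linear independence of $p,t_1-1,\ldots,t_n-1$ in $\mathfrak m_B/\mathfrak m_B^2$. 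This uses that these form a regular system of parameters of $B$, an input the paper's route does not need. You then compute $\delta(f)$ modulo $\mathfrak m_B$ directly, which amounts to reproving the needed case of that lemma.
\end{itemize}

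Some clean-ups:
\begin{itemize}
\item Your relation gives $f=g^{-1}p+h$ with $h\in(t_1-1,\ldots,t_n-1)$ exactly, not merely modulo $\mathfrak m_B^2$. So the bullet about $\delta(\mathfrak m_B^2)\subset\mathfrak m_B$ is unnecessary.
\item The bullet for $h\in(t_1-1,\ldots,t_n-1)$ is best justified by the same specialization map. Its kernel is that ideal and it commutes with $\delta$ (check on $\varphi$, since both rings are $p$-torsionfree), so $\delta$ preserves the ideal.
\item Your formula for $\delta(up)$ has a stray factor of $p$ in the last term. This does not affect the conclusion $\delta(f)\equiv u^p(1-p^{p-1})\not\equiv 0 \pmod{\mathfrak m_B}$.
\end{itemize}
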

\begin{proof}[Proof of \Cref{claim:W[[t]] is a prism}]
	Since any pseudo-uniformizer of the ring $R$ is contained in the maximal ideal $m\subset R$, the choice of $t_i$ implies that there is an element $g\in (t_1-1,\ldots,t_n-1)\subset W\llbracket t_1-1,\ldots,t_n-1 \rrbracket$ such that $\wh{\psi}(g)=p$.
	This shows that the element $p-g\in \ker(\wh{\psi})$.
	Note that since $g\in (t_1-1,\ldots,t_n-1)$, the element $\delta(p-g)=\frac{(p-\varphi(g))-(p-g)^p}{p}$, regarded as a function in $t_1,\ldots,t_n$, satisfies the condition that its evaluation at $(t_1=1,\ldots,t_n=1)$ is the number $(1-p^{p-1})$.
	In particular, the element $\delta(p-g)$ is a unit in $W\llbracket t_1-1\,\ldots,t_n-1 \rrbracket$, and $p-g\in \ker(\wh{\psi})$ is a distinguished element (in the sense of \cite[Def.\ 2.19]{BS22}) in the ring $W\llbracket t_1-1,\ldots,t_n-1 \rrbracket$.
	As a consequence, since the generator $f$ is contained in $\mathrm{rad}(W\llbracket t_1-1,\ldots,t_n-1 \rrbracket)$, by \cite[Lem.\ 2.24]{BS22} the element $f$ is a distinguished element as well, and the pair $(W\llbracket t_1-1,\ldots,t_n-1 \rrbracket, (f)= \ker(\wh{\psi}))$ is a prism in $X_\Prism$.
\end{proof}

	We then claim that $\psi$ is a surjection after restricted onto a $p$-complete open neighborhood of $x\in X$.
	To see this, by applying Zariski's Main Theorem (\cite[\href{https://stacks.math.columbia.edu/tag/05K0}{Tag 05K0}]{stacks-project}) at the mod $p$ reduction of the map $\psi$, there is an open neighborhood of the point $(t_1-1,\ldots,t_n-1)$, which is a $p$-adic formal scheme $\Spf(S)\subseteq  \Spf(W\langle t_1^{\pm1},\ldots,t_n^{\pm1} \rangle)$, such that the restriction $\psi|_{\Spf(S)}$ is a finite morphism.
	We let $C$ be the cokernel of the map $\psi|_{\Spf(S)}\colon S\to (S\otimes_{W\langle t_1^{\pm1},\ldots,t_n^{\pm1} \rangle} R)^\wedge_p$.
	Then by the finiteness of $\psi|_{\Spf(S)}$, we see $C$ is a finite (and thus $p$-complete) $S$-module.
	Notice that by the first paragraph, the flat base change $C\otimes_S W\llbracket t_1-1,\ldots,t_n-1 \rrbracket$ vanishes.
	As a consequence, since $W\llbracket t_1-1,\ldots,t_n-1 \rrbracket$ is faithfully flat over the localization $S_{(t_1-1,\ldots,t_n-1)}$, we see there is a Zariski open neighborhood $V$ of the point $(t_1-1,\ldots,t_n-1)$ in $\Spec(S)$ such that $C|_V=0$ and the map $\psi|_V$ is a surjection.
	By taking the $p$-completion of the scheme $V$, we get a $p$-complete open neighborhood $\Spf(A_0)$ of $(t_1-1,\ldots,t_n-1)$ inside $\Spf(W\langle t_1^{\pm1},\ldots,t_n^{\pm1} \rangle)$ such that the restriction $\psi|_{\Spf(A_0)}:A_0\to (R\otimes_{W\langle t_1^{\pm1},\ldots,t_n^{\pm1} \rangle} A_0)^\wedge_p$ is a surjection.
	Moreover, its complete localization at $(t_1-1,\ldots,t_n-1)$ is a regular surjection of codimension one onto the complete local ring of $R^\wedge_m$.

	Next we show that by shrinking $\Spf(A_0)$ around the point $(t_1-1,\ldots,t_n-1)$ in $\Spf(W\langle t_1^{\pm1},\ldots,t_n^{\pm1} \rangle)$ if necessary,
	the kernel ideal $I_0\colonequals \ker(\psi|_{\Spf(A_0)})$ is an invertible sheaf generated by a regular element.
	For this, we first note that up to shrinking $A_0$ by a $p$-complete open neighborhood of $(t_1-1,\ldots,t_n-1)$ if necessary, the ideal $I_0$ is invertible: this is because its base change along the complete localization map $A_0\to W\llbracket t_1-1,\ldots,t_n-1 \rrbracket$ is generated by one regular element and is in particular invertible, so by the similar approximation mentioned in the first paragraph above,	\footnote{More concretely, both the kernel and the cokernel of the map $I_0\otimes \Hom_{A_0}(I_0,A_0)\to A_0$ vanish after the complete localization, so they vanish after shrinking around the point $(t_1-1,\ldots,t_n-1)$.}
	there is a $p$-complete open neighborhood of  $(t_1-1,\ldots,t_n-1)$ in $\Spf(A_0)$ such that $I_0$ is invertible.
	As the ideal $I_0$ is Zariski locally principal, we may further assume the invertible sheaf $I_0$ is generated by one element, which we assume for now.
	
	Now, by inheriting the $\delta$-structure along the $p$-completely \'etale map from $W\langle t_1^{\pm1},\ldots,t_n^{\pm1} \rangle$ (\cite[Lem.\ 2.18]{BS22}), we obtain an orientable $\delta$-pair $(A_0,I_0)$ together with a $p$-completely \'etale map of $\delta$-rings $W\langle t_1^{\pm1},\ldots,t_n^{\pm1} \rangle\to A_0$.
	We let  $(A,I\colonequals I_0A)$ be the $\delta$-pair defined by the $I_0$-adic completion of $(A_0,I_0)$.
	Since the completion preserves the regularity, we know the ring $A$ is regular.
	So to show that $(A,I)$ is in fact a prism in $X_\Prism$, by \cite[Lem.\ 3.1, Def.\ 3.2]{BS22}, it suffices to show that the ideal $p\cdot A/(I,\varphi(I))$ vanishes up to shrinking around the point $(t_1-1,\ldots,t_n-1)$.
	This again follows from the observation that its flat base change along the complete localization $A\to W\llbracket t_1-1,\ldots,t_n-1 \rrbracket$ vanishes by \Cref{claim:W[[t]] is a prism}.	
\end{proof}
Argued verbatically as the proof of \Cref{thm:regular prism}, we obtain the following analogue of framed regular prism over a complete regular local ring of mixed characteristic.
\begin{corollary}[Regular prism of complete regular local ring]
Let $R$ be a $p$-torsionfree complete regular local ring with residue field $k$.
There is a prism $(W\llbracket t_1-1,\ldots,t_n-1 \rrbracket, (f))$ with $\delta(t_i)=0$, such that $\overline{A}$ is a finite \'etale extension of $R$.
\end{corollary}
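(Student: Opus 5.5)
We run the argument of \Cref{thm:regular prism} with the algebraization steps removed: in the complete local setting there is nothing to shrink, and Cohen's structure theorem applies directly.

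Let $m\subset R$ be the maximal ideal. Since $R$ is regular of dimension $n$, the cotangent space $m/m^2$ is an $n$-dimensional $k$-vector space. Because $k$ is perfect and $R$ is complete and $p$-torsionfree, the Teichm\"uller/Cohen structure theory gives a canonical map $W=W(k)\to R$ lifting the identity on residue fields. Choose elements $x_1,\ldots,x_n\in m$ whose images form a $k$-basis of $m/m^2$, and send $t_i\mapsto 1+x_i$. This defines a continuous map
\[
\wh{\psi}\colon W\llbracket t_1-1,\ldots,t_n-1\rrbracket\longrightarrow R.
\]
It is surjective by the complete Nakayama lemma, since it is surjective on cotangent spaces and induces an isomorphism on residue fields. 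Both rings are regular, of dimensions $n+1$ and $n$, so by \cite[\href{https://stacks.math.columbia.edu/tag/00NR}{Tag 00NR}]{stacks-project} and \cite[\href{https://stacks.math.columbia.edu/tag/00NQ}{Tag 00NQ}]{stacks-project} the kernel is generated by a single regular element $f\in (p,t_1-1,\ldots,t_n-1)$. Thus $\overline{A}=A/(f)\simeq R$, which is in particular trivially finite \'etale over $R$. In the general version one first passes from $k(x)$ to $k$ via the finite \'etale extension $W(k)\to W(k(x))$, and this is exactly where the phrase ``finite \'etale extension'' in the statement comes from.

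Next, equip $A=W\llbracket t_1-1,\ldots,t_n-1\rrbracket$ with the $\delta$-structure determined by $\delta(t_i)=0$ and the Witt vector Frobenius on $W$. Then $A$ is $(p,f)$-adically complete, because $f$ lies in the maximal ideal and $A$ is complete local. It remains to check that $f$ is distinguished, and we do this exactly as in \Cref{claim:W[[t]] is a prism}. Write $\wh{\psi}(g)=p$ for some $g\in (t_1-1,\ldots,t_n-1)$. This is possible because $p\in m$, and every element of $m$ is the image of a power series in the $x_i$ with $W$-coefficients reducing into $m$. Then $p-g\in(f)$, and evaluating $\delta(p-g)$ at $t_i=1$ gives $1-p^{p-1}$, a unit, so $p-g$ is distinguished. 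Since $f$ lies in the radical of $A$ and divides a distinguished element, \cite[Lem.\ 2.24]{BS22} shows that $f$ is distinguished.

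The step needing the most care is producing $g$, that is, showing $p\in \wh{\psi}\bigl((t_i-1)\bigr)$ rather than merely $p\in \wh{\psi}(m_A)$. Surjectivity gives some $h\in m_A$ with $\wh{\psi}(h)=p$. Write $h=p\,u+g'$ with $u\in W$ and $g'\in(t_i-1)$. If $p\notin m^2$, we may simply take $p$ to be part of the regular system of parameters and adjust the choice of the $x_i$ accordingly. Otherwise, when $u$ is a unit, solving $p(1-u)=\wh{\psi}(g')$ and iterating $m$-adically yields the desired $g$. The case $u$ a unit with $u\equiv 1$ is precisely the case where $f=p-g$ can be taken directly. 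I expect this bookkeeping to be the main thing to verify, though in the end it is the same argument as in the proof of \Cref{claim:W[[t]] is a prism}.
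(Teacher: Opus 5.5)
Your route is the paper's. The statement is proved ``verbatim'' as \Cref{thm:regular prism} and \Cref{claim:W[[t]] is a prism}: the Cohen surjection $\wh{\psi}\colon W\llbracket t_1-1,\ldots,t_n-1\rrbracket\to R$ with principal kernel $(f)$, the $\delta$-structure with $\delta(t_i)=0$, distinguishedness of $p-g$ by reducing along $t_i\mapsto 1$, and then \cite[Lem.\ 2.24]{BS22}. All of that is fine. The one step not established as written is the one you flag yourself: the existence of $g\in(t_1-1,\ldots,t_n-1)$ with $\wh{\psi}(g)=p$.

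Your first justification only shows $p\in\wh{\psi}(m_A)$, where $m_A=(p,t_1-1,\ldots,t_n-1)$. That is weaker than the claim.

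The case analysis does not close the gap. From $h=pu+g'$ you get $p(1-u)=\wh{\psi}(g')$.
\begin{itemize}
\item If $u\not\equiv 1 \pmod p$, then $1-u\in W^\times$ and $g=(1-u)^{-1}g'$ works at once. Whether $u$ itself is a unit is irrelevant.
\item If $u\equiv 1\pmod p$, write $1-u=pv$. Then you only learn $p^2v\in\wh{\psi}\bigl((t_i-1)\bigr)$, and for $u=1$ you learn nothing.
\end{itemize}
``Iterating $m$-adically'' is not an argument, and your last sentence presupposes the $g$ you are trying to construct.

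None of this is needed. The elements $x_i=\wh{\psi}(t_i-1)$ span $m/m^2$, so Nakayama gives $m=(x_1,\ldots,x_n)R$. Since $\wh{\psi}$ is surjective, $\wh{\psi}\bigl((t_1-1,\ldots,t_n-1)A\bigr)=(x_1,\ldots,x_n)R=m\ni p$. Write $p=\sum_i r_ix_i$, lift each $r_i$ to some $s_i\in A$, and take $g=\sum_i s_i(t_i-1)$. This is exactly what the paper means by ``any pseudo-uniformizer of $R$ lies in $m$, so the choice of $t_i$ gives $g$.''

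Replace your last paragraph with this argument. That also removes the re-choice of the $x_i$ when $p\notin m^2$, which is unnecessary. With that change the proof is complete.
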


We give some names for the prisms whose underlying rings are regular.
\begin{definition}
	\label{def:regular_prism} 
	Let $X$ be a $p$-adic formal scheme over $\mathcal{O}_K$, and let $(A,I)$ be a bounded prism in $X_\Prism$.
	\begin{enumerate}[label=\upshape{(\roman*)}]
		\item We say $(A,I)$ is a \emph{regular prism} if $A$ is a regular ring.
		\item We say $(A,I)$ is a \emph{framed regular prism} if there is a map of $\delta$-pairs $(A_0,I_0)\to (A,I)$ together with a finite set of elements $\Sigma\subset A$, such that
		\begin{itemize}
			\item $I_0$ is a principal invertible ideal, with $\Spf(A_0/I_0)$ being $p$-completely \'etale over $X$.
			\item $A_0$ is $p$-completely \'etale over $W(k)\langle t_a|~a\in \Sigma \rangle$ and is equipped with a $\delta$-structure that is compatible with $W(k)$ with $\delta(t_a)=0$.
			\item The prism $(A,I)$ is the $I_0$-completion of $(A_0,I_0)$.
		\end{itemize}  
		Under the assumption, we call the data $((A_0,I_0),\Sigma)$ a \emph{framing} of $(A,I)$.
	\end{enumerate}
\end{definition}
By definition, we know the prism $(A,I)$ in \Cref{thm:regular prism} is a framed regular prism and admits a framing from $(A_0,I_0)$ with $\Sigma\subset A^\times$.

Here we notice that the regularity of a prism puts a very strong constrain on its Frobenius structure.
\begin{proposition}[Kunz's theorem for prisms]\footnote{The statement was proved independently in a recent work of Ishizuka--Nakazato \cite[Prop.\ 3.4]{IN24}, and we refer the reader to \cite[\S 3]{IN24} for related discussions.}
	\label{prop:reg_imply_Frob-regular}
	Let $(A,I)$ be a transversal prism such that $A$ is noetherian.
	The ring $A$ is regular if and only if the map $\varphi_A:A\to A$ is flat.
\end{proposition}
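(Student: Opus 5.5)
The plan is to reduce to Kunz's classical theorem in characteristic $p$, applied to $A/p$, and to pass between $A$ and $A/p$ using completeness together with the local flatness criterion.

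Since $(A,I)$ is a bounded prism with $A$ noetherian, $A$ is $(p,I)$-adically complete. Hence $(p,I)$ lies in the Jacobson radical, so every maximal ideal of $A$ contains $p$. Transversality says $A/I$ is $p$-torsionfree. Together with $I$ being locally generated by a distinguished element $d$, this makes $(d,p)$ (equivalently $(p,d)$) a regular sequence locally. In particular $A$ is $p$-torsionfree: if $px=0$, then $x \in \bigcap_n d^nA$ by transversality applied repeatedly, so $x=0$ by Krull. Consequently, for any maximal ideal $\mathfrak m$, the ring $A_\mathfrak m$ is regular if and only if $A_\mathfrak m/p$ is regular, because $p$ is a nonzerodivisor lying in $\mathfrak m$. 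Since every maximal ideal contains $p$, and regularity can be checked at maximal ideals, $A$ is regular if and only if the $\mathbb F_p$-algebra $A/p$ is regular.

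On the Frobenius side, $\varphi_A$ reduces mod $p$ to the absolute Frobenius $F$ of $A/p$. Since $\varphi_A(p)=p$, we have $\varphi_A^*(A/p)=A/p$ as a base change. Apply the local flatness criterion (Stacks 00MK / the $I$-adic flatness criterion) to the noetherian ring $A$, the ideal $(p)$, and the module $A$ viewed via $\varphi_A$. This module is $p$-adically ideal-separated, since it is finite over itself and $A$ is $p$-adically separated by Krull after localizing at maximal ideals containing $p$. It is also $p$-torsionfree, which gives $\mathrm{Tor}_1^A(A/p, {}_{\varphi}A)=0$. The criterion then shows that $\varphi_A$ is flat if and only if $F\colon A/p\to A/p$ is flat. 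Some care is needed because the criterion is stated for local maps. I would work locally: for $\mathfrak m\ni p$, note that $\varphi_A^{-1}(\mathfrak m)$ is again a maximal ideal containing $p$, since $\varphi$ is Frobenius mod $p$ and so acts as a homeomorphism on $\Spec(A/p)$. Then apply the criterion to the local map $A_{\varphi^{-1}\mathfrak m}\to A_{\mathfrak m}$. Flatness at primes not containing $p$ follows because flatness of a module over a noetherian ring is checked at maximal ideals of the target.

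Finally, Kunz's theorem for the noetherian $\mathbb F_p$-algebra $A/p$ says that $A/p$ is regular if and only if $F$ is flat. Chaining the three equivalences gives the proposition. I expect the main obstacle to be the bookkeeping in the local flatness step. Specifically, one must make sure the $\mathrm{Tor}_1$-vanishing uses $p$-torsionfreeness of the target module, i.e.\ of $A$ itself. One must also check that the passage from flatness of $A_{\varphi^{-1}\mathfrak m}\to A_\mathfrak m$ to global flatness is legitimate, which it is, since $A$ is a finite-type-over-nothing noetherian module over itself and flatness is local on the target.
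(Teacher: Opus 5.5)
There is a genuine gap in your first step, in the direction ``$A$ regular $\Rightarrow$ $\varphi_A$ flat''. You claim that $A_\mathfrak m$ is regular if and only if $A_\mathfrak m/p$ is regular ``because $p$ is a nonzerodivisor lying in $\mathfrak m$''. That justification only gives one direction. If $x$ is a nonzerodivisor in the maximal ideal of a noetherian local ring $R$ and $R/x$ is regular, then $R$ is regular \cite[Tag 00NU]{stacks-project}. But if $R$ is regular, then $R/x$ is regular if and only if $x\notin\mathfrak m_R^2$ \cite[Tag 00NQ]{stacks-project}. For instance, $R=\mathbb{Z}_p\llbracket x,y\rrbracket/(p-xy)$ is a $p$-torsionfree two-dimensional regular local ring, yet $R/p=\mathbb{F}_p\llbracket x,y\rrbracket/(xy)$ is not regular. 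Your argument never uses the $\delta$-structure, and without it the implication ``$A$ regular $\Rightarrow$ $A/p$ regular'' is false for general $p$-torsionfree noetherian rings. So Kunz cannot yet be applied in that direction.

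The missing idea is the key step of the paper's proof: the Frobenius lift forces $p\notin\mathfrak m^2A_\mathfrak m$. The identities $\delta(ab)=a^p\delta(b)+b^p\delta(a)+p\delta(a)\delta(b)$ and $\delta(a+b)=\delta(a)+\delta(b)-\sum_{0<i<p}\frac{1}{p}\binom{p}{i}a^ib^{p-i}$, together with $p\in\mathfrak m$, show that $\delta$ maps $\mathfrak m^2$ into $\mathfrak m$. The paper applies this in $A^\wedge_\mathfrak m$, which carries a compatible $\delta$-structure by \cite[Lem.\ 2.18]{BS22}. You can also argue directly in $A$: if $sp\in\mathfrak m^2$ with $s\notin\mathfrak m$, then $\delta(sp)\equiv s^p\delta(p)\bmod\mathfrak m$. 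In either form, $p\in\mathfrak m^2$ would put $\delta(p)$ (or $s^p\delta(p)$) in $\mathfrak m$, contradicting that $\delta(p)=1-p^{p-1}$ is a unit.

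Once this is inserted, the rest of your proof works. The converse direction coincides with the paper's: base change to $A/p$, then Kunz, then Tag 00NU. Your transfer of flatness between $\varphi_A$ and Frobenius on $A/p$ differs from the paper's route, which uses \cite[Tag 0912]{stacks-project} and \cite[Rmk.\ 4.2]{BMS19}. You instead apply the local flatness criterion to $A_{\varphi^{-1}\mathfrak m}\to A_\mathfrak m$, where in fact $\varphi^{-1}(\mathfrak m)=\mathfrak m$. This is a valid substitute, since the criterion only needs the module to be finite over the target local ring.
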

\begin{proof}
	We first assume $A$ is regular.
	The assumptions implies that the ring $A$ is noetherian, $p$-torsionfree, and classically $p$-complete.
	So by \cite[\href{https://stacks.math.columbia.edu/tag/0912}{Tag 0912}]{stacks-project}, it is enough to prove that the mod $p^n$ reduction of the map $\varphi_A$ is faithfully flat.
	Moreover, as observed in \cite[Rmk.\ 4.2]{BMS19}, it suffices to consider the case for $n=1$.
	Note that the mod $p$ reduction of $\varphi_A$ is the Frobenius endomorphism of the noetherian $\mathbb{F}_p$-algebra $A/pA$.
	So by Kunz' theorem (\cite[\href{https://stacks.math.columbia.edu/tag/0EC0}{Tag 0EC0}]{stacks-project}), it remains to show that $A/pA$ is a regular ring.
	
	As the regularity is a local property and can be checked at maximal ideals (\cite[\href{https://stacks.math.columbia.edu/tag/00OD}{Tag 00OD}, \href{https://stacks.math.columbia.edu/tag/0AFS}{Tag 0AFS}]{stacks-project}), it suffices to check the regularity of $(A/pA)_{\overline{\mathfrak{m}}}$, where $\overline{\mathfrak{m}}$ is any maximal ideal of $A/pA$.
	We let $\mathfrak{m}$ be the unique maximal ideal of $A$ that lifts $\overline{m}$.
	Then by \cite[\href{https://stacks.math.columbia.edu/tag/00NQ}{Tag 00NQ}]{stacks-project}, since $A_\mathfrak{m}$ is a regular local ring, it suffice to prove that the element $p$, which is contained in $\mathfrak{m}A_\mathfrak{m}$ by the $p$-completeness of $A$, is not in $\mathfrak{m}^2A_\mathfrak{m}$.
	Moreover, by the faithful flatness of the complete localization $A_\mathfrak{m} \to A^\wedge_\mathfrak{m}$, it suffice to check the above by replacing $A_\mathfrak{m}$ by the ring $A^\wedge_\mathfrak{m}$.
	
	To continue, by \cite[Lem.\ 2.18]{BS22}, the complete local ring $A^\wedge_\mathfrak{m}$ admits a unique $\delta$-structure compatible with that of $A$. 
	If $p$ is contained in $\mathfrak{m}^2A^\wedge_\mathfrak{m}$, then since the $\delta$-structure sends the subset $\mathfrak{m}^2A^\wedge_\mathfrak{m}$ into the subset $\mathfrak{m}A^\wedge_\mathfrak{m}$ (\cite[Def.\ 2.1]{BS22}), we know $\delta(p)$ is an element in $\mathfrak{m}A^\wedge_\mathfrak{m}$.
	However, as $A$ is $p$-torsionfree, we also know that $\delta(p)=\frac{\varphi_A(p)-p^p}{p}=1-p^{p-1}$, which is a unit in $A^\wedge_\mathfrak{m}$ and is contradicting to the above.
	Hence $p$ cannot be contained in the ideal $\mathfrak{m}^2A^\wedge_\mathfrak{m}$.
	
	Now assume that $\varphi_A:A\to A$ is flat, which, by the surjectivity of the Frobenius endomorphism of $\Spec(A/pA)$, is automatically faithfully flat.
	Then by Kunz's theorem (\cite[\href{https://stacks.math.columbia.edu/tag/0EC0}{Tag 0EC0}]{stacks-project}) we know $A/pA$ is regular.
	So the claim follows from \cite[\href{https://stacks.math.columbia.edu/tag/00NU}{Tag 00NU}]{stacks-project}, and the fact that $A$ is $p$-complete and $p$-torsionfree.
\end{proof}

The following statement summarizes the properties of the Frobenius structure of framed regular prisms.
\begin{proposition}[Frobenius structure of framed regular prism]
	\label{prop:regular prism Frobenius}
	Let $X$ be a regular $p$-adic formal scheme over $\mathcal{O}_K$, and let $(A,I)$ be a framed regular prism in $X_\Prism$.
	\begin{enumerate}[label=\upshape{(\roman*)}]
		\item The Frobenius morphism $\varphi_A:A\to A$ is finite and faithfully flat.
		\item The mod $I$ reduction of the perfection map $(A,I)\to (A_\perf, I_\perf)$ is a quasi-syntomic cover with the shifted $p$-complete cotangent complex $\mathbb{L}_{\overline{A}_\perf/\overline{A}}[-1]$ isomorphic to a finite projective module over $\overline{A}_\perf$.
	\end{enumerate}
In particular, a framed regular prism is a regular prism.
\end{proposition}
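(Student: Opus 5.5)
The plan is to reduce everything to the framing. Fix a framing $((A_0,I_0),\Sigma)$ and write $P\colonequals W(k)\langle t_a\mid a\in\Sigma\rangle$ with $\delta(t_a)=0$. Then $\varphi_P$ is the Witt vector Frobenius on $W(k)$ together with $t_a\mapsto t_a^p$. This map is visibly finite and free, with basis the monomials $\prod t_a^{i_a}$, $0\le i_a<p$; here we use that $k$ is perfect, so $\varphi$ is bijective on $W(k)$.

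For (i), I first pass to $A_0$. Because $P\to A_0$ is $p$-completely étale, the relative Frobenius square is cocartesian after $p$-completion, i.e.\ $A_0\simeq (A_0\otimes_{P,\varphi_P}P)^\wedge_p$. It suffices to check this mod $p$, where it is the standard fact that relative Frobenius of an étale map is an isomorphism. Consequently $\varphi_{A_0}$ is the $p$-completed base change of $\varphi_P$, so it is finite free.

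Next I pass to the $I_0$-adic completion $A$. Since $\varphi(I_0)\subset (I_0,p)$ and $A$ is $(p,I)$-complete, I would write $A\otimes_{A,\varphi}A$ as the $(p,I)$-completion of $A_0\otimes_{A_0,\varphi}A$. This is a finite free $A$-module because $\varphi_{A_0}$ is finite free, and finite modules over the noetherian ring $A$ are already complete. Hence $\varphi_A$ is finite free, and in particular flat. Faithful flatness follows because $\varphi_A$ is surjective on $\Spec(A/p)$, exactly as in the proof of \Cref{prop:reg_imply_Frob-regular}. The "in particular" assertion then comes out directly: $A$ is noetherian, being a completion of the noetherian ring $A_0$, and transversality holds as in \Cref{thm:regular prism}. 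With $\varphi_A$ flat, \Cref{prop:reg_imply_Frob-regular} gives regularity. Transversality here means that $A/I$ is $p$-adically étale over the $p$-torsionfree $X$; I would assume this, as is implicit in $X_\Prism$ for framed prisms.

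For (ii), I would identify the perfection. We have $A_\perf=(\colim_\varphi A)^\wedge_{(p,I)}$, and since each transition map is finite free of rank $p^{|\Sigma|}$, the map $A\to A_\perf$ is $(p,I)$-completely faithfully flat. The key computation is that the $p$-completed cotangent complex $\mathbb{L}_{A_\perf/A}$ vanishes up to completion, because $A_\perf$ is relatively perfect over the étale-framed part, while the framing variables are perfected. More precisely, I would set $Q=P_\perf=W(k)\langle t_a^{1/p^\infty}\rangle$. Then $A_\perf\simeq (A\otimes_P Q)^\wedge$, because $A$ is étale-like over $P$ up to $I$-completion. This gives $\mathbb{L}_{A_\perf/A}\simeq \mathbb{L}_{Q/P}\otimes_Q A_\perf$, and the latter is $0$ after $p$-completion since both $P$ and $Q$ have Frobenius-lift cotangent complexes with $d(t^{1/p^n})$ infinitely $p$-divisible. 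I expect the identification $A_\perf\simeq (A\otimes_P Q)^\wedge$ to be the delicate step: one must check that the colimit along $\varphi_A$ agrees with the base change along $\varphi_P$-perfection. This should follow from the relative Frobenius isomorphism of (i) applied iteratively.

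Finally, I would reduce mod $I$ and use the transitivity triangle for $A\to \overline{A}$ and $A\to A_\perf$, together with base change. Since $I$ is generated by a nonzerodivisor $d$ that stays a nonzerodivisor in $A_\perf$ by flatness, this gives
\[
\mathbb{L}_{\overline{A}_\perf/\overline{A}}\simeq \mathbb{L}_{A_\perf/A}\otimes^L_{A_\perf}\overline{A}_\perf .
\]
The vanishing computed above then cannot be literally right, because the expected answer is a shifted projective module. The correction is that one should compare $\overline{A}_\perf$ with $\overline{A}$ using $\mathbb{L}_{\overline{A}_\perf/\mathbb{Z}_p}\simeq (I/I^2)\otimes\overline{A}_\perf[1]$, which holds since $\overline{A}_\perf$ is perfectoid, and $\mathbb{L}_{\overline{A}/\mathbb{Z}_p}$, which is $\Omega$ of rank $n-1$ in degree $0$ by \Cref{cor:regular_implies_lci}. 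The triangle for $\mathbb{Z}_p\to\overline{A}\to\overline{A}_\perf$ then gives $\mathbb{L}_{\overline{A}_\perf/\overline{A}}[-1]$ as an extension of $(I/I^2)\otimes\overline{A}_\perf$ by $\widehat{\Omega}_{\overline{A}}\otimes\overline{A}_\perf$. Both pieces are finite projective: the second because $\overline{A}$ is regular, hence smooth over $\mathcal{O}_K$ up to an $\mathcal{O}_K$-term, which I would absorb using $\mathbb{L}_{\mathcal{O}_K/W}$ as a further $I/I^2$-type term. This computation proves the asserted projectivity, and together with the faithful flatness of $A\to A_\perf$ it gives the quasi-syntomic cover.
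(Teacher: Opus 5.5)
Part (i) is fine. Deducing that $\varphi_{A_0}$ is the $p$-completed base change of $\varphi_P$ from the invertibility of the relative Frobenius of the $p$-completely \'etale map $P\to A_0$ is a valid substitute for the paper's ``Kunz mod $p$ plus lifting'' argument. It even shows that $\varphi_{A_0}$, and hence $\varphi_A\simeq\varphi_{A_0}\otimes_{A_0}A$, is finite free. For that last identification you need the $(p,\varphi(I_0))$-adic and $(p,I_0)$-adic topologies to coincide, which holds since $\varphi(d)\equiv d^p \bmod p$.

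The gap is in (ii), and it starts with your claim that $\mathbb{L}_{Q/P}$ vanishes after $p$-completion. That claim is false. The infinite $p$-divisibility of $d(t_a^{1/p^n})$ (equivalently, perfectness of $Q/p$ over $k$) shows that $\mathbb{L}_{Q/W}$ vanishes, not $\mathbb{L}_{Q/P}$. The transitivity triangle for $W\to P\to Q$ instead gives $\mathbb{L}_{Q/P}\simeq(\Omega_{P/W}\otimes_P Q)[1]$, which is free of rank $|\Sigma|$ in homological degree one. With this correction your first route goes through. Your identification $A_\perf\simeq(A\otimes_P Q)^\wedge_{(p,I)}$, flat base change, and reduction along $A\to\overline{A}$ give $\mathbb{L}_{\overline{A}_\perf/\overline{A}}[-1]\simeq\Omega_{P/W}\otimes_P\overline{A}_\perf$, free of rank $|\Sigma|$. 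This is essentially the paper's proof: it uses the triangle for $W\to A_0\to(\colim_{\varphi_{A_0}}A_0)^\wedge_p$, relative perfectness of the colimit over $W$, smoothness of $A_0$ over $W$, and then reduces mod $I_0$.

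The replacement route you switch to does not work. The triangle for $\mathbb{Z}_p\to\overline{A}\to\overline{A}_\perf$ does exhibit $\mathbb{L}_{\overline{A}_\perf/\overline{A}}[-1]$ as an extension of $\Omega_{\overline{A}}\otimes\overline{A}_\perf$ by $(I/I^2)\otimes\overline{A}_\perf$. However, the quotient term is not projective in general, for three reasons.
\begin{itemize}
\item A regular $\overline{A}$ need not be smooth over $\mathcal{O}_K$; the semi-stable ring of \Cref{eg:semi-stable prism} is a counterexample.
\item \Cref{cor:regular_implies_lci} only gives lci, not a projective $\Omega$ of rank $n-1$.
\item Already for ramified $K$ and $\overline{A}=\mathcal{O}_K$ with the Breuil--Kisin framing, $\mathbb{L}_{\mathcal{O}_K/W}\simeq\mathcal{O}_K/E'(\pi)$ is a nonzero torsion module. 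It cannot be ``absorbed'' as an $I/I^2$-type term.
\end{itemize}
In that example the sequence reads $0\to\overline{A}_\perf\xrightarrow{E'(\pi)}\overline{A}_\perf\to\overline{A}_\perf/E'(\pi)\to 0$. The middle term is free while the quotient is torsion, so the extension is non-split. Projectivity of the middle term therefore cannot be read off from the two outer terms. You have to identify the extension with $\Omega_{A_0/W}\otimes_{A_0}\overline{A}_\perf$ using the presentation $\overline{A}=A_0/I_0$, which means returning to the framing as in the corrected first route.
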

\begin{proof}
	Let $(A_0,I_0)$ be a framing of $(A,I)$, where $A_0$ is a $p$-completely \'etale $\delta$-algebra over $W\langle t_1, \ldots, t_n\rangle$.
	We first note that by the assumption, the mod $p$ reduction of $A_0$ is a smooth $k$-algebra.
	In particular, the Frobenius endomorphism on $A_0/p$ is finite surjective and is flat, where the latter follows from Kunz's theorem (\cite[\href{https://stacks.math.columbia.edu/tag/0EC0}{Tag 0EC0}]{stacks-project}).
	So by the $p$-completeness and $p$-torsionfreeness of $A_0$, since the endomorphism $\varphi_{A_0}$ lifts the Frobenius endomorphism of $A_0/p$, we see $\varphi_{A_0}$ is finite and $p$-completely faithfully flat.
	Here we note that by the noetherianity of the ring $A_0$ and \cite[\href{https://stacks.math.columbia.edu/tag/0912}{Tag 0912}]{stacks-project}, the map $\varphi_{A_0}$ is in fact faithfully flat.
	
	We then note that as the ring $A$ is the $I_0$-adic completion of $A_0$, the Frobenius structure $\varphi_A$ is defined by the completion of $\varphi_{A_0}$.
	By the finiteness of the homomorphism $\varphi_{A_0}$ and the noetherianity of the ring $A_0$, we know $\varphi_A$ is isomorphic to the base change morphism $\varphi_{A_0}\otimes_{A_0} A$.
	Hence the map $\varphi_A$ is finite and faithfully flat as well.
	
	Finally, to calculate the $p$-complete cotangent complex $\mathbb{L}_{\overline{A}_\perf/\overline{A}}$, we recall that the perfection $(A_\perf, I_\perf)$ is defined as the $(p,I)$-completion of $\colim_{\varphi_A} A$.
	By the paragraph above, since the Frobenius map $\varphi_A$ is the $I_0$-completion of $\varphi_{A_0}$, the perfection map $A\to A_\perf$ can be identified with the $(p,I_0)$-completion of the map
	\[ 
	A_0 \longrightarrow \colim_{\varphi_{A_0}} A_0.
	\]
	In particular, the map of $p$-complete algebras $\overline{A}\to \overline{A}_\perf$ can be identified with the mod $I_0$ reduction map $\overline{A}_0 \to \overline{(\colim_{\varphi_{A_0}} A_0)^\wedge_p}$, and we get
	\[
	\mathbb{L}_{\overline{A}_\perf/\overline{A}} \simeq \mathbb{L}_{\overline{(\colim_{\varphi_{A_0}} A_0)^\wedge_p} / \overline{A}_0} \simeq  \mathbb{L}_{(\colim_{\varphi_{A_0}} A_0)^\wedge_p / A_0} \otimes_{A_0}^L \overline{A}_0.
	\]
	So to finish the proof, it suffice to show that $\mathbb{L}_{(\colim_{\varphi_{A_0}} A_0)^\wedge_p / A_0}$ has Tor amplitude $[-1,-1]$, which follows from the distinguished triangle
	\[
	\mathbb{L}_{(\colim_{\varphi_{A_0}} A_0)^\wedge_p/W} \longrightarrow \mathbb{L}_{(\colim_{\varphi_{A_0}} A_0)^\wedge_p / A_0} \longrightarrow \mathbb{L}_{A_0/W}[1] \otimes^L_{A_0} (\colim_{\varphi_{A_0}} A_0)^\wedge_p.
	\]
	Here the first term vanishes thanks to the relative perfectness of $(\colim_{\varphi_{A_0}} A_0)^\wedge_p$ over $W$, and the last term is a finite projective module over $(\colim_{\varphi_{A_0}} A_0)^\wedge_p$ that lives in cohomological degree $(-1)$, thanks to the smoothness of $A_0$ over $W$.
\end{proof}

A quick consequence of \Cref{prop:regular prism Frobenius} and Andr\'e's lemma is that framed regular prisms cover the absolute prismatic site.
\begin{corollary}[Weak initiality]
	\label{cor:regular prism cover}
	Let $X$ be a regular $p$-adic formal scheme over $\mathcal{O}_K$, and let $(A,I)$ be a framed regular prism such that $\Spf(\overline{A})$ is an \'etale cover of $X$.
	Then the prism $(A,I)$ covers the absolute prismatic site $X_\Prism$.
\end{corollary}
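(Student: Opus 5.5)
To prove that $(A,I)$ covers $X_\Prism$, I will show the following. For any prism $(B,J)\in X_\Prism$ there is a $(p,J)$-completely faithfully flat map of prisms $(B,J)\to (C,JC)$ and a map $(A,I)\to (C,JC)$ in $X_\Prism$. Since being a cover in the flat topology is local, I may first replace $(B,J)$ by a flat cover. By \cite[Thm.\ 7.14]{BS22}, André's flatness lemma, there is a faithfully flat cover of $(B,J)$ by a perfect prism. So I may assume $(B,J)$ is perfect, and then $\overline{B}=B/J$ is a perfectoid ring.

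The first step replaces the $\overline{B}$-point of $X$ by one of $\Spf(\overline{A})$. Since $\Spf(\overline{A})\to X$ is a $p$-completely étale cover, the base change $\overline{B}'\colonequals(\overline{B}\otimes_{\mathcal{O}_X}\overline{A})^\wedge_p$ (working affine-locally on $X$) is $p$-completely étale and faithfully flat over $\overline{B}$. It is therefore again perfectoid. Étale maps lift uniquely along the henselian pair $(B,J)$ (or via tilting), so I obtain a perfect prism $(B',JB')$ that is a $(p,J)$-completely faithfully flat cover of $(B,J)$. It carries a structure map $\overline{A}\to\overline{B}'$. Renaming, I may assume the map $\Spf(\overline{B})\to X$ factors through $\Spf(\overline{A})$.

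The main step lifts $\overline{A}\to\overline{B}$ to a map of prisms after a further cover, and this is where \Cref{prop:regular prism Frobenius} enters. Consider the coproduct of $(A,I)$ and $(B,J)$ in $X_\Prism$, or more concretely in the slice over $\Spf(\overline{A})$. Since $(B,J)$ is perfect, a map $(A,I)\to(B,J)$ is the same as a map $(A_\perf,I_\perf)\to(B,J)$. Perfect prisms are equivalent to perfectoid rings, so such a map is the same as a map $\overline{A}_\perf\to\overline{B}$ extending the given $\overline{A}\to\overline{B}$.

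By \Cref{prop:regular prism Frobenius}(ii), $\overline{A}\to\overline{A}_\perf$ is quasi-syntomic with $\mathbb{L}_{\overline{A}_\perf/\overline{A}}[-1]$ finite projective. Hence $D\colonequals(\overline{A}_\perf\otimes^L_{\overline{A}}\overline{B})^\wedge_p$ is quasi-syntomic over the perfectoid $\overline{B}$, and $\overline{B}\to D$ is $p$-completely faithfully flat, since $\overline{A}\to\overline{A}_\perf$ is faithfully flat by (i). Then André's lemma, in the form of \cite[Thm.\ 7.14]{BS22} (or via the perfectoidization $D_\perfd$ together with a further André cover), produces a $p$-completely faithfully flat perfectoid $\overline{B}$-algebra $\overline{C}$ receiving a map from $D$. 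The associated perfect prism $(C,JC)$ is then a faithfully flat cover of $(B,J)$, and it receives $\overline{A}_\perf\to D\to\overline{C}$, hence a map of prisms $(A,I)\to(A_\perf,I_\perf)\to(C,JC)$ compatible with the structure maps to $X$.

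The main obstacle is the faithful flatness of $\overline{B}\to\overline{C}$. Perfectoidization of a quasi-syntomic algebra need not be flat in general, so I would use the explicit cotangent complex computation in \Cref{prop:regular prism Frobenius}(ii). Locally, $\overline{A}_\perf$ is obtained from $\overline{A}$ by adjoining compatible $p$-power roots of the units $t_a$: it is the $p$-completed $\overline{A}_0\otimes_{W\langle t_a\rangle}W\langle t_a^{1/p^\infty}\rangle$. So after pulling back, I only need to adjoin $p$-power roots of units of $\overline{B}$ and then apply André's lemma \cite[Thm.\ 7.12]{BS22} directly. That yields the required faithfully flat perfectoid cover and completes the argument.
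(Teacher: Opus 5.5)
\textbf{Gap in the first reduction.} Your first step claims that every $(B,J)\in X_\Prism$ has a $(p,J)$-completely faithfully flat cover by a perfect prism. This is false, and \cite[Thm.\ 7.14]{BS22} does not give it: André's lemma is a statement about perfectoid rings, so it only refines prisms that are already perfect.

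Crystalline prisms give a concrete obstruction. If $(B,(p))\to(C,pC)$ were such a cover with $C$ perfect, then $B/p\to C/p$ would be faithfully flat, hence injective. But $C/p$ is perfect and therefore reduced, so $B/p$ would have to be reduced. This fails for objects that genuinely lie in $X_\Prism$. For example, take the divided power envelope $(D_A(I),(p))$ of \Cref{eg:crystalline_prism}, and let $d$ generate $I$. Then $d^p=p!\,\gamma_p(d)\equiv 0$ in $D_A(I)/p$. On the other hand, the image of $d$ is nonzero there, because $A/(p,I^p)\hookrightarrow D_A(I)/p$. Covering $X_\Prism$ requires refining every bounded prism in it, so as written your argument only handles perfect $(B,J)$.

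\textbf{What survives, and the fix.} Your second and third steps are correct once $(B,J)$ is perfect. The unique étale lift works. Your identification $\overline{A}_\perf\simeq(\overline{A}\otimes_{W\langle t_a\rangle}W\langle t_a^{1/p^\infty}\rangle)^\wedge_p$ is also right, and it lets André's lemma, applied to $p$-power roots of the units $\bar t_a$, produce the perfectoid cover.

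What is missing for a general $(B,J)$ is a way to lift quasi-syntomic covers of $\overline{B}$ to flat covers of prisms, namely \cite[Prop.\ 7.11]{BS22}. This is exactly the paper's argument:
\begin{itemize}
\item by \Cref{prop:regular prism Frobenius}, $\overline{A}_\perf$ is a quasi-syntomic cover of $X$, so $\overline{B}\to(\overline{B}\otimes_{\mathcal{O}_X}\overline{A}_\perf)^\wedge_p$ is a quasi-syntomic cover;
\item \cite[Prop.\ 7.11]{BS22} then gives a flat cover $(B,J)\to(C,L)$ with $\overline{C}$ receiving $\overline{A}_\perf$ compatibly over $X$;
\item since $(A_\perf,IA_\perf)$ is initial among prisms over the perfectoid ring $\overline{A}_\perf$, one obtains $(A,I)\to(A_\perf,IA_\perf)\to(C,L)$.
\end{itemize}
This route never needs $(B,J)$ to be perfect. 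It absorbs your étale pre-step, and it sidesteps your concern about flatness of perfectoidizations entirely.
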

\begin{proof}
	Let $(A_\perf, IA_\perf)$ be the perfection of $(A,I)$, and let $(B,J)$ be any bounded prism in $X_\Prism$.
	By \Cref{prop:regular prism Frobenius}, we know the reduction $\overline{A}_\perf$ is a quasi-syntomic cover over $X$, so the $p$-complete base change $(\overline{B}\otimes_{\mathcal{O}_X} \overline{A}_\perf)^\wedge_p$ is also a quasi-syntomic cover.
	Thus by \cite[Prop.\ 7.11]{BS22}, we know there is a map of the prisms $(B,J)\to (C,L)$ such that $\overline{C}$ is a $p$-complete flat cover over $(\overline{B}\otimes_{\mathcal{O}_X} \overline{A}_\perf)^\wedge_p$ and hence over $\overline{B}$.
	Moreover, by the initial property of the perfect prism $(A_\perf,IA_\perf)$, the prism $(C,L)$ admits a map from $(A_\perf,IA_\perf)$ and hence from $(A,I)$.
	So in summary, a flat cover of the prism $(B,J)$ admits a map from $(A,I)$, which finishes the proof.
\end{proof}

By taking the $p$-adic divided power envelope for the surjection $A\to \overline{A}$, one obtains a crystalline prism of the mod $p$ fiber $X_{p=0}$ that is also a covering object, as we shall explain.
\begin{example}[Framed crystalline prism]
	\label{eg:crystalline_prism}
	Let $X=\Spf(R)$ be an affine regular $p$-adic formal scheme over $\mathcal{O}_K$, and let $(A,I)$ be a framed regular prism, such that $\overline{A}=R$.
	We let $D=D_A(I)$ be the $p$-complete $p$-adic divided power envelope for the surjection $A\to \overline{A}$, and let $i:A\to D$ be the canonical map.
	So we get a $\delta$-pair $(D,(p))$, equipped with the induced $\delta$-structure from $A$.
	In addition, by precomposing with the faithfully flat Frobenius structure $\varphi_A:A\to A$ (\Cref{prop:regular prism Frobenius}) and by applying the mod $p$ reduction, we get the following diagram of solid arrows:
	\begin{equation}
		\label{eq:crys_prism_1}
		\begin{tikzcd}
			A/p \arrow[r, "\varphi_A"] \arrow[d, "\text{mod~}I"'] & A/p \arrow[r, "i"] & D/p\\
			R/p\arrow[rru, dashed, "f"'].
		\end{tikzcd}
	\end{equation}
    Notice that for any generator $x\in I$, we have $\varphi_A(x)=x^p+p\delta(x)$, which implies that the image of the ideal $I$ along the composition $i\circ \varphi_A$ in (\ref{eq:crys_prism_1}) is zero.
    Hence the diagram in (\ref{eq:crys_prism_1}) induces a natural dashed arrow $f:R/p\to D/p$.
    Here we note that the above in particular produces a crystalline prism $(D,pD)$ with the structure map $f\colon R/p\to D/pD)$ in $(X_{p=0})_\Prism$.
    
    Moreover, we claim that the map of $\mathbb{F}_p$-algebras $f:R/p\to D/p$ is faithfully flat.
    To see this, we fix a generator $x$ of the invertible ideal $I\subset A$, whose mod $p$ reduction is a non-zero-divisor in $A/p$ (by the assumption of $A$).
    Then by taking the pushout of the left corner, the diagram (\ref{eq:crys_prism_1}) can be enlarged into the following
    \begin{equation}
    	\label{eq:crys_prism_2}
    	\begin{tikzcd}
    		A/p \arrow[r, "\varphi_A"]\arrow[d, "\text{mod~}I"'] & A/p \arrow[r, "i"] \ar[d] & D/p,\\
    		R/p \arrow[r] & A/(p,I^p)  \arrow[ul, phantom, "\ulcorner"]  \arrow[ru, "g"'] &
    	\end{tikzcd}
    \end{equation}
    and by the faithful flatness of $\varphi_A$ in \Cref{prop:regular prism Frobenius}, it is left to check that the map $g:A/(p,I^p)\to D_A(I)/p$ is faithful flat.
    
    To proceed, by the regularity of the element $x$ in $A/p$, we notice that the map $g$ is equal to the base change of $g_0:\mathbb{F}_p[T]/T^p \to D_{\mathbb{F}_p[T]}(T)$ along the map $\mathbb{F}_p[T]\xrightarrow{T\mapsto x}A/p$.
    The latter in particular allows us to reduce the question to checking the faithful flatness of the map $g_0$.
    Notice that since $T$ is a non-zero-divisor in $\mathbb{F}_p[T]$, by \cite[Lem.\ 3.42]{Bha12}, there is an increasing exhaustive $\mathbb{N}$-indexed filtration on $D_{\mathbb{F}_p[T]}(T)$, where the $i$-th graded piece is $\varphi_{\mathbb{F}_p[T]}^*(\Gamma^i_{\mathbb{F}_p}\mathbb{F}_p)$ and is in particular finite projective over the ring $\mathbb{F}_p[T]/T^p=\varphi_{\mathbb{F}_p[T]}^*(\mathbb{F}_p)$.
    This shows that $D_{\mathbb{F}_p[T]}(T)$ is a faithfully flat $\mathbb{F}_p[T]/T^p$-module, which finishes the proof.
\end{example}

Recall that for a given uniformizer $\pi\in \mathcal{O}_K$, there is a natural \emph{Breuil--Kisin} prism $(\mathfrak{S}\colonequals W(k)\llbracket u \rrbracket, E(u))\in (\mathcal{O}_K)_\Prism$.
Its $\delta$-structure extends that of $W(k)$ and vanishes at $u$, and its reduction map sends the element $u$ onto $\pi\in \mathcal{O}_K$.
So a Breuil--Kisin prism is in particular framed.
As we shall recall from \cite{DLMS24} and \cite{DLMS24b}, the construction naturally extends to the case when $X$ is strictly semi-stable or even smooth, with concrete presentations.
\begin{example}[Strictly semi-stable prism]
	\label{eg:semi-stable prism}
	Let $R$ be the ring $\mathcal{O}_K\langle x_1,\ldots,x_n \rangle/(\prod_i x_i-\pi)$, so that $R$ is strictly semi-stable and thus regular.
	Under the assumption, the tangent space of $R$ at the origin is generated by the image of $\{x_1,\ldots,x_n\}$ in $m/m^2$.
	We can then form a natural map 
	\[
	\psi:W\langle s_1,\ldots,s_n \rangle \longrightarrow R,
	\]
	sending $\{s_1,\ldots,s_n\}$ onto the elements $\{x_1,\ldots,x_n\}$.
	By rewriting the ring $R$ as 
	\[
	W\langle u,s_1,\ldots,s_n \rangle/(E(u), \prod_i s_i-u),
	\] 
	the map $\psi$ identifies the ring $R$ as the quotient of $W\langle s_1,\ldots,s_n \rangle$ by the regular element $E(\prod_i s_i)$.
	Moreover, we can equip $W\langle s_1,\ldots,s_n \rangle$ with the $\delta$-structure extending that of $W$ such that $\delta(s_i)=0$, thus yielding the $\delta$-pair $(W\langle s_1,\ldots,s_n \rangle, (E(\prod_i s_i)))$ over $(\mathfrak{S},E(u))$ whose reduction is $R$.
	Since $E(u)$ is distinguished and has image $E(\prod_i s_i)$ in $W\langle s_1,\ldots,s_n \rangle$, we in particular know that $E(\prod_i s_i)$ is a distinguished element as well.
	Hence the $E(\prod_i s_i)$-completion of $(W\langle s_1,\ldots,s_n \rangle, E(\prod_i s_i))$ is a framed regular prism with $\Sigma=\{s_1,\ldots,s_n\}$.
	Here we note that the construction naturally extends to $p$-complete \'etale $R$-algebras by \cite[Lem.\ 2.18]{BS22}.
\end{example}

At the end of this subsection, we prove that regular transversal prisms of Krull dimension two admits a uniform and explicit description.
\begin{proposition}
	\label{prop:Frob_reg_prism_of_dim_2}
	Let $(A,I)$ be a regular and transversal prism such that $A$ has Krull dimension two.
	There is an isomorphism of prisms $(V\llbracket t \rrbracket, (E(t)))\xrightarrow{\sim} (A,I)$, where 
	\begin{itemize}
		\item the element $E(t)$ is an Eisenstein polynomial in $t$;
		\item the ring $V$ is a mixed characteristic Cohen ring (\cite[\href{https://stacks.math.columbia.edu/tag/0327}{Tag 0327}]{stacks-project}) and is a Frobenius equivariant subring in $A$;
		\item the element $\varphi_{V\llbracket t \rrbracket}(t)$ is divisible by $t$.
	\end{itemize}
\end{proposition}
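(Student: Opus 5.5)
We prove the statement when $A$ is local. This is the intended setting, since a $(p,I)$-complete noetherian ring is a finite product of complete local rings and each factor is again a regular transversal prism, so the general case follows factor by factor. Throughout we write $\mathfrak m$ for the maximal ideal, $\kappa=A/\mathfrak m$, and $d$ for a generator of $I$. The plan has three steps: identify $A/I$, build a Frobenius-stable Cohen subring and coordinate $t$, then show $d$ is a unit multiple of an Eisenstein polynomial.

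\textbf{Step 1: $A/I$ is a DVR.} Run the argument from the proof of \Cref{prop:reg_imply_Frob-regular} with $d$ in place of $p$. If $d\in\mathfrak m^2$, then $\delta(d)\in\mathfrak m$, which contradicts distinguishedness. Hence $d\notin\mathfrak m^2$, and $\overline A=A/dA$ is a regular local ring of dimension one, i.e. a DVR. It is $p$-torsionfree by transversality, so it has mixed characteristic. Lifting a uniformizer $\pi$ of $\overline A$ gives $\mathfrak m=(d,t_0)$, and the same argument shows $p\notin\mathfrak m^2$.

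\textbf{Step 2: a Frobenius-stable Cohen subring and a good coordinate.} By Cohen's structure theorem, $A\cong V'\llbracket t_0\rrbracket$ for some Cohen ring $V'$ with residue field $\kappa$. The content of the proposition is that $V$ and $t$ can be chosen compatibly with $\varphi_A$.
\begin{itemize}
\item For $V$: take a $p$-basis $\{\bar b_j\}$ of $\kappa$ and seek lifts $b_j\in A$ with $\delta(b_j)=0$, i.e. $\varphi_A(b_j)=b_j^p$. Given these, $V$ is the $p$-adic closure of the subring generated by the Teichm\"uller-type lifts of $\kappa^{p^\infty}$ (via the Witt vectors of the perfect part) together with the $b_j$. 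It is $\varphi$-stable by construction, and it is a Cohen ring because the $b_j$ reduce to a $p$-basis.
\item For $t$: choose a lift of $\pi$ (equivalently, a generator of $\mathfrak m/(p,\mathfrak m^2)$ in $A/p$, which is a DVR $\kappa\llbracket t\rrbracket$) satisfying $\delta(t)=0$ by the same procedure. Then $\varphi(t)=t^p$ is divisible by $t$.
\item Conclusion: $V\to A/t$ is an isomorphism, since both are Cohen rings of $\kappa$ and the map is an isomorphism modulo $p$ by the $p$-basis choice. Completeness then gives $V\llbracket t\rrbracket\xrightarrow{\sim}A$.
\end{itemize}

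\textbf{Step 3: $d$ is Eisenstein up to a unit.} Since $A/(p,d)$ is Artinian, $d$ mod $p$ is a unit times $t^e$. Weierstrass preparation in $V\llbracket t\rrbracket$ then gives $d=u\cdot E(t)$ with $u$ a unit and $E$ a distinguished polynomial. Because $\varphi(t)\in tA$, the reduction $A\to A/t\cong V$ is a map of $\delta$-rings, so $d(0)$ is distinguished in $V$. Hence $E(0)=p\cdot(\text{unit})$ and $E$ is Eisenstein, so $(E(t))=I$.

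\textbf{Main obstacle.} The hardest step is producing lifts with $\delta=0$ in Step 2, since coefficient rings are far from unique when $\kappa$ is imperfect. I would first try successive approximation. Starting from an arbitrary lift $y_0$, look for $y=y_0+\epsilon$ with $\epsilon\in\mathfrak m$ solving $\varphi(y)=y^p$. Each correction uses that $\varphi$ is finite flat (\Cref{prop:reg_imply_Frob-regular}) and that $\varphi(x)-x^p\in pA$, giving a contraction in the $(p,d)$-adic topology. Failing that, I would reduce to the framed case, using $\delta(t_a)=0$ from \Cref{thm:regular prism} after a complete localization.
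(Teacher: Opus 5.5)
Your Step 2 is where the argument breaks, and the problem is not technical: the elements it asks for need not exist. You require the coordinate to satisfy $\delta(t)=0$, i.e.\ $\varphi(t)=t^p$. That is strictly stronger than the required $t\mid\varphi(t)$, and it is false in general; the remark placed right after the proposition in the paper says exactly this.

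Concretely, take the $q$-de Rham prism $(\mathbb{Z}_p\llbracket q-1\rrbracket,([p]_q))$ with $\varphi(q)=q^p$. It is regular, transversal, of dimension two, and even framed by the unit $q$ with $\delta(q)=0$. Suppose $t=\sum_n a_n(q-1)^n$ is a uniformizer modulo $p$, so $a_0\in p\mathbb{Z}_p$ and $a_1\in\mathbb{Z}_p^\times$, and that $\varphi(t)=t^p$. Comparing constant terms gives $a_0=a_0^p$, hence $a_0=0$. Comparing coefficients of $q-1$ then gives $pa_1=0$, a contradiction. Yet $t=q-1$ does satisfy $t\mid\varphi(t)$.

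This also explains why your successive approximation cannot converge. In $\delta(y_0+\epsilon)=\delta(y_0)+\delta(\epsilon)-\sum_{k=1}^{p-1}\frac1p\binom pk y_0^k\epsilon^{p-k}$, the map $\delta$ does not contract $\mathfrak m$ (e.g.\ $\delta(pu)\equiv u^p \bmod p$). Moreover, for $y_0\in\mathfrak m$ the linear term $y_0^{p-1}\epsilon$ cannot be inverted. The same non-contraction undermines your $\delta$-zero lifts of a $p$-basis. Passing to a framing does not help either: in the example above the framing variable is a unit, not a uniformizer.

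The missing idea is to obtain $t$ as a generator of the kernel of a Frobenius-equivariant retraction $A\to V$. Such a kernel is automatically $\varphi$-stable, which gives $t\mid\varphi(t)$ with no constraint on $\delta(t)$. The paper produces this retraction from $A\to A_{\perf}\simeq W(S^\flat)\to W(\mathfrak f)$, with $V=W(k'_{\perf})\cap A$.

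When $\kappa$ is perfect this works cleanly:
\begin{itemize}
\item $V=W(\kappa)$ embeds canonically and $\varphi$-equivariantly.
\item The composite $A\to W(\kappa)$ is a $\delta$-map restricting to the identity on $W(\kappa)$.
\item Its kernel is a $\varphi$-stable height-one prime $(t)$ with $A=W(\kappa)\llbracket t\rrbracket$.
\end{itemize}
Your Steps 1 and 3 then finish the proof, and Step 3 supplies the Eisenstein verification that the paper leaves implicit.

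When $\kappa$ is imperfect, however, no repair can work, because the conclusion itself can fail. Let $\kappa=\mathbb{F}_p(s)$, let $C$ be its Cohen ring with $\varphi(s)=s^p$, and let $A=C\llbracket x\rrbracket$ with $\varphi(x)=x^p+ps$ and $I=(x-p)$. Then $\delta(x-p)\equiv s-1\bmod\mathfrak m$ is a unit and $A/I\cong C$, so $(A,I)$ is a regular transversal prism of dimension two. If $t\in A$ is a uniformizer modulo $p$, Weierstrass preparation gives $(t)=(x-pg)$ for some $g\in C$. The condition $t\mid\varphi(t)$ then amounts to $p^pg^p+ps-p\varphi(g)=0$ in $C$, i.e.\ $\bar g^{\,p}=\bar s$ in $\kappa$, which is impossible.

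So the paper's claims that $W(k'_{\perf})\cap A$ is a Cohen ring of $k'$ and that $V\to A\to A'$ is an isomorphism cannot both hold in this example. The proposition, and your approach, should be restricted to perfect residue fields.
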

\begin{remark}[Difference with the Breuil--Kisin prism]
	\label{rmk:Frob_reg_prism_of_dim_2}
	It is tempting to claim that the prism $(A,I)$ in \Cref{prop:Frob_reg_prism_of_dim_2} is more or less a special case of the Breuil--Kisin prism (with possibly imperfect residue field).
	For the underlying rings, this is indeed the case by \textit{loc.\ cit.}.
	However, one cannot always arrange $(A,I)$ to the case such that $\varphi_{V\llbracket t \rrbracket}(t)=t^p$, either via a change of variable or up to a finite extension.
	The latter can be seen for example by taking (the reduction of) the perfection of the $q$-de Rham prism $(\mathbb{Z}_p\llbracket q-1 \rrbracket,[p]_q)$.
\end{remark}
\begin{proof}[Proof of \Cref{prop:Frob_reg_prism_of_dim_2}]
	We first notice that the flatness of $\varphi_A$ implies the flatness of the Frobenius endomorphism of $A/pA$.
	So by Kunz's theorem \cite[\href{https://stacks.math.columbia.edu/tag/0EC0}{Tag 0EC0}]{stacks-project}, the ring $A/pA$ is a complete regular local ring of dimensino one in characteristic $p$.
	Thus by Cohen's structural theorem \cite[\href{https://stacks.math.columbia.edu/tag/0C0S}{Tag 0C0S}]{stacks-project}, we have $A/pA\simeq k'\llbracket t' \rrbracket$ for a field $k'$ in characteristic $p$.
	Hence by taking the power series ring over any Cohen ring $V'$ of the residue field $k'$, we get an isomorphism of $p$-complete rings $A\simeq V'\llbracket t' \rrbracket$.
	Here we note that the latter isomorphism may not be Frobenius equivariant.
	
	To proceed, since the ring $A$ is of dimension two, the transversal assumption implies that $A$ is a complete regular local ring.
	So $A$ is in particular a unique factorization domain (\cite[\href{https://stacks.math.columbia.edu/tag/0AG0}{Tag 0AG0}]{stacks-project}), and thus the invertible ideal $I$ is principal.
	Moreover, by the transversal condition again, the quotient $\overline{A}$ is a mixed characteristic regular ring in dimension one, and hence a mixed characteristic complete discrete valuation ring.
	As the residue field of $\overline{A}$ is $k'$, the induced composition $V'\to V'\llbracket t' \rrbracket\simeq A \to \overline{A}$ is a finite totally ramified extension.
	So the image of $t'$ in $\overline{A}$, which generates $\overline{A}$ as a $V'$-algebra, is a uniformizer.
	In particular, the kernel of the surjection $V'\llbracket t \rrbracket\to \overline{A}$ is generated by a Eisenstein polynomial $E(t')$.
	
	At this moment, we note that for each $n\geq 1$, the pair $(A,\varphi^n(I))$ with the same delta structure is also a regular prism of dimension two.
	In particular, the sequence $\{A/\varphi^n(I)\}$ is a sequence of extensions of discrete valuation rings.
	We let $(A_\perf,IA_\perf)$ be the perfection of $(A,I)$.
	Then the reduction $\overline{A}_\perf$ is the $p$-adic completion of the colimit $\colim_{n\geq 0} A/\varphi^n(I)$.
	Hence the perfectoid ring $S\colonequals\overline{A}_\perf$ is the ring of integers of a perfectoid field extension of $\overline{A}[1/p]$,
	and the $\delta$-ring $A_\perf$ is isomorphic to $W(S^\flat)$.
	
	We now let $\mathfrak{f}$ be the residue field of $S$ and thus of $S^\flat$, which is perfect and naturally contains both $k'$ and thus the perfection $k'_\perf$.
	Here by the perfectness of $k'_\perf$, the inclusion $k'_\perf\to S^\flat$ induces a Frobenius equivariant injection $W(k'_\perf)\to W(S^\flat)=A_\perf$.
	Moreover, there is a Frobenius equivariant surjection $A_\perf=W(S^\flat)\to W(\mathfrak{f})$.
	Now we let $V$ be the intersection $W(k'_\perf)\cap A$ inside $A_\perf$, which by construction is preserved by the endomorphism $\varphi_A$.
	Let $\iota:V\to A$ be the inclusion map.
	Then we can fit $A$ into the Frobenius equivariant commutative diagram 
	\begin{equation}
		\label{eq:diagram_of_V}
		\begin{tikzcd}
			W(k'_\perf) \arrow[r, hook] & W(S^\flat) \arrow[r, hook] &W(\mathfrak{f}) \\
			V \arrow[u, hook] \arrow[r, hook, "\iota"'] & A. \arrow[u, hook] &
		\end{tikzcd}
	\end{equation}
On the other hand, by the explicit presentation $A/pA\simeq k'\llbracket t' \rrbracket$, we know the intersection $k'_\perf \cap A/pA$ inside $S^\flat=A_\perf/pA_\perf\simeq k'_\perf\llbracket t^{1/p^\infty} \rrbracket$ is naturally isomorphic to the subfield $k'$.
As a consequence, by the limit presentation $V\simeq \lim_{n\geq 1} \bigl( W(k'_\perf)/p^nW(k'_\perf) \cap A/p^nA \bigr)$, we know $V$ is also a Cohen ring of the field $k'$.
Therefore, via the inclusion map $\iota:V\to A$, we can now replace the previous presentation $V'\llbracket t' \rrbracket\simeq A$ by $V\llbracket t' \rrbracket\simeq A$, so that the subring $V\subset A$ is preserved by $\varphi_A$.

Finally, let $A'$ be the image of the Frobenius equivariant composition $A\to A_\perf\simeq W(S^\flat) \to W(\mathfrak{f})$ in (\ref{eq:diagram_of_V}).
Then by taking the mod $p$ reduction, we know the composition $V\to A\to A'$ is an isomorphism.
We let $f$ be the image of $t'\in V\llbracket t' \rrbracket\simeq A$ in $V\simeq A'$.
By construction, the element $(t'-\iota(f))$ in $A$ generates the kernel of the Frobenius equivariant surjection $A\to A'$.
Hence by assigning $t\colonequals t'-f$, we see the ring $A$ is isomorphic to $V\llbracket t \rrbracket$, with the induced Frobenius structure sending $t$ onto a $t$-divisible element.	
\end{proof}

\subsection{Localizations and completions}
\label{sub:localization_of_prism}
In this subsection, we consider some localizations and completions of a regular prism that will be used later.

We start with an observation on the localization of the reduction of a regular ring.
Below for a finite module $M$ over a ring $R$, we let $\mathrm{Ass}_R(M)$ be the set of associated prime ideals of $M$, and we drop the subscript of the notation when the meaning is clear.
\begin{lemma}
	\label{lem:localization_injection}
	Let $(A,I)$ be a regular prism.
	\begin{enumerate}[label=\upshape{(\roman*)}]
		\item\label{lem:localization_injection_minimal} The reduction $\overline{A}/p\overline{A}=A/(p,I)A$ has no embedded prime ideals, and hence the localization $\overline{A}_\mathfrak{p}/p\overline{A}_\mathfrak{p}$ is an artinian local ring for each $\mathfrak{p}\in \mathrm{Ass}(\overline{A}/p\overline{A})$.
		\item\label{lem:localization_injection_inj} The localization map $\overline{A}/p\overline{A} \to \prod_{\mathfrak{p}\in \mathrm{Ass}(\overline{A}/p\overline{A})} \overline{A}_\mathfrak{p}/p\overline{A}_\mathfrak{p}$ is an injection.
	\end{enumerate}
\end{lemma}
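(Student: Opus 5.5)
The plan is to show that $A/(p,I)$ is Cohen--Macaulay; then it has no embedded primes, and both (i) and (ii) follow from standard commutative algebra. I would first check that $p, d$ (with $d$ a local generator of $I$) form a regular sequence locally on $A$.

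The regular prism $A$ is noetherian and $(p,I)$-adically complete, so every maximal ideal $\mathfrak{m}$ of $A$ contains $(p,I)$. The ring $A_\mathfrak{m}$ is regular local, hence Cohen--Macaulay. Work locally where $I=(d)$. As in the proof of \Cref{prop:reg_imply_Frob-regular}, we have $\delta(p)=1-p^{p-1}$, a unit, so $p\notin \mathfrak{m}^2$. Moreover $p$ is a nonzero-divisor in the domain $A_\mathfrak{m}$, since it is nonzero there. Therefore $A_\mathfrak{m}/p$ is regular, in particular a domain, and Cohen--Macaulay.

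Next I need $d$ to be a nonzero-divisor on $A_\mathfrak{m}/p$, i.e.\ $d\notin pA_\mathfrak{m}$ (as $A_\mathfrak{m}/p$ is a domain). Suppose $d=pu$. Then $d$ distinguished means $\delta(d)$ is a unit. But $\delta(pu)=p^{p-1}\varphi(u)\cdot(\text{stuff})$: explicitly $\delta(pu)=\varphi(p)\delta(u)+u^p\delta(p)$ with $\varphi(p)=p$. This forces $u^p(1-p^{p-1})+p\delta(u)$ to be a unit, so $u$ is a unit. Hence $I=(p)$ locally, and this is excluded because $p\in I$ would contradict boundedness/transversality.

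I expect this last exclusion to be the delicate point. If the prism is allowed to be crystalline ($I=(p)$), then $A/(p,I)=A/p$ is regular and the claim holds anyway. So in either case $A_\mathfrak{m}/(p,I)$ is Cohen--Macaulay: it is either $A_\mathfrak{m}/p$, or a quotient of the Cohen--Macaulay domain $A_\mathfrak{m}/p$ by a nonzero element.

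A noetherian Cohen--Macaulay ring has no embedded associated primes. For any prime $\mathfrak{q}$ of $A/(p,I)$, choose a maximal ideal $\mathfrak{m}\supseteq\mathfrak{q}$; then associated primes localize, and in the CM local ring all associated primes are minimal. This proves the first part of (i). For $\mathfrak{p}$ associated, hence minimal, the localization $\overline{A}_\mathfrak{p}/p$ is noetherian of dimension zero, i.e.\ artinian local.

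For (ii), the kernel of $M\to\prod_{\mathfrak{p}\in\Ass(M)}M_\mathfrak{p}$, with $M=\overline{A}/p$, is a submodule $N$. If $N\neq0$, then $N$ has an associated prime $\mathfrak{q}$, which lies in $\Ass(M)$. Its localization $N_\mathfrak{q}$ is nonzero, yet it is contained in $\ker(M_\mathfrak{q}\to M_\mathfrak{q})=0$, a contradiction. Thus the map is injective; this is a standard fact and needs no hypothesis beyond noetherianity.
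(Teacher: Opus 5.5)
Your argument is correct, but it takes a different route from the paper.

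\textbf{The paper's route.} The paper reduces to a complete regular local ring. It then uses the flatness of $\varphi_A$ (\Cref{prop:reg_imply_Frob-regular}) and Kunz's theorem to get $A/pA$ regular, and writes $A/pA\simeq k\llbracket x_1,\ldots,x_n\rrbracket$ by Cohen's structure theorem. Next it factors the generator of $I\cdot A/pA$ into prime elements $\overline{g}_j$ of this UFD. This gives $\mathrm{Ass}(\overline{A}/p\overline{A})\subseteq\{(p,g_j)\}$, and since each $\overline{g}_j$ generates a height-one prime, every associated prime is minimal. For (ii) it simply cites the standard injectivity into the localizations at associated primes, which you reprove directly.

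\textbf{Your route.} You get regularity of $A_\mathfrak{m}/p$ directly from $p\notin\mathfrak{m}^2$. This is exactly the $\delta(p)$ computation inside the proof of \Cref{prop:reg_imply_Frob-regular}, so you skip the detour through Frobenius flatness and Kunz. You then replace the UFD/height count by the unmixedness of Cohen--Macaulay rings, applied to the hypersurface $A_\mathfrak{m}/(p,d)$, or to $A_\mathfrak{m}/p$ itself in the case $IA_\mathfrak{m}=pA_\mathfrak{m}$.

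\textbf{Comparison.} Your approach needs no completion and no Cohen structure theorem. It also never invokes \Cref{prop:reg_imply_Frob-regular}, whose statement assumes transversality, so it applies verbatim to every regular prism, crystalline ones included. And it proves more: $A/(p,I)$ is Cohen--Macaulay, indeed locally a hypersurface in a regular ring. The paper's approach instead gives an explicit list of the associated primes, as the height-one primes $(p,g_j)$ cut out by the prime factors of $I$ modulo $p$.

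\textbf{Two small points to tidy up.}
\begin{itemize}
\item Your parenthetical claim that $I=(p)$ would contradict boundedness is false, since crystalline prisms are bounded. It is harmless, because you treat that case separately and it does give a regular ring.
\item Your $\delta$-computations take place in $A_\mathfrak{m}$, so two facts should be justified. First, the $\delta$-structure extends to $A_\mathfrak{m}$: for $s\notin\mathfrak{m}$ one has $\varphi(s)\equiv s^p\not\equiv 0 \bmod \mathfrak{m}$, because $p\in\mathfrak{m}$. Second, a generator $d$ of $IA_\mathfrak{m}$ is distinguished. If $\delta(d)\in\mathfrak{m}$, then $p\in(d,\varphi(d))=(d,p\delta(d))$ forces $p\in(d)$. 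But then $\delta(p)=1-p^{p-1}$ would lie in $\mathfrak{m}$, which is impossible.
\end{itemize}
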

\begin{proof}
	Part \ref{lem:localization_injection_inj} follows by applying \cite[\href{https://stacks.math.columbia.edu/tag/0311}{Tag 0311}]{stacks-project} to $M=\overline{A}/p\overline{A}$.
	It is left to show that every prime ideal in $\mathrm{Ass}_{\overline{A}}(\overline{A}/p\overline{A})$ is minimal.
	The statement is local and can be checked around each closed point of $V(p,I)\subset \Spec(A)$, hence we may assume $A$ is a $p$-torsionfree complete regular local ring with a flat Frobenius structure $\varphi_A:A\to A$ (\Cref{prop:reg_imply_Frob-regular}).
	Moreover, since $\varphi_A$ is $\mathbb{Z}_p$-linear, by taking the base change along the mod $p$ reduction, we know the Frobenius endomorphism $\varphi_{A/pA}:A/pA\to A/pA$ is finite flat as well.
	So by Kunz's theorem \cite[\href{https://stacks.math.columbia.edu/tag/0EC0}{Tag 0EC0}]{stacks-project}, the ring $A/pA$ is a complete regular ring in characteristic $p$.
	Thus by Cohen's structural theorem \cite[\href{https://stacks.math.columbia.edu/tag/0C0S}{Tag 0C0S}]{stacks-project}, we have $A/pA\simeq k\llbracket x_1,\ldots,x_n \rrbracket$ for a field $k$ in characteristic $p$.
	If the ideal $I\cdot A/pA$ is the zero ideal (so that $I=pA$ and $(A,I)$ is a crystalline prism), then the claim follows from the above structural result.
	Otherwise, note that since the power series ring is a unique factorization domain, the non-zero invertible ideal $I\cdot A/pA$ is principal, and its generator $\overline{f}$ is a finite product of prime elements $\overline{g}_j$ in $k\llbracket  x_1,\ldots, x_n\rrbracket$.
	We let $g_j\in \overline{A}$ be any lift of $\overline{g}_j$ along the surjection $\overline{A}\to \overline{A}/p=k\llbracket x_1,\ldots, x_n \rrbracket/(\overline{f})$.
	Then by \cite[\href{https://stacks.math.columbia.edu/tag/00LB}{Tag 00LB}]{stacks-project}, we have $\mathrm{Ass}_{\overline{A}}(\overline{A}/p\overline{A})\subset \{(p,g_j)_j\}$.
	Notice that each prime factor $\overline{g}_j$ generates a height $1$ prime ideal in $k\llbracket x_1,\ldots, x_n \rrbracket$.
	So for each $j$, the Krull dimension of the quotient ring $\overline{A}/(p,g_j)\overline{A}=k\llbracket x_1,\ldots, x_n \rrbracket/\overline{g}_j$ is equal to $n-1$.
	Hence by the dimensional reason, every element in $\mathrm{Ass}_{\overline{A}}(\overline{A}/p\overline{A})$ is minimal.
\end{proof}

For the convenience of our discussion, we introduce the following notations on the complete localizations.
\begin{construction}
	\label{const:localization_of_regular_prism}
	Let $(A,I)$ be a transversal regular prism.
	\begin{enumerate}
		\item	\label{const:localization_of_regular_prism_dvr} 	
		For each prime ideal $\mathfrak{p}\in \mathrm{Ass}_{\overline{A}}(\overline{A}/p\overline{A})$, we let $\mathcal{O}_{L_{\mathfrak{p}}}$ be the $p$-completion of the localization $\overline{A}_{\mathfrak{p}}$, and let $L_{\mathfrak{p}}$ be the ring $\mathcal{O}_{L_\mathfrak{p}}[1/p]$.
		
		\item \label{const:localization_of_regular_prism_prism}
		By \cite[\href{https://stacks.math.columbia.edu/tag/05DZ}{Tag 05DZ}]{stacks-project} and the equality $\overline{A}/p\overline{A}=A/(p,I)A$, the closed immersion $\Spec(\overline{A}) \to \Spec(A)$ induces a natural bijection between the two sets of primes ideals 
		\[
		\mathrm{Ass}_{\overline{A}}(\overline{A}/p\overline{A}) \longrightarrow \mathrm{Ass}_A(\overline{A}/p\overline{A}),~\mathfrak{p} \longmapsto \mathfrak{P}.
		\]
		We let $A_{L_\mathfrak{p}}$ be the $(p,I)$-completion of the localization $A_\mathfrak{P}$.
	\end{enumerate}
\end{construction}

\begin{proposition}
	\label{prop:localization_injection}
	Let $(A,I)$ be a transversal regular prism, and let $n\in \mathbb{N}\cup\{\infty\}$.
	\begin{enumerate}[label=\upshape{(\roman*)}]
		\item\label{prop:localization_injection_R} The natural map $\overline{A}/p^n\overline{A} \longrightarrow \prod_{\mathfrak{p}\in \mathrm{Ass}_{\overline{A}}(\overline{A}/p\overline{A})} \mathcal{O}_{L_\mathfrak{p}}/p^n\mathcal{O}_{L_\mathfrak{p}}$ is an injection. 
		When $n=\infty$ the target is a finite product of complete discrete valuation rings $\mathcal{O}_{L_\mathfrak{p}}$ of mixed characteristic $(0,p)$.
		\item\label{prop:localization_injection_A} For each $\mathfrak{p}\in \mathrm{Ass}_{\overline{A}}(\overline{A}/p\overline{A})$, the pair $(A_{L_\mathfrak{p}}, IA_{L_\mathfrak{p}})$ naturally admits a structure of a regular prism over $\mathcal{O}_{L_\mathfrak{p}}$ that is compatible with $(A,I)$.
		Moreover, the canonical map $A/p^nA\longrightarrow \prod_{\mathfrak{p}\in \mathrm{Ass}_{\overline{A}}(\overline{A}/p\overline{A})} A_{L_\mathfrak{p}}/p^nA_{L_\mathfrak{p}}$ is an injection.
	\end{enumerate}
\end{proposition}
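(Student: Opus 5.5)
The plan is to prove the finite-level injectivity statements by induction on $n$ via $p$-torsionfreeness, then pass to $n=\infty$ by a limit argument. The discrete valuation ring and prism structures come from a dimension count at $\mathfrak{P}$, together with the same ``$\delta$ of an element of $\mathfrak{m}^2$'' trick used in \Cref{prop:reg_imply_Frob-regular}.

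\emph{Structure of $\mathcal{O}_{L_\mathfrak{p}}$ and $A_{L_\mathfrak{p}}$.}
\begin{enumerate}[label=\upshape{(\arabic*)}]
\item Fix $\mathfrak{p}\in\mathrm{Ass}(\overline{A}/p\overline{A})$ and the corresponding $\mathfrak{P}\subset A$. By \Cref{lem:localization_injection}, $\mathfrak{P}$ is minimal over $(p,I)$.
\item Locally $I=(f)$, and by transversality $p,f$ is a regular sequence in $A$. Hence $A_\mathfrak{P}$ is a regular local ring of dimension $2$.
\item The ring $A_\mathfrak{P}/p$ is regular, by the Kunz argument of \Cref{lem:localization_injection}. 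It has dimension $1$, so it is a discrete valuation ring.
\item To see that $\overline{A}_\mathfrak{p}=A_\mathfrak{P}/f$ is regular, it suffices to show $f\notin \mathfrak{P}^2A_\mathfrak{P}$. If $f\in\mathfrak{P}^2$, then $\delta(f)\in\mathfrak{P}$, because $\delta$ sends $\mathfrak{m}^2$ into $\mathfrak{m}$ after passing to the complete local ring with its induced $\delta$-structure, exactly as in \Cref{prop:reg_imply_Frob-regular}. This contradicts that $f$ is distinguished, i.e.\ $\delta(f)$ is a unit.
\item Hence $\overline{A}_\mathfrak{p}$ is a one-dimensional regular local ring, that is, a discrete valuation ring. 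It has mixed characteristic since $\overline{A}$ is $p$-torsionfree, so its $p$-completion $\mathcal{O}_{L_\mathfrak{p}}$ is a complete discrete valuation ring of mixed characteristic $(0,p)$.
\item For the prism structure, $\varphi(s)\equiv s^p \bmod p$ for $s\notin\mathfrak{P}$. Thus $\varphi(s)$ becomes invertible after $p$-completion of $A_\mathfrak{P}$, and the $\delta$-structure extends uniquely to the $(p,I)$-completed localization $A_{L_\mathfrak{p}}$, as in \cite[Lem.\ 2.18]{BS22}.
\item Since $(p,I)A_\mathfrak{P}$ is $\mathfrak{P}$-primary, $A_{L_\mathfrak{p}}$ is the $\mathfrak{P}$-adic completion of the regular local ring $A_\mathfrak{P}$, hence regular. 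The element $f$ stays distinguished, so $(A_{L_\mathfrak{p}},IA_{L_\mathfrak{p}})$ is a prism. Its reduction is the completion of $\overline{A}_\mathfrak{p}$, namely $\mathcal{O}_{L_\mathfrak{p}}$.
\end{enumerate}

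\emph{Injectivity.} For $n=1$, part \ref{prop:localization_injection_R} is \Cref{lem:localization_injection}\ref{lem:localization_injection_inj}, composed with the faithfully flat completion $\overline{A}_\mathfrak{p}/p\to\mathcal{O}_{L_\mathfrak{p}}/p$; here $\overline{A}_\mathfrak{p}/p$ is artinian, so this map is even an isomorphism. For part \ref{prop:localization_injection_A} with $n=1$, the argument has three steps:
\begin{enumerate}[label=\upshape{(\alph*)}]
\item $A/p$ is regular, hence a finite product of regular domains.
\item Each factor meets $V(I)$, since $I$ lies in the Jacobson radical of the $(p,I)$-complete ring $A$. So each factor contains some $\mathfrak{P}$ from the list, by \Cref{lem:localization_injection}.
\item Localization of a domain is injective, and the further completion $(A/p)_\mathfrak{P}\to A_{L_\mathfrak{p}}/p$ is faithfully flat.
\end{enumerate}
For finite $n>1$, use the short exact sequences $0\to M/p^{n-1}\xrightarrow{p} M/p^n\to M/p\to 0$. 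These hold for $M=\overline{A}$ and $M=\mathcal{O}_{L_\mathfrak{p}}$, and likewise for $A$ and $A_{L_\mathfrak{p}}$, since all of these are $p$-torsionfree; the targets are flat over the $p$-torsionfree sources. A diagram chase gives injectivity at level $n$ from levels $n-1$ and $1$. For $n=\infty$, all the rings are $p$-complete and noetherian, so the map is the inverse limit of the level-$n$ injections, and $\lim$ is left exact.

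\emph{Main obstacle.} The step I expect to be delicate is the $n=1$ injectivity for $A$ itself, rather than for $\overline{A}$. There one must ensure that every minimal prime of $A/p$ specializes into $V(p,I)$ at one of the finitely many $\mathfrak{P}$, which is where the completeness of $A$ and the component argument above are used. The correct extension of the $\delta$-structure to the completed localization also needs care.
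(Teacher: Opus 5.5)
Your proof is correct. For part (i) and for the prism structure on $A_{L_\mathfrak{p}}$ it follows the paper's outline, but it is more careful.

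\emph{Part (i).} The paper simply asserts that each $\overline{A}_\mathfrak{p}$ is a one-dimensional regular local ring. Your step (4) actually justifies regularity: $\delta(f)\in A^\times$ forces $f\notin\mathfrak{P}^2A_\mathfrak{P}$. This can be run directly in $A_\mathfrak{P}$, which carries a $\delta$-structure since $\varphi(A\setminus\mathfrak{P})\subset A\setminus\mathfrak{P}$, so you need not invoke the completion first. Your snake-lemma induction on $n$, followed by the limit, is just the expanded form of the paper's ``by $p$-completeness and $p$-torsionfreeness''.

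\emph{Injectivity in part (ii).} This is where your route genuinely differs. The paper says the injectivity of $A/p^nA\to\prod_\mathfrak{p} A_{L_\mathfrak{p}}/p^nA_{L_\mathfrak{p}}$ follows from (i). Implicitly this is an $I$-adic d\'evissage using transversality. The ring $A/p^n$ is $I$-adically separated, and each $A_{L_\mathfrak{p}}/p^n$ is $I$-torsionfree. So a nonzero kernel element, written as $d^k y$ with $y\notin I\cdot A/p^n$, would give a nonzero element of $\overline{A}/p^n$ dying in $\prod\mathcal{O}_{L_\mathfrak{p}}/p^n$, contradicting (i).

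You instead prove the level-one statement independently of (i). You decompose the regular ring $A/p$ into finitely many regular domains. Each of them meets $V(p,I)$ because $I\subset\mathrm{Jac}(A)$, hence contains one of the $\mathfrak{P}$. Then you induct on $n$.

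\emph{Comparison.} Your argument is self-contained and elementary, but it leans on $A/p$ being a product of domains (via Kunz). The paper's d\'evissage needs only transversality plus the reduction-level injectivity. It is also the same mechanism the paper reuses for the intersection formula for $A$ that follows this proposition.
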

\begin{remark}
	Here we note that by \Cref{prop:localization_injection}.\ref{prop:localization_injection_R}, each $A_{L_\mathfrak{p}}$ is in particular a complete regular local ring of dimension $2$.
\end{remark}
\begin{proof}
	We start with \ref{prop:localization_injection_R}.
	By assumption and \Cref{lem:localization_injection}.\ref{lem:localization_injection_inj}, each localization $\overline{A}_\mathfrak{p}$ is a $p$-torsionfree regular local ring of Krull dimension one such that $p$ is contained in the maximal ideal.
	So by taking the $p$-adic completion, the ring $\mathcal{O}_{L_{\mathfrak{p}}}$ is complete discrete valuation ring of mixed characteristic $(0,p)$.
	By the $p$-completeness and $p$-torsionfreeness of $\overline{A}$, the injectivity of the map $\overline{A}/p^n\overline{A} \longrightarrow \prod_{\mathfrak{p}\in \mathrm{Ass}_{\overline{A}}(\overline{A}/p\overline{A})} \mathcal{O}_{L_\mathfrak{p}}/p^n\mathcal{O}_{L_\mathfrak{p}}$ follows from \Cref{lem:localization_injection}.\ref{lem:localization_injection_inj}.
	
	For \ref{prop:localization_injection_A}, as the map $A\to A_{L_\mathfrak{p}}$ is a $(p,I)$-completion of a Zariski localization, 
	the ring $A_{L_\mathfrak{p}}$ is regular.
	Moreover, by \cite[Lem.\ 2.18]{BS22}, the ring $A_{L_\mathfrak{p}}$ admits a unique $\delta$-structure that is compatible with $A$.
	The induced $\delta$-pair $(A_{L_\mathfrak{p}}, IA_{L_\mathfrak{p}})$ is then a transversal prism, thanks to \cite[Lem.\ 3.1]{BS22}.
	In particular, the injectivity of the map $A/p^nA\longrightarrow \prod_{\mathfrak{p}\in \mathrm{Ass}_{\overline{A}}(\overline{A}/p\overline{A})} A_{L_\mathfrak{p}}/p^nA_{L_\mathfrak{p}}$  follows from that in \ref{prop:localization_injection_R}.
\end{proof}

Let $(A,I)$ be a transversal orientable regular prism, with $d$ a generator of $I$.
By assumption, the sequence $(d,p)$ is regular in the ring $A$.
In particular, by \cite[\href{https://stacks.math.columbia.edu/tag/062F}{Tag 062F}]{stacks-project}, since $\mathrm{H}_1(\mathrm{Kos}_A(d,p))=0$, we know there is no element $x\in A\backslash pA$ such that $dx\in pA$.
Hence the ring $A/pA$ has no $I$-torsion and the canonical map $A/pA \to A/pA[1/I]$ is injective; similarly for $A_{L_{\mathfrak{p}}}/p^nA_{L_{\mathfrak{p}}}$, where $\mathfrak{p}\in \mathrm{Ass}_{\overline{A}}(\overline{A}/p\overline{A})$.
So by combining this with \Cref{prop:localization_injection}, we get the following natural commutative diagram of injections into various (complete) localizations
\begin{equation}
	\label{diagram:inj_of_prisms}
	\begin{tikzcd}[column sep=small,row sep=small]
		&A/p^nA[1/I] \arrow[rd, hook]&\\
		A/p^n \arrow[ru, hook] \arrow[rd, hook] && \prod_{\mathfrak{p}\in \mathrm{Ass}(\overline{A}/p\overline{A})} A_{L_{\mathfrak{p}}}/p^nA_{L_{\mathfrak{p}}}[1/I]\\
		& \prod_{\mathfrak{p}\in \mathrm{Ass}(\overline{A}/p\overline{A})} A_{L_{\mathfrak{p}}}/p^nA_{L_{\mathfrak{p}}}, \arrow[ru, hook]&
	\end{tikzcd}
\end{equation}
which naturally extends to general transversal regular prisms by flat descent.
As we shall see below, the diagram induces an intersection formula of the prism $A$.
\begin{corollary}
	\label{cor:intersection_prism}
	Let $(A,I)$ be a transversal regular prism, and let $n\in \mathbb{N}\cup \{\infty\}$.
	The canonical maps in diagram (\ref{diagram:inj_of_prisms}) induce an isomorphism
	\begin{equation}
		\label{eq:inj_of_prisms_and_localizations}
		A \xrightarrow{\sim} \bigl( \prod_{\mathfrak{p}\in \mathrm{Ass}_{\overline{A}}(\overline{A}/p\overline{A})} A_{L_{\mathfrak{p}}}/p^nA_{L_{\mathfrak{p}}} \bigr) \bigcap (A/p^nA[1/I]).
	\end{equation}
\end{corollary}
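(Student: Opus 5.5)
I read the left-hand side of (\ref{eq:inj_of_prisms_and_localizations}) as $A/p^nA$, which is $A$ itself when $n=\infty$. For $n=\infty$ I read the right-hand side as the intersection of $\prod_{\mathfrak{p}} A_{L_\mathfrak{p}}$ and $A\langle 1/I\rangle$ inside $\prod_\mathfrak{p} A_{L_\mathfrak{p}}\langle 1/I\rangle$.

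I first treat a finite $n$ and reduce to the orientable case. Both sides of the claimed isomorphism are compatible with flat (indeed Zariski) localization on $\Spec(A)$, so I may assume $I=(d)$ is principal. Injectivity of the map is already contained in diagram (\ref{diagram:inj_of_prisms}), since all its arrows are injections. So the content is surjectivity. Take $x\in A/p^nA[1/I]$ whose image in each $A_{L_\mathfrak{p}}/p^n[1/I]$ lies in $A_{L_\mathfrak{p}}/p^n$. Write $x=y/d^m$ with $y\in A/p^nA$ and $m$ minimal. I will show that $m>0$ leads to a contradiction by proving $y\in d\cdot A/p^nA$.

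The key step is the case $m=1$, to which the general case reduces as follows. Put $x'=y/d$ and $x=x'/d^{m-1}$. By assumption $x'\in A_{L_\mathfrak{p}}/p^n$ for every $\mathfrak{p}$, because $d$ is a non-zero-divisor there. So it suffices to show that $y/d$ integral at every $L_\mathfrak{p}$ forces $y\in dA/p^n$.

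Integrality of $y/d$ at $L_\mathfrak{p}$ means that the image of $y$ vanishes in $A_{L_\mathfrak{p}}/(p^n,d)$. I then identify this quotient with $\mathcal{O}_{L_\mathfrak{p}}/p^n$. Indeed, $A_{L_\mathfrak{p}}$ is the $(p,I)$-completion of $A_\mathfrak{P}$, so reducing mod $(p^n,d)$ gives the localization $(\overline{A}/p^n)_\mathfrak{p}$. This agrees with $\mathcal{O}_{L_\mathfrak{p}}/p^n$, since $p$-completion does not change a ring modulo $p^n$. Hence the image of $y$ in $\overline{A}/p^n\overline{A}=A/(p^n,d)$ vanishes in $\prod_\mathfrak{p}\mathcal{O}_{L_\mathfrak{p}}/p^n$. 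By the injectivity in \Cref{prop:localization_injection}\ref{prop:localization_injection_R}, $y\in (p^n,d)$, that is, $y\in dA/p^nA$. I will use the regularity of $(d,p)$ (no $d$-torsion in $A/p^n$, via the Koszul remark) to divide by $d$ without ambiguity.

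For $n=\infty$ I pass to the limit. We have $A=\lim_n A/p^n$, and the intersections are fiber products, which commute with limits. Also $A\langle 1/I\rangle=\lim_n A/p^n[1/I]$, and similarly for the $A_{L_\mathfrak{p}}\langle 1/I\rangle$. The finite-level isomorphisms are compatible in $n$, so the limit isomorphism follows.

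I expect the main obstacle to be the identification $A_{L_\mathfrak{p}}/(p^n,d)\simeq\mathcal{O}_{L_\mathfrak{p}}/p^n$, together with the claim that $d$ stays a non-zero-divisor in $A_{L_\mathfrak{p}}/p^n$. Here I rely on the flatness of completion for noetherian rings and on the transversality of $(A_{L_\mathfrak{p}},IA_{L_\mathfrak{p}})$ from \Cref{prop:localization_injection}\ref{prop:localization_injection_A}.
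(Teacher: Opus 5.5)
Your proposal is correct and follows essentially the same route as the paper: after reducing to $I=(d)$ principal, an element of the intersection outside $A/p^nA$ produces some $y$ with $y/d$ integral at every $L_\mathfrak{p}$, the injectivity of the reduction $\overline{A}/p^n\overline{A}$ into the product of its localizations forces $d\mid y$, and the case $n=\infty$ follows by passing to the limit. The only minor differences are that you work at each level $p^n$ directly via \Cref{prop:localization_injection}.\ref{prop:localization_injection_R} and spell out the minimal-exponent reduction, whereas the paper first inducts down to $n=1$ and uses \Cref{lem:localization_injection}.\ref{lem:localization_injection_inj}.
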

\begin{proof}
	By induction and by taking the limits, it suffices to assume $n=1$.
	By taking a flat cover if necessary, we may assume $I=(d)$ is principal.
	We denote the right hand side as $N$.
	Then we notice that the injection (\ref{eq:inj_of_prisms_and_localizations}) becomes an isomorphism after inverting the ideal $I$.
	If the map (\ref{eq:inj_of_prisms_and_localizations}) is not surjective, then there is a non-zero element $x\in N$ such that $x\notin A$ but $dx\in A$.
	We let $\overline{dx}$ be the image of $dx$ in $\overline{A}=A/IA$, which is nonzero by assumption.
	Then the image of $\overline{dx}$ in $N/IN$ is 
	\[
	d\cdot (\text{image of}~x~\text{in}~N/IN),
	\]
	which in particular vanishes.
	So we conclude the proof by noticing that from \Cref{lem:localization_injection}.\ref{lem:localization_injection_inj}, we know the map
	\[
	\overline{A}/p\overline{A} \to N/IN=\prod_{\mathfrak{p}\in \mathrm{Ass}(A)} \overline{A}_{L_\mathfrak{p}}/p\overline{A}_{L_\mathfrak{p}}
	\]
	is an injection, which contradicts the existence of the element $x$.
\end{proof}

To prepare for the analysis of Frobenius modules, it is useful to equip ourselves with the explicit formula of the complete localization of the framed regular prism.
\begin{proposition}
	\label{prop:competion_of_A_at_max_ideal}
	Let $(A,I)$ be a framed regular prism, let $\mathfrak{m}$ be a maximal ideal of $A$ with its residue field $k'\colonequals k(\mathfrak{m})=A/\mathfrak{m}$.
	Let $\mathfrak{m}'$ be the maximal ideal of $A'\colonequals A\otimes_{W(k)} W(k')$ defined by its canonical surjection onto $k'$.
	There exists an isomorphism of regular prisms 
	\[
	(W(k')\llbracket x_1,\ldots,x_n \rrbracket, I') \longrightarrow ((A')^\wedge_{\mathfrak{m'}},I(A')^\wedge_{\mathfrak{m'}}),
	\] 
	with the following properties:
	\begin{itemize}
		\item for each $i\in \{1,\ldots, n\}$, the endomorphism $\varphi_{W(k')\llbracket x_1, \ldots, x_n \rrbracket}$ on $W(k')\llbracket x_1, \ldots, x_n \rrbracket$ sends $x_i$ to a monic polynomial in $x_i$ of degree $p$ such that
		\[
		x_i|\varphi_{W(k')\llbracket x_1, \ldots, x_n \rrbracket}(x_i),~\text{and}~\delta\bigl( \frac{\varphi_{W(k')\llbracket x_1, \ldots, x_n \rrbracket}(x_i)}{x_i}\bigr)~\text{is zero or a unit};
		\]
		\item the ideal $I'$ is generated by an element $d\in W(k')\llbracket x_1, \ldots, x_n \rrbracket$ whose constant term is $p$.
	\end{itemize}
\end{proposition}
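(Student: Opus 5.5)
We use the framing $((A_0,I_0),\Sigma)$ of $(A,I)$ and pass to the complete local ring at $\mathfrak{m}'$. First, the base change $A_0\to A_0\otimes_{W(k)}W(k')$ is $p$-completely \'etale, so by \cite[Lem.\ 2.18]{BS22} it carries a unique compatible $\delta$-structure. Hence $(A',IA')$ is again a framed regular prism, with framing $(A_0\otimes_{W(k)} W(k'),\Sigma)$. By construction $\mathfrak{m}'$ is a $k'$-rational maximal ideal, and it contains $(p,I)$ since $A$ is $(p,I)$-complete. Let $\mathfrak{m}_0'$ be its preimage in $A_0':=A_0\otimes_{W(k)}W(k')$. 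Then $A_0'$ is $p$-completely \'etale over $W(k')\langle t_a\rangle_{a\in\Sigma}$, and $A'$ is the $I_0$-completion of $A_0'$. The complete local rings therefore agree:
\[
(A')^\wedge_{\mathfrak{m}'}\simeq (A'_0)^\wedge_{\mathfrak{m}'_0}\simeq W(k')\llbracket t_a-\lambda_a\rrbracket_{a\in\Sigma},
\]
where $\lambda_a\in W(k')$ is the Teichm\"uller lift of the image $\bar t_a\in k'$. The second isomorphism holds because a $p$-completely \'etale map induces an isomorphism on completions at a $k'$-point lying over a $k'$-point.

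Next we choose coordinates. Set $x_a:=t_a-\lambda_a$ and $n=|\Sigma|$. Since $\delta(t_a)=0$ and $\lambda_a$ is Teichm\"uller, we have $\varphi(t_a)=t_a^p$ and $\varphi(\lambda_a)=\lambda_a^p$. Therefore
\[
\varphi(x_a)=(x_a+\lambda_a)^p-\lambda_a^p,
\]
which is a monic polynomial of degree $p$ in $x_a$ with zero constant term, so $x_a\mid\varphi(x_a)$. The quotient is
\[
\varphi(x_a)/x_a=\sum_{j=1}^{p}\binom{p}{j}\lambda_a^{p-j}x_a^{j-1}.
\]
If $\lambda_a=0$, this quotient is $x_a^{p-1}$, and $\delta(x_a^{p-1})=0$ because $\delta(x_a)=0$. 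If $\lambda_a\neq0$ (for instance when $\Sigma\subset A^\times$, as in \Cref{thm:regular prism}), the quotient has constant term $p\lambda_a^{p-1}$. To evaluate $\delta$ of it, compute in the quotient $W(k')\llbracket x_a\rrbracket/(x_a)$, where $\delta$ is still well defined because $(x_a)$ is a $\delta$-ideal ($\delta(x_a)=0$). There $\delta$ of the quotient reduces to $\delta(p\lambda_a^{p-1})$. Since $\lambda_a$ is a Teichm\"uller unit, this equals $\lambda_a^{p(p-1)}(1-p^{p-1})$, which is a unit. So by locality of the complete ring, $\delta$ of the quotient is a unit. This settles the first bullet point.

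For the second bullet, $I'=I(A')^\wedge_{\mathfrak{m}'}$ is invertible in a local ring, hence principal; write $I'=(d)$. Because $d$ is distinguished, $\delta(d)$ is a unit. Write $d=c+(\text{terms in }x)$ with constant term $c\in W(k')$. Reducing modulo the $\delta$-ideal $(x_1,\dots,x_n)$ gives a $\delta$-map onto $W(k')$ sending $d\mapsto c$ with $\delta(c)$ a unit. Moreover $c\in pW(k')$: indeed $d\in\mathfrak{m}'$, and $\mathfrak{m}'\cap W(k')=(p)$. So $c=pu$ for some $u\in W(k')$, and the condition that $\delta(pu)$ is a unit forces $u\in W(k')^\times$. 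We may then replace $d$ by $u^{-1}d$, a generator of the same ideal, whose constant term is exactly $p$.

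The main obstacle is the first step: identifying $(A')^\wedge_{\mathfrak{m}'}$ as a $\delta$-ring with $W(k')\llbracket x_a\rrbracket$ carrying exactly the Frobenius $t_a\mapsto t_a^p$. This needs care in two places. First, the completions along $I_0$ and along $\mathfrak{m}'$ must be compatible. Second, the residue field extension has to be absorbed: this is why we pass to $A'$, so that $\mathfrak{m}'$ becomes $k'$-rational and the \'etale map induces an isomorphism of complete local rings. Once that identification is made, the uniqueness of $\delta$-structures on \'etale and complete extensions \cite[Lem.\ 2.18]{BS22} transports everything else.
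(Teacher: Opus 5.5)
Your proof is correct and follows essentially the same route as the paper: the coordinates $x_i=t_i-[a_i]$, the identification of $(A')^\wedge_{\mathfrak{m}'}$ with $W(k')\llbracket x_1,\ldots,x_n\rrbracket$ via the $p$-completely \'etale framing, the computation of $\varphi(x_i)$ and $\delta(\varphi(x_i)/x_i)$, and the reduction along $x_i\mapsto 0$ to $W(k')$ to normalize the constant term of $d$ all match.

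One small correction: $\delta(x_a)=\sum_{0<j<p}\frac{1}{p}\binom{p}{j}\lambda_a^{p-j}x_a^{j}$ is not zero when $\lambda_a\neq 0$. It does lie in $(x_a)$, so $(x_a)$ and $(x_1,\ldots,x_n)$ are still $\delta$-ideals and your argument goes through. You should also record, as the paper does, that $p\in I+\varphi(I)$ persists in the completion, so that the completion is a prism and $d$ is distinguished.
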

\begin{proof}
	We let $(A_0, I_0)$ be the framing of $(A,I)$, with a $p$-completely \'etale map of $\delta$-rings $W(k)\langle t_1, \ldots, t_n \rangle \to A_0$, where the $\delta$-structure vanishes on $t_i$.
	As the ring $A$ is $(p,I)$-complete, the ideal $(p,I)$ is contained in the maximal ideal $\mathfrak{m}$.
	Moreover, since the mod $(p,I)$ reduction of $A$ is a finite type $k$-algebra, the residue field $k'$ is a finite extension of $k$.
	So we get a commutative diagram of Frobenius equivariant maps
	\[
	\begin{tikzcd}
		W(k) \ar[r] \ar[d] & W(k) \langle t_1, \ldots, t_n \rangle \ar[r] & A \ar[rd] & \\
		W(k') \ar[rrr] &&& k'=A/\mathfrak{m},
	\end{tikzcd}
	\]
	which naturally extends to the Frobenius equivariant maps of $W(k')$-algebras
	\begin{equation}
		\label{eq:lem:competion_of_A_at_max_ideal_1}
		W(k') \langle t_1, \ldots, t_n \rangle \longrightarrow A'=A\otimes_{W(k)} W(k') \longrightarrow k'.
	\end{equation}
	We let $a_i$ be the image of $t_i$ in $k'$.
	Then by taking the formal completion for the maps in (\ref{eq:lem:competion_of_A_at_max_ideal_1}) with respect to the surjections onto $k'$, we obtain the Frobenius equivariant maps as below
	\begin{equation}
		\label{eq:lem:competion_of_A_at_max_ideal_2}
		W(k') \llbracket x_1, \ldots, x_n \rrbracket \longrightarrow (A')^\wedge_\mathfrak{m'} \longrightarrow k',
	\end{equation}
	where each $x_i$ is the image of the element $t_i-[a_i]\in W(k')\langle t_1, \ldots, t_n \rangle$.
	Note that since the map $W(k) \langle t_1, \ldots, t_n \rangle \to A_0$ is $p$-completely \'etale, the associated map $W(k')\llbracket x_1, \ldots, x_n \rrbracket \to A'^\wedge_\mathfrak{m'}$ is formally \'etale and induces an isomorphism of cotangent spaces at the maximal ideals.
	Hence by checking the graded pieces, the map $W(k')\llbracket x_1, \ldots, x_n \rrbracket \to A'^\wedge_\mathfrak{m'}$  is a Frobenius equivariant isomorphism.
	Here we also note that the Frobenius structure sends each $x_i$ onto $(x_i+[a_i])^p-\varphi_{W(k')}([a_i])$, where the latter is a degree $p$ monic polynomial in $x_i$  whose constant term is zero (thanks to the equality $\varphi_{W(k')}([a_i])=[a_i]^p$).
	The latter in particular implies that Frobenius endomorphism on $W(k')\llbracket x_1, \ldots, x_n \rrbracket$ is a finite flat cover.
	Moreover, by expanding the polynomial and the binomial formula, the element $\frac{(x_i+[a_i])^p-\varphi_{W(k')}([a_i])}{x_i}$ is of the form $x_ih(x_i)+p[a]^{p-1}$.
	In particular, the element $\delta(\frac{(x_i+[a_i])^p-\varphi_{W(k')}([a_i])}{x_i})$ is of the form $x_ig(x_i)+(1-p^{p-1})[a_i]^{p(p-1)}$, which is a unit if $a_i\neq 0$.
	If $a_i=0$, then we have $x_i=t_i$, and thus the $\delta$-stricture of $\frac{\varphi(x_i)}{x_i}=x_i^{p-1}$ is zero.
	
	The above produces a Frobenius equivariant isomorphism between two regular rings $W(k') \llbracket x_1, \ldots, x_n \rrbracket \longrightarrow (A')^\wedge_\mathfrak{m'}$, and we then claim that this map naturally underlies a morphism of prisms.
	First of all, since both rings are $p$-torsionfree, their Frobenius endomorphisms uniquely determine $\delta$-structures on them respectively, which are compatible under the map.
	Moreover, since $(A,I)$ is a prism, we know the element $p$ is contained in the ideal $I+\varphi(I)$, and hence in $I(A')^\wedge_\mathfrak{m'} + \varphi(I) (A')^\wedge_\mathfrak{m'}$.
	Thus the $\delta$-pair $((A')^\wedge_\mathfrak{m'}, I(A')^\wedge_\mathfrak{m'})$ is a prism as well (\cite[Def.\ 3.2.(1)]{BS22}).

	We let $I'$ be the unique invertible ideal of $W(k') \llbracket x_1, \ldots, x_n \rrbracket \longrightarrow (A')^\wedge_\mathfrak{m'}$ that is induced by $I(A')^\wedge_\mathfrak{m'}$ via the isomorphism $W(k') \llbracket x_1, \ldots, x_n \rrbracket \longrightarrow (A')^\wedge_\mathfrak{m'}$.
	Then the $\delta$-equivariant isomorphism makes $(W(k') \llbracket x_1, \ldots, x_n \rrbracket,I')$ a prism.
	Here by the orientability of the prism $(A,I)$ (\Cref{thm:regular prism}), the ideal $I'$ is principal and can be generated by an element $d$. 
	We write the element $d$ as $up^n+g$, with $u\in W(k')^\times$, $n\geq 0,$ and $g\in (x_1,\ldots,x_n)W(k')\llbracket x_1, \ldots, x_n \rrbracket$.
	If $n=0$, then by construction the element $d$ is a unit in $W(k')\llbracket x_1, \ldots, x_n \rrbracket$, which is impossible since the ring $W(k')\llbracket x_1, \ldots, x_n \rrbracket$ is also $d$-adically complete.
	On the other hand, by taking the surjection along the equivariant map $W(k')\llbracket x_1, \ldots, x_n \rrbracket \xrightarrow{x_i\mapsto 0} W(k')$, the containment $p\in (I',\varphi(I'))$ implies that $p\in (up^n, \varphi(u)p^n)\subset W(k')$, which is possible only when $n=1$.
	Hence by multiplying with the unit $u^{-1}$, we see the ideal $I'$ can be generated by a power series in $x_i$ whose constant term is the element $p$.
\end{proof}


\subsection{Coproduct of prisms and relative prismatic cohomology}
\label{sub:coproduct of prisms}
In this subsection, we show that the coproduct exists for a large collection of prisms over a regular $p$-adic formal scheme.
In addition, we consider the special case when one of the factors is perfect, where we prove that the relative prismatic cohomology often coincides with the coproduct.

For the convenience of our discussion, we sort out a subcategory of bounded prisms that satisfy some flatness conditions.
\begin{definition}[Flatness assumption]
	\label{ass:flat}
	Let $X$ be a $p$-adic formal scheme, and let $(B,J)$ be a bounded prism in $X_\Prism$.
	\begin{itemize}
		\item We say $(B,J)$ is a \emph{(faithfully) flat prism} if $\overline{B}$ is $p$-completely (faithfully) flat over $X$.
		\item We say $(B,J)$ is a \emph{flat crystalline prism} if $J=pB$ and $\overline{B}=B/pB$ is flat over $X_{p=0}$.
	\end{itemize}
\end{definition}
Note that a flat prism over $X$ is in particular transversal if $X$ is $p$-torsionfree.

\begin{theorem}[Existence of coproducts]
	\label{thm:coproduct_in_general}
	Let $X$ be a regular $p$-adic formal scheme, and let $(B_i,J_i)$ for $i=1,2$ be two bounded prisms such that each of them admit a map from a flat prism.
	\begin{enumerate}[label=\upshape{(\roman*)}]
		\item\label{thm:coproduct of prism general} The coproduct  $(B_1,J_1)\coprod (B_2,J_2)$ exists in $X_\Prism$.
		\item\label{thm:coproduct of prism faithful} If $(B_1,J_1)$ is a (faithfully) flat prism in $X_\Prism$, then the coproduct is completely (faithfully) flat over $(B_2,J_2)$.
	\end{enumerate}
	Assume $(B_2,J_2)$ is a framed regular prism $(A,I)$ for a framing $\Sigma=\{t_1,\ldots,t_n\}$, and $(B_1,J_1)$ admits a map of prisms $f:(A,I)\to (B_1,J_1)$.
	\begin{enumerate}[resume, label=\upshape{(\roman*)}]
		\item\label{thm:coproduct of prism regular and regular}  		
		The underlying $\delta$-ring of $(B_1,J_1) \coprod (A,I)$ is isomorphic to the complete $\delta$-envelope 
		\[
		B_1\{\frac{f(t_1)\otimes 1 - 1\otimes t_1}{J_1}, \ldots, \frac{f(t_n)\otimes 1 - 1\otimes t_n}{J_1}\},
		\] and its mod $I$ reduction is isomorphic to the complete divided power polynomial $\overline{B}_1\{u_1,\ldots,u_n\}^{\mathrm{pd}}$, where $u_i$ is the image of $ \frac{f(t_n)\otimes 1 - 1\otimes t_n}{d}$ and $d$ is a generator of $I$.
		In particular, the coproduct is completely free over $(B_1,J_1)$ (cf. \Cref{def:completely_projective}).
	\end{enumerate}
\end{theorem}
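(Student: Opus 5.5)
Part \ref{thm:coproduct of prism regular and regular} is the technical heart, so I will prove it first and then deduce \ref{thm:coproduct of prism general} and \ref{thm:coproduct of prism faithful} from it by covering arguments.

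\emph{Step 1: the framed case.} Let $f:(A,I)\to(B_1,J_1)$ be given and fix the framing $(A_0,I_0)$ with $W(k)\langle t_1,\ldots,t_n\rangle\to A_0$ $p$-completely \'etale. A map from the coproduct to a prism $(C,L)\in X_\Prism$ is a pair of maps $B_1\to C$ and $A\to C$ that agree on reductions over $X$. Since $A_0$ is $p$-completely \'etale over the framing ring and $A$ is an $I_0$-completion, the map $A\to C$ is determined by the images $s_i$ of the $t_i$ (\cite[Lem.\ 2.18]{BS22}). Compatibility over $\overline{C}$ forces $s_i-f(t_i)\in LC=J_1C$. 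Hence the coproduct should be the prismatic envelope of $B_1\widehat\otimes W\{t_1,\ldots,t_n\}$ along $f(t_i)\otimes1-1\otimes t_i$ relative to $J_1$, followed by an \'etale base change recovering $A_0$. Because $f(t_i)\otimes 1-1\otimes t_i$ is a regular sequence relative to $B_1$ (it is a free polynomial shift), \cite[Prop.\ 3.13]{BS22} applies and gives the complete $\delta$-envelope $B_1\{\frac{f(t_i)\otimes1-1\otimes t_i}{J_1}\}$ as a flat, bounded prism. Its mod $J_1$ reduction is the divided power polynomial algebra by \cite[Cor.\ 2.39, Lem.\ 2.35]{BS22}: for $d$ distinguished, $\delta$-adjoining $x/d$ becomes divided powers modulo $d$. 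The \'etale base change is absorbed, since the \'etale lift is unique once the $t_i$ are matched. Complete freeness over $B_1$ then follows from the divided power description together with $(p,J)$-complete Nakayama.

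\emph{Step 2: general existence.} Given arbitrary bounded $(B_i,J_i)$ receiving maps from flat prisms, I will first treat the case where $(B_1,J_1)$ is itself flat. By \Cref{thm:regular prism} and \Cref{cor:regular prism cover}, after a $p$-completely \'etale localization of $X$, a framed regular prism $(A,I)$ covers $X_\Prism$. Flatness of $\overline{B}_1$ over $X$ makes the base change along the \'etale cover harmless, so $(B_1,J_1)$ admits, flat-locally, a map from $(A,I)$. I will then glue the coproducts of Step 1, using that coproducts with an \'etale cover commute with \'etale localization on $\overline{B}$. For general $(B_2,J_2)$, I intend to use the flat prism mapping to it to form $B_1\amalg B_2$ as a pushout of $B_2$ along a coproduct of flat prisms. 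Pushouts of prisms exist by completing the tensor product, and boundedness should follow from the flatness just established.

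\emph{Step 3: flatness.} Part \ref{thm:coproduct of prism faithful} follows by descent. Locally the coproduct with a flat $(B_1,J_1)$ is a base change of the completely free extension of Step 1, now with the roles of the two factors swapped. The remaining factor $\overline{B}_1$ over $X$ carries the (faithful) flatness. The main obstacle is Step 2: making the descent and gluing rigorous when $(B_1,J_1)$ does not itself receive a map from $(A,I)$, and keeping boundedness under the pushout. There I would try to reduce everything to the $(p,J)$-complete flatness statements of \cite[Prop.\ 7.11]{BS22}.
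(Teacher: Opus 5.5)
Your Step 1 is a legitimate variant of the paper's proof of (iii). You represent pairs of maps by the envelope of $B_1\widehat{\otimes}_W W\langle t_1,\ldots,t_n\rangle$ along $(J_1,x_i)$, where $x_i=f(t_i)\otimes 1-1\otimes t_i$, and you absorb $W\langle t\rangle\to A_0\to A$ by unique \'etale lifting. This is the paper's isomorphism $\alpha$ phrased through the universal property.

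\textbf{The mod $J_1$ step.} Your justification of the reduction is wrong as stated. The principle that $\delta$-adjoining $x/d$ yields divided powers modulo $d$ is false in general, and \cite[Cor.\ 2.39, Lem.\ 2.35]{BS22} concern crystalline prisms. Writing $x=dy$, one has $\delta(x)\equiv\delta(d)\bigl(y^p+p\,\delta(y)\bigr)\pmod d$ with $\delta(d)$ a unit. So $\delta(y)$ behaves like $-y^p/p$ only if $\delta(x)\equiv 0\pmod d$, and the principle fails, e.g., when $x$ is a free $\delta$-variable.

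What is needed is the paper's observation that $\delta^m(x_i)\in(x_i)$ for all $m$. Since $\delta(f(t_i)\otimes 1)=\delta(1\otimes t_i)=0$, the element $\delta(x_i)$ is $\frac{X^p-Y^p-(X-Y)^p}{p}$ evaluated at $(f(t_i)\otimes 1,1\otimes t_i)$. That polynomial is divisible by $X-Y$, so $\delta(x_i)\in(x_i)$, and the identity $\delta(ax)=a^p\delta(x)+x^p\delta(a)+p\delta(a)\delta(x)$ makes $(x_i)$ $\delta$-stable. Only then does the recursive argument (as in \cite{DL26}) give $\overline{B}_1\{u_1,\ldots,u_n\}^{\mathrm{pd}}$. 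This also forces the ambient ring to be $W\langle t\rangle$ with $\delta(t_i)=0$. Your $W\{t_1,\ldots,t_n\}$ is, in the paper's notation, the free $\delta$-ring, whose envelope is a free $\delta$-ring in the $y_i$ with polynomial reduction.

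\textbf{The real gap: Step 2.} This step carries the whole content of (i) and (ii), and you leave it as a plan; as sketched it does not work. Covering only $B_1$ by some $C_1$ that receives a map from $(A,I)$ gives you $C_1\coprod A$, not $C_1\coprod B_2$. You would have to cover both factors and form $C_1\widehat{\otimes}_A(A\coprod A)\widehat{\otimes}_A C_2$. You would then have to descend a bounded prism, together with its universal property, along two \v{C}ech nerves. None of this is set up.

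The missing idea, which is how the paper proceeds, is that no gluing of coproducts is needed:
\begin{enumerate}
\item The candidate coproduct is the prismatic envelope of $(B_1\otimes_W B_2)^\wedge_{(p,1\otimes J_2)}$ along the kernel of its surjection onto $(\overline{B}_1\otimes_R\overline{B}_2)^\wedge_p$. It has the right universal property by construction.
\item The only thing to verify is the regularity hypothesis of \cite[Prop.\ 3.13]{BS22}.
\item This is checked after the completely faithfully flat base change to $(C_1\otimes_W C_2)^\wedge$. There the kernel is generated by the image of the $(p,1\otimes I)$-completely regular sequence $(I\otimes 1,\,t_i\otimes 1-1\otimes t_i)$ of $(A\otimes_W A)^\wedge$.
\end{enumerate}
The same proposition then gives the (faithful) flatness in (ii) directly, so your Step 3 descent is unnecessary. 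Your Step 1 can be kept as the proof of the explicit formula in (iii).
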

\begin{proof}
	By the categorical description of the coproduct and the fiber product, to prove \ref{thm:coproduct of prism general} and \ref{thm:coproduct of prism faithful}, it suffices to assume that both prisms are flat prisms.
	In addition, by the uniqueness of the coproduct (if exists), it suffices to prove its existence up to Zariski localization.
	So we assume $X=\Spf(R)$ is affine, and the maps to the reductions $R\to \overline{B}_i$ is ($p$-completely) faithfully flat.
	In addition, by the weak initiality of the framed regular prism and by further shrinking $X$ Zariski locally if necessary, we may assume that 
	\begin{itemize}
		\item there is a framed regular prism $(A,I)$ with $\overline{A}$ finite \'etale over $X$, and
		\item there are completely faithfully flat covers $(B_i,J_i)\to (C_i,L_i)$ for $i=1,2$, such that $(C_i,L_i)$ admit maps from $(A,I)$.
	\end{itemize}
Here we note that by assumption, since $\overline{A}$ is finite \'etale over $X$, and since the composition $R\to \overline{B}_i\to \overline{C}_i$ is $p$-completely faithfully flat and factors through the finite flat map $R\to \overline{A}$, the map $\overline{A}\to \overline{C}_i$ is also $p$-completely faithfully flat for each $i$.

To continue, we consider the following commutative diagram:
\begin{equation}
	\label{eq:diagram_of_self_tensor_product_of_prisms}
	\begin{tikzcd}
		(B_1\otimes_W B_2)^\wedge_{(p, 1\otimes J_2)} \ar[r] \arrow[d,"\alpha"]& (C_1,\otimes_W C_2)^\wedge_{(p,1\otimes L_2)} \arrow[d,"\beta"] & (A\otimes_W A)^\wedge_{(p,1\otimes I)} \arrow[d,"\gamma"] \ar[l]\\
		(\overline{B}_1\otimes_R \overline{B}_2)^\wedge_p \ar[r] & (C_1\otimes_R C_2)^\wedge_p & \overline{A} \ar[l],
	\end{tikzcd}
\end{equation}
where the horizontal arrows are completely faithfully flat, each square is a complete (derived) pushout diagram, and the arrows in the top row are compatible with the $\delta$-structures.
Moreover, we notice that the kernel of the map $\gamma$ is the ideal $(I\otimes 1, t_1\otimes 1 - 1\otimes t_1,\ldots, t_n\otimes 1 - 1\otimes t_n)$ and is generated by a $(p,1\otimes I)$-regular sequence in $(A\otimes_W A)^\wedge_{(p,1\otimes I)}$.
Thus by the complete faithful flatness of the top row in (\ref{eq:diagram_of_self_tensor_product_of_prisms}), the ideal $\ker(\alpha)$ is also generated by a $(p,1\otimes J_2)$-regular sequence as well.
Hence by \cite[Prop.\ 3.13]{BS22}, the prismatic envelope $\bigl((B_1\otimes_W B_2)^\wedge_{(p,1\otimes J_2)} \{\frac{\ker(\alpha)}{1\otimes J_2}\}, (1\otimes J_2) \bigr)$ for the surjection $\alpha$ exists, which by its categorical description is the coproduct of $(B_i,J_i)$ over $X_\Prism$.
In addition, under the complete faithful flatness assumption of $(B_i,J_i)$ and by \textit{loc. cit.}, we know the coproduct is completely faithfully flat over each of $(B_i,J_i)$.

	Finally we prove \ref{thm:coproduct of prism regular and regular}, for which we may assume $(B_1,J_1)=(A,I)$ as well.
	We let $J_0$ be the kernel ideal for the diagonal surjection $(A_0\otimes_W A_0)^\wedge_p \to R$.
	So the image of $J_0$ in $(A\otimes_W A)^\wedge_{(p,I\otimes 1,1\otimes I)}$ generates the kernel ideal $J$ for the surjection $(A\otimes_W A)^\wedge_{(p,I\otimes 1,1\otimes I)} \to R$.
	To see the explicit formula of the coproduct in \ref{thm:coproduct of prism regular and regular}, we consider an auxiliary construction. 
	As in the previous paragraphs, since the $\delta$-ring $A_0$ admits a map from the $\delta$-ring $W\langle t_1,\ldots,t_n\rangle$, there is a natural map of $\delta$-pairs
	\begin{equation}
		\label{eq:thm:coproduct 3}
		\left( \bigl( A_0\otimes_W W\langle t_1, \ldots, t_n\rangle \bigr)^\wedge_p, (t_i\otimes 1-1\otimes t_i) \right) \longrightarrow \bigl( (A_0\otimes_W A_0)^\wedge_p, J_0 \bigr).
	\end{equation}
	So by taking the induced map of the $(p,I_0\otimes 1)$-complete prismatic envelopes as in the proof of \ref{thm:coproduct of prism general}, we get a map of prisms
	\begin{equation}
		\label{eq:thm:coproduct 4}
		\alpha\colon \left( A\{\frac{t_i\otimes 1-1\otimes t_i}{I_0\otimes 1}\} ,(I_0\otimes 1) \right) \longrightarrow \left( \bigl( (A_0\otimes_W A_0)^\wedge_p\{\frac{J_0}{I_0\otimes 1}\} \bigr)^\wedge_{(p,I_0\otimes 1)}, (I_0\otimes1) \right).
	\end{equation}

	To proceed, we let $v_i= t_i\otimes 1 -1\otimes t_i$.
	Then we notice that the images of $v_1,\ldots,v_n$ in $W\langle t_1, \ldots, t_n\rangle\otimes_W W\langle t_1, \ldots, t_n\rangle$ form a basis of the relative differential $\Omega^1_{W\langle t_1, \ldots, t_n \rangle/W}$.
	In addition, since $W\langle t_1, \ldots, t_n\rangle\to A_0$ is $p$-completely \'etale, the natural map of coherent $R$-modules $A_0\otimes_{W\langle t_1, \ldots, t_n\rangle} (\Omega^1_{W\langle t_1, \ldots, t_n\rangle/W})^\wedge_p\to (\Omega^1_{A_0/W})^\wedge_p$ is an isomorphism.
	As a consequence, by checking it via the mod $(I_0\otimes 1)$ reduction, we see the map $\overline{\alpha}$ is an isomorphism of $p$-complete rings, and so is the map $\alpha$.
	
	Now consider the explicit presentation of the mod $I=(d)$ reduction.
	By unwinding the construction of the prismatic envelope in \cite[Prop.\ 3.13]{BS22}, the coproduct $A^1$ is obtained from the free $\delta$-ring over $(A_0\otimes_W W\langle t_1,\ldots,t_n \rangle)^\wedge_p$ on symbols $u_i$ by imposing the $\delta$-relations
		\[
		du_i=v_i.
		\]
		The higher $\delta$-relations are therefore generated by the identities
		\[
		\delta^m(du_i)=\delta^m(v_i)\qquad (m\geq 0).
		\]
		
			We claim that $\delta^m(v_i)\in (v_i)$ for all $m\geq 0$.
			Indeed, since the framing map $W(k)\langle t_1,\ldots,t_n\rangle\to A_0$ is a map of $\delta$-rings and we have $\delta(t_i)=0$, the $\delta$-structure vanishes on both $t_i\otimes 1$ and $1\otimes t_i$.
			Therefore
			\[
			p\delta(v_i)=\varphi(v_i)-v_i^p=(t_i^p\otimes 1-1\otimes t_i^p)-(t_i\otimes 1-1\otimes t_i)^p.
			\]
			On the other hand, the polynomial
			$\frac{X^p-Y^p-(X-Y)^p}{p}\in \mathbb{Z}[X,Y]$
			vanishes after setting $X=Y$, hence is divisible by $X-Y$.
			So by evaluating at $X=t_i\otimes 1$ and $Y=1\otimes t_i$, we obtain $\delta(v_i)\in (v_i)$.
			Now given an element $r=av_i\in (v_i)$, the identity 
			$\delta(av_i)=a^p\delta(v_i)+v_i^p\delta(a)+p\delta(a)\delta(v_i)$
			shows that $\delta(r)\in (v_i)$ as well.
			Hence the ideal $(v_i)$ is preserved by the $\delta$-structure, and the claim follows by induction.
			Since $v_i=du_i$, we conclude that
		\[
		\delta^m(du_i)\equiv 0 \pmod{d}
		\qquad \text{for all }m\geq 0.
		\]
		In addition, notice that $\delta(d)\in A^\times$.
			At this point the argument of \cite[Lem.\ 2.2.5, Prop.\ 2.2.8(1)]{DL26} applies verbatim: its proof only uses the relations $du_i=v_i$, the congruences $\delta^m(du_i)\equiv 0 \pmod{d}$, and the invertibility of $\delta(d)$.
			Concretely, modulo $d$ one gets a recursive system of congruences of the form
			\[
			p\mu_{i,0}\,\delta(u_i)\equiv u_i^p \pmod{d},
			\]
			and for $m\geq 1$, there exists $\mu_{i,m}\in \overline{A}^\times$ such that 
			\[
			p\mu_{i,m}\,\delta^{m+1}(u_i)\equiv (\delta^m(u_i))^p+\text{(lower divided-power terms)} \pmod{d}.
			\]
			In particular the subring generated by the higher $\delta^m(u_i)$ is the same as the one generated by the $p$-adic divided powers of the $u_i$.
			Hence the quotient modulo $d$ is exactly the ordinary divided power polynomial algebra on the classes of the $u_i$.
\end{proof}

Here we note that we can in particular obtain an explicit presentation of the \v{C}ech nerve for a given framed regular prism.
\begin{corollary}[\v{C}ech nerve of a regular prism]
	\label{cor:Cech_nerve_of_framed}
	Let $X$ be a regular $p$-adic formal scheme of relative dimension $d-1$ over $\mathcal{O}_K$, and let $(A,I)$ be a framed regular prism.
	Then the \v{C}ech nerve of the $(A,I)$ is the cosimplicial object of the prisms
	\[
	\Delta \ni [n] \longmapsto (A^n,IA^n) \colonequals \bigl( A\{\frac{\delta_{ij}}{I}\}_{0\leq i\leq n,1\leq j\leq d}, (I) \bigr).
	\]
	Here $\delta_{ij}$ is the image of the element $t_j\otimes 1 \otimes \cdots \otimes 1 - 1\otimes \cdots \otimes t_j \otimes \cdots\otimes 1$, where $t_j$ in the second term is at the $i$-th tensor factor.
\end{corollary}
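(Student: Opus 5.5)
The plan is to apply \Cref{thm:coproduct_in_general} inductively. First I identify the \v{C}ech nerve with iterated self-coproducts: by \Cref{cor:regular prism cover}, $(A,I)$ covers $X_\Prism$ (after replacing $X$ by $\Spf(\overline{A})$ if needed, or working over the final object). The $n$-th term of the \v{C}ech nerve is the $(n+1)$-fold coproduct $(A,I)\coprod\cdots\coprod(A,I)$. This coproduct exists by \Cref{thm:coproduct_in_general}.\ref{thm:coproduct of prism general}, because a framed regular prism is a flat prism: its reduction $\overline{A}$ is $p$-completely \'etale over $X$.

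Next I induct on $n$. For $n=0$ there is nothing to prove. Suppose $A^{n-1}$ has the stated form. It receives a map $f\colon (A,I)\to (A^{n-1},IA^{n-1})$, for instance the inclusion of the $0$-th factor, and it is flat over $X$, being completely free over $A$ by \Cref{thm:coproduct_in_general}.\ref{thm:coproduct of prism regular and regular}. Applying \Cref{thm:coproduct_in_general}.\ref{thm:coproduct of prism regular and regular} with $(B_1,J_1)=(A^{n-1},IA^{n-1})$ and $(B_2,J_2)=(A,I)$, I get
\[
A^n \simeq A^{n-1}\{\tfrac{f(t_j)\otimes 1-1\otimes t_j}{I}\}_{1\le j\le d}.
\]
Here the complete $\delta$-envelope is taken with respect to $J_1=IA^{n-1}$, which is fine since the prism structure is transported along $f$. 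Combining this with the inductive description, the new generators are the classes $\delta_{nj}$ relative to the $0$-th factor. An adjunction of complete $\delta$-envelopes then gives the iterated presentation $A\{\delta_{ij}/I\}$.

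The last step is to check that the index set and the cosimplicial maps match. The elements $t_j^{(a)}-t_j^{(b)}$ for arbitrary factors $a,b$ are differences of the $\delta_{ij}$, which are taken relative to a fixed base factor. So the envelope generated by all pairwise differences over $I$ coincides with the one generated by the $\delta_{ij}$, and the face and degeneracy maps act by permuting or identifying tensor factors. I would treat this bookkeeping, namely that $\delta_{0j}=0$ and that different choices of base factor give the same envelope, as routine.

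The main obstacle is to justify carefully that the iterated complete $\delta$-envelope is the same as a single envelope in all the variables. This means the completions with respect to $(p,I)$ at each stage must agree with the one-shot $(p,I)$-completion. I would first try to prove this by universal properties together with the boundedness and complete flatness from \Cref{thm:coproduct_in_general}, which guarantee that derived and classical completions agree.
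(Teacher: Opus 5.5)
Your argument is correct and is essentially how the corollary is meant to follow from \Cref{thm:coproduct_in_general}.\ref{thm:coproduct of prism regular and regular}, for which the paper gives no separate proof: iterate the two-factor formula, using associativity of coproducts, and then merge the iterated envelopes into a single one. The completion issue you flag is harmless, since the iterated envelope and the one-shot envelope are both bounded prisms corepresenting the same functor on prisms over $(A,I)$ (the latter is a prism by \cite[Prop.\ 3.13]{BS22}), so Yoneda identifies them; also, the covering property from \Cref{cor:regular prism cover} plays no role, because the \v{C}ech nerve here is just the cosimplicial object of iterated self-coproducts.
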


For the later applications, we consider the coproduct of prisms such that one of the factor is perfect.
Our first observation is a reinterpretation of the relative prismatic cohomology using coproducts.
\begin{proposition}[Coproduct = Relative prismatic cohomology]
	\label{prop:equivalence_of_cat_for_relative_prismatic_site}
	Let $X$ be a bounded $p$-adic formal scheme, let $S$ be a $p$-torsionfree perfectoid algebra over $X$, and let $(B,J)$ be a bounded prism in $X_\Prism$.
	We let $S_{\overline{B}}$ be the $p$-completion of $S\otimes_{\mathcal{O}_X} \overline{B}$.
	The following two categories are naturally equivalent to each other
	\begin{itemize}
		\item $\{ ((C,L),~f,~g)~|~(C,L)\in X_\Prism, ~f\in \Map_{X_\Prism}\bigl((\rAinf(S),\ker(\tilde{\theta})), (C,L) \bigr),~g\in \Map_{X_\Prism}\bigl((B,J),(C,L)\bigr)\}$;
		\item $\bigl(S_{\overline{B}}/(B,J) \bigr)_\Prism$.
	\end{itemize}
	In particular, the coproduct $(\rAinf(S),\ker(\tilde{\theta}))\coprod (B,J)$ exists in $X_\Prism$ if and only if the prismatic site $\bigl(S_{\overline{B}}/(B,J) \bigr)_\Prism$ admits an initial prism.
\end{proposition}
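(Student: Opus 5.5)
We construct functors in both directions and check that they are mutually inverse. The input is the universal property of $\rAinf(S)$: for a perfectoid ring $S$, the perfect prism $(\rAinf(S),\ker(\tilde\theta))$ is initial among prisms $(C,L)$ equipped with a ring map $S\to C/L$. Equivalently, $\Map((\rAinf(S),\ker\tilde\theta),(C,L))\simeq \Hom(S,\overline{C})$ (cf.\ \cite[Lem.\ 4.8]{BS22}). Here $\tilde\theta$ denotes the twisted map $\theta\circ\varphi^{-1}$, chosen so that the structure map to $S$ matches the Frobenius conventions used in the paper. Everything below should be read as occurring over $X$, so all maps are compatible with the structure maps to $\mathcal{O}_X$.

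Given a triple $((C,L),f,g)$, the map $g$ makes $(C,L)$ a prism over $(B,J)$, so $\overline{C}$ is a $\overline{B}$-algebra. The map $f$ corresponds under the universal property to a ring map $S\to\overline{C}$. Both maps are compatible over $\mathcal{O}_X$, because $f$ and $g$ are morphisms in $X_\Prism$. They therefore induce a map $S\otimes_{\mathcal{O}_X}\overline{B}\to \overline{C}$. Since $\overline{C}$ is $p$-complete (as $(C,L)$ is bounded, hence $(p,L)$-complete), this map extends to the completion $S_{\overline{B}}\to\overline{C}$. The result is an object of $(S_{\overline{B}}/(B,J))_\Prism$.

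Conversely, take a prism $(C,L)$ over $(B,J)$ together with a map $S_{\overline{B}}\to \overline{C}$ compatible with $\overline{B}\to\overline{C}$. Composing with $S\to S_{\overline{B}}$ gives a map $S\to\overline{C}$. By initiality this produces a unique $f\colon(\rAinf(S),\ker\tilde\theta)\to(C,L)$, and we take $g$ to be the given structure map. The composite $\mathcal{O}_X\to\overline{C}$ is the same along both routes, since it factors through $\overline{B}$ and through $S$ compatibly. Hence $f$ is a morphism in $X_\Prism$.

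Morphisms on both sides are maps of prisms $(C,L)\to(C',L')$ compatible with the extra data. The correspondences respect morphisms by the uniqueness in the universal property: a map of prisms is compatible with $f$ and $f'$ exactly when it is compatible with the induced maps $S\to\overline{C}$ and $S\to\overline{C'}$. The two constructions are mutually inverse for the same reason, together with the universal property of the completed tensor product.

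For the final assertion, observe that the coproduct in $X_\Prism$ is by definition an initial object of the first category. An equivalence of categories preserves initial objects, so the coproduct exists if and only if the second category, the relative prismatic site, has an initial prism.

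The only delicate point is the correct normalization of $\tilde\theta$, which is needed so that $f\leftrightarrow(S\to\overline{C})$ is a genuine bijection. Beyond that, the argument is formal.
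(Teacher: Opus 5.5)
Your proposal is correct and follows essentially the same route as the paper: initiality of $(\rAinf(S),\ker(\tilde\theta))$ in $S_\Prism$ turns $f$ into a map $S\to\overline{C}$, and the $p$-completed pushout $S_{\overline{B}}$ packages the pair $(f,g)$ into an object of $(S_{\overline{B}}/(B,J))_\Prism$ and back. One small correction: the normalization of $\tilde\theta$ is not actually delicate, since $\varphi$ gives an isomorphism of prisms $(\rAinf(S),\ker\theta)\xrightarrow{\sim}(\rAinf(S),\ker\tilde\theta)$ compatible with the identifications with $S$, so either normalization yields the same bijection.
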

\begin{proof}
	We let $(C,L)$ be any prism in $\bigl(S_{\overline{B}}/(B,J)\bigr)_\Prism$, which by definition is equivalent to a commutative diagram of rings 
	\begin{equation}
		\label{eq:claim:initial_prism_diagram}
		\begin{tikzcd}
			C \ar[r] & \overline{C} & S_{\overline{B}}= (S \otimes_{\mathcal{O}_X} \overline{B})^\wedge_p \ar[l] & S \ar[l]\\
			B \ar[u] \ar[r] & \overline{B} \ar[u] \ar[ur] && \mathcal{O}_X \ar[ll] \ar[u] \arrow[lu, phantom, "\lrcorner"],
		\end{tikzcd}
	\end{equation}
	such that the left vertical map is compatible with the $\delta$-structures and the right square is the prescribed pushout diagram.
	Then since $S$ is a perfectoid ring, by the initiality of the perfect prism $(\rAinf(S),\ker(\widetilde{\theta}))$ in $S_\Prism$ (\cite[Thm.\ 3.10, Prop.\ 7.2]{BS22}), 
	there is an unique map of prisms $(\rAinf(S),\ker(\widetilde{\theta})) \to (C,L)$ that enlarges the diagram (\ref{eq:claim:initial_prism_diagram}) to the following 
	\begin{equation}
		\label{eq:claim:initial_prism_diagram_large}
		\begin{tikzcd}
			\rAinf(S) \ar[d] \ar[r]& \overline{A_{\inf}(S)}=S \ar[d] &&\\
			C \ar[r] & \overline{C} & S_{\overline{B}}= (S \otimes_{\mathcal{O}_X} \overline{B})^\wedge_p \ar[l] & S \ar[l] \arrow[llu, equal]\\
			B \ar[u] \ar[r] & \overline{B} \ar[u] \ar[ur] && \mathcal{O}_X \ar[ll] \ar[u] \arrow[lu, phantom, "\lrcorner"].
		\end{tikzcd}
	\end{equation}
	In this way, in the category $X_\Prism$, we obtain the maps of prisms from $(\rAinf(S),\ker(\widetilde{\theta}))$ and $(B,J)$ to $(C,L)$ respectively, which are determined uniquely by the diagram in (\ref{eq:claim:initial_prism_diagram}). 
	Conversely, given maps of prisms from $(\rAinf(S),\ker(\widetilde{\theta}))$ and $(B,J)$ to $(C,L)$ respectively in $X_\Prism$, we obtain equivalently the following commutative diagram
	\begin{equation}
		\label{eq:claim:initial_prism_diagram_converse}
		\begin{tikzcd}
			\rAinf(S) \ar[d] \ar[r]& \overline{A_{\inf}(S)}=S \ar[d] &&\\
			C \ar[r] & \overline{C} &  & S \ar[ll] \arrow[llu, equal]\\
			B \ar[u] \ar[r] & \overline{B} \ar[u] && \mathcal{O}_X \ar[ll] \ar[u],
		\end{tikzcd}
	\end{equation}
	where the left vertical maps are compatible with the $\delta$-structures.
	The diagram in (\ref{eq:claim:initial_prism_diagram_converse}) uniquely extends to a diagram as in (\ref{eq:claim:initial_prism_diagram_large}), hence produce a prism $(C,L)\in (S_{\overline{B}}/(B,J))_\Prism$.
\end{proof}

Next, we recall the Faltings extension, its splitting, and its relation to the relative prismatic cohomology.
\begin{proposition}
	\label{prop:relative_prismatic_coh_of_perfectoid}
	Let $X$ be a regular $p$-adic formal scheme over $\mathcal{O}_K$ and let $S$ be a perfectoid algebra over $X$.
	\begin{enumerate}[label=\upshape{(\roman*)}]
		\item\label{prop:relative_prismatic_coh_of_perfectoid_cotangent_complex}
		The $p$-complete cotangent complex $\mathbb{L}_{S/X}$ fits into a canonical fiber sequence
		\begin{equation}
			\label{eq:Faltings_extension}
			S\{1\}[1] \longrightarrow \mathbb{L}_{S/X} \longrightarrow \bigl( \mathbb{L}_{X/W}\otimes_{\mathcal{O}_X} S \bigr)^\wedge_p[1].
		\end{equation}
	\end{enumerate}
	Assume there is a map of prisms $f:(A,I)\to (\rAinf(S), \ker(\widetilde{\theta}))$ in $X_\Prism$, where $(A,I)$ is a framed regular prism with a framing $(A_0,I_0)$.
	\begin{enumerate}[label=\upshape{(\roman*)}]
		\setcounter{enumi}{1}
		\item\label{prop:relative_prismatic_coh_of_perfectoid_cotangent_complex_2}
		The map $f$ induces an isomorphism between the shifted $p$-complete cotangent complex $\mathbb{L}_{S/X}[-1]$ and the tensor product $\mathbb{L}_{A_0/W}\otimes_{A_0} S[-1]$. 
		In particular, $\mathbb{L}_{S/X}[-1]$ is a finite projective $S$-module.
		\item\label{prop:relative_prismatic_coh_of_perfectoid_initial_prism}
		Let $(B,J)\in X_\Prism$ be a flat prism or a flat crystalline prism.
		The derived relative prismatic cohomology $\Prism_{S_{\overline{B}}/B}$ coincides with the site theoretic relative prismatic cohomology of $S_{\overline{B}}$ over $(B,J)$ and lives in cohomologically degree zero, with its mod $I$ reduction being of Tor amplitude $[0,0]$ over $S_{\overline{B}}$.
	\end{enumerate}
\end{proposition}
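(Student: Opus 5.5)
For part (i), I will use the transitivity triangle for $W \to \mathcal{O}_X \to S$ of $p$-complete cotangent complexes:
\[
\bigl(\mathbb{L}_{X/W}\otimes_{\mathcal{O}_X} S\bigr)^\wedge_p \longrightarrow \mathbb{L}_{S/W} \longrightarrow \mathbb{L}_{S/X}.
\]
For a perfectoid $S$ over $W=W(k)$, the $p$-complete cotangent complex satisfies $\mathbb{L}_{S/W}\simeq \mathbb{L}_{S/\mathbb{Z}_p}\simeq S\{1\}[1]$. This holds because $W$ is $p$-completely étale over $\mathbb{Z}_p$ and because of the standard computation $\mathbb{L}_{S/\mathbb{Z}_p}\simeq \ker(\theta)/\ker(\theta)^2[1]$ via $\rAinf(S)\to S$, with $\ker(\theta)/\ker(\theta)^2 \simeq S\{1\}$ the Breuil--Kisin twist. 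Rotating the triangle gives the fiber sequence (\ref{eq:Faltings_extension}), and canonicity is clear from functoriality.

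For part (ii), I will factor $W\to A_0\to \overline{A}_0 \to S$, or more precisely use the framing. The map $f$ gives $A\to \rAinf(S)$, hence $\overline{A}\to S$ lifting $\mathcal{O}_X\to S$. Since $\overline{A}=\overline{A}_0{}^\wedge$ is $p$-completely étale over $X$, we have $\mathbb{L}_{S/X}\simeq \mathbb{L}_{S/\overline{A}}$. Next I will compare with $\mathbb{L}_{S/A}$ using the triangle for $A\to\overline{A}\to S$: the term $\mathbb{L}_{\overline{A}/A}\otimes S\simeq I/I^2\otimes S[1]$ appears. Alternatively, I will use the triangle for $W\to A\to S$, with $\mathbb{L}_{S/W}=S\{1\}[1]$ and $\mathbb{L}_{A/W}\otimes S \simeq \mathbb{L}_{A_0/W}\otimes S$ projective in degree $0$. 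The cleaner route is that $\mathbb{L}_{S/A}$ is computed via the map $A\to\rAinf(S)$. Because $\rAinf(S)$ is relatively perfect over $W$, $\mathbb{L}_{\rAinf(S)/W}$ vanishes $p$-completely, so $\mathbb{L}_{\rAinf(S)/A}\simeq \mathbb{L}_{A_0/W}\otimes\rAinf(S)[1]$, exactly as in the proof of \Cref{prop:regular prism Frobenius}. Reducing modulo $I$ (the distinguished element $d$ maps to a generator of $\ker\tilde\theta$, so the base change $\rAinf(S)\otimes^L_A \overline{A}\simeq S$) gives $\mathbb{L}_{S/\overline{A}}\simeq \mathbb{L}_{\rAinf(S)/A}\otimes^L_{\rAinf(S)} S\simeq \mathbb{L}_{A_0/W}\otimes_{A_0}S[1]$. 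Shifting by $[-1]$ and using that $\mathbb{L}_{A_0/W}$ is finite free on $dt_a$ (by étaleness over $W\langle t_a\rangle$) yields (ii).

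For part (iii), I will reduce to the case of a coproduct with a framed regular prism. The plan is to work étale locally on $X$ and apply the weak initiality \Cref{cor:regular prism cover} together with flat descent. By \Cref{prop:equivalence_of_cat_for_relative_prismatic_site} and \Cref{thm:coproduct_in_general}, the site $(S_{\overline{B}}/B)_\Prism$ has an initial object, namely the coproduct $(\rAinf(S),\ker\tilde\theta)\coprod (B,J)$, when $(B,J)$ is flat. Hence the site-theoretic cohomology is this coproduct, concentrated in degree $0$. To match this with derived prismatic cohomology and get Tor amplitude, I will use the Hodge--Tate comparison: $\overline{\Prism}_{S_{\overline{B}}/B}$ has a conjugate filtration with graded pieces $\wedge^i\mathbb{L}_{S_{\overline{B}}/\overline{B}}[-i]\{-i\}$. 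By base change $\mathbb{L}_{S_{\overline{B}}/\overline{B}}\simeq \mathbb{L}_{S/X}\otimes S_{\overline{B}}$, which by (ii) (locally, after choosing $(A,I)\to\rAinf(S)$, possible by initiality of perfect prisms after a cover) is $P[1]$ with $P$ finite projective. Then $\wedge^i(P[1])[-i]\simeq\Gamma^i(P)$ is in degree $0$, so $\overline{\Prism}$ is a colimit of flat $S_{\overline{B}}$-modules in degree $0$, which gives Tor amplitude $[0,0]$. Derived completeness then places $\Prism_{S_{\overline{B}}/B}$ in degree $0$. Since $S_{\overline{B}}$ is quasisyntomic over $\overline{B}$, the derived and site-theoretic cohomologies agree by \cite{BS22}. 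For the flat crystalline case the same argument works, using the crystalline comparison instead.

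The main obstacle will be the quasisyntomicity and base-change bookkeeping in (iii): showing that $\mathbb{L}_{S/X}$ is projective globally rather than only after choosing $f$, and that the derived and site-theoretic cohomologies agree when $S_{\overline{B}}$ is not obviously of finite type. I expect both to be handled by descending along completely faithfully flat covers.
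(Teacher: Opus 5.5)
Your proofs of (i) and (ii) are the paper's. For (i), you use the transitivity triangle for $W\to\mathcal{O}_X\to S$ together with $\mathbb{L}_{S/W}\simeq S\{1\}[1]$. For (ii), you use the triangle for $W\to A_0\to\rAinf(S)$, the vanishing of $\mathbb{L}_{\rAinf(S)/W}$ by relative perfectness, derived reduction modulo $I$, and $\mathbb{L}_{S/\overline{A}}\simeq\mathbb{L}_{S/X}$ by étaleness.

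In (iii) the paper likewise computes $\mathbb{L}_{S_{\overline{B}}/\overline{B}}\simeq(\mathbb{L}_{S/X}\otimes^L_S S_{\overline{B}})^\wedge_p$ by Tor-independent base change and feeds in (ii). It then simply cites Bhatt--Lurie \cite[Var.\ 4.1.19, Thm.\ 4.3.6]{BL22a}, whose condition $(\ast^+)$ is exactly that this cotangent complex is a shifted finite projective module. Your conjugate-filtration computation (graded pieces $\wedge^i(P[1])[-i]\{-i\}\simeq\Gamma^i(P)\{-i\}$, then derived Nakayama along $J$) correctly spells out why the output is discrete and has flat Hodge--Tate reduction.

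\emph{Points to fix.}
\begin{enumerate}
\item The step ``$S_{\overline{B}}$ is quasisyntomic over $\overline{B}$, so derived and site-theoretic cohomology agree by \cite{BS22}'' does not apply as stated. Quasisyntomicity of $\overline{B}\to S_{\overline{B}}$ includes $p$-complete flatness. But $S$ is not assumed flat over $X$, so $S_{\overline{B}}$ need not be flat over $\overline{B}$. You need a comparison whose hypothesis is only on $\mathbb{L}_{S_{\overline{B}}/\overline{B}}$, which you have already verified; this is exactly the Bhatt--Lurie result the paper uses.
\item Your anticipated ``main obstacle'' is moot. The map $f$ is a standing hypothesis for (iii), so (ii) gives the finite projectivity of $\mathbb{L}_{S/X}[-1]$ outright, with no étale localization or descent needed.
\item The initial-object detour is unnecessary for (iii). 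The paper combines it with (iii) only afterwards, in \Cref{cor:coproduct:perfect_with_framed}. In the flat crystalline case \Cref{thm:coproduct_in_general} does not directly supply it, since such a $(B,J)$ is not assumed to receive a map from a flat prism.
\end{enumerate}
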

\begin{proof}
	Part \ref{prop:relative_prismatic_coh_of_perfectoid_cotangent_complex} follows from the canonical fiber sequence of the cotangent complexes with respect to the maps $W\to \mathcal{O}_X\to S$, together with the natural isomorphisms
	\[
	\mathbb{L}_{S/W} \simeq \mathbb{L}_{S/\rAinf(S)} \simeq S\{1\}[1].
	\]	
	
	For \ref{prop:relative_prismatic_coh_of_perfectoid_cotangent_complex_2}, we first recall that $\Spf(A_0)$ is a smooth $p$-adic formal scheme over $W$.
	The maps of rings $W\to A_0\to \rAinf(S)$ then produces a fiber sequence of $p$-complete cotangent complexes
	\[
	\mathbb{L}_{\rAinf(S)/W} \longrightarrow \mathbb{L}_{\rAinf(S)/A_0} \longrightarrow \bigl( \mathbb{L}_{A_0/W}\otimes_{A_0}\rAinf(S) \bigr)^\wedge_p[1].
	\]
	Notice that since $\rAinf(S)$ is $p$-completely relative perfect over $W$, the first term in the sequence vanishes.
	On the other hand, by the $p$-complete smoothness of $A_0$ over $W$, we know the $\mathbb{L}_{A_0/W}$ is a finite projective $A_0$-module.
	Thus by taking the derived base change of the above sequence along the surjection $A_0\to \overline{A}_0=\overline{A}$ and by the base change formula of the cotangent complex, we get
	\[
	\mathbb{L}_{S/\overline{A}}\simeq \mathbb{L}_{\rAinf(S)/A_0} \otimes^L_{A_0} \overline{A}_0 \simeq \mathbb{L}_{A_0/W} \otimes_{A_0} S[1].
	\]
	Furthermore, we notice that the assumption on the map $f$ implies that the structure map $\Spf(S)\to X$ factors through $\Spf(\overline{A}_0)$, where the latter is \'etale over $X$.
	So we have $\mathbb{L}_{S/\overline{A}}\simeq \mathbb{L}_{S/X}$, which concludes the proof of \ref{prop:relative_prismatic_coh_of_perfectoid_cotangent_complex_2}.
	
	For \ref{prop:relative_prismatic_coh_of_perfectoid_initial_prism}, we notice that the flatness assumption implies that $S_{\overline{B}}\simeq (S\otimes^L_{\mathcal{O}_X} \overline{B})^\wedge_p$.
	Moreover, there are natural isomorphisms of $p$-complete cotangent complexes
	\[
	\mathbb{L}_{S_{\overline{B}}/\overline{B}} \simeq \bigl( \mathbb{L}_{S/X}\otimes^L_{\mathcal{O}_X} \overline{B} \bigr)^\wedge_p\simeq \bigl( \mathbb{L}_{S/X} \otimes^L_S S_{\overline{B}} \bigr)^\wedge_p,
	\]
	where the latter by \ref{prop:relative_prismatic_coh_of_perfectoid_cotangent_complex_2} is isomorphic to the shifted finite projective $S_{\overline{B}}$-module $\mathbb{L}_{S/X}\otimes_S S_{\overline{B}}$.
	So the statement follows from \cite[Var.\ 4.1.19, Thm. 4.3.6]{BL22a} (see also \cite[Rmk.\ 4.3.9]{BL22a} for a related discussion), where the condition $(\ast^+)$ of loc.\ cit.\ is satisfied thanks to the aforementioned finite projectivity.
\end{proof}

Summarizing the properties of the coproduct where one of the factors being perfect, we obtain the following consequence.
\begin{corollary}[Coproduct with a perfect prism]
	\label{cor:coproduct:perfect_with_framed}
	Let $X$ be a regular $p$-adic formal scheme over $\mathcal{O}_K$, let $S$ be a perfectoid ring over $X$, and let $(B,J)$ be a flat or a flat crystalline prism.
	\begin{enumerate}[label=\upshape{(\roman*)}]
		\item\label{cor:coproduct:perfect_with_framed_general} 
		There is a natural isomorphism of prisms
		\[
		(\rAinf(S), \ker(\widetilde{\theta}) )\coprod (B,J) \simeq \bigl(\Prism_{S_{\overline{B}}/B}, I\Prism_{S_{\overline{B}}/B}\bigr),
		\]
		which lives in cohomological degree zero with its mod $I$ reduction completely (faithfully) flat over $S_{\overline{B}}$. In addition, it is completely (faithfully) flat over $\overline{B}$ if $S$ is $p$-completely (faithfully) flat over $X$.
		\item\label{cor:coproduct:perfect_with_framed_special} 	Assume there is a map of prisms $f:(A,I)\to (\rAinf(S), \ker(\widetilde{\theta}))$, where $(A,I)$ is framed regular.
		Then $f$ induces an isomorphism of prisms
		\[
		(\rAinf(S), \ker(\widetilde{\theta}) )\coprod (A,I) \simeq \bigl( \rAinf(S_{\overline{A}})\{\frac{f(t_i)\otimes 1-1\otimes t_i}{I}\} , (I) \bigr),
		\]
	\end{enumerate}
\end{corollary}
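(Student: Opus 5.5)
Part \ref{cor:coproduct:perfect_with_framed_general} is obtained by combining \Cref{prop:equivalence_of_cat_for_relative_prismatic_site} with \Cref{prop:relative_prismatic_coh_of_perfectoid}.\ref{prop:relative_prismatic_coh_of_perfectoid_initial_prism}. By \Cref{prop:equivalence_of_cat_for_relative_prismatic_site}, the coproduct $(\rAinf(S),\ker(\widetilde{\theta}))\coprod(B,J)$ exists exactly when the relative site $(S_{\overline{B}}/(B,J))_\Prism$ has an initial object, and in that case the two objects agree. To produce this initial object, apply \Cref{prop:relative_prismatic_coh_of_perfectoid}.\ref{prop:relative_prismatic_coh_of_perfectoid_initial_prism}. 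Since $(B,J)$ is flat or flat crystalline, the derived relative prismatic cohomology $\Prism_{S_{\overline{B}}/B}$ equals the site-theoretic one and is concentrated in degree zero. Its Hodge--Tate reduction is flat over $S_{\overline{B}}$: it has Tor amplitude $[0,0]$, and via the Hodge--Tate comparison its graded pieces are the modules $\wedge^i(\mathbb{L}_{S/X}[-1]\otimes S_{\overline{B}})$, which are finite projective, so faithfulness also follows. Because $\Prism_{S_{\overline{B}}/B}$ is a discrete ring with the induced $\delta$-structure and ideal $J\Prism$, the standard argument of \cite[\S 4]{BL22a} (or the Čech–Alexander description) shows it is a bounded prism in the relative site. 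Its degree-zero site cohomology computing $\mathcal{O}$ then forces it to be weakly final among covers; together with flatness, this makes it initial. One must check carefully that the object representing global sections is indeed an object of the site, and this is the main point requiring care. The final flatness claim follows by composition: if $S$ is $p$-completely (faithfully) flat over $X$, then $S_{\overline{B}}$ is $p$-completely (faithfully) flat over $\overline{B}$, and the mod $J$ reduction of the coproduct is flat over $S_{\overline{B}}$.

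For part \ref{cor:coproduct:perfect_with_framed_special}, specialize to $(B,J)=(A,I)$. This is legitimate because a framed regular prism is a flat prism: $\overline{A}$ is $p$-completely étale over $X$ by the framing. Now redo the computation of \Cref{thm:coproduct_in_general}.\ref{thm:coproduct of prism regular and regular} with $B_1=\rAinf(S)$, which receives the map $f$ from $(A,I)$. Concretely, \Cref{thm:coproduct_in_general}.\ref{thm:coproduct of prism regular and regular} identifies the coproduct with
\[
\rAinf(S)\{\tfrac{f(t_i)\otimes1-1\otimes t_i}{I}\}.
\]
The remaining task is to match the base ring: the stated formula is over $\rAinf(S_{\overline{A}})$, whereas the theorem gives one over $\rAinf(S)$. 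Here one uses that $S_{\overline{A}}=(S\otimes_{\mathcal{O}_X}\overline{A})^\wedge_p$ is $p$-completely étale over $S$, hence perfectoid, with $\rAinf(S_{\overline{A}})$ the unique $(p,I)$-completely étale $\delta$-lift of $\rAinf(S)$. The framing relation $A_0\otimes_W A_0$ then absorbs this étale factor, exactly as in the proof of that theorem, where the étale map $W\langle t_i\rangle\to A_0$ makes the comparison map $\alpha$ an isomorphism modulo $I$ and hence an isomorphism.

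Thus the main obstacle is the first part, namely showing that the relative prismatic cohomology, when discrete and flat, is genuinely an initial prism. The second part is a bookkeeping reduction to \Cref{thm:coproduct_in_general}, in which the key step is replacing $\rAinf(S)$ by its étale extension $\rAinf(S_{\overline{A}})$ via \cite[Lem.\ 2.18]{BS22}.
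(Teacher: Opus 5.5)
Your overall plan agrees with the paper's. Part (i) combines \Cref{prop:equivalence_of_cat_for_relative_prismatic_site} with \Cref{prop:relative_prismatic_coh_of_perfectoid}.\ref{prop:relative_prismatic_coh_of_perfectoid_initial_prism}, and part (ii) is read off from \Cref{thm:coproduct_in_general}.\ref{thm:coproduct of prism regular and regular}. Your flatness-by-composition argument over $\overline{B}$ is a fine substitute for the paper's appeal to \Cref{thm:coproduct_in_general}.\ref{thm:coproduct of prism faithful}.

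The gap is the step you flag as ``requiring care'': you never produce an initial object, and the reason you sketch does not work. Discreteness of $R\Gamma((S_{\overline{B}}/B)_\Prism,\mathcal{O})$ does not by itself give an initial object of $(S_{\overline{B}}/(B,J))_\Prism$. You need a \emph{unique} morphism from the candidate to every prism $(D,L)$ over $S_{\overline{B}}$, and neither being ``weakly final among covers'' nor flatness provides that.

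In the paper the logic runs the other way. Existence comes from \Cref{thm:coproduct_in_general}.\ref{thm:coproduct of prism general}. There the coproduct is built as a prismatic envelope \cite[Prop.\ 3.13]{BS22} of a kernel that, after completely faithfully flat comparison with the framed diagonal $(A\otimes_W A)^\wedge_{(p,1\otimes I)}\to\overline{A}$, is generated by a completely regular sequence. \Cref{prop:equivalence_of_cat_for_relative_prismatic_site} turns that coproduct into an initial object, so the site-theoretic cohomology is just its value. Only then does \Cref{prop:relative_prismatic_coh_of_perfectoid}.\ref{prop:relative_prismatic_coh_of_perfectoid_initial_prism} identify it with the discrete derived $\Prism_{S_{\overline{B}}/B}$.

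Your cohomological route can be completed, but you have to supply the following argument.
\begin{enumerate}
\item Put $P\colonequals \mathrm{H}^0=\lim_{(D,L)}D$ over the relative site; this is a $\delta$-ring over $B$.
\item Since $J$ is invertible, derived reduction mod $J$ commutes with the limit, giving $R\Gamma(\mathcal{O})\otimes^L_B B/J\simeq R\Gamma(\overline{\mathcal{O}})$. The right side is coconnective, so discreteness of $R\Gamma(\mathcal{O})$ forces $P$ to be $J$-torsionfree with $P/JP=\lim_{(D,L)}\overline{D}$.
\item Hence $P/JP$ receives the compatible structure maps from $S_{\overline{B}}$. Check that $(P,JP)$ is a bounded prism, using the Tor-amplitude statement.
\item The projections $\pi_D\colon P\to D$ are then morphisms of the site, and $P$ is initial. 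Naturality of the cone applied to $\pi_D$ itself shows that $\pi_P$ commutes with every $\pi_D$, so $\pi_P$ is the identity. Therefore any $g\colon P\to D$ satisfies $g=g\circ\pi_P=\pi_D$.
\end{enumerate}

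A minor correction: the conjugate graded pieces of $\overline{\Prism}_{S_{\overline{B}}/B}$ are the divided powers $\Gamma^i$ of the finite projective module $\mathbb{L}_{S_{\overline{B}}/\overline{B}}[-1]$ (by d\'ecalage), not exterior powers. This does not affect the flatness conclusion.
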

\begin{proof}
	Part \ref{cor:coproduct:perfect_with_framed_general} follows from the discreteness of the cotangent complex in \Cref{prop:relative_prismatic_coh_of_perfectoid}.\ref{prop:relative_prismatic_coh_of_perfectoid_initial_prism} and the categorical description of the relative prismatic cohomology in \Cref{prop:equivalence_of_cat_for_relative_prismatic_site}, together with the flatness in \Cref{thm:coproduct_in_general}.\ref{thm:coproduct of prism faithful}.
	Part \ref{cor:coproduct:perfect_with_framed_special} follows from the explicit formula in \Cref{thm:coproduct_in_general}.\ref{thm:coproduct of prism regular and regular}.
\end{proof}

Finally, to assist the calculation of the global section for the rational prismatic period sheaf, we record the following formulae on intersections and Frobenius structures.
\begin{lemma}
	\label{lem:intersection_of_prismatic_coh_with_base}
	Let $X$ be a regular $p$-adic formal scheme, and let $(B,J)\in X_\Prism$ be a flat prism.
	Assume $S$ is a perfectoid ring that is $p$-completely faithfully flat over $X$.
	Then the canonical map below is an equality
	\begin{equation}
		\label{eq:lem:intersection_of_prismatic_coh_with_base}
			B \longrightarrow (B[1/p])^\wedge_J \cap \Prism_{S_{\overline{B}}/B}.
	\end{equation}
\end{lemma}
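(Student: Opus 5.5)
The strategy is to reduce the equality to a statement modulo $J$ and then use faithful flatness. Both sides sit inside a common ring. The natural ambient is the $J$-adic completion of $\Prism_{S_{\overline{B}}/B}[1/p]$, which receives both $(B[1/p])^\wedge_J$ and $\Prism_{S_{\overline{B}}/B}$. Injectivity of the map in (\ref{eq:lem:intersection_of_prismatic_coh_with_base}) is clear: $\Prism_{S_{\overline{B}}/B}$ is completely faithfully flat over $B$. This follows from \Cref{cor:coproduct:perfect_with_framed}.\ref{cor:coproduct:perfect_with_framed_general}, since $S$ is $p$-completely faithfully flat over $X$ and hence the mod $J$ reduction is $p$-completely faithfully flat over $\overline{B}$. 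The work is surjectivity.

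We may work flat locally on $B$ and assume $J=(d)$ is principal. Write $P\colonequals\Prism_{S_{\overline{B}}/B}$. Let $x$ lie in the intersection. Since $x\in (B[1/p])^\wedge_J$, we have $p^Nx\in B$ modulo high powers of $d$. More precisely, we first check that the intersection computed inside $(P[1/p])^\wedge_d$ can be reduced to the finite-level statement
\[
B/d^m \longrightarrow (B/d^m)[1/p]\cap P/d^m,
\]
for every $m$, followed by a passage to the limit. This reduction uses that $B$ and $P$ are $d$-complete and $d$-torsionfree, so that the intersection commutes with the inverse limit. Then, by induction on $m$ using the exact sequences $0\to B/d\xrightarrow{d^{m-1}} B/d^m\to B/d^{m-1}\to 0$ and their analogues for $P$ (with $d$-torsionfreeness giving exactness), it suffices to treat $m=1$. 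Here transversality is what lets us reduce modulo $d$, together with $p$-torsionfreeness of $\overline{B}$ and of $\overline{P}$.

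For $m=1$ the claim becomes: if $y\in \overline{P}$ and $p^Ny\in \overline{B}$, then $y\in \overline{B}$. Equivalently, the $\overline{B}$-module $\overline{P}/\overline{B}$ is $p$-torsionfree. The key input is that $\overline{B}\to\overline{P}$ is $p$-completely faithfully flat. Since $\overline{P}$ is flat over $\overline{B}$, the map $\overline{B}/p\to \overline{P}/p$ is injective by faithful flatness. So $\mathrm{Tor}_1^{\mathbb{Z}}(\overline{P}/\overline{B},\mathbb{Z}/p)=\ker(\overline{B}/p\to\overline{P}/p)=0$, using that $\overline{P}$ is $p$-torsionfree. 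This is exactly $p$-torsionfreeness of the quotient.

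The main obstacle is the completion bookkeeping in the reduction step. We must make sure the intersection in (\ref{eq:lem:intersection_of_prismatic_coh_with_base}) is computed inside the $J$-completion of $P[1/p]$, and that the required $p$-power denominator $N$ is handled at each level $m$. On $x$ we only know $d$-adic approximations by elements of $B[1/p]$ with possibly unbounded denominators, so the bound is not uniform in $m$. I would handle this as follows. Write $x\equiv b_m/p^{N_m}\pmod{d^m}$ and apply the $m$-level statement to $b_m$ to get $b_m/p^{N_m}\in B+d^m P$. Then show that the resulting elements of $B$ form a $d$-adically Cauchy sequence, using injectivity of $B/d^m\to P/d^m$, and conclude by $d$-completeness of $B$.
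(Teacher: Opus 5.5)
Your proof is correct and follows the paper's route. You reduce to the mod $J$ statement by induction on powers of $d$ and passage to the limit, then show that $\overline{\Prism}_{S_{\overline{B}}/B}/\overline{B}$ is $p$-torsionfree using $p$-complete faithful flatness of $\overline{B}\to\overline{\Prism}_{S_{\overline{B}}/B}$.

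The only difference is in that last step. You read off torsionfreeness directly from injectivity of $\overline{B}/p\to\overline{\Prism}_{S_{\overline{B}}/B}/p$. The paper instead base changes the cokernel along $\overline{B}\to\overline{\Prism}_{S_{\overline{B}}/B}$ and uses the multiplication splitting of $\overline{\Prism}_{S_{\overline{B}}/B}\to(\overline{\Prism}_{S_{\overline{B}}/B}\otimes_{\overline{B}}\overline{\Prism}_{S_{\overline{B}}/B})^\wedge_p$. Your version is shorter. Your worry about non-uniform $p$-power denominators is harmless, since the intersection is computed levelwise in $(B/d^m)[1/p]\cap P/d^m$ and commutes with $\lim_m$.
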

\begin{proof}
	By assumption and \Cref{cor:coproduct:perfect_with_framed}.\ref{cor:coproduct:perfect_with_framed_general}, the sequence $(J,p)$ is regular in both $B$ and $\Prism_{S_{\overline{B}}/B}$, and the both rings are $p$-torsionfree.
	So by induction and by taking the limit, the claim follows if the mod $J$ reduction
		\begin{equation}
		\label{eq:lem:intersection_of_prismatic_coh_with_base_reduction}
				\overline{B} \longrightarrow \overline{B}[1/p] \cap \overline{\Prism}_{S_{\overline{B}}/B}
	\end{equation}
	is an equality.
	We claim that the cokernel $C\colonequals\coker(\overline{B}\to \overline{\Prism}_{S_{\overline{B}}/B})$ is $p$-torsionfree.
	Granting the claim, if the map in (\ref{eq:lem:intersection_of_prismatic_coh_with_base_reduction}) is not surjective, there exists an element $x\in \overline{B}\backslash p \overline{B}$ such that $\frac{x}{p}\in \overline{\Prism}_{S_{\overline{B}}/B}$.
	The image of $\frac{x}{p}$ in $C$ is then a non-zero $p$-torsion element, leading to a contradiction.
	
	To see the claim, we first notice that since $\overline{B}$ and $\overline{\Prism}_{S_{\overline{B}}/B}$ are derived $p$-complete, so is the cokernel $C$.
	We consider the complete base change of the fiber sequence
	\[
	\overline{\Prism}_{S_{\overline{B}}/B} \longrightarrow (\overline{\Prism}_{S_{\overline{B}}/B}\otimes_{\overline{B}} \overline{\Prism}_{S_{\overline{B}}/B})^\wedge_{p} \longrightarrow (C\otimes_{\overline{B}} \overline{\Prism}_{S_{\overline{B}}/B})^\wedge_p.
	\]
	By \Cref{cor:coproduct:perfect_with_framed}.\ref{cor:coproduct:perfect_with_framed_general}, we know map $\overline{B}\to \overline{\Prism}_{S_{\overline{B}}/B}$ is $p$-completely faithfully flat.
	So to show the $p$-torsionfreeness of $C$, it is equivalent to showing that the complete base change $(C\otimes_{\overline{B}} \overline{\Prism}_{S_{\overline{B}}/B})^\wedge_{p}$ is $p$-torsionfree.
	Note that the first arrow in the sequence above (namely $\overline{\Prism}_{S_{\overline{B}}/B} \to (\overline{\Prism}_{S_{\overline{B}}/B}\otimes_{\overline{B}}\overline{\Prism}_{S_{\overline{B}}/B})^\wedge_{p}$) is split injective, since the arrow admits a canonical section by the multiplication.
	Thus $(C\otimes_{\overline{B}} \overline{\Prism}_{S_{\overline{B}}/B})^\wedge_{p}$ is a direct summand of $(\overline{\Prism}_{S_{\overline{B}}/B}\otimes_{\overline{B}}\overline{\Prism}_{S_{\overline{B}}/B})^\wedge_{p}$.
	Hence the $p$-torsionfree of $(C\otimes_{\overline{B}} \overline{\Prism}_{S_{\overline{B}}/B})^\wedge_{p}$ follows from that of $(\overline{\Prism}_{S_{\overline{B}}/B}\otimes_{\overline{B}}\overline{\Prism}_{S_{\overline{B}}/B})^\wedge_{p}$, where the latter follows from the $p$-torsionfreeness of $\overline{\Prism}_{S_{\overline{B}}/B}$ together with the complete flatness of the morphism $\overline{B}\to \overline{\Prism}_{S_{\overline{B}}/B}$.
\end{proof}

\begin{proposition}
\label{prop:intersection_of_Frobenius_of_base_with_coproduct}
Let $X$ be a regular $p$-adic formal scheme, let $S$ be a $p$-completely flat perfectoid algebra over $X$, and let $(B,J)\in X_\Prism$.
Suppose $(B,J)$ is a coproduct of finitely many framed regular prisms.
\begin{enumerate}[label=\upshape{(\roman*)}]
\item\label{prop:intersection_of_Frobenius_of_base_with_coproduct:base} The mod $J$-reduction of the Frobenius structure $\varphi_B$ is injective.
\item\label{prop:intersection_of_Frobenius_of_base_with_coproduct:cohomology} For each $a\geq 1$, the mod $\varphi_{\Prism_S}^{-a-1}(I_{\Prism_S})$ reductions of the absolute Frobenius maps $\Prism_{S_{\overline{B}}/B}\to \Prism_{S_{\overline{B}}/B}$ and $\Prism^{(1)}_{S_{\overline{B}}/B}\to \Prism^{(1)}_{S_{\overline{B}}/B}$ are injective.
\item\label{prop:intersection_of_Frobenius_of_base_with_coproduct:intersection} The following complete pushout diagram is also a pullback diagram of injections
\[
\begin{tikzcd}
B \ar[r] \arrow[d, "\varphi_B"'] & \Prism_{S_{\overline{B}}/B} \ar[d]\\
B \ar[r] & \Prism^{(1)}_{S_{\overline{B}}/B}.
\end{tikzcd}
\]
\end{enumerate}
\end{proposition}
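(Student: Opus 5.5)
We reduce everything to explicit presentations and to the regularity of framed regular prisms. For \ref{prop:intersection_of_Frobenius_of_base_with_coproduct:base}, we first treat a single framed regular prism $(A,I)$ with $I=(d)$. By \Cref{prop:regular prism Frobenius}, $\varphi_A$ is finite faithfully flat. Hence $\varphi_A$ is injective and $\varphi_A(d)$ is a non-zero-divisor. Injectivity of $A/d\to A/d$ induced by $\varphi_A$ amounts to $\varphi_A^{-1}(dA)=dA$. We check this after the faithfully flat complete localizations at the associated primes of $(p,I)$, or directly via the intersection formula \Cref{cor:intersection_prism}. There the statement reduces to two-dimensional regular prisms $V\llbracket t\rrbracket$, as in \Cref{prop:Frob_reg_prism_of_dim_2}, where it is a computation with Eisenstein polynomials: $\varphi(E)$ is coprime to $E$ because $A/E$ is a domain and $\varphi(E)\equiv p\delta(E)$ mod $E$ is nonzero in $\overline{A}$ by transversality. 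We then pass to a coproduct of framed regular prisms. By \Cref{thm:coproduct_in_general}.\ref{thm:coproduct of prism regular and regular}, $B$ is completely free over one factor $A$, with mod $I$ reduction a divided power polynomial algebra $\overline{A}\{u_i\}^{\pd}$. The Frobenius on $\overline{B}$ is then handled by filtering $\overline{B}$ by divided power degree and reducing to the injectivity on $\overline{A}$, using $p$-torsionfreeness of the graded pieces.

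For \ref{prop:intersection_of_Frobenius_of_base_with_coproduct:cohomology}, we use \Cref{cor:coproduct:perfect_with_framed}. Locally on $X$, we may choose a map from a framed regular prism to $\rAinf(S)$. Then $\Prism_{S_{\overline{B}}/B}$ is the coproduct of the perfect prism $\rAinf(S)$ with $B$, hence again of the form $\rAinf(S_{\overline{A}})\{\frac{f(t_i)\otimes1-1\otimes t_i}{I}\}$ tensored appropriately. Modulo $\varphi^{-a-1}(I_{\Prism_S})$, which is generated by $\varphi^{-a-1}(\tilde\xi)$-type elements of the perfect factor, we analyse the Frobenius as follows. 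On the perfect factor it is bijective and carries $\varphi^{-a-1}(I)$ to $\varphi^{-a}(I)$, so it induces an injection $\rAinf/\varphi^{-a-1}(I)\hookrightarrow \rAinf/\varphi^{-a}(I)$ after composing with the natural inclusion up to the regularity of the relevant sequence. On the $\delta$-envelope variables we argue via the divided-power filtration as in \ref{prop:intersection_of_Frobenius_of_base_with_coproduct:base}. The Frobenius twist $\Prism^{(1)}$ is obtained by flat base change along $\varphi_B$, and injectivity is preserved by the flatness from \Cref{cor:coproduct:perfect_with_framed}.\ref{cor:coproduct:perfect_with_framed_general}.

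For \ref{prop:intersection_of_Frobenius_of_base_with_coproduct:intersection}, all maps are injective by \ref{prop:intersection_of_Frobenius_of_base_with_coproduct:base}, \ref{prop:intersection_of_Frobenius_of_base_with_coproduct:cohomology} and faithful flatness. For the pullback property we must show that if $x\in\Prism_{S_{\overline{B}}/B}$ has Frobenius image in $B\subset\Prism^{(1)}$, then $x\in\varphi_B(B)$-image. Since $\Prism^{(1)}=\Prism_{S_{\overline B}/B}\widehat\otimes_{B,\varphi}B$ and $\varphi_B$ is faithfully flat, this is the statement that
\[
B\to \Prism_{S_{\overline B}/B},\qquad \Prism_{S_{\overline B}/B}\rightrightarrows\Prism_{S_{\overline B}/B}\widehat\otimes_{\varphi}B
\]
behaves like an equalizer. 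We expect to prove it by a d\'evissage modulo $J$ and induction on $J$-adic order, using \Cref{lem:intersection_of_prismatic_coh_with_base}-style arguments: torsionfreeness of the cokernel $\coker(B\to\Prism_{S_{\overline B}/B})$ shown via the split injection given by multiplication.

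The main obstacle is \ref{prop:intersection_of_Frobenius_of_base_with_coproduct:cohomology}, namely controlling the Frobenius modulo $\varphi^{-a-1}(I)$ on the non-perfect $\delta$-envelope variables. There the divided-power description is only valid mod $I$, not mod $\varphi^{-a-1}(I)$. We would first try to reduce to the single-prism case $B=A$ with explicit framing and check injectivity graded piece by graded piece.
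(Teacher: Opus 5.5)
\paragraph{Part (ii).}
Part (ii) is not proved. The reason you give for the difficulty is exactly the missing idea: the divided power description does not need reduction modulo $I$.

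Since $\varphi^{-a}(d_S)$ is distinguished and $a\geq 1$, both $d_S$ and $\varphi(d_S)$ become $p$ times units in $\Prism_S/\varphi^{-a}(d_S)$, and likewise modulo $\varphi^{-a-1}(d_S)$. So the relations $d_Su_i=t_i\otimes 1-1\otimes t_i$ again behave like a $p$-adic divided power envelope. The argument of \Cref{thm:coproduct_in_general}.\ref{thm:coproduct of prism regular and regular} then gives $\Prism_{S_{\overline{A}}/A}/\varphi^{-a-1}(d_S)\simeq\Prism_S/\varphi^{-a-1}(d_S)\{u_1,\ldots,u_n\}^{\pd}$, and similarly modulo $\varphi^{-a}(d_S)$.

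The second input is the explicit Frobenius of the envelope variables. Apply $\varphi$ to $1\otimes t_i=t_i\otimes 1-u_id_S$ and use $\delta(t_i\otimes 1)=\delta(1\otimes t_i)=0$:
\[
\varphi(d_S)\varphi(u_i)=-\sum_{j=1}^{p}\binom{p}{j}(-u_id_S)^j(t_i\otimes 1)^{p-j}.
\]
Hence modulo $\varphi^{-a}(d_S)$ the element $\varphi(u_i)$ is a nonconstant degree-$p$ polynomial in $u_i$. The reduced Frobenius is therefore the map of complete divided power polynomial algebras lying over the isomorphism $\Prism_S/\varphi^{-a-1}(d_S)\xrightarrow{\sim}\Prism_S/\varphi^{-a}(d_S)$, sending $u_i\mapsto\varphi(u_i)$. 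On the perfect factor there is no regularity issue at all. This map identifies the source with the divided power subalgebra generated by the $\varphi(u_i)$, which gives injectivity.

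The twisted version is the same computation with $1\otimes t_i^p$. It is not a base change along $\varphi_B$, whose flatness you do not have. Without these two inputs, ``argue via the divided-power filtration as in (i)'' has nothing to run on.

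\paragraph{Parts (i) and (iii).}
These also do not go through as written.

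In (i), your reformulation $\varphi_A^{-1}(dA)=dA$ is false for a transversal prism, because $\varphi_A(d)\equiv p\delta(d)\not\equiv 0$ modulo $d$. The reduced Frobenius is $A/dA\to A/\varphi_A(d)A$. Its injectivity, i.e.\ $\varphi_A(d)A\cap\varphi_A(A)=\varphi_A(dA)$, is immediate from faithful flatness of $\varphi_A$ (\Cref{prop:regular prism Frobenius}). The localization and Eisenstein detour is beside the point. For a coproduct, the target is $B/\varphi_B(J)B$ rather than $\overline{B}$, so filtering $\overline{B}$ by divided power degree does not address it. The paper instead base changes along the completely free map $A\to A_\perf$ (\Cref{lem:inj_and_completely_proj_mod}) and uses the Frobenius of $A_\perf\coprod B'\simeq\Prism_{(\overline{A}_\perf)_{\overline{B}'}/B'}$ from part (ii).

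In (iii), the ``equalizer'' reformulation is not a descent statement: the right vertical map is a base change along $\varphi_B$, not a coface map of an Amitsur complex. It also leans on faithful flatness of $\varphi_B$, which is not available for a coproduct of framed regular prisms; the paper keeps it as a separate hypothesis in \Cref{thm:global_section}.

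The real input is the framing. The ring $A$ is free over $\varphi_A(A)$ on the monomials $t^\alpha$ with $0\leq\alpha_i<p$, and $t^0=1$. Therefore $\Prism^{(1)}_{S_{\overline{A}}/A}\simeq\bigoplus_\alpha\Prism_{S_{\overline{A}}/A}\otimes t^\alpha$, and the right vertical map is the inclusion of the summand $\alpha=0$. Cartesianness then reduces to injectivity of the structure map $A\to\Prism_{S_{\overline{A}}/A}$.

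The paper packages this as follows: embed the square for $A$ into the square for $A_\perf$, where Frobenius is bijective, and check that $A\to A_\perf$ is cartesian against $\varphi_A$ and $\varphi_{A_\perf}$. Coproducts are then handled inductively by replacing one factor $A$ with $A_\perf$.
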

\begin{proof}
We first assume that $(B,J)=(A,I)$ is a framed regular prism.
In this case, \ref{prop:intersection_of_Frobenius_of_base_with_coproduct:base} follows from the fact that the Frobenius structure $\varphi_A$ is finite flat (\Cref{prop:regular prism Frobenius}).
For \ref{prop:intersection_of_Frobenius_of_base_with_coproduct:cohomology}, we let $A_0=W \langle t_1,\ldots,t_n \rangle$ be a framing, and let $d_S$ be a generator of $I_{\Prism_S}$ in $\Prism_S$.
	We first show that both the source and the target are isomorphic to the complete divided power polynomial rings over $\Prism_S/\varphi^{-a-1}(d_S)$ and over $\Prism_S/\varphi^{-a}(d_S)$ respectively.
	Recall from \Cref{thm:coproduct_in_general} and its proof that the relative prismatic cohomology $\Prism_{S_{\overline{A}}/A}$ is isomorphic to the prismatic envelope $\Prism_S \{\frac{t_1\otimes 1 - 1\otimes t_i}{d_S}\}$, which can represented as the $\delta$-quotient
	\[\Prism_S \{u_1,\ldots,u_n\} /(u_id-(t_i\otimes 1 - 1\otimes t_i))_\delta.\]
	Now by repeating the arguments for \Cref{thm:coproduct_in_general}.\ref{thm:coproduct of prism regular and regular}, we have
		\begin{align*}
		\Prism_{S_{\overline{A}}/A}/\varphi^{-a-1}(d_S) & \simeq \Prism_S/\varphi^{-a-1}(d_S) \{ u_1,\ldots,u_n\}^{\pd},\\
		\Prism_{S_{\overline{A}}/A} /\varphi^{-a}(d_S) & \simeq  \Prism_S/\varphi^{-a}(d_S) \{ u_1,\ldots,u_n\}^{\pd},
		\end{align*}
	and the map in the statement extends the isomorphism $\Prism_S/\varphi^{-a-1}(d_S) \xrightarrow{\varphi_{\Prism_S}} \Prism_S/\varphi^{-a}(d_S)$ and sends $u_i$ onto $\varphi(u_i)$.
	Notice that in the ring $\Prism_{S_{\overline{A}}/A}$, by applying the Frobenius at the relation $1\otimes t_i=t_i\otimes 1 -u_id_S$ and using the assumption that the Frobenius sends both $1\otimes t_i$ and $t_i\otimes 1$ to their $p$-th powers, we have
		\[
		\varphi(d_S)\varphi(u_i) = -\sum_{j=1}^p \binom{p}{j} (-u_id_S)^j (t_i\otimes 1)^{p-j}.
		\]
	Moreover, the images of both $d_S$ and $\varphi(d_S)$ in $\Prism_S/\varphi^{-a}(d_S)$ are equal to products of $p$ by units (since $a\geq 1$).
	So the image of $\varphi(u_i)$ in the mod $\varphi^{-a}(d_S)$ reduction $ \Prism_S/\varphi^{-a}(d_S) \{ u_1,\ldots,u_n\}^{\pd}$ is a degree $p$ polynomial of $u_i$ over $\Prism_S/\varphi^{-a}(d_S)$ and is in particular non-zero.
	In particular, the induced map $\Prism_{S_{\overline{A}}/A}/\varphi^{-a-1}(d_S) \to \Prism_{S_{\overline{A}}/A}/\varphi^{-a}(d_S)$ identifies the source as the complete divided power polynomial subring generated by the variables $\varphi(u_i)$ over $\Prism_{S}/\varphi^{-a}(d_S)$.
	As a consequence, by considering the mod $\varphi^{-a}(d_S)$ reduction of the image, the absolute Frobenius map $\Prism_{S_{\overline{A}}/A} \to \Prism_{S_{\overline{A}}/A}$ is non-zero on each free divided power variable $u_i$ and its divided powers, and there are no algebraic relations for different $i$, which implies the injectivity of the map.
	The same extends to the Frobenius twisted version by raising $1\otimes t_i$ to its $p$-th power.

For \ref{prop:intersection_of_Frobenius_of_base_with_coproduct:intersection}, we notice that the diagram for $(B,J)=(A,I)$ naturally admits a compatible injection into the diagram but for $(B,J)=(A_\perf,IA_\perf)$, where the claim for the latter is automatic since $\varphi_{A_\perf}$ is an isomorphism.
So we reduce the question into showing the following being cartesian:
\[
\begin{tikzcd}
A \ar[r] \arrow[d, "\varphi_A"']  & A_\perf \arrow[d, "\varphi_{A_\perf}"]\\
A \ar[r]& A_\perf,
\end{tikzcd}
\]
which is clear thanks to the explicit coordinate from the framing $W \langle t_1,\ldots,t_n \rangle\to A_0\to A$.

Now we consider the case for general $(B,J)$.
We then notice that the claims can be fitted into a natural induction: if $(B,J)$ is a multiple coproduct of framed regular prisms, Part \ref{prop:intersection_of_Frobenius_of_base_with_coproduct:base} for $(B,J)$ follows from Part \ref{prop:intersection_of_Frobenius_of_base_with_coproduct:cohomology} for $S=\overline{A}_\perf$ and $(B',J')$, where $(B',J')$ is the other factor such that $(B,J)=(A,I)\coprod (B',J')$, thanks to the complete projectivity of $A\to A_\perf$ together with the injectivity of the linearization maps in \Cref{lem:inj_and_completely_proj_mod}.
For \ref{prop:intersection_of_Frobenius_of_base_with_coproduct:cohomology}, the arguments in the first paragraph above work the same, since the coproduct $(B,J)$ admits a similar presentation as a prismatic envelope (cf. \Cref{cor:Cech_nerve_of_framed}).
For \ref{prop:intersection_of_Frobenius_of_base_with_coproduct:intersection}, we notice that the diagram for $(B,J)=(A,I)\coprod (B',J')$ admits a compatible injection into the diagram for $(B,J)=(A_\perf,IA_\perf)\coprod (B',J')$.
So it reduces to check that the following diagram is cartesian (where we slightly abuse the notations for coproducts)
\[
\begin{tikzcd}
A\coprod B' \ar[r] \arrow[d, "\varphi_{A\coprod B'}"']  & A_\perf\coprod B' \arrow[d, "\varphi_{A_\perf\coprod B'}"]\\
A\coprod B' \ar[r]& A_\perf\coprod B',
\end{tikzcd}
\]
which follows from the explicit formula for the reduction of $\varphi_{A_\perf\coprod B'}=\varphi_{\Prism_{(\overline{A}_\perf)_{\overline{B}'}/B'}}$ in the first paragraph above.
\end{proof}


\section{Frobenius modules over regular prisms}
\label{sec:Frob_mod}
In this section, we analyze the structure of a Frobenius module over a regular prism.
We prove that certain torsionfree Frobenius modules over a framed regular prism are locally free after restricted onto the analytic locus.
In addition, we prove that Frobenius modules over regular prisms (or their reductions) admit a primitive purity result.

\subsection{Frobenius modules, and generalities on the reflexivity}
\label{sub:Frob_mod_and_ref}
To facilitate the discussion, we introduce a natural extension of Breuil--Kisin(--Fargues) modules, and discuss some general results on the reflexivity.

\begin{definition}
	\label{def:Frobenius_mod}
	Let $(A,I)$ be a prism, let $n\in \mathbb{N}$, and let $a\leq b$ be two integers.
	Let $A'$ be any of the rings in the collection $\{A, A\langle 1/I \rangle, A/p^nA, A/p^nA[1/I]\}$.
	\begin{enumerate}
		\item A \emph{weak Frobenius module over $A'$} is a pair $(M,\varphi_M)$, where $M$ is an $A'$-module, and $\varphi_M$ is an $A'[1/I]$-linear map $\varphi_A^*M[1/I] \to M[1/I]$. 
		We also require $M$ to be finitely generated if $A$ is a noetherian ring.
		\item A weak Frobenius module $(M,\varphi_M)$ over $A'$ is called a \emph{Frobenius module over $A'$} if the map $\varphi_M$ is an isomorphism.
		\item An $I$-torsionfree Frobenius module $(M,\varphi_M)$ over $A'$ has \emph{Frobenius height in $[a,b]$} if there are containments of $A'$-submodules in $M[1/I]$\footnote{Note that the condition is trivially satisfied for any $a,b$ if $A'$ is either $A\langle 1/I \rangle$ of $A/p^nA[1/I]$.}
		\begin{equation}
			\label{eq:def_Frob_height}
					I^b\cdot M \subseteq \varphi_M(\varphi_A^*M)) \subseteq I^a\cdot M.
		\end{equation}
	\end{enumerate}
	We use $\Coh^{w\varphi}(A')$ to denote the category of weak Frobenius modules over $A'$, use $\Coh^\varphi(A')$ to denote the subcategory of Frobenius modules over $A'$, and use $\Coh^{\varphi, [a,b]}(A')$ to denote the subcategory of $I$-torsionfree Frobenius modules of Frobenius height $[a,b]$.
\end{definition}
\begin{remark}
	\label{rmk:Frob_mod_vs_weak_Frob_mod}
	The category of Frobenius modules naturally embeds fully faithfully into the category of weak Frobenius modules.
\end{remark}
We sort out a subcategory of (weak Frobenius) modules as below.
\begin{definition}
	\label{def:ref_Frob_mod}
	Let $(A,I)$ be a transversal prism, let $n\in \mathbb{N}$.
	\begin{enumerate}
		\item A finitely generated module $M$ over $A$ is called 
		\begin{itemize}
			\item \emph{analytically locally free} if the restriction of $M$ onto $\Spec(A)\backslash V(p,I)$ is locally free;
			\item \emph{saturated} if it is $p$-torsionfree, $I$-torsionfree, and satisfies the equality $M=M[1/p]\cap M[1/I]$;
			\item \emph{reflexive} if it is analytically locally free, and the Koszul complex $\Kos_M(p,d)$ is discrete, where $d$ is any generator of $I$ up to passing to a Zariski cover.
		\end{itemize}
		\item A finitely generated module $M$ over $A/p^nA$ is called \emph{reflexive} if $M$ is $I$-torsionfree, the localization $M[1/I]$ is finite projective over $A/p^nA[1/I]$, and the reduction $M/IM$ is generically locally free over $A/(p^n,I)A$.
	\end{enumerate}
	For $A'\in \{A, A/p^nA\}$ and $*\in \{\refl,\sat\}$, we use $\Coh^{w\varphi}_*(A')$ to denote the full subcategory of reflexive/saturated weak Frobenius modules over $A'$, and similarly use $\Coh^{w\varphi}_*(A')$ and $\Coh^{\varphi,[a,b]}_*(A')$ for the full subcategories of weak Frobenius modules or Frobenius modules with prescribed heights.
\end{definition}

As we see below, the reflexivity defined here is a special case of the reflexivity in commutative algebra.
Roughly speaking, the condition on the Koszul complex in the reflexivity is equivalent to the torsionfreeness and the saturatedness, and can be detected using the reduction.
\begin{lemma}
	\label{lem:reflexive_is_saturated}
	Let $(x,y)$ be a regular sequence in a noetherian ring $A$.
	Let $M$ be a finitely genereated $A$-module.
	Then the following conditions are equivalent:
	\begin{enumerate}[label=\upshape{(\roman*)}]
		\item\label{lem:reflexive_is_saturated_ref} the Koszul complex $\Kos_M(x,y)$ is discrete;
		\item\label{lem:reflexive_is_saturated_sat} $M$ is both $x$-torsionfree and $y$-torsionfree, and the natural injection $M\to M[1/x]\cap M[1/y]$ is an isomorphism.
		\item\label{lem:reflexive_is_saturated_red} $M$ is both $x$-torsionfree and $y$-torsionfree, the reduction $M/xM$ is $y$-torsionfree, and the reduction $M/yM$ is $x$-torsionfree.
	\end{enumerate}
\end{lemma}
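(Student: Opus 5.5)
The plan is to go through the Koszul complex $\Kos_M(x,y)$, which I write as $M\xrightarrow{(x,y)} M\oplus M \xrightarrow{(-y,x)} M$ in degrees $-2,-1,0$, together with its description as an iterated cone. Discreteness means $\mathrm{H}^{-2}=\mathrm{H}^{-1}=0$. The first observation is that $\mathrm{H}^{-2}=M[x]\cap M[y]$, which is the joint torsion. The second is that there is a long exact sequence coming from $\Kos_M(x,y)=\mathrm{cofib}(\Kos_M(x)\xrightarrow{y}\Kos_M(x))$; this gives a relation between $\mathrm{H}^{-1}$, multiplication by $y$ on $M[x]$, and multiplication by $y$ on $M/xM$.

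\ref{lem:reflexive_is_saturated_ref}$\Leftrightarrow$\ref{lem:reflexive_is_saturated_red}: First I show that discreteness forces $M$ to be $x$-torsionfree. The module $N\colonequals M[x]$ (the $x$-torsion) is finitely generated since $A$ is noetherian. Discreteness of the Koszul complex gives the exact sequence $0\to \mathrm{H}^{-1}(\Kos_M(x))\xrightarrow{y}\mathrm{H}^{-1}(\Kos_M(x))$, so $N=\mathrm{H}^{-1}(\Kos_M(x))$ is $y$-torsionfree. The surjectivity part of the same sequence gives $N\subseteq yN$, so $N=yN$. I would rather use local information than Krull's intersection theorem here, since the latter needs $y$ in the Jacobson radical. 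If $N\neq 0$, pick $\mathfrak{q}\in\Ass(N)$. Then $x\in\mathfrak{q}$, so $\mathfrak{q}$ lies in $V(x)$. Because $y$ is a nonzerodivisor on $N$, we have $y\notin\mathfrak{q}$. Now $N_{\mathfrak q}$ is still killed by $x$, and this does not immediately conflict with the hypotheses.

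The cleaner route is the following. The functor $\Kos_{(-)}(x,y)$ commutes with localization, and so do both conditions. Also, after inverting $y$ the Koszul complex is acyclic, because $y$ becomes a unit; likewise after inverting $x$. So discreteness and the torsion conditions are only meaningful on $V(x,y)$. I would therefore reduce to the local ring $A_{\mathfrak m}$ with $x,y\in\mathfrak m$. I need to check that $M$ being $x$-torsionfree can be tested at maximal ideals containing $x,y$. This holds because an associated prime $\mathfrak q$ of $M[x]$ must contain $x$. If $y\notin\mathfrak q$, then inverting $y$ still leaves $x$-torsion, and that has to be ruled out separately. This is the main obstacle, and it indicates that the standard argument is the \emph{depth-sensitivity of Koszul homology}: for finitely generated $M$ over a noetherian ring with $(x,y)M\neq M$ locally, vanishing of $\mathrm{H}^{-1}$ and $\mathrm{H}^{-2}$ means $(x,y)$ is $M$-regular in either order. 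This follows from Stacks [Tag 062F]/[Tag 0AHW] after localizing at primes in $\mathrm{Supp}(M/(x,y)M)$. On localizations where $M=(x,y)M$, Nakayama gives $M_{\mathfrak m}=0$.

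Granting that regular sequences are permutable locally, condition \ref{lem:reflexive_is_saturated_red} is exactly the statement that $(x,y)$ and $(y,x)$ are $M$-regular, apart from the condition $M\neq(x,y)M$. In particular it implies $\mathrm{H}^{-1}=\mathrm{H}^{-2}=0$ via the cone sequence.

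\ref{lem:reflexive_is_saturated_red}$\Leftrightarrow$\ref{lem:reflexive_is_saturated_sat}: Assume both torsionfreeness conditions, so that $M\subseteq M[1/x]\cap M[1/y]$ inside $M[1/xy]$. Take an element $m$ of the intersection, written as $m=a/x^k=b/y^l$ with $k$ minimal. Then $y^l a=x^k b$ in $M$. If $M/xM$ is $y$-torsionfree, this forces $a\in xM$, so we may lower $k$; hence $k=0$ and $m\in M$. Conversely, if $y\bar a=0$ in $M/xM$, write $ya=xb$. Then $a/x=b/y$ lies in the intersection, hence in $M$, so $a\in xM$. The symmetric argument handles $M/yM$. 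This part is elementary. The genuine work is the Koszul step, which I would handle by citing the standard characterization of regular sequences via Koszul homology for finite modules over noetherian rings.
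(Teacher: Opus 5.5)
Your treatment of (i)$\Rightarrow$(iii) has a genuine gap, at exactly the step you call the main obstacle: deducing $M[x]=0$ (and $M[y]=0$) from discreteness. Your cone sequence correctly shows that (i) amounts to $y$ acting bijectively on $M[x]$ and injectively on $M/xM$, and symmetrically with $x$ and $y$ exchanged. But the Koszul complex only detects primes containing both $x$ and $y$, whereas $x$-torsion can be supported on $V(x)\setminus V(y)$. Depth-sensitivity of Koszul homology at primes of $\mathrm{Supp}(M/(x,y)M)$ says nothing about such primes. In fact no argument can close the gap, because the implication is false as stated. Take $A=k[x,y]$ with $k$ a field and $M=A/(x,y-1)$. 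Then $\Kos_M(x,y)$ is $k\to k^{2}\to k$, with maps $m\mapsto(0,m)$ and $(m_1,m_2)\mapsto m_1$, which is acyclic. Yet $xM=0$, so (ii) and (iii) both fail.

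The missing hypothesis is that $x$ and $y$ lie in the Jacobson radical of $A$ (e.g.\ $A$ local with $x,y$ in the maximal ideal, or $A$ $(x,y)$-adically complete). This holds wherever the paper applies the lemma, with $(x,y)=(d,p)$ over a $(p,I)$-complete ring. The paper's own proof uses it tacitly when it produces, ``by the finiteness assumptions'', an $m$ with $xm=0$ and $y\nmid m$.

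Under this hypothesis your argument closes at once, with no appeal to permutability of regular sequences. $M[x]$ is finitely generated because $A$ is noetherian, and discreteness gives $M[x]=yM[x]$, so $M[x]=0$ by Nakayama. Likewise $M[y]=0$. Then $y$-torsionfreeness of $M/xM$ and $x$-torsionfreeness of $M/yM$ follow from injectivity in the two cone sequences.

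Your proofs of (iii)$\Rightarrow$(i) and of (ii)$\Leftrightarrow$(iii) are correct and need no extra hypothesis. The latter is essentially the paper's argument. Your cone-sequence handling of the Koszul condition is a cleaner substitute for the paper's element chase between (i) and (ii).
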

\begin{proof}
	We first prove the equivalence between \ref{lem:reflexive_is_saturated_ref} and \ref{lem:reflexive_is_saturated_sat}.
	Assume \ref{lem:reflexive_is_saturated_ref} holds for $M$, namely the homology of the following chain complex vanishes for  positive homological degrees
	\[
	\Kos_M(x,y)=M\xrightarrow{m\mapsto (xm,ym)} M\oplus M \xrightarrow{(m_1,m_2)\mapsto ym_1-xm_2} M.
	\]
	If $M[x^\infty]\neq 0$, by the finiteness assumptions, there is an element $m\in M$ such that $xm=0$ and $y\nmid m$.
	In particular, the pair $(0,m)$ produces a non-zero element in $\mathrm{H}_1(\Kos_M(x,y))$, a contradiction.
	By symmetry, the torsion submodule $M[y^\infty]$ vanishes as well.
	If the map $M\to  M[1/x]\cap M[1/y]$ is not surjective, then there are elements $m_1, m_2\in M$ such that $x\nmid m_1$ and $y\nmid m_2$, together with the equality that $\frac{m_1}{x^r}=\frac{m_2}{y^s}\notin M$ for some pair of positive integers $(r,s)$.
	So we get an equality $m_1y^s=m_2x^r$ of elements in $M$, and the image of $m_1$ in $M/xM$ is a non-zero and $y^s$-torsion element.
	In particular, the subset $(M/xM)[y]\subset (M/xM)[y^s]$ is non-empty 
	\footnote{If not, the multiplication-by-$y$ map $M/xM[y^{i+1}]\to M/xM[y^{i}]$ is injective, and by induction we know $M/xM[y^s]=0$, a contradiction.}.
	Hence by picking an element $m_2'\in M$ whose image in $M/xM$ is a non-zero element in $M/xM[y]$, we know there is another element $m_1'\in M$ such that $ym_2'=xm_1'$ in $M$.
	Note that the element $m_2'\in M$ is not divisible by $x$.
	As a consequence, we see the pair $(m_1',m_2')$ represents a non-zero class in $\mathrm{H}_1(\Kos_M(x,y))$, contradicting to our assumption.
	
	Conversely, assume \ref{lem:reflexive_is_saturated_sat} holds for $M$.
	The torsionfreeness assumption of $M$ implies that the vanishing of $\mathrm{H}_2(\Kos_M(x,y))$.
	In addition, if there is a non-zero class in $\mathrm{H}_1(\Kos_M(x,y))$, then by definition there are elements $m_1,m_2\in M$ such that $ym_1=xm_2$, yet the element $m\colonequals \frac{m_1}{x}=\frac{m_2}{y}\in M[1/xy]$ is not contained in the submodule $M$.
	Notice that the element $m$ in contained in both $M[1/x]$ and $M[1/y]$.
	Thus we have $m\in (M[1/x]\cap M[1/y])\backslash M$, a contradiction.
	
	Finally we prove the equivalence between \ref{lem:reflexive_is_saturated_sat} and \ref{lem:reflexive_is_saturated_red}, and assume $M$ is $x$-torsionfree and $y$-torsionfree.
	If $M/xM$ has non-trivial $y$-torsion, then there is an element $m_1\in  M$ such that $x\nmid m_1$ but $ym_1=xm_2$ for some $m_2\in M$.
	This implies that the element $\frac{m_1}{x}=\frac{m_2}{y}$ is contained in the intersection $(M[1/x]\cap M[1/y])$ but not in $M$.
	Conversely, if $M\to M[1/x]\cap M[1/y]$ is not surjective, then we may find an element $\frac{m_1}{x}=\frac{m_2}{y}$, for some $m_1,m_2\in M$ such that $x\nmid m_1$ and $y\nmid m_2$.
	This implies that the image of $m_1$ in $M/xM$ is non-zero, yet the image of $ym_1=xm_2$ is zero.
\end{proof}
It worth to mentioning that the proof of the implication from \Cref{lem:reflexive_is_saturated}.\ref{lem:reflexive_is_saturated_ref} to \Cref{lem:reflexive_is_saturated}.\ref{lem:reflexive_is_saturated_sat} applies to more general settings.
\begin{corollary}
	\label{cor:Koszul_reg_imply_sat}
	Let $(x,y)$ be two elements in a ring $A$, and let $M$ be an $A$-module.
	Assume $M$ is $x$-torsionfree and $y$-torsionfree, and the Koszul complex $\Kos_M(x,y)$ is discrete.
	Then $M\to M[1/x]\cap M[1/y]$ is an isomorphism.
\end{corollary}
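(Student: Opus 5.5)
The plan is to rerun the implication \ref{lem:reflexive_is_saturated_ref}$\Rightarrow$\ref{lem:reflexive_is_saturated_sat} of \Cref{lem:reflexive_is_saturated}, keeping only the step that produces a class in $\mathrm{H}_1$. That step needs neither noetherianity nor finite generation, because the torsionfreeness of $M$ is now an assumption rather than something to be derived. The map $M\to M[1/x]\cap M[1/y]$ is injective, since $M$ has no $x$-torsion, so $M\to M[1/x]$ is injective. Here the intersection is taken inside $M[1/xy]$, and $M[1/x]\to M[1/xy]$ is injective because $M[1/x]$ is $y$-torsionfree, which follows from $M$ being $y$-torsionfree. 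So only surjectivity needs proof.

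Let $z\in M[1/x]\cap M[1/y]$. Write $z=m_1/x^r=m_2/y^s$ with $m_1,m_2\in M$, and choose $r,s\ge 0$ minimal. I will show that $r=0$ or $s=0$, which gives $z\in M$ by the injectivity above. The identity in $M[1/xy]$, together with the torsionfreeness, gives $y^sm_1=x^rm_2$ in $M$.

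The main step is to reduce to the case $r=s=1$. Since $\mathrm{H}_1(\Kos_M(x,y))=0$, the element $x$ is a nonzerodivisor on $M/yM$. Koszul discreteness also gives $\mathrm{H}_1(\Kos_M(x^r,y^s))=0$ for all $r,s\ge1$; this follows from the standard fact that $\Kos(x^r,y)$ and similar complexes are filtered by copies of $\Kos(x,y)$. Equivalently, $x^r$ is a nonzerodivisor on $M/y^sM$.

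Now suppose $r,s\geq 1$. From $y^sm_1=x^rm_2$, the element $x^r m_2$ vanishes in $M/y^sM$. Hence $m_2\in y^sM$, say $m_2=y^sm$. Then $y^s(m_1-x^rm)=0$, and since $M$ is $y$-torsionfree, $m_1=x^rm$. So $z=m\in M$.

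The only point requiring care is the claim $\mathrm{H}_1(\Kos_M(x^r,y^s))=0$. I would prove it by induction, using the short exact sequence of complexes $\Kos_M(x^{r-1},y)\xrightarrow{x}\Kos_M(x^r,y)\to\Kos_M(x,y)$, which holds by $x$-torsionfreeness, and symmetrically in $y$. The resulting long exact sequence propagates the vanishing of $\mathrm{H}_1$. Alternatively, one can avoid powers entirely: induct on $r+s$, and at each step use only the vanishing of $\mathrm{H}_1(\Kos_M(x,y))$ to peel off one factor of $x$ or of $y$ from the representation $z=m_1/x^r=m_2/y^s$, contradicting minimality.
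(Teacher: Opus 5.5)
Your proof is correct, but it handles the exponents differently from the paper. The paper simply reuses the surjectivity half of the implication \ref{lem:reflexive_is_saturated_ref}$\Rightarrow$\ref{lem:reflexive_is_saturated_sat} in \Cref{lem:reflexive_is_saturated}:
\begin{itemize}
\item minimality of $r$ forces $x\nmid m_1$, so $y^sm_1=x^rm_2$ makes the image of $m_1$ a nonzero $y^s$-torsion element of $M/xM$;
\item multiplying it by a suitable power of $y$ gives a nonzero $y$-torsion element, hence a nonzero class in $\mathrm{H}_1(\Kos_M(x,y))$, which is a contradiction.
\end{itemize}
Thus the paper only needs the elementary fact that $y$ injective on $M/xM$ implies $y^s$ injective. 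It uses minimality of one exponent in place of any statement about powers of $x$. Your closing ``peel off one factor at a time'' alternative is essentially this argument.

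Your main route instead first upgrades the hypothesis to $\mathrm{H}_1(\Kos_M(x^r,y^s))=0$ for all $r,s\geq 1$ by d\'evissage. After that, the conclusion is a direct divisibility argument that needs neither the minimal choice of $r,s$ nor a contradiction. You obtain a stronger intermediate statement (Koszul regularity of all powers), at the cost of extra machinery.

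One imprecision there: $\Kos_M(x^{r-1},y)\to\Kos_M(x^r,y)\to\Kos_M(x,y)$ is not literally a short exact sequence of complexes. The chain map is the identity on the components involving the $x$-generator and multiplication by $x$ on the others; it is injective by $x$-torsionfreeness. Its cokernel is $\Kos_{M/xM}(y)$, which is quasi-isomorphic to $\Kos_M(x,y)$, again by $x$-torsionfreeness. Alternatively, use the distinguished triangle from the octahedral axiom. Either way, your long exact sequence argument goes through unchanged.
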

We also notice that by \cite[\href{https://stacks.math.columbia.edu/tag/0AY6}{Tag 0AY6}]{stacks-project}, the reflexivity in \Cref{def:ref_Frob_mod} often coincides with the usual notion in commutative algebra.
\begin{corollary}
\label{cor:reflexive_in_ca}
Let $(A,I)$ be a regular and transversal prism, and let $M$ be a finitely generated analytically locally free $A$-module.
Then $M$ is reflexive in the sense of \Cref{def:ref_Frob_mod} if and only if it is reflexive in the sense of commutative algebra (\cite[\href{https://stacks.math.columbia.edu/tag/0AVT}{Tag 0AVT}]{stacks-project}).
Moreover, the reflexive hull of $M$ is equal to the intersection $M[1/p]\cap M[1/I]$.
\end{corollary}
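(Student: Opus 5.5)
Since both notions of reflexivity and the intersection formula are local, I first pass to a Zariski (or completely faithfully flat) cover, so that $I=(d)$ is principal. By transversality and regularity, $(p,d)$ is then a regular sequence in the noetherian ring $A$. Applying \Cref{lem:reflexive_is_saturated} to $(x,y)=(p,d)$, the reflexivity of \Cref{def:ref_Frob_mod} (for an analytically locally free $M$) becomes the saturation condition. Explicitly, $M$ is $p$- and $d$-torsionfree and $M=M[1/p]\cap M[1/d]$; equivalently, $M$ is torsionfree and $M/pM$ is $d$-torsionfree.

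For the implication from commutative-algebra reflexivity to our notion, I use the criterion of \cite[\href{https://stacks.math.columbia.edu/tag/0AY6}{Tag 0AY6}]{stacks-project}. Over a noetherian normal domain (locally $A$ is regular, hence a normal domain), a finitely generated $M$ is reflexive iff it is torsionfree and satisfies Serre's $(S_2)$ property. Concretely, $M$ is reflexive iff $M=\bigcap_{\mathrm{ht}\,\mathfrak q=1} M_{\mathfrak q}$ inside $M\otimes \mathrm{Frac}$, or equivalently $M = j_*j^*M$ for any open $j:U\to \Spec A$ whose complement has codimension $\geq 2$. Take $U=\Spec(A)\setminus V(p,d)=D(p)\cup D(d)$. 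Since $(p,d)$ is a regular sequence, $V(p,d)$ has codimension $2$. Sections of $M$ over $U$ are exactly $M[1/p]\cap M[1/d]$ inside $M[1/pd]$ (torsionfreeness makes these injective). Hence a reflexive $M$ is torsionfree and saturated, which is our reflexivity.

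For the converse, start from $M$ analytically locally free and saturated. Then $M$ is $p$- and $d$-torsionfree. Its restriction to $U$ is locally free, so the torsion of $M$ is supported on $V(p,d)$ and is killed by a power of $(p,d)$; it therefore vanishes by the torsionfreeness. Thus $M$ is torsionfree, and $M=\Gamma(U,M)$. For a torsionfree finitely generated module over a normal noetherian domain whose restriction to the complement of a codimension-$2$ closed set is locally free (in particular reflexive there), the sections $\Gamma(U,M|_U)$ form the reflexive hull $M^{\vee\vee}$. This is because $j_*$ of a reflexive sheaf on $U$ is reflexive when the complement has codimension $\geq 2$, and it agrees with $M$ on $U$. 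So $M=M^{\vee\vee}$ is reflexive. The same argument gives the last claim for a general analytically locally free $M$ (replace $M$ by $M/M_{\mathrm{tors}}$ if needed, which changes nothing after inverting $p$ or $d$): the reflexive hull $M^{\vee\vee}=j_*(M|_U)$ equals $M[1/p]\cap M[1/I]$.

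I expect the main obstacle to be the identification $\Gamma(U,M)=M[1/p]\cap M[1/d]$ together with the equality $j_*(M|_U)=M^{\vee\vee}$. This equality requires the codimension-two statement and normality, and normality must be checked locally at points of $V(p,d)$. My first attempt would be to use that $A$ is regular, so its localizations are UFDs, combined with the Stacks characterization of reflexive modules via depth $\geq 2$ at primes of height $\geq 2$. Finally, I descend from the cover back to general $I$: the conditions involved are flat-local, and the intersection formula commutes with flat base change.
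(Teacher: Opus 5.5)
Your proposal is correct and follows the paper's own (implicit) argument. The paper justifies the corollary only by combining \Cref{lem:reflexive_is_saturated}, which converts the Koszul condition into the saturation $M=M[1/p]\cap M[1/I]$, with the Stacks characterization \cite[Tag 0AY6]{stacks-project} of reflexive modules over a noetherian normal ring. You carry out exactly these steps via the sections over the codimension-two complement $D(p)\cup D(d)$, and you also supply the reflexive-hull claim that the paper leaves unjustified.

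One step can be tightened: for the hull, argue directly that $M^{\vee\vee}$ is reflexive and agrees with $M$ on $U$. Your first direction then gives $M^{\vee\vee}=\Gamma(U,M^{\vee\vee})=\Gamma(U,M)$, so you do not need to invoke coherence of $j_*$ of a reflexive sheaf.
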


By specializing the above results to modules over a regular prism, we obtain the following observations.
\begin{corollary}
	\label{cor:reflexive_is_locally_free}
	Let $(A,I)$ be a transversal regular prism, and let $\mathfrak{p}$ be a prime ideal in $\Ass_{\overline{A}}(\overline{A}/p\overline{A})$.
	Then for any $n\in \mathbb{N}\cup \{\infty\}$, an object in $\Coh_\refl(A_{L_\mathfrak{p}}/p^nA_{L_\mathfrak{p}})$ is finite projective over $A_{L_\mathfrak{p}}/p^nA_{L_\mathfrak{p}}$.
\end{corollary}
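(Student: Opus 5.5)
We use that $A_{L_\mathfrak{p}}$ is a two-dimensional complete regular local ring, by the remark after \Cref{prop:localization_injection}. In particular it is a UFD, so $IA_{L_\mathfrak{p}}$ is principal; we fix a generator $d$. By transversality, $(p,d)$ is a regular sequence in $A_{L_\mathfrak{p}}$ contained in the maximal ideal. Moreover $A_{L_\mathfrak{p}}/(p,d)=\mathcal{O}_{L_\mathfrak{p}}/p$ is artinian local, since $\mathcal{O}_{L_\mathfrak{p}}$ is a complete DVR by \Cref{prop:localization_injection}. The strategy in every case is the same: show that $M/dM$ is free over $A_{L_\mathfrak{p}}/(p^n,d)$, then lift a basis to $M$ using $d$-torsionfreeness.

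\emph{Case $n=\infty$.} Let $M\in\Coh_\refl(A_{L_\mathfrak{p}})$. Then $\Kos_M(p,d)$ is discrete, so \Cref{lem:reflexive_is_saturated} (direction \ref{lem:reflexive_is_saturated_ref}$\Rightarrow$\ref{lem:reflexive_is_saturated_red}) gives two facts: $M$ is $d$-torsionfree, and $M/dM$ is $p$-torsionfree. Now $M/dM$ is a finitely generated torsionfree module over the DVR $A_{L_\mathfrak{p}}/d=\mathcal{O}_{L_\mathfrak{p}}$, hence free.

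\emph{Case $n<\infty$.} By definition $M$ is $I$-torsionfree and $M/IM$ is generically locally free over $A_{L_\mathfrak{p}}/(p^n,I)=\mathcal{O}_{L_\mathfrak{p}}/p^n$. This ring is artinian local, so its spectrum is a single point and "generically locally free" just means $M/dM$ is free. (The projectivity of $M[1/I]$ is not even needed here.)

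\emph{Lifting step.} In both cases, put $B=A_{L_\mathfrak{p}}/p^n$. This is a noetherian local ring with $d$ in its maximal ideal, and $M$ is a finitely generated, $d$-torsionfree $B$-module with $M/dM$ free over $B/d$. Lift a basis of $M/dM$ to a map $F=B^r\to M$. It is surjective by Nakayama; let $K$ be its kernel. Since $M[d]=0$, we have $\mathrm{Tor}_1^B(M,B/d)=0$, so reducing $0\to K\to F\to M\to 0$ mod $d$ gives the exact sequence $0\to K/dK\to (B/d)^r\to M/dM\to 0$. The second map is an isomorphism, so $K/dK=0$. As $K$ is finitely generated ($B$ is noetherian), Nakayama gives $K=0$, and $M\simeq B^r$ is free, in particular finite projective.

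The only step needing care is the case $n=\infty$: one must extract $p$-torsionfreeness of $M/dM$ from discreteness of the Koszul complex, and \Cref{lem:reflexive_is_saturated} supplies exactly this. The rest is standard Nakayama bookkeeping.
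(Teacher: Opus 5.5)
Your proof is correct, but it is not the route the paper takes.

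\textbf{The paper's route.} For $n\in\mathbb{N}$ the paper simply says the claim is immediate from \Cref{def:ref_Frob_mod}. For $n=\infty$ it uses \Cref{lem:reflexive_is_saturated} only to see that $M$ is reflexive in the usual commutative-algebra sense \cite[Tag 0AY6]{stacks-project}. It then quotes the general theorem that finite reflexive modules over a two-dimensional regular local ring are free \cite[Tag 0B3N]{stacks-project}.

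\textbf{Your route.} You prove that freeness by hand in this specific setting. From the discreteness of $\Kos_M(p,d)$ you extract that $d$ is $M$-regular and that $p$ is $M/dM$-regular. Since $A_{L_\mathfrak{p}}/d=\mathcal{O}_{L_\mathfrak{p}}$ is a DVR, $M/dM$ is free. You then lift a basis by Nakayama. In effect this is the depth-two (Auslander--Buchsbaum) argument, specialized to the regular sequence $(p,d)$.

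\textbf{Comparison.} Your version is self-contained. It avoids translating between the Koszul definition of reflexivity and the double-dual one, and it treats all $n$ uniformly. It also supplies the Nakayama lifting that the paper leaves implicit for finite $n$: there the definition only gives that $M$ is $I$-torsionfree and $M/IM$ is free over the artinian ring $\mathcal{O}_{L_\mathfrak{p}}/p^n$, not directly that $M$ is free. The paper's version is shorter and rests on a general theorem not tied to the particular parameters $(p,d)$.

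\textbf{One small precision.} The identification $\mathrm{Tor}_1^B(M,B/d)=M[d]$ uses that $d$ is a non-zero-divisor on $B=A_{L_\mathfrak{p}}/p^n$. This does hold, because $(d,p)$ is a regular sequence in the local ring $A_{L_\mathfrak{p}}$. Alternatively, apply the snake lemma to multiplication by $d$ on $0\to K\to F\to M\to 0$; this gives $K/dK\hookrightarrow F/dF$ directly from $M[d]=0$, without needing regularity of $d$ on $B$.
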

\begin{proof}
	The statement for $n\in \mathbb{N}$ is trivial by \Cref{def:ref_Frob_mod}, and it is left to consider the case when $n=\infty$.
	In the latter case, by \Cref{def:ref_Frob_mod} and \Cref{lem:reflexive_is_saturated}, we know an object $M\in \Coh_\refl(A_{L_\mathfrak{p}})$ is in particular reflexive in the sense of \cite[\href{https://stacks.math.columbia.edu/tag/0AY6}{Tag 0AY6}]{stacks-project}.
	Hence we can use \cite[\href{https://stacks.math.columbia.edu/tag/0B3N}{Tag 0B3N}]{stacks-project} to conclude the finite projectivity of $M$ over $A$.
\end{proof}
\begin{corollary}
	\label{cor:reg_prism_is_sat}
	Let $(A,I)$ be a transversal regular prism.
	Then the inclusion map $A\to A[1/p]\cap A[1/I]$ is an isomorphism, and the completion map $A[1/I]\to A\langle 1/I \rangle$ is an injection.
\end{corollary}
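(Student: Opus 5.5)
The first claim follows by applying \Cref{lem:reflexive_is_saturated} to the module $M=A$ and the pair $(x,y)=(p,d)$. Here $d$ is a local generator of $I$. Both conditions in the statement are local on $\Spec(A)$: intersections inside $A[1/pI]$ commute with flat localization, and injectivity can be checked Zariski locally. So we may pass to a Zariski cover on which $I=(d)$ is principal.

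\textbf{Step 1: the regular sequence and the first claim.} By transversality, $(d,p)$ is a regular sequence in $A$; this was already observed before (\ref{diagram:inj_of_prisms}). Since $A$ is noetherian, permuting gives that $(p,d)$ is regular as well. We check condition \ref{lem:reflexive_is_saturated_red} of \Cref{lem:reflexive_is_saturated} for $M=A$:
\begin{itemize}
\item $A$ is $p$-torsionfree and $d$-torsionfree, since it is transversal and $d$ is a non-zero-divisor.
\item $A/pA$ is $d$-torsionfree (this is the observation via $\mathrm{H}_1(\Kos_A(d,p))=0$).
\item $A/dA=\overline{A}$ is $p$-torsionfree, by transversality.
\end{itemize}
The lemma then gives $A\simeq A[1/p]\cap A[1/d]$. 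Gluing over the cover yields the first claim.

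\textbf{Step 2: reducing injectivity to an element divisible by every power of $p$.} An element of $A[1/I]$ has the form $x/d^k$ with $x\in A$. Suppose it maps to zero in $A\langle 1/I\rangle=(A[1/I])^\wedge_p$. Since $d$ is a unit in $A[1/I]$, this amounts to $x$ mapping to zero in the $p$-completion of $A[1/d]$. It therefore suffices to show that $A[1/d]$ is $p$-adically separated and $p$-torsionfree, so that it injects into its derived $p$-completion. For a $p$-torsionfree module, derived and classical completion agree. The module $A[1/d]$ is $p$-torsionfree because $A$ is.

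\textbf{Step 3: the main step.} For separatedness, take $y\in\bigcap_n p^nA[1/d]$. Clearing denominators at each level, for each $n$ we can write $d^{k_n}y=p^n a_n$ with $a_n\in A$. The difficulty is that the exponents $k_n$ may grow with $n$. This is handled by the fact that $A/p^nA$ is $d$-torsionfree for every $n$, which follows by induction on $n$ from the regularity of $(p,d)$. Hence $d^{k_n}y\in p^nA$ forces $y\in p^nA$ for each $n$, where we first normalize $y\in A$ by multiplying by a fixed power of $d$. Since $A$ is noetherian and $p$-adically complete, $A$ is $p$-adically separated by Krull's intersection theorem, and so $y=0$.

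Applying this with $y=x$, the element $x$ lies in $\bigcap_n p^nA=0$, so $x/d^k=0$ and the completion map is injective.

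I expect Step 3 to be the main obstacle: controlling the denominators uniformly is exactly the $d$-torsionfreeness of $A/p^n$ given by the regular sequence. Everything else is a direct application of \Cref{lem:reflexive_is_saturated}.
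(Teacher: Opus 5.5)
Your proof is correct and is essentially the paper's argument: the first claim comes from \Cref{lem:reflexive_is_saturated} applied to $(p,d)$, and the second from the $d$-torsionfreeness of every $A/p^nA$. You verify condition (iii) of the lemma where the paper uses the Koszul condition (i), and the paper packages the second claim as taking $\lim_n$ of the injections $A/p^nA\hookrightarrow A/p^nA[1/I]$ to get $A\hookrightarrow A\langle 1/I\rangle$ and then inverting $I$, which is equivalent to your $p$-adic separatedness argument for $A[1/d]$.
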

\begin{proof}
	By the assumption of $(A,I)$, we know the Koszul complex $\Kos_A(d,p)$ is discrete, where $d$ is a local generator of the ideal $I$.
	So the first claim follows from the equivalence of \ref{lem:reflexive_is_saturated_ref} and \ref{lem:reflexive_is_saturated_sat} in \Cref{lem:reflexive_is_saturated}.
	For the second claim, it suffices to notice that by the equivalence of \ref{lem:reflexive_is_saturated_sat} and \ref{lem:reflexive_is_saturated_ref} in \Cref{lem:reflexive_is_saturated}, we know  for each $n\in \mathbb{N}$ that $A/p^nA$ is $d$-torsionfree and hence the map $A/p^nA\to A/p^nA[1/I]$ is injective.
	So by taking the limit, we get an injection $A\to A \langle 1/I \rangle$, where the source is $I$-torsionfree.
	Hence the map $A[1/I]\to A \langle 1/I \rangle$ is injective as well.
\end{proof}

\subsection{Analytic locally freeness}
\label{sub:Frobenius_module_ref}
We prove that a $p$-torsionfree Frobenius module over a framed regular prism is in fact analytically locally free, as long as it satisfies an appropriate pointwise assumption.

\begin{theorem}[Analytic locally freeness]
	\label{thm:Frob_mod_is_analyticall_loc_free}
	Let $(A,I)$ be a framed regular prism, and let $(M,\varphi_M)$ be a Frobenius module over $(A,I)$.
	Assume that for any surjection $(A,I)\to (A',IA')$ onto a transversal regular prism of Krull dimension two, the base change $M\otimes_A A'[1/p]$ is locally free over $A'[1/p]$.
	Then the restriction of $M/M[p^\infty]$ at the open subscheme $\Spec(A)\backslash V(p,I)$ is locally free.
\end{theorem}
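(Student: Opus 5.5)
We first replace $M$ by $N\colonequals M/M[p^\infty]$. Since $\varphi_A$ is flat (\Cref{prop:regular prism Frobenius}), $\varphi_A^*$ commutes with taking $p^\infty$-torsion, so $\varphi_M$ descends to an isomorphism $\varphi_N:\varphi_A^*N[1/I]\simeq N[1/I]$. On $U\colonequals\Spec(A)\backslash V(p,I)$ we have $N[1/p]=M[1/p]$, and $U$ is covered by $\Spec(A[1/p])$ and $\Spec(A[1/I])$. It therefore suffices to show that $N$ is locally free at every prime $\mathfrak{q}\in U$.

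We argue by Fitting ideals. Let $r$ be the generic rank of $N$ (work one component of $\Spec(A)$ at a time; $A$ is regular, so each component is a domain). Let $\mathfrak{a}\colonequals \mathrm{Fitt}_r(N)$. Then $N$ is locally free of rank $r$ exactly off $V(\mathfrak{a}')$, where $\mathfrak{a}'=\mathfrak{a}\cdot\mathrm{ann}$-type correction; concretely, the non-free locus $Z\subset\Spec(A)$ is closed. Fitting ideals commute with base change, so the Frobenius isomorphism gives $\varphi_A^{-1}(Z)\cap D(I)=Z\cap D(I)$ as closed sets of $D(I)$.

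The key geometric input is to use this $\varphi$-invariance to force any irreducible component of $Z\cap U$ to contain a point of dimension-two type. Take a generic point $\mathfrak{q}$ of a component of $Z\cap U$. Its closure meets $V(p,I)$, because $A$ is $(p,I)$-complete and so every maximal ideal contains $(p,I)$. If $Z\cap U\neq\emptyset$, we want to produce a surjection $(A,I)\to(A',IA')$ onto a transversal regular prism of Krull dimension two such that $\Spec(A'[1/p])$ meets $Z$. Using the framing, the most natural choice is a $\delta$-stable quotient cut out by coordinate relations: set $t_i$ equal to suitable Teichm\"uller-type constants, giving $\varphi$-stable ideals $(x_2,\dots,x_n)$ in the complete local coordinates of \Cref{prop:competion_of_A_at_max_ideal}. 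That proposition yields $x_i\mid\varphi(x_i)$, so such ideals are $\delta$-stable, and the quotient is $W(k')\llbracket x_1\rrbracket$ with $d$ of constant term $p$, hence a transversal regular prism of dimension two. Then $\varphi$-invariance of $Z\cap D(I)$ should propagate $Z$ onto such a curve. The $\varphi$-orbit closure of $\mathfrak{q}$ is $\varphi$-stable; since $\varphi$ is finite flat, we expect its minimal $\varphi$-stable closed subsets to be exactly the framing strata. The hypothesis then says $N\otimes A'[1/p]$ is locally free. Combined with $N$ being $p$-torsionfree and the constancy of rank along the Frobenius orbit, this should contradict $\mathfrak{q}\in Z$. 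For points of $D(p)$ outside $D(I)$, we pass to points of $D(I)\cap D(p)$ by generization, since local freeness is an open condition.

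The main obstacle is the step showing that every nonempty $\varphi$-invariant closed subset of $U$ meets the locus swept out by the two-dimensional framed quotients. General regular Frobenius structures can be wild, as \Cref{rmk:Frob_reg_prism_of_dim_2} warns, so the framing is essential here. I would first try to do this after completing at a maximal ideal, using the explicit form $\varphi(x_i)=(x_i+[a_i])^p-[a_i]^p$ and a dimension induction: restrict along $x_n\mapsto 0$, which is a $\delta$-stable regular quotient, and use that the Fitting ideal pulled back along the flat, monic, degree-$p$ map $\varphi$ has the same zero locus. I expect most of the technical work to go into this.
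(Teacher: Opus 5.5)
Your set-up matches the paper's: pass to $N=M/M[p^\infty]$, use Fitting ideals, and note $\Spec(\varphi_A)^{-1}(Z)\cap D(I)=Z\cap D(I)$. But there is a genuine gap. The step you flag as the main obstacle is the entire content of the $D(p)$-part, and the heuristics you offer for it fail.

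(i) A good curve through a bad point gives no contradiction. The hypothesis only says $\mathfrak aA'[1/p]\in\{0,A'[1/p]\}$, and $\mathfrak aA'[1/p]=0$ is compatible with $Z$ containing the whole curve. For example, $N=(x_2,x_3)$ becomes free of rank $2$ on $V(x_2,\dots,x_n)$ although it is not free there.

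(ii) The quotient $A^\wedge_{\mathfrak m}/(x_2,\dots,x_n)$ need not be transversal. Constant term $p$ still allows the image of $d$ to be $p$ times a unit, and then the quotient is a crystalline prism, to which the hypothesis does not apply.

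(iii) The correct replacement for your ``framing strata'' expectation is \Cref{prop:fitting_ideal_of_BK_mod}: a $\varphi$-stable ideal with $J[1/p]\neq A[1/p]$ lies in $(x_1,\dots,x_n)A[1/p]$. The paper proves this by passing to $A_\perf$ and using that $\varphi^{s+1}(d)$ and $d$ are comaximal in $A[1/p]$ (\Cref{lem:ideal_generated_by_Frobenius_powers_of_d}) to lower the number of Frobenius twists of $d$ that must be inverted. It then inducts on $n$ in \Cref{thm:BK_mod_invert_p_explicit}, killing a coordinate chosen so that the image of $J[1/p]$ stays nonzero and proper. This choice is possible because, by \Cref{lem:dividibility_of_J_and_x}, $J[1/p]\subseteq x_iA[1/p]$ forces $d$ to be, up to a unit, a Frobenius twist of $\varphi(x_i)/x_i$. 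The crystalline case is handled by \Cref{prop:BK_mod_crystalline}. None of these ingredients is in your outline.

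Separately, your argument never reaches the points of $U$ in $V(p)\cap D(I)$. The hypothesis only controls $N\otimes_AA'[1/p]$, and $\Spec(\varphi_A)^{-1}(V(p))=V(p)$. So $\varphi$-invariance can never move a bad point of $V(p)\cap D(I)$ into $D(p)$. The paper treats this locus by a separate argument that does not use the hypothesis. Once $N[1/p]$ is projective, Beauville--Laszlo reduces to $N\otimes_AA\langle 1/I\rangle$, whose Fitting ideal is genuinely $\varphi$-stable. Then \Cref{Prop:fitting_ideal_invert_I} shows it is $p^mA\langle 1/I\rangle$: mod $p$ one gets $\bar J=\bar J^{[p]}$, hence $\bar J=\bar J^m$ for all $m$, and Krull's intersection theorem finishes.

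Finally, your treatment of $D(p)\cap V(I)$ by generization runs the wrong way, since the non-free locus is closed under specialization, not generization. What works is to take a preimage $\mathfrak q'$ of the bad point under the surjective map $\Spec(\varphi_A)$. It lies in $D(p)\cap V(\varphi(I))\subseteq D(I)$ because $p\in(d,\varphi(d))$, and it is bad by your invariance statement.
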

Before the proof, it worth to mention that the analytic locally freeness in \Cref{thm:Frob_mod_is_analyticall_loc_free} does not hold for general torsionfree Frobenius modules over a regular prism or even a framed regular prism.
In the following, we give an example of a framed regular prism $(A,I)$ that shares the same underlying ring/ideal with that of the Breuil--Kisin prism, yet the structural results of the Breuil--Kisin modules as in \cite[Prop.\ 4.3]{BMS1} fail for those over $(A,I)$.
Relatedly, in contrast to the result of Kisin \cite[Prop.\ 2.1.12]{Kis06} and Bhatt--Scholze (\cite[Thm.\ 7.2]{BS23}) on Breuil--Kisin modules, the \'etale realization functor $\Vect^\varphi(A)\to \Vect^\varphi(A\langle 1/I \rangle)$ is not full.
\begin{example}
	\label{eg:Frob_mod_over_general_Frob-reg_prism}
	We let $A$ be the ring $W\llbracket t \rrbracket$, and let $\varphi_A:A\to A$ be the unique continuous extension of $\varphi_W$ that sends the element $t$ onto the polynomial $t(t-p)^{p-1}$.
	Since the mod $p$ reduction of the map $\varphi_A$ is finite flat, and since the ring $A$ is both $p$-complete and $p$-torsionfree, we know $\varphi_A$ is finite flat as well.
	We let $E(t)=t-p$.
	Then as the constant term of the element $\frac{(\varphi_A(t)-p)-(t-p)^p}{p}\in A$ is $\frac{-p-(-p)^p}{p}$ and in particular is a unit in $W$, we know $E(t)$ is distinguished in the $\delta$-ring $A$.
	Thus the pair $(A,(E(t)))$ is a prism (\cite[Lem.\ 3.1]{BS22}).	
	Now consider the $p$-torsionfree module $M\colonequals A/tA\simeq W$.
	The Frobenius pullback $\varphi_A^*M$ is naturally isomorphic to the $A$-module $A/\varphi_A(t)A=W\llbracket t \rrbracket/tE(t)^{p-1} W\llbracket t \rrbracket$.
	In particular, there is a natural isomorphism between the localizations
	\[
	\varphi_M:\varphi_A^*M [1/E(t)] =  \bigl(A/tE(t)^{p-1} A\bigr)[1/E(t)] \simeq \bigl(A/tA\bigr)[1/E(t)]=M[1/E(t)].
	\]
	Hence pair $(M,\varphi_M)$ forms a $p$-torsionfree Frobenius module that is not analytically locally free.
\end{example}
\begin{proof}[Proof of \Cref{thm:Frob_mod_is_analyticall_loc_free}]
	We let $J$ be a non-zero fitting ideal of $M$.
	We first claim that $M[1/p]$ is finite projective over $A[1/p]$.
	By \cite[\href{https://stacks.math.columbia.edu/tag/07ZD}{Tag 07ZD}]{stacks-project}, it suffices to show that the inclusion of $A$-modules $J\subset A$ becomes an equality after inverting by $p$.
	Here we implicitly use the fact that the formation of the fitting ideal commutes with arbitrary base changes of the ring (\cite[\href{https://stacks.math.columbia.edu/tag/07ZA}{Tag 07ZA}.(3)]{stacks-project}).
	Moreover, by the local property of quasi-coherent sheaves and the faithfully flatness of the complete localization, to check $J[1/p]=A[1/p]$, it suffices to do so by taking the base change along the map $A\to A^\wedge_{\mathfrak{m}}$ for a maximal ideal $\mathfrak{m}\subset A$.
	In addition, by taking the finite \'etale base change along $W(k)\to W(k(\mathfrak{m}))$ and apply the further faithfully flat base change along $A^\wedge_{\mathfrak{m}}\to A'^\wedge_{\mathfrak{m}}$ as in \Cref{prop:competion_of_A_at_max_ideal}, we may assume that there is a Frobenius equivariant isomorphism of regular prisms
	\[
	(W\llbracket x_1,\ldots,x_n \rrbracket, I') \simeq (A^\wedge_{\mathfrak{m}}, IA^\wedge_{\mathfrak{m}}),
	\]
	where $(W\llbracket x_1,\ldots,x_n \rrbracket, I')$ satisfies the condition as in \Cref{prop:competion_of_A_at_max_ideal}.
	So the claim follows from \Cref{thm:BK_mod_invert_p_explicit}, which will be proved in the rest of the subsection.
	
	We now grant the above claim and assume $M$ is $p$-torsionfree.
	By the Beauville--Laszlo gluing, to finish the proof, it suffices to show that $M\otimes_A A\langle 1/I \rangle$ is finite projective over $A\langle 1/I \rangle$.
	By \cite[\href{https://stacks.math.columbia.edu/tag/0F7M}{Tag 0F7M}]{stacks-project}, it suffices to show that the fitting ideal $J\otimes_A A\langle 1/I \rangle$ is equal to $p^nA\langle 1/I \rangle$ for some $n$.
	The latter follows from the more general observation on Frobenius equivariant ideals in a noetherian $\delta$-ring \Cref{Prop:fitting_ideal_invert_I}, applied at $A\langle 1/I \rangle$.
\end{proof}

\begin{proposition}[Local freeness of \'etale realization]
	\label{Prop:fitting_ideal_invert_I}
	Let $A$ be a $p$-complete $p$-torsionfree noetherian $\delta$-ring, and let $J$ be an ideal in $A$ such that $\varphi_A^*(J)=J$.
	Then $J=p^nA$ for some $n\in \mathbb{N}$.
\end{proposition}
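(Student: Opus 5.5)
The plan is to divide out the largest power of $p$ contained in $J$, and then show that the remaining ideal $J'$ is the unit ideal by working modulo $p$. I read $\varphi_A^*(J)=J$ as saying that the ideal generated by $\varphi_A(J)$ equals $J$. There are two preliminary points.
\begin{enumerate}
\item If $J=0$ the statement is vacuous or degenerate, so I assume $J\neq 0$.
\item The statement as written needs $\Spec(A)$ to be connected. Indeed, for $A=\mathbb{Z}_p\times\mathbb{Z}_p$ with $\varphi_A=\id$, the ideal $J=\mathbb{Z}_p\times 0$ satisfies the hypothesis but is not of the form $p^nA$.
\end{enumerate}

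To handle the second point I first reduce to the connected case. Since $A$ is noetherian, it has finitely many connected components, cut out by idempotents $e$. For such an idempotent, $\varphi_A(e)$ is again idempotent, and $\varphi_A(e)\equiv e^p=e \bmod p$. Idempotents of the $p$-complete ring $A$ are determined by their reductions mod $p$, so $\varphi_A(e)=e$. Hence $A=\prod eA$ is a product of $\delta$-rings, $J=\prod eJ$, and each factor $eJ$ is again $\varphi$-stable. So I may assume $A$ is connected. The conclusion in general is then that $J$ is Zariski locally of the form $p^nA$, which is exactly what is needed for \cite[\href{https://stacks.math.columbia.edu/tag/0F7M}{Tag 0F7M}]{stacks-project} in the proof of \Cref{thm:Frob_mod_is_analyticall_loc_free}.

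Now let $A$ be connected and $J\neq0$. Since $A$ is noetherian and $p$-adically complete (so $p$ lies in the Jacobson radical), Krull's intersection theorem gives $\bigcap_n p^nA=0$. Hence there is a maximal $n$ with $J\subseteq p^nA$. Because $A$ is $p$-torsionfree, I can write $J=p^nJ'$ with $J'\not\subseteq pA$. Using $\varphi_A(p^n)=p^n$ and torsionfreeness again, the relation $\varphi_A(J)A=J$ becomes $\varphi_A(J')A=J'$.

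Next I reduce mod $p$. Let $\bar J\subseteq A/p$ be the image of $J'$; it is nonzero. Since $\varphi_A(x)\equiv x^p \bmod p$, the relation says $\bar J=\bar J^{[p]}$, the ideal generated by $p$-th powers of elements of $\bar J$. This gives $\bar J=\bar J^{[p]}\subseteq \bar J^{2}\subseteq\bar J$, so $\bar J=\bar J^2$. A finitely generated idempotent ideal in a noetherian ring is generated by an idempotent (Nakayama's lemma). Since $A$ is $p$-complete and connected, so is $A/p$, because idempotents lift. Therefore $\bar J$ is either $0$ or $A/p$. It is not $0$, so $J'+pA=A$. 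As $p$ lies in the Jacobson radical, Nakayama's lemma gives $J'=A$, and hence $J=p^nA$.

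The main thing to get right is the step $\bar J=\bar J^{2}$ and the resulting idempotent dichotomy, together with the connectedness reduction above. I expect everything else to be routine bookkeeping with $p$-torsionfreeness.
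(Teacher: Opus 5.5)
Your argument is correct, and it follows the same route as the paper. You divide out the largest power of $p$ contained in $J$ (using $p$-torsionfreeness and $\varphi_A(p^n)=p^n$), reduce modulo $p$, and use $\varphi_A(x)\equiv x^p$ to get $\bar J=\bar J^{[p]}$. Hence $\bar J$ is an idempotent ideal of $A/p$.

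The two proofs differ only in the last step, and there your version is the more accurate one.
\begin{itemize}
\item The paper iterates to get $\bar J=\bar J^{m}$ for all $m$. It then cites Krull's intersection theorem to conclude $\bar J=0$, having assumed $J\neq A$.
\item For an ideal not contained in the Jacobson radical of $A/p$, Krull's theorem only gives some $a\in\bar J$ with $(1-a)\bar J=0$. That says $\bar J$ is generated by an idempotent, and one still needs $A/p$ to be connected (or local) to conclude $\bar J\in\{0,A/p\}$.
\item Your argument makes this explicit. A finitely generated idempotent ideal is generated by an idempotent. Idempotents lift along $A\to A/p$, so $A/p$ is connected when $A$ is. Hence $\bar J=A/p$, and Nakayama gives $J'=A$.
\end{itemize}

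Your example $A=\mathbb{Z}_p\times\mathbb{Z}_p$ with $\varphi_A=\id$ and $J=\mathbb{Z}_p\times 0$ is valid. It shows the connectedness hypothesis is genuinely needed, so the proposition as literally stated is false without it. Your componentwise reduction is the right repair: $\varphi_A(e)=e$ by uniqueness of idempotent lifts, so the decomposition $A=\prod eA$ is $\delta$-equivariant.

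Two small corrections to your framing:
\begin{itemize}
\item $J=0$ is not a vacuous case. It is a genuine exception to the literal statement, since $0\neq p^nA$. The paper also tacitly assumes $J\neq 0$ when it picks the largest $n$.
\item In the disconnected case, some components can have $eJ=0$ even when $J\neq 0$. So the correct general conclusion is that on each connected component $eJ$ is either $0$ or $p^{n_e}eA$, not simply ``Zariski locally $p^nA$''.
\end{itemize}
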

\begin{proof}
	By assumption, the ideal $J$ is a $p$-complete $p$-torsionfree finitely presented $A$-module.
	We let $n\in \mathbb{N}$ be the largest integer such that $J\subseteq p^nA$.
	Then by the $p$-torsionfree assumption and by replacing $J$ with the ideal $\frac{J}{p^n}$, we may assume $J$ is not contained in the ideal $pA$ and it suffices to check that the ideal $J$ is equal to $A$.
	
	We assume that $J$ is not equal to $A$, and let $\overline{J}$ be the image of $J$ in $A/pA$, so that $\overline{J}$ is a non-zero ideal in $\overline{A}$.
	By assumption, we have the equality $\overline{J}=\varphi_A^*(\overline{J})$, as ideals in $A/pA$.
	We let $\{f_1,\ldots,f_m\}$ be a set of generators of $\overline{J}$, which is guaranteed by the noetherian assumption.
	Then the condition on Frobenius twist of $\overline{J}$ implies the equality of ideals 
	\[
	(f_1,\ldots,f_m)A/pA = (f_1^p,\ldots,f_m^p)A/pA,
	\]
	and thus $\{f_1^p,\ldots,f_m^p\}$ is another set of generators of $\overline{J}$.
	So by repeating this process, and by noticing that the systems of ideals $\{\overline{J}^n\}_n$ and $\{(f_1^{p^n},\ldots,f_m^{p^n})A/pA\}_n$ are cofinal to each other, 
	we get the equality of ideals
	\[
	\overline{J}= \overline{J}^n, ~\forall n\in \mathbb{N}.
	\]
	Hence by Krull's intersection theorem (\cite[\href{https://stacks.math.columbia.edu/tag/00IP}{Tag 00IP}]{stacks-project}), we know $\overline{J}=(0)$, a contradiction.	
\end{proof}
For the convenience of the discussion, we sort out the following explicit conditions on the prism as in \Cref{prop:competion_of_A_at_max_ideal}.
\begin{convention}
	\label{assumption:completion_of_framed_regular_prism}
	We let $(A,I)$ be the regular prism $(V\llbracket x_1,\ldots,x_n \rrbracket, (d))$, where
	\begin{itemize}
		\item the ring $V$ is an unramified Cohen ring of mixed characteristic that is preserved by $\varphi_A$;
		\item for each $1\leq i\leq n$, the element $\varphi_A(x_i)$ is a monic polynomial in $x_i$ of degree $p$ such that
		\[
		x_i~\text{divides}~\varphi_A(x_i),~\text{and}~\delta(\frac{\varphi_A(x_i)}{x_i})~\text{is either zero or a unit};
		\]
		\item the ideal $I$ is generated by a non-constant power series $d\in W\llbracket x_1,\ldots,x_n \rrbracket$ whose constant term is $p$.
	\end{itemize}
\end{convention}
\begin{theorem}
	\label{thm:BK_mod_invert_p_explicit}
	Let $(A,I)$ be the regular prism in \Cref{assumption:completion_of_framed_regular_prism}, and let $(M,\varphi_M)$ be a Frobenius module over $A$.
	Let $J$ be a non-zero ideal of $A$ satisfying the following two conditions:
	\begin{itemize}
		\item There is an equality of ideals in $A[1/d]$:
			\begin{equation}
			\label{eq:thm:BK_mod_invert_p_ideal}
			J [1/d] = \varphi_A^*(J) [1/d];
		\end{equation}
	\item for any surjection $(A,I)\to (A',IA')$ onto a regular prism of Krull dimension two, the base change $JA'[1/p]$ is either the zero ideal or the entire ring $A'[1/p]$.
	\end{itemize}
	Then $J[1/p]=A[1/p]$.
\end{theorem}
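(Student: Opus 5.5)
Since $A$ is a regular local ring and $A[1/p]$ is noetherian, the plan is to show that $V(J)\cap \Spec(A[1/p])$ is empty. Suppose not and pick a prime $\mathfrak{q}\subset A$ with $p\notin\mathfrak{q}$ and $J\subseteq \mathfrak{q}$, chosen maximal among such primes. We would like this point to lie on the divisor $V(d)$. Note first that $\mathfrak{q}$ contains no power of $d$ paired with $p$ in a problematic way: the maximal primes of $\Spec(A[1/p])$ correspond to primes $\mathfrak{q}$ with $\dim A/\mathfrak{q}=1$ and $A/\mathfrak{q}$ $p$-torsionfree, that is, curves through the closed point. The strategy has two parts. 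First, use the Frobenius invariance \eqref{eq:thm:BK_mod_invert_p_ideal} to push the vanishing locus $V(J)[1/p]$ onto the divisor $V(d)$. Then use the second hypothesis, applied to the quotient $A\to A/\mathfrak{q}'$ for a suitable height $n-1$ prime $\mathfrak{q}'$ containing $d$, to reach a contradiction.

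\textbf{Step 1: $V(J)[1/p]\subseteq V(d)$.} Let $Z=V(J)\cap\Spec A[1/p]$, a closed subset of the generic fibre. Away from $V(d)$, the equality \eqref{eq:thm:BK_mod_invert_p_ideal} says $Z\setminus V(d)=\varphi^{-1}(Z)\setminus V(d)$, where $\varphi$ is the finite flat map on $\Spec A$ from \Cref{prop:regular prism Frobenius}. Using the explicit form of $\varphi$ in \Cref{assumption:completion_of_framed_regular_prism}, with $\varphi(x_i)$ monic of degree $p$ and divisible by $x_i$, the map $\varphi$ acts on $\overline{\mathbb{Q}}_p$-points of the open polydisc by $x_i\mapsto \varphi(x_i)(x)$. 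The condition that $\delta(\varphi(x_i)/x_i)$ is $0$ or a unit should force this action to be expanding: $|\varphi(x)_i|$ is at least $|x_i|^p$ and is strictly larger than $|x_i|$ unless $|x_i|$ is already small. In the case $\delta=0$ we have $\varphi(x_i)=x_i^p$; in the other case $\varphi(x_i)=x_i(x_i^{p-1}+pu)$. A point $z\in Z\setminus V(d)$ then has its whole forward and backward $\varphi$-orbit in $Z$, as long as the orbit avoids $V(d)$. Since $Z$ has only finitely many irreducible components, an orbit-counting argument (a Zariski-closed set stable under an expanding finite map) should show that these components are contained in $\bigcup_k \varphi^{-k}V(d)$, and then, by the primality/minimality choice, that they lie in $V(d)$ itself, via the usual \say{push towards the boundary} reasoning. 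I expect this dynamical step to be the main obstacle. The alternative I would try first is purely algebraic. Work in $A[1/p]$ localized at a minimal prime $\mathfrak{q}$ of $J$ not containing $d$. The invariance $J_{\mathfrak{q}}=\varphi^*(J)_{\mathfrak{q}}$ together with flatness gives $\varphi^{-1}(\mathfrak{q})$-primary relations. Iterating, one argues as in \Cref{Prop:fitting_ideal_invert_I} that $J_{\mathfrak{q}}=J_{\mathfrak{q}}^{p^k}$ up to units, and then Krull's intersection theorem yields a contradiction.

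\textbf{Step 2: contradiction on $V(d)$.} Now $Z\subseteq V(d)[1/p]$, which is the generic fibre of the regular ring $\overline{A}$ by transversality. Take a point of $Z$ given by a prime $\mathfrak{q}\supseteq (d)$ with $A/\mathfrak{q}$ of dimension one and $p$-torsionfree. Choose a regular system of parameters for $\overline{A}$ adapted to $\mathfrak{q}$; this gives a prime $\mathfrak{q}'\subseteq\mathfrak{q}$ of height $n-1$, generated by part of a regular system of parameters, with $d\notin\mathfrak{q}'$ and $\mathfrak{q}'+(d)$ being $\mathfrak{q}$-primary. We need $A'=A/\mathfrak{q}'$ to be a transversal regular prism of dimension two. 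Regularity and dimension come from the choice of parameters. For the $\delta$-structure, we need $\mathfrak{q}'$ to be stable under $\varphi$. This is where the coordinates help: we take $\mathfrak{q}'$ generated by elements of the form $x_i$ (or by $\varphi$-compatible lifts). Since $x_i\mid\varphi(x_i)$, ideals generated by the $x_i$ are $\delta$-stable. Granting this, the base change $JA'[1/p]$ is contained in $\mathfrak{q}A'[1/p]$, which is a proper ideal, so by hypothesis it is zero. But $J\neq 0$, and transporting the Frobenius invariance to $A'$ should give a contradiction. Concretely, we pick $\mathfrak{q}'$ generically, avoiding the finitely many minimal primes of the height $\ge1$ ideal $J$, which is possible because $J$ is non-zero; then $JA'\neq0$. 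Hence $J[1/p]=A[1/p]$.
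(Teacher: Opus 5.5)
There is a genuine gap, and it is in Step~1: the inclusion $V(J)\cap\Spec A[1/p]\subseteq V(d)$ does not follow from the invariance \eqref{eq:thm:BK_mod_invert_p_ideal}. In fact the vanishing locus sits in exactly the opposite place. Since $x_i\mid\varphi_A(x_i)$, the prime $\mathfrak{m}_0=(x_1,\ldots,x_n)A[1/p]$ of the generic fibre is fixed by Frobenius, $\varphi_A^{-1}(\mathfrak{m}_0)=\mathfrak{m}_0$. It lies off $V(d)$ because $d\equiv p \bmod (x_1,\ldots,x_n)$. \Cref{prop:fitting_ideal_of_BK_mod}, which uses only \eqref{eq:thm:BK_mod_invert_p_ideal}, shows that $J[1/p]\neq A[1/p]$ forces $J[1/p]\subseteq (x_1,\ldots,x_n)A[1/p]$, i.e.\ $\mathfrak{m}_0\in V(J)$. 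So your Step~1 combined with that proposition would prove the theorem without the second hypothesis, which is false.

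Here is a concrete counterexample. Take $n=1$ and $A=W(k')\llbracket x\rrbracket$ with $\varphi_A(x)=(x+[a])^p-[a]^p$ for some $a\in k'^{\times}$. This is the shape produced in \Cref{prop:competion_of_A_at_max_ideal}, and $\delta(\varphi_A(x)/x)$ is a unit. Set $d=[a]^{1-p}\varphi_A(x)/x$, which is distinguished with constant term $p$. Then $J=(x)$ satisfies $\varphi_A^*(J)=(xd)$, hence \eqref{eq:thm:BK_mod_invert_p_ideal}. But $V(J)[1/p]$ is the point $x=0$, where $d$ takes the value $p\neq 0$. This $J$ is the Fitting ideal of the Frobenius module $A/xA$, which is not analytically locally free, and only the second hypothesis excludes it.

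Your dynamical argument breaks exactly at such $\varphi$-fixed points. The algebraic alternative modelled on \Cref{Prop:fitting_ideal_invert_I} does not transfer either. That argument uses that in characteristic $p$ the ideal $\varphi^*(\overline{J})$ is generated by $p$-th powers of generators of $\overline{J}$, and nothing like this holds after inverting $p$. In the example above, $J_{(x)}=\varphi_A^*(J)_{(x)}$ and Krull's theorem gives no contradiction.

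Step~2 is also not viable as written. You need a single prime $\mathfrak{q}'\subseteq\mathfrak{q}$ that is $\delta$-stable, has $A/\mathfrak{q}'$ a \emph{transversal} prism, and is generic enough that $JA'\neq 0$. These requirements conflict:
\begin{itemize}
\item the $\delta$-stable primes you propose, generated by coordinates $x_i$, form a finite set, so they cannot be chosen generically;
\item a point of $V(J)\cap V(d)$ need not lie on any coordinate axis;
\item the quotient may be a crystalline prism ($\bar d=p$), to which the second hypothesis does not apply.
\end{itemize}

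The paper's proof is organized around precisely these obstructions. It first uses \Cref{prop:fitting_ideal_of_BK_mod} to reduce to $J[1/p]\subseteq (x_1,\ldots,x_n)A[1/p]$. It then inducts on $n$ along the $\delta$-stable quotients $A\to A/x_iA$. Both hypotheses pass to the quotient, the second because surjections compose, and the base case $n=1$ is the second hypothesis applied to $A$ itself. \Cref{lem:dividibility_of_J_and_x} decides which coordinate to kill:
\begin{itemize}
\item if $J[1/p]\subseteq x_nA[1/p]$, then $d$ is a unit times a power series in $x_n$ alone, so modulo $x_1$ the image of $J$ stays non-zero and proper, and the quotient is again of the type in \Cref{assumption:completion_of_framed_regular_prism};
\item otherwise one reduces modulo $x_n$, invoking \Cref{prop:BK_mod_crystalline} when $\bar d=p$.
\end{itemize}
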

The rest of the subsection will be devoted to the proof of \Cref{thm:BK_mod_invert_p_explicit}, during which we explore various algebraic properties of the $\delta$-ring $A$ and the ideal $J$.
We start with the following simple calculations will be used frequently so we record them here.
\begin{lemma}
	\label{lem:power_of_Frobenius}
	Let $A$ be the $\delta$-ring $A$ in \Cref{assumption:completion_of_framed_regular_prism}, and let $g\in A$ be any element.
	\begin{enumerate}[label=\upshape{(\roman*)}] 
		\item For $s\in \mathbb{N}$, $r\in \mathbb{N}_{\geq 1}$, and $h\in A$, the element $(p^s\cdot h+g)^{p^r}$ is of the form $p^{sp^r}\cdot h^{p^r}+g^{p^r}+p^{s+r'}\cdot g\cdot h \cdot g_1$, where $g_1\in A$, and $r'=\begin{cases}
		r, ~\text{if}~s\geq 1;\\
		1,~\text{if}~s=0.
		\end{cases}$
		\item The element $\varphi_A(g)-g^p$ is of the form $p\cdot g_2$ for some $g_2\in A$, and $g_2$ is contained in the ideal $(x_1,\ldots,x_n)\subset A$ if $g$ is so.
	\end{enumerate}
\end{lemma}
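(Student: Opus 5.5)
Both parts are direct computations with binomial expansions and the $\delta$-ring identities, and I will do them in order.

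For part (i), I expand $(p^s h+g)^{p^r}$ by the binomial theorem:
\[
(p^s h+g)^{p^r}=\sum_{j=0}^{p^r}\binom{p^r}{j}p^{sj}h^jg^{p^r-j}.
\]
The terms $j=0$ and $j=p^r$ give $g^{p^r}$ and $p^{sp^r}h^{p^r}$. Each intermediate term $0<j<p^r$ contains at least one factor of $g$ and one of $h$, so it suffices to bound the $p$-adic valuation of $\binom{p^r}{j}p^{sj}$ from below by $s+r'$.

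I use Kummer's formula $v_p\binom{p^r}{j}=r-v_p(j)$. When $s=0$ we have $r'=1$, and the bound $v_p\binom{p^r}{j}\geq 1$ for $0<j<p^r$ suffices. When $s\geq 1$, I need
\[
r-v_p(j)+sj\geq s+r.
\]
Writing $v=v_p(j)$, this says $s(j-1)\geq v$. That holds because $j\geq p^v\geq v+1$ and $s\geq1$.

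The resulting coefficient $p^{s+r'}\cdot(\text{integer})\cdot h^{j-1}g^{p^r-j-1}$ lies in $A$. Summing these terms gives $p^{s+r'}gh\,g_1$ for some $g_1\in A$.

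For part (ii), the identity $\varphi_A(g)=g^p+p\delta(g)$ immediately gives $g_2=\delta(g)$. What remains is to show that $\delta$ preserves the ideal $\mathfrak{a}=(x_1,\ldots,x_n)$. This is where the hypotheses of \Cref{assumption:completion_of_framed_regular_prism} enter.

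First, $\delta(x_i)\in\mathfrak{a}$. Indeed, $p\delta(x_i)=\varphi_A(x_i)-x_i^p$ is divisible by $x_i$ because $x_i\mid\varphi_A(x_i)$. Since $A$ is a power series ring, $x_i$ is prime and $p\notin(x_i)$, so $\delta(x_i)\in(x_i)$.

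Next, write $g=\sum_i a_ix_i$. Applying $\delta(ab)=a^p\delta(b)+b^p\delta(a)+p\delta(a)\delta(b)$ to each product $a_ix_i$ shows $\delta(a_ix_i)\in\mathfrak{a}$. Applying $\delta(a+b)=\delta(a)+\delta(b)-\sum_{0<k<p}\tfrac1p\binom pk a^kb^{p-k}$ to the sum, the mixed terms lie in $\mathfrak{a}$ because $a,b\in\mathfrak{a}$. Hence $\delta(g)\in\mathfrak{a}$ by induction on the number of summands.

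The only subtle step is the valuation inequality in case (i) with $s\geq1$. There the gain $sj$ must absorb the loss $v_p(j)$, and the argument above shows it does.
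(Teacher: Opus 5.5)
Your proposal is correct.

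\textbf{Part (i).} This is the paper's argument: binomial expansion together with $v_p\binom{p^r}{j}=r-v_p(j)$. The only difference is that you use $j\geq p^{v}\geq v+1$ where the paper uses the chain $v\leq\log_p j\leq\log_2 j\leq j-1$.

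\textbf{Part (ii).} Here you take a slightly different route.
\begin{itemize}
\item The paper works with $\varphi_A$ rather than $\delta$. Since $\varphi_A$ is a ring endomorphism and $x_i\mid\varphi_A(x_i)$, it preserves $\mathfrak{a}=(x_1,\ldots,x_n)$. Hence $p\,g_2=\varphi_A(g)-g^p\in\mathfrak{a}$ immediately. The paper then passes to $g_2\in\mathfrak{a}$ without comment; this step is justified because $A/\mathfrak{a}\cong V$ is $p$-torsionfree.
\item You work with $\delta$ directly. You divide by $p$ only on the principal primes $(x_i)$ to get $\delta(x_i)\in(x_i)$. You then propagate this to all of $\mathfrak{a}$ through the product and sum rules for $\delta$.
\end{itemize}
The paper's version is shorter, because stability of an ideal under a ring map is automatic and no $\delta$-identities are needed. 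Yours makes explicit the torsion-freeness input that the paper leaves unstated. It also arrives at $\delta(\mathfrak{a})\subseteq\mathfrak{a}$ directly, which is the form in which part (ii) is invoked in \Cref{lem:f_and_varphi(f)}.
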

\begin{proof}[Proof of \Cref{lem:power_of_Frobenius}]
	For the first claim, we use the binomial formula and consider each $\binom{p^r}{i}\cdot (p^s\cdot h)^i\cdot g^{p^r-i}$, which is divided by $g\cdot p^s\cdot h$ for any $1\leq i\leq p^r-1$.
	So the statement for $s=0$ follows from the divisibility $p|\binom{p^r}{i}$.
	If $s\geq 1$, we recall that the $p$-adic valuation of $\binom{p^r}{i}=\frac{p^r}{i}\binom{p^r-1}{i-1}$ is $r-v$, where $v$ is the $p$-adic valuation of $i$.
	So it is left to check that for each $1\leq i\leq p^r-1$, we have $si+r-v\geq s+r$, or equivalently $s(i-1)\geq v$.
	The latter follows from the inequalities of real functions $v\leq \log_p(i)\leq \log_2(i) \leq i-1$ for $i\geq 0$.
	
	For the first half of the second claim, it follows by the definition of $\varphi_A$.
	If $g\in (x_1,\ldots,x_n)\subset A$, since the setup of \Cref{assumption:completion_of_framed_regular_prism} implies that the Frobenius endomorphism $\varphi_A:A\to A$ sends the ideal $(x_1,\ldots,x_n)$ into itself, we see the element $\varphi_A^r(g)-g^{p^r}$ belongs to $(x_1,\ldots,x_n)$ as well.
\end{proof}
As quick consequences, we have the following results on certain ideals being the entire ring.
\begin{lemma}
	\label{lem:f_and_varphi(f)}
	Let $A$ be the $\delta$-ring in \Cref{assumption:completion_of_framed_regular_prism}, and let $f=p^r+f_1$ be an element in $A$, where $r\geq 1$ and $f_1\in (x_1,\ldots,x_n)\subset A$.
	Then the ideal $(f, \varphi_A(f),\ldots, \varphi_A^r(f))[1/p]$ is equal to $A[1/p]$.
\end{lemma}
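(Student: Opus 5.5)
The plan is to show that the ideal $\mathfrak{a}\colonequals (f,\varphi_A(f),\ldots,\varphi_A^r(f))$ contains an element of the form $p^N u$ with $u\in A^\times$. Since $A=V\llbracket x_1,\ldots,x_n\rrbracket$ is local with maximal ideal $(p,x_1,\ldots,x_n)$, an element is a unit exactly when its constant term (its image under $x_i\mapsto 0$) is a unit of $V$. So it suffices to produce, inside $\mathfrak{a}$, an element of the form $p^N(1+y)$ with $y\in(p,x_1,\ldots,x_n)$; inverting $p$ then gives $\mathfrak{a}[1/p]=A[1/p]$.

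First I will compute the iterates $\varphi_A^j(f)$. The ideal $(x_1,\ldots,x_n)$ is $\varphi_A$-stable, since $x_i\mid\varphi_A(x_i)$ by \Cref{assumption:completion_of_framed_regular_prism}, and $\varphi_A(p)=p$. Hence
\[
\varphi_A^j(f)=p^r+\varphi_A^j(f_1),\qquad \varphi_A^j(f_1)\in(x_1,\ldots,x_n).
\]
Iterating \Cref{lem:power_of_Frobenius}(ii) gives $\varphi_A^j(f_1)=f_1^{p^j}+p\,h_j$ with $h_j\in(x_1,\ldots,x_n)$. Combined with \Cref{lem:power_of_Frobenius}(i), this shows that modulo $p^{r+1}(x_1,\ldots,x_n)$ (or a comparable ideal) the element $\varphi_A^j(f)$ is congruent to $p^r+f_1^{p^j}+p(\ldots)$. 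The correction terms pick up increasing powers of $p$ as $j$ grows.

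Next I will eliminate $f_1$. The idea is to write $f_1\equiv -p^r \pmod{\mathfrak{a}}$, so that $f_1^{p^j}\equiv (-p^r)^{p^j}$ modulo $\mathfrak{a}$. Substituting this into $\varphi_A^j(f)\in\mathfrak{a}$, I expect to get a relation of the form
\[
p^r+(-p^r)^{p^j}+p\,h_j\equiv 0 \pmod{\mathfrak{a}},
\]
where $h_j$ can itself be rewritten modulo $\mathfrak{a}$ in terms of $p$-multiples of the earlier quantities. Doing this inductively for $j=1,\ldots,r$, each step should raise by one the $p$-adic order of the error term involving $x$. After $r$ steps, I expect to obtain that $p^r(1+p\,c)+p^{r+1}z$ lies in $\mathfrak{a}$, with $c\in A$ and $z\in(x_1,\ldots,x_n)$. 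Factoring out $p^r$ leaves an element of the form $1+pc+pz$, which is a unit. The number of iterates ($r+1$ elements $f,\varphi_A(f),\ldots,\varphi_A^r(f)$) is exactly what is needed to push the $x$-dependent error past $p^r$.

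The main obstacle is the bookkeeping in this induction: controlling the terms $p\,h_j$ with $h_j\in(x)$, which are not obviously in $\mathfrak{a}$. I would try to prove by induction on $j$ that
\[
\mathfrak{a}_j\colonequals(f,\ldots,\varphi_A^j(f))+p^{r+1}A \supseteq p^{\,r-j}(x_1,\ldots,x_n)\cap(\text{relevant span}).
\]
If this bookkeeping proves too delicate, the fallback is a Nakayama/dimension argument. The ideal $\mathfrak{a}$ is $\varphi_A$-compatible up to $\varphi^{r+1}(f)$, which is congruent to $p^r$ modulo $p^{r+1}(x)$ by the same lemma. Its vanishing locus in $\Spec A[1/p]$ would then be a closed subset on which $p^r\in p^{r+1}(x)$, forcing $1\in p(x)$ locally, which is impossible. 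Hence $V(\mathfrak{a})\cap\Spec A[1/p]=\emptyset$.
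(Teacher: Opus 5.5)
Your proposal does not yet contain a working argument. The main route stops exactly where the difficulty lies. After the $j=1$ substitution you know $p\,g\in\mathfrak{a}$ for $g=p^{r-1}u+h_1$. Here $u=1+(-1)^p p^{r(p-1)}$ is an integer that is a unit in $V$, and $h_1=\delta(f_1)\in(x_1,\ldots,x_n)$ by \Cref{lem:power_of_Frobenius}. But this only determines $p h_1$, not $h_1$, modulo $\mathfrak{a}$. For $j\geq 2$ the terms $h_j$ involve $\varphi_A(h_1)=h_1^p+p\,\delta(h_1)$, $\delta^2(f_1)$, and so on, and no congruence on $f_1$ controls these.

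So ``rewriting $h_j$ in terms of $p$-multiples of earlier quantities'' has no content. The inductive statement with a ``relevant span'' is not a statement. The announced mechanism, where the $x$-dependent error gains a power of $p$ at each step, is also not what happens.

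The fallback is false as stated. Since $\varphi_A$ lifts Frobenius and $r\geq 1$, you get $\varphi_A^{r+1}(f)=p^r+\varphi_A^{r+1}(f_1)\equiv f_1^{p^{r+1}}\pmod p$. This is nonzero in the domain $A/pA=k\llbracket x_1,\ldots,x_n\rrbracket$ whenever $f_1\notin pA$, so $\varphi_A^{r+1}(f)\not\equiv p^r$ modulo $p^{r+1}(x_1,\ldots,x_n)$. Even granting that congruence, the sentence about the vanishing locus yields no contradiction.

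The missing idea is to recurse on $g$ rather than keep tracking $f$. Your $j=1$ computation is in fact the whole key step. The paper uses instead $\delta(f)=(\varphi_A(f)-f^p)/p$, which by the binomial expansion and \Cref{lem:power_of_Frobenius} equals $p^{r-1}(1-p^{r(p-1)})$ plus an element of $(x_1,\ldots,x_n)$; the two elements serve the same purpose.

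After dividing by the unit $u$, $g$ has the same shape as $f$ with $r$ replaced by $r-1$. Since $u\in\mathbb{Z}$ is fixed by $\varphi_A$, applying $\varphi_A^j$ to $p\,g\in(f,\varphi_A(f))$ gives $p\,\varphi_A^j(g)\in(\varphi_A^j(f),\varphi_A^{j+1}(f))$. Now induct on $r$:
\begin{itemize}
\item For $r=1$, $g=u+h_1$ is a unit of the local ring $A$ (maximal ideal $(p,x_1,\ldots,x_n)$), so $p\cdot(\text{unit})\in(f,\varphi_A(f))$.
\item For $r>1$, the inductive hypothesis applied to $g/u$ gives $p^{r-1}w\in(g,\ldots,\varphi_A^{r-1}(g))$ with $w\in A^\times$. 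Hence $p^r w\in(f,\ldots,\varphi_A^r(f))$.
\end{itemize}
This is where the count of $r+1$ iterates comes from. Each step spends one Frobenius shift and lowers the $p$-adic order of the constant term by one, while the $x$-part merely stays in $(x_1,\ldots,x_n)$.
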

\begin{proof}
	We prove this by induction on $r$.
	If $r=1$, then the element $\frac{\varphi_A(f)-f^p}{p}$ is of the form
	\begin{align*}
		\frac{p+ \varphi_A(f_1) - (p+f_1)^p}{p} & = \frac{p+ \varphi_A(f_1) - (p^p +f_1^p +p\cdot f_1\cdot f_2)}{p} \\
		& = (1-p^{p-1}) + \frac{\varphi_A(f_1)-f_1^p}{p} - f_1\cdot f_2,
	\end{align*}
	where $f_2$ is an element in $A$ and the first equality follows from \Cref{lem:power_of_Frobenius}.
	In particular, both $\frac{\varphi_A(f_1)-f_1^p}{p}$ and $f_1\cdot f_2$ belong to the ideal $(x_1,\ldots,x_n)\subset A$.
	Hence the element $\frac{\varphi_A(f)-f^p}{p}$ is a unit in $A[1/p]$.
	
	Now let $r>1$.
	We claim that the ideal $(f,\varphi_A(f))[1/p]$ contains an element $g\colonequals p^{r-1}+g_1$ in $A$ for some $g_1\in (x_1,\ldots,x_n)\subset A$.
	Granting the claim, by taking Frobenius twists, we know $(f, \varphi_A(f),\ldots, \varphi_A^r(f))[1/p]$ contains $(g,\varphi_A(g),\ldots, \varphi_A^{r-1}(g))[1/p]$, where the latter is equal to $A[1/p]$ by the induction hypothesis.
	To see the claim, we consider the element $\delta(f)=\frac{\varphi_A(f)-f^p}{p}$, which is of the form
	\begin{align*}
		\frac{p^r+ \varphi_A(f_1) - (p^r+f_1)^p}{p} & = \frac{p^r + \varphi_A(f_1) - (p^{rp} +f_1^p + p^r\cdot f_1\cdot f_2)}{p} \\
		& = \frac{(p^r-p^{rp}) + (\varphi_A(f_1)-f_1^p) - p^r\cdot f_1\cdot f_2}{f}\\
		& = p^{r-1}\cdot (1-p^{rp-1}) + \frac{\varphi_A(f_1)-f_1^p}{p} - p^{r-1} \cdot f_1\cdot f_2,
	\end{align*}
	where $f_2$ is an element in $A$ and the first equality follows from \Cref{lem:power_of_Frobenius}.
	In particular, we know the element $\bigl( \frac{\varphi_A(f_1)-f_1^p}{p} - p^{r-1} \cdot f_1\cdot f_2 \bigr)$ belong to the ideal $(x_1,\ldots,x_n)\subset A$.
	Hence the required element $g$ can be obtained by dividing $\delta(f)$ by the unit $(1-p^{rp-1})$, which finishes the proof of the claim.
\end{proof}

\begin{lemma}
	\label{lem:ideal_generated_by_Frobenius_powers_of_d}
	Let $A$ be the $\delta$-ring in \Cref{assumption:completion_of_framed_regular_prism}.
	For each $s\in \mathbb{N}$ and $N\in \mathbb{N}_{\geq 1}$, the ideal $(\varphi_A^{s+1}(d)^N,d^N)A[1/p]$ is equal to $A[1/p]$.
\end{lemma}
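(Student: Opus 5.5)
Since an ideal in $A[1/p]$ containing $a^N,b^N$ also contains a power of every element of $(a,b)$, the radical of $(a^N,b^N)$ equals that of $(a,b)$. It therefore suffices to prove the case $N=1$: I will show that $(\varphi_A^{s+1}(d),d)A[1/p]=A[1/p]$ for every $s\in\mathbb{N}$. If $(a,b)A[1/p]$ is the unit ideal, then so is $(a^N,b^N)A[1/p]$, because the unit ideal has no proper radical.

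The strategy is to reduce to \Cref{lem:f_and_varphi(f)} with $r=1$. By \Cref{assumption:completion_of_framed_regular_prism}, write $d=p+d_1$ with $d_1\in(x_1,\ldots,x_n)$. The ideal $(x_1,\ldots,x_n)$ is stable under $\varphi_A$, and $\varphi_A(p)=p$. Hence every Frobenius twist has the same shape:
\[
\varphi_A^{j}(d)=p+\varphi_A^j(d_1),\qquad \varphi_A^j(d_1)\in(x_1,\ldots,x_n).
\]
Applying \Cref{lem:f_and_varphi(f)} with $r=1$ to $f=\varphi_A^j(d)$ gives
\[
(\varphi_A^j(d),\varphi_A^{j+1}(d))A[1/p]=A[1/p]\quad\text{for all } j\geq 0.
\]
Concretely, $\delta(f)=(1-p^{p-1})+(\text{element of }(x_i))$ is a unit, and $\varphi_A(f)=f^p+p\,\delta(f)$.

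This settles $s=0$. For general $s$ I would argue directly with the explicit relation rather than chain the pairwise statements, since being pairwise comaximal does not make $d$ and $\varphi^{s+1}(d)$ comaximal. Work modulo the ideal $\mathfrak{a}=(d,\varphi_A^{s+1}(d))A[1/p]$ and use induction on $j$ to track $\varphi_A^j(d)$ modulo $d$. The idea is to show that $\varphi_A^{j}(d)\equiv p\cdot u_j$ modulo $(d)$ with $u_j$ a unit. This is plausible because $d\equiv 0$ forces $p\equiv -d_1$. Then $\varphi_A(d)=d^p+p\delta(d)\equiv p\,\delta(d)$, and $\delta(d)$ is a unit. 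Next, $\varphi^2(d)=\varphi(d)^p+p\,\varphi(\delta(d))$, and $\varphi(\delta(d))$ is again a unit, being of the form $(1-p^{p-1})+(x_i)$. Thus $\varphi^2(d)\equiv p\cdot(\text{unit}+p^{p-1}\cdot\text{stuff})$, and the bracket is a unit because $A$ is $p$-adically complete. Inductively, $\varphi^{j}(d)\equiv p\cdot(\text{unit})$ in $A/d$ for all $j\geq 1$. Consequently $\mathfrak{a}$ contains $p$ times a unit of $A$ modulo $d$, so $\mathfrak{a}\ni p\cdot u$ and hence $\mathfrak{a}=A[1/p]$.

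The main obstacle is the inductive unit claim in $A/d$. One has to check that at each step the correction term $\varphi^{j}(d)^p$ contributes only $p^p$ times a unit, which lies in $p^2A$. It must also not destroy the unit property, given that $p$ is not topologically nilpotent in a way that interacts badly with $d$. I would control this using $p$-adic completeness of $A/d$ and the formula $\varphi^{j+1}(d)=\varphi^j(d)^p+p\,\varphi^j(\delta(d))$, where each $\varphi^j(\delta(d))\in(1-p^{p-1})+(x_1,\ldots,x_n)$ is a unit in the local ring $A$.
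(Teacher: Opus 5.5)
Your proposal is correct, and it reaches the key step by a different route from the paper.

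\textbf{The paper's route.} The paper works in the coordinates of \Cref{assumption:completion_of_framed_regular_prism}. It writes $d=p+g$ with $g\in(x_1,\ldots,x_n)$ and expands $\bigl(\varphi_A^{s+1}(d)-d^{p^{s+1}}\bigr)/p$ directly using \Cref{lem:power_of_Frobenius}. It reads off that this element is $1-p^{p^{s+1}-1}$ plus an element of $(x_1,\ldots,x_n)$, hence a unit of the local ring $A$. So $\varphi_A^{s+1}(d)-d^{p^{s+1}}=p\cdot(\text{unit})$ lies in $(d,\varphi_A^{s+1}(d))$.

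\textbf{Your route.} You use the identity $\varphi_A^{j+1}(d)=\varphi_A^j(d)^p+p\,\varphi_A^j(\delta(d))$ and induct. This gives $\varphi_A^{j}(d)\equiv p\,u_j \pmod{dA}$, with $u_1=\delta(d)$ and $u_{j+1}=\varphi_A^j(\delta(d))+p^{p-1}u_j^p\in A^\times$. It yields the same kind of certificate, namely $p$ times a unit lying in the ideal.

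\textbf{Comparison.} Your argument uses only that $\delta(d)$ is a unit and that $p$ lies in the Jacobson radical of $A$. So it proves the statement for any distinguished element of any such $\delta$-ring, for example any orientable prism, without coordinate bookkeeping. In particular you never need the fact that $(\varphi_A^{s+1}(g)-g^{p^{s+1}})/p\in(x_1,\ldots,x_n)$. The paper's version is a one-line computation once \Cref{lem:power_of_Frobenius} is available, but it is tied to the explicit shape of $d$.

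\textbf{Minor remarks.}
\begin{itemize}
\item The preliminary reduction to \Cref{lem:f_and_varphi(f)} for $s=0$ is redundant, since your induction already covers $j=1$.
\item The ``main obstacle'' you describe is not an obstacle. $A$ is local with $p$ in its maximal ideal, so a unit plus an element of $pA$ is automatically a unit. There is no issue with topological nilpotence of $p$, and the induction closes immediately.
\end{itemize}
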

\begin{proof}
	We first notice that by the assumption of \Cref{assumption:completion_of_framed_regular_prism}, the element $d$ is of the form $p+g$, where $g\in (x_1,\ldots,x_n)\subset A$.
	So by \Cref{lem:power_of_Frobenius}, the element $\frac{\varphi_A^{s+1}(d)-d^{p^{s+1}}}{p}$ is of the form
	\begin{align*}
		\frac{p+\varphi_A^{s+1}(g)-(p+g)^{p^{s+1}}}{p} & = \frac{p-p^{p^{s+1}} + \varphi_A^{s+1}(g)- g^{p^{s+1}} - p\cdot g\cdot g_1}{p}  \\
		&= (1-p^{p^{s+1}-1}) + p^s\cdot g_2 - g\cdot g_1,
	\end{align*}
	where $g_1\in A$ and $g_2\in (x_1,\ldots,x_n)$.
	So the element $p^s\cdot g_2 - g\cdot g_1$ is contained in the ideal $(x_1,\ldots,x_n)$, and the element $\frac{p+\varphi_A^{s+1}(g)-(p+g)^{p^{s+1}}}{p}$ is invertible in $A[1/p]$.
	Hence the ideal $(\varphi_A^{s+1}(d), d)A[1/p]$ is equal to the ring $A[1/p]$.
	Finally, the claim for general $N$ follows by noticing that the ideal $(\varphi_A^{s+1}(d)^N,d^N)A[1/p]$ contains a large power of the ideal $(\varphi_A^{s+1}(d), d)A[1/p]$.
\end{proof}

The following observation narrows down the possibility of the ideal $J[1/p]$.
\begin{proposition}
	\label{prop:fitting_ideal_of_BK_mod}
	Let $(A,I)$ be the prism in \Cref{assumption:completion_of_framed_regular_prism}, and let $J$ be an ideal in $A$ satisfying (\ref{eq:thm:BK_mod_invert_p_ideal}).
	Then either $J[1/p]=A[1/p]$, or $J[1/p]$ is contained in $(x_1,\ldots,x_n)A[1/p]$.
\end{proposition}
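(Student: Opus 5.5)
The plan is to split according to the image of $J$ under the ``constant term'' map, and in the non-degenerate case to use \Cref{lem:f_and_varphi(f)} to show that $J[1/p]$ is the unit ideal. The $d$-denominators coming from (\ref{eq:thm:BK_mod_invert_p_ideal}) will be controlled by \Cref{lem:ideal_generated_by_Frobenius_powers_of_d}.

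\emph{Reduction to the constant term.} Consider the evaluation $\epsilon\colon A=V\llbracket x_1,\ldots,x_n\rrbracket\to V$ given by $x_i\mapsto 0$. By \Cref{assumption:completion_of_framed_regular_prism}, $x_i$ divides $\varphi_A(x_i)$ and $V$ is $\varphi_A$-stable, so $\varphi_A$ preserves $(x_1,\ldots,x_n)$ and $\epsilon$ is Frobenius equivariant. The image $\epsilon(J)$ is an ideal of the discrete valuation ring $V$, so it is either $0$ or $p^mV$ for some $m\geq 0$. If $\epsilon(J)=0$, then $J\subseteq (x_1,\ldots,x_n)$ and we are in the second alternative. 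Otherwise choose $f\in J$ with $f=up^m+f_1$, where $u\in V^\times$ and $f_1\in (x_1,\ldots,x_n)$. After rescaling by $u^{-1}$ (using $\varphi_A(V)\subseteq V$), we may take $u=1$. If $m=0$, then $f$ is a unit of the local ring $A$ and we are done. So assume $m\geq 1$. Then \Cref{lem:f_and_varphi(f)} gives
\[
(f,\varphi_A(f),\ldots,\varphi_A^m(f))A[1/p]=A[1/p].
\]

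\emph{Producing Frobenius translates of $f$ inside $J$.} From $\varphi_A^*(J)[1/d]=J[1/d]$, applying $\varphi_A^*$ repeatedly gives $\varphi_A^{k*}(J)[1/\varphi_A^{k-1}(d)]=\varphi_A^{(k+1)*}(J)[1/\varphi_A^{k-1}(d)]$. Chaining these, we get an $N$ (which can be chosen uniformly by noetherianity) such that
\[
D_k\cdot\varphi_A^k(f)\in J,\qquad D_k\colonequals \prod_{j=0}^{k-1}\varphi_A^{j}(d)^N .
\]

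\emph{Showing $J[1/p]$ is the unit ideal.} Suppose a prime $\mathfrak q$ of $A[1/p]$ contains $J$. By \Cref{lem:ideal_generated_by_Frobenius_powers_of_d}, applied after $\varphi_A^j$ (which sends unit ideals to unit ideals), at most one index $j_0$ satisfies $\varphi_A^{j_0}(d)\in\mathfrak q$. If there is no such index, every $D_k$ is a unit at $\mathfrak q$. Then $\varphi_A^k(f)\in\mathfrak q$ for all $k\leq m$, contradicting the displayed unit ideal from \Cref{lem:f_and_varphi(f)}.

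The main obstacle is the remaining case, where exactly one $\varphi_A^{j_0}(d)$ lies in $\mathfrak q$. Here the relation (\ref{eq:thm:BK_mod_invert_p_ideal}) loses information, and only $f,\ldots,\varphi_A^{j_0}(f)\in\mathfrak q$ is guaranteed. My first attempt is to restart the argument at the element $g\colonequals D_{j_0+1}\varphi_A^{j_0+1}(f)\in J$. This element again has the form (unit)$\cdot p^{m'}+(x_1,\ldots,x_n)$, with $m'=p^{j_0+1}m+N(j_0+1)\cdot(\text{const})$, because each $\varphi_A^j(d)$ has constant term $p$ up to a unit. I would then run the same prime-avoidance argument using the Frobenius translates $\varphi_A^k(g)$, whose $d$-denominators only involve $\varphi_A^{j}(d)$ for $j\geq j_0+1$. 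Those are units at $\mathfrak q$ by \Cref{lem:ideal_generated_by_Frobenius_powers_of_d}. The only point still to check is that $g\in\mathfrak q$ can be arranged, or that $\varphi_A^{j_0}(d)\notin\mathfrak q$ can be reached by this shift. If that fails, the fallback is to use the symmetric inclusion $J[1/d]\subseteq\varphi_A^*(J)[1/d]$ to descend instead of ascend, and to argue the same way with $\varphi_A^{-}$-type relations.
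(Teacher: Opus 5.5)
Your reduction to an element $f=p^m+f_1\in J$ with $m\geq 1$ is correct, and so is your treatment of a prime $\mathfrak q\supseteq J$ of $A[1/p]$ that contains no $\varphi_A^j(d)$. In fact that argument gives a useful bound: every such $\mathfrak q$ contains $\varphi_A^{j}(d)$ for some $0\leq j\leq m-1$. Otherwise $D_k\notin\mathfrak q$ for all $k\leq m$, which forces $f,\ldots,\varphi_A^m(f)\in\mathfrak q$.

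The proof, however, stops where the real content lies. The case $\varphi_A^{j_0}(d)\in\mathfrak q$ is left open, and the proposed restart cannot close it:
\begin{itemize}
\item To bring $\varphi_A^k(g)$ into $J$ you must again multiply by $D_k=\prod_{j<k}\varphi_A^j(d)^N$, which contains $\varphi_A^{j_0}(d)$ as soon as $k>j_0$.
\item For $k\leq j_0$, the element $\varphi_A^k(g)=\prod_{j=k}^{k+j_0}\varphi_A^j(d)^N\cdot\varphi_A^{k+j_0+1}(f)$ is itself divisible by $\varphi_A^{j_0}(d)$.
\end{itemize}
So nothing new is learned, and your claim that the $d$-denominators of $\varphi_A^k(g)$ only involve $\varphi_A^j(d)$ with $j\geq j_0+1$ is false. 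Structurally, (\ref{eq:thm:BK_mod_invert_p_ideal}) only says that $\Phi\colonequals\Spec(\varphi_A)$ maps $V(J)\cap D(pd)$ into $V(J)$. Hence no argument using forward Frobenius translates alone can get past the first index at which the $\Phi$-orbit of $\mathfrak q$ meets $V(d)$.

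The missing idea is to go backward, using an input your proposal never invokes. The map $\varphi_A$ is flat by \Cref{prop:reg_imply_Frob-regular} and is a local endomorphism of the local ring $A$, so it is faithfully flat and $\Phi$ is surjective. Moreover, since $\varphi_A(p)=p$, any $\Phi$-preimage of a prime not containing $p$ again does not contain $p$.

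Now take a prime $\mathfrak q\supseteq J$ of $A[1/p]$ with $\varphi_A^{j}(d)\in\mathfrak q$, and choose $\mathfrak q'$ with $\varphi_A^{-1}(\mathfrak q')=\mathfrak q$. Then:
\begin{itemize}
\item $\varphi_A(J)\subseteq\mathfrak q'$ and $\varphi_A^{j+1}(d)\in\mathfrak q'$;
\item hence $d\notin\mathfrak q'$ by \Cref{lem:ideal_generated_by_Frobenius_powers_of_d};
\item localizing (\ref{eq:thm:BK_mod_invert_p_ideal}) at $\mathfrak q'$ then gives $J\subseteq\mathfrak q'$.
\end{itemize}
Iterating produces primes of $A[1/p]$ containing $J$ and $\varphi_A^{j+k}(d)$ for every $k\geq 0$. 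Once $j+k\geq m$, these primes contain none of $d,\ldots,\varphi_A^{m-1}(d)$, contradicting the bound above.

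This is exactly the paper's argument in different language. The paper takes a minimal $s$ such that $J[1/(p\varphi_A^s(d)\cdots d)]$ is the unit ideal and passes to the faithfully flat perfection $A_\perf$. There $\varphi_{A_\perf}^{-1}$ turns $(\varphi_A^s(d)\cdots d)^N\in\varphi_A^*(J)[1/p]$ into $(\varphi_{A_\perf}^{s-1}(d)\cdots\varphi_{A_\perf}^{-1}(d))^N\in JA_\perf[1/p]$, which lets one drop the factor $\varphi_A^s(d)$ and contradict minimality. Your ``fallback'' of descending instead of ascending points in this direction, but without surjectivity of $\Phi$ (or the perfection) it is not carried out, so as written the proof is incomplete.
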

\begin{proof}
	We assume $J[1/p]$ is not the entire ring $A[1/p]$, and prove the other condition by contradiction: namely we assume that there exists an element $f=p^r+f_1\in J$, where $r\geq 1$ and $f_1\in (x_1,\ldots,x_n)A$.
	We first notice that by (\ref{eq:thm:BK_mod_invert_p_ideal}), we know $\varphi_A(f)\in J[1/d]$.
	By taking its Frobenius twist, we get 
	\[
	\varphi_A^2(f) \in \varphi_A^*(J)[1/\varphi_A(d)] \subset \varphi_A^*(J)[1/\varphi_A(d)d] = J[1/\varphi_A(d)d].
	\]
	Continuing this process, we see the Frobenius twists of the equality (\ref{eq:thm:BK_mod_invert_p_ideal}) imply that for each $i\geq 1$, we have the containment
	\begin{equation}
		\label{eq:thm:BK_mod_invert_p_Frobenius_power_of_element}
		\{\varphi_A^i(f),\varphi_A^{i-1}(f),\ldots, f\} \subset J[\frac{1}{\varphi_A^{i-1}(d)\cdots d}].
	\end{equation}
	So combining \Cref{lem:f_and_varphi(f)} with (\ref{eq:thm:BK_mod_invert_p_Frobenius_power_of_element}), we get the following equality of ideals:
	\begin{equation}
		\label{eq:thm:BK_mod_invert_p_equality_of_ideals_after_localizing}
		J[\frac{1}{p\varphi_A^{r-1}(d)\cdots d}] = A[\frac{1}{p\varphi_A^{r-1}(d)\cdots d}].
	\end{equation}
	
	We now let $s\in \mathbb{N}$ be the least non-negative integer such that the equality below holds true.
	\begin{equation}
		\label{eq:thm:BK_mod_invert_p_equality_of_ideals_after_localizing_2}
		J[\frac{1}{p\varphi_A^{s}(d)\cdots d}] = A[\frac{1}{p\varphi_A^{s}(d)\cdots d}].
	\end{equation}
	Then by (\ref{eq:thm:BK_mod_invert_p_equality_of_ideals_after_localizing_2}) and (\ref{eq:thm:BK_mod_invert_p_ideal}) we have
	\begin{equation}
		\label{eq:thm:BK_mod_invert_p_twisted_ideal}
		\varphi^*(J) [\frac{1}{p\varphi_A^{s}(d)\cdots d}] = A[\frac{1}{p\varphi_A^{s}(d)\cdots d}].
	\end{equation}
	Hence by representing the identity $1_A$ as elements in the left hand side, there exists a large positive integer $N$ such that
	\[
	(\varphi_A^s(d)\cdots d)^N \in J[1/p],~ \text{and}~ (\varphi_A^s(d)\cdots d)^N \in \varphi_A^*(J) [1/p].
	\]
	
	Next we let $(A_{\perf}, IA_{\perf})$ be the perfection of the prism $(A,I)$, where $A_{\perf}$ by assumption is faithfully flat over $A$.
	Since $\varphi_{A_\perf}$ is an isomorphism, we get
	\begin{equation}
		\label{eq:thm:BK_mod_invert_p_power_of_d_in_J}
		(\varphi_A^s(d)\cdots d)^N \in J[1/p]\otimes_A A_{\perf},~ \text{and}~ (\varphi_{A_\perf}^{s-1}(d)\cdots \varphi_{A_\perf}^{-1}(d))^N \in J[1/p]\otimes_A A_{\perf}.
	\end{equation}
	
	Now we discuss the possible values of the integer $s$.
	If $s=0$, then we know from (\ref{eq:thm:BK_mod_invert_p_power_of_d_in_J}) that both $d^N$ and $\varphi_{A_{\perf}}^{-1}(d)^N$ belong to the ideal $J[1/p]\otimes_A A_{\perf}$.
	However, \Cref{lem:ideal_generated_by_Frobenius_powers_of_d} implies that the ideal of $A_{\perf}[1/p]$ that is generated by $(d^N, \varphi^{-1}(d)^N)$ is the entire ring $A_{\perf}[1/p]$.
	Thus the inclusion $J[1/p]\subset A[1/p]$ becomes an isomorphism after taking the base change along the faithfully flat cover $A\to A_\perf$, a contradiction to the assumption from the beginning that $J[1/p]\neq A[1/p]$.
	
	So let us assume $s\geq 1$ from now.
	Under the assumption, \Cref{lem:ideal_generated_by_Frobenius_powers_of_d} implies that the ideal of $A_{\perf}[1/p]$ generated by $(\varphi^s(d)^N, \varphi^{-1}(d)^N)$ is the ring $A_{\perf}[1/p]$.
	On the other hand, by (\ref{eq:thm:BK_mod_invert_p_power_of_d_in_J}), both $(\varphi_A^s(d)\cdots d)^N$ and  $(\varphi_{A_\perf}^{s-1}(d)\cdots \varphi_{A_\perf}^{-1}(d))^N$ belong to the ideal $J[1/p]\otimes_A A_{\perf}$.
	Hence by taking a linear combination, we know the element $(\varphi_{A_\perf}^{s-1}(d)\cdots d)^N=(\varphi_{A}^{s-1}(d)\cdots d)^N$ belong to $J[1/p]\otimes_A A_{\perf}$.
	The latter, by the faithful flatness of $A\to A_{\perf}$ again, implies the equality $J[\frac{1}{p\varphi_{A}^{s-1}(d)\cdots d}]=A[\frac{1}{p\varphi_{A}^{s-1}(d)\cdots d}]$, contradicting to the minimality of the positive integer $s$.
	As a consequence, we see the ideal $J[1/p]$ has to be contained in the ideal $(x_1,\ldots,x_n)A[1/p]$.
\end{proof}

To prepare for the induction procedure later, we notice that the equation (\ref{eq:thm:BK_mod_invert_p_ideal}) puts a very strong relationship between the divisibility of $J$ by $x_i$, and the formation of the distinguished element $d$.
\begin{lemma}
	\label{lem:dividibility_of_J_and_x}
	Let $(A,I)$ be the prism in \Cref{assumption:completion_of_framed_regular_prism}, and let $J\subset A$ be a non-zero ideal satisfying (\ref{eq:thm:BK_mod_invert_p_ideal}).
	Assume the ideal $J[1/p]$ is contained in the ideal $x_i\cdot A[1/p]$ for some integer $i$.
	Then $\delta(\frac{\varphi_A(x_i)}{x_i})$ is a unit, and there exists an integer $m\in \mathbb{N}$ and a unit $u\in A$ such that $d = u\cdot \varphi_A^m (\frac{\varphi_A(x_i)}{x_i})$.
	In particular, the element $du^{-1}$ is contained in the subring $V\llbracket x_i \rrbracket\subset V\llbracket x_1,\ldots,x_n \rrbracket$.
\end{lemma}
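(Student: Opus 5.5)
The plan is to use the containment $J[1/p]\subseteq x_iA[1/p]$ together with the Frobenius invariance (\ref{eq:thm:BK_mod_invert_p_ideal}) to force the prime factors of $\varphi_A(x_i)/x_i$ to be inverted by $d$.

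\emph{Step 1: transport divisibility through Frobenius.} Since $J[1/p]\subseteq x_iA[1/p]$, applying $\varphi_A^*$ gives $\varphi_A^*(J)[1/p]\subseteq \varphi_A(x_i)A[1/p]$. Combining with (\ref{eq:thm:BK_mod_invert_p_ideal}), we get $J[1/pd]\subseteq \varphi_A(x_i)A[1/pd]$. Iterating, $J[1/pd\varphi_A(d)\cdots\varphi_A^{m-1}(d)]$ is contained in $\varphi_A^m(x_i)$ times the same localized ring, for every $m\geq 1$.

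\emph{Step 2: factor $\varphi_A^m(x_i)$.} Write $\varphi_A^m(x_i)=x_i\cdot\prod_{j=0}^{m-1}\varphi_A^j(e)$ with $e\colonequals \varphi_A(x_i)/x_i$. Here $e$ is a monic polynomial of degree $p-1$ in $x_i$, and its constant term lies in $pV$ because $e\equiv x_i^{p-1}\bmod p$. Since $J\neq 0$ and $A$ is a noetherian UFD, a nonzero element of $J$ has only finitely many prime factors with bounded multiplicity. On the other hand, Step~1 says that every prime factor $q$ of $\varphi_A^j(e)$ which is not associated to $p$ or to a prime factor of $d,\dots,\varphi_A^{m-1}(d)$ must divide all of $J$. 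For $m$ large, this forces the unbounded products of the $\varphi_A^j(e)$ to be absorbed by the inverted elements. So some prime factor of $\varphi_A^j(e)$ must divide some $\varphi_A^k(d)$; alternatively $e$ is a unit up to $p$, which is excluded by its degree. The height-one primes in question are not $p$, since $p\nmid e$.

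\emph{Step 3: identify $d$.} The element $d$ is prime: $A/d=\overline{A}$ is a regular local domain, via the convention and transversality. Moreover each $\varphi_A^k(d)$ is distinguished, and a distinguished element dividing a product of distinguished-type elements is, up to unit, one of them (\cite[Lem.\ 2.24]{BS22}). We want to conclude $d=u\,\varphi_A^m(e)$. For that I would show $e$ itself is distinguished exactly when $\delta(e)$ is a unit. If instead $\delta(e)=0$, then $e=x_i^{p-1}$, so $\varphi^j(e)$ has only the prime factor $x_i$. Then Step~1 forces $J[1/p]\subseteq x_i^{N}A[1/p]$ for all $N$, hence $J=0$ by Krull intersection, a contradiction. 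In the unit case, $e$ is distinguished, so $\varphi_A^j(e)$ is distinguished, and a prime factor shared with $d$ forces equality up to unit by the rigidity lemma. This also requires ruling out a match with $\varphi^k(d)$ for $k>0$ by Frobenius-untwisting using flatness of $\varphi_A$.

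\emph{Step 4: the final claim.} Because $\varphi_A$ preserves $V\llbracket x_i\rrbracket$, the element $\varphi_A^m(e)$ lies in this subring, and hence so does $du^{-1}$.

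\emph{Main obstacle.} The hardest step is the counting/absorption argument in Step~2. Making precise that infinitely many Frobenius factors cannot all divide the fixed nonzero ideal $J$ requires controlling multiplicities in the localized rings. I would first try a valuation argument at the height-one primes $(q)$ dividing $\varphi^j(e)$.
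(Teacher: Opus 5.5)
Your outline matches the paper's proof. You twist $J[1/p]\subseteq x_iA[1/p]$ by Frobenius using (\ref{eq:thm:BK_mod_invert_p_ideal}), so that $\varphi_A^m(x_i)=x_i\prod_{j<m}\varphi_A^j(e)$ (with $e=\varphi_A(x_i)/x_i$) divides $J$ away from $p,d,\ldots,\varphi_A^{m-1}(d)$. You then rule out $\delta(e)=0$ by Krull's intersection theorem on powers of $x_i$, and in the unit case derive a contradiction from a nonzero element of $J$ acquiring too many prime factors.

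Using rigidity of distinguished elements (\cite[Lem.\ 2.24]{BS22}) instead of the paper's Eisenstein/degree considerations is a valid variant. Since $\delta(ab)=a^p\delta(b)+b^p\delta(a)+p\delta(a)\delta(b)\in\mathfrak{m}$ for $a,b\in\mathfrak{m}$, a distinguished element of $\mathfrak{m}$ has no factorization into two non-units. Hence $\varphi_A^j(e)$ and $\varphi_A^k(d)$ are primes of the UFD $A$, and sharing a prime factor means being associate.

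The step you call the main obstacle is routine. No $\varphi_A^j(e)$ is associate to $p$, since $\varphi_A^j(e)\equiv x_i^{p^j(p-1)}\bmod p$. So if none is associate to any $\varphi_A^k(d)$, fix $0\neq f\in J$ and compare $q$-adic valuations at $q=\varphi_A^j(e)$, $j<m$. This gives $f\in\prod_{j<m}\varphi_A^j(e)\,A\subseteq\mathfrak{m}^m$ for every $m$, hence $f=0$.

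What does not work as stated is the end of Step 3. Suppose $\varphi_A^j(e)\sim\varphi_A^k(d)$. Faithful flatness of $\varphi_A$ (i.e.\ $\varphi_A^l(b)\in\varphi_A^l(a)A$ implies $b\in aA$) untwists this to the desired $d\sim\varphi_A^{j-k}(e)$ only when $k\le j$. When $k>j$ it only gives $e\sim\varphi_A^{k-j}(d)$, and flatness cannot exclude that.

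The missing observation is a reduction mod $p$. We have $e\equiv x_i^{p-1}$ and $\varphi_A^{l}(d)\equiv d^{p^l}$ modulo $p$. If $d\in pA$, the relation $e\sim\varphi_A^l(d)$ with $l\ge 1$ is impossible outright, since $e\notin pA$. Otherwise it makes $x_i^{p-1}$ a unit times a $p^l$-th power in the UFD $A/pA\simeq (V/p)\llbracket x_1,\ldots,x_n\rrbracket$. Comparing $x_i$-adic valuations then contradicts $0<p-1<p^l$.

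With this added your argument is complete. The paper's own proof uses the same fact without comment, when it passes from the non-divisibility of $d$ by every $\varphi_A^m(e)$ to the containments $J[1/p]\subseteq\varphi_A^j(e)A[1/p]$ for all $j$.
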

\begin{proof}
	We first note that the ideal $J$ is finitely generated.
	So for any fixed $m\in \mathbb{N}$, by taking the successive $m$-th Frobenius twists of the equality  (\ref{eq:thm:BK_mod_invert_p_ideal}), we have
	\begin{equation}
	\label{eq:dividibility_of_J_and_x}
	(\varphi^{m}_A(d)\cdots d)^N J[1/p] \subset \varphi_A^{m+1}(x_i)\cdot A[1/p],~\text{for}~N\gg0.
	\end{equation}
	Moreover, the element $\varphi_A^{m+1}(x_i)$ can be written as the product
	\[
	\varphi_A^{m+1}(x_i) = x_i \cdot \frac{\varphi_A(x_i)}{x_i} \cdots \varphi_A^m(\frac{\varphi_A(x_i)}{x_i}).
	\]
	
	We then discuss the value of $\delta( \frac{\varphi_A(x_i)}{x_i})$.
	If $\delta( \frac{\varphi_A(x_i)}{x_i})=0$, then $\varphi_A^{m+1}(x_i)=x_i^{p^{m+1}}$, which is not divisible by any of $\varphi^i_A(d)$.
	So by the fact that the ring $A$ is a unique factorization domain, the containment in (\ref{eq:dividibility_of_J_and_x}) implies that $J[1/p]$ is contained in $x_i^{p^{m+1}}\cdot A[1/p]$ for any $m\in \mathbb{N}$. 
	Hence $J[1/p]$ is in 
	\[
	\bigcap_{m>0} x_i^{p^m}\cdot A[1/p],
	\]
	where the latter is zero and contradicts to the assumption of $J[1/p]$.
	Thus $\delta( \frac{\varphi_A(x_i)}{x_i})$ can only be a unit.
	
	To continue, we assume the element $d$ (which is irreducible) is not divisible by any of the element $\varphi_A^m (\frac{\varphi_A(x_i)}{x_i})$.
	By the assumption of \Cref{assumption:completion_of_framed_regular_prism}, we know each $\varphi_A^j(\frac{\varphi_A(x_i)}{x_i})$ is an Eisinstein polynomial of degree $p^j(p-1)$ and is in particular irreducible.
	Moreover, the non-divisibility assumption on $d$ implies that the ideal $J[1/p]$ is contained in the ideal $\varphi_A^j(\frac{\varphi_A(x_i)}{x_i})\cdot A[1/p]$ for each $j\in \mathbb{N}$.
	In addition, for the degree reason, the Eisenstein polynomials $\varphi_A^j(\frac{\varphi_A(x_i)}{x_i})$ are pairwise non-divisible by each other.
	Hence the ideal $J[1/p]$ is contained in the ideal 
	\[
	\bigcap_{m>0} \varphi_A^m(\frac{\varphi_A(x_i)}{x_i})\cdot A[1/p],
	\]
	and each non-zero element in $J[1/p]$ can be divided by infinitely many irreducible elements $\varphi_A^m(\frac{\varphi_A(x_i)}{x_i})$, contradicting to the noetherianity of $A$.
	Thus $d$ has to be divided by one of $\varphi_A^m(\frac{\varphi_A(x_i)}{x_i})$.
\end{proof}

The following crystalline analogue of \Cref{thm:BK_mod_invert_p_explicit} will be used later.
\begin{proposition}[Frobenius modules over regular crystalline prism]
	\label{prop:BK_mod_crystalline}
	Let $A$ be the $\delta$ ring in \Cref{assumption:completion_of_framed_regular_prism}, and let $J$ be a non-zero ideal in $A$ such that 
	\[
	\varphi^*_A(J)[1/p]=J[1/p].
	\]
	Then $J[1/p]=A[1/p]$.
\end{proposition}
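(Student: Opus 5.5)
The plan is to reduce to \Cref{Prop:fitting_ideal_invert_I} by replacing $J$ with its $p$-saturation. The point is that the hypothesis $\varphi_A^*(J)[1/p]=J[1/p]$ only sees $J$ up to $p$-power torsion. Once we pass to a $p$-saturated ideal, the flatness of $\varphi_A$ should upgrade this to an honest equality of ideals in $A$.

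First I will set $J'\colonequals J[1/p]\cap A$, taken inside $A[1/p]$. This makes sense because $A=V\llbracket x_1,\ldots,x_n\rrbracket$ is $p$-torsionfree. Then $J'$ is an ideal of $A$ with $J'[1/p]=J[1/p]$, and $A/J'$ is $p$-torsionfree by construction. Since $A$ is noetherian, $J'$ is finitely generated.

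Second, I will show that $\varphi_A^*(J')=J'$ as ideals of $A$. The map $\varphi_A:A\to A$ is flat: in \Cref{assumption:completion_of_framed_regular_prism} each $x_i$ is sent to a monic polynomial of degree $p$ in $x_i$, so $\varphi_A$ is finite and, mod $p$, flat by Kunz's theorem; alternatively apply \Cref{prop:reg_imply_Frob-regular}. By flatness, $\varphi_A^*(J')=\varphi_A(J')A$ is the kernel of $A\to \varphi_A^*(A/J')$. Moreover $\varphi_A^*(A/J')$ is again $p$-torsionfree, because tensoring the injection $p\colon A/J'\to A/J'$ with a flat module preserves injectivity, and $\varphi_A$ is $\mathbb{Z}_p$-linear. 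Hence $\varphi_A^*(J')$ is also $p$-saturated. Its localization is
\[
\varphi_A^*(J')[1/p]=\varphi_A^*(J'[1/p])=\varphi_A^*(J[1/p])=\varphi_A^*(J)[1/p]=J[1/p]=J'[1/p].
\]
Two $p$-saturated ideals with the same $p$-localization coincide, so $\varphi_A^*(J')=J'$.

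Finally, $A$ is a $p$-complete, $p$-torsionfree, noetherian $\delta$-ring. So \Cref{Prop:fitting_ideal_invert_I} applies to $J'$ and gives $J'=p^mA$ for some $m\in\mathbb{N}$; here $J'\neq 0$ because $J\neq0$ and $A$ is a domain. Therefore $J[1/p]=J'[1/p]=A[1/p]$. The only step needing care is the compatibility of $p$-saturation with $\varphi_A^*$, that is, the flatness of $\varphi_A$ on $A$ and the identification $\varphi_A^*(J')=\varphi_A(J')A$. Both follow from the explicit Frobenius in the convention, so I expect no real obstacle beyond bookkeeping.
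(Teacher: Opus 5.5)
Your argument is correct, and it takes a different route from the paper. The paper's proof has two steps, both using the coordinates.
\begin{itemize}
\item It first excludes $J[1/p]\subseteq (x_1,\ldots,x_n)A[1/p]$. Twisting gives $J\subseteq (\varphi_A^m(x_1),\ldots,\varphi_A^m(x_n))A[1/p]\cap A=(\varphi_A^m(x_1),\ldots,\varphi_A^m(x_n))$, since the quotient by these monic polynomials is finite free over $V$. Krull's intersection theorem then forces $J=0$.
\item It then takes an element $p^r+f_1\in J$ with $f_1\in(x_1,\ldots,x_n)$. By \Cref{lem:f_and_varphi(f)}, its Frobenius iterates generate $A[1/p]$.
\end{itemize}
You instead $p$-saturate $J$. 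Flatness of $\varphi_A$ together with $\varphi_A(p)=p$ turns the rational hypothesis into the integral equality $\varphi_A^*(J')=J'$, and \Cref{Prop:fitting_ideal_invert_I} finishes. Your route is shorter and uses only that $A$ is a $p$-complete, $p$-torsionfree, noetherian local $\delta$-ring with $A/pA$ regular; it never uses the shape of $\varphi_A(x_i)$. The paper's route mirrors the transversal case \Cref{prop:fitting_ideal_of_BK_mod}. There the hypothesis holds only after inverting $d$, and saturating with respect to $d$ does not give a Frobenius-stable ideal because $\varphi_A(d)\neq d$ (compare \Cref{eg:Frob_mod_over_general_Frob-reg_prism}). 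Your trick works precisely because only $p$ is inverted.

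Two small points of bookkeeping.
\begin{itemize}
\item Do not use \Cref{prop:reg_imply_Frob-regular} as your alternative source of flatness. It assumes a transversal prism, but this proposition is used in case (2) of the proof of \Cref{thm:BK_mod_invert_p_explicit} exactly when the reduction of $d$ is $p$. Use the direct argument instead: Kunz's theorem on $A/pA=(V/p)\llbracket x_1,\ldots,x_n\rrbracket$, then the local flatness criterion, using that $A$ is $p$-torsionfree and $p$-adically complete. Finiteness of $\varphi_A$ is not needed.
\item The proof of \Cref{Prop:fitting_ideal_invert_I} really uses that the ideal is nonzero and that $A/pA$ is local, so that a nonzero idempotent ideal of $A/pA$ must be the unit ideal. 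Both hold for $J'$ and $A=V\llbracket x_1,\ldots,x_n\rrbracket$, so your application is fine.
\end{itemize}
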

\begin{proof}
	We first notice that the ideal $J[1/p]$ is not contained in the ideal $(x_1,\ldots,x_n)[1/p]$.
	Otherwise, by taking the Frobenius twists, for each $m\in \mathbb{N}$, we have
	\[
	J\subset J[1/p]=\varphi^{m,*}_A(J)[1/p]\subset (\varphi_A^m(x_1), \ldots, \varphi_A^m(x_n))[1/p].
	\]
	On the other hand, since $J$ is contained in $A$, we get
	\[
	J\subset (\varphi_A^m(x_1), \ldots, \varphi_A^m(x_n))[1/p] \cap A.
	\]
	We then claim that the intersection on the right hand side is the ideal $(\varphi_A^m(x_1),\ldots,\varphi_A^m(x_n))$ in $A$.
	Granting the claim, since $(\varphi_A^m(x_1), \ldots, \varphi_A^m(x_n))$ is contained in $(p,x_1,\ldots,x_n)^m$, and intersection $\bigcap_{m>0} (p,x_1,\ldots,x_n)^m$ in $A$ is zero, we see $J=(0)$, contradicting to the non-zero assumption of this ideal.
	To see the claim, we prove by contradiction and assume that there is a non-zero element $f\in A$ and an integer $i\geq 1$, such that $p^if\in (\varphi_A^m(x_1), \ldots, \varphi_A^m(x_n))$ and $p^{i-1}f\notin (\varphi_A^m(x_1), \ldots, \varphi_A^m(x_n))$.
	Then the image of $f$ in the quotient $A/(\varphi_A^m(x_1), \ldots, \varphi_A^m(x_n))$ is a non-zero $p^i$-torsion element.
	Yet by the assumption in \Cref{assumption:completion_of_framed_regular_prism} that each $\varphi_A^m(x_j)$ is a monic polynomial in $x_j$, the quotient $A/(\varphi_A^m(x_1), \ldots, \varphi_A^m(x_n))$ is finite free over $V$ and is in particular $p$-torsionfree, a contradiction.
	
	To proceed, we notice that there exists an element $f=p^r+f_1$ in $J[1/p]$, where $r\in \mathbb{N}$ and $f_1\in (x_1,\ldots,x_n)$.
	Moreover, by taking Frobenius twists, we know the element $\varphi_A^m(f)$ belongs to $J[1/p]$ for each $m\in \mathbb{N}$.
	If $r=0$, then since the element $f_1$ is topologically nilpotent, the element $f$ is a unit in $A$, which implies that $J[1/p]=A[1/p]$.
	If $r>0$, the equality $J[1/p]=A[1/p]$ follows from \Cref{lem:f_and_varphi(f)}, which finishes the proof.
\end{proof}

Now assembling all the ingredients above, we are ready to prove \Cref{thm:BK_mod_invert_p_explicit}.
\begin{proof}[Proof of \Cref{thm:BK_mod_invert_p_explicit}]
	We prove the statement by induction on $n$.
	If $n=1$, as $V\llbracket x_1 \rrbracket$ itself is a transversal regular prism of Krull dimension two, there is nothing to prove.
	So we may fix an integer $n>1$, and assume that \Cref{thm:BK_mod_invert_p_explicit} is true for the setup of $(n-1)$ variables.
	We assume that the non-zero ideal $J[1/p]$ is not the entire ring $A[1/p]$.
	By \Cref{prop:fitting_ideal_of_BK_mod}, we know the ideal $J[1/p]$ is contained in $(x_1,\ldots,x_n)[1/p]$.
	We will show that the assumptions on $J[1/p]$ would eventually contradict the inductive hypothesis.
	
	To proceed, we first notice that by the setup in \Cref{thm:BK_mod_invert_p_explicit}, for each $1\leq i\leq n$, the surjection $(A,(d))\to (A/x_iA, (d))$ is Frobenius equivariant and induces a natural prism structure on the target.
	Then we discuss the following possibilities:
	\begin{enumerate}
		\item 	If the ideal $J[1/p]$ is contained in the ideal $x_n A[1/p]$, then by \Cref{lem:dividibility_of_J_and_x} we know both of the following two conditions are true:
		\begin{itemize}
			\item the ideal $J[1/p]$ is not contained in the ideal $x_1A[1/p]$;
			\item the element $d-p$ is not divided by $x_1$.
		\end{itemize} 
		We let $\overline{J}$ be the image of the ideal $J$ in $A/x_1A$, and let $\overline{d}$ be the reduction of the element $d$.
		Then the $\delta$-pair $(A/x_1A, (\overline{d}))$ satisfies the same assumptions of \Cref{assumption:completion_of_framed_regular_prism} but with $(n-1)$ variables.
		In addition, by the above two conditions, we know the ideal $\overline{J}[1/p]$ is non-zero, is contained in $(x_2,\ldots,x_n)A/x_1A[1/p]$ (hence a proper ideal in $A/x_1A[1/p]$), and satisfies the two assumptions in \Cref{thm:BK_mod_invert_p_explicit}.
		The latter is impossible, thanks to the inductive hypothesis of \Cref{thm:BK_mod_invert_p_explicit} as in the first paragraph.
		\item If the ideal $J[1/p]$ is not contained in $x_nA[1/p]$.
		We let $\overline{J}$ be the image of $J$ in $A/x_nA$, which is then non-zero, is contained in $(x_1,\ldots,x_{n-1})A/x_nA[1/p]$, and satisfies the two assumptions in \Cref{thm:BK_mod_invert_p_explicit}.
		At this moment, the mod $x_n$ reduction of the distinguished element (which we denote as $\overline{d}$) is of the form $p+h$, where $h$ is an element in the ideal $(x_1,\ldots,x_{n-1})A/x_nA$.
		If the element $h$ is equal to zero, then $(A/x_nA, IA/x_nA)$ is a regular crystalline prism.
		So by the crystalline analogue of the locally freeness in \Cref{prop:BK_mod_crystalline}, the ideal $\overline{J}[1/p]$ has to be the entire ring $A/x_nA[1/p]$, which contradicts the previous sentence that $\overline{J}[1/p]$ is a proper ideal.
		Otherwise, the element $h$ is non-zero in the ideal $(x_1,\ldots,x_{n-1})A/x_nA$, so the prism $(A/x_nA,IA/x_nA)$ satisfies the same assumptions of \Cref{assumption:completion_of_framed_regular_prism} but with $(n-1)$ variables.
		Thus we reduce the problem to \Cref{thm:BK_mod_invert_p_explicit} but for $(n-1)$-variables, under which there cannot exist an ideal $\overline{J}[1/p]$ that satisfies the aforementioned properties.
		Hence we again obtain a contradiction, which finishes the proof.
	\end{enumerate}
\end{proof}

Finally, we provide an alternative assumption of the regular prism that prevents the defect we saw in \Cref{eg:Frob_mod_over_general_Frob-reg_prism}.
Roughly speaking, it says that the aforementioned example would never happen if the reduction $\overline{A}$ is sufficiently ramified.
\begin{proposition}
	\label{prop:Frob_mod_when_the_reduction_is_mildly_ramified}
	Let $(A,I)$ be a regular prism of Krull dimension two, and assume the absolute ramification degree of the discrete valuation ring $\overline{A}$ is not dividing $p-1$.
	Then for any Frobenius module $M$ over $A$, we have $M[1/p]$ is free.
\end{proposition}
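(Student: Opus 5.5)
By \Cref{prop:Frob_reg_prism_of_dim_2}, we may identify $(A,I)$ with $(V\llbracket t \rrbracket,(E(t)))$. Here $E(t)$ is Eisenstein of degree $e$, the absolute ramification degree of $\overline{A}$, and $\varphi_A(t)$ is divisible by $t$. The ring $A[1/p]$ is a one-dimensional noetherian regular UFD, hence a PID. Its height one primes are generated by irreducible distinguished (Weierstrass) polynomials $P(t)\in V[t]$. So $M[1/p]$ is a free module plus a finite torsion part $\bigoplus A[1/p]/(P_j^{k_j})$, and it suffices to show the torsion vanishes. As in the proof of \Cref{thm:Frob_mod_is_analyticall_loc_free}, I will work with the first nonzero Fitting ideal $J$ of $M$. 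Since $\varphi_M$ is an isomorphism after inverting $d=E(t)$, and Fitting ideals commute with base change, we get $J[1/d]=\varphi_A^*(J)[1/d]$. The torsion vanishes exactly when $J[1/p]=A[1/p]$. So we assume $J[1/p]$ is proper and let $S$ be the finite nonempty set of height one primes of $A[1/p]$ containing $J$.

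Next I translate the Fitting-ideal equality into a statement about primes. Because $\varphi_A$ is finite flat (\Cref{prop:reg_imply_Frob-regular}), for each $P\in S$ the extended ideal $\varphi_A(P)A[1/p]$ factors into the primes $Q$ with $\varphi_A^{-1}(Q)=P$. Let $T$ be the set of all such $Q$ as $P$ runs over $S$. The equality $J[1/d]=\varphi_A^*(J)[1/d]$ then gives $S\setminus\{(E)\}=T\setminus\{(E)\}$, including a comparison of multiplicities away from $(E)$.

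Now I attach a Weierstrass degree to each prime. Modulo $(p,t^{p+1})$ we have $\varphi_A(t)\equiv t^p\cdot(\text{unit})$, which should give $\deg \varphi_A(P)=p\deg P$ for any distinguished polynomial $P$. Summing degrees over $S$ with multiplicities, and using that $S$ and $T$ agree away from $(E)$, gives
\[
p\sum_{P\in S}m_P\deg P=\sum_{Q\in T}m'_Q\deg Q,
\]
where the two sides differ only by the contribution of $(E)$, which has degree $e$. If $(E)$ does not occur with different multiplicities on the two sides, the two totals coincide, forcing $(p-1)\sum m_P\deg P=0$, a contradiction. So the discrepancy at $(E)$ is nonzero, and it carries all of the factor $p-1$.

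The main obstacle is refining this global count into a contradiction with the hypothesis on $e$. Here I would use the dynamics of $Q\mapsto\varphi_A^{-1}(Q)$ on $S$, thinking of it via roots in the open unit disc of $\mathbb{C}_p$, where $\varphi$ acts through the power series $\varphi_A(t)$. Because $S$ is finite and $T$ maps onto $S$ with $T\setminus\{(E)\}=S\setminus\{(E)\}$, this map restricted to $S$ should consist of cycles together with a single chain passing through $(E)$. On a cycle the degrees would have to multiply by $p$ at each step while returning to the start, which is impossible. Hence every prime lies on the chain ending or starting at $(E)$, and at $(E)$ the degree bookkeeping becomes an exact relation tying $e$ to $p-1$, such as $e\cdot k=p-1$ for some positive integer $k$. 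This is precisely what the hypothesis on $e$ rules out; \Cref{eg:Frob_mod_over_general_Frob-reg_prism}, with $e=1$, shows that relation being realised. The step I expect to need the most care is pinning down this exact relation, tracking multiplicities carefully as in \Cref{lem:dividibility_of_J_and_x} and using that each $\varphi^m(\varphi(t)/t)$ is Eisenstein.
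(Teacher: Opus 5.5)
The argument breaks at the final step, where you expect the chain through $(E)$ to yield an exact relation like $e\cdot k=p-1$.

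There is a smaller error first. The map $Q\mapsto\varphi_A^{-1}(Q)$ does have a cycle, namely the fixed point $(t)$, since $t\mid\varphi_A(t)$; so cycles cannot be excluded by degree counting. The correct input is that $v(\varphi_A(t)(x))>v(x)$ for every nonzero $x$ in the open disc. This shows $(t)$ is the only periodic prime. Since $k\geq 1$ forces $\varphi_A^{-1}(S)\subseteq S$, it follows that $(t)\in S$.

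Now sort the primes of $S$ by the least $j$ with $\varphi_A^j(t)\in Q$, and compare both sides of $\varphi_A^*\mathrm{div}(f)=\mathrm{div}(f)+k[(E)]$ level by level, with $f$ a generator of $J[1/p]$. You find, for some $m\geq 0$,
$\mathrm{div}(f)=a\,\mathrm{div}(\varphi_A^m(t))$ and $\varphi_A^m(\varphi_A(t)/t)^a=E^k\cdot(\text{unit})$.
So the chain from $(E)$ down to $(t)$ can have any length $m+1$. Comparing Weierstrass degrees then gives only $e\mid p^m(p-1)$, and the hypothesis $e\nmid p-1$ rules out just $m=0$.

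This cannot be patched, because the proposition is false as stated. Let $A=\mathbb{Z}_p\llbracket q-1\rrbracket$ with $\varphi_A(q)=q^p$, and let $I=(d)$ with $d=\varphi_A([p]_q)=\Phi_{p^2}(q)$. This is a transversal regular prism of Krull dimension two with $\overline{A}=\mathbb{Z}_p[\zeta_{p^2}]$, so $e=p(p-1)\nmid p-1$.

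Take $M=A/(q^p-1)$. Then $\varphi_A^*M=A/(q^{p^2}-1)=A/\bigl((q^p-1)d\bigr)$, so $\varphi_A^*M[1/d]=M[1/d]$. Hence $M$ is a Frobenius module, and it is even $p$- and $I$-torsionfree. Yet $M[1/p]=A[1/p]/(q^p-1)$ is nonzero and torsion, hence not free.

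This is exactly the case $m=1$ above: with $t=q-1$, the factor $\varphi_A(t)/t=[p]_q$ lies in $S$, and $\varphi_A([p]_q)=E$. The paper's own proof has the same defect. It asserts that $t$ is the only possible irreducible factor of a generator of $J[1/p]$, which fails here for $q^p-1=(q-1)[p]_q$.

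Once completed as above, your strategy does prove a corrected statement. It assumes $e\nmid p^m(p-1)$ for all $m\geq 0$, equivalently that the prime-to-$p$ part of $e$ does not divide $p-1$.
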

\begin{corollary}
	\label{cor:Frob_mod_when_the_reduction_is_mildly_ramified}
	Assume the absolute ramification degree of the $p$-adic local field $K$ is not dividing $(p-1)$.
	Let $(A,I)$ be a framed regular prism such that the reduction $A/I$ is an $\mathcal{O}_K$-algebra.
	For any Frobenius module $M$ over $A$, the quotient $M/M[p^\infty]$ is analytically locally free.
\end{corollary}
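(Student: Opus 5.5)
The plan is to deduce the statement from \Cref{thm:Frob_mod_is_analyticall_loc_free} by checking its pointwise hypothesis with \Cref{prop:Frob_mod_when_the_reduction_is_mildly_ramified}. Since $(A,I)$ is framed regular and $(M,\varphi_M)$ is a Frobenius module over it, \Cref{thm:Frob_mod_is_analyticall_loc_free} says that $M/M[p^\infty]$ is locally free on $\Spec(A)\backslash V(p,I)$, which is exactly the claim. This holds as soon as the following is true: for every surjection of prisms $(A,I)\to (A',IA')$ onto a transversal regular prism of Krull dimension two, the module $M\otimes_A A'[1/p]$ is locally free over $A'[1/p]$. So it suffices to verify this condition for each such $(A',IA')$.

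Fix such a surjection. First, $M' \colonequals M\otimes_A A'$ is a Frobenius module over $A'$. The map $A\to A'$ is a map of $\delta$-rings, so $\varphi_{A'}^*M'\simeq \varphi_A^*M\otimes_A A'$. The isomorphism $\varphi_M$ then base changes to an isomorphism $\varphi_{M'}\colon \varphi_{A'}^*M'[1/I]\to M'[1/I]$. Also, $M'$ is finitely generated over the noetherian ring $A'$.

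Next, I check that $(A',IA')$ satisfies the ramification hypothesis of \Cref{prop:Frob_mod_when_the_reduction_is_mildly_ramified}.
\begin{itemize}
\item By transversality and regularity, and as in the proof of \Cref{prop:Frob_reg_prism_of_dim_2}, the reduction $\overline{A'}=A'/IA'$ is a complete discrete valuation ring of mixed characteristic $(0,p)$.
\item It receives a map from $\overline{A}$, hence from $\mathcal{O}_K$. This map $\mathcal{O}_K\to \overline{A'}$ is injective, since $\overline{A'}$ is $p$-torsionfree and every nonzero ideal of $\mathcal{O}_K$ contains a power of $p$.
\item It is local, since $p$ lies in the maximal ideal of $\overline{A'}$. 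So a uniformizer $\pi$ of $K$ maps to an element of positive valuation $e'' \geq 1$ in $\overline{A'}$.
\item Therefore the absolute ramification degree of $\overline{A'}$ is $v_{\overline{A'}}(p)=e_K\cdot e''$.
\end{itemize}
If this integer divided $p-1$, then $e_K$ would divide $p-1$, contrary to the assumption on $K$. Hence \Cref{prop:Frob_mod_when_the_reduction_is_mildly_ramified} applies to the Frobenius module $M'$ over $(A',IA')$. It gives that $M'[1/p]=M\otimes_A A'[1/p]$ is free, and in particular locally free.

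With the pointwise hypothesis verified for every such $(A',IA')$, \Cref{thm:Frob_mod_is_analyticall_loc_free} gives the corollary. The only step needing some care is the ramification computation. There one must make sure the valuation on $\overline{A'}$ is normalized so that a uniformizer has valuation $1$. One must also make sure that ``absolute ramification degree'' in \Cref{prop:Frob_mod_when_the_reduction_is_mildly_ramified} means $v_{\overline{A'}}(p)$, even though the residue field of $\overline{A'}$ may be imperfect. With that convention the divisibility $e_K \mid v_{\overline{A'}}(p)$ holds as computed above, and I do not expect any further difficulty.
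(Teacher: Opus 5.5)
Your proposal is correct and is exactly the intended deduction. The paper gives no separate proof: the corollary is the combination of \Cref{thm:Frob_mod_is_analyticall_loc_free} with \Cref{prop:Frob_mod_when_the_reduction_is_mildly_ramified}, and your observation that $e_K$ divides the absolute ramification degree $v_{\overline{A'}}(p)$ of every two-dimensional transversal regular quotient is precisely what makes the ramification hypothesis pass from $K$ to each such quotient.
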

\begin{proof}[Proof of \Cref{prop:Frob_mod_when_the_reduction_is_mildly_ramified}]
	We recall the structural result of the regular prism of Krull dimension two in \Cref{prop:Frob_reg_prism_of_dim_2}, and identify $(A,I)$ with the pair $(V\llbracket t \rrbracket,(d))$, where the Frobenius structure $\varphi_{V\llbracket t \rrbracket}$ preserves the subring $V$.
	Then the argument is similar to that of \cite[Prop.\ 4.3]{BMS1}, which we now explain.
	Let $J$ be a fitting ideal of $M$, which under our assumption satisfies the equation in (\ref{eq:thm:BK_mod_invert_p_ideal}).
	By Weierstrass preparation theorem \cite[Page 17, Corollary]{Hoc12}, we know the ideal $J[1/p]$ is generated by an element $f(t)=\prod_{i=1}^m f_i(t)^{a_i}$, where each $f_i$ is a monic irreducible polynomial in $t$.
	Then by considering the $p$-adic valuation of the solution of each $f_i$, we know the only possible irreducible factor of $f(t)$ is $t$.
	Now we assume $J[1/p]$ is not equal to $A[1/p]$.
	Then the equality (\ref{eq:thm:BK_mod_invert_p_ideal}) implies that $\varphi_A(t)^{a_1}=t^{a_1}\cdot d^s\cdot u$ for some integer $s\in \mathbb{N}$ and some unit $u\in A^\times$.
	In particular, the irreducible element $d$ divides the element $\frac{\varphi_A(t)}{t}$.
	However, by \Cref{prop:Frob_reg_prism_of_dim_2}, since $d$ is an Eisinstein polynomial in $t$ and since the mod $p$ reduction of $\frac{\varphi_A(t)}{t}$ is $t^{p-1}$, the divisibility implies that $\deg(d)|(p-1)$.
	Hence the ramification degree of the field extension $\overline{A}[1/p]$ over $V[1/p]$ is dividing $(p-1)$, contradicting to the assumption on the ramification index of $\overline{A}$ in \Cref{thm:BK_mod_invert_p_explicit}.	
\end{proof}

\subsection{Primitive Purity theorem: mod $p^n$ reductions}
\label{sub:primitive_purity_reduction}
In this subsection, we show that Frobenius modules over the reduction of a transversal regular prism enjoy a similar purity result as that of crystalline or semi-stable local systems.
The proof strategy is inspired by the work of Du--Liu--Moon--Shimizu as in \cite{DLMS24}, \cite{DLMS24b}.

We start by introducing the category of tuples that consist of compatible Frobenius modules over various complete localizations.
Below we refer the reader to \Cref{def:Frobenius_mod} for the precise definition of Frobenius modules.
For the simplicity of the notations, we use $\mathcal{G}^{(\varphi)}_{(A,I),n}$ to denote either the category $\mathcal{G}_{(A,I),n}$ or the category $\mathcal{G}^{\varphi}_{(A,I),n}$. 
We also remind the reader of a convention that for a (derived) $p$-complete ring $A$, the quotient ring $A/p^nA$ is defined to be the ring $A$ itself when $n=\infty$.
\begin{definition}
	\label{def:extendable_tuples}
	Let $(A,I)$ be a transversal regular prism, and let $n\in \mathbb{N}\cup\{\infty\}$.
	We define the category $\mathcal{G}^{(\varphi)}_{(A,I),n}$ consisting of the tuples as below
	\[
	\bigl( M_\et; (M_{\mathfrak{p}}, \alpha_{\mathfrak{p}})_{\mathfrak{p}\in \mathrm{Ass}_{\overline{A}}(\overline{A}/p\overline{A})} \bigr),
	\]
	where \begin{itemize}
		\item $M_\et$ is a finite projective (Frobenius) module over $A\langle 1/I \rangle/p^nA\langle 1/I \rangle$;
		\item $M_{\mathfrak{p}}$ is a finite projective (Frobenius) module over $A_{L_{\mathfrak{p}}}/p^nA_{L_{\mathfrak{p}}}$, for each $\mathfrak{p}\in \mathrm{Ass}_{\overline{A}}(\overline{A}/p\overline{A})$;
		\item $\alpha_\mathfrak{p}:M_\et \otimes_{A\langle 1/I \rangle} A_{L_{\mathfrak{p}}}\langle 1/I \rangle \xrightarrow{\sim} M_{L_{\mathfrak{p}}} \otimes_{A_{L_{\mathfrak{p}}}}  A_{L_{\mathfrak{p}}}\langle 1/I \rangle$ is an isomorphism of (Frobenius) modules over $A_{L_{\mathfrak{p}}}\langle 1/I \rangle/p^nA_{L_{\mathfrak{p}}}\langle 1/I \rangle$, for each $\mathfrak{p}\in \mathrm{Ass}_{\overline{A}}(\overline{A}/p\overline{A})$.
	\end{itemize}
The morphisms are defined as maps of tuples that are compatible with the linear structures (and the Frobenius structures).
A tuple $\bigl( M_\et; (M_{\mathfrak{p}}, \alpha_{\mathfrak{p}})_{\mathfrak{p}\in \mathrm{Ass}_{\overline{A}}(\overline{A}/p\overline{A})} \bigr)$ in $\mathcal{G}^{\varphi}_{(A,I),n}$ has \emph{Frobenius height in $[a,b]$} if each $M_\mathfrak{p}$ has Frobenius height in $[a,b]$ for all $\mathfrak{p}\in \mathrm{Ass}_{\overline{A}}(\overline{A}/p\overline{A})$.
When $n=\infty$, we use $\mathcal{G}^{(\varphi)}_{(A,I)}$ to abbreviate the notation $\mathcal{G}^{(\varphi)}_{(A,I), \infty}$.
\end{definition}

Now we can present a primitive version of the purity result, for reflexive Frobenius modules over the mod $p^n$-reduction of an arbitrary transversal regular prism.
\begin{theorem}[Primitive Purity at the finite level]
	\label{thm:primitive_purity_reduction}
	Let $(A,I)$ be a transversal regular prism, and let $n\in \mathbb{N}$.
	There is a natural fully faithful functor 
	\begin{equation}
		\label{eq:primitive_purity_functor_reduction}
		\mathcal{G}^{(\varphi)}_{(A,I),n} \longrightarrow \Coh^{(\varphi)}_\refl(A/p^nA),
	\end{equation}
	The functor preserves the Frobenius heights, and its composition with the tensor product functor $\Coh^{(\varphi)}_\refl(A/p^nA) \to \mathcal{G}^{(\varphi)}_{(A,I),n}$ is equal to the identity functor of $\mathcal{G}^{(\varphi)}_{(A,I),n}$.
\end{theorem}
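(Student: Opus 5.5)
The functor will be the intersection construction suggested by \Cref{cor:intersection_prism}. Given a tuple $\bigl(M_\et;(M_\mathfrak{p},\alpha_\mathfrak{p})_\mathfrak{p}\bigr)$ in $\mathcal{G}^{(\varphi)}_{(A,I),n}$, the isomorphisms $\alpha_\mathfrak{p}$ let us regard both $M_\et$ and $\prod_\mathfrak{p} M_\mathfrak{p}$ as submodules of the common module $\prod_\mathfrak{p} M_\mathfrak{p}\otimes A_{L_\mathfrak{p}}\langle 1/I\rangle/p^n$. Here we use that $A_{L_\mathfrak{p}}/p^n$ is $I$-torsionfree (Koszul regularity of $(d,p)$) and $M_\mathfrak{p}$ is projective, so both maps are injective. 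We then set
\[
M \colonequals M_\et \cap \prod_{\mathfrak{p}} M_\mathfrak{p}.
\]
Since $A/p^nA[1/I]=A\langle 1/I\rangle/p^n$, the module $M_\et$ already lives over $A/p^nA[1/I]$. Everything can be checked Zariski/flat locally, so we may assume $I=(d)$ is principal. The Frobenius structure is inherited: $\varphi_{M_\et}$ and the $\varphi_{M_\mathfrak{p}}$ are compatible under $\alpha_\mathfrak{p}$. Flatness of $\varphi_A$ (\Cref{prop:reg_imply_Frob-regular}) gives $\varphi_A^*M\subseteq \varphi_A^*M_\et\cap\prod\varphi^*M_\mathfrak{p}$, and the same intersection description identifies $\varphi_A^*M[1/I]\simeq M[1/I]$. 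Functoriality and full faithfulness are then formal, because morphisms of tuples restrict to the intersections, and a morphism $M\to M'$ is determined by its restriction to $M[1/I]=M_\et$. Height preservation follows by intersecting the containments (\ref{eq:def_Frob_height}) for each $M_\mathfrak{p}$ with the trivial ones for $M_\et$.

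The steps, in order, are as follows.
\begin{enumerate}
\item Show $M[1/I]=M_\et$. Given $x\in M_\et$, some $d^N x$ lies in each $M_\mathfrak{p}$, since the localizations are finitely generated. We use finitely many generators of $M_\et$ and the finiteness of $\mathrm{Ass}$.
\item Show $M\otimes_A A_{L_\mathfrak{p}}\simeq M_\mathfrak{p}$. Compare the $d$-adic filtrations: since $A_{L_\mathfrak{p}}/p^n$ is complete local of dimension 2 and the only other primes of $A$ containing $(p,I)$ localize away, the intersection with $M_{\mathfrak{p}'}$ for $\mathfrak{p}'\neq\mathfrak{p}$ becomes vacuous after localizing at $\mathfrak{P}$.
\item Show that $M$ is finitely generated and reflexive in the sense of \Cref{def:ref_Frob_mod}. $I$-torsionfreeness is clear, and $M[1/I]=M_\et$ is projective. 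For the last condition, $M/IM\hookrightarrow\prod M_\mathfrak{p}/I$ injects because of the intersection, and the target is free at the generic points $\mathfrak{p}$ of $A/(p^n,I)$, which by \Cref{lem:localization_injection} are exactly the associated primes.
\item Show that composing with the tensor-product functor gives the identity. This is step 2 together with step 1.
\end{enumerate}

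The main obstacle is finite generation of $M$ together with step 2, i.e. controlling how large $d$-denominators can get. My plan is first to pick a finite $A/p^n$-lattice $M_0\subset M_\et$ and $N$ with $d^N M_0\subseteq \prod M_\mathfrak{p}$ and $\prod M_\mathfrak{p}\subseteq d^{-N}M_0\otimes A_{L_\mathfrak{p}}$. Then $d^N M_0\subseteq M\subseteq d^{-N}M_0\cap \prod_\mathfrak{p}(\ldots)$, and the noetherianity of $A/p^n$ gives finite generation. For the lower containment I would reduce modulo $p$ by dévissage on $n$ and apply the injectivity $\overline{A}/p\hookrightarrow\prod\overline{A}_\mathfrak{p}/p$ from \Cref{lem:localization_injection}. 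This mirrors the proof of \Cref{cor:intersection_prism}, with the regular ring $A$ replaced by the lattice, and I expect it to be where the Du--Liu--Moon--Shimizu style argument is needed.
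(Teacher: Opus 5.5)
Your proposal follows the paper's proof essentially step for step: the intersection $M=M_\et\cap\prod_\mathfrak{p}M_\mathfrak{p}$, the Frobenius structure and heights via flatness of $\varphi_A$, finite generation by bounding $d$-denominators, the base change $M\otimes_AA_{L_\mathfrak{p}}\simeq M_\mathfrak{p}$ because the conditions at $\mathfrak{p}'\neq\mathfrak{p}$ become vacuous after localizing at $\mathfrak{P}$, and full faithfulness deduced from these. Two small corrections. First, generic local freeness of $M/IM$ has to come from your step 2, since an injection into a free module over the Artinian ring $A_{L_\mathfrak{p}}/(p^n,I)$ does not force freeness. Second, the nontrivial containment is $M\subseteq d^{-N}M_0$ (the lower one is automatic), and it follows at once from \Cref{cor:intersection_prism} applied coordinatewise in a basis of a free lattice $M_0$, so no further d\'evissage is needed.
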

\begin{proof}
	We first give the construction of the functor.
	Let $\bigl( M_\et; (M_{\mathfrak{p}}, \alpha_{\mathfrak{p}})_{\mathfrak{p}\in \mathrm{Ass}_{\overline{A}}(\overline{A}/p\overline{A})} \bigr)\in \mathcal{G}_{(A,I),n}$.
	By the assumption on the finite projectivity and the injectivity of the localization maps in \Cref{prop:localization_injection}.\ref{prop:localization_injection_A}, both $M_\et$ and $\prod M_\mathfrak{p}$ are contained in $\widetilde{M}\colonequals M_\et \otimes_{A/p^nA[1/I]} \prod A_{L_\mathfrak{p}}/p^nA_{L_\mathfrak{p}}[1/I]\simeq \prod M_\mathfrak{p} [1/I]$, and it makes sense to consider the intersection $M\colonequals M_\et \cap \prod M_\mathfrak{p}$, as a sub $A/p^nA$-module in $M_\et$.
	It is then proved in \Cref{lem:intersection_is_fg} and \Cref{lem:intersection_is_refl} below that $M$ is finitely generated over $A/p^nA$ and is reflexive, hence is an object in $\Coh_\refl(A/p^nA)$.
	In addition, since $\varphi_A:A\to A$ is flat (\Cref{prop:reg_imply_Frob-regular}), by the fact that the flat base change commutes with the finite limit, we get
	\begin{align*}
		\varphi_A^*(M)[1/I] & = \varphi_A^*(M_\et \cap \prod M_\mathfrak{p}) [1/I] \\
		& \simeq (\varphi_A^* M_\et [1/I]) \cap ( \prod \varphi_A^* M_\mathfrak{p}[1/I] ).
	\end{align*}
So if we assume $\bigl( M_\et; (M_{\mathfrak{p}}, \alpha_{\mathfrak{p}})\bigr)\in \mathcal{G}^\varphi_{(A,I),n}$, then the Frobenius structure of $\bigl( M_\et; (M_{\mathfrak{p}}, \alpha_{\mathfrak{p}})\bigr)$ naturally equips the last term of the equation above with an isomorphism to $M_\et[1/I]\cap \prod M_\mathfrak{p}[1/I]=M[1/I]$.
Hence we obtain the Frobenius structure $\varphi_M$ on $M$.
If we further assume the tuple has Frobenius height in $[a,b]$, then we have the inclusions 
\[
I^b\cdot M_\et \subseteq \varphi_A^*M_\et \subseteq I^a \cdot M_\et, ~ \text{and}~I^b \cdot M_\mathfrak{p} \subseteq \varphi_A^* M_\mathfrak{p} \subseteq I^a\cdot M_\mathfrak{p}, ~\forall \mathfrak{p}.
\]
Notice that since $I$ is invertible and $\widetilde{M}$ is $I$-torsionfree, as $A/p^nA$-submodules in $\widetilde{M}$ we have $I^i\cdot M_\et\cap I^i \cdot \prod M_\mathfrak{p}= I^i\cdot M$ for each integer $i$.
Thus by taking the intersection we have that $(M,\varphi_M)$ has Frobenius height in $[a,b]$ as well.

	We then show that the following composition with the tensor product functor is the identity functor
	\[
		\mathcal{G}^{(\varphi)}_{(A,I),n} \longrightarrow \Coh^{(\varphi)}_\refl(A/p^nA) \longrightarrow \mathcal{G}^{(\varphi)}_{(A,I),n}.
	\]
	As the claim is Zariski local on $A$, we assume that the ideal $I=(d)$ is principal.
	Let $\bigl( M_\et; (M_{\mathfrak{p}}, \alpha_{\mathfrak{p}})\bigr)$ be an object in $\mathcal{G}_{(A,I),n}$, and let $M$ be the intersection $M_\et \cap \prod M_\mathfrak{p}$, which by the last paragraph is an object in $\Coh_\refl(A/p^nA)$.
	As the intersection commutes with the flat base change, the natural map $M\to M_\et$ becomes an isomorphism after inverting by $I$.
	In addition, by \Cref{lem:intersection_is_refl} we know the canonical linearlization $M\otimes_A \prod A_{L_\mathfrak{p}} \to \prod M_\mathfrak{p}$ is an isomorphism.
	Hence the tensor product functor naturally sends $M$ onto $\bigl(M_\et; (M_\mathfrak{p}, \alpha_\mathfrak{p})\bigr)$.
	If the input admits a Frobenius structure, then from the last paragraph, the Frobenius structure $\varphi_M$ on $M$ is defined via intersection.
	Noticing that since the images of $M$ in both $M_\et$ and $\prod M_\mathfrak{p}$ are generating, hence the base changes $\varphi_M\otimes_A A[1/I]$ and $\varphi_M \otimes_A \prod A_\mathfrak{p}$ coincide with $\varphi_{M_\et}$ and $\varphi_{\prod M_\mathfrak{p}}$ respectively.

	Finally, since the composition above is an equivalence of categories, the intersection functor $\mathcal{G}^{(\varphi)}_{(A,I),n} \longrightarrow \Coh^{(\varphi)}_\refl(A/p^nA)$ is faithful.
	To show the functor is full, we let $\bigl( M_\et; (M_{\mathfrak{p}}, \alpha_{\mathfrak{p}})\bigr)$ and $\bigl( N_\et; (N_{\mathfrak{p}}, \beta_{\mathfrak{p}}) \bigr)$ be two objects in the source, and let $M$ and $N$ be their corresponding intersection modules over $A/p^nA$.
	Then given a morphism $f:M\to N$ in $\Coh^{(\varphi)}(A/p^nA)$, we may define the natural base changes morphisms $g_\et\colonequals f[1/I]$ and $g_\mathfrak{p}\colonequals f \otimes_A A_{L_\mathfrak{p}}$.
	Then we obtain a natural arrow 
	\[
	g\colonequals (g_\et; (g_\mathfrak{p}))\colon \bigl( M_\et; (M_{\mathfrak{p}}, \alpha_{\mathfrak{p}})\bigr) \longrightarrow \bigl( N_\et; (N_{\mathfrak{p}}, \beta_{\mathfrak{p}}) \bigr).
	\]
	It is then left to check that under the functor (\ref{eq:primitive_purity_functor_reduction}), the image of the arrow $g$, which by definition is the restriction of $g$ on $M= M_\et \cap \prod M_\mathfrak{p}$, is equal to the arrow $f$.
	The latter follows quickly from the fact that $M$ is contained in $M[1/I]$, and by noticing that the two maps from $M$ to $N$ coincide after inverting by $I$.
\end{proof}

The following algebraic observations on the intersection module were used in the proof above.
\begin{lemma}[Finiteness]
	\label{lem:intersection_is_fg}
	Let $(A,I)$ be a transversal regular prism, and 
	let $\bigl( M_\et; (M_{\mathfrak{p}}, \alpha_{\mathfrak{p}})_{\mathfrak{p}\in \mathrm{Ass}_{\overline{A}}(\overline{A}/p\overline{A})} \bigr)\in \mathcal{G}_{(A,I),n}$ for some $n\in \mathbb{N}$.
	The intersection $M\colonequals M_\et \cap \prod M_\mathfrak{p}$ is finitely generated over $A$.
\end{lemma}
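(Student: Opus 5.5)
The plan is to squeeze $M$ between two finitely generated $A/p^nA$-modules: a lattice that contains it and sits inside a noetherian ambient module. Since $A$ is regular, hence noetherian, $A/p^nA$ is noetherian, and submodules of finitely generated $A/p^nA$-modules are finitely generated. Reducing to $I=(d)$ principal is harmless, because finite generation can be checked Zariski locally (or after a faithfully flat cover). So it suffices to show that $M$ lies inside $d^{-N}M_0$ for some finitely generated $A/p^nA$-module $M_0\subset M_\et$ and some $N\gg0$. Indeed $d^{-N}M_0\simeq M_0$ as modules, because $M_\et$ is $d$-torsionfree, being projective over $A/p^nA[1/I]$.

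First choose $M_0$. Since $M_\et$ is finite projective over $A/p^nA[1/I]=\colim_k d^{-k}A/p^nA$, pick generators $e_1,\dots,e_r$ of $M_\et$ and let $M_0=\sum (A/p^nA)e_i$. This is finitely generated and satisfies $M_0[1/I]=M_\et$. Its base change $M_0\otimes_A A_{L_\mathfrak{p}}$ maps onto a finitely generated submodule of $M_\mathfrak{p}[1/I]$ that generates it after inverting $d$. Since $M_\mathfrak{p}$ is finitely generated over $A_{L_\mathfrak{p}}/p^n$ and there are only finitely many $\mathfrak{p}$ (by \Cref{lem:localization_injection}), we can choose $N$ with $d^N M_\mathfrak{p}\subseteq M_0 A_{L_\mathfrak{p}}$ for every $\mathfrak{p}$. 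Hence $\prod M_\mathfrak{p}\subseteq d^{-N}\prod M_0A_{L_\mathfrak{p}}$.

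Next, intersect. We get $M\subseteq M_\et\cap d^{-N}\prod M_0A_{L_\mathfrak{p}}=d^{-N}\bigl(M_\et\cap \prod M_0A_{L_\mathfrak{p}}\bigr)$. It remains to show that $M_\et\cap\prod M_0 A_{L_\mathfrak{p}}$ is finitely generated, and this is the main obstacle. My first attempt is to replace $M_0$ by a finite free module. Choose a surjection $F=(A/p^n)^r\to M_0$ and compare with the intersection formula of \Cref{cor:intersection_prism}, which gives $F[1/I]\cap\prod F A_{L_\mathfrak{p}}=F$. However, passing through a quotient does not preserve intersections.

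So instead I would embed $M_\et$ as a direct summand of a free module $F[1/I]$, with $F=(A/p^n)^r$, via projectivity. I would then rechoose the lattices inside $F$: set $M_0:=F\cap M_\et$, viewed inside $F[1/I]$. This is a submodule of $F$, hence finitely generated. It also satisfies $M_0[1/I]=M_\et$, since for any element of $M_\et$ some multiple $d^k$ of it lies in $F$. Repeating the choice of $N$ with this $M_0$, we get $M\subseteq M_\et\cap d^{-N}\prod F A_{L_\mathfrak{p}}$. This is contained in $d^{-N}\bigl(F[1/I]\cap\prod FA_{L_\mathfrak{p}}\bigr)=d^{-N}F$ by \Cref{cor:intersection_prism} applied coordinatewise, using that the injections in diagram (\ref{diagram:inj_of_prisms}) are compatible. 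Hence $M$ is a submodule of the finitely generated module $d^{-N}F\simeq F$ over the noetherian ring $A/p^nA$, and is therefore finitely generated. The delicate point I expect to check carefully is exactly this use of \Cref{cor:intersection_prism} at finite level $n$ inside $\widetilde{M}$, together with the compatibility of the identifications $\alpha_\mathfrak{p}$.
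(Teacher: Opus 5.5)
Your argument is correct and is essentially the paper's proof: you bound $\prod M_\mathfrak{p}$ by $d^{-N}$ times a free lattice, then apply \Cref{cor:intersection_prism} coordinatewise to trap $M$ inside $d^{-N}F$, a finitely generated module over the noetherian ring $A/p^nA$. The only difference is cosmetic: the paper Zariski-localizes so that $M_\et$ is free and compares coefficients against the basis $x_i/d^N$, whereas you embed $M_\et$ as a direct summand of $F[1/I]$ and use the lattice $F\cap M_\et$, which avoids that localization.
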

\begin{proof}
	As the statement is Zariski local with respect to $A$, by passing to an open subset of $\Spec(A)$ if necessary, we may assume the ideal $I=(d)$ is principal and $M_\et$ (and hence $\prod M_\mathfrak{p}[1/I]\simeq M_\et \otimes_A \prod A_{L_\mathfrak{p}}$) is finite free.
	We first notice that since $M[1/I]=M_\et$ is finite free over $A/p^n[1/I]$, there are finitely many elements $x_1,\ldots ,x_n\in M$ whose images in $M[1/I]$ form a basis.
	In addition, using the isomorphisms $\alpha_{\mathfrak{p}}$, the images of $(x_1,\ldots ,x_n)$ in $M_\et \otimes_{A/p^nA[1/I]} \prod A_{L_\mathfrak{p}}/p^nA_{L_\mathfrak{p}}[1/I] \simeq \prod M_\mathfrak{p}[1/I]$ form a basis of the latter over $\prod A_{L_\mathfrak{p}}/p^nA_{L_\mathfrak{p}}[1/I]$ as well.
	Thus by the finitely generatedness of $\prod M_\mathfrak{p}$, there is an integer $N\in \mathbb{N}$ such that $\prod M_\mathfrak{p}$ is contained in the $\prod A_{L_\mathfrak{p}}/p^nA_{L_\mathfrak{p}}$-submodule of $\prod A_{L_\mathfrak{p}}/p^nA_{L_\mathfrak{p}}[1/I]$ generated by $(\frac{x_1}{d^N},\ldots, \frac{x_n}{d^N})\in M_\et$.
	
	We now claim that, within $M_\et$, the $A/p^nA$-submodule $M$ is contained in the $A/p^nA$-submodule of $M_\et$ generated by $\frac{x_1}{d^N},\ldots, \frac{x_n}{d^N}$, which by the assumption that $A$ is noetherian would conclude the proof.
	To see the claim, we let $x=\sum_i a_i\cdot \frac{x_i}{d^N}$ be any element in $M$ for some $a_i\in A/p^nA[1/I]$.
	By construction, the image of the element $x$ in $\prod M_\mathfrak{p}[1/I]$ is contained in $\prod M_\mathfrak{p}$, and is in particular in contained in the free submodule $\bigoplus_i \prod A_{L_\mathfrak{p}}/p^nA_{L_\mathfrak{p}}\cdot \frac{x_i}{d^N} \subset \bigoplus_i \prod A_{L_\mathfrak{p}}/p^nA_{L_\mathfrak{p}}[1/I]\cdot \frac{x_i}{d^N}= \prod M_\mathfrak{p}[1/I]$.
	Thus the coefficients $a_i$, regarded as elements in $\prod A_{L_\mathfrak{p}}/p^nA_{L_\mathfrak{p}}[1/I]$, are also within the subring $\prod A_{L_\mathfrak{p}}/p^nA_{L_\mathfrak{p}}$.
	As a consequence, we get $a_i\in (A/p^nA[1/I])\cap (\prod A_{L_\mathfrak{p}}/p^nA_{L_\mathfrak{p}})$, which finishes the proof since the intersection ring by \Cref{cor:intersection_prism} is equal to $A/p^nA$.
\end{proof}
\begin{lemma}[Reflexivity and base change]
	\label{lem:intersection_is_refl}
	Let $(A,I)$ be a transversal regular prism, and let $\bigl( M_\et; (M_{\mathfrak{p}}, \alpha_{\mathfrak{p}})_{\mathfrak{p}\in \mathrm{Ass}_{\overline{A}}(\overline{A}/p\overline{A})} \bigr)\in \mathcal{G}_{(A,I),n}$ for some $n\in \mathbb{N}$.
	The intersection $M\colonequals M_\et \cap \prod M_\mathfrak{p}$ is reflexive over $A$ (in the sense of \Cref{def:ref_Frob_mod}), and the canonical map $M\otimes_A \prod A_\mathfrak{p} \to \prod M_\mathfrak{p}$ is an isomorphism.
\end{lemma}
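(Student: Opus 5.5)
We first reduce to a local setting, then check each clause of reflexivity for $A/p^nA$-modules in \Cref{def:ref_Frob_mod} (for finite $n$), and finally establish the base change isomorphism. Everything is Zariski local on $A$. So we assume $I=(d)$ is principal and $M_\et$ is finite free over $A/p^nA[1/I]$. By \Cref{lem:intersection_is_fg}, $M$ is finitely generated. Since $M\subset M_\et$ and $M_\et$ is a module over $A/p^nA[1/I]$, $M$ is $d$-torsionfree.

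Next we show $M[1/I]=M_\et$. Pick $x_1,\dots,x_r\in M_\et$ forming a basis. By \Cref{lem:intersection_is_fg}, $\prod M_\mathfrak{p}$ is sandwiched between lattices $d^{N}\bigoplus \prod A_{L_\mathfrak{p}}/p^n\cdot x_i\subset \prod M_\mathfrak{p}\subset d^{-N}\bigoplus \prod A_{L_\mathfrak{p}}/p^n\cdot x_i$. Hence $d^Nx_i\in M$, so $M[1/I]=M_\et$, which is finite projective over $A/p^nA[1/I]$.

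For the base change isomorphism we aim to show $M\otimes_A A_{L_\mathfrak{p}}\to M_\mathfrak{p}$ is bijective for each $\mathfrak{p}$. Surjectivity is the main point. Each $A_{L_\mathfrak{p}}/p^n$ is the $d$-adic completion of the localization $(A/p^n)_\mathfrak{P}$. Thus $M_\mathfrak{p}/d^{k}M_\mathfrak{p}$, for $k$ large relative to $N$, is determined by finitely many conditions modulo $d^{2N+k}$ on coefficients in $\prod A_{L_\mathfrak{p}}/(p^n,d^{2N+k})=\prod (A/(p^n,d^{2N+k}))_\mathfrak{P}$. This is an artinian localization by \Cref{lem:localization_injection}.

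We want to approximate an element $y\in M_\mathfrak{p}$ modulo $d^{k}M_\mathfrak{p}$ by an element of $M$. The first attempt is to use that $A/(p^n,d^{m})\to\prod_\mathfrak{p}(A/(p^n,d^m))_\mathfrak{P}$ has dense image after inverting elements outside the $\mathfrak{P}$'s. Clearing a denominator $s\notin\bigcup\mathfrak{P}$ then gives $sy\equiv z$ with $z\in M$; note that $s$ is a unit in $A_{L_\mathfrak{p}}$. Alternatively, and more cleanly, we can write $M$ as the kernel of $M_\et\oplus\prod M_\mathfrak{p}\to \prod M_\mathfrak{p}[1/I]$. Since $A\to A_{L_\mathfrak{p}}$ is flat ($d$-completion of a localization of a noetherian ring), base change gives $M\otimes A_{L_\mathfrak{p}}$ as the kernel of $M_\et\otimes A_{L_\mathfrak{p}}\oplus \prod_{\mathfrak{q}}M_\mathfrak{q}\otimes A_{L_\mathfrak{p}}\to\cdots$.

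This requires computing $A_{L_\mathfrak{q}}\otimes_A A_{L_\mathfrak{p}}$, which modulo $d^m$ equals $\delta_{\mathfrak{p}\mathfrak{q}}A_{L_\mathfrak{p}}/d^m$ because distinct minimal primes localize to zero. Working modulo $d^m$ and passing to the limit, noting that everything is $d$-torsionfree and bounded by $N$, we get
\[
M\otimes A_{L_\mathfrak{p}}\simeq (M_\et\otimes A_{L_\mathfrak{p}})\cap M_\mathfrak{p} \text{ inside } M_\mathfrak{p}[1/I],
\]
and the first term is dense, so this intersection equals $M_\mathfrak{p}$. We expect this step to be the main obstacle. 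The tensor products are not literally completions, so the argument has to be run carefully modulo $d^m$ using the bounds from \Cref{lem:intersection_is_fg}.

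Finally, $M/dM$ is generically locally free over $A/(p^n,d)$. Its generic points are exactly the $\mathfrak{P}$ (\Cref{lem:localization_injection}). Localizing at $\mathfrak{P}$, the base change isomorphism just proved identifies $(M/dM)_\mathfrak{P}$, after completion, with $M_\mathfrak{p}/dM_\mathfrak{p}$, which is free. Over an artinian local ring, freeness descends along the faithfully flat completion, so $M/dM$ is free at each generic point. Together with $d$-torsionfreeness and the finite projectivity of $M[1/I]$, this gives the reflexivity of $M$ in the sense of \Cref{def:ref_Frob_mod}.
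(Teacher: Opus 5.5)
Your proposal is correct. Its skeleton matches the paper's. You get $M[1/I]=M_\et$ from $d^Nx_i\in M$, where the paper uses flat base change commuting with finite intersections. You reduce generic local freeness of $M/IM$ to the base change isomorphism, because $(A/(p^n,I))_{\mathfrak{P}}=A_{L_\mathfrak{p}}/(p^n,I)A_{L_\mathfrak{p}}$; this already equality makes your final ``descend freeness along completion'' step unnecessary. For the base change itself, your ``cleaner'' route is exactly the paper's. One tensors the finite intersection with the flat $A$-algebra $A_{L_\mathfrak{p}}$, discards the factors $\mathfrak{q}\neq\mathfrak{p}$ because $I$ becomes invertible in $(A_{L_\mathfrak{q}}\otimes_A A_{L_\mathfrak{p}})/p^n$, and concludes; the paper does so via \Cref{cor:intersection_prism}.

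Be aware that $M_\et\otimes_A A_{L_\mathfrak{p}}$ is already all of $M_\mathfrak{p}[1/I]$, not a dense part of it, so your displayed intersection is a tautology. The real content is replacing the $\mathfrak{p}$-factor $M_\mathfrak{p}\otimes_A A_{L_\mathfrak{p}}$ (a module over $A_{L_\mathfrak{p}}\otimes_A A_{L_\mathfrak{p}}$) by $M_\mathfrak{p}$. The paper's write-up also passes over this point quickly.

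The clean way to run this route is to localize at $\mathfrak{P}$ rather than complete. Then $M_\mathfrak{p}\otimes_A A_{\mathfrak{P}}=M_\mathfrak{p}$. For $\mathfrak{q}\neq\mathfrak{p}$ one has $(M_{\mathfrak{q}})_{\mathfrak{P}}=(M_{\mathfrak{q}}[1/I])_{\mathfrak{P}}$, so those factors impose no condition. Hence $M_{\mathfrak{P}}=(M_\et)_{\mathfrak{P}}\cap M_\mathfrak{p}$ inside $M_\mathfrak{p}[1/I]$. Your density remark, now applied to $(M_\et)_{\mathfrak{P}}$, is exactly the Beauville--Laszlo input giving $M\otimes_A A_{L_\mathfrak{p}}=(M_{\mathfrak{P}})^\wedge_I\simeq M_\mathfrak{p}$.

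Your ``first attempt'' is a genuinely different argument and already works, so I would make it the main one. Let $\mathfrak{Q}\subset A$ correspond to $\mathfrak{q}$, put $y'=(y,0,\ldots,0)\in\prod_{\mathfrak{q}}M_{\mathfrak{q}}$, write $y'=\sum_i b_ix_i$ with $d^Nb_i$ integral, and set $m=2N+k$ and $S=A\setminus\bigcup_{\mathfrak{q}}\mathfrak{Q}$. The identification $\prod_{\mathfrak{q}}(A/(p^n,d^m))_{\mathfrak{Q}}=S^{-1}(A/(p^n,d^m))$ holds because the right side is artinian semilocal with maximal ideals the $\mathfrak{Q}$. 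It yields $c_i\in A/p^nA$ and $s\in S$ with $c_i\equiv sd^Nb_i$ modulo $d^m$. Setting $z=\sum_i (c_i/d^N)x_i\in M_\et$, you get $z-sy'\in d^{N+k}\bigoplus_i(\prod_{\mathfrak{q}} A_{L_{\mathfrak{q}}}/p^n)x_i\subseteq d^k\prod_{\mathfrak{q}}M_{\mathfrak{q}}$. Hence $z\in M$ and $z\equiv sy$ modulo $d^kM_\mathfrak{p}$, with $s\in A_{L_\mathfrak{p}}^\times$.

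Two steps left implicit need to be written out. First, Nakayama over the local ring $A_{L_\mathfrak{p}}/p^nA_{L_\mathfrak{p}}$ upgrades this to surjectivity. Second, injectivity holds because $M\otimes_A A_{L_\mathfrak{p}}$ is $I$-torsionfree by flatness and its $I$-localization is $M_\et\otimes_A A_{L_\mathfrak{p}}\simeq M_\mathfrak{p}[1/I]$.

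Compared with the paper, this route replaces the formal manipulation of intersections and the appeal to \Cref{cor:intersection_prism} by the explicit lattice bounds of \Cref{lem:intersection_is_fg} and a Chinese remainder argument in an artinian semilocal ring. It is longer, but every step is checkable.
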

\begin{proof}
	As the finite limit commutes with the flat base change, the localization $M[1/I]$ is naturally isomorphic to $M_\et$, which by assumption is finite projective over $A/p^nA[1/I]$.
	On the other hand, notice that by \Cref{prop:localization_injection}.\ref{prop:localization_injection_R}, the localization of $A/(p^n,I)A$ at a generic point is equal to one of the ring $A_{L_\mathfrak{p}}/(p^n,I)A_{L_\mathfrak{p}}$.
	Thus to check the generic locally freeness of $M/IM=M\otimes_{A/p^nA} A/(p^n,I)A$ (hence the reflexivity of $(M,\varphi_M)$), it suffices to show that the linearlization map $M\otimes_A A_{L_\mathfrak{p}}\to M_\mathfrak{p}$ is an isomorphism, for each $\mathfrak{p}\in \Ass_{\overline{A}}(\overline{A}/p\overline{A})$.
	Notice that using again the exactness of the flat base change along $A\to A_{L_\mathfrak{p}}$, we have 
	\begin{align*}
		M  \otimes_A A_{L_\mathfrak{p}} & = \bigl( \bigcap_{\mathfrak{p}'\in \Ass_{\overline{A}}(\overline{A}/p \overline{A})} (M_\et \cap M_\mathfrak{p'}) \bigr) \otimes_A A_{L_\mathfrak{p}} \\
		& \simeq \bigcap_\mathfrak{p'} \bigl( (M_\et \otimes_A A_{L_\mathfrak{p}}) \cap (M_{\mathfrak{p}'} \otimes_A A_{L_\mathfrak{p}}) \bigr)\\
		& \simeq \bigcap_\mathfrak{p'} \bigl( M_\mathfrak{p}[1/I] \cap (M_{\mathfrak{p}'} \otimes_A A_{L_\mathfrak{p}}) \bigr).
	\end{align*}
	Moreover, if $\mathfrak{p}'\neq \mathfrak{p}$, then since the mod $p^n$ reduction of them are distinct minimal prime ideals of the ideal $I\cdot A/(p^nA$ (\Cref{prop:localization_injection}.\ref{prop:localization_injection_R}), we know the ideal $I$ is invertible in the mod $p^n$ reduction of the ring $A_{L_\mathfrak{p'}}\otimes_A A_{L_{\mathfrak{p}}}$.
	Hence $M_{\mathfrak{p}'} \otimes_A A_{L_\mathfrak{p}}= M_{\mathfrak{p'}}\otimes_A (A_{L_\mathfrak{p'}}\otimes_A A_{L_\mathfrak{p}})$ is naturally isomorphic to $M_{\mathfrak{p'}}\otimes_A  A_{L_\mathfrak{p}}[1/I]$, which is further isomorphic to $M_\et \otimes_{A} A_{L_\mathfrak{p}}\otimes_A A_{L_{\mathfrak{p'}}}[1/I] \simeq M_\mathfrak{p}\otimes_A A_{L_\mathfrak{p'}}[1/I]$.
	As a consequence, continuing with the previous calculations, we get
	\begin{align*}
		M \otimes_A A_{L_\mathfrak{p}} & \simeq M_\mathfrak{p} \otimes_{A/p^nA} \bigl( A/p^nA[1/I] \cap (\prod_\mathfrak{p'} A_{L_{\mathfrak{p'}}}/p^n A_{L_{\mathfrak{p'}}}) \bigr) \\
		& \simeq M_{\mathfrak{p}} \otimes_A A=M_\mathfrak{p},
	\end{align*}
	where the second isomorphism is \Cref{cor:intersection_prism}.
\end{proof}

As we saw from the proof of \Cref{thm:primitive_purity_reduction}, the essential image of the functor (\ref{eq:primitive_purity_functor_reduction}) consists of the objects $(M,\varphi_M)\in \Coh^{(\varphi)}_\refl(A/p^nA)$ such that the natural injection $M\to (M[1/I] )\cap  (M\otimes_{A/p^nA} \prod A_{L_\mathfrak{p}}/p^nA_{L_\mathfrak{p}})$ is an isomorphism.
In general, the Frobenius module $(M,\varphi_M)$ is likely to be a proper submodule in the intersection, with the cokernel killed by a power of the ideal $I$. 
The last result in this subsection says that the difference of the above two Frobenius modules depends only on their Frobenius heights and the natural number $n$.
\begin{lemma}
	\label{lem:difference_of_two_Frob_mod}
	Let $(A,I)$ be a regular prism, and let $f:(M_1,\varphi_{M_1})\to (M_2,\varphi_{M_2})$ be an injection of $I$-torsionfree Frobenius modules in $\Coh^{\varphi,[a,b]}(A/p^nA)$ such that $M_1[1/I]=M_2[1/I]$.
	Then there is an integer $r=r(n;a,b)$, depending only on $n, a, b$, such that $\mathrm{Coker}(f)$ is $I^r$-torsion.
\end{lemma}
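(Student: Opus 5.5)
We argue by a Frobenius bootstrapping on the exponent of $I$-torsion, after reducing to $n=1$. We may work Zariski locally and assume $I=(d)$ is principal. Since $M_1[1/I]=M_2[1/I]$ and $M_2$ is finitely generated, $\mathrm{Coker}(f)$ is killed by $d^N$ for some $N$. The task is to bound $N$ uniformly in terms of $n,a,b$.

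\emph{Case $n=1$.} Let $s\geq 0$ be minimal with $d^sM_2\subseteq M_1$ inside $M_2[1/I]$. The height conditions give
\[
d^bM_2\subseteq \varphi_{M_2}(\varphi_A^*M_2),\qquad \varphi_{M_1}(\varphi_A^*M_1)\subseteq d^aM_1.
\]
Pulling back $d^sM_2\subseteq M_1$ along $\varphi_A$ uses the flatness of $\varphi_A$ from \Cref{prop:reg_imply_Frob-regular}. This gives $\varphi_A(d)^s\varphi_A^*M_2\subseteq \varphi_A^*M_1$. Applying the compatible Frobenius structures then yields
\[
\varphi_A(d)^s d^bM_2\subseteq d^aM_1 .
\]
Modulo $p$ we have $\varphi_A(d)\equiv d^p$ up to the term $p\delta(d)$. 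In $A/pA$ this says $d^{ps+b-a}M_2\subseteq M_1$. On its face this inequality goes the wrong way: $ps+b-a\geq s$, so it does not by itself shrink $s$.

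So we run the comparison backwards. We start from the largest power $t$ with $M_1\not\supseteq d^{t-1}M_2$ and use the left inclusion $d^bM_2\subseteq \varphi(\varphi^*M_2)$ to transport $d$-divisibility through $\varphi^{-1}$. Concretely, take $x\in M_2$ with $d^{s-1}x\notin M_1$. Write $d^bx=\varphi_{M_2}(y)$ with $y\in\varphi^*M_2$.

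From the minimality of $s$, we want to argue that $y$ can be chosen so that $\varphi^*(d^{s'}M_2)\subseteq\varphi^*M_1$ with $s'=\lceil s/p\rceil$ forces $d^{ps'+b-a}x\in M_1$ after applying $\varphi_{M_1}$. This produces an inequality of the shape $s\leq p\lceil s/p\rceil+(b-a)$ that is self-consistent. To make it force a bound, we use the faithful flatness of $\varphi_A$. Because $\varphi_A^*(M_2/M_1)\simeq \varphi^*M_2/\varphi^*M_1$ is killed exactly by $\varphi(d)^{s}$ and no smaller power, this annihilator is the $d^{ps}$-exponent mod $p$. Comparing exponents of annihilators of $\varphi^*(M_2/M_1)$ and $M_2/M_1$ through the two height inclusions gives
\[
ps\leq s+(b-a)\quad\text{mod } p,
\]
hence $s\leq (b-a)/(p-1)$. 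This comparison is the key step: the annihilator exponent multiplies by $p$ under $\varphi^*$ but can only change by $b-a$ under $\varphi_M$. We take $r(1;a,b)=\lceil (b-a)/(p-1)\rceil$. The annihilator of the cokernel can be measured at the generic points $\mathfrak p\in\Ass$, where $A_{L_\mathfrak p}/p$ is a DVR-like ring and exponents make sense, and this is how we make the exponent comparison rigorous.

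\emph{Induction on $n$.} Consider the exact sequences
\[
0\to M_i[p]\to M_i\to M_i/pM_i\to 0 .
\]
They are not obviously Frobenius-compatible with height control, so we prefer to filter by $p^jM_i/p^{j+1}M_i$. Each such layer is a Frobenius module over $A/p$ after taking $I$-torsionfree quotients, with heights still in $[a,b]$. The map $f$ induces maps between the layers whose cokernels are controlled by the $n=1$ case, up to the $I$-torsion created by the passage to torsionfree quotients. A snake-lemma d\'evissage then gives $r(n;a,b)\leq n\cdot r(1;a,b)$ plus corrections. We expect the main obstacle to be exactly this bookkeeping: keeping the graded layers $I$-torsionfree, with height in $[a,b]$, without introducing unbounded torsion. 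If this proves delicate, we would instead run the annihilator-exponent argument directly over $A_{L_\mathfrak p}/p^n$. There $M_i\otimes A_{L_\mathfrak p}$ is finite projective by \Cref{cor:reflexive_is_locally_free} once reflexive, and elementary divisors over the local ring make the inequality $ps\leq s+(b-a)$ transparent.
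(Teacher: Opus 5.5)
Your $n=1$ mechanism is the one the paper uses. With $M_3=\mathrm{Coker}(f)$, the two height inclusions give $\widetilde\psi\colon d^a\otimes M_3\to\varphi_A^*M_3$ with $\widetilde\psi\circ\widetilde\varphi=d^{b-a}$, so $d^sM_3=0$ forces $d^{s+b-a}\varphi_A^*M_3=0$. Faithful flatness of $\varphi_A$ then transports non-vanishing from $M_3$ to $\varphi_A^*M_3$.

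Two details are wrong, however.
\begin{itemize}
\item Faithful flatness only gives $d^{p(s-1)}\varphi_A^*M_3=\varphi_A(d)^{s-1}\varphi_A^*M_3\neq 0$. It does not make $d^{ps}$ the exact annihilating power of $d$. For instance, take $p=2$, $d\equiv u^2 \bmod p$ (as for a Breuil--Kisin prism with $e=2$) and $M_3=A/(p,u^3)$. Then $s=2$, but $d^3$ already kills $\varphi_A^*M_3=A/(p,u^6)$. The correct inequality is $p(s-1)<s+b-a$, which still bounds $s$.
\item Your proposed rigorization by localizing at the generic points of $V(p,I)$ fails, because $\mathrm{Coker}(f)$ can have embedded associated primes. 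Take $M_1=(d,x)\cdot A/pA\subset M_2=A/pA$ with $(p,d,x)$ of height three, both with Frobenius structure induced from $A/pA$ (heights in $[0,p]$). The cokernel is $A/(p,d,x)$, which vanishes at every generic point of $V(p,I)$ although it is nonzero. No localization is needed; the global inequality suffices.
\end{itemize}

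The genuine gap is the passage to general $n$. Your $p$-adic d\'evissage is not carried out, and the obstacle you name is real:
\begin{itemize}
\item the layers $p^jM_i/p^{j+1}M_i$ can have $I$-torsion (for $M=(p,d)\cdot A/p^2A$, the class of $p$ in $M/pM$ is nonzero but killed by $d$);
\item the induced maps between layers need not be injective;
\item nothing you say bounds the resulting ``corrections'' in terms of $n,a,b$.
\end{itemize}
Your fallback over $A_{L_\mathfrak p}/p^n$ inherits the localization defect above, and it also assumes reflexivity, which is not a hypothesis.

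The missing idea is that no induction on $n$ is needed. Take the $p$-power $s=p^{n-1}$, so that $\varphi_A(d)^s=(d^p+p\delta(d))^s\equiv d^{ps}\pmod{p^n}$, and measure the exponent of $M_3$ in blocks of $d^s$. Let $r$ be minimal with $d^{sr}M_3=0$.
\begin{itemize}
\item Faithful flatness of $\varphi_A$ on $A/p^nA$ gives $d^{ps(r-1)}\varphi_A^*M_3=\varphi_A(d)^{s(r-1)}\varphi_A^*M_3\neq 0$.
\item The identity $\widetilde\psi\circ\widetilde\varphi=d^{b-a}$ gives $d^{sr+b-a}\varphi_A^*M_3=0$.
\end{itemize}
Hence $ps(r-1)<sr+b-a$, i.e.\ $r\le (b-a+ps)/((p-1)s)$. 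So $\mathrm{Coker}(f)$ is killed by $I^{sr}$, with $sr$ bounded in terms of $n,a,b$ only. This is exactly the paper's argument, run directly over $A/p^nA$.
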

\begin{proof}
	We first assume the ideal $I$ admits a generator $d$.
	As the element $\varphi_A(d)-d^p$ is divided by $p$, by the binomial formula, we can find a large enough integer $s$ (of $p$-power) that depends only on $n$ such that $\varphi_A(d^s)-d^{ps}$ is divided by $p^n$ (\Cref{lem:power_of_Frobenius}).
	On the other hand, since both $M_1$ and $M_2$ have Frobenius height in $[a,b]$, their Frobenius structures induce $A/p^nA$-linear maps $\widetilde{\varphi}_{M_i}:\varphi_A^* M_i \to d^a\otimes M_i$.
	As the image of $\widetilde{\varphi}_{M_i}$ contains $d^b\otimes  M_i$, the inverse isomorphism $\varphi_{M_i}^{-1}$ can be extended to an $A/p^nA$-linear map $\widetilde{\psi}_{M_i}\colon d^a\otimes M_i\to \varphi_A^*M_i$, such that the composition $\varphi_A^*M_i \xrightarrow{\widetilde{\varphi}_{M_i}} d^a\otimes M_i \xrightarrow{\widetilde{\psi}_{M_i}} \varphi_A^*M_i$ is the multiplication by $d^{b-a}$, for $i=1,2$.
	
	We now assume the map $f$ is not an isomorphism, and let $M_3$ be $\mathrm{Coker}(f)$, which is a non-zero $I$-power torsion finitely generated module, and is equipped with the induced $A/p^nA$-linear morphisms $\widetilde{\varphi}_{M_3}:\varphi_A^* M_3 \to d^a\otimes M_3$ and $\widetilde{\psi}_{M_3}: d^a\otimes M_3\to \varphi_A^*M_3$, with the composition $\widetilde{\psi}_{M_3}\circ \widetilde{\varphi}_{M_3}=d^{b-a}$.
	We let $r$ be the smallest positive integer such that $M_3$ is equal to the torsion submodule $M_3[d^{sr}]$, so that the multiplication by $d^{s(r-1)}$ is non-zero on $M_3$.
	Then by taking the Frobenius twist, we have the equalities of torsion modules \[
	\varphi_A^*M_3=\varphi_A^*(M_3[d^{sr}])=(\varphi_A^*M_3)[\varphi_A(d)^{sr}],
	\] 
	which (by the choice of $s$) is further equal to $(\varphi_A^*M_3)[d^{psr}]$.
	Moreover, since $\varphi_A$ is faithfully flat (\Cref{prop:reg_imply_Frob-regular}), we know the multiplication by the element $\varphi_A(d)^{s(r-1)}=d^{ps(r-1)}$ on $(\varphi_A^*M_3)$ is non-zero.
	On the other hand, as the module $d^a\otimes M_3$ is killed by $d^{sr}$ and the composition $\widetilde{\psi}_{M_3}\circ \widetilde{\varphi}_{M_3}$ is equal to the multiplication by $d^{b-a}$ on $\varphi_A^*M_3$, we know the element $d^{sr}\cdot d^{b-a}=d^{sr+b-a}$ kills the module $\varphi_A^*M_3$ as well.
	Hence combining the above analysis, we get the inequality $sr+b-a > ps(r-1)$, which implies that $r \leq \frac{b-a+ps}{(p-1)s}$.
	Notice that the positive integer $s$ can be chosen to be only depending on $n$.
	Hence the claim follows by taking the roof function of the above positive rational number.
\end{proof}

\subsection{Primitive Purity theorem: infinite level}
\label{sub:primitive_purity_infinity}
In this subsection, we extend the purity result for mod $p^n$ modules to the infinite level, for Frobenius modules over regular prisms.

We start with the following limit presentation.
\begin{proposition}[Limit presentation of Frobenius modules]
	\label{prop:Frob_mod_and_its_reduction}
	Let $(A,I)$ be a transversal regular prism.
	Then the mod $p^n$ reduction functors induce a fully faithful functor as below
	\begin{equation}
		\label{eq:Frob_mod_and_its_reduction}
			\Coh^{(\varphi)}_\refl(A) \longrightarrow 
		\lim_n \Coh^{(\varphi)}_\refl(A/p^nA).
	\end{equation}
In addition, a Frobenius module $(M,\varphi_M)\in \Coh^{\varphi}_\refl(A)$ has Frobenius height $[a,b]$ if and only if its each mod $p^n$ reduction does so.
\end{proposition}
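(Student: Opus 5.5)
We first check that the functor (\ref{eq:Frob_mod_and_its_reduction}) is well defined. Let $M\in \Coh^{(\varphi)}_\refl(A)$ and work Zariski locally with $I=(d)$. By \Cref{def:ref_Frob_mod} the Koszul complex $\Kos_M(p,d)$ is discrete. Hence $M$ is $p$-torsionfree and $M/pM$ is $d$-torsionfree, and by \Cref{lem:reflexive_is_saturated} the same holds for each $M/p^nM$ after a d\'evissage on $n$. Since $M$ is analytically locally free, $M/p^nM[1/I]=M[1/I]/p^n$ is finite projective over $A/p^nA[1/I]$. Next, $M/(p^n,I)M$ is generically locally free over $A/(p^n,I)A$: at a minimal prime $\mathfrak{p}$, \Cref{cor:reflexive_is_locally_free} shows that $M\otimes_A A_{L_\mathfrak{p}}$ is finite projective. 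Indeed, $M\otimes_A A_{L_\mathfrak{p}}$ is reflexive because flat base change preserves discreteness of the Koszul complex and analytic local freeness. The Frobenius structure reduces modulo $p^n$ because $\varphi_A$ is flat (\Cref{prop:reg_imply_Frob-regular}). This gives a compatible system in $\lim_n\Coh^{(\varphi)}_\refl(A/p^nA)$.

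For full faithfulness we use that $M$ is finitely generated over the noetherian, $p$-adically complete ring $A$. This gives $M\simeq \lim_n M/p^nM$, and moreover
\[
\Hom_A(M,N)\simeq \lim_n \Hom_A(M,N/p^nN)=\lim_n\Hom_{A/p^n}(M/p^n,N/p^n).
\]
It remains to see that Frobenius compatibility can be checked modulo every $p^n$. A map $f\colon M\to N$ intertwines $\varphi_M$ and $\varphi_N$ if and only if two maps $\varphi_A^*M\to N[1/I]$ agree. After clearing a power $d^k$ of the denominator, these become maps $\varphi_A^*M\to N$. This reduction needs $k$ to be uniform, which holds because $\varphi_A^*M$ is finitely generated. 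Equality of two such maps can then be tested modulo all $p^n$, since $N$ is $p$-adically separated and $N/p^n$ is $d$-torsionfree, so the relation $d^k(\cdot)=0$ modulo $p^n$ loses nothing. Faithfulness follows from the same separatedness.

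For the height statement, the forward direction is reduction of the inclusions $I^bM\subseteq \varphi_M(\varphi_A^*M)\subseteq I^aM$ inside $M[1/I]$. This uses that $M[1/I]/p^n=M/p^n[1/I]$ and that the images of $I^aM$ modulo $p^n$ are $I^a(M/p^n)$. For the converse, suppose every reduction has height in $[a,b]$. Put $Q=(\varphi_M(\varphi_A^*M)+I^aM)/I^aM$ and $Q'=(I^bM+\varphi_M(\varphi_A^*M))/\varphi_M(\varphi_A^*M)$; both are finitely generated $A$-modules. We need $Q/p^nQ=0$ for all $n$, but this requires that the submodule $\varphi_M(\varphi_A^*M)$ modulo $p^n$ agrees with $\varphi_{M/p^n}(\varphi_A^*(M/p^n))$. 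I expect this to be the main subtlety, because images do not automatically commute with reduction. I would handle it by observing that all modules involved are lattices in $M[1/I]$ that are $p$-saturated relative to one another, via the $d$-torsionfreeness of $M/p^n$. Once this compatibility holds, Nakayama's lemma for the $p$-complete ring $A$, with $p$ in the Jacobson radical, gives $Q=Q'=0$.
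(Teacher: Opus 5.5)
Your check that reduction lands in $\Coh^{(\varphi)}_\refl(A/p^nA)$ is correct, and so is your full faithfulness argument. For Frobenius compatibility you clear a uniform power $d^k$ and use that $N/p^nN$ is $d$-torsionfree and that $N$ is $p$-adically separated. This is a more direct version of the paper's route, which passes through the injection $N[1/I]\hookrightarrow N\langle 1/I\rangle$ of \Cref{cor:reg_prism_is_sat}.

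The gap is in the converse of the height statement. The compatibility you single out is not the problem. Write $P\colonequals\varphi_M(\varphi_A^*M)$. Its image in $M[1/I]/p^n=(M/p^nM)[1/I]$ is automatically $\varphi_{M/p^n}(\varphi_A^*(M/p^n))$, because $\varphi_A^*M\to\varphi_A^*(M/p^n)$ is surjective. So the hypothesis says exactly that, for all $n$,
\[
P\subseteq I^aM+p^nM[1/I], \qquad I^bM\subseteq P+p^nM[1/I].
\]
From this, $Q/pQ=0$ (equivalently $P\subseteq (P\cap I^aM)+pP$) would follow only if $P+I^aM$ were $p$-saturated in $M[1/I]$. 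That means: whenever $pz\in P+I^aM$ with $z\in M[1/I]$, then $z\in P+I^aM$. The same would be needed for $I^bM+P$ to handle $Q'$. Nothing gives this a priori. The principle you invoke, that the relevant lattices are $p$-saturated relative to one another, also fails in general. Take $R=\mathbb{Z}_p\llbracket u\rrbracket$ and the lattices $R^2$ and $R(1,0)+R(p/u,1)$ in $R[1/u]^2$. Each is $p$-saturated; the second is the image of $R^2$ under a matrix of determinant $1$. Their sum $R^2+R(p/u,0)$ contains $p\cdot(1/u,0)$ but not $(1/u,0)$, since $1\notin(u,p)$. So Nakayama applied to $Q$ and $Q'$ does not close the argument.

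The fix is to apply Krull's intersection theorem inside one fixed $p$-saturated lattice, instead of Nakayama.
\begin{itemize}
\item Since $P$ is finitely generated, choose $k$ with $P,\,I^aM,\,I^bM\subseteq d^{-k}M$.
\item Because $d^{-k}M\simeq M$ and $M/p^nM$ is $d$-torsionfree, the map $d^{-k}M/p^nd^{-k}M\to M[1/I]/p^n$ is injective. Equivalently, $d^{-k}M\cap p^nM[1/I]=p^nd^{-k}M$.
\item Intersect the two hypotheses with $d^{-k}M$, using the modular law. This gives $P\subseteq\bigcap_n(I^aM+p^nd^{-k}M)$ and $I^bM\subseteq\bigcap_n(P+p^nd^{-k}M)$.
\item Apply Krull's intersection theorem to the finitely generated modules $d^{-k}M/I^aM$ and $d^{-k}M/P$ over the noetherian ring $A$, where $p$ lies in the Jacobson radical. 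This yields $P\subseteq I^aM$ and $I^bM\subseteq P$.
\end{itemize}
This is what the paper's terse ``taking the limit for the sequences (\ref{eq:def_Frob_height})'' amounts to.
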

\begin{proof}
	To start, we claim that the image of the tensor product functor $\Coh_{\refl}(A) \to \Coh(A/p^nA)$ is contained in the subcategory $\Coh_{\refl}(A/p^nA)$.
	Namely for $M\in \Coh_\refl(A)$, we want to check that $M/p^nM$ is $I$-torsionfree, the localization $M/p^nM[1/I]$ is finite projective over $A/p^nA[1/I]$, and the reduction $M/(p^n,I)M$ is generically locally free over $A/(p^n,I)A$:
	By \Cref{def:ref_Frob_mod}, since $M[1/I]$ is finite projective over $A[1/I]$, its mod $p^n$-reduction is finite projective over $A/p^nA[1/I]$ as well.
	Moreover, by \Cref{lem:reflexive_is_saturated}.\ref{lem:reflexive_is_saturated_red}, the reflexivity assumption of $M$ implies that $M/p^nM$ is $I$-torsionfree.
	Furthermore, since $M$ is reflexive and the map $A\to A_{L_\mathfrak{p}}$ is flat for each $\mathfrak{p}\in \Ass_(\overline{A}/p\overline{A})$, the base change $M\otimes_A A_{L_\mathfrak{p}}$ is reflexive and hence finite projective (\Cref{cor:reflexive_is_locally_free}) over $A_{L_\mathfrak{p}}$.
	In particular, the mod $(I,p^n)$ reduction of $M\otimes_A A_{L_\mathfrak{p}}$ is finite projective over $A_{L_\mathfrak{p}}/(p^n,I)A_{L_\mathfrak{p}}$.
	Hence $M/(p^n,I)M$ is generically locally free.
	As a consequence, the limit of the base change functors naturally induce a functor as in (\ref{eq:Frob_mod_and_its_reduction}).
	Notice that since $A$ is $p$-complete and $p$-torsionfree, we naturally have $M=\lim_n M/p^nM$.
	In particular, the induced functor on the underlying modules $\Coh_{\refl}(A) \longrightarrow 
	\lim_n \Coh_{\refl}(A/p^nA)$ is fully faithful.
	
	Now we prove that the functor (\ref{eq:Frob_mod_and_its_reduction}) for Frobenius modules is fully faithful.
	Let $(M,\varphi_M)$ and $(N,\varphi_N)$ be two objects in $\Coh^\varphi_\refl(A)$, and let $f:M\to N$ be a map of $A$-modules, such that $f_n\colonequals f\otimes_A A/p^nA$ is Frobenius equivariant for each $n\in \mathbb{N}$.
	By the first paragraph above, it suffices to show that $f$ is Frobenius equivariant itself.
	Note that by assumption, each $f_n$ fits into the commutative diagram
	\begin{equation}
	\label{eq:diagram_of_Frob_mod_n}
	\begin{tikzcd}
	\varphi_A^* M/p^nM [1/I] \arrow[r,"\varphi_A^*(f_n){[1/I]}"] \arrow[d,"\varphi_M \otimes A/p^nA{[1/I]}"']  & \varphi_A^* N/p^nN [1/I] \arrow[d,"\varphi_N \otimes A/p^nA{[1/I]}"] \\
		M/p^nM [1/I] \arrow[r,"f_n{[1/I]}"] & N/p^nN[1/I].
	\end{tikzcd}
\end{equation}
By taking the limit, we get 
\begin{equation}
	\label{eq:diagram_of_Frob_limit}
	\begin{tikzcd}
		(\varphi_A^* M [1/I])^\wedge_p \arrow[rr, "{\lim \varphi_A^*(f_n){[1/I]}}"]  \arrow[d, "{\varphi_M \otimes A \langle 1/I \rangle}"']  &&(\varphi_A^* N [1/I])^\wedge_p  \arrow[d, "{\varphi_N \otimes (A{[1/I]})^\wedge_p}"] \\
		M \langle 1/I \rangle \arrow[rr, "{\lim (f_n[1/I])}"]&& N \langle 1/I \rangle.
	\end{tikzcd}
\end{equation}
Notice that the first vertical map in (\ref{eq:diagram_of_Frob_limit}) is the base change of the Frobenius structure $\varphi_M$.
So by precomposing with the Frobenius structure of $M$, we get the following enlarged commutative diagram
\begin{equation}
	\label{eq:diagram_of_Frob_larger}
	\begin{tikzcd}
		\varphi_A^*M[1/I] \ar[r] \arrow[d, "{\varphi_M}"'] & (\varphi_A^* M [1/I])^\wedge_p \arrow[rr, "{\lim \varphi_A^*(f_n){[1/I]}}"] \arrow[d, "\varphi_M \otimes (A{[1/I]})^\wedge_p"] &&(\varphi_A^* N [1/I])^\wedge_p \arrow[d, "{\varphi_N \otimes (A{[1/I]})^\wedge_p}"] \\
		M[1/I]   \ar[r]& M \langle 1/I \rangle \arrow[rr, "{\lim f_n{[1/I]}}"]  && N \langle 1/I \rangle.
	\end{tikzcd}
\end{equation}
On the other hand, the composition of top row in (\ref{eq:diagram_of_Frob_larger}), which has nothing to do with the Frobenius structures, can be also factored as the composition $\varphi_A^*M[1/I] \xrightarrow{\varphi_A^* f{[1/I]}} \varphi_A^* N[1/I] \to (\varphi_A^* N[1/I])^\wedge_p$.
Hence by combining the aforementioned factorization and the commutativity in (\ref{eq:diagram_of_Frob_larger}), we get a commutative diagram
\begin{equation}
	\label{eq:diagram_of_Frob_outcome}
	\begin{tikzcd}
		\varphi_A^* N[1/I] \arrow[r] \arrow[d]  & (\varphi_A^*N[1/I])^\wedge_p \arrow[d, "{\varphi_N \otimes (A{[1/I]})^\wedge_p}"]\\
		M[1/I] \ar[r] & N \langle 1/I \rangle,
	\end{tikzcd}
\end{equation}
where the right vertical map is an isomorphism, and the top map is injective, thanks to the analytic locally freeness of $N$ and \Cref{cor:reg_prism_is_sat}.
Notice that since the isomorphism $\varphi_N \otimes A \langle 1/I \rangle: (\varphi_A^*N[1/I])^\wedge_p \xrightarrow{\sim}  N \langle 1/I \rangle$ is equal to to the base change of the isomorphism $\varphi_N:  \varphi_A^*N[1/I] \to N[1/I]$, the intersection $N \langle 1/I \rangle \cap \varphi_A^* N [1/I]$ inside $N \langle 1/I \rangle\simeq (\varphi_A^*N[1/I])^\wedge_p$ is exactly the submodule $N[1/I]$.
Hence the commutativity of the diagram (\ref{eq:diagram_of_Frob_outcome}) implies that the $M[1/I]$ maps into $N[1/I]$, which finishes the proof.

Finally, we check the claim on Frobenius height.
If $(M,\varphi_M)\in \Coh^{\varphi,[a,b]}_\refl(A)$, then by definition we have the containments of the submodules in $M[1/I]$ as in (\ref{eq:def_Frob_height}).
Thus by taking the image of them into $M/p^nM$, each its mod $p^n$ reduction has the same Frobenius height as well.
Conversely, if each $(M/p^nM,\varphi_{M/p^nM})$ has Frobenius height $[a,b]$, then by taking the limit for the sequences (\ref{eq:def_Frob_height}) of $M/p^nM$, we see $(M,\varphi_M)$ admits the same type of sequence as well.
\end{proof}
\begin{corollary}
	\label{cor:gluing_tuple_and_their_reduction}
	Let $(A,I)$ be a transversal regular prism.
	The base change functor induces a fully faithful functor 
	\begin{equation}
		\label{eq:gluing_tuple_and_its_reduction}
		\mathcal{G}^{(\varphi)}_{(A,I)} \longrightarrow 
		\lim_n \mathcal{G}^{(\varphi)}_{(A,I),n},
	\end{equation}
which preserves the Frobenius heights.
\end{corollary}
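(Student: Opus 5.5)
The plan is to reduce to \Cref{prop:Frob_mod_and_its_reduction} applied to each piece of a tuple, together with the fact that the categories of finite projective modules over the $p$-complete rings $A\langle 1/I\rangle$ and $A_{L_\mathfrak{p}}$ and over $A_{L_\mathfrak{p}}\langle 1/I\rangle$ embed fully faithfully into the limits of their mod $p^n$ reductions. First, the functor is well defined. Given a tuple $\bigl(M_\et;(M_\mathfrak{p},\alpha_\mathfrak{p})\bigr)\in \mathcal{G}^{(\varphi)}_{(A,I)}$, the mod $p^n$ reductions $M_\et/p^n$ and $M_\mathfrak{p}/p^n$ are finite projective over the respective mod $p^n$ rings, and $\alpha_\mathfrak{p}/p^n$ remains an isomorphism. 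The Frobenius structures also reduce: for $M_\et$ the structure is defined over $A\langle 1/I\rangle$, where $I$ is already inverted, and for $M_\mathfrak{p}$ the structure is an isomorphism after inverting $I$, which commutes with reduction mod $p^n$. The transition isomorphisms between levels are the evident ones, so this gives an object of $\lim_n \mathcal{G}^{(\varphi)}_{(A,I),n}$.

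Faithfulness holds because all modules in a tuple are $p$-complete and $p$-torsionfree, being finite projective over $p$-complete $p$-torsionfree rings. Hence $M_\et=\lim_n M_\et/p^n$ and $M_\mathfrak{p}=\lim_n M_\mathfrak{p}/p^n$, and a morphism is determined by its reductions.

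For fullness, start with a compatible system of morphisms $(g_{\et,n};(g_{\mathfrak{p},n}))_n$ and take limits to obtain $A\langle 1/I\rangle$-linear and $A_{L_\mathfrak{p}}$-linear maps $g_\et$ and $g_\mathfrak{p}$. Compatibility with $\alpha_\mathfrak{p}$ holds mod every $p^n$, hence in the limit, since the targets are $p$-adically separated. What remains is Frobenius equivariance.
\begin{itemize}
\item For $g_\et$ this is immediate: $A\langle 1/I\rangle$ is $p$-complete and the modules are finite projective, so $\varphi_A^*M_\et=\lim_n \varphi_A^*M_\et/p^n$, and the Frobenius square commutes mod every $p^n$.
\item For $g_\mathfrak{p}$ the Frobenius lives only after inverting $I$. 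Here I would argue as in \Cref{prop:Frob_mod_and_its_reduction}: $M_\mathfrak{p}$ is reflexive over the regular prism $A_{L_\mathfrak{p}}$, and \Cref{cor:reg_prism_is_sat} gives that $M_\mathfrak{p}[1/I]$ injects into $M_\mathfrak{p}\langle 1/I\rangle$. Equivariance can then be checked after $p$-completing, where it follows from the levelwise statement. Alternatively, one can apply \Cref{prop:Frob_mod_and_its_reduction} directly to the prism $(A_{L_\mathfrak{p}},IA_{L_\mathfrak{p}})$, since finite projective modules are reflexive.
\end{itemize}

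Preservation of Frobenius heights is by definition the condition on each $M_\mathfrak{p}$, so it reduces to the last assertion of \Cref{prop:Frob_mod_and_its_reduction} for $A_{L_\mathfrak{p}}$. The only real point is the Frobenius equivariance of $g_\mathfrak{p}$, since the Frobenius there exists only after inverting $I$ and does not commute naively with $p$-adic limits. This is exactly what \Cref{prop:Frob_mod_and_its_reduction} already handles, so I expect no further obstacle.
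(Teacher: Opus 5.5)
Your proposal is correct and follows the paper's own proof. Fully faithfulness on the underlying finite projective modules comes from $p$-adic completeness, and Frobenius equivariance of the $A_{L_\mathfrak{p}}$-components, together with the height statement, comes from applying \Cref{prop:Frob_mod_and_its_reduction} to the transversal regular prism $(A_{L_\mathfrak{p}}, IA_{L_\mathfrak{p}})$. The only difference is cosmetic: you handle the \'etale component directly because $I$ is already a unit in $A\langle 1/I\rangle$, whereas the paper reruns the argument of \Cref{prop:Frob_mod_and_its_reduction} there.
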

\begin{proof}
	We notice that the each ring appeared in the definition of $\mathcal{G}_{(A,I)}$ is noetherian.
	Thus for the underlying categories of modules $\mathcal{G}_{(A,I),n}$, the fully faithfulness of the functor (\ref{eq:gluing_tuple_and_its_reduction}) follows from the locally freeness assumption of the reductions, together with the flatness of the limit in \cite[\href{https://stacks.math.columbia.edu/tag/0912}{Tag 0912}]{stacks-project}.
	For the finite projective Frobenius modules over the ring $A\langle 1/I \rangle$, the fully faithfulness of the limit functor (\ref{eq:gluing_tuple_and_its_reduction}) follows by the same proof as in the last paragraph of the proof for \Cref{prop:Frob_mod_and_its_reduction}.
	As a consequence, the fully faithfulness for the functor $\mathcal{G}^{\varphi}_{(A,I)} \longrightarrow 
	\lim_n \mathcal{G}^{\varphi}_{(A,I),n}$ follows by applying \Cref{prop:Frob_mod_and_its_reduction} at the regular prism $(A_{L_\mathfrak{p}}, IA_{L_\mathfrak{p}})$, together with the aforementioned fully faithfulness for Frobenius modules over $A\langle 1/I \rangle$.
\end{proof}
Now we present the primitive purity theorem for Frobenius modules over $A$.
\begin{theorem}[Primitive Purity at the infinite level]
	\label{thm:primitive_purity_inf}
	Let $(A,I)$ be a transversal regular prism.
	There is a natural faithful functor
		\begin{equation}
			\label{eq:primitive_purity_functor}
			\mathcal{G}^{\varphi}_{(A,I)} \longrightarrow \Coh^\varphi_\sat(A), \quad (M_\et; (M_\mathfrak{p}, \alpha_\mathfrak{p})) \longmapsto M.
		\end{equation}
	It satisfies the following properties:
	\begin{itemize}
		\item The functor preserves the Frobenius heights.
		\item The composition with the tensor product functor $\Coh^\varphi(A)\to \Coh^\varphi(A\langle 1/I \rangle)$ naturally sends $M$ onto $M_\et$.
		\item For each $\mathfrak{p}\in \Ass_{\overline{A}}(\overline{A}/p\overline{A})$, there is a natural injection of Frobenius modules $M\otimes_A A_{L_\mathfrak{p}}\hookrightarrow M_\mathfrak{p}$, which becomes isomorphic after base changing to $A_{L_\mathfrak{p}}\langle 1/I \rangle$.
	\end{itemize}
\end{theorem}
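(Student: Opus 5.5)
We construct the functor by reduction to the finite level and then pass to the limit. Given a tuple $(M_\et;(M_\mathfrak{p},\alpha_\mathfrak{p}))\in \mathcal{G}^\varphi_{(A,I)}$, we reduce mod $p^n$ to obtain tuples in $\mathcal{G}^\varphi_{(A,I),n}$; this is compatible by \Cref{cor:gluing_tuple_and_their_reduction}. We then apply \Cref{thm:primitive_purity_reduction} to get reflexive Frobenius modules $M_n\colonequals (M_\et/p^n)\cap \prod (M_\mathfrak{p}/p^n)$ over $A/p^nA$, each of Frobenius height $[a,b]$ if the tuple has that height. The transition maps $M_{n+1}/p^n\to M_n$ need not be isomorphisms: they are injective with $I$-power torsion cokernel. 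So rather than take $\lim_n M_n$ naively, we define $M$ directly at the integral level as the intersection
\[
M\colonequals M_\et\cap \prod_\mathfrak{p} M_\mathfrak{p}\subset \prod_\mathfrak{p} M_\mathfrak{p}\langle 1/I\rangle,
\]
using the integral analogue of diagram (\ref{diagram:inj_of_prisms}). Equivalently, $M=\lim_n M_n$ inside $\lim_n M_\et/p^n=M_\et$. Saturation would also suffice: intersect with $M[1/p]$ to kill any defect.

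The steps, in order, are as follows.
\begin{enumerate}
\item Show $M$ is finitely generated. Choose a lattice $L\subset M_\et$ generated by lifts of a basis, after localizing so that $M_\et$ is free. As in \Cref{lem:intersection_is_fg}, $M$ lies in $d^{-N}L'$ for the $A$-span $L'$, where \Cref{cor:intersection_prism} (case $n=\infty$) identifies $A\langle 1/I\rangle\cap\prod A_{L_\mathfrak{p}}$ with $A$. Noetherianity then finishes this step.
\item Show $M$ is saturated: $p$- and $I$-torsionfree because it sits in $M_\et$ and in $\prod M_\mathfrak{p}$, and $M=M[1/p]\cap M[1/I]$ by the intersection description together with \Cref{cor:reg_prism_is_sat}.
\item Construct the Frobenius. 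Since $\varphi_A$ is flat (\Cref{prop:reg_imply_Frob-regular}), $\varphi_A^*M=\varphi_A^*M_\et\cap\prod\varphi_A^*M_\mathfrak{p}$. The Frobenius structures then intersect to give $\varphi_M$, and the heights are preserved exactly as in the finite-level proof.
\item Obtain the comparison maps. The natural map $M\otimes_A A_{L_\mathfrak{p}}\to M_\mathfrak{p}$ exists by construction and is injective by flatness of $A\to A_{L_\mathfrak{p}}$. It becomes an isomorphism after inverting $I$ because $M[1/I]^\wedge_p\to M_\et$ is an isomorphism.
\end{enumerate}

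The main obstacle is this last claim, $M\langle 1/I\rangle\simeq M_\et$. Unlike mod $p^n$, the module $M[1/I]$ is not simply $M_\et$, because $M_\et$ lives over the completed ring $A\langle 1/I\rangle$. My approach is to control the finite-level defects. By \Cref{lem:difference_of_two_Frob_mod}, the cokernel of $M/p^n\hookrightarrow M_n$ (compare the intersection of the reduction with $M_n$) is killed by $I^{r(n;a,b)}$. Hence $M/p^n[1/I]=M_n[1/I]=M_\et/p^n$ for every $n$. Passing to the limit, $M\langle 1/I\rangle=\lim_n M/p^n[1/I]=M_\et$, using finite generation of $M$ so that $p$-completion commutes with the tensor product. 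The subtlety is that $r$ grows with $n$, so only the localized statement holds, not an integral one. This is why the theorem asserts only an injection $M\otimes A_{L_\mathfrak{p}}\hookrightarrow M_\mathfrak{p}$ rather than an isomorphism. If the heights are unbounded, I would first twist by a power of $I$ to place them in some $[a,b]$.

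Faithfulness follows because a morphism is determined by its restriction to $M\subset M_\et$, and the tuple can be recovered via $M_\et=M\langle 1/I\rangle$. Naturality in morphisms of tuples follows since intersections are functorial.
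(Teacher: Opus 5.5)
There is a genuine gap at exactly the step you flag as the main obstacle. \Cref{lem:difference_of_two_Frob_mod} takes as a \emph{hypothesis} that the injection of Frobenius modules becomes an equality after inverting $I$; it only bounds the exponent of the $I$-power torsion cokernel. Applying it to $M/p^n\hookrightarrow M_n$ therefore presupposes $M/p^n[1/I]=M_n[1/I]$, which is precisely what you then conclude, so the argument is circular. (You also use the injectivity $M/p^nM\hookrightarrow M_n$ without proof. It follows from the $p$-torsionfreeness of $\prod M_\mathfrak{p}\langle 1/I\rangle$, as in Step 2 of the paper.)

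The missing idea is to apply the lemma not to $M/pM$ but to the images $N_m\colonequals\mathrm{Im}(M_m\to M_1)\subseteq M_1$ for $m\geq 1$. These satisfy the hypothesis automatically. By \Cref{thm:primitive_purity_reduction}, $M_m[1/I]=M_{\et,m}$, and this surjects onto $M_{\et,1}=M_1[1/I]$, so $N_m[1/I]=M_1[1/I]$. Moreover, $N_m$ is a Frobenius submodule with the same heights $[a,b]$. The lemma, always applied mod $p$, then gives a single exponent $r=r(1;a,b)$ independent of $m$. Hence $I^rM_1\subseteq\bigcap_m N_m$, and the paper identifies this intersection with $M/pM$ via the limit presentation $M=\lim_m M_m$. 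Thus $M/pM[1/I]=M_1[1/I]=M_\et/p$, and $M\langle 1/I\rangle\simeq M_\et$ follows from $p$-completeness and $p$-torsionfreeness. The uniformity that matters is in the source level $m$ for a fixed target level. The growth of $r$ in $n$, which you discuss, is not the relevant issue.

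Your Step 1 also does not transfer to $n=\infty$. In \Cref{lem:intersection_is_fg}, the bound of $\prod M_\mathfrak{p}$ by a $d^{-N}$-lattice works because $A_{L_\mathfrak{p}}/p^n[1/I]$ is a localization, so every coordinate has a bounded $I$-denominator. At the infinite level, $A_{L_\mathfrak{p}}\langle 1/I\rangle$ is the $p$-adic completion of $A_{L_\mathfrak{p}}[1/I]$ and contains elements with unbounded denominators, e.g.\ $\sum_{k\geq 0}p^kd^{-k}$. So with respect to an arbitrary $A\langle 1/I\rangle$-basis of $M_\et$, the generators of $M_\mathfrak{p}$ need not lie in any $d^{-N}$-lattice. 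Nor can you pick the basis inside $M$, since you do not yet know that $M$ generates $M_\et$. The paper instead deduces finite generation from three facts: the injection $M/pM\hookrightarrow M_1$, the finite generation of $M_1$ over the noetherian ring $A/pA$, and the $p$-adic completeness of $M$.

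Similarly, injectivity of $M\otimes_AA_{L_\mathfrak{p}}\to M_\mathfrak{p}$ does not follow from flatness of $A\to A_{L_\mathfrak{p}}$ alone. Flatness only embeds $M\otimes_AA_{L_\mathfrak{p}}$ into $\prod_{\mathfrak{p}'}M_{\mathfrak{p}'}\otimes_AA_{L_\mathfrak{p}}$. The paper checks injectivity mod $p$, where the target is $M_1\otimes_AA_{L_\mathfrak{p}}$ by \Cref{thm:primitive_purity_reduction}, and then concludes by completeness and torsion-freeness. Your Steps 2 and 3 (saturatedness via intersections, and the Frobenius via flatness of $\varphi_A$) agree with the paper.
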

\begin{remark}[Refined versions]
	Before the proof, we comment on some special cases where the result admits automatic improvements.
	\begin{enumerate}
		\item If the \'etale realization functors $\Vect^\varphi(A_{L_\mathfrak{p}})\to \Vect^\varphi(A_{L_\mathfrak{p}}\langle 1/I \rangle)$ are fully faithful for each associated prime ideal $\mathfrak{p}$, then the composition of the functor (\ref{eq:primitive_purity_functor}) with the tensor product functor $\Coh^\varphi_\sat(A) \to \mathcal{G}^{\varphi}_{(A,I)}$ is naturally equivalent to the identity functor on $\mathcal{G}^{\varphi}_{(A,I)}$.
		\item If the analytic locally freeness in \Cref{thm:Frob_mod_is_analyticall_loc_free} holds true for all the $p$-torsionfree Frobenius modules over $A$ (for example when $(A,I)$ is Breuil--Kisin or satisfies the assumption on ramification in \Cref{cor:Frob_mod_when_the_reduction_is_mildly_ramified}), then the essential image of (\ref{eq:primitive_purity_functor}) is contained in the subcategory $\Coh_\refl^\varphi(A)$.
		In particular, each $M$ is analytically locally free.
	\end{enumerate}
\end{remark}
\begin{proof}[Proof of \Cref{thm:primitive_purity_inf}]
	\noindent \textit{Step 1: The underlying $A$-module.}
	Let $(M_\et; (M_\mathfrak{p}, \alpha_\mathfrak{p}))$ be an object in $\mathcal{G}_{(A,I)}$.
	By the finite projectivity assumption of objects in $\mathcal{G}_{(A,I)}$ and by the injectivity in \Cref{prop:localization_injection}.\ref{prop:localization_injection_A}, the isomorphisms $\alpha_{\mathfrak{p}}$ induce injections $M_\et \to \widetilde{M}\colonequals M_\et\otimes_{A\langle 1/I \rangle} \prod A_{L_\mathfrak{p}}\langle 1/I \rangle \simeq \prod M_\mathfrak{p}\otimes_{A_{L_{\mathfrak{p}}}} A_{L_\mathfrak{p}}\langle 1/I \rangle$, and in particular, it makes sense to consider the intersection $M\colonequals M_\et \cap \prod M_\mathfrak{p}$ within $\widetilde{M}$.
	Note that since both $M_\et$ and $\prod M_\mathfrak{p}$ are classical (and thus derived) $p$-complete, by \cite[Lem.\ 6.15]{BMS1}, we know $M$ is derived $p$-complete as well.
	Moreover, the finite projectivity assumption implies that $M_\et$ and $M_\mathfrak{p}$ are $p$-torsionfree.
	Thus the submodule $M$ is also $p$-torsionfree and is in particular classical $p$-complete, and we get the isomorphism $M=\lim M/p^nM$.
	\\
	
	\noindent \textit{Step 2: The finiteness and the limit presentation.}
	We let $(M_{\et,n}; (M_{\mathfrak{p},n}, \alpha_\mathfrak{p}))\in \mathcal{G}_{(A,I),n}$ be the mod $p^n$ reduction of $(M_\et; (M_\mathfrak{p}, \alpha_\mathfrak{p}))$, and as in the proof of \Cref{thm:primitive_purity_reduction} it makes sense to consider the intersection $M_n$, which by \textit{loc.\ cit.} is a reflexive finitely presented module over $A/p^nA$.
	Then we claim that the natural map $M/p^nM \to M_n$ is injective.
	Granting the claim, by the finiteness of $M_n$ and the $p$-completeness of $M$, we know $M$ is finitely generated over $A$.
	To see the claim, if the map is not injective, then there is an element $m\in M$ whose image in $M_\et \oplus \prod M_\mathfrak{p}$ is contained in $\ker\bigl( (M_\et \oplus \prod M_\mathfrak{p}) \to (M_{\et,n} \oplus \prod M_{\mathfrak{p},n}) \bigr)$.
	The finite projectivity assumption then implies that the image of $m$ in $M_\et \oplus \prod M_\mathfrak{p}$ is contained in $p^nM_\et \oplus \prod p^nM_\mathfrak{p}$, which we denote as $p^n\cdot (m'_\et, (m'_\mathfrak{p})_\mathfrak{p})$ for some elements $m'_\et \in M_\et$ and $m'_\mathfrak{p} \in M_\mathfrak{p}$.
	Then we get $p^n\cdot m'_\et - p^n(m'_\mathfrak{p})_\mathfrak{p}=0$ in $M_\et \otimes \prod A_{L_\mathfrak{p}}\langle 1/I \rangle \simeq \prod M_\mathfrak{p}\langle 1/I \rangle$.
	But notice that the latter is $p$-torsionfree.
	Hence we have $m'_\et = (m'_\mathfrak{p})_\mathfrak{p}$, which then comes from an element $m'$ in $M$ such that $p^n\cdot m'=m$.
	This shows that $m\in p^nM$, and thus the injectivity of $M/p^nM\to M_n$.
	
	In addition, notice that the finite projectivity assumption on $M_\et$ (resp. $M_\mathfrak{p}$) implies its limit presentation $M_\et=\lim_n M_{\et,n}$ (resp. $M_\mathfrak{p}=\lim_n M_{\mathfrak{p},n}$).
	Hence by commuting the limit with the intersection, we have 
	\begin{equation}
		\label{eq:thm:primitive_purity_inf_limit}
		M=M_\et \cap \prod M_\mathfrak{p} = (\lim M_{\et,n}) \cap (\lim \prod M_{\mathfrak{p},n}) = \lim (M_{\et,n} \cap \prod  M_{\mathfrak{p},n}) = \lim M_n.
	\end{equation}
	\\
	
	\noindent \textit{Step 3: The Frobenius structure $\varphi_M$.}
	We claim that the Frobenius structures $\varphi_{M_\et}$ and $\varphi_{M_\mathfrak{p}}$ naturally induce a Frobenius structure over $M$.
	By assumption that $(A,I)$ is regular, we know $\varphi_A:A\to A$ is flat (\Cref{prop:reg_imply_Frob-regular}).
	In particular, by the fact that flat base change commutes with the finite intersection, we have
	\begin{align*}
		\varphi_A^* M[1/I] & = \varphi_A^*(M_\et \cap \prod M_\mathfrak{p}) [1/I] \\
		& = (\varphi_A^*M_\et [1/I]) \cap (\varphi_A^* \prod M_\mathfrak{p} [1/I] \\
		& \xrightarrow{\sim} (M_\et[1/I]) \cap (\prod M_\mathfrak{p}[1/I]) \\
		& = M[1/I],
	\end{align*}
	where the isomorphism in the third line above is given by the isomorphisms $\varphi_{M_\et}$ and $\varphi_{M_\mathfrak{p}}$, which by assumption are compatible within their common ambient module $M_\et \otimes \prod A_{L_\mathfrak{p}} \langle 1/I \rangle \simeq \prod M_\mathfrak{p} \langle 1/I \rangle$.
	Thus we obtain a natural Frobenius structure on $M$, which we denote as $\varphi_M$.
	
	Here we note that as we saw from the proof of \Cref{thm:primitive_purity_reduction}, each individual $A/p^nA$-module $M_n$ admits a Frobenius structure as well, defined by the same formula as above (but for $M_{\et,n}$ and $M_{\mathfrak{p},n}$).
	So by their constructions, the mod $p^n$ reduction of $\varphi_M$ is naturally compatible with the Frobenius structure $\varphi_{M_n}$ of $M_n$ along the injection $M/p^nM\to M_n$.
	
	Moreover, notice that for each $i\in \mathbb{Z}$, as $A$-submodules of $M[1/I]$ we have $I^i\cdot M_\et \cap I^i \cdot \prod M_\mathfrak{p}=I^i\cdot (M_\et \cap \prod M_\mathfrak{p})=I^i\cdot M$.
	Thus if each $M_\mathfrak{p}$ (and trivially for $M_\et)$ has Frobenius height $[a,b]$, then by taking the intersection of (\ref{eq:def_Frob_height}), we see $(M,\varphi_M)$ has Frobenius height $[a,b]$ as well.
	\\
	
	\noindent \textit{Step 4: The saturatedness.}
	By assumption, since $M$ is a submodule in the finite projective $A\langle 1/I \rangle$-module $M_\et$, we know $M$ is torsionfree.
	we want to show that $M$ satisfies the condition in \Cref{lem:reflexive_is_saturated}.\ref{lem:reflexive_is_saturated_sat} with respect to $p$ and (local generators of) $I$, namely the injection $M\to M[1/p]\cap M[1/I]$ is an isomorphism.
	Notice that by commuting the flat base change with the finite intersection, we get
	\begin{align*}
		M[1/p]\cap M[1/I] & = (M_\et \cap \prod M_\mathfrak{p})[1/p] \cap (M_\et \cap \prod M_\mathfrak{p})[1/I]\\
		& =  (M_\et [1/p] \cap M_\et [1/I]) \cap \prod (M_\mathfrak{p} [1/p] \cap M_\mathfrak{p} [1/I]).
	\end{align*}
	Since each $M_\mathfrak{p}$ is locally free over $A_{L_\mathfrak{p}}$ and in particular is reflexive, we have $(M_\mathfrak{p} [1/p] \cap M_\mathfrak{p} [1/I]) = M_\mathfrak{p}$.
	On the other hand, since $A\langle 1/I \rangle = A\langle 1/I \rangle[1/p] \cap A\langle 1/I \rangle[1/I]$, by the finite projectivity of $M_\et$ over $A\langle 1/I \rangle$, we get $M_\et [1/p] \cap M_\et [1/I] = M_\et$.
	Hence continuing with the calculation above, we get $M[1/p]\cap M[1/I] = M_\et \cap \prod M_\mathfrak{p}= M$.\\

	\noindent \textit{Step 5: Composition with the tensor product functor $\Coh^\varphi_\refl(A)\to \mathcal{G}^{\varphi}_{(A,I)}$.}
	We now consider the composition of \ref{eq:primitive_purity_functor} with the tensor product functor $\Coh_{\refl}(A)\to \mathcal{G}_{(A,I)}$. 
	We first claim that the linearization map $M\otimes_A A\langle 1/I \rangle \to M_\et$ is an Frobenius equivariant isomorphism.
	As both objects are classically $p$-complete and $p$-torsionfree, it suffices to show the isomoprhims for the mod $p$ reductions, namely
	\[
	M/pM[1/I] \to M_{\et,1},
	\]
	where the latter by \Cref{thm:primitive_purity_reduction} is naturally identified with $M_1[1/I]$.
	From Step 1, we know $M/pM\to M_1$ is injective, which implies the injectivity of the localization $M/pM[1/I] \to M_{\et,1}$.
	Moreover, by the limit presentation $M=\lim M_n$ in Step 2, we know $M/pM$ is equal to the intersection of Frobenius modules	$\cap_{n>0} \mathrm{Im}(M_n\to M_1)$.
	Notice that \Cref{thm:primitive_purity_reduction} implies the equality $M_n[1/I]=M_{\et,n}$ for each $n$, and by our assumption the mod $p$ reduction of $M_{\et,n}$ is equal to $M_{\et,1}$.
	Hence each $\mathrm{Im}(M_n\to M_1)$ is a Frobenius submodule of $M_1$ with same Frobenius heights such that $\mathrm{Im}(M_n\to M_1)[1/I]=M_1[1/I]$.
	As a consequence, by \Cref{lem:difference_of_two_Frob_mod} we know there is a positive integer $r=r(1;a,b)$ such that the cokernel of the map $\mathrm{Im}(M_n\to M_1) \to M_1$ is killed by $I^r$, for each $n>0$.
	The latter implies that the submodule $I^rM_1$ is contained in the intersection $\cap_{n>0} \mathrm{Im}(M_n\to M_1)=M/pM$, and hence $M/pM[1/I]=M_1[1/I]$.
	
	We then check that each $M\otimes_A A_{L_\mathfrak{p}}\to M_\mathfrak{p}$ is an injection and induces an isomorphism after base changing to $A_{L_\mathfrak{p}}\langle 1/I \rangle$.
	We first notice that since the mod $p$ reduction of the map is $M/pM\otimes_A A_{L_\mathfrak{p}} \to M_\mathfrak{p}/pM_\mathfrak{p}$, where the target by \Cref{thm:primitive_purity_reduction} is equal to $M_1\otimes_A A_{L_\mathfrak{p}}$, its injectivity follows from that of $M/pM\to M_1$, which was proved in Step 4.
	So by the completeness and torsionfreeness, we know $M\otimes_A A_{L_\mathfrak{p}}\to M_\mathfrak{p}$ is injective as well.
	The isomorphism over $A_{L_\mathfrak{p}}\langle 1/I \rangle$ follows from the last paragraph together with the input isomorphism $\alpha_{\mathfrak{p}}:M_\et \otimes_{A\langle 1/I \rangle} A_{L_\mathfrak{p}}\langle 1/I \rangle \simeq M_{L_\mathfrak{p}}\langle 1/I \rangle$.
\end{proof}
\begin{remark}
	For the underlying $A$-modules without the Frobenius structures, it follows from the proof of \Cref{thm:primitive_purity_inf} that the functor (\ref{eq:primitive_purity_functor}) can be factorized as below:
	\[
	\begin{tikzcd}
		\mathcal{G}_{(A,I)} \ar[d] \ar[r] & \Coh_\sat(A) \\
		\lim_n \mathcal{G}_{(A,I),n} \ar[r]& \lim_n \Coh_\refl(A) \ar[u],
	\end{tikzcd}
\]
where the bottom functor is (\ref{eq:primitive_purity_functor_reduction}).
\end{remark}

\section{Integral pro-\'etale cohomology and its finiteness}
\label{sec:galois_coh}
In this section, we setup the basics of the pro-\'etale geometry needed in the article, give a projection formula for integral continuous group cohomology, and recollect various finiteness results of pro-\'etale cohomology of integral and rational coefficients.

We setup the convention of the pro-\'etale site used in the article. 
Fix a $p$-adic complete discrete valuation ring $\mathcal{O}_K$ with perfect residue field $k$.
Let $\Gal_K$ be the absolute Galois group of a fixed algebraic closure $\overline{K}$ over $K$, and let $C$ be the completion of $\overline{K}$, with $\mathcal{O}_C$ its ring of integers.
Here we will implicitly fix a cardinality and assume all the constructions are \emph{small} (cf. \cite[\S\ 4]{Sch22}).
\begin{definition}
	\label{def:proetale_site}
	\begin{enumerate}
		\item We let $\Perfd_\pe$ be the \emph{big pro-\'etale site} on the category of perfectoid spaces over $\mathbb{Q}_p$, together with the pro-\'etale topology, as in \cite[Def.\ 8.1]{Sch22}.
		\item For a locally spatial diamond $Z$, we let $Z_{\pe}$ be the \emph{pro-\'etale site of $Z$} of perfectoid spaces over $Z$, defined as the sliced site $\Perfd_\pe|_{Z}$.
	\end{enumerate}
\end{definition}
\begin{remark}
	The cautious reader may notice that the notion of the pro-\'etale site used in this article is different from the original construction in \cite{Sch13}, where various period sheaves were introduced.
	We remark that the only properties that are relevant to the discussion of the period sheaves are the sheafifiness of the (tilted) pro-\'etale structure sheaves and the acyclicity of their cohomology on affinoid perfectoid spaces, which are guaranteed by \cite[Prop.\ 8.5]{Sch22}.
\end{remark}

We also setup the convention on the generic fiber.
\begin{definition}
	Let $X$ be bounded $p$-adic formal scheme.
	We let $X_\eta$ be the \emph{generic fiber of $X$}, regarded as a locally special diamond in $\Shv(\Perfd_\pe)$ as in \cite[Def.\ 15.5]{Sch22}. 
\end{definition}
\begin{remark}
	Assume $X$ is a topologically of finite type $p$-adic formal scheme over $\mathcal{O}_K$, so that its Raynaud's generic fiber is a rigid space over $K$.
	Then thanks to \cite[Lem.\ 15.6]{BS22} and \cite[Prop.\ 10.2.3]{SW20}, the (finite) \'etale site of this rigid space is equivalent to the (finite) \'etale site of the diamond $X_\eta$.
	For this reason, we will identify the Raynaud's generic fiber of $X$ with its associated diamond $X_\eta$, and call both of them the \emph{generic fiber of $X$}.
\end{remark}

To calculate the pro-\'etale cohomology, it is often useful to translate it into certain continuous group cohomology.
As a preparation, we recall the following construction of the algebraic fundamental group.
\begin{construction}[Algebraic fundamental group]
	\label{const:algebrac_pi_1}
	Let $X$ be a connected normal $p$-adic formal scheme over $\mathcal{O}_K$, and let $X_\eta$ be its generic fiber.
	Let $\bar{x}_\eta$ be a geometric point of $X_\eta$.
	We let $\widetilde{X}_\eta$ be a fixed maximal connected pro-finite-\'etale cover of $X_\eta$, which is a perfectoid space in $X_{\eta,\pe}$.
	Then the canonical map $\widetilde{X}_\eta \to X_\eta$ is a Galois cover, and its associated Galois group is the \emph{algebraic fundamental group} $G_{X_\eta}\colonequals \pi_1^\mathrm{alg}(X_\eta, \bar{x}_\eta)$.	
	In addition, the map $\widetilde{X}_\eta \to X_\eta$ factors through the base extension $X_{\eta,C}\to X_\eta$, which corresponds to the surjection $G_{X_\eta}\to \Gal_K$ and we use $G_{X_{\eta,C}}$ to denote the kernel.
	In the special case when $X=\Spf(R)$ is affine, 
	the adic space $\widetilde{X}_\eta$ is affinoid perfectoid and is of the form $\Spa(S[1/p],S)$, where $S$ is the $p$-completion of the integral closure of $R$ in a fixed connected maximal pro-finite-\'etale extension of $R[1/p]$, and admits a map from the ring of integers $\mathcal{O}_C$.
\end{construction}


\subsection{Projection formula}
\label{sub:proj_formula}
We prove a projection formula for continuous group cohomology in this subsection.

To prepare, we start with a discreteness result of the completely flat tensor product.
\begin{lemma}
	\label{lem:complete_tensor_product_w_flat}
	Let $R$ be a ring, let $(f_1,\ldots,f_d)$ be a Koszul regular sequence in $R$, and assume $R$ is complete under the $I=(f_1,\ldots,f_d)$-adic topology.
	Assume $M$ and $N$ are two derived $I$-complete $R$-modules, such that 
	\begin{itemize}
		\item $M$ is $I$-completely flat over $R$;
		\item $(f_1,\ldots,f_d)$ is a Koszul regular sequence for $N$.
	\end{itemize}
	Then the derived $I$-adic complete tensor product $(M\otimes^L_R N)^\wedge_I$ is naturally isomorphic to the classical completion of the classical tensor product, namely $\lim_n (M\otimes_R N)/I^n$.
	In particular, it lives in cohomological degree zero.
\end{lemma}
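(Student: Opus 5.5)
The plan is to compute the derived completion via the Koszul complexes. Set $f^n=(f_1^n,\ldots,f_d^n)$. For any $R$-complex $P$, one has $P^\wedge_I\simeq R\lim_n\bigl(P\otimes^L_R\Kos_R(f^n)\bigr)$. Here we use that $(f_1,\ldots,f_d)$ Koszul regular implies $(f_1^n,\ldots,f_d^n)$ Koszul regular, so that $\Kos_R(f^n)\simeq R/(f^n)$ is discrete. Apply this to $P=M\otimes^L_R N$ and rewrite each term as
\[
(M\otimes^L_R N)\otimes^L_R R/(f^n)\simeq \bigl(M\otimes^L_R R/(f^n)\bigr)\otimes^L_{R/(f^n)}\bigl(N\otimes^L_R R/(f^n)\bigr).
\]

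First, since $M$ is $I$-completely flat, $M\otimes^L_R R/(f^n)$ is discrete and flat over $R/(f^n)$. This uses only the definition of complete flatness, which we read as the statement for $R/I$ and then extend to $R/(f^n)$ by d\'evissage along the $I$-adic filtration. Second, Koszul regularity of $(f_1,\ldots,f_d)$ on $N$ passes to $(f^n)$; this is the standard fact that Koszul regularity is preserved under powers, checked by the exact sequences $\Kos(f_1^{a})\to\Kos(f_1^{a+1})\to\Kos(f_1)$. It follows that $N\otimes^L_R R/(f^n)\simeq \Kos_N(f^n)\simeq N/(f^n)N$ is discrete. A flat module tensored with a discrete module over $R/(f^n)$ is discrete, so each term is the classical module $(M\otimes_R N)/(f^n)$.

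It then remains to compute $R\lim_n$ of the tower $\{(M\otimes_R N)/(f^n)\}_n$. Its transition maps are surjective, so it is Mittag-Leffler and $R^1\lim=0$. Hence $(M\otimes^L_R N)^\wedge_I\simeq\lim_n (M\otimes_R N)/(f^n)$, which lives in degree zero. Finally, the ideals $(f^n)$ and $I^n$ are cofinal, because $I^{dn}\subseteq (f^n)\subseteq I^n$. This identifies the limit with $\lim_n (M\otimes_R N)/I^n$, the classical completion.

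The main obstacle is keeping the identifications honest at the derived level. The issue is precisely the description of derived completion as $R\lim$ of $-\otimes^L\Kos(f^n)$, together with the claim that Koszul regularity on $N$ (rather than honest regularity) survives passing to the powers $f^n$. The first point I would cite from the Stacks project (Tag 091Z). For the second, I would argue using the stability of Koszul regularity under taking powers of the elements, applied to the module $N$.
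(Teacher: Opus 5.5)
Your proposal is correct and follows the paper's proof: you compute the derived completion as $R\lim_n$ of the Koszul reductions, and you use the $I$-complete flatness of $M$ together with Koszul regularity of $N$ (passed to the powers $f_i^n$) to see that each term is the discrete module $(M\otimes_R N)/(f_1^n,\ldots,f_d^n)$. You then conclude from the surjectivity of the transition maps. The only difference is the final identification with $\lim_n (M\otimes_R N)/I^n$: you get it directly from $\Kos_R(f_1^n,\ldots,f_d^n)\simeq R/(f_1^n,\ldots,f_d^n)$ and the cofinality $I^{dn}\subseteq (f_1^n,\ldots,f_d^n)\subseteq I^n$, whereas the paper invokes a pro-isomorphism $\{\Kos(R;f_1^n,\ldots,f_d^n)\}_n\simeq\{R/I^n\}_n$ via noetherian approximation; your route is slightly more direct.
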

\begin{proof}
	By the first assumption, the derived $I$-completion $(M\otimes^L_R N)^\wedge_I$ is $I$-completely flat over $N$.
	This implies that the Koszul complex $\mathrm{Kos}(M\otimes^L_R N; f_1^n,\ldots, f_d^n)$ is a flat complex over $\mathrm{Kos}(N; f_1^n,\ldots, f_d^n)$.
	On the other hand by the second assumption, the Koszul complex $\mathrm{Kos}(N; f_1^n,\ldots, f_d^n)$ is a discrete module.
	Note that by definition, the derived completion $(M\otimes^L_R N)^\wedge_I$ is $R\lim_n \mathrm{Kos}(M\otimes^L_R N; f_1^n,\ldots, f_d^n)$.
	As a consequence, since  $\{\mathrm{Kos}(M\otimes^L_R N; f_1^n,\ldots, f_d^n)\}_n$ is a pro-system of discrete modules with surjective transition maps, we see the derived complete tensor product $(M\otimes^L_R N)^\wedge_I$ lives in cohomological degree zero.
	
	Moreover, since $(f_1,\ldots,f_d)$ is Koszul regular in $R$, using the noetherian approximation and \cite[\href{https://stacks.math.columbia.edu/tag/0925}{Tag 0925}, \href{https://stacks.math.columbia.edu/tag/0921}{Tag 0921}]{stacks-project}, we see the pro-systems $\{\mathrm{Kos}(R, f_1^n,\ldots,f_d^n)\}_n$ and $\{R/I^n\}$ are pro-isomorphic to each other,
	and the derived completion $(M\otimes^L_R N)^\wedge_I$ is isomorphic to $R\lim_n \bigl( (M\otimes^L_R N) \otimes^L_R R/I^n \bigr)$.
	Finally, since the derived tensor product $(M\otimes^L_R N) \otimes^L_R R/I$ by assumptions is isomorphic to $M/IM \otimes_{R/IR} N/IN$, the discreteness of the derived completion $(M\otimes^L_R N)^\wedge_I$  implies that $R\lim_n \bigl( (M\otimes^L_R N) \otimes^L_R R/I^n \bigr)\simeq \lim_n (M\otimes_R N)/I^n$.
\end{proof}
Another preparation is the following vanishing result on the limit of the first continuous group cohomology.
\begin{lemma}
	\label{lem:vanishing_of_certain_limit_of_group_coh}
	Let $R$ be a $p$-complete $p$-torsionfree ring, let $G$ be a pro-finite group. 
	Assume $N$ is a $p$-complete $p$-torsionfree $R$-module with a continuous $G$-action.
	Then we have 
	\[
	R^1\lim_n (\mathrm{H}^0_\cont(G,N)/p^n) = \lim_n (\mathrm{H}^1_\cont(G, N)[p^n]) = 0.
	\]
\end{lemma}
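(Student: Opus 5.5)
We will prove the two equalities separately. The plan is to show that each side vanishes, using only the short exact sequences coming from multiplication by $p^n$ on $N$ together with the $p$-completeness and $p$-torsionfreeness of $N$.

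\emph{First equality.} Since $N$ is $p$-torsionfree, we have the short exact sequence of continuous $G$-modules $0\to N\xrightarrow{p^n} N\to N/p^n\to 0$. Its long exact sequence in continuous cohomology reads
\[
0\to \mathrm{H}^0_\cont(G,N)/p^n\to \mathrm{H}^0_\cont(G,N/p^n)\to \mathrm{H}^1_\cont(G,N)[p^n]\to 0.
\]
These sequences are compatible in $n$. In the third term the transition maps $\mathrm{H}^1_\cont(G,N)[p^{n+1}]\to \mathrm{H}^1_\cont(G,N)[p^n]$ are multiplication by $p$. In the first term the transition maps are the natural projections, which are surjective. The first tower is therefore Mittag-Leffler, and so $R^1\lim_n \mathrm{H}^0_\cont(G,N)/p^n=0$. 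Since both sides of the first equality will be shown to vanish, it holds automatically.

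\emph{Second equality.} We have $\lim_n \mathrm{H}^1_\cont(G,N)[p^n]=T_p(\mathrm{H}^1_\cont(G,N))$, the $p$-adic Tate module. We must show that $\mathrm{H}^1_\cont(G,N)$ has no nonzero compatible systems of $p$-power roots of zero. Take the $\lim$ of the exact sequences above. Left exactness of $\lim$ and the vanishing of $R^1\lim$ of the first term give
\[
0\to \lim_n \mathrm{H}^0_\cont(G,N)/p^n \to \lim_n \mathrm{H}^0_\cont(G,N/p^n)\to T_p\mathrm{H}^1_\cont(G,N)\to 0.
\]
The middle term equals $\mathrm{H}^0_\cont(G,\lim_n N/p^n)=\mathrm{H}^0_\cont(G,N)$, because invariants commute with limits and $N$ is $p$-complete. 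The first term is the $p$-completion of $\mathrm{H}^0_\cont(G,N)=N^G$. This module is a closed submodule of $N$: it is $p$-torsionfree, and it is $p$-saturated, since $px\in N^G$ with $N$ torsionfree forces $x\in N^G$. Saturation gives $N^G\cap p^nN=p^nN^G$, so $N^G$ is $p$-complete and the first map is the identity $N^G\to N^G$. Hence $T_p\mathrm{H}^1_\cont(G,N)=0$.

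\emph{Main obstacle.} The only delicate point is the identification of $\mathrm{H}^0_\cont(G,N/p^n)$ in the limit, together with the $p$-completeness of $N^G$. One must make sure that $\lim$ commutes with invariants, which holds because invariants are a limit. One must also make sure that the Tate module is correctly identified with the cokernel, for which the Mittag-Leffler property from the first step is exactly what is needed. Both points reduce to the saturation of $N^G$ in $N$ and the classical $p$-completeness of $N$.
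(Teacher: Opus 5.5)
Your proof is correct and follows essentially the paper's route: the universal coefficient sequence for $p^n$, passage to the limit using $\mathrm{H}^0_\cont(G,N)=\lim_n \mathrm{H}^0_\cont(G,N/p^n)$, and the $p$-completeness of $N^G$. The only differences are cosmetic. You kill $R^1\lim$ by Mittag-Leffler (surjective transition maps) rather than via the paper's identification $\mathrm{H}^0_\cont(G,N)\simeq R\lim_n \mathrm{H}^0_\cont(G,N)/p^n$, and you spell out, via saturation and closedness in $N$, why $N^G$ is $p$-complete, which the paper simply asserts.
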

\begin{proof}
	By assumption, we know $\mathrm{H}^0_\cont(G,N)\subseteq N$ is also a $p$-complete and $p$-torsionfree module.
	So we have $\mathrm{H}^0_\cont(G,N)\simeq R\lim_n \mathrm{H}^0_\cont(G,N)/p^n$, which implies the vanishing of $R^1\lim_n (\mathrm{H}^0_\cont(G,N)/p^n)$.
	In addition, by the isomorphism $N=R\lim_n N/p^nN\simeq \lim_n N/p^nN$ and the fact that limit commutes with continuous group invariant, we have 
	\[
	\mathrm{H}^0_\cont(G,N) =\mathrm{H}^0_\cont(G,\lim_n N) \simeq  \lim_n \mathrm{H}^0_\cont(G,N/p^nN).
	\]
	So by the universal coefficient theorem (with respect to $p$), we get the vanishing of $\lim_n (\mathrm{H}^1_\cont(G, N)[p^n])$.
\end{proof}
We also introduce a stronger flatness assumption.
\begin{definition}
	\label{def:completely_projective}
	Let $R$ be a ring, let $(f_1,\ldots,f_d)$ be a Koszul regular sequence in $R$, and assume $R$ is complete under the $I=(f_1,\ldots,f_d)$-adic topology.
	\begin{enumerate}
	\item We say an $R$-module $M$ is \emph{completely projective (resp. completely finite projective, resp. completely free)} over $R$ if there is a projective  (resp. finite projective, resp. free) module $M_0$ over $R$, such that $M\simeq (M_0)^\wedge_I$.
	\item We say a map of $R$-modules $M\to N$ is \emph{completely injective} if the derived mod $(f_1,\ldots,f_d)$ reduction of the map is an injection of $R/(f_1,\ldots,f_d)R$-modules.
	\end{enumerate}
\end{definition}
Note that the complete injectivity in particular implies the injectivity.
We also say an $R$-algebra $S$ (equivalently a homomorphism $R\to S$) is \emph{completely projective} if $S$ is completely projective over $R$, and similarly for the others.
So a completely projective homomorphism of complete rings is in particular completely faithfully flat.

The following lemmas on the complete projective base change of an injection will be used later.
\begin{lemma}[Injectivity of complete base change, two variables]
	\label{lem:inj_and_completely_proj_mod}
	Let $R$ be a ring and assume $R$ is complete under a Koszul regular sequence $(f_1,f_2)$ in $R$.
	Let $R\to S$ be a completely injective map of $(f_1,f_2)$-complete rings.
	Let $M\to N$ be a map of $(f_1,f_2)$-regular and $(f_1,f_2)$-complete $R$-modules.
	\begin{enumerate}
	\item Assume either $M$ is completely flat, or $S$ is completely faithfully flat over $R$. The complete linearization map $M\to (M\otimes_R S)^\wedge_{(f_1,f_2)}$ is injective.
	\item Assume $S$ is completely projective over $R$ and the mod $f_1$ reduction of $M\to N$ is injective. The complete base change $(M\otimes_R S)^\wedge_{(f_1,f_2)} \to (N\otimes_R S)^\wedge_{(f_1,f_2)}$ is injective.
	\end{enumerate}
\end{lemma}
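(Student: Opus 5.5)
The plan is to reduce both parts to statements about the derived reductions modulo $(f_1,f_2)$. The key tool is a derived Nakayama-type argument for complete, Koszul-regular modules. Write $\overline{(-)}$ for the derived reduction modulo $(f_1,f_2)$. Since $M$ and $N$ are $(f_1,f_2)$-regular and complete, $\overline{M}=M/(f_1,f_2)M$ is discrete, and similarly for $N$. By \Cref{lem:complete_tensor_product_w_flat}, whenever one of the factors is completely flat, the complete tensor product $(M\otimes_R S)^\wedge$ is discrete. Its derived reduction is then $\overline{M}\otimes_{\overline{R}}\overline{S}$.

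\textbf{Criterion.} A map $P\to Q$ of derived complete modules, with $(f_1,f_2)$ Koszul regular on both, is injective as soon as its reduction $\overline{P}\to\overline{Q}$ is injective. To see this, let $K$ be the kernel, a derived complete module. The fiber sequence $K\to P\to Q'$, where $Q'$ is the image, reduces to a sequence in which $\overline{K}$ has $\mathrm{H}^0$ equal to the kernel of $\overline{P}\to\overline{Q'}$. That kernel is controlled by $\overline{P}\to\overline{Q}$ after one extra step. The cleanest route is to reduce modulo $f_1$ and then modulo $f_2$ separately: for an $f$-torsionfree complete module, injectivity modulo $f$ implies injectivity, since any kernel element is infinitely $f$-divisible and hence zero by completeness. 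Part (2) only assumes injectivity modulo $f_1$, which suggests proceeding in this one-variable-at-a-time fashion.

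\textbf{Part (1).} If $M$ is completely flat, the reduction of $M\to(M\otimes_RS)^\wedge$ is $\overline{M}\otimes_{\overline R}(\overline R\to\overline S)$. This is injective by flatness of $\overline M$ over $\overline R$ together with the complete injectivity of $R\to S$. If instead $S$ is completely faithfully flat, the reduction is $\overline M\to\overline M\otimes_{\overline R}\overline S$, which is injective by faithful flatness. In both cases the source $M$ and the target are $(f_1,f_2)$-regular: the target by the Koszul-discreteness in \Cref{lem:complete_tensor_product_w_flat}. Applying the criterion one variable at a time gives injectivity; we use that $f_1$-torsionfreeness and $f_2$-torsionfreeness modulo $f_1$ pass to the flat base change.

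\textbf{Part (2).} Write $S\simeq(S_0)^\wedge$ with $S_0$ projective over $R$, hence a direct summand of a free module $\bigoplus R$. Then $(M\otimes_RS)^\wedge$ is a direct summand of the completed direct sum $(\bigoplus M)^\wedge$, and compatibly for $N$. It therefore suffices to show that $(\bigoplus M)^\wedge\to(\bigoplus N)^\wedge$ is injective. Modulo $f_1$, this map is $(\bigoplus M/f_1)^\wedge_{f_2}\to(\bigoplus N/f_1)^\wedge_{f_2}$. The completion of a direct sum here consists of $f_2$-adically null sequences, so injectivity follows coordinatewise from the injectivity of $M/f_1\to N/f_1$. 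Then lift to injectivity modulo nothing using $f_1$-torsionfreeness of $(\bigoplus M)^\wedge$ and $f_1$-completeness, as in the criterion.

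\textbf{Main obstacle.} The step I expect to be delicate is justifying that completed direct sums of $(f_1,f_2)$-regular complete modules are again regular, and that their reduction modulo $f_1$ is the $f_2$-completed direct sum of the reductions. Both facts are needed to run the null-sequence description. I would handle this via the derived-completion formula $R\lim_n\mathrm{Kos}(\,\cdot\,;f_1^n,f_2^n)$, as in the proof of \Cref{lem:complete_tensor_product_w_flat}.
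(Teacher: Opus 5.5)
Your proposal is correct. It uses the same basic tools as the paper: reduction modulo the ideal, and in (2) the passage from a projective module to a free one and then into a product. The execution differs in both parts.

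In (1), the paper proves injectivity of $M/(f_1,f_2)^nM\to M/(f_1,f_2)^nM\otimes_{R/(f_1,f_2)^n}S/(f_1,f_2)^nS$ for every $n$ and takes the inverse limit. You use only the level-one statement and lift it by separatedness and torsionfreeness, one variable at a time. Your route uses complete injectivity exactly as defined, which is a level-one condition. The levelwise argument, in the case $M$ completely flat, needs injectivity of $R/(f_1,f_2)^n\to S/(f_1,f_2)^n$ for all $n$, which takes a small d\'evissage. The price of your route is that you must know $(M\otimes_RS)^\wedge$ is again $(f_1,f_2)$-regular. In the case $M$ completely flat, this uses that $S$ itself is Koszul regular; that is built into the paper's definition of complete injectivity, since the derived reduction of $S$ is required to be a module.

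In (2), the paper reduces mod $f_1$ and identifies the kernel with $\lim_n\bigl(C[f_2^n]\otimes_{\overline R}P\bigr)$, where $C=\coker(M/f_1M\to N/f_1N)$. It then kills this by embedding into $\prod\lim_n C[f_2^n]=0$. You instead describe completed free modules as null sequences inside a product and argue coordinatewise. This is more elementary, and it actually gives more. Since $M=\lim_n M/(f_1^n,f_2^n)M$, the module $(\bigoplus_J M)^\wedge=\lim_n\bigoplus_J M/(f_1^n,f_2^n)M$ embeds naturally into $\prod_J M$. So if you run your argument $(f_1,f_2)$-adically instead of modulo $f_1$, plain injectivity of $M\to N$ already suffices when $S$ is completely projective.

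One small correction concerns the one-variable lifting step, as you phrase it in the criterion and at the end of (2). The torsionfreeness you need is that of the \emph{target}, here $(\bigoplus_J N)^\wedge$, together with $f_1$-adic separatedness of the source. If $x=f_1y$ lies in the kernel, you must conclude $\phi(y)=0$ from $f_1\phi(y)=0$. Torsionfreeness of $(\bigoplus_J M)^\wedge$ alone would not suffice: think of $\mathbb{Z}_p\to\mathbb{Z}_p/p$. Since $N$ is also $(f_1,f_2)$-regular, the argument goes through unchanged.
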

\begin{proof}
    For the first part, it suffices to check that for each $n\in \mathbb{N}$, the linearization map 
    \[
    M/(f_1,f_2)^nM \to M/(f_1,f_2)^nM\otimes_{R/(f_1,f_2)^nR} S/(f_1,f_2)^nS
    \] is injective:
    this is clear if $M$ is completely flat since $R\to S$ is completely injective, so we assume $S/(f_1,f_2)^nS$ is faithfully flat over $R/(f_1,f_2)^nR$ for now.
    Then to check its injectivity, by the faithfully flatness it suffices to check it for the base change along $R/(f_1,f_2)^nR\to S/(f_1,f_2)^nS$, which admits a canonical section via the multiplication map of the two identical factors of $S/(f_1,f_2)^nS$.
    Hence by taking the inverse limit, we get the injectivity of $M\to (M\otimes_R S)^\wedge_{(f_1,f_2)}$.
    
	For the second part, we let $\overline{R}$ denote the mod $f_1$ reduction of $R$ and similarly for other objects.
	We let $C$ be the cokernel of the canonical injection $\overline{M} \to \overline{N}$, which is a $f_2$-complete module and by assumption has $\lim_{n\in \mathbb{N}} C[f_2^n]=0.$
	To show the claim, since both objects are derived $f_1$-complete, it suffices to check the injectivity of their mod $f_1$-reduction, namely the map
	\[
	( \overline{M}\otimes^L_{\overline{R}} \overline{S} )^\wedge_{f_2} \longrightarrow ( \overline{N}\otimes^L_R S)^\wedge_{f_2},
	\]
	which by the long exact sequence of the tensor product is equivalent to the vanishing of $\lim_{n\in \mathbb{N}} (C[f_2^n]\otimes^L_{\overline{R}} \overline{S})$.
	Notice that by the assumption, the reduction $\overline{S}$ is the $f_2$-completion of a projective module $P$ over $\overline{R}$, where we can embed $P$ into a free module $\bigoplus_i \overline{R}$.
	So we get
	\[
	\lim_{n\in \mathbb{N}} (C[f_2^n]\otimes^L_{\overline{R}} \overline{S}) =  \lim_{n\in \mathbb{N}} (C[f_2^n]\otimes_{\overline{R}}  P) \hookrightarrow \lim_{n\in \mathbb{N}} (\bigoplus_i C[f_2^n]) \subset \lim_{n\in \mathbb{N}} \prod_i C[f_2^n].
	\]
	So by switching the order of limits in the last term above, we see it is zero, which finishes the proof.
\end{proof}
Here we note that same arguments also show the following simpler case.
\begin{lemma}[Injectivity of complete base change, one variable]
\label{lem:inj_and_completely_proj_mod_one_variable}
Let $R$ be a $f$-complete $f$-torsionfree ring for an element $f\in R$, and let $R\to S$ be a completely injective map of $f$-complete rings.
Let $M\to N$ be a map of $f$-complete and $f$-torsionfree modules.
\begin{enumerate}
\item Assume either $M$ is completely flat, or $S$ is completely faithfully flat over $R$. The complete linearization map $M\to (M\otimes_R S)^\wedge_f$ is injective.
\item Assume $S$ is completely projective over $R$ and $M\to N$ is injective. The complete base change $(M\otimes_R S)^\wedge_f \to (N\otimes_R S)^\wedge_f$ is injective.
\end{enumerate}
\end{lemma}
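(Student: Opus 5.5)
The plan is to run the proof of \Cref{lem:inj_and_completely_proj_mod} again with only one element $f$, so that the second variable disappears. Since $R$, $M$ and $N$ are $f$-torsionfree, derived and classical reductions mod $f^n$ agree for them.

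For part (1), I will show that for each $n\in\mathbb{N}$ the map
\[
M/f^nM \longrightarrow M/f^nM\otimes_{R/f^nR} S/f^nS
\]
is injective. Passing to the inverse limit then gives injectivity of $M\to (M\otimes_R S)^\wedge_f$, because the completed tensor product is computed as the limit of these reductions and $M=\lim_n M/f^nM$.

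\begin{itemize}
\item If $M$ is completely flat, then $M/f^nM$ is flat over $R/f^nR$. By a dévissage on $n$ using $f$-torsionfreeness of $R$ and $S$, complete injectivity of $R\to S$ gives injectivity of $R/f^n\to S/f^n$. Tensoring this injection with the flat module $M/f^nM$ gives the claim.
\item If $S$ is completely faithfully flat, I will check injectivity after base change along $R/f^n\to S/f^n$. After that base change, the map acquires a section from the multiplication map $S/f^n\otimes S/f^n\to S/f^n$, so it is injective.
\end{itemize}

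For part (2), write $C=\coker(M\to N)$. Since $M$, $N$ and $R$ are $f$-torsionfree and $f$-complete, the long exact sequence for derived tensor products identifies the kernel of $(M\otimes_R S)^\wedge_f\to (N\otimes_R S)^\wedge_f$ with a limit term of the form
\[
\lim_n \mathrm{Tor}_1^{R/f^n}(C/f^n,\, S/f^n),
\]
equivalently a limit built from $C[f^n]\otimes_R S$. By complete projectivity, $S\simeq P^\wedge_f$ for a projective $R$-module $P$, and I can embed $P$ as a direct summand of a free module $\bigoplus_i R$. This embeds the relevant limit into $\lim_n\prod_i C[f^n]$, which equals $\prod_i\lim_n C[f^n]$.

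The main obstacle is showing $\lim_n C[f^n]=0$, where the transition maps are multiplication by $f$. An element of this limit is a compatible system $c_n$ with $c_n=fc_{n+1}$. Lifting the $c_n$ to $N$ and using that $N$ is $f$-torsionfree and $f$-complete, I expect to show such a system comes from an element of $M$ that is infinitely $f$-divisible, hence zero by $f$-completeness. This step needs care to check that derived completeness (rather than classical completeness) of $C$ is not required. If that check fails, I will instead use that the one-variable Tor terms vanish for torsionfree modules and take the limit directly, avoiding any completeness assumption on $C$.
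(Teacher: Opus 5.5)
Your proposal is correct and is essentially the paper's argument run with one element instead of two. Part (1) goes via the multiplication-map section, and part (2) identifies the kernel with $\lim_n (C[f^n]\otimes_R P)$, embeds $P$ in a free module, and swaps limits. Your elementwise proof that $\lim_n C[f^n]=0$ (the elements $f^ny_n\in M$ form a Cauchy sequence whose limit lies in $\bigcap_n f^nN=0$) is a fine substitute for the paper's appeal to the derived $f$-completeness of $C$.

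One slip: $\mathrm{Tor}_1^{R/f^n}(C/f^n,S/f^n)$ vanishes because $S/f^n$ is flat over $R/f^n$. The correct kernel term is $\lim_n \mathrm{Tor}_1^R(C,S/f^n)=\lim_n (C[f^n]\otimes_R P)$, so the fallback you sketch from vanishing Tor terms would not work.
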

It is tempting to hope that the injectivity can be preserved by an arbitrary completely faithfully flat base change, and even in the one-variable case we give a counterexample below.
Roughly speaking, it is related to the fact that the complete localization functor is not exact in the condensed world (cf. \cite[Exm.\ A.8, Rmk.\ A.9]{KP22}.)
\begin{example}
\label{eg:counter_eg_of_injectivity_preserved_by_bc}
Let $R=\mathbb{Z}_p\langle x \rangle$, let $f=p$, and let $S$ be the ring $R\times \mathbb{Z}_p\langle x^{\pm1} \rangle$.
Consider the map of $R$-modules $M_1=(\oplus_{i \in \mathbb{N}} R)^\wedge_p \to M_2 = (\oplus_{i \in \mathbb{N}} R)^\wedge_p$ that sends $(a_i)$ onto $(pa_i-xa_{i+1})$.
Then the kernel of the complete base change consists of all the elements $(0, (a_i))$ such that $a_i=(\frac{p}{x})^ia_0$ and $a_0\in S$.
\end{example}

We now give a projection formula on the continuous group invariant, for the complete tensor product with a flat module.
\begin{theorem}[Projection formula]
	\label{thm:projection_formula}
	Let $R$ be a $p$-complete $p$-torsionfree ring, let $G$ be a pro-finite group that acts on $R$ trivially.
	Let $M, N$ be two $p$-complete $p$-torsionfree continuous $R[G]$-modules such that the $G$-action on $M$ is trivial.
	Assume either of the following conditions:
	\begin{enumerate}[label=\upshape{(\alph*)}]
		\item\label{thm:projection_formula_H1} $M$ is $p$-completely flat over $R$ and $\mathrm{H}^{1}_\cont(G,N)$ has bounded $p^\infty$-torsion; or
		\item\label{thm:projection_formula_projective} $M$ is completely projective over $R$.
	\end{enumerate}
	Then we have natural isomorphisms
	\[
	\mathrm{H}^0_\cont(G, (M\otimes^L_R N)^\wedge_p) \simeq \bigl( M\otimes^L_R \mathrm{H}^0_\cont(G,N) \bigr)^\wedge_p \simeq \lim_n \bigl(M\otimes_R \mathrm{H}^0_\cont(G,N)\bigr) /p^n.
	\]
\end{theorem}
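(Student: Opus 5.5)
The plan is to first identify the complete tensor products with classical ones, then compute the $G$-invariants modulo $p^n$, and finally pass to the limit.

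\emph{Discreteness.} By \Cref{lem:complete_tensor_product_w_flat}, applied with $d=1$ and $f_1=p$, the derived complete tensor product $(M\otimes^L_R N)^\wedge_p$ is discrete and equals $\lim_n (M\otimes_R N)/p^n$. Here $M$ is $p$-completely flat in both cases (a completely projective module is completely flat), and $N$ is $p$-torsionfree. The same lemma applies to $\mathrm{H}^0_\cont(G,N)\subseteq N$, which is $p$-complete and $p$-torsionfree. This gives the second isomorphism of the statement.

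\emph{Invariants of the complete tensor product.} Since continuous invariants commute with limits, we have
\[
\mathrm{H}^0_\cont\bigl(G,(M\otimes^L_R N)^\wedge_p\bigr)=\lim_n \mathrm{H}^0_\cont\bigl(G, M/p^n\otimes_{R/p^n} N/p^n\bigr).
\]
At each finite level, $M/p^n$ is flat over $R/p^n$ with trivial $G$-action. Writing it by Lazard as a filtered colimit of finite free modules, and using that continuous invariants of discrete $p^n$-torsion modules commute with filtered colimits and finite sums, we get
\[
\mathrm{H}^0\bigl(G,M/p^n\otimes N/p^n\bigr)\simeq M/p^n\otimes_{R/p^n}\mathrm{H}^0_\cont(G,N/p^n).
\]
Continuity needs care: $N/p^n$ should be viewed with its discrete (or condensed) topology, and I would use the continuous cochain complex, which commutes with these operations.

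\emph{Comparison modulo $p^n$.} The universal coefficient sequence reads
\[
0\to \mathrm{H}^0_\cont(G,N)/p^n\to \mathrm{H}^0_\cont(G,N/p^n)\to \mathrm{H}^1_\cont(G,N)[p^n]\to 0.
\]
Tensoring with the flat module $M/p^n$ gives exact sequences
\[
0\to M\otimes \mathrm{H}^0/p^n \to (\text{level-}n\text{ invariants}) \to M/p^n\otimes \mathrm{H}^1[p^n]\to 0.
\]
Taking $\lim_n$, the left terms have vanishing $R^1\lim$ by \Cref{lem:vanishing_of_certain_limit_of_group_coh}, since they are reductions of the $p$-torsionfree complete module. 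So it remains to show
\[
\lim_n \bigl(M/p^n\otimes_{R/p^n}\mathrm{H}^1_\cont(G,N)[p^n]\bigr)=0.
\]
The transition maps here are the maps $\mathrm{H}^1[p^{n+1}]\to \mathrm{H}^1[p^n]$ given by multiplication by $p$.

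\emph{The main obstacle is this final vanishing.} In case \ref{thm:projection_formula_H1}, the torsion $\mathrm{H}^1[p^\infty]$ is killed by some $p^c$. Then $\mathrm{H}^1[p^n]=\mathrm{H}^1[p^c]$ for $n\ge c$, and any composite of $c$ transition maps is multiplication by $p^c$, which is zero. The pro-system is therefore pro-zero, and the limit vanishes.

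In case \ref{thm:projection_formula_projective}, write $M=(M_0)^\wedge_p$ and embed $M_0$ as a summand of a free module $\bigoplus_i R$. Then $M/p^n\otimes \mathrm{H}^1[p^n]$ is a summand of $\bigoplus_i \mathrm{H}^1[p^n]$, compatibly in $n$. The limit therefore injects into $\lim_n\prod_i \mathrm{H}^1[p^n]=\prod_i\lim_n \mathrm{H}^1[p^n]$, which is $0$ by \Cref{lem:vanishing_of_certain_limit_of_group_coh}. This is the same trick as in \Cref{lem:inj_and_completely_proj_mod}.

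Combining the two vanishings gives
\[
\lim_n \bigl(M\otimes \mathrm{H}^0_\cont(G,N)\bigr)/p^n \xrightarrow{\sim} \mathrm{H}^0_\cont\bigl(G,(M\otimes^L_R N)^\wedge_p\bigr),
\]
and the isomorphism is natural.
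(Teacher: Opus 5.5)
Your proposal is correct and follows essentially the same route as the paper: discreteness and the second isomorphism via \Cref{lem:complete_tensor_product_w_flat}, then commuting invariants with $\lim_n$, then Lazard at each finite level, then the universal coefficient sequence tensored with the flat $M/p^n$. It ends, as the paper does, with pro-zero-ness in case (a), and in case (b) with the embedding into $\prod_i \mathrm{H}^1_\cont(G,N)[p^n]$ together with $\lim_n \mathrm{H}^1_\cont(G,N)[p^n]=0$. The only deviation is your $R^1\lim$ remark on the left-hand system, which is harmless but unnecessary: vanishing of $\lim$ of the right-hand system already forces the limits of the left and middle systems to agree.
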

Here we mention that when $G$ is profinite, the statement and the proof can be adapted to the higher cohomology, once formulated in an appropriate derived category of continuous $G$-representations.
\begin{proof}
	We first notice that the second isomorphism in the statement follows from \Cref{lem:complete_tensor_product_w_flat}, since $\mathrm{H}^0_\cont(G,N)$ is a derived $p$-complete submodule of $N$, where the latter is $p$-torsionfree.
	It then remains to check that there is a natural isomorphism between the first and the last term.	
	By assumptions and \Cref{lem:complete_tensor_product_w_flat}, the complete derived tensor product $(M\otimes^L_R N)^\wedge_p$ is the  inverse limit of the discrete module $(M\otimes_R N)/p^n$.
	So by the fact that the inverse limit commutes with the group invariant, we get 
	\begin{equation}
		\label{eq:thm:projection_formula_1}
		\begin{multlined}
			\mathrm{H}^0_\cont(G, (M\otimes^L_R N)^\wedge_p)  \simeq \mathrm{H}^0_\cont\bigl(G, \lim_n (M/p^n\otimes_{R/p^n} N/p^n) \bigr) \\
			\simeq \lim_n \mathrm{H}^0_\cont (G, M/p^n\otimes_{R/p^n} N/p^n ),
		\end{multlined}
	\end{equation}
	where the latter naturally admits a map from $\lim_n \bigl(  M/p^n \otimes_{R/p^n}  \mathrm{H}^0_\cont(G,N/p^n) \bigr)$.
	We then claim that the map is an isomorphism: to see this, it suffices to check that for each $n\in \mathbb{N}$, the canonical map below is an isomorphism
	\begin{equation}
		\label{eq:thm:projection_formula_colimit}
		M/p^n \otimes_{R/p^n}  \mathrm{H}^0_\cont(G,N/p^n) \longrightarrow \mathrm{H}^0_\cont(G, M/p^n\otimes_{R/p^n} N/p^n).
	\end{equation}
	Since $M$ is $p$-completely flat over $R$ in both cases, the reduction $M/p^nM$ is flat over $R/p^nR$.
	In particular, by Lazard's theorem \cite[\href{https://stacks.math.columbia.edu/tag/058G}{Tag 058G}]{stacks-project}, we know $M/p^nM$ is isomorphic to a filtered colimit $\colim_k F_k$, where each $F_k$ is a finite free $R/p^nR$-module.
	So thanks to the fact that the filtered colimit commutes with the group invariant, we get natural isomorphisms that are compatible with (\ref{eq:thm:projection_formula_colimit})
	\begin{align*}
		M/p^n \otimes_{R/p^n}  \mathrm{H}^0_\cont(G,N/p^n) & \simeq \colim_k F_k \otimes_{R/p^n}  \mathrm{H}^0_\cont(G,N/p^n) \\
		&\simeq \colim_k  \mathrm{H}^0_\cont(G,F_k\otimes N/p^n) \\
		& \simeq \mathrm{H}^0_\cont \bigl(G, \colim_k (F_k\otimes_{R/p^n} N/p^n) \bigr) \\
		&\simeq \mathrm{H}^0_\cont (G, M/p^n\otimes_{R/p^n} N/p^n),
	\end{align*}
	where the second isomorphism uses the fact that the group invariant commutes with finite direct sums.
	So combine (\ref{eq:thm:projection_formula_1}) and the limit of (\ref{eq:thm:projection_formula_colimit}), we get
	\begin{equation}
		\label{eq:thm:projection_formula_2}
		\mathrm{H}^0_\cont(G, (M\otimes^L_R N)^\wedge_p)  \simeq \lim_n\bigl(  M/p^n \otimes_{R/p^n}  \mathrm{H}^0_\cont(G,N/p^n) \bigr).
	\end{equation}
	
	To obtain the formulae in the statement that involves $\mathrm{H}^0_\cont(G,N)/p^n$, we recall that by the universal coefficient theorem and the flatness of $M/p^n$ over $R/p^n$, we have a short exact sequence of inverse systems
	\begin{equation}
		\label{eq:projection_formula_univ_coef}
		0 \longrightarrow \{M/p^n\otimes \mathrm{H}^0_\cont(G,N)/p^n \}_n \longrightarrow \{M/p^n \otimes  \mathrm{H}^0_\cont(G,N/p^n)\}_n \longrightarrow \{ M/p^n \otimes \mathrm{H}^1_\cont(G,N)[p^n] \}_n \longrightarrow 0.
	\end{equation}
	Now we discuss the two scenarios as in the statement.
	If $M$ and $N$ satisfy the assumptions in \ref{thm:projection_formula_H1}, then the fourth term of (\ref{eq:projection_formula_univ_coef}) is equal to $\{ M/p^n \otimes_{R/p^n} \mathrm{H}^1_\cont(G,N)[p^m] \}_n$ for a fixed integer $m$.
	Since the transition morphism of $\{ M/p^n \otimes_{R/p^n} \mathrm{H}^1_\cont(G,N)[p^m] \}_n$ is the multiplication-by-$p$ map, the inverse system is pro-zero.
	Hence we get the isomorphism
	\[
	\lim_n  \bigl(M/p^n\otimes \mathrm{H}^0_\cont(G,N)/p^n \bigr) \simeq \lim_n  \bigl( M/p^n \otimes_{R/p^n}  \mathrm{H}^0_\cont(G,N/p^n) \bigr),
	\]
	which combining with (\ref{eq:thm:projection_formula_2}) finishes the proof.
	
	Next, we assume that $M$ is the $p$-completion of a projective module $M_0$ as in \ref{thm:projection_formula_projective}.
	We let $\bigoplus_i R$ be a free module that admits a split injection from $M_0$.
	Then there are natural injections of inverse systems
	\begin{align*}
			\{M/p^nM \otimes \mathrm{H}^1_\cont(G,N)[p^n]\}_n & = \{M_0/p^nM_0\otimes_{R/p^nR} \mathrm{H}^1_\cont(G,N)[p^n] )\}_n \\
			& \hookrightarrow	\{\bigoplus_i R/p^nR \otimes_{R/p^nR} \mathrm{H}^1_\cont(G,N)[p^n] )\} \\
		& \hookrightarrow \{\prod_i \mathrm{H}^1_\cont(G,N)[p^n] \},
	\end{align*}
	On the other hand, since the inverse limit commutes with the product, we have
	\begin{equation}
		\label{eq:projection_formula_limit}
		\lim_n \prod_i (\mathrm{H}^1_\cont(G,N)[p^n] ) \simeq \prod_i \lim_n  (\mathrm{H}^1_\cont(G,N)[p^n] ),
	\end{equation}
	where $\lim_n  (\mathrm{H}^1_\cont(G,N)[p^n]) =0$ thanks to the assumption that $N$ is $p$-complete and $p$-torsionfree and \Cref{lem:vanishing_of_certain_limit_of_group_coh}.
	Hence the limit of $\{M/p^n \otimes \mathrm{H}^1_\cont(G,N)[p^n]\}_n$, which is contained in the first limit of (\ref{eq:projection_formula_limit}), also vanishes.
\end{proof}
\begin{remark}[Analogue for pro-\'etale cohomology]
By translating pro-\'etale cohomology into continuous group cohomology, we naturally obtain an analogue of \Cref{thm:projection_formula} but for the global section of a $p$-complete pro-\'etale sheaf $\mathcal{F}$ on a rigid space $X_\eta$.
This is similar to the classical relationship between \'etale cohomology and group cohomology, and we will assume this fact without additional explanations.
\end{remark}


\subsection{Finiteness of arithmetic pro-\'etale cohomology}
\label{sub:finiteness}
In this subsection, we recollect the finiteness of rational arithmetic pro-\'etale cohomology using the decompletion result of Liu--Zhu \cite{LZ17}, and prove the finiteness of the integral arithmetic pro-\'etale cohomology, up to bounded torsions.

Recall from \cite[\S 6]{Sch13} and \cite[\S 2.1]{LZ17} that for a smooth rigid space $X_\eta$, there is a period sheaf $\mathcal{O}\mathbb{C}=\gr^0 \OBdR$ over $X_{\eta,\pe}$.
We let $\mathbb{L}^\an_{X_{\eta,\pe}/X_\eta}$ be the analytic cotangent complex of the rational complete structure sheaf $\widehat{\mathcal{O}}$ over $\mathcal{O}_{X_\eta}$, which is defined by inverting $p$ at the $p$-complete cotangent complex $\mathbb{L}_{\widehat{\mathcal{O}}^+/\mathcal{O}_X^+}$ (\cite[Rmk.\ 4.5]{GL21}); similarly for $\mathbb{L}^\an_{X_{\eta,\pe}/K}$.
Then by \cite[Def.\ 6.8.(iv)]{Sch13} and \cite[Thm.\ 4.21]{GL21}, there is a natural formula
\begin{equation}
	\label{eq:formula_of_OC}
	\mathcal{O}\mathbb{C} \simeq \colim_{n\in \mathbb{N}} \bigl( \wedge^i\mathbb{L}^\an_{X_{\eta,\pe}/X_\eta}(-i)[-i] \bigr),
\end{equation}
where the transition morphism $\widehat{\mathcal{O}}\to \mathbb{L}^\an_{X_{\eta,\pe}/X_\eta}(-1)[-1]$ and its wedge higher powers are induced by the Faltings extension, or more conceptually the canonical map
\[
\mathbb{L}^\an_{X_{\eta,\pe}/K} \longrightarrow \mathbb{L}^\an_{X_{\eta,\pe}/X_\eta}.
\]
In particular, the pro-\'etale sheaf $\wedge^i\mathbb{L}^\an_{X_{\eta,\pe}/X_\eta}(-i)[-i]$ is naturally contained in $\mathcal{O}\mathbb{C}$.
\begin{theorem}[Liu--Zhu]
	\label{thm:LZ_finiteness}
	Let $X=\Spf(R)$ be an affine topologically of finite type $p$-adic formal scheme over $\mathcal{O}_K$, and assume the generic fiber $X_\eta$ is smooth over $K$ of dimension $d$.
	Let $T$ be a $\mathbb{Z}_p$-local system over $X_\eta$.
	The cohomology $R\Gamma_\proet(X_\eta, T\otimes_{\mathbb{Z}_p} \widehat{\mathcal{O}}(j))$ lives in $D^{[0,d+1]}_{\coh}(R[1/p])$, and vanishes if $|j|\gg 0$.
\end{theorem}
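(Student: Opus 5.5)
This is a result of Liu--Zhu, so the plan is to reduce to their relative decompletion and finiteness theorem, adding only the bookkeeping needed for an affine formal model with smooth generic fiber.

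First, both claims are local on $X_\eta$ for the analytic topology. Coherence of the cohomology sheaves of $R\nu_*(T\otimes\widehat{\mathcal{O}}(j))$ on the affinoid $X_\eta=\Spa(R[1/p],R)$, together with Tate acyclicity (Kiehl), gives back the statement about global sections over $R[1/p]$. Vanishing for $|j|\gg0$ is also local, after passing to a finite cover by small affinoids. So we may assume $X_\eta$ is a small smooth affinoid admitting a toric chart $X_\eta\to \mathbb{T}^d_K$ that is étale. We may also assume $T/pT$ is trivialized after a finite Galois extension; passing to that extension only costs a Hochschild--Serre spectral sequence for a finite group, which preserves coherence and vanishing after inverting $p$.

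Second, in the chart we use the perfectoid tower $X_{\eta,\infty}\to X_\eta$ obtained by extracting all $p$-power roots of the coordinates $t_i$ and adjoining all $p$-power roots of unity to $K$. Its Galois group is $\Gamma=\mathbb{Z}_p(1)^d\rtimes \Gal(K(\mu_{p^\infty})/K)$, with the geometric part normal. Almost purity (Faltings--Scholze) identifies $R\Gamma_\proet(X_\eta,T\otimes\widehat{\mathcal{O}}(j))$ with the continuous cohomology $R\Gamma_\cont(\Gamma,M(j))$, where $M$ is the finite projective $\widehat{R}_\infty[1/p]$-module attached to $T$.

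Third, we apply the Sen--Tate style decompletion of Liu--Zhu (relative Sen theory). The module $M$ descends to a finite projective module $M_{\mathrm{fin}}$ over $R_{K_\infty}[1/p]$ for some finite level $K_\infty$. On $M_{\mathrm{fin}}$ the geometric part $\mathbb{Z}_p(1)^d$ acts through an operator that becomes unipotent after a finite extension, and the arithmetic part acts through a Sen operator $\Theta$ commuting with it up to the cyclotomic twist. The continuous cohomology of the complement of $M_{\mathrm{fin}}$ vanishes by Tate's normalized traces. The geometric cohomology of $M_{\mathrm{fin}}$ is then a Koszul complex of length $d$ on finite projective $R_{K_n}[1/p]$-modules, so it is coherent and lives in degrees $[0,d]$. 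Taking $\Gal(K_\infty/K)$-cohomology adds one more degree, giving the range $[0,d+1]$. Concretely, this is the cohomology of $\Theta+j$ acting on a coherent module over $R\otimes_K K_\infty[1/p]$, followed by a finite Galois descent, so the result is coherent. Since $\Theta$ satisfies a characteristic polynomial over $R[1/p]$ with finitely many eigenvalues (the generalized Sen weights), $\Theta+j$ is invertible once $|j|$ exceeds all of them, and the cohomology vanishes.

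The main obstacle is the third step: the relative decompletion with uniform control over $R[1/p]$ when $X$ itself is only topologically of finite type with smooth generic fiber. For this I would cite Liu--Zhu \cite[Thm.\ 2.1, Prop.\ 2.3]{LZ17} directly rather than reprove it. The part that needs the most care is checking that the Sen weights are bounded uniformly over the affinoid, which we get because the characteristic polynomial of $\Theta$ has coefficients in $R[1/p]$.
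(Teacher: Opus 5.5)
Your overall route is the paper's route: reduce to a small affinoid with a toric chart, use the Liu--Zhu decompletion to a finite projective module $M$ over $R_{K_m}[1/p]$ with an action of $\Gamma=\Gamma_{\mathrm{geom}}\rtimes\Gal(K_\infty/K)$, compute the $\Gamma_{\mathrm{geom}}$-cohomology by a Koszul complex in degrees $[0,d]$, gain one degree from $\Gal(K_\infty/K_m)\simeq\mathbb{Z}_p$, and descend along the finite group $\Gal(K_m/K)$.

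The coherence and the amplitude $[0,d+1]$ come out exactly as in the paper. Keep the level finite, though: your text drifts to ``$R\otimes_K K_\infty$'', and modules over the infinite level are not coherent over $R[1/p]$. Your local-to-global step, via coherence of $R^i\nu_*$ and Kiehl, is if anything more careful than the paper's ``Zariski local''. The problem is the vanishing for $|j|\gg0$.

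You justify the uniform invertibility of $\Theta+j$ by the fact that the characteristic polynomial $P$ of $\Theta$ has coefficients in $R[1/p]$. That does not suffice. An integer $j$ being large in absolute value gives no $p$-adic information, so $P(-j)$ need not be a unit. For instance, on $R=\mathcal{O}_K\langle t\rangle$ the operator ``multiplication by $t$'' has $P(T)=T-t$. Then $P(-j)=-(t+j)$ vanishes at the $K$-point $t=-j$ of the disc for every $j\in\mathbb{Z}$, so this operator plus $j$ is never invertible.

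What is needed is that $P$ has constant coefficients, i.e.\ in $K$, or in a finite extension on each connected component. This is the constancy of generalized Hodge--Tate weights (Shimizu \cite{Shi18}, building on Liu--Zhu's arithmetic Sen operator). With that input the argument closes, but run it termwise on the Koszul complex rather than on its cohomology groups, which are only coherent, so ``eigenvalues'' there are not meaningful. Since $\Gamma_{\mathrm{geom}}\simeq\mathbb{Z}_p(1)^d$, the $i$-th term is $M\otimes\wedge^i\Hom(\Gamma_{\mathrm{geom}},\mathbb{Z}_p)(j)$. On it the Sen operator is $\Theta+j-i$, with finitely many constant eigenvalues. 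Hence for all but finitely many $j$ it is invertible on every term, and so is $\gamma-\id$ for a topological generator $\gamma$ of $\Gal(K_\infty/K_m)$ with $m$ large. The $\Gal(K_\infty/K_m)$-cohomology of the whole complex then vanishes.

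The paper's own proof handles this step tersely too: it asserts invertibility of $\gamma-\id$ on the twisted geometric cohomology from its finite generation over $R[1/p]$. It implicitly needs the same constancy input.
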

\begin{proof}
	The statement can be proved using the decompletion result in \cite{LZ17}, as we shall explain.
	We first notice that the statement is Zariski local with respect to $X_\eta$.
	So as in the setup of \cite[\S 2.3]{LZ17}, we may assume $X_\eta$ is small and thus admits a toric chart, and let $\Gamma=\Gamma_\mathrm{geom} \rtimes \Gal(K_\infty/K)$ be the Galois group associated to the induced pro-finite Galois cover of $X_\eta$, where $K_\infty/K$ is the ($p$-completed) cyclotomic cover.
	Then by \cite[Prop.\ 2.8 (and the paragraph above for notations), Lem.\ 3.10]{LZ17}, we know that there is a large enough subextension $K_m/K$ in $K_\infty$, together with a finite projective $(R_{K_m}\colonequals R\otimes_{\mathcal{O}_K} K_m)$-module $M$ that is equipped with a continuous $\Gamma$-action, such that 
	\[
	R\Gamma_\cont(\Gamma, M(j)) \simeq R\Gamma_\proet(X_\eta, T\otimes_{\mathbb{Z}_p} \widehat{\mathcal{O}}(j)),
	\]
	where $M(j)=M\otimes_{\mathbb{Z}_p} \mathbb{Z}_p(j)$ is the $j$-th Tate twist of $M$.
	Notice that by construction, since $\Gamma_\mathrm{geom}$ (resp. $\Gal(K_\infty/K_m)$) is a $p$-completed free abelian group of $d$ topological generators (resp. $1$ generator), the continuous group cohomology $R\Gamma_\cont(\Gamma_m, M(j))$ is computed by the Koszul complexes, namely
	\[
	R\Gamma_\cont(\Gamma_m, M(j)) \simeq \bigl[ \Kos_{M(j)}(\Gamma_\mathrm{geom}) \xrightarrow{\gamma - \id} \Kos_{M(j)}(\Gamma_\mathrm{geom}) \bigr],
	\]
	where $\gamma$ is a topological generator of $\Gal(K_\infty/K_m)$ and $\Gamma_m$ is the preimage of $\Gal(K_\infty/K_m)\subset \Gal(K_\infty/K)$ in $\Gamma$.
	In addition, by the finiteness of $M$, each term in the complex $\Kos_{M(j)}(\Gamma_\mathrm{geom})$ above is a finite $R[1/p]$-module, and the arrows in the complex are $R[1/p]$-linear.
	Hence $R\Gamma_\cont(\Gamma_m, M(j))$ lives in the category $D^{[0,d+1]}_{\coh}(R[1/p])$.
	Finally, by the Hochschild--Serre spectral sequence together with the fact that the $\Gal(K_m/K)$-invariant functor is exact on $\mathbb{Q}_p$-vector spaces, we see $R\Gamma_\proet(X_\eta, T\otimes_{\mathbb{Z}_p} \widehat{\mathcal{O}}(j))$ lives in the category $D^{[0,d+1]}_{\coh}(R[1/p])$ as well.
	
	To continue, as the geometric Galois cohomology commutes with the Tate twist, we have $\mathrm{H}^i(\Kos_{M(j)}(\Gamma_\mathrm{geom}))= \mathrm{H}^i(\Kos_{M}(\Gamma_\mathrm{geom}))(j)$, as continuous representations of $\Gal(K_\infty/K_m)$.
	For each element $x\in \mathrm{H}^i(\Kos_{M}(\Gamma_\mathrm{geom}))$, we let $x^j$ be the associated element in the $j$-th Tate twist.
	Then we have $\gamma(x^j)=p^{jn}\cdot \gamma(x)^j$, where $n$ is a constant positive integer that depends on $K_m$.
	In particular, since $\mathrm{H}^i(\Kos_{M}(\Gamma_\mathrm{geom}))$ is finitely generated over $R[1/p]$, the action of $\gamma-\id$ on $\mathrm{H}^i(\Kos_{M}(\Gamma_\mathrm{geom}))(j)$ is invertible when $|j|$ is sufficiently large.
	As a consequence, the arithmetic Galois cohomology $R\Gamma_\cont\bigl(\Gal(K_\infty/K_m), \mathrm{H}^i(\Kos_{M}(\Gamma_\mathrm{geom}))(j)\bigr)$ vanishes when $|j|$ is sufficiently large.
	Hence by the boundedness of $\Kos_{M}(\Gamma_\mathrm{geom}))(j)$, we see the entire cohomology $R\Gamma_\proet(X_\eta, T\otimes_{\mathbb{Z}_p} \widehat{\mathcal{O}}(j))$ vanishes for $|j|\gg 0$.
\end{proof}
\begin{proposition}[Rational finiteness]
	\label{prop:LZ_finiteness}
	Let $X=\Spf(R)$ be an affine topologically of finite type $p$-adic formal scheme over $\mathcal{O}_K$, and assume the generic fiber $X_\eta$ is smooth over $K$ of dimension $d$.
	Let $T$ be a $\mathbb{Z}_p$-local system $T$ over $X_\eta$.
	\begin{enumerate}[label=\upshape{(\roman*)}]
		\item\label{prop:LZ_finiteness_general} For each $i\in \mathbb{N}$, the cohomology $R\Gamma_\proet(X_\eta, T\otimes_{\mathbb{Z}_p} \wedge^i \mathbb{L}^\an_{X_{\eta,\pe}/X_\eta}[-i])$ lives in $D^{[0,d+1]}_{\coh}(R[1/p])$.
		\item\label{prop:LZ_finiteness_H0} The zero-th cohomology $\mathrm{H}^0_\proet(X_\eta,  T\otimes_{\mathbb{Z}_p}\wedge^i \mathbb{L}^\an_{X_{\eta,\pe}/X_\eta}[-i])$ is contained in $\mathrm{H}^0_\proet(X_\eta, T\otimes_{\mathbb{Z}_p} \widehat{\mathcal{O}}_X(i))$ and vanishes when $i\gg 0$. 
		If $T$ is a constant local system, then the zero-th cohomology vanishes for $i>0$.
	\end{enumerate}
\end{proposition}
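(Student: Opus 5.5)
We deduce both parts from \Cref{thm:LZ_finiteness} via the filtered description (\ref{eq:formula_of_OC}) of $\mathcal{O}\mathbb{C}$ and the Faltings extension. The argument is Zariski local on $X_\eta$, so as in the proof of \Cref{thm:LZ_finiteness} we assume $X_\eta$ is small with a toric chart. Then $\mathbb{L}^\an_{X_{\eta,\pe}/X_\eta}[-1]$ is an extension of the finite free $\widehat{\mathcal{O}}$-module $\Omega^1_{X_\eta}\otimes\widehat{\mathcal{O}}(-1)$ by $\widehat{\mathcal{O}}$. This follows from the rational version of the fiber sequence (\ref{eq:Faltings_extension}), using \Cref{prop:relative_prismatic_coh_of_perfectoid}.\ref{prop:relative_prismatic_coh_of_perfectoid_cotangent_complex} on affinoid perfectoids. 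Twisting by $(1)$ gives the usual Faltings extension
\[
0\to \widehat{\mathcal{O}}(1)\to \mathbb{L}^\an_{X_{\eta,\pe}/X_\eta}[-1]\to \Omega^1_{X_\eta}\otimes_{\mathcal{O}_{X_\eta}}\widehat{\mathcal{O}}\to 0 .
\]
Taking $\wedge^i$ yields a finite increasing filtration on $\wedge^i\mathbb{L}^\an_{X_{\eta,\pe}/X_\eta}[-i]$ with graded pieces $\widehat{\mathcal{O}}(a)\otimes\Omega^{i-a}_{X_\eta}$ for $a\in\{0,1\}$; since the sub is a line, only $a=0,1$ occur. Here $\Omega^{i-a}_{X_\eta}$ is finite projective over $R[1/p]$, and is free after shrinking.

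For part \ref{prop:LZ_finiteness_general}, we tensor the filtration with $T$. Each graded piece then has cohomology $R\Gamma_\proet(X_\eta,T\otimes\widehat{\mathcal{O}}(a))\otimes_{R[1/p]}\Omega^{i-a}$, by a trivial projection formula for finite free modules. By \Cref{thm:LZ_finiteness} these lie in $D^{[0,d+1]}_\coh(R[1/p])$. Since $D^{[0,d+1]}_\coh$ is closed under extensions, the claim follows by dévissage along the finite filtration.

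For part \ref{prop:LZ_finiteness_H0}, we use the twisted normalization in which $\wedge^i\mathbb{L}^\an[-i](-i)$ sits inside $\mathcal{O}\mathbb{C}$ as a filtration step, with bottom piece $\widehat{\mathcal{O}}$. The statement as written concerns $\wedge^i\mathbb{L}^\an[-i]$ itself, so we twist back by $(i)$ and take the bottom step $\widehat{\mathcal{O}}(i)$. The containment is then the claim that $\mathrm{H}^0$ of the successive quotients contributes nothing beyond $\mathrm{H}^0(T\otimes\widehat{\mathcal{O}}(i))$. We expect this to be the main obstacle, and the first thing we would try is the following. A section of the top piece $T\otimes\mathrm{Sym}$-graded part $\otimes\Omega^{\bullet}$ of $\mathcal{O}\mathbb{C}$ that lifts to $\mathcal{O}\mathbb{C}$ must have vanishing image under the boundary map, which is the Higgs field. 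The geometric Sen operator then forces the section into the bottom piece, via the Liu--Zhu decompletion, where $\Gamma_\mathrm{geom}$ acts unipotently on $M$ with logarithm the Higgs field.

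Concretely, we would compute $\mathrm{H}^0$ as $\Gamma$-invariants of $M\otimes\mathrm{Sym}^{\le i}(\text{free module on }\Omega^1(-1))$. The generators $\gamma_k-1$ of $\Gamma_\mathrm{geom}$ act on the polynomial variables $V_k$ by $V_k\mapsto V_k+1$. An invariant of top degree in the $V_k$ would then give a nonzero element of $M$ killed by an operator that is invertible up to nilpotents, and a leading-term comparison pushes it down in degree. Vanishing for $i\gg0$ follows from the containment together with the vanishing for $|j|\gg0$ in \Cref{thm:LZ_finiteness}. For constant $T$, we would use $\mathrm{H}^0_\proet(X_\eta,\widehat{\mathcal{O}}(i))=0$ for $i>0$. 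This is a Tate--Sen computation: the cyclotomic character has infinite image on $\Gal(K_\infty/K_m)$, so $\gamma-1$ is invertible on the $\Gamma_\mathrm{geom}$-invariants $R_{K_m}(i)$.
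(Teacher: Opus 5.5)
\paragraph{Part (ii).} This part has a genuine gap at the leading-term step, and the gap cannot be closed.

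On Liu--Zhu's module $M$ the generators $\gamma_k$ of $\Gamma_{\mathrm{geom}}$ act unipotently. So $\gamma_k-1$ is nilpotent, not ``invertible up to nilpotents'', and the top-degree condition $(\gamma_k-1)m_\alpha=0$ is no obstruction. Put $\theta_k=\log\gamma_k$, which is nilpotent. Conjugating by $\exp(-\sum_kV_k\theta_k)$ turns the $\gamma_k$-action on $M[V_1,\ldots,V_d]$ into the pure shift $V_k\mapsto V_k+1$. Hence the $\Gamma_{\mathrm{geom}}$-invariants are exactly the elements $\exp(-\sum_kV_k\theta_k)m$ with $m\in M$, and these have positive $V$-degree whenever $\theta m\neq0$. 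Since $V_k$ and $\theta_k$ carry opposite Tate twists, the map $m\mapsto\exp(-\sum_kV_k\theta_k)m$ commutes with the arithmetic action. So passing to arithmetic invariants does not remove these elements either.

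In fact the containment is false for non-constant $T$. Take $X=\Spf(\mathcal{O}_K\langle t^{\pm1}\rangle)$ and let $\mathcal{K}$ be the Kummer local system of $t$, an extension of $\mathbb{Z}_p$ by $\mathbb{Z}_p(1)$. Up to sign, $\mathcal{K}\otimes\widehat{\mathcal{O}}$ is the pullback of the Faltings extension $\mathbb{L}^\an_{X_{\eta,\pe}/X_\eta}[-1]$ along $d\log t$. The resulting map is a nonzero element of $\mathrm{H}^0_\proet(X_\eta,\mathcal{K}^\vee\otimes\mathbb{L}^\an_{X_{\eta,\pe}/X_\eta}[-1])$.

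On the other hand, $\mathrm{H}^0_\proet(X_\eta,\mathcal{K}^\vee\otimes\widehat{\mathcal{O}}(1))=\Hom(\mathcal{K}\otimes\widehat{\mathcal{O}},\widehat{\mathcal{O}}(1))=0$. Since $\mathrm{H}^0_\proet(X_\eta,\widehat{\mathcal{O}}(1))=0$, such a map is determined by its restriction $f\in R[1/p]$ to $\widehat{\mathcal{O}}(1)$. It extends only if $f\cdot d\log t=0$ in $\mathrm{H}^1_\proet(X_\eta,\widehat{\mathcal{O}}(1))\simeq\Omega^1_{X_\eta}(X_\eta)$, which forces $f=0$.

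The paper does not argue as you do. It uses the inclusion into $T\otimes\mathcal{O}\mathbb{C}(i)$ from (\ref{eq:formula_of_OC}), together with an identification $\mathrm{H}^0_\proet(X_\eta,T\otimes\mathcal{O}\mathbb{C}(i))\simeq\mathrm{H}^0_\proet(X_\eta,T\otimes\widehat{\mathcal{O}}(i))$ quoted from \cite[Lem.\ 2.9]{LZ17}. That route runs into the same example, so the identification can only hold for special $T$, such as constant ones.

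\paragraph{What survives, and part (i).} What is used later is the vanishing for $i\gg0$ and the constant case. Both survive if you aim at $\mathcal{O}\mathbb{C}(i)$ instead of $\widehat{\mathcal{O}}(i)$. The computation above identifies $\mathrm{H}^0_\proet(X_\eta,T\otimes\mathcal{O}\mathbb{C}(i))$ with the invariants of $M(i)$ under the arithmetic part of $\Gamma$. The Tate--Sen argument in the proof of \Cref{thm:LZ_finiteness} kills these for $|i|\gg0$, and $\wedge^i\mathbb{L}^\an_{X_{\eta,\pe}/X_\eta}[-i]\subset\mathcal{O}\mathbb{C}(i)$ then gives the vanishing. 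A d\'evissage alone cannot give it: the top graded piece $\widehat{\mathcal{O}}\otimes\mathrm{Sym}^i\Omega^1_{X_\eta}$ already has nonzero $\mathrm{H}^0$ for $T=\mathbb{Z}_p$. For constant $T$ one has $\theta=0$, the only polynomials invariant under the shifts are constants, and your reduction to $\mathrm{H}^0_\proet(X_\eta,\widehat{\mathcal{O}}(i))=0$ is correct.

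Your part (i) follows the paper's route: Faltings extension, filtration on the wedge power, \Cref{thm:LZ_finiteness}, and d\'evissage. However, your graded pieces are wrong and contradict what you use in (ii). Since $\mathbb{L}^\an_{X_{\eta,\pe}/X_\eta}$ sits in cohomological degree $-1$, d\'ecalage gives $\wedge^i\mathbb{L}^\an_{X_{\eta,\pe}/X_\eta}[-i]\simeq\Gamma^i(\mathbb{L}^\an_{X_{\eta,\pe}/X_\eta}[-1])$. After inverting $p$ this is $\mathrm{Sym}^i$ of the Faltings extension, not the exterior power of a rank $d+1$ module. Its graded pieces are $\widehat{\mathcal{O}}(a)\otimes\mathrm{Sym}^{i-a}\Omega^1_{X_\eta}$ for $0\le a\le i$, with bottom piece $\widehat{\mathcal{O}}(i)$. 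Your first description of $\mathbb{L}^\an[-1]$ also has the twists off by one. The d\'evissage itself is unaffected, since \Cref{thm:LZ_finiteness} covers every twist.
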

\begin{proof}
	Part \ref{prop:LZ_finiteness_general} is a consequence of \Cref{thm:LZ_finiteness} and the Faltings extension (\cite[Thm.\ 4.9]{GL21}): namely the analytic cotangent complex $\mathbb{L}^\an_{X_{\eta,\pe}/X_\eta}$ fits into the fiber sequence
	\[
	\widehat{\mathcal{O}}(1)[1] \longrightarrow \mathbb{L}^\an_{X_{\eta,\pe}/X_\eta} \longrightarrow \widehat{\mathcal{O}}\otimes_{R[1/p]} \mathbb{L}^\an_{X_\eta/K}[1],
	\]
	where $\mathbb{L}^\an_{X_\eta/K}$ is a finite projective $R[1/p]$-module.
	So the statement follows by taking the associated long exact sequence of the wedge power and \Cref{thm:LZ_finiteness}.
	
	For \ref{prop:LZ_finiteness_H0}, we notice that by (\ref{eq:formula_of_OC}), the pro-\'etale sheaf $T\otimes_{\mathbb{Z}_p}\wedge^i \mathbb{L}^\an_{X_{\eta,\pe}/X_\eta}[-i]$ is naturally contained in $T\otimes_{\mathbb{Z}_p}\mathcal{O}\mathbb{C}(i)$.
	In addition, by \cite[Lem.\ 2.9]{LZ17} (see \cite[Prop.\ 2.8]{LZ17} for the notations), we know 
	\[
	\mathrm{H}^0_\proet(X_\eta, T\otimes_{\mathbb{Z}_p}\mathcal{O}\mathbb{C}(i)) \simeq \mathrm{H}^0_\proet(X_\eta, T\otimes_{\mathbb{Z}_p} \widehat{\mathcal{O}}(i)).
	\]
	Thus the vanishing of the zero-th cohomology of $T\otimes_{\mathbb{Z}_p}\wedge^i \mathbb{L}^\an_{X_{\eta,\pe}/X_\eta}[-i]$ follows from the aforementioned inclusion and \Cref{thm:LZ_finiteness}.
	For the constant local system, as mentioned above, the vanishing of the zero-th cohomology of $\wedge^i \mathbb{L}^\an_{X_{\eta,\pe}/X_\eta}[-i]$ for $i>0$ follows from that of $\widehat{\mathcal{O}}(i)$.
	The latter is a consequence of Tate's calculation of Galois cohomology (\cite{Tat67}) together with the fact that $\mathrm{H}^0_\proet(X_{K_{\infty}}, \widehat{\mathcal{O}})=R_{K_\infty}$ (a consequence of for example the Hodge--Tate decomposition \cite{Sch13}).
\end{proof}

Now we turn to the integral arithmetic pro-\'etale cohomology.
Different from the rational case, the integral pro-\'etale cohomology is not finitely generated due to many torsion classes.
However, we prove below that the integral cohomology has bounded $p^\infty$-torsion and the torsionfree quotient is finitely generated.
\begin{proposition}
	\label{prop:finiteness_integral_general}
	Let $R$ be a $p$-torsionfree and $p$-complete noetherian ring, and let $\mathrm{H}$ be a derived $p$-complete $R$-module.
	Assume $\mathrm{H}[1/p]$ is finitely generated over $R[1/p]$.
	\begin{enumerate}[label=\upshape{(\roman*)}]
		\item\label{thm:finiteness_integral_general_bounded} The submodule $\mathrm{H}[p^\infty]$ is bounded, namely $\mathrm{H}[p^\infty]=\mathrm{H}[p^N]$ for some $N\in \mathbb{N}$.
		\item\label{thm:finiteness_integral_general_tf} The torsionfree quotient $\mathrm{H}_\tf\colonequals \mathrm{H}/\mathrm{H}[p^\infty]$ is a finitely presented $R$-module.
	\end{enumerate}
\end{proposition}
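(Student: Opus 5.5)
The plan is to pass through a finitely generated lattice inside $\mathrm{H}[1/p]$ and to read off both statements from derived $p$-completeness of $\mathrm{H}$ together with noetherianity of $R$.

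First choose finitely many elements $h_1,\ldots,h_m\in \mathrm{H}$ whose images generate $\mathrm{H}[1/p]$ over $R[1/p]$. This is possible because every element of $\mathrm{H}[1/p]$ has the form $h/p^k$. Let $L\subseteq \mathrm{H}$ be the $R$-submodule they generate. Then $L$ is finitely generated, and the quotient $Q\colonequals \mathrm{H}/L$ satisfies $Q[1/p]=0$, so $Q$ is $p^\infty$-torsion. Since $R$ is noetherian, $L[p^\infty]$ is a finitely generated submodule killed by some $p^{N_0}$. The image $\overline{L}$ of $L$ in $\mathrm{H}_\tf$ is a finitely generated $p$-torsionfree module with $\overline{L}[1/p]=\mathrm{H}_\tf[1/p]$.

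For \ref{thm:finiteness_integral_general_tf}, I would show $\mathrm{H}_\tf\subseteq p^{-c}\overline{L}$ for some $c$, working inside $\mathrm{H}_\tf[1/p]=\overline{L}[1/p]$. Granting this, $\mathrm{H}_\tf$ is a submodule of the finitely generated module $p^{-c}\overline{L}\simeq \overline{L}$, hence finitely presented by noetherianity. To get the bound, consider the increasing chain $\mathrm{H}_\tf\cap p^{-k}\overline{L}$. Its union is $\mathrm{H}_\tf$, but the chain is not obviously stationary, since the ambient modules $p^{-k}\overline{L}$ change with $k$. This is exactly where derived $p$-completeness is needed.

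The key input is that a derived $p$-complete module which is $p$-divisible is zero, and more generally that a derived $p$-complete module has no nonzero elements that are infinitely $p$-divisible with compatible choices. I would apply this to $\mathrm{H}$, and then deduce the same property for $\mathrm{H}_\tf$ once $\mathrm{H}[p^\infty]$ is known to be bounded, since the quotient of a derived complete module by a bounded-torsion submodule remains derived complete. Suppose no $c$ works. Then one gets elements of $\mathrm{H}_\tf$ lying in $p^{-k}\overline{L}$ but not in $p^{-k+1}\overline{L}$ for arbitrarily large $k$. Multiplying them into $\overline{L}$ gives elements of $\overline{L}\setminus p\overline{L}$ that are divisible by $p^{k}$ in $\mathrm{H}_\tf$. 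Since $\overline{L}/p\overline{L}$ is finitely generated, I would extract from these a nonzero, compatibly $p$-divisible element of $\mathrm{H}_\tf$ mod $\overline{L}$. Concretely, I would produce a nonzero map $\mathbb{Q}_p/\mathbb{Z}_p\to \mathrm{H}_\tf/\overline{L}$ and compare with $\mathrm{Ext}$ vanishing, and I expect this compactness and Mittag-Leffler step to be the main obstacle.

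For \ref{thm:finiteness_integral_general_bounded}, I would use the criterion that for derived $p$-complete $\mathrm{H}$, $\mathrm{Hom}(\mathbb{Z}[1/p],\mathrm{H})=0$ and $\mathrm{Ext}^1(\mathbb{Z}[1/p],\mathrm{H})=0$. The first should rule out unbounded divisible torsion chains $x_1,x_2,\ldots$ with $px_{k+1}=x_k$. Unbounded but non-divisible torsion can still occur in general derived complete modules, for example $\prod \mathbb{Z}/p^n$, so I cannot conclude from completeness alone. The finiteness hypothesis must enter, and it does so through the extension $0\to L\to \mathrm{H}\to Q\to 0$ with $L$ finitely generated and $Q$ torsion. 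Since $L$ is derived complete, $Q$ is derived $p$-complete and $p^\infty$-torsion. The torsion of $\mathrm{H}$ maps to $Q$ with kernel $L[p^\infty]$, which is bounded by $p^{N_0}$, so the real task is to control torsion classes in $Q$ lifting to torsion in $\mathrm{H}$. I would try to show these lie in $Q[p^{N}]$ using the Artin--Rees lemma for $L\subseteq L\otimes\mathbb{Q}$ intersected with the image, which again reduces to the same lattice comparison as in \ref{thm:finiteness_integral_general_tf}. So I would prove both parts simultaneously via the containment $\mathrm{H}\subseteq p^{-c}L+\mathrm{H}[p^\infty]$.
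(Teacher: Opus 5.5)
There is a genuine gap: the key lemma is missing, and the substitute you propose cannot work. You correctly observe that $Q=\mathrm{H}/L$ is derived $p$-complete, being the cokernel of a map from the finitely generated, hence derived $p$-complete, module $L$. You also observe that $Q$ is $p^\infty$-torsion.

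At that point you need the boundedness lemma for complete torsion modules (due to Bhatt, \cite[Tag 0CQY]{stacks-project}): a derived $p$-complete module all of whose elements are killed by powers of $p$ is killed by a single $p^N$. Your objection via $\prod_n\mathbb{Z}/p^n$ does not apply, since that module is not $p^\infty$-torsion ($(1,1,\ldots)$ has infinite order).

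With the lemma, $p^N\mathrm{H}\subseteq L$, and both parts follow at once; this is the paper's argument.
\begin{itemize}
\item For (i): $p^N\cdot\mathrm{H}[p^\infty]\subseteq L[p^\infty]=L[p^{N_0}]$, so $\mathrm{H}[p^\infty]=\mathrm{H}[p^{N+N_0}]$.
\item For (ii): $\mathrm{H}_\tf/\overline{L}$ is a quotient of $Q$, so $\overline{L}\subseteq\mathrm{H}_\tf\subseteq p^{-N}\overline{L}$ inside $\mathrm{H}[1/p]$, and noetherianity gives finite presentation.
\end{itemize}
No compactness, Mittag-Leffler or Artin--Rees step is needed. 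Part (ii) also no longer depends on first knowing that $\mathrm{H}_\tf$ is derived complete.

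The route you sketch instead is to extract a compatibly $p$-divisible element, i.e.\ a nonzero map $\mathbb{Q}_p/\mathbb{Z}_p\to\mathrm{H}_\tf/\overline{L}$ or a chain $x_k=px_{k+1}$. This only exploits $\mathrm{Hom}(\mathbb{Z}[1/p],-)=0$, which cannot suffice for either part:
\begin{itemize}
\item Over $R=\mathbb{Z}_p$, the module $\mathrm{H}=\mathbb{Z}_p\oplus\bigoplus_{n\geq1}\mathbb{Z}/p^n$ has $\mathrm{H}[1/p]=\mathbb{Q}_p$ and $\bigcap_k p^k\mathrm{H}=0$, yet $\mathrm{H}[p^\infty]$ is unbounded.
\item Over $R=\mathbb{Z}_p\llbracket t\rrbracket$, the torsionfree module $\mathrm{H}=R[t/p]\subseteq R[1/p]$ embeds into $\mathbb{Z}_p\llbracket s\rrbracket$ via $t\mapsto ps$. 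Hence it has no nonzero infinitely $p$-divisible elements, and a short lifting argument using completeness of $R$ shows $\mathrm{Hom}(\mathbb{Q}_p/\mathbb{Z}_p,\mathrm{H}/R)=0$. Yet $\mathrm{H}/R$ is unbounded (the class of $(t/p)^n$ has order $p^n$), so $\mathrm{H}$ is not finitely generated.
\end{itemize}
Both examples satisfy every hypothesis your sketch intends to use: noetherianity, finite generation of $\overline{L}/p\overline{L}$, and finiteness of $\mathrm{H}[1/p]$. What excludes them is the other half of derived completeness, $\mathrm{Ext}^1(\mathbb{Z}[1/p],\mathrm{H})=0$, i.e.\ the existence of elements playing the role of $\sum_n p^nx_n$. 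Your sketch never uses this, and the boundedness lemma necessarily relies on it.

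Your plan is also circular as written. Part (ii) needs (i) to know that $\mathrm{H}_\tf$ is derived complete, while (i) is reduced back to the lattice comparison of (ii). The proposed Artin--Rees step does not apply either, since $\mathrm{H}$ is not finitely generated.
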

\begin{proof}
	First we notice that the image of $\mathrm{H}$ in $\mathrm{H}[1/p]$ is naturally isomorphic to the torsionfree quotient $\mathrm{H}_\tf$.
	Since $\mathrm{H}[1/p]$ is a finitely generated $R[1/p]$-module, we let $\{\overline{m}_1,\ldots, \overline{m}_n\}$ be a finite set of elements in $\mathrm{H}_\tf$ whose image in $\mathrm{H}[1/p]$ generates the latter.
	We let $\{m_1,\ldots, m_n\}$ be a set of lifts of the aforementioned elements in $\mathrm{H}$.
	Then the choice of elements induces a map $f:R^{\oplus n} \to \mathrm{H}$ of $R$-modules such that its image in $\mathrm{H}[1/p]$ generates the entire $\mathrm{H}[1/p]$.
	We let $M$ (resp. $\overline{M}$) be the image of $R^{\oplus n}$ in $\mathrm{H}$ (resp. in $\mathrm{H}_\tf$), and let $g:M\to \mathrm{H}$ and $\overline{g}: \overline{M}\to \mathrm{H}_\tf$ be the induced injections.
	Then we obtain the following commutative diagram:
	\begin{equation}
		\label{eq:thm:finiteness_integral_general_1}
		\begin{tikzcd}
			R^{\oplus n} \arrow[d, two heads] & \\
			M \arrow[r, hook, "g"] \arrow[d, two heads] & \mathrm{H} \arrow[d, two heads]\\
			\overline{M} \arrow[r, hook, "\overline{g}"] & \mathrm{H}_\tf \arrow[d, hook]\\
			& \mathrm{H}[1/p].
		\end{tikzcd}
	\end{equation}
	
    Notice that since $M$ is finitely generated over $R$, we know $M$ is derived $p$-complete (\cite[\href{https://stacks.math.columbia.edu/tag/0EEU}{Tag 0EEU}]{stacks-project}).
    So by the derived $p$-completeness of $\mathrm{H}$, we know $\mathrm{cofib}(g)=\coker(g)$ is also derived $p$-complete.
    On the other hand, by the construction of $M$ above, we know $M[1/p]=\mathrm{H}[1/p]$ which implies that $\mathrm{cofib}(g)$ is $p^\infty$-torsion.
    So by Bhatt's boundedness result of torsion complete modules (\cite[\href{https://stacks.math.columbia.edu/tag/0CQY}{Tag 0CQY}]{stacks-project}), we know $\mathrm{cofib}(g)$ is bounded.
    
    To continue, we consider the map of short exact sequences induced from (\ref{eq:thm:finiteness_integral_general_1})
    \begin{equation}
    	\label{eq:thm:finiteness_integral_general_ses}
    	\begin{tikzcd}
    		0 \ar[r] &M \arrow[r, hook, "g"] \arrow[d, two heads] & \mathrm{H} \arrow[d, two heads] \ar[r] & \mathrm{cofib}(g) \ar[r] \ar[d]& 0\\
    		0 \ar[r] & \overline{M} \arrow[r, hook, "\overline{g}"] & \mathrm{H}_\tf \ar[r] & \mathrm{cofib}(\overline{g}) \ar[r] &0. \\
    	\end{tikzcd}
    \end{equation}
    From the diagram, we know $\ker(\mathrm{H}\to \mathrm{H}_\tf)$, which is equal to $\mathrm{H}[p^\infty]$, is sandwiched by $\ker(M\to \overline{M})$ and $\cofib(g)$.
    By the noetherian assumption of $R$ and the finiteness of $M$, we know $\ker(M\to \overline{M})=M[p^\infty]$ is bounded.
    Hence by combining the latter with the boundedness of $\cofib(g)$, we know  $\mathrm{H}[p^\infty]$ is also bounded, which finishes \ref{thm:finiteness_integral_general_bounded}.
    
    To prove \ref{thm:finiteness_integral_general_tf}, we notice that (\ref{eq:thm:finiteness_integral_general_ses}) implies that the map $\cofib(g)\to \cofib(\overline{g})$ is surjective.
    In particular $\cofib(\overline{g})$ is bounded and is killed by $p^{N}$ for some $N\in \mathbb{N}$.
    So as submodules inside $\mathrm{H}[1/p]$, we have
    \[
    \overline{M} \subseteq \mathrm{H}_\tf \subseteq \frac{1}{p^{N}}\overline{M}.
    \]
    Hence by the noetherian assumption of $R$ and the finiteness of $M$, we see $\mathrm{H}_\tf$ is finitely presented over $R$ as well.
\end{proof}
\begin{corollary}[Integral finiteness]
	\label{thm:finiteness_integral_proet_coh}
	Let $X=\Spf(R)$ be an affine topologically of finite type $p$-adic formal scheme over $\mathcal{O}_K$, and assume the generic fiber $X_\eta$ is smooth over $K$ of dimension $d$.
	Let $T$ be a $\mathbb{Z}_p$-local system $T$ over $X_\eta$, let $i,j\in \mathbb{N}$, and let $\mathrm{H}^j$ be the $j$-th cohomology of $R\Gamma_\proet\bigl(X_\eta, T\otimes_{\mathbb{Z}_p} \wedge^i \mathbb{L}_{\widehat{\mathcal{O}}^+/\mathcal{O}_X}[-i] \bigr)$.
	\begin{enumerate}[label=\upshape{(\roman*)}]
		\item The submodule $\mathrm{H}^j[p^\infty]$ is bounded.
		\item The torsionfree quotient $\mathrm{H}^j_\tf$ is a finitely presented $R$-module, and vanishes if $j\notin [0,d+1]$.
		\item The module $\mathrm{H}^0$ vanishes for $i\gg 0$.
	\end{enumerate}
\end{corollary}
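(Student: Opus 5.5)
The plan is to reduce everything to \Cref{prop:finiteness_integral_general}, applied to $\mathrm{H}=\mathrm{H}^j$. That result needs two inputs: $\mathrm{H}^j$ must be derived $p$-complete, and $\mathrm{H}^j[1/p]$ must be finitely generated over $R[1/p]$.

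First I would establish derived $p$-completeness. The sheaf $\wedge^i\mathbb{L}_{\widehat{\mathcal{O}}^+/\mathcal{O}_X}[-i]$ is derived $p$-complete, because the cotangent complex is taken $p$-completely. Tensoring with the $\mathbb{Z}_p$-local system $T$ preserves this, since $T$ is locally a finite free $\mathbb{Z}_p$-module up to finite \'etale covers. The functor $R\Gamma_\proet$ commutes with $R\lim$, so $R\Gamma_\proet(X_\eta, T\otimes \wedge^i\mathbb{L}[-i])$ is derived $p$-complete. The cohomology groups of a derived $p$-complete complex are derived $p$-complete by \cite[\href{https://stacks.math.columbia.edu/tag/091U}{Tag 091U}]{stacks-project}, so each $\mathrm{H}^j$ is derived $p$-complete.

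Second, I would identify $\mathrm{H}^j[1/p]$ with the rational cohomology. The affinoid $X_\eta$ is qcqs, so pro-\'etale cohomology on it commutes with the filtered colimit along multiplication by $p$. This gives
\[
\mathrm{H}^j[1/p]\simeq \mathrm{H}^j_\proet\bigl(X_\eta, T\otimes_{\mathbb{Z}_p} \wedge^i \mathbb{L}^\an_{X_{\eta,\pe}/X_\eta}[-i]\bigr),
\]
using the definition of $\mathbb{L}^\an$ as the $p$-inverted $p$-complete cotangent complex. I expect this to be the main subtle point: the sheafy object $\mathbb{L}^\an$ is only obtained after inverting $p$ sheaf-locally. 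So I would check the commutation of $R\Gamma$ with $[1/p]$ on a basis of affinoid perfectoids, via the Cartan--Leray spectral sequence for the Galois cover $\widetilde{X}_\eta\to X_\eta$ and continuous group cohomology. If needed, I would instead argue with bounded-torsion issues only.

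Granting that identification, \Cref{prop:LZ_finiteness}.\ref{prop:LZ_finiteness_general} shows that $\mathrm{H}^j[1/p]$ is coherent over $R[1/p]$ and vanishes for $j\notin[0,d+1]$. Then \Cref{prop:finiteness_integral_general} yields that $\mathrm{H}^j[p^\infty]$ is bounded and that $\mathrm{H}^j_\tf$ is finitely presented. Since $\mathrm{H}^j_\tf$ embeds into $\mathrm{H}^j[1/p]$, it vanishes whenever $j\notin[0,d+1]$.

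For (iii), I would show $\mathrm{H}^0$ is torsionfree. The sheaf $\wedge^i\mathbb{L}_{\widehat{\mathcal{O}}^+/\mathcal{O}_X}[-i]$ is locally a $p$-torsionfree module: it is a wedge power of a finite projective module over the perfectoid cover, by the Faltings extension in \Cref{prop:relative_prismatic_coh_of_perfectoid}. Hence its global sections, and so $\mathrm{H}^0$, are $p$-torsionfree, and $\mathrm{H}^0\subset \mathrm{H}^0[1/p]$. The latter vanishes for $i\gg0$ by \Cref{prop:LZ_finiteness}.\ref{prop:LZ_finiteness_H0}, which gives (iii).
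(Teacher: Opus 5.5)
Your route is the one the paper intends; the corollary is stated without proof right after \Cref{prop:finiteness_integral_general}. Parts (i) and (ii) are that proposition applied to the derived $p$-complete module $\mathrm{H}^j$, with coherence and the amplitude $[0,d+1]$ of $\mathrm{H}^j[1/p]$ supplied by \Cref{prop:LZ_finiteness}.\ref{prop:LZ_finiteness_general}. Part (iii) is $p$-torsionfreeness of $\mathrm{H}^0$ combined with \Cref{prop:LZ_finiteness}.\ref{prop:LZ_finiteness_H0}. Your identification of $\mathrm{H}^j[1/p]$ with the cohomology of $\mathbb{L}^\an$, by commuting $R\Gamma$ on the qcqs $X_\eta$ with the colimit along $p$, is fine.

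The step that needs qualifying is the torsionfreeness in (iii). \Cref{prop:relative_prismatic_coh_of_perfectoid}.\ref{prop:relative_prismatic_coh_of_perfectoid_cotangent_complex_2} is proved only for regular $X$. It needs a framed regular prism mapping to $\rAinf(S)$, and such prisms exist only \'etale locally on a regular $X$ (\Cref{thm:regular prism}). The corollary, however, assumes only that $X_\eta$ is smooth.

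For regular $X$ your argument works. More precisely, by d\'ecalage $\wedge^i\mathbb{L}_{S/X}[-i]\simeq\Gamma^i_S(\mathbb{L}_{S/X}[-1])$, which is a finite projective $S$-module in degree $0$ (a divided power rather than a wedge power). So $\mathrm{H}^0$ is the sections of a $p$-torsionfree sheaf. This is exactly the setting where the paper applies the corollary, and it makes the same vector-bundle argument there (e.g.\ \Cref{thm:global_section}.\ref{thm:global_section_Hodge-Tate}).

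For a non-regular model, the triangle $\mathbb{L}_{R/W}\otimes^L S\to S\{1\}[1]\to\mathbb{L}_{S/X}$ no longer forces $\mathbb{L}_{S/X}[-1]$ to be a vector bundle. Neither discreteness nor $p$-torsionfreeness of $\wedge^i\mathbb{L}_{\widehat{\mathcal{O}}^+/\mathcal{O}_X}[-i]$ is then available from the paper. In that case (i), (ii) and Liu--Zhu only show that $\mathrm{H}^0$ is killed by a bounded power of $p$ for $i\gg0$, not that it vanishes. So either add regularity of $X$ to the hypotheses of (iii), or give a separate argument for it.
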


Finally, we give the calculation of the global section for the integral pro-\'etale structure sheaf.
\begin{lemma}
	\label{lem:global_sec_of_Ohat}
	Let $X=\Spf(R)$ be a normal topologically of finite type $p$-adic formal scheme over $\mathcal{O}_K$, and let $X_\eta$ be its generic fiber.
	The canonical map below is an isomorphism
	\[
	R \longrightarrow \mathrm{H}^0_\pe(X_\eta, \widehat{\mathcal{O}}^+).
	\]
\end{lemma}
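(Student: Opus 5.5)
The plan is to separate the statement into a rational part and an integral part. The rational part comes from the Hodge--Tate/Tate computation already used in the proof of \Cref{prop:LZ_finiteness}. The integral part then reduces to normality of $R$.

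\textbf{Step 1: rational global sections.} First I would show that $\mathrm{H}^0_\pe(X_\eta,\widehat{\mathcal{O}})=R[1/p]$. The statement is Zariski local on $X$, so we may assume $X_\eta$ is small with a toric chart, as in the proof of \Cref{thm:LZ_finiteness}.

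The known input is that $\mathrm{H}^0_\pe(X_{K_\infty},\widehat{\mathcal{O}})=R_{K_\infty}$, the completed base change. Taking $\Gal(K_\infty/K)$-invariants and using Tate's computation \cite{Tat67} gives $(R\widehat{\otimes}_{\mathcal{O}_K}\widehat{K}_\infty)^{\Gal(K_\infty/K)}=R[1/p]$. Tate's computation applies here because $R[1/p]$ is a Banach $K$-module with trivial Galois action, and the invariants of $\widehat{K}_\infty$ are exactly $K$.

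If $X_\eta$ is not smooth, I would instead use that the sheaf $U\mapsto \mathcal{O}_{X_\eta}(U)$ agrees with the pro-\'etale pushforward of $\widehat{\mathcal{O}}$ on the \'etale site. This agreement is a consequence of \cite[Prop.\ 8.5]{Sch22} together with v-descent for the structure sheaf of rigid spaces (Scholze, \'etale cohomology of diamonds). Either route gives $\mathrm{H}^0(X_\eta,\widehat{\mathcal{O}})=\mathcal{O}(X_\eta)=R[1/p]$.

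\textbf{Step 2: integral sections.} The sheaf $\widehat{\mathcal{O}}^+$ is $p$-torsionfree and injects into $\widehat{\mathcal{O}}$. Hence $\mathrm{H}^0(\widehat{\mathcal{O}}^+)$ consists of those $f\in R[1/p]$ that lie in $\widehat{\mathcal{O}}^+$ at every pro-\'etale point. This condition says $|f(x)|\le1$ for every point $x$ of $X_\eta$, so
\[
\mathrm{H}^0(\widehat{\mathcal{O}}^+)=\mathcal{O}^+(X_\eta)=R[1/p]^\circ,
\]
the ring of power-bounded elements.

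\textbf{Step 3: power-bounded elements.} It remains to show $R[1/p]^\circ=R$. Since $R$ is topologically of finite type, $R[1/p]^\circ$ is integral over $R$; it is even finite over $R$ when $R[1/p]$ is reduced, which holds because $R$ is normal. The ring $R$ is $p$-torsionfree and integrally closed in $R[1/p]$, so $R[1/p]^\circ\subseteq R$. The reverse inclusion is obvious, which finishes the argument.

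\textbf{Main obstacle.} I expect Step 3 to be the delicate point. One needs that power-bounded elements are integral over $R$ rather than just over some ring of definition. This should follow from Noether normalization for $R$ over $\mathcal{O}_K\langle T_1,\dots,T_n\rangle$ together with the maximum modulus principle (BGR, 6.3.4).
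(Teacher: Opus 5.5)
Your proposal is correct and follows the paper's proof. The paper likewise gets $\widehat{\mathcal{O}}(X_\eta)=R[1/p]$, bounds $\widehat{\mathcal{O}}^+(X_\eta)$ by the power-bounded elements (citing \cite[Lem.\ 4.2(ii)]{Sch13} where you argue pointwise), and concludes by normality via \cite[Thm.\ 7.4.1]{dJ95}, which is the integrality statement you sketch with Noether normalization.

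One point in Step~1 needs tightening. Since $X_\eta$ is not assumed smooth, the applicable route is your second one, i.e.\ \cite[Prop.\ 10.2.3]{SW20}, and the equality $\mathcal{O}_{X_\eta}(X_\eta)=\mathrm{H}^0(X_\eta,\widehat{\mathcal{O}})$ holds only for seminormal $X_\eta$ (it fails for a cusp), so normality of $R[1/p]$ is already used there, not only in Step~3.
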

\begin{proof}
	Note that there are natural maps of ringed sites $(X_{\eta,\pe}, \widehat{\mathcal{O}}^+) \to (X_{\eta, \mathrm{an}}, \mathcal{O}^+) \to (X_{\mathrm{Zar}}, \mathcal{O}_X)$, inducing maps of rings
	\begin{equation}
		\label{eq:lem:global_sec_of_Ohat}
		\mathcal{O}_X(X)=R \longrightarrow \mathcal{O}^+(X_\eta) \longrightarrow \widehat{\mathcal{O}}^+(X_\eta).
	\end{equation}
	It is left to show that the canonical maps above are all equalities.
	By assumption, $X_\eta$ is the affinoid rigid space $\Spa(R[1/p],R)$, with $R$ an integrally closed subring in $R[1/p]$.
	On the other hand, by \cite[Prop.\ 10.2.3]{SW20} and the normality of $R[1/p]$, we know $\widehat{\mathcal{O}}(X_\eta)=\mathcal{O}_{X_\eta}(X_\eta)=R[1/p]$.
	So the maps in (\ref{eq:lem:global_sec_of_Ohat}) are all inclusions, and it is left to show that $R\to \widehat{\mathcal{O}}^+(X_\eta)$ is surjective.
	Note that by \cite[Lem.\ 4.2.(ii)]{Sch13}, the subring $\widehat{\mathcal{O}}^+(X_\eta)$ is contained in the ring of power-bounded elements in $R[1/p]$.
	So the claim follows from \cite[Thm.\ 7.4.1]{dJ95}, which shows that $R$ is the subring of power-bounded elements in $R[1/p]$.
\end{proof}

\section{Prismatic family of period sheaves}
\label{sec:period sheaves}
Let $X$ be a regular $p$-adic formal scheme over $\mathcal{O}_K$, and let $X_\eta$ be its generic fiber.
In this section, we introduce a prismatic family of period sheaves: a family of period sheaves over the rigid space $X_{\eta,\pe}$ that are parametrized by the prisms in $X_\Prism$.
We then calculate their global sections.

\subsection{Period presheaves}
\label{sub:construction_of_period}
We start with the construction of the period presheaves on the pro-\'etale site of the rigid space $X_\eta$.

To prepare, we recall the construction of the infinitesimal period sheaf as below.
\begin{definition}
	\label{def:Ainf}
	Let $S$ be a $p$-torsionfree perfectoid ring.
	\begin{enumerate}
		\item We let $\rAinf(S)$ be the ring $W(S^\flat)$, where $S^\flat=\lim_{x\mapsto x^p} S$ is the tilt of $S$.
		It admits an automorphism $\varphi_{\rAinf(S)}$ induced by the Witt vector Frobenius of the perfect ring.
		Moreover, there is a natural surjection $\widetilde{\theta}:\rAinf(S)\to S$, sending a Teichm\"uller lift $[(x_0,x_1,\ldots)]$ onto $x_1$.
		\item Let $X$ be a topologically finite type $p$-adic formal scheme.
		We define a sheaf of complete rings $\Ainf$ on $X_{\eta,\pe}$ by sending an affinoid perfectoid space $\Spa(S[1/p],S)\in X_{\eta,\pe}$ onto the $(p, \ker(\widetilde{\theta}))$-complete ring $\rAinf(S)$. 
		\footnote{Here we mention that the sheaf condition of (the zero-th cohomology of) $\Ainf$ follows from that of $\widehat{\mathcal{O}}^+$ in \cite[Prop.\ 8.5]{Sch22}.}
		We let $\varphi_{\Ainf}$ be the induced automorphism on $\Ainf$, and let $\widetilde{\theta}:\Ainf\to \widehat{\mathcal{O}}^+$ be the induced surjection.
	\end{enumerate}
\end{definition}
Now we introduce the period presheaves that are parametrized by prisms, using the derived relative prismatic cohomology as in \cite[Const.\ 7.6]{BS22}.
\begin{definition}[Prismatic period presheaf]
	\label{def:prismatic_period_sheaf}
	Let $X$ be a bounded $p$-adic formal scheme, and let $(B,J)\in X_\Prism$ be a bounded prism.
	The \emph{prismatic period presheaf} over $(B,J)$, denoted as $\Prism^\pre_{(-)_{\overline{B}}/B}$, is a presheaf of complexes that sends an affinoid perfectoid space $U=\Spa(S[1/p],S)$ in the pro-\'etale site $X_{\eta,\pe}$ onto the derived relative prismatic cohomology $\Prism_{S_{\overline{B}}/B}\in D_{(p,I)\text{-comp}}(B)\subset D(B)$.
	The presheaf $\Prism^\pre_{(-)_{\overline{B}}/B}$ is equipped with a Frobenius endomorphism $\varphi_\Prism \colonequals \varphi_{\Prism^\pre_{(-)_{\overline{B}}/B}}$ that is compatible with $\varphi_B$.
\end{definition}
Analogously, we can define the Frobenius twisted prismatic period presheaf $\Prism^{(1),\pre}_{(-)_{\overline{B}}/B}\colonequals \varphi_B^*\Prism^\pre_{(-)_{\overline{B}}/B}$ and its Nygaard (filtered) completion $\widehat{\Prism}^{(1),\pre}_{(-)_{\overline{B}}/B}$.
\begin{remark}
	By \cite[Thm.\ 6.5]{Sch13}, the prismatic period presheaf is naturally isomorphic to the presheaf of complete rings $\Prism^\pre_{\widehat{\mathcal{O}}^+_{\overline{B}}/B}$ for affinoid perfectoid objects over $X_{\eta,\pe}$.
	To lighten the notations, we use $\Prism^\pre_{(-)_{\overline{B}}/B}$ instead, and similarly for the related constructions later.
\end{remark}
\begin{remark}[Base change property]
	\label{rmk:base_change_presheaf}
	We recall from \cite{BS22} that there is a natural base change formula for the relative prismatic cohomology.
	Namely given a map of bounded prisms $(B_1,J_1)\to (B_2,J_2)$ in $X_\Prism$, there is a natural isomorphism of presheaves on $X_{\eta,\pe}$
	\begin{equation}
		\label{eq:base_change_of_period_sheaf_general}
		\begin{tikzcd}
			\bigl( \Prism^\pre_{(-)_{\overline{B_1}}/B_1} \otimes^L_{B_1} B_2 \bigr)^\wedge_{(p,J_2)} \arrow[r, "\sim"] &\Prism^\pre_{(-)_{\overline{B_2}}/B_2}.
		\end{tikzcd}
	\end{equation}
\end{remark}
\begin{remark}[Relation with $\Ainf$]
	\label{rmk:Ainf_to_relative_prismatic_coh}
	For a perfectoid ring $S$, the relative prismatic site $(S_{\overline{B}}/(B,J))_\Prism$ admits a forgetful functor to the absolute prismatic site $S_\Prism$.
	So by the initiality of the perfect prism $(\rAinf(S), \ker(\widetilde{\theta}))$, we get a natural Frobenius equivariant map of presheaves on $X_{\eta,\pe}$
	\begin{equation}
		\label{eq:Ainf_to_relative_prismatic_coh}
		\Ainf \longrightarrow \Prism^\pre_{(-)_{\overline{B}}/B},
	\end{equation}
	which is compatible with the base prisms $(B,J)\in X_\Prism$.
\end{remark}

Thanks to the analysis on the coproduct in \Cref{cor:coproduct:perfect_with_framed}, the prismatic period presheaf becomes a sheaf and admits simpler and more explicit presentations in favorable situations.
To see this, we first note the following observation on the reduced perfection of a framed regular prism.
\begin{lemma}[Perfection and pro-\'etale cover]
	\label{lem:perfection_of_framed_regular_prism}
	Let $X$ be a regular $p$-adic formal scheme over $\mathcal{O}_K$, let $(A,I)\in X_\Prism$ be a framed regular prism with the framing $\Sigma\subset A^\times$, and let $(A_\perf,I_\perf)$ be its perfection.
	The generic fiber $\Spf(\overline{A}_\perf)_\eta$ is an affinoid perfectoid space and is a pro-finite-\'etale cover of $\Spf(\overline{A})_\eta$
\end{lemma}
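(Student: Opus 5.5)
The plan is to make the perfection completely explicit through the framing. By \Cref{prop:regular prism Frobenius} and its proof, $A_\perf$ is the $(p,I)$-completion of $\colim_{\varphi_{A_0}} A_0$, where $A_0$ is $p$-completely \'etale over $W\langle t_a^{\pm1}\mid a\in\Sigma\rangle$ with $\delta(t_a)=0$. Since $\Sigma\subset A^\times$, I will use the chart $W\langle t_a^{\pm1}\rangle$. In the colimit along $\varphi$, each $t_a$ acquires compatible $p$-power roots $t_a^{1/p^m}$ (the image of $t_a$ in the $m$-th copy). The \'etale map $A_0\to A_0$ over the Frobenius on the chart then identifies $\colim_\varphi A_0$, up to $p$-completion, with
\[
A_0\widehat\otimes_{W\langle t_a^{\pm1}\rangle} W\langle t_a^{\pm1/p^\infty}\rangle .
\]
This uses that $\varphi_{A_0}$ is the unique \'etale lift of $\varphi$ on the chart, so $\varphi_{A_0}$ is the base change of the chart Frobenius relative to $A_0$ (\cite[Lem.\ 2.18]{BS22}). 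Reducing modulo $I$, I get
\[
\overline{A}_\perf\simeq \bigl(\overline{A}\widehat\otimes_{W\langle t_a^{\pm1}\rangle}W\langle t_a^{\pm1/p^\infty}\rangle\bigr)^\wedge_{p},
\]
and I will check that this is a perfectoid ring, namely the reduction of the perfect prism $(A_\perf,IA_\perf)$.

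Next I verify the pro-finite-\'etale claim on generic fibers. Write $\overline{A}_\perf$ as the $p$-completed colimit of
\[
\overline{A}_m\colonequals \overline{A}\otimes_{W\langle t_a^{\pm1}\rangle}W\langle t_a^{\pm1/p^m}\rangle .
\]
Each $\overline{A}_m$ is finite over $\overline{A}$, obtained by adjoining $p^m$-th roots of the units $t_a$. After inverting $p$, adjoining a $p^m$-th root of a unit $u$ is finite \'etale, since the derivative $p^m x^{p^m-1}$ of $x^{p^m}-u$ is invertible on $\{x^{p^m}=u\}$ once $p$ is inverted. Hence $\Spf(\overline{A}_m)_\eta\to\Spf(\overline{A})_\eta$ is finite \'etale. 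The limit of these generic fibers is represented by $\Spa(\overline{A}_\perf[1/p],\overline{A}_\perf^+)$, with $\overline{A}_\perf^+$ the integral closure, since the transition maps are compatible and generic fibers commute with such cofiltered limits of affinoids in the diamond sense (\cite[Def.\ 15.5]{Sch22}). This gives an affinoid pro-finite-\'etale cover.

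For perfectoidness, I will note that $\overline{A}_\perf$ is the reduction of a perfect prism, so it is a perfectoid ring by \cite[Thm.\ 3.10]{BS22}. Its generic fiber is then an affinoid perfectoid space; replacing the ring by the integral closure of $\overline{A}_\perf$ in $\overline{A}_\perf[1/p]$ keeps it perfectoid. Alternatively, perfectoidness follows directly from the explicit presentation, since Frobenius is surjective modulo $p$ after adjoining all $p$-power roots of coordinates together with the \'etale extension.

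The expected main obstacle is the first identification of $\colim_\varphi A_0$ with the base change to $W\langle t_a^{\pm1/p^\infty}\rangle$. Without invertibility of the $t_a$ this can fail: the Frobenius on $W\langle t\rangle$ is not \'etale over $t=0$, and the $p$-th root extension ramifies there, which is exactly why $\Sigma\subset A^\times$ is imposed. I will handle this step by checking the identification modulo $p$, where $\varphi$ on $A_0/p$ is the relative Frobenius composed with the absolute Frobenius of the chart. For the \'etale map $W\langle t^{\pm1}\rangle/p\to A_0/p$, the relative Frobenius is an isomorphism. The colimit statement then lifts by $p$-completeness and $p$-torsionfreeness, and the $I$-completion is harmless because $A_\perf$ is already $(p,I)$-completed.
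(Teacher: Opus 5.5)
Your proposal is correct and follows essentially the paper's route. Both arguments work with the Laurent framing $W\langle t_a^{\pm1}\rangle\to A_0$ and show that the Frobenius tower of $A_0$ becomes pro-finite-\'etale on generic fibers, because the chart Frobenius $t_a\mapsto t_a^p$ is finite \'etale after inverting $p$. Both then pull this back along $\Spf(\overline{A})_\eta\hookrightarrow \Spf(A_0)_\eta$, using that $\overline{A}_\perf$ is the mod $I_0$ reduction of $(\colim_{\varphi_{A_0}}A_0)^\wedge_p$, and both get perfectoidness from \cite[Thm.\ 3.10]{BS22}. The only difference is that you make the tower explicit as the Kummer tower $\overline{A}\otimes_{W\langle t_a^{\pm1}\rangle}W\langle t_a^{\pm1/p^m}\rangle$, via the relative Frobenius isomorphism for \'etale maps. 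The paper instead notes by cancellation that each $(\varphi_{A_0})_\eta$ is finite \'etale, and invokes \cite[Lem.\ 4.5]{Sch13} for the limit.
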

\begin{proof}
	As $(A_\perf,I_\perf)$ is a perfect prism, by \cite[Thm.\ 3.10]{BS22}, the reduction $\overline{A}_\perf$ is a perfectoid ring. 
	Thus the generic fiber $\Spf(\overline{A}_\perf)_\eta$ is an affinoid perfectoid space that admits a map to $\Spf(\overline{A})_\eta$.
	So it remains to prove that the latter map of adic spaces is pro-finite-\'etale.
	
	We let the $\delta$-pair $(A_0,I_0)$ be the framing of the prism $(A,I)$, so $A_0$ is $p$-completely \'etale over $W\langle t_1^{\pm1}, \ldots, t_n^{\pm1} \rangle$ and $\Spf(A_0/I_0)=\Spf(\overline{A})$.
	By construction, the Frobenius morphism $\varphi_{A_0}$ is a finite flat cover that is compatible with $\varphi_{W\langle  t_1^{\pm1}, \ldots, t_n^{\pm1} \rangle}:W\langle  t_1^{\pm1}, \ldots, t_n^{\pm1} \rangle \to W\langle  t_1^{\pm1}, \ldots, t_n^{\pm1} \rangle$, $t_i\mapsto t_i^p$.
	Moreover, since the map $\varphi_{W\langle  t_1^{\pm1}, \ldots, t_n^{\pm1} \rangle}$ induces a finite \'etale cover on the generic fiber and since the map of rigid spaces $\Spf(A_0)_\eta\to \Spf(W\langle  t_1^{\pm1}, \ldots, t_n^{\pm1} \rangle)_\eta$ is \'etale, the compatibility of Frobenii implies that the map $\varphi_{A_0}$ induces a finite \'etale endomorphism $(\varphi_{A_0})_\eta : \Spf(A_0)_\eta\to \Spf(A_0)_\eta$ on the smooth affinoid rigid space $\Spf(A_0)_\eta$.
	Hence by taking the limit of $\Spf(A_0)_\eta$ with respect to a countable amount of $(\varphi_{A_0})_\eta$ and by \cite[Lem.\ 4.5]{Sch13}, we see $\Spf\bigl( (\colim_{\varphi_{A_0}} A_0)^\wedge_p \bigr)_\eta$ is a pro-finite-\'etale cover of $\Spf(A_0)_\eta$.
	Finally, by the last paragraph of the proof for \Cref{prop:regular prism Frobenius}, the reduced perfection $\overline{A}_\perf$ is the mod $I_0$ reduction of the ring $(\colim_{\varphi_{A_0}} A_0)^\wedge_p$.
	Hence the claim follows from the following pullback diagram
	\[
	\begin{tikzcd}
		\Spf(\overline{A}_\perf)_\eta \arrow[r, hook] \ar[d] & \Spf\bigl( (\colim_{\varphi_{A_0}} A_0)^\wedge_p \bigr)_\eta \arrow[d, "\text{pro-finite-\'etale}"] \\
		\Spf(\overline{A})_\eta \arrow[r, hook] & \Spf(A_0)_\eta. \arrow[lu, phantom, "\lrcorner"]
	\end{tikzcd}
	\]
\end{proof}
Recall from \cite[\S 2]{Sch13} that for a perfectoid ring $S$ and a pseudo-uniformizer $\pi^\flat\in S^\flat$, one can define the notion of the almost zero module over $\rAinf(S)$ with respect to the ideal $([\pi^\flat]^{1/p^n}, n>0)\subset \rAinf(S)$.
Namely an $\rAinf(S)$-module $M$ is \emph{almost zero} if $M$ is killed by $([\pi^\flat]^{1/p^n}, n>0)$.
The notion is independent of the choice of $\pi^\flat$, and we use $M \almosteq 0$ to denote the condition that $M$ is almost zero.
\begin{theorem}[Sheaf property]
	\label{thm:prismatic_period_sheaf}
	Let $X$ be a regular $p$-adic formal scheme, let $(A,I)\in X_\Prism$ be a framed regular prism for a framing $\Sigma\subset A^\times$, with $\overline{A}_\perf$ the reduction of its perfection.
	Assume $(B,J)\in X_\Prism$ is a flat prism or a flat crystalline prism.
	\begin{enumerate}[label=\upshape{(\roman*)}]
		\item\label{thm:prismatic_period_sheaf_property}  The restriction of the presheaf $\mathcal{H}^0(\Prism^\pre_{(-)_{\overline{B}}/B})$ onto $X_{\eta,\pe}|_{\Spf(\overline{A}_\perf)_\eta}$ is a Frobenius equivariant $B$-linear sheaf of complete modules and is $(p,J)$-completely flat over $\Ainf$.
		\item\label{thm:prismatic_period_sheaf_coh} For any affinoid perfectoid space $U=\Spa(S[1/p],S)\in X_{\eta,\pe}|_{\Spf(\overline{A}_\perf)_\eta}$, one has
		\[
		\mathrm{H}^0_\pe(U, \Prism^\pre_{(-)_{\overline{B}}/B}) = \Prism_{S_{\overline{B}}/B}; \quad \mathrm{H}^i_\pe(U ,\Prism^\pre_{(-)_{\overline{B}}/B}) \almosteq 0,~\forall i>0.
		\]	
	\end{enumerate}
\end{theorem}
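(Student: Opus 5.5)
The plan is to identify the presheaf, on the localized site $X_{\eta,\pe}|_{\Spf(\overline{A}_\perf)_\eta}$, with a completed base change of $\Ainf$. Then the sheaf property and almost acyclicity reduce to the known statements for $\Ainf$, which are consequences of \cite[Prop.\ 8.5]{Sch22} and of the almost acyclicity of $\widehat{\mathcal{O}}^{+\flat}$ on affinoid perfectoids \cite{Sch13}.

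The first step is the explicit presentation. Take $U=\Spa(S[1/p],S)$ over $\Spf(\overline{A}_\perf)_\eta$. The structure map gives a map of perfectoid rings $\overline{A}_\perf\to S$, hence a map of perfect prisms $(A_\perf,I_\perf)\to(\rAinf(S),\ker\widetilde\theta)$, and composing with $A\to A_\perf$ a map $f:(A,I)\to(\rAinf(S),\ker\widetilde\theta)$ that is functorial in $U$.

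By \Cref{prop:relative_prismatic_coh_of_perfectoid}.\ref{prop:relative_prismatic_coh_of_perfectoid_initial_prism} and \Cref{cor:coproduct:perfect_with_framed}.\ref{cor:coproduct:perfect_with_framed_general}, $\Prism_{S_{\overline{B}}/B}$ is discrete and equals the coproduct $\rAinf(S)\coprod B$. To make this explicit over $\rAinf(S)$, I would rewrite it as $\bigl(\rAinf(S)\coprod A\bigr)\coprod_A B$ and use \Cref{thm:coproduct_in_general}.\ref{thm:coproduct of prism regular and regular}. In the flat crystalline case, I would first pass to a cover of $(B,J)$ admitting a map from $(A,I)$.

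Writing $P\colonequals(A_\perf,I_\perf)\coprod(B,J)$, the base change formula (\Cref{rmk:base_change_presheaf}) applied to the perfect prism $(A_\perf,I_\perf)\to(\rAinf(S),\ker\widetilde\theta)$ gives a natural isomorphism
\[
\Prism_{S_{\overline{B}}/B}\simeq\bigl(\rAinf(S)\otimes^L_{A_\perf}P\bigr)^\wedge_{(p,I)}.
\]
This holds because the coproduct with a prism over $A_\perf$ is computed by the completed tensor product over $A_\perf$. By \Cref{thm:coproduct_in_general}.\ref{thm:coproduct of prism faithful} and \Cref{cor:coproduct:perfect_with_framed}, $P$ is $(p,I)$-completely flat over $A_\perf$. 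Moreover, $P$ is independent of $U$.

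Consequently $\mathcal{H}^0(\Prism^\pre_{(-)_{\overline{B}}/B})\simeq(\Ainf\widehat\otimes_{A_\perf}P)$ on the localized site, which gives both $B$-linearity and Frobenius equivariance. Since $P$ is completely flat over $A_\perf$, this sheaf is $(p,J)$-completely flat over $\Ainf$. For part (ii), I would compute $R\Gamma(U,-)$ modulo $(p,I)$. Reducing modulo $(p,I)$, the tensor product becomes $\Ainf/(p,I)\otimes_{A_\perf/(p,I)}P/(p,I)$ with $P/(p,I)$ flat. By Lazard's theorem $P/(p,I)$ is a filtered colimit of finite free modules, and cohomology on the qcqs object $U$ commutes with filtered colimits. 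Hence $H^i(U,-)$ equals $P/(p,I)\otimes H^i(U,\widehat{\mathcal{O}}^{+\flat}/\pi^\flat)$, which is almost zero for $i>0$ and equals the expected value for $i=0$. Derived completeness then lifts these statements to the completed sheaf, in the style of the proof of \Cref{thm:projection_formula}. The sheaf condition follows from the resulting Čech acyclicity in degree $0$; this is not merely almost exactness, since $H^0$ of $\Ainf$ is an honest sheaf and flatness preserves kernels.

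I expect the main obstacle to be making the base change identification precise and functorial, and keeping the flat crystalline case under control. The issue is justifying that $\rAinf(S)\coprod B$ equals the completed tensor product over $A_\perf$ even though $B$ need not admit a map from $A$. I would handle this by descent along a faithfully flat cover of $B$ by prisms over $A$ (\Cref{cor:regular prism cover}), using \Cref{lem:inj_and_completely_proj_mod} to keep the injectivity needed for descent of $H^0$. A second delicate point is passing almost vanishing through the derived $(p,J)$-completion when $J$ is only locally principal, which I would handle Zariski locally on $B$.
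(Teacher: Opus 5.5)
Your argument is sound up to the two repairs below, and it is a mild but genuine variant of the paper's route.

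\textbf{Comparison with the paper.} The paper fixes $U$ and a pro-\'etale cover $U^0\to U$. It writes each $\Prism_{S^k_{\overline{B}}/B}$ as the completed base change of the $\rAinf(S)$-module $\Prism_{S_{\overline{B}}/B}$ along $\rAinf(S)\to\rAinf(S^k)$. It then checks left exactness and almost acyclicity of the resulting \v{C}ech--Alexander complex modulo $(p,I)^n$, where the complex becomes $\widehat{\mathcal{O}}^+/p$ tensored with the flat $S/p$-module $\Prism_{S_{\overline{B}}/B}/(p,I)$.

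You instead fix one base $A_\perf$ and one module $P=(A_\perf,I_\perf)\coprod(B,J)$, independent of $U$. You identify the whole restricted presheaf with $\Ainf\widehat{\otimes}_{A_\perf}P$, which is essentially what the paper does only afterwards, for $B=A$, in \Cref{cor:explicit_desciption_of_period_sheaf}. This gives $B$-linearity, Frobenius equivariance and flatness over $\Ainf$ in one stroke. Lazard's theorem then plays the role of the paper's flat base change of an almost zero complex.

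The identification is formal, so the descent along covers of $B$ that you anticipate is unnecessary. Note also that \Cref{rmk:base_change_presheaf} is base change in the prism variable and is not the relevant input. By the universal property, $\rAinf(S)\coprod B\simeq\rAinf(S)\coprod_{A_\perf}P$, and a pushout of bounded prisms along the completely flat map $A_\perf\to P$ is the completed tensor product.

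\textbf{First repair: the flat crystalline case.} Here $P$ is itself crystalline, so $IP=pP$ and $d$ maps to $p$ times a unit. Then $\Kos(P;p,d)\simeq\Kos(P;p,0)$ has $H^{-1}=P/p\neq0$, so $P$ is \emph{not} $(p,I)$-completely flat over $A_\perf$. For the same reason $\Prism_{S_{\overline{B}}/B}/p$ is killed by the image $\overline{d}$ of $d$ in $S^\flat$, which is a non-zero-divisor. So the complete flatness over $\Ainf$ asserted in (i) can only hold for flat $(B,J)$.

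What survives, and is all the paper actually uses there, is flatness of $P/p$ over $A_\perf/(p,I)=\overline{A}_\perf/p$. This follows from \Cref{cor:coproduct:perfect_with_framed} and flatness of $\overline{B}$ over $X_{p=0}$. Since $\overline{d}$ is regular on $S^\flat$, it still makes $(\rAinf(S)\otimes^L_{A_\perf}P)^\wedge_p$ discrete with reduction $S/p\otimes_{\overline{A}_\perf/p}P/p$, so your mod $p$ computation goes through.

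\textbf{Second repair: honest exactness in degree $0$.} Your reduction only yields $\mathrm{H}^0(U,\widehat{\mathcal{O}}^+/p)\almosteq S/p$, because $\mathrm{H}^1(U,\widehat{\mathcal{O}}^+)[p]$ is merely almost zero. Moreover, ``flatness preserves kernels'' is not automatic for completed base change along a merely completely flat algebra (cf.\ \Cref{eg:counter_eg_of_injectivity_preserved_by_bc}). So what you actually get is that $\ker\bigl(\rAinf(S^0)\widehat{\otimes}P\rightrightarrows\rAinf(S^1)\widehat{\otimes}P\bigr)$ contains $\rAinf(S)\widehat{\otimes}P$ with almost zero quotient.

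In the flat case you can finish as follows. The quotient lies in $N=(\rAinf(S^0)/\rAinf(S))\widehat{\otimes}_{A_\perf}P$. Since $(S^0)^\flat\cap S^\flat[1/\overline{d}]=S^\flat$ (because $S$ is the $+$-ring of $U$ and $U^0\to U$ is surjective), the module $\rAinf(S^0)/\rAinf(S)$ is $(p,d)$-regular. Hence $N$ is $p$-torsionfree with $N/p$ free of $d$-torsion, and so $N$ has no almost zero elements. The paper's appeal to the sheaf property of $\widehat{\mathcal{O}}^+/p$ needs the same supplement.
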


\begin{proof}
	We let $U=\Spa(S[1/p],S)$ be an affinoid perfectoid space over $X$ that admits a map from $\Spf(\overline{A}_\perf)_\eta$.
	By \Cref{cor:coproduct:perfect_with_framed}.\ref{cor:coproduct:perfect_with_framed_general}, we know the relative prismatic cohomology $\Prism_{S_{\overline{B}}/B}\in D_{(p,I)\text{-comp}}(B)$ lives in cohomological degree zero and is completely flat over $\rAinf(S)$.
	So it is left to check the sheaf property and calculate the higher cohomology.
	
	We let $U^0=\Spa(S^0[1/p],S^0)$ be any pro-\'etale cover of $U$, and let $U^n=\Spa(S^n[1/p],S^n)$ be the $(n+1)$-th self product of $U^0$ over $U$.
	So the cosimplicial object $U^\bullet$ forms the \v{C}ech nerve of the pro-\'etale covering map $U^0\to U$.
	To check the sheaf condition, it is equivalent to showing that the sequence below is left exact
	\begin{equation}
		\label{eq:thm:prismatic_period_sheaf:1}
		0 \longrightarrow \Prism_{S_{\overline{B}}/B} \longrightarrow \Prism_{S^0_{\overline{B}}/B} \xrightarrow{p_0-p_1} \Prism_{S^1_{\overline{B}}/B}.
	\end{equation}
    Here we note that by the coproduct presentation in \Cref{cor:coproduct:perfect_with_framed}.\ref{cor:coproduct:perfect_with_framed_general}, the sequence in (\ref{eq:thm:prismatic_period_sheaf:1}) is naturally equivalent to the sequence
    \begin{equation}
    	\label{eq:thm:prismatic_period_sheaf:2}
    			\left( \Prism_{S_{\overline{B}}/B} \otimes_{\rAinf(S)} \bigl( 0 \longrightarrow  \rAinf(S) \longrightarrow \rAinf(S^0) \xrightarrow{p_0-p_1} \rAinf(S^1) \bigr) \right)^\wedge_{(p,I)}.
    \end{equation}
	Note that each term in the sequence above is $I$-complete and $I$-torsionfree.
	So by the left exactness of the inverse limit functor, it suffices to check the left exactness of (\ref{eq:thm:prismatic_period_sheaf:2}) at their mod $(I,p)$-reduction.
	In the latter setting, the claim follows from the sheaf property of $\rAinf(-)/(p,I)=\widehat{\mathcal{O}}^+/p$ on $X_{\eta,\pe}$ (\cite[Prop.\ 8.5]{Sch22}), together with the flatness of $\Prism_{S_{\overline{B}}/B}/(p,I)$ over $\rAinf(S)/(p,I)=S/p$.
	
	We let $\mathfrak{m}$ be the ideal $([\pi^\flat]^{1/p^n}, n>0)$ in $\rAinf(S)$.
	To show the almost vanishing of the higher cohomology, by the similar reasoning as above, it is equivalent to checking that for any $n>0$, the $n$-th cohomology group of the following complex is almost zero:
	\begin{equation}
		\label{eq:thm:prismatic_period_sheaf:Cech-Alex}
		C\colonequals \left( \Prism_{S_{\overline{B}}/B} \otimes^L_{\rAinf(S)} \bigl( \rAinf(S^0) \xrightarrow{p_0-p_1} \rAinf(S^1) \xrightarrow{p_0-p_1+p_2} \cdots \bigr) \right)^\wedge_{(p,I)}.
	\end{equation}
    Note that by the exactness of the mod $(p,I)^n$ reduction of (\ref{eq:thm:prismatic_period_sheaf:2}) as in the last paragraph above, we have $\mathrm{H}^0(C)\simeq R\lim_n \mathrm{H}^0(C\otimes^L_{\rAinf(S)} \rAinf(S)/(p,I)^n)$.
    In particular, the canonical truncation of $C$ satisfies the isomorphism
    \[
     \tau^{\geq 1} C \simeq R\lim_n \tau^{\geq 1} (C\otimes^L_{\rAinf(S)} \rAinf(S)/(p,I)^n).
    \]
    We then claim that each $M_n\colonequals \tau^{\geq 1}(C\otimes^L_{\rAinf(S)} \rAinf(S)/(p,I)^n)$ is almost zero.
    Granting the claim, since each reduction $M_n$ is a complex over $\rAinf(S)/(\mathfrak{m}, (p,I)^n)$, and by taking the limit we see $\tau^{\geq 1}C=R\lim_n M_n$ is a complex over $\rAinf(S)/ \mathfrak{m}^\wedge_{(p,I)}$.
    Since the surjection $\rAinf(S)\to  \rAinf(S)/\mathfrak{m}^\wedge_{(p,I)}$ factors through $\rAinf(S)\to \rAinf(S)/\mathfrak{m}$, we know $\tau^{\geq 1}C$ is also a complex over $\rAinf(S)/\mathfrak{m}$, and hence almost zero with respect to $\mathfrak{m}$.

	Now we check that $\tau^{\geq 1}\bigl(C\otimes^L_{\rAinf(S)} \rAinf(S)/(p,I)^n\bigr)$ is almost zero, and by induction we let $n=1$.
	In this case, we recall that the ring $\Prism_{S_{\overline{B}}/B}/(p,I)$ is flat over $\rAinf(S)/(p,I)=S/p$.
	Since being almost zero is preserved under flat base change, it amounts to show that the $\rAinf(S)$-complex below is almost zero with respect to $\mathfrak{m}$
	\[
	\tau^{\geq 1}\left( \bigl( \rAinf(S^0) \xrightarrow{p_0-p_1} \rAinf(S^1) \xrightarrow{p_0-p_1+p_2} \cdots \bigr) \otimes^L_{\rAinf(S)} S/p \right).
	\]
	The latter was shown in \cite[Prop.\ 8.5]{BS22}, which finishes the proof.
\end{proof}
Note that in the special case when $(B,J)=(A,I)$ is a framed regular prism, we obtain the following explicit description of the prismatic period sheaf.
\begin{corollary}
	\label{cor:explicit_desciption_of_period_sheaf}
	Let $X$ be a regular $p$-adic formal scheme, let $(A,I)\in X_\Prism$ be a framed regular prism with framing $\Sigma\subset A^\times$, and let $(A_\perf,I_\perf)$ be the perfection.
	There is an isomorphsm of Frobenius equivalent $(p,I)$-complete $\Ainf$-linear sheaves on $X_{\eta,\pe}|_{\Spf(\overline{A}_\perf)_\eta}$
	\[
	\Ainf\{\frac{\iota(t_1)\otimes 1-1\otimes t_1}{I},\ldots,\frac{\iota (t_d)\otimes 1-1\otimes t_d}{I}\} \longrightarrow \mathcal{H}^0(\Prism^\pre_{(-)_{\overline{A}}/A}).
	\]
	In particular, $\mathcal{H}^0(\Prism^\pre_{(-)_{\overline{A}}/A})$ is completely free over $\Ainf$.
\end{corollary}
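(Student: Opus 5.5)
The approach is to combine the sheaf property of \Cref{thm:prismatic_period_sheaf} with the explicit coproduct formula of \Cref{cor:coproduct:perfect_with_framed}.\ref{cor:coproduct:perfect_with_framed_special}, and then check that the resulting identifications glue.

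First, we construct the map. For every affinoid perfectoid $U=\Spa(S[1/p],S)$ in $X_{\eta,\pe}|_{\Spf(\overline{A}_\perf)_\eta}$, the structure map factors through $\overline{A}_\perf$. The initiality of $(A_\perf,I_\perf)$ and of $(\rAinf(S),\ker\widetilde{\theta})$ then produces a map of prisms $\iota\colon (A,I)\to (A_\perf,I_\perf)\to (\rAinf(S),\ker\widetilde{\theta})$, functorial in $U$. Through \Cref{rmk:Ainf_to_relative_prismatic_coh} we have the $\Ainf$-algebra map $\rAinf(S)\to \Prism_{S_{\overline{A}}/A}$. The elements $\iota(t_j)\otimes 1-1\otimes t_j$ become divisible by $I$ in the target, because they vanish in $\overline{\Prism}_{S_{\overline{A}}/A}$, which receives compatible maps from $S$ and from $\overline{A}$. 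The universal property of the complete $\delta$-envelope therefore gives a Frobenius equivariant $\Ainf$-linear map of presheaves
\[
\Ainf\{\tfrac{\iota(t_j)\otimes 1-1\otimes t_j}{I}\}_j \longrightarrow \mathcal{H}^0(\Prism^\pre_{(-)_{\overline{A}}/A}).
\]

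Second, we show the map is an isomorphism on each affinoid perfectoid $U$. Here \Cref{thm:prismatic_period_sheaf}.\ref{thm:prismatic_period_sheaf_coh} identifies the target with $\Prism_{S_{\overline{A}}/A}$. By \Cref{prop:equivalence_of_cat_for_relative_prismatic_site} and \Cref{cor:coproduct:perfect_with_framed}.\ref{cor:coproduct:perfect_with_framed_general}, the ring $\Prism_{S_{\overline{A}}/A}$ is the coproduct $(\rAinf(S),\ker\widetilde{\theta})\coprod(A,I)$. The explicit formula in \Cref{cor:coproduct:perfect_with_framed}.\ref{cor:coproduct:perfect_with_framed_special}, which rests on \Cref{thm:coproduct_in_general}.\ref{thm:coproduct of prism regular and regular} applied with $B_1=\rAinf(S)$ and $f=\iota$, says that this coproduct is exactly the complete $\delta$-envelope on the source side. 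One has to check that the map constructed in the first step agrees with that identification. This follows from uniqueness: both maps are maps of prisms over $\rAinf(S)$ and over $A$, and the source is initial among such objects. Since affinoid perfectoids form a basis and the target is a sheaf by \Cref{thm:prismatic_period_sheaf}.\ref{thm:prismatic_period_sheaf_property}, the source presheaf is a sheaf as well, and the map is an isomorphism of sheaves.

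Third, we deduce complete freeness. By \Cref{thm:coproduct_in_general}.\ref{thm:coproduct of prism regular and regular}, the envelope is completely free over $\rAinf(S)$. The mod $I$ reduction is $S\{u_1,\dots,u_d\}^{\pd}$, which has the divided power monomials as a basis. Lifting these monomials gives a basis of a free module $F$ with $F^\wedge_{(p,I)}$ equal to the envelope. The monomials are compatible under restriction along $U'\to U$, so the freeness holds at the level of sheaves over $\Ainf$.

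The main obstacle is the compatibility and functoriality in the second step: the framing is written through $\iota(t_j)$, which depends on the chosen map $A\to A_\perf\to\rAinf(S)$. We need to know this map is canonical, meaning the unique one through $A_\perf$ compatible with $\Spf(S)\to\Spf(\overline{A}_\perf)$. That canonicity is what makes the explicit presentation glue across the slice site. I would settle it using rigidity of maps between perfect prisms \cite[Thm.\ 3.10]{BS22}.
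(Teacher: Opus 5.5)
Your proposal is correct and follows essentially the paper's argument. The paper likewise produces the canonical map $A\to A_\perf\to\rAinf(S)$, functorially in $U$, from \cite[Thm.\ 3.10]{BS22}. It identifies $\Prism_{S_{\overline{A}}/A}$ with the coproduct $(\rAinf(S),\ker\widetilde{\theta})\coprod(A,I)$, phrased there as the base change of $(A,I)\coprod(A,I)$ along that map. It then concludes from \Cref{cor:coproduct:perfect_with_framed}.\ref{cor:coproduct:perfect_with_framed_special}, with the sheaf property coming from \Cref{thm:prismatic_period_sheaf}. As in the paper, whose diagram writes $R$ for $\overline{A}$, your claim that $\iota(t_j)\otimes 1-1\otimes t_j$ vanishes in $\overline{\Prism}_{S_{\overline{A}}/A}$ tacitly uses $\overline{A}=\mathcal{O}_X$; this gives $S_{\overline{A}}=S$, so the two maps $\overline{A}\to S_{\overline{A}}$ coincide.
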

\begin{proof}
	By assumption, for each affinoid perfectoid space $U=\Spa(S[1/p],S)\in X_{\eta,\pe}|_{\Spf(\overline{A}_\perf)_\eta}$, there is a canonical commutative diagram of rings that is functorial in $U$:
	\[
	\begin{tikzcd}
		A \arrow[r,"\text{perfection}"] \arrow[d, "\text{mod}~I"'] & A_\perf \arrow[d, "\text{mod}~I"']  \arrow[r, dashed] & \rAinf(S) \arrow[d, "\widetilde{\theta}"'] \\
		R \ar[r] & \overline{A}_\perf \ar[r] & S,
	\end{tikzcd}
	\]
	where the existence of the dashed arrow follows from the equivalence of the categories between perfectoid rings and perfect prisms (\cite[Thm.\ 3.10]{BS22}).
	In particular, we see the coproduct $(\rAinf(S),\ker(\widetilde{\theta}))\coprod (A,I)$ is the base change of $(A,I)\coprod (A,I)$ along the map $f:(A,I)\to (\rAinf(S),\ker(\widetilde{\theta}))$ as above.
	Hence the claim follows from \Cref{cor:coproduct:perfect_with_framed}.\ref{cor:coproduct:perfect_with_framed_special}.
\end{proof}


\subsection{Hodge--Tate and de Rham specializations}
\label{sub:specialize}
Similar to the prismatic cohomology, one of the central features of the prismatic period sheaf is that it admits various realizations, as we shall explain in this subsection.

We start with the Hodge--Tate specialization map.
\begin{definition}[Hodge--Tate specialization]
	\label{def:HT_period_sheaf}
	Let $X$ be a bounded $p$-adic formal scheme, and let $(B,J)\in X_\Prism$ be a bounded prism.
	\begin{enumerate}[label=\upshape{(\roman*)}]
		\item The  \emph{Hodge--Tate period presheaf} on $X_{\eta,\pe}$, denoted as $\overline{\Prism}^\pre_{(-)_{\overline{B}}/B}$, is an $\widehat{\mathcal{O}}^+$-linear presheaf of complexes that sends an affinoid perfectoid space $U=\Spa(S[1/p],S) \in X_{\eta,\pe}$ onto the derived Hodge--Tate prismatic cohomology $\overline{\Prism}_{S_{\overline{B}}/B}\in D_{p\text{-comp}}(\overline{B})\subset D(B)$.
		\item The \emph{Hodge--Tate specialization map} of presheaves on $X_{\eta,\pe}$
		\[
		\gamma^\HT\colon \Prism^\pre_{(-)_{\overline{B}}/B} \longrightarrow \overline{\Prism}^\pre_{(-)_{\overline{B}}/B},
		\]
		is defined by the derived mod $J$ reduction on the source.
	\end{enumerate}
\end{definition}
The conjugate filtration on the relative Hodge--Tate cohomology (\cite{BS22}) naturally extends to a filtration on the Hodge--Tate period presheaf.
\begin{corollary}[Conjugate filtration]
	\label{cor:conjuage_fil}
	Let $X$ be a bounded $p$-adic formal scheme, and let $(B,J)\in X_\Prism$ be a bounded prism.
	\begin{enumerate}[label=\upshape{(\roman*)}]
		\item\label{cor:conjuage_fil_graded} There is an $\mathbb{N}$-indexed ascending and exhaustive \emph{conjugate filtration} $\Fil^\conj_\bullet$ on $\overline{\Prism}^\pre_{(-)_{\overline{B}}/B}$, with its $i$-th graded piece being the presheaf of complexes $\wedge^i \mathbb{L}^{\pre}_{(-)_{\overline{B}}/\overline{B}}\{-i\}[-i]$.
		\item\label{cor:conjuage_fil_non-canonical} Assume $(A,I)\in X_\Prism$ is a framed regular prism with the framing $\Sigma\subset A^\times$, and let $\overline{A}_\perf$ be the reduction of the perfection of $A$.
		In addition, we assume $(B,J)$ is a flat prism or a flat crystalline prism.
		The restriction of $\gr^\conj_i \overline{\Prism}^\pre_{(-)_{\overline{B}}/B}$ on $X_{\eta,\pe}|_{\Spf(\overline{A}_\perf)_\eta}$ is isomorphic to the presheaf of $p$-complete $\overline{B}$-modules
		\[
		\left(  \Gamma^i \mathbb{L}_{A_0/W}\{-i\} \otimes^L_{A_0} \widehat{\mathcal{O}}^+|_{\Spf(\overline{A}_\perf)_\eta}\otimes^L_{\mathcal{O}_X} \overline{B} \right)^\wedge_p.
		\]
		In particular, the zero-th cohomology sheaves of both $\wedge^i \mathbb{L}^{\pre}_{(-)_{\overline{B}}/\overline{B}}\{-i\}[-i]|_{\Spf(\overline{A}_\perf)_\eta}$ and $\Fil^\conj_i\overline{\Prism}^\pre_{(-)_{\overline{B}}/B}|_{\Spf(\overline{A}_\perf)_\eta}$ are sheaves of finite projective modules over $\widehat{\mathcal{O}}^+_{\overline{B}}|_{\Spf(\overline{A}_\perf)_\eta}$.
	\end{enumerate}
\end{corollary}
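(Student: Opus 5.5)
Part \ref{cor:conjuage_fil_graded} follows by evaluating the conjugate filtration of \cite{BS22} (or its derived form in \cite{BL22a}) sectionwise. For each affinoid perfectoid $U=\Spa(S[1/p],S)\in X_{\eta,\pe}$, the derived Hodge--Tate cohomology $\overline{\Prism}_{S_{\overline{B}}/B}$ is, by left Kan extension from polynomial $\overline{B}$-algebras, equipped with an increasing exhaustive filtration whose $i$-th graded piece is $\wedge^i\mathbb{L}_{S_{\overline{B}}/\overline{B}}\{-i\}[-i]$ ($p$-completed). This is the Hodge--Tate comparison. Since both the filtration and the identification of graded pieces are functorial in $S_{\overline{B}}$, and hence in $U$, they assemble into a filtration on the presheaf $\overline{\Prism}^\pre_{(-)_{\overline{B}}/B}$ with the stated graded presheaves. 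Exhaustiveness holds sectionwise, so it holds for the presheaf.

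For part \ref{cor:conjuage_fil_non-canonical}, I would first identify the cotangent complex. Take $U$ over $\Spf(\overline{A}_\perf)_\eta$, so that the map $A\to A_\perf\to \rAinf(S)$ of \Cref{cor:explicit_desciption_of_period_sheaf} exists. Then \Cref{prop:relative_prismatic_coh_of_perfectoid}.\ref{prop:relative_prismatic_coh_of_perfectoid_cotangent_complex_2} gives $\mathbb{L}_{S/X}\simeq \mathbb{L}_{A_0/W}\otimes_{A_0}S[1]$, functorially in $S$. Under the flatness hypothesis on $(B,J)$, the proof of \Cref{prop:relative_prismatic_coh_of_perfectoid}.\ref{prop:relative_prismatic_coh_of_perfectoid_initial_prism} yields
\[
\mathbb{L}_{S_{\overline{B}}/\overline{B}}\simeq\bigl(\mathbb{L}_{A_0/W}\otimes_{A_0}S\otimes^L_{\mathcal{O}_X}\overline{B}\bigr)^\wedge_p[1].
\]
Since $\mathbb{L}_{A_0/W}$ is finite projective, the décalage identity $\wedge^i(P[1])\simeq \Gamma^i(P)[i]$ gives
\[
\wedge^i\mathbb{L}_{S_{\overline{B}}/\overline{B}}\{-i\}[-i]\simeq\bigl(\Gamma^i\mathbb{L}_{A_0/W}\{-i\}\otimes_{A_0}S\otimes^L_{\mathcal{O}_X}\overline{B}\bigr)^\wedge_p.
\]
The identification is natural in $U$ because the map from $A$ is canonical (it factors through $A_\perf$, and maps of perfect prisms are unique). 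This gives the claimed isomorphism of presheaves.

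Next I would deduce finite projectivity. The term $S_{\overline{B}}=(S\otimes_{\mathcal{O}_X}\overline{B})^\wedge_p$ is discrete by flatness, so the graded pieces are discrete and finite projective over $\widehat{\mathcal{O}}^+_{\overline{B}}$, being base changes of the finite projective module $\Gamma^i\mathbb{L}_{A_0/W}$. Finite projectivity of $\Fil^\conj_i$ then follows by induction on $i$: the extensions $0\to\Fil_{i-1}\to\Fil_i\to\gr_i\to 0$ have discrete finite projective outer terms, hence so is the middle term, both on $\mathcal{H}^0$ and as presheaves.

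The main obstacle is the sheaf versus presheaf distinction in the last sentence of the statement. I would pass to $\mathcal{H}^0$ and use that $U\mapsto (P\otimes_{A_0}S)_{\overline{B}}$ is a sheaf for a finite projective $P$. This reduces, as in \Cref{thm:prismatic_period_sheaf}, to the sheaf property of $\widehat{\mathcal{O}}^+/p$ together with flatness of $\overline{B}$ over $\mathcal{O}_X$, followed by a completion argument.
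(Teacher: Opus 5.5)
Your proposal is correct and follows essentially the same route as the paper. Part (i) is the $p$-complete left Kan extension of the conjugate filtration of \cite{BS22}. Part (ii) comes from the cotangent complex identification in \Cref{prop:relative_prismatic_coh_of_perfectoid}, where d\'ecalage turns $\wedge^i$ of the shifted finite projective module into $\Gamma^i$, combined with the sheaf property of $\widehat{\mathcal{O}}^+$ and the flatness of $\overline{B}$. You in fact spell out more than the paper does: the naturality in $U$ via the unique lift $A_\perf\to\rAinf(S)$, and the induction that deduces finite projectivity of $\Fil^\conj_i$ from that of its graded pieces.
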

Here we emphasize that the isomorphism in \Cref{cor:conjuage_fil}.\ref{cor:conjuage_fil_non-canonical} is \emph{not} Galois equivariant with respect to the pro-\'etale cover $\Spf(\overline{A}_\perf)_\eta \to \Spf(\overline{A})_\eta$.
\begin{proof}
	Part \ref{cor:conjuage_fil_graded} follows from the $p$-complete left Kan extension of the usual conjugate filtration as in \cite[Thm.\ 1.14]{BS22}.
	The identification of the cotangent complex in \ref{cor:conjuage_fil_non-canonical} follows from \Cref{prop:relative_prismatic_coh_of_perfectoid}.(\ref{prop:relative_prismatic_coh_of_perfectoid_cotangent_complex_2} and \ref{prop:relative_prismatic_coh_of_perfectoid_initial_prism}).
	The sheaf property follows from that of $\widehat{\mathcal{O}}^+$ (\cite[Prop.\ 8.5]{Sch22}) together with the flatness.
\end{proof}

Another useful construction is the de Rham specialization constructed below.
\begin{definition}[de Rham specialization]
	\label{def:dR_period_sheaf}
		Let $X$ be a bounded $p$-adic formal scheme, and let $(B,J)\in X_\Prism$ be a bounded prism.
	\begin{enumerate}[label=\upshape{(\roman*)}]
		\item The  \emph{de Rham period presheaf} on $X_{\eta,\pe}$, denoted as  $\dR^{\pre}_{(-)_{\overline{B}}/\overline{B}}$, is an $\overline{B}$-linear presheaf of $p$-complete complexes that sends an affinoid perfectoid space $U=\Spa(S[1/p],S) \in X_{\eta,\pe}$ onto the $p$-complete derived de Rham complex $\dR_{S_{\overline{B}}/\overline{B}}\in D_{p\text{-comp}}(\overline{B})\subset D(B)$.
		\item The \emph{de Rham specialization map} of presheaves on $X_{\eta,\pe}$ is the canonical composition
		\[
		\gamma^\dR\colon \Prism^\pre_{(-)_{\overline{B}}/B} \longrightarrow \Prism^{(1),\pre}_{(-)_{\overline{B}}/B}  \longrightarrow \dR^\pre_{(-)_{\overline{B}}/\overline{B}}.
		\]
	\end{enumerate}
\end{definition}
		By the relative prismatic--de Rham comparison theorem (\cite[Thm.\ 1.8.(3)]{BS22}, \cite[Prop.\ 5.2.3]{BL22a}), the de Rham realization map is equal to the $p$-complete derived pullback of the source along the composition $B \xrightarrow{\varphi_B} B \xrightarrow{\text{mod}~J~} \overline{B}$.
\begin{remark}[Hodge filtration]
	\label{rmk:Hodge_fil}
	Let $X$ be a bounded $p$-adic formal scheme, and let $(B,J)\in X_\Prism$ be a bounded prism.
	There is an $\mathbb{N}$-indexed descending \emph{Hodge filtration} on $\dR^\pre_{(-)_{\overline{B}}/\overline{B}}$, with its $i$-th graded piece being the presheaf of the complex $\wedge^i \mathbb{L}_{(-)_{\overline{B}}/\overline{B}}[-i]$.
	The zero-th graded piece then produces a natural map of presheaves of rings
	\[
	\dR^\pre_{(-)_{\overline{B}}/\overline{B}} \longrightarrow (\widehat{\mathcal{O}}^+\otimes_{\mathcal{O}_X} \overline{B})^\wedge_p.
	\]
	Moreover, we can define the Hodge-filtered completed de Rham period presheaf $\widehat{\dR}^{\pre}_{(-)_{\overline{B}/\overline{B}}}$ over $X_{\eta,\pe}$.
\end{remark}

Analogous to the Hodge--Tate period sheaf, we also obtain the following property of the de Rham period sheaf when the base prism admits a flatness assumption.
Note that this in particular generalizes the construction of \cite{GL21} to the regular setting.
\begin{proposition}
	\label{prop:property_of_dR}
	Let $X$ be a regular $p$-adic formal scheme, and let $(A,I)\in X_\Prism$ be a framed regular prism with framing $\Sigma\subset A^\times$, with $\overline{A}_\perf$ the reduction of its perfection.
	Assume $(B,J)\in X_\Prism$ is a flat prism or a flat crystalline prism.
	\begin{enumerate}[label=\upshape{(\roman*)}]
		\item\label{prop:property_of_dR_sheaf} The restriction of the presheaf $\mathcal{H}^0(\dR^\pre_{(-)_{\overline{B}}/\overline{B}})$ on $X_{\eta,\pe}|_{\Spf(\overline{A}_\perf)_\eta}$ is a sheaf in $\overline{B}$-modules.
		\item\label{prop:property_of_dR_explicit} Assume $(B,J)=(A,I)$.
		Then the restriction of the presheaf $\dR^\pre_{(-)_{\overline{A}}/\overline{A}}$ onto $X_{\eta,\pe}|_{\Spf(\overline{A}_\perf)_\eta}$ is naturally isomorphic to the complete divided power polynomial ring over $\widehat{\mathcal{O}}^+$ as below
		\[
		\dR^\pre_{(-)_{\overline{A}}/\overline{A}}|_{\Spf(\overline{A}_\perf)_\eta} \simeq \widehat{\mathcal{O}}^+|_{\Spf(\overline{A}_\perf)_\eta}\{\overline{t}_i\otimes1 -1\otimes t_i\}^\pd,
		\]
		where $\overline{t}_i$ is the image of $t_i\in A$ along the map $A\to \overline{A} \to \widehat{\mathcal{O}}^+|_{\Spf(\overline{A}_\perf)_\eta}$, and the base map $\overline{A}\to \dR^\pre_{(-)_{\overline{A}}/\overline{A}}$ sends $t_i$ onto $1\otimes t_i$.
	\end{enumerate}
\end{proposition}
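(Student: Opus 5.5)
The plan is to follow the proof of \Cref{thm:prismatic_period_sheaf} and \Cref{cor:explicit_desciption_of_period_sheaf}, substituting the relative prismatic--de Rham comparison for prismatic cohomology. By the remark after \Cref{def:dR_period_sheaf}, on an affinoid perfectoid $U=\Spa(S[1/p],S)$ the value $\dR_{S_{\overline{B}}/\overline{B}}$ is the $p$-complete derived base change of $\Prism_{S_{\overline{B}}/B}$ along $B\xrightarrow{\varphi_B}B\to\overline{B}$.

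For part \ref{prop:property_of_dR_sheaf}, restrict to $U$ over $\Spf(\overline{A}_\perf)_\eta$.
\begin{enumerate}[label=\upshape{(\arabic*)}]
\item By \Cref{thm:prismatic_period_sheaf}, $\Prism_{S_{\overline{B}}/B}$ is discrete and completely flat over $\rAinf(S)$. Its formation is functorial in $U$ via the coproduct description of \Cref{cor:coproduct:perfect_with_framed}.\ref{cor:coproduct:perfect_with_framed_general}.
\item The Frobenius twist is the base change of the coproduct along $\varphi_B$. So $\Prism^{(1)}_{S_{\overline{B}}/B}$ is the coproduct of $(\rAinf(S),\ker\widetilde\theta)$ with $(B,J)$ twisted by the Frobenius, and in particular it is again completely flat over $\rAinf(S)$ via $\varphi$. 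Reducing mod $J$ shows that $\dR_{S_{\overline{B}}/\overline{B}}$ is discrete and $p$-completely flat over $\rAinf(S)/\varphi(\ker\widetilde\theta)$.
\item Alternatively, the Hodge filtration of \Cref{rmk:Hodge_fil} has graded pieces $\wedge^i\mathbb{L}_{S_{\overline{B}}/\overline{B}}[-i]$. These are completed divided powers of a finite projective module in degree $0$, by \Cref{prop:relative_prismatic_coh_of_perfectoid}.\ref{prop:relative_prismatic_coh_of_perfectoid_cotangent_complex_2}, and this also gives discreteness.
\item For the sheaf property, take a pro-étale Čech nerve $U^\bullet$ of $U$. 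Write the equalizer sequence for $\dR$ as
\[
\bigl(\dR_{S_{\overline{B}}/\overline{B}}\otimes_{\rAinf(S)}(\rAinf(S)\to\rAinf(S^0)\to\rAinf(S^1))\bigr)^\wedge_p,
\]
with $\rAinf$ mapping through $\varphi$.
\item Reduce mod $p$, where $\rAinf(-)/(p,\varphi(\ker\widetilde\theta))\cong\widehat{\mathcal O}^+/p$ after untwisting Frobenius. Left exactness then follows from the sheaf property of $\widehat{\mathcal O}^+/p$ (\cite[Prop.\ 8.5]{Sch22}) together with flatness, exactly as in \Cref{thm:prismatic_period_sheaf}. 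Pass to the limit over $p^n$ using $p$-torsionfreeness.
\end{enumerate}

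For part \ref{prop:property_of_dR_explicit}, take $(B,J)=(A,I)$ and start from \Cref{cor:explicit_desciption_of_period_sheaf}, which gives
\[
\mathcal H^0(\Prism^\pre_{(-)_{\overline A}/A}) \cong \Ainf\{(\iota(t_i)\otimes1-1\otimes t_i)/I\}.
\]
\begin{enumerate}[label=\upshape{(\arabic*)}]
\item Frobenius-twist this presentation and reduce mod $I$. The base change of $\Ainf$ along $\varphi$ modulo $I$ is $\Ainf/\ker\widetilde\theta=\widehat{\mathcal O}^+$, since $\varphi^{-1}$ identifies $\varphi(\ker\widetilde\theta)$ with $\ker\widetilde\theta$ up to the chosen twist.
\item The remaining task is to identify the twisted prismatic envelope modulo $I$. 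Mimicking the computation in \Cref{thm:coproduct_in_general}.\ref{thm:coproduct of prism regular and regular} and in part \ref{prop:intersection_of_Frobenius_of_base_with_coproduct:cohomology} of \Cref{prop:intersection_of_Frobenius_of_base_with_coproduct}, I expect the Frobenius twist of $\Ainf\{v_i/d\}$, reduced mod $d$, to be the divided power envelope of the diagonal. This is the standard statement that $\varphi^*$ of a prismatic envelope, taken mod $I$, is a pd-envelope \cite[Cor.\ 2.39]{BS22}.
\item Concretely, the elements $\varphi(u_i)$ generate divided powers of $v_i=\overline t_i\otimes1-1\otimes t_i$ modulo $d$, using the relation $\varphi(d)\varphi(u_i)=\varphi(v_i)$ and the fact that $\varphi(d)\equiv p\cdot$unit mod $d$.
\item The map $\overline A\to\dR$ sends $t_i$ to $1\otimes t_i$, as recorded in the construction.
\end{enumerate}

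The main obstacle is the bookkeeping of Frobenius twists in part \ref{prop:property_of_dR_explicit}: one has to check that the reduction of $\varphi^*\Ainf$ modulo $I$ really lands in $\widehat{\mathcal O}^+$ with $\overline t_i$ appearing untwisted. This is where the compatibility $\varphi_A(t_i)=t_i^p$ of the framing is used. I would handle it by comparing with the crystalline comparison of Bhatt--Scholze on each $U$.
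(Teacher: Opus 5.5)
Your part (ii) is the paper's argument: the presentation of \Cref{cor:explicit_desciption_of_period_sheaf} combined with $\dR^\pre_{(-)_{\overline{A}}/\overline{A}}=\varphi_A^*\Prism^\pre_{(-)_{\overline{A}}/A}\otimes_A\overline{A}$. Your \v{C}ech argument in part (i) is also essentially what the paper does when $(B,J)=(A,I)$.

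\textbf{The gap is step (2) of part (i) for a general flat or flat crystalline $(B,J)$.} Your left exactness in step (5) needs $\dR_{S_{\overline{B}}/\overline{B}}$ to be $p$-completely flat over $\rAinf(S)$. The only reason you give is that $\Prism^{(1)}_{S_{\overline{B}}/B}$ is the base change of the completely $\rAinf(S)$-flat module $\Prism_{S_{\overline{B}}/B}$ along $\varphi_B$. That base change acts on the $B$-factor, however, and $\Prism_{S_{\overline{B}}/B}$ is flat over $\rAinf(S)$ without being flat over $(\rAinf(S)\otimes_W B)^\wedge$. So the inference is only valid when $\varphi_B$ is itself completely flat.

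This holds for framed regular prisms by \Cref{prop:regular prism Frobenius}, but not for flat prisms in general:
\begin{itemize}
\item By \Cref{prop:reg_imply_Frob-regular}, $\varphi_B$ is not flat whenever $B$ is noetherian and transversal but not regular. This case is allowed, since only $\overline{B}$ is required to be flat over $X$.
\item For flat crystalline prisms such as $D_A(I)$, the Frobenius kills the divided power ideal modulo $p$.
\item Rewriting $\Prism^{(1)}_{S_{\overline{B}}/B}$ as a coproduct with $(B,\varphi(J))$ does not help, since that prism need not be flat over $X$.
\end{itemize}
Step (3) does not repair this either. The Hodge filtration on the uncompleted $\dR$ is not complete, so finite projectivity of the $\gr^i_H$ gives no flatness of $\dR$ itself.

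\textbf{The missing idea is to reduce to the framed case via base change of derived de Rham complexes,} which is how the paper argues.
\begin{enumerate}
\item Run your argument for $(A,I)$.
\item Every $\Spf(S)\to X$ with $\Spf(S)_\eta$ over $\Spf(\overline{A}_\perf)_\eta$ factors through the \'etale map $\Spf(\overline{A})\to X$. Hence $\dR^\pre_{(-)_{\overline{A}}/\overline{A}}\simeq(\dR^\pre_{(-)/X}\otimes_{\mathcal{O}_X}\overline{A})^\wedge_p$ on this site, and so $\mathcal{H}^0(\dR^\pre_{(-)/X})$ is a sheaf of $p$-complete $p$-torsionfree modules.
\item For general $(B,J)$, use $\dR_{S_{\overline{B}}/\overline{B}}\simeq(\dR_{S/X}\otimes^L_{\mathcal{O}_X}\overline{B})^\wedge_p$ together with the $p$-complete flatness of $\overline{B}$ over $X$ to transport left exactness. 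In the crystalline case, use instead the flatness of $\overline{B}$ over $X_{p=0}$ and flat base change of the algebraic derived de Rham complex. No Frobenius twist over $B$ is involved.
\end{enumerate}

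A minor point: with the natural map $\rAinf(S)\to\Prism^{(1)}_{S_{\overline{B}}/B}$, the ideal that dies in $\dR$ is $\varphi^{-1}(\ker\widetilde{\theta})$, not $\varphi(\ker\widetilde{\theta})$. This is harmless for the sheaf property, since these quotients are identified with $\widehat{\mathcal{O}}^+$ by Frobenius compatibly along the \v{C}ech nerve.
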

\begin{proof}
	By \Cref{cor:explicit_desciption_of_period_sheaf}, we know the restriction of $\mathcal{H}^0(\Prism^\pre_{(-)_{\overline{A}}/A})$ on $X_{\eta,\pe}|_{\Spf(\overline{A}_\perf)_\eta}$ is a sheaf in $(p,I)$-complete modules and is completely flat over $\Ainf$.
	On the other hand, by \Cref{prop:regular prism Frobenius} the map $\varphi_A:A\to A$ is finite flat.
	So the zero-th cohomology sheaf of the Frobenius pullback $\Prism^{(1),\pre}_{(-)_{\overline{A}}/A}=\varphi_A^* \Prism^\pre_{(-)_{\overline{A}}/A}$ is a sheaf and the composition $\Ainf\to \Prism^\pre_{(-)_{\overline{A}}/A} \to \Prism^{(1),\pre}_{(-)_{\overline{A}}/A}$ is completely flat as well.
	In particular, thanks to the $I$-completeness and $I$-torsionfreeness of $\mathcal{H}^0(\Prism^{(1),\pre}_{(-)_{\overline{A}}/A})$, the mod $I$ reduction is a sheaf in $p$-completely flat modules over $\Ainf/\varphi^{-1}(I)$.
	The latter, combined with the $p$-complete sheaf property of $\Ainf/\varphi^{-1}(I)$, implies the sheaf property of $\mathcal{H}^0(\dR^\pre_{(-)_{\overline{A}}/\overline{A}})$ over $X_{\eta,\pe}|_{\Spf(\overline{A}_\perf)_\eta}$, together with its $p$-complete flatness over $\Ainf/\varphi^{-1}(I)$.
	Here we note that since $\Spf(\overline{A})\to X$ is $p$-completely \'etale and the map $\Spf(\overline{A}_\perf)_\eta\to \Spf(\overline{A})_\eta$ is pro-finite-\'etale, for any affinoid perfectoid space $U=\Spf(S[1/p],S)\in X_{\eta,\pe}|_{\Spf(\overline{A}_\perf)_\eta}$, the structural map $\Spf(S)\to X$ factors through the \'etale morphism $\Spf(\overline{A})\to X$.
	In particular, the restriction $\dR^\pre_{(-)_{\overline{A}}/\overline{A}}$ on $X_{\eta,\pe}|_{\Spf(\overline{A}_\perf)_\eta}$ is naturally isomorphic to that of $\bigl(\dR^\pre_{(-)/X}\otimes_{\mathcal{O}_X} \overline{A}\bigr)^\wedge_p$.
	As a consequence, the presheaf of $p$-complete modules $\mathrm{H}^0(\dR^\pre_{(-)/X})$ is a sheaf of $p$-complete $p$-torsionfree modules on $X_{\eta,\pe}|_{\Spf(\overline{A}_\perf)_\eta}$.
	
	For more general base, if $(B,J)\in X_\Prism$ is a transversal flat prism, then \Cref{ass:flat} implies that the map of $p$-adic formal scheme $\Spf(\overline{B})\to X$ is $p$-completely flat.
	So the sheaf property of $\mathcal{H}^0(\dR^\pre_{(-)_{\overline{B}}/\overline{B}})$ follows from that of the $p$-complete $p$-torsionfree sheaf $\mathcal{H}^0(\dR^\pre_{(-)/X})$, together with the $p$-completely flat base change along $\Spf(\overline{B})\to X$.
	For the second scenario when $(B,J)$ is a crystalline flat prism, the $\mathbb{F}_p$-scheme $\Spec(\overline{B})$ is flat over the reduction $X_{p=0}$ by \Cref{ass:flat}.
	So the sheaf property of the $\mathbb{F}_p$-linear presheaf $\dR^\pre_{(-)_{\overline{B}}/\overline{B}}$ follows from that of the mod $p$ reduction of the sheaf $\dR^\pre_{(-)/X}$ on $X_{\eta,\pe}|_{\Spf(\overline{A}_\perf)_\eta}$, together with the flat base change of the algebraic derived de Rham complex.
	This finishes the proof of Part \ref{prop:property_of_dR_sheaf}.
	
	For part \ref{prop:property_of_dR_explicit}, the claim follows from the explicit presentation of the relative prismatic cohomology in \Cref{cor:explicit_desciption_of_period_sheaf}, together with the formula that $\dR_{(-)_{\overline{A}}/\overline{A}}= (\varphi_A^*\Prism^\pre_{(-)_{\overline{A}}/A})\otimes_A \overline{A}$ (\Cref{def:dR_period_sheaf}).	
\end{proof}


\subsection{Period sheaves and their global sections}
\label{sub:coh_of_sheaf}
In this subsection, we introduce the integral prismatic period sheaves and calculate their global sections.

\begin{definition}[Prismatic period sheaves]
	\label{def:period_sheafification}
	Let $X$ be a regular $p$-adic formal scheme over $\mathcal{O}_K$, and let $(B,J)\in X_\Prism$ be a flat prism or a flat crystalline prism.
	We define the following sheaves of complete complexes over $X_{\eta,\pe}$:
	\begin{enumerate}
		\item The \emph{prismatic period sheaf} over $(B,J)$, denoted as $\Prism_{(-)_{\overline{B}}/B}$, is the $(p,J)$-complete hypercomplete sheafification of the presheaf $\Prism^\pre_{(-)_{\overline{B}}/B}$.
		\item The \emph{Hodge--Tate period sheaf} over $(B,J)$, denoted as $\overline{\Prism}_{(-)_{\overline{B}}/B}$, is the $p$-complete hypercomplete sheafification of the presheaf $\overline{\Prism}^\pre_{(-)_{\overline{B}}/B}$.
		\item The \emph{de Rham period sheaf} over $(B,J)$, denoted as $\dR_{(-)_{\overline{B}}/\overline{B}}$, is the $p$-complete hypercomplete sheafification of the presheaf $\dR^\pre_{(-)_{\overline{B}}/\overline{B}}$.
		\item We use $\mathbb{L}_{(-)_{\overline{B}}/\overline{B}}$ to denote the $p$-complete hypercomplete sheafification of the presheaf of $p$-complete cotangent complex $\mathbb{L}^\pre_{(-)_{\overline{B}}/\overline{B}}$.
	\end{enumerate}
\end{definition}
Analogously, we may define the Frobenius twisted version $\Prism^{(1)}_{(-)_{\overline{B}}/B}$ and its Nygaard-filtered completion $\widehat{\Prism}^{(1)}_{(-)_{\overline{B}}/B}$, together with the Hodge-filtered completion $\widehat{\dR}_{(-)_{\overline{B}}/\overline{B}}$, which we would not repeat.

By \Cref{lem:perfection_of_framed_regular_prism}, given a regular $p$-adic formal scheme $X$ over $\mathcal{O}_K$ and a framed regular prism $(A,I)$ of $X$ with $\Sigma\subset A^\times$, the generic fiber of the reduction of the perfection $\Spf(\overline{A}_\perf)$ forms a pro-\'etale cover of the rigid space $\Spf(\overline{A})_\eta$.
Moreover, since each point in $X$ admits an \'etale neighborhood that lifts to a framed regular prism (\Cref{thm:regular prism}), the above allows us to produce a covering of the pro-\'etale site of $X_\eta$.
Namely, we let $\{(A_i,I_i)\}_i$ be a collection of framed regular prisms in $X_\Prism$ such that $\{\Spf(\overline{A}_i)\}_i$ covers $X$ in the $p$-completely \'etale topology, then the affinoid perfectoid space $Y_\eta=\coprod_i \Spf(\overline{A}_{i,\perf})_\eta$ is a covering object in $X_{\eta,\pe}$.
On the other hand, by \Cref{thm:prismatic_period_sheaf}.\ref{thm:prismatic_period_sheaf_property} (resp. \Cref{cor:conjuage_fil}.\ref{cor:conjuage_fil_non-canonical}, resp. \Cref{prop:property_of_dR}.\ref{prop:property_of_dR_sheaf}), we know the zero-th cohomology presheaf of $\Prism^\pre_{(-)_{\overline{B}}/B}|_{Y_\eta}$ (resp. $\overline{\Prism}^\pre_{(-)_{\overline{B}}/B}|_{Y_\eta}$, resp. $\dR^\pre_{(-)_{\overline{B}}/\overline{B}}|_{Y_\eta}$) satisfies the sheaf condition over $X_{\eta,\pe}|_{Y_\eta}$.
\begin{corollary}
	\label{cor:sheafification_is_unfolding}
	Let $X$ be a regular $p$-adic formal scheme over $\mathcal{O}_K$, let $(B,J)\in X_\Prism$ be a flat prism or a flat crystalline prism, and let $Y_\eta$ be the affinoid perfectoid space mentioned above.
	The restriction of the period sheaf $\mathcal{H}^0(\Prism_{(-)_{\overline{B}}/B})$ on $Y_{\eta,\pe}$ is naturally isomorphic to the presheaf $\Prism^\pre_{(-)_{\overline{B}}/B}|_{Y_\eta}$.
	Similarly for $\overline{\Prism}_{(-)_{\overline{B}}/B}$ and $\dR_{(-)_{\overline{B}}/\overline{B}}$.
\end{corollary}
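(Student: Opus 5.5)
The plan is to compare the sheafification with the presheaf on the slice $X_{\eta,\pe}|_{Y_\eta}$ and to show that the natural map $\Prism^\pre_{(-)_{\overline{B}}/B}|_{Y_\eta}\to \mathcal{H}^0(\Prism_{(-)_{\overline{B}}/B})|_{Y_\eta}$ is an isomorphism. The key observation is that every object of $X_{\eta,\pe}|_{Y_\eta}$ is covered by affinoid perfectoid spaces $U=\Spa(S[1/p],S)$ admitting a map to one of the $\Spf(\overline{A}_{i,\perf})_\eta$. Hence \Cref{thm:prismatic_period_sheaf} applies to each $U$ with $(A,I)=(A_i,I_i)$. It shows that $U\mapsto \Prism_{S_{\overline{B}}/B}$ is concentrated in degree zero, satisfies the sheaf condition, and has almost vanishing higher \v{C}ech cohomology.

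The steps are as follows. First, I would reduce to a single index $i$: since $Y_\eta$ is a disjoint union, the slice site decomposes as a product, and both sides are compatible with this decomposition. Second, on affinoid perfectoid objects of the slice, the presheaf is already discrete (\Cref{cor:coproduct:perfect_with_framed}) and $(p,J)$-complete, and it is a sheaf by \Cref{thm:prismatic_period_sheaf}.\ref{thm:prismatic_period_sheaf_property}. Affinoid perfectoids form a basis of the pro-\'etale topology, so the sheaf $\mathcal{F}$ it determines agrees with the presheaf on this basis. Third, I would show that the $(p,J)$-complete hypercomplete sheafification $\Prism_{(-)_{\overline{B}}/B}$ has $\mathcal{H}^0$ equal to $\mathcal{F}$. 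Hypersheafification does not change $\mathcal{H}^0$ of a sheaf that is already discrete, so the only remaining issue is the derived $(p,J)$-completion step. For this I would use that $\mathcal{F}$ is $(p,J)$-completely flat over $\Ainf$ and that $\Ainf$ is classically complete on affinoid perfectoids. Then each $\mathcal{F}/(p,J)^n$ is discrete and is a sheaf, by the mod-$(p,I)$ argument in the proof of \Cref{thm:prismatic_period_sheaf}. The Mittag-Leffler limit is then discrete and equals $\mathcal{F}$, so completion does not alter $\mathcal{H}^0$.

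The same argument goes through verbatim for $\overline{\Prism}$, using \Cref{cor:conjuage_fil}.\ref{cor:conjuage_fil_non-canonical}: the conjugate graded pieces are finite projective over $\widehat{\mathcal{O}}^+_{\overline{B}}$ on the slice, and the conjugate filtration is exhaustive, so $\mathcal{H}^0$ is a sheaf. It also goes through for $\dR$, using \Cref{prop:property_of_dR}.\ref{prop:property_of_dR_sheaf}, with completion now only $p$-adic.

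I expect the main obstacle to be the third step, namely controlling the interaction between hypercomplete sheafification, derived completion, and $\mathcal{H}^0$. The higher cohomology is only almost zero, so the sheafified complex may have nonzero higher cohomology sheaves. To keep these from affecting $\mathcal{H}^0$, I would use the $t$-exactness of sheafification on the heart and the left $t$-exactness of $R\lim$: the terms $R^1\lim$ of $\mathcal{H}^{-1}$ vanish because everything is discrete. Completion therefore affects $\mathcal{H}^0$ only through $\lim \mathcal{H}^0(-/(p,J)^n)$.
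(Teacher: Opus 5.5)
Your proposal is correct and follows the paper's own reasoning. The paper deduces the corollary directly from two facts: $Y_\eta$ covers $X_{\eta,\pe}$, and by \Cref{thm:prismatic_period_sheaf}.\ref{thm:prismatic_period_sheaf_property} (resp.\ \Cref{cor:conjuage_fil}.\ref{cor:conjuage_fil_non-canonical}, \Cref{prop:property_of_dR}.\ref{prop:property_of_dR_sheaf}) the degree-zero presheaf is already a sheaf on $X_{\eta,\pe}|_{Y_\eta}$, so sheafification leaves it unchanged.

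Your extra bookkeeping for hypercompletion and derived completion is a sound elaboration of what the paper leaves implicit: $\mathcal{H}^0$ of the completion is $\lim_n$ of the $\mathcal{H}^0$ of the reductions, and these are sheaves by the same mod-$(p,I)^n$ argument. There is one small slip in your last paragraph. The almost-zero terms are the groups $\mathrm{H}^i_\pe(U,-)$, not higher cohomology sheaves; the latter vanish in positive degrees because the presheaf is discrete on a basis.
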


We now calculate the global section of various period sheaves.
We start with the de Rham cohomology.
\begin{proposition}
	\label{prop:global_section_dR}
	Let $X=\Spf(R)$ be an affine regular $p$-adic formal scheme over $\mathcal{O}_K$.
	\begin{enumerate}[label=\upshape{(\roman*)}]
		\item\label{prop:global_section_dR_0}
		The structural map below is an isomorphism
		\[
		R \longrightarrow \mathrm{H}^0_\proet(X_\eta, \gr^0_H \dR_{(-)/X}),
		\]
		and $\mathrm{H}^0_\pe(X_\eta, \gr^i_H \dR_{(-)/X})=0$ for $i\geq 1$.
		\item\label{prop:global_section_dR_inj}
		Let $\widehat{\dR}_{(-)/X}$ be the Hodge-filtered-completion of $\dR_{(-)/X}$.
		The canonical map 
		\[
		\mathrm{H}^0_\pe(X_\eta, \dR_{(-)/X}) \longrightarrow \mathrm{H}^0_\pe(X_\eta, \widehat{\dR}_{(-)/X})
		\]
		is an injection of $p$-complete modules.
		In particular, we have a natural isomorphism $R\xrightarrow{\sim} \mathrm{H}^0_\pe(X_\eta, \dR_{(-)/X})$.
	\end{enumerate}
\end{proposition}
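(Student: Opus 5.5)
For Part \ref{prop:global_section_dR_0}, I would first identify the graded pieces of the Hodge filtration on the sheafified de Rham period sheaf. By \Cref{rmk:Hodge_fil}, $\gr^i_H \dR_{(-)/X}$ is the $p$-complete sheafification of $\wedge^i\mathbb{L}_{(-)/X}[-i]$. For $i=0$ this is $\widehat{\mathcal{O}}^+$, so the claim $R\simeq \mathrm{H}^0_\pe(X_\eta,\gr^0_H\dR_{(-)/X})$ is exactly \Cref{lem:global_sec_of_Ohat}; this uses that a regular ring is normal.

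For $i\geq 1$ I would argue by torsionfreeness. Locally on a cover $Y_\eta$ as in \Cref{cor:sheafification_is_unfolding}, \Cref{cor:conjuage_fil}.\ref{cor:conjuage_fil_non-canonical} and \Cref{prop:relative_prismatic_coh_of_perfectoid} show that $\mathcal{H}^0(\wedge^i\mathbb{L}_{(-)/X}[-i])$ is a finite projective $\widehat{\mathcal{O}}^+$-module, and hence $p$-torsionfree. Its global sections therefore inject into their rationalization, which is $\mathrm{H}^0_\proet(X_\eta,\wedge^i\mathbb{L}^\an_{X_{\eta,\pe}/X_\eta}[-i])$. By \Cref{prop:LZ_finiteness}.\ref{prop:LZ_finiteness_H0}, applied to the constant local system $T=\mathbb{Z}_p$, this rational group vanishes for $i>0$. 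Consequently the integral $\mathrm{H}^0$ vanishes as well.

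For Part \ref{prop:global_section_dR_inj}, the completed sheaf $\widehat{\dR}_{(-)/X}$ is the limit of $\dR_{(-)/X}/\Fil^i_H$. The kernel of $\mathrm{H}^0(\dR)\to\mathrm{H}^0(\widehat{\dR})$ is $\mathrm{H}^0$ of $\Fil^\infty:=\bigcap_i\Fil^i_H$, so it suffices to show that the sections of $\dR_{(-)/X}$ lying in every $\Fil^i_H$ are zero. I would check this locally on $Y_\eta$, using the explicit presentation in \Cref{prop:property_of_dR}.\ref{prop:property_of_dR_explicit}. There $\dR$ is the $p$-completed divided power polynomial ring $\widehat{\mathcal{O}}^+\{u_i\}^\pd$, and its Hodge filtration is the divided power filtration, which is $p$-adically separated. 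Indeed, $\gamma_n(u)$ lies in $\Fil^n$, and a $p$-adically convergent sum of divided power monomials lies in $\Fil^N$ for all $N$ only if every coefficient vanishes. This separatedness is the step I expect to be the main obstacle. The difficulty is that $p$-completion could a priori create nonzero elements of $\bigcap\Fil^i$, since $\gamma_n(u)$ times $p$-powers converge. The way to handle it is to check separatedness after reduction mod $p^n$, where the filtration is a direct sum filtration indexed by monomial degree, and then pass to the limit.

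Given the injectivity, I would conclude as follows. The composite $R\to\mathrm{H}^0(\dR)\to\mathrm{H}^0(\widehat{\dR})$ together with the filtration shows that $\mathrm{H}^0(\widehat{\dR})$ is filtered with graded pieces contained in $\mathrm{H}^0(\gr^i_H)$, by left exactness. By Part \ref{prop:global_section_dR_0}, these graded pieces are $R$ for $i=0$ and $0$ for $i\geq1$. Induction on $i$ together with the limit then gives $\mathrm{H}^0(\widehat{\dR})\subseteq R$. Since $R$ maps into $\mathrm{H}^0(\dR)$ and everything injects, the isomorphism $R\simeq\mathrm{H}^0_\pe(X_\eta,\dR_{(-)/X})$ follows.
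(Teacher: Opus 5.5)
Your Part (i), the reduction of the injectivity in Part (ii) to separatedness of the Hodge filtration on $\dR_{S/R}$ for $S=\overline{A}_\perf$, and the final deduction $R\simeq\mathrm{H}^0_\pe(X_\eta,\dR_{(-)/X})$ are fine and match the paper. The gap is the separatedness step itself.

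You take from \Cref{prop:property_of_dR}.\ref{prop:property_of_dR_explicit} that $\dR_{S/R}$ is a $p$-completed divided power polynomial ring $S\{u_i\}^\pd$ \emph{whose Hodge filtration is the filtration by divided-power degree in the $u_i$}. That proposition says nothing about the filtration, and the filtered statement is false. It would make $S\to\dR_{S/R}\to\dR_{S/R}/\Fil^1_H=S$ an isomorphism. Hence it would split, by a ring map, the square-zero extension $\dR_{S/R}/\Fil^2_H\to S$ of $S$ by $\gr^1_H=\mathbb{L}_{S/R}[-1]$. The class of that extension is the composite $(\mathbb{L}_{S/\mathbb{Z}_p})^\wedge_p\simeq S\{1\}[1]\to\mathbb{L}_{S/R}$. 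This is the first map of the Faltings extension (\Cref{prop:relative_prismatic_coh_of_perfectoid}), which is injective on $\mathrm{H}^{-1}$ and hence nonzero. So no such splitting exists; this is the same phenomenon as $\rAcrys$ not being an $\mathcal{O}_C$-algebra.

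Consequently separatedness is not a triviality about monomials. The obstacle you single out, $p$-completion creating elements of $\bigcap_i\Fil^i$, is not the real one. The real issue is that the divided-power variables satisfy relations with the base ring. For the same reason, the mod $p^n$ Hodge filtration is not a direct-sum filtration by monomial degree.

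The paper proves separatedness by the following route.
\begin{enumerate}
\item Since $R\to(R\otimes_W\rAinf(S))^\wedge_p$ is relatively perfect, $\dR_{S/R}\simeq\dR_{S/(R\otimes_W\rAinf(S))^\wedge_p}$. This is the $p$-completed divided power envelope of a surjection whose kernel is generated by the diagonal elements $t_i\otimes 1-1\otimes t_i$.
\item These elements form a regular sequence. One permutes the regular sequence $t_1\otimes1-1\otimes t_1,\ldots,t_n\otimes1-1\otimes t_n,d\otimes1$ in the noetherian ring $(A\otimes_WA)^\wedge_p$, then base changes faithfully flatly along $A\to\rAinf(S)$.
\item \Cref{lem:dR_is_sep} then applies, and its proof contains the actual work:
\begin{itemize}
\item reduce mod $p$, using that $\bigcap_j\Fil^j$ is $p$-complete and $p$-torsionfree;
\item reduce to a single regular element $f$ by K\"unneth;
\item localize so that the base is flat over $\mathbb{F}_p[x]$;
\item compare the Hodge filtration with the conjugate filtration: every element lies in some $\Fil^\conj_n$, and $\Fil^\conj_n\cap\Fil^m_H=0$ for $m>pn+p-1$, as one checks on $\mathbb{F}_p\{x\}^\pd$.
\end{itemize}
\end{enumerate}
You need an argument of this kind, or some other genuine proof of separatedness of the divided power filtration for a regular surjection, to complete Part (ii).
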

\begin{proof}
	Notice that each graded piece $\gr^i_H \dR_{(-)/X}$ is naturally isomorphic to the sheaf of $p$-complete K\"ahler differentials $\wedge^i\mathbb{L}_{\widehat{\mathcal{O}}^+/\mathcal{O}_X}[-i]$ over $X_{\eta, \pe}$. 
	The latter by \Cref{prop:relative_prismatic_coh_of_perfectoid}.\ref{prop:relative_prismatic_coh_of_perfectoid_cotangent_complex_2} is an $\widehat{\mathcal{O}}^+$-vector bundle and in particular is $p$-torsionfree.
	So Part \ref{prop:global_section_dR_0} follows from \Cref{lem:global_sec_of_Ohat} and the vanishing result in 
	\Cref{prop:LZ_finiteness}.\ref{prop:LZ_finiteness_general}.
	
	The injectivity in Part \ref{prop:global_section_dR_inj} is equivalent to the separatedness of the Hodge filtration on $\mathrm{H}^0_\pe(X_\eta, \dR_{(-)/X})$.
	To prove the latter, we notice that the map $R\to \widehat{\mathcal{O}}^+$ factors through $R\to (\mathbb{A}_{\inf}\otimes_W R)^\wedge_p \to \widehat{\mathcal{O}}^+$, where the first arrow is relatively perfect and the second arrow is a surjection.
	In particular, the maps induce a natural filtered isomorphism
	\[
	\dR_{(-)/X} \simeq \dR_{\widehat{\mathcal{O}}^+/(\mathbb{A}_{\inf}\otimes_W R)^\wedge_p}.
	\]
	
	To analyze the above de Rham cohomology, we may assume $X$ (hence $X_\eta$) is connected.
	By shrinking $X$ in \'etale topology if necessary, we let $(A,I=(d))$ be a framed regular prism with a framing $A_0=W\langle t_1^{\pm1},\ldots,t_n^{\pm1} \rangle$ and assume $\overline{A}=R$.
	Assume $S$ is the perfectoid ring $\overline{A}_\perf$, so that there is an induced map of prisms $f:(A,I)\to (\rAinf(S)=\Prism_S, I_{\Prism_S})$, which by the noetherian property of $A$ and \Cref{prop:regular prism Frobenius} is a faithfully flat cover.
	Here we notice that by \Cref{lem:perfection_of_framed_regular_prism} the generic fiber of $\Spf(S)$ is a pro-finite-\'etale cover of $X_\eta$, and by the sheaf property in \Cref{prop:property_of_dR} it suffices to show that the Hodge filtration is separated on $\dR_{S/R}$.
	
	Now we consider the following cartesian diagram of rings
	\begin{equation}
		\begin{tikzcd}
			(A\otimes_W A)^\wedge_p / (I\otimes 1) \ar[r] \ar[d] & (A\otimes_W A)^\wedge_p / (I\otimes 1, \Delta) \simeq R \ar[d]\\
			(A\otimes_W \rAinf(S))^\wedge_p / ( I\otimes 1) \ar[r]& (A\otimes_W \rAinf(S))^\wedge_p / (I\otimes 1, \Delta) \simeq (R\otimes_W \rAinf(S))^\wedge_p / (\Delta) \simeq S,
		\end{tikzcd}
	\end{equation}
    where $\Delta = (t_1\otimes 1 - 1\otimes t_1, \ldots, t_n\otimes 1 - t_n \otimes 1)$ is the diagonal ideal of $(A\otimes_W A)^\wedge_p$, and the vertical maps are faithfully flat.
    Notice that since the sequence of elements $\{t_1\otimes 1 - 1\otimes t_1, \ldots, t_n\otimes 1 - t_n \otimes 1, d\otimes 1\}$ is regular (and thus Koszul-regular) in the noetherian ring $(A\otimes_W A)^\wedge_p$, by \cite[\href{https://stacks.math.columbia.edu/tag/09CC}{Tag 09CC}]{stacks-project} we can switch the positions without affecting the regularity.
    In particular, the elements $\{t_1\otimes 1 - 1\otimes t_1, \ldots, t_n\otimes 1 - t_n \otimes 1\}$ is a regular sequence in the ring $(A\otimes_W A)^\wedge_p / (I\otimes 1)$, and hence also in the ring $(A\otimes_W \rAinf(S))^\wedge_p / ( I\otimes 1)$ by the faithfully flatness.
    As a consequence, the kernel of the canonical surjection $(R\otimes_W \rAinf(S))^\wedge_p / (I\otimes 1, \Delta) \to S$ is generated by a regular sequence.
    Hence the claim follows from the more general observation on the relative de Rham cohomology as in \Cref{lem:dR_is_sep}.	
\end{proof}
The following general result on the separatedness of the relative de Rham cohomology was used in the proof, and we thank Shizhang Li for the related discussions.
\begin{lemma}[Separatedness of the Hodge filtration]
	\label{lem:dR_is_sep}
	Let $A\to B$ be a surjection of $p$-complete and $p$-torsionfree rings with the kernel generated by a regular sequence.
	The Hodge filtration on the $p$-complete de Rham cohomology $\dR_{B/A}$ is separated.
\end{lemma}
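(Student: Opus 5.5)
The plan is to compute $\dR_{B/A}$ explicitly through the divided power envelope. Write $J=\ker(A\to B)=(f_1,\ldots,f_r)$, a regular sequence. Since $A\to B$ is a surjection with Koszul regular kernel, the cotangent complex $\mathbb{L}_{B/A}\simeq J/J^2[1]$ is a shifted finite free $B$-module. The derived de Rham complex (derived Cartier/Illusie, cf. \cite[Lem.\ 3.42]{Bha12}) is then the $p$-completed divided power envelope $D\colonequals D_A(J)^\wedge_p$, discrete and concentrated in degree zero. The Hodge filtration on $\dR_{B/A}$ is the $p$-completion of the divided power filtration $J^{[i]}D$. This reduces the lemma to showing
\[
\bigcap_{i\geq 0}\bigl(J^{[i]}D\bigr)^\wedge_p=0
\]
inside $D$, with the care that the derived completion of each filtered piece is discrete.

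Step one is to reduce to a universal case by base change. The map $\mathbb{Z}_p[x_1,\ldots,x_r]\to A$, $x_j\mapsto f_j$, is flat after $p$-completion modulo the ideal, in the sense that Koszul regularity makes the relevant Tor's vanish. Hence $D\simeq \bigl(A\otimes_{\mathbb{Z}_p[x]}\mathbb{Z}_p[x]\langle x\rangle^{\pd}\bigr)^\wedge_p$ as filtered rings, and likewise $\gr^i D\simeq \Gamma^i_B(J/J^2)$ is a finite free $B$-module. Step two is to choose a convenient basis. Over $B$, $\gr^\bullet D$ has basis the divided monomials $f^{[\underline{n}]}$, and this lifts to an identification of $D$ with the $p$-completion of $\bigoplus_{\underline{n}} A'\cdot x^{[\underline{n}]}$ for a suitable presentation. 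Then $\Fil^i$ is spanned by monomials of degree $\geq i$ plus $J$-multiples of lower ones.

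The main obstacle is exactly this last point: $\Fil^i$ is not simply ``degree $\geq i$'', because $f_j\cdot x^{[\underline{n}]}$ is $(n_j+1)x^{[\underline{n}+e_j]}$, which introduces $p$-power denominators. To handle it, I would first reduce mod $p^m$ for each $m$, using $p$-torsionfreeness and completeness. Indeed, $D$ is $p$-torsionfree, since each $\gr^i$ is free over the $p$-torsionfree ring $B$. So an element of $\bigcap_i \Fil^i$ lies in $\bigcap_i(\Fil^i+p^mD)$ for all $m$, and it suffices to show that $\bigcap_i(\Fil^i+p^mD)\subseteq p^mD$. Modulo $p^m$, the element $x^{[n]}$ with $n\geq$ a large bound times $p^{m}$ lies in the ideal generated by $x^{[k]}$ with $k$ large, and conversely. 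So the filtration mod $p^m$ is cofinal with the ``degree $\geq N$'' filtration on the free $B/p^m$-module with the $x^{[\underline{n}]}$ basis. That filtration is visibly separated, since elements are (completed) sums with coefficients tending to zero, and on the mod $p^m$ level they are finite sums. This gives separatedness mod $p^m$ and hence, by the limit argument, on $\dR_{B/A}$ itself.
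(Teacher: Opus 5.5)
Your reduction is sound and parallels the paper. You identify $\dR_{B/A}$ with the completed divided power envelope $D$ and reduce to showing $\bigcap_i(\Fil^i+p^mD)\subseteq p^mD$; the paper reduces to the mod $p$ case. Note that mod $p$ already suffices for all $m$: since $D/\Fil^i$ is $p$-torsionfree, $py\in\Fil^i+p^mD$ forces $y\in\Fil^i+p^{m-1}D$.

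The gap is in the step that is supposed to prove this. There is no identification of $D$, or of $D/p^m$, with a ($p$-completed) free $B$-module on the $x^{[\underline n]}$.
\begin{itemize}
\item $D$ is an $A$-algebra in which $f_j$ does not act by zero. In $D/p$ it is merely nilpotent: $f_j^p=p!\,f_j^{[p]}=0$. So $D/p^m$ is not a $B/p^m$-module at all.
\item Lifting the $B$-basis of $\gr^\bullet D$ to a filtered splitting of $D$ would already presuppose that the Hodge filtration is separated and complete, which is what you are trying to prove.
\item Over $A$, the $x^{[\underline n]}$ only generate, subject to the relations $f_j\,x^{[\underline n]}=(n_j+1)\,x^{[\underline n+e_j]}$. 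Hence your ``degree $\geq N$'' filtration is just the Hodge filtration again, and the cofinality claim is tautological.
\item The remark that elements mod $p^m$ are finite sums $\sum a_{\underline n}x^{[\underline n]}$ proves nothing. Such a representation is not unique, and for instance $a\cdot 1$ with $a\in J^N$ already lies in $\Fil^N$.
\end{itemize}
So the separatedness is asserted, not proved. (Your justification that $D$ is $p$-torsionfree via its graded pieces is circular for the same reason. It instead follows from the mod $p$ description below.)

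What is missing is a genuine basis of $D/p$ in which the Hodge filtration can be read off from the coefficients. Since $B$ is $p$-torsionfree, the $f_j$ form a Koszul regular sequence on $A/p$. Take one variable for simplicity. Base change from the universal case
\[
\mathbb{F}_p[x]\langle x\rangle\simeq \mathbb{F}_p[x]/(x^p)\otimes\bigotimes_{i\geq 1}\mathbb{F}_p[y_i]/(y_i^p),\qquad y_i=x^{[p^i]},
\]
gives
\[
D/p\simeq \bigoplus_{\underline a}A/(p,f^p)\cdot y^{\underline a},\qquad \Fil^j(D/p)=\bigoplus_{\underline a}f^{\max(0,\,j-\sum_i a_ip^i)}A/(p,f^p)\cdot y^{\underline a}.
\]
Every element has finite support in $\underline a$. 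For $j$ large, its coefficients are therefore forced into $f^pA/(p,f^p)=0$.

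This is exactly the paper's argument in different language. The $\mathbb{F}_p[x]/(x^p)$-span of finitely many $y^{\underline a}$ is a conjugate-filtration step $\Fil^\conj_n$, and the paper proves $\Fil^\conj_n\cap\Fil^m_H=0$ for $m>pn+p-1$. It gets there by reducing to one generator via K\"unneth and then to $A=\mathbb{F}_p[x]$ by flat base change.
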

\begin{proof}
	We let $J$ be the kernel ideal of the surjection $A\to B$.
	We first notice that by \cite{Bha12}, the relative de Rham cohomology $\dR_{B/A}$ is naturally filtered isomorphic to the $p$-complete divided power envelope $D_A(I)$, where the Hodge filtrations are sent onto the ($p$-complete) divided power filtrations.
	As each $\Fil_H^j\dR_{B/A}=\Fil^j D_A(I)$ is a derived $p$-complete submodule of $D_A(I)$, the intersection $M\colonequals\cap_{j\in \mathbb{N}} \Fil^j D_A(I)$ is also $p$-complete and $p$-torsionfree.
	Thus by the derived Nakayama lemma, to show that $M=0$, it suffices to check it after the derived mod $p$ reduction.
	So from now on we assume $A\to B$ is a regular surjection of $\mathbb{F}_p$-algebras.
	Moreover, by the K\"unneth formula in \cite[Prop.\ 2.7, Lem.\ 3.37]{Bha12} and the aforementioned description of the Hodge filtration, we may assume the kernel is generated by one regular element $f\in A$.
	Under the latter assumption, the ring $D_A(I)$ is naturally isomorphic to the quotient of the divided power polynomial ring, namely 
	\[
	D_A(I)\simeq A\{x\}^{\pd}/(x-f) = (\bigoplus_j A\cdot \gamma_j(x))/(f-x),
	\]
	where $\gamma_j(x)$ is the $j$-th divided power structure of the element $x$.
	In addition, the $j$-th Hodge filtration of $D_A(I)$ is the submodule generated by $\{\gamma_i(x)~|~i\geq j\}$.
	
	To continue, we notice that since $A$ is $f$-torsionfree, its $f$-adic completion is flat over $\mathbb{F}_p\llbracket x \rrbracket$.
	In particular, there is an affine open neighborhood $\Spec(A')$ of $\Spec(B)=V(f)$ in $\Spec(A)$ such that $A'$ is flat over $\mathbb{F}_p[x]$.
	Note that the \'etaleness of $A\to A'$ implies that  $\dR_{B/A}=\dR_{B/A'}$.
	So by replacing $A$ with $A'$ if necessary, we may assume $A$ is flat over $\mathbb{F}_p[x]$.
	
	Furthermore, we recall that the ring $\dR_{B/A}$ naturally admits an ascending and exhaustive conjugate filtration $\Fil^\conj_i \dR_{B/A}$.
	In particular, each element $x\in M\subseteq  \dR_{B/A}$ is contained in $\Fil^\conj_n\dR_{B/A}$ for some $n\in \mathbb{N}$.
	We then claim that for each $n$, there is an integer $m\in \mathbb{N}$ such that $\Fil^\conj_n \dR_{B/A} \cap \Fil^m_H \dR_{B/A} =0$.
	Granting the claim, if the intersection $M\neq 0$, we may pick a nonzero element $x\in M$, which is contained in $\Fil^\conj_n \dR_{B/A}$ for some $n$.
	The latter however is impossible, since $x \notin \Fil^m_H\dR_{B/A}\supseteq M$.
	
	Finally we check the claim.
	By the flat base change formulae of the conjugate and the Hodge filtrations, it suffices to assume $A=\mathbb{F}_p[x]$ and $f=x$.
	Then the divided power envelope $D_A(I)$ is equal to the divided power polynomial ring $\mathbb{F}_p\{x\}^{\pd}$, where $\Fil^j_H D_A(I)$ is the submodule $\bigoplus_{i\geq j} \mathbb{F}_p\cdot \gamma_i(x)$.
	On the other hand, by \cite[Proof of Lem.\ 3.42]{Bha12}, we know the conjugate filtration $\Fil^\conj_n D_A(I)$ is the $\mathbb{F}_p[x]/x^p$-submodule  $\bigoplus_{i=0}^n \mathbb{F}_p[x]/x^p\cdot \gamma_{ip}(x)$.
	In particular, by comparing the descriptions of the Hodge filtration with that of the conjugate filtration, we notice that for a fixed integer $n\in \mathbb{N}$, the intersection of $\Fil^\conj_n D_A(I)$ and $\Fil^m_H D_A(I)$ is zero if $m>pn+p-1$.
\end{proof}
\begin{corollary}
	\label{cor:Nygarrd_filtration_separated}
	Let $X$ be a regular $p$-adic formal scheme over $\mathcal{O}_K$, let $(B,J)\in X_\Prism$ be a flat prism, and let $S$ be the perfectoid ring over $X$ given by the reduction of the perfection of a framed regular prism.
	Then the Nygaard filtration on $\Prism^{(1)}_{S_{\overline{B}}/B}$ is separated.
\end{corollary}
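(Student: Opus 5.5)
The plan is to reduce the separatedness of the Nygaard filtration on $\Prism^{(1)}_{S_{\overline{B}}/B}$ to the separatedness of the Hodge filtration on a relative de Rham complex, which is \Cref{lem:dR_is_sep}. Two inputs drive the reduction. The relative prismatic--de Rham comparison identifies $\Prism^{(1)}_{S_{\overline{B}}/B}/J$ with $\dR_{S_{\overline{B}}/\overline{B}}$. The Nygaard filtration satisfies $\Fil^i_N \cap J\Prism^{(1)} = J\cdot\Fil^{i-1}_N$, and its image mod $J$ is the Hodge filtration. We expect the latter compatibility from \cite{BS22}, \cite{BL22a}; in our situation the relative cohomology is discrete and of the explicit coproduct form of \Cref{cor:coproduct:perfect_with_framed}, so the needed relations should hold on the nose.

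\textbf{Step 1 (the de Rham quotient).} By \Cref{cor:coproduct:perfect_with_framed}, $\Prism^{(1)}_{S_{\overline{B}}/B}$ is discrete, $J$-torsionfree and $(p,J)$-complete, and its mod $J$ quotient is $\dR_{S_{\overline{B}}/\overline{B}}$. We write $S_{\overline{B}}$ as the quotient of the $p$-complete relatively perfect algebra $(\overline{B}\otimes_W\rAinf(S))^\wedge_p$ by the ideal generated by $\ker\widetilde\theta$ and the diagonal elements $t_j\otimes 1-1\otimes t_j$ coming from the framing. As in the proof of \Cref{prop:global_section_dR}, these elements should form a regular sequence: they do so for $(A\otimes_W A)^\wedge_p/(I\otimes 1)$, and regularity is preserved under the faithfully flat base changes to $\rAinf(S)$ and along the $p$-completely flat map $X\leftarrow \Spf(\overline{B})$. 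Then \Cref{lem:dR_is_sep} shows that the Hodge filtration on $\dR_{S_{\overline{B}}/\overline{B}}$ is separated.

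\textbf{Step 2 (lifting).} Let $x\in\bigcap_i \Fil^i_N$. Its image mod $J$ lies in $\bigcap_i \Fil^i_H = 0$, so $x=dx_1$ locally for a generator $d$ of $J$. Because $x\in\Fil^i_N\cap J\Prism^{(1)}=d\Fil^{i-1}_N$ for every $i$, and $d$ is a nonzerodivisor, we get $x_1\in\bigcap_i\Fil^i_N$. Iterating gives $x\in\bigcap_n J^n\Prism^{(1)}_{S_{\overline{B}}/B}$, which vanishes by $J$-adic separatedness of the classically $(p,J)$-complete module.

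\textbf{Main obstacle.} The hard step is establishing $\Fil^i_N\cap J\Prism^{(1)} = J\Fil^{i-1}_N$ for this possibly non-noetherian relative prismatic cohomology. First we would try the local description of $\Fil_N$ via the divided-power presentation of \Cref{thm:coproduct_in_general}.\ref{thm:coproduct of prism regular and regular}, where the graded pieces are explicit. Failing that, we would use the flat base change from the framed case and the Nygaard-graded pieces $\gr^i_N\simeq\Fil^\conj_i\overline{\Prism}\{i\}$, which are $J$-torsionfree by \Cref{cor:conjuage_fil}.
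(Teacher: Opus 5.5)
Your proposal is correct and is essentially the paper's argument. The paper likewise uses that $\Prism^{(1)}_{S_{\overline{B}}/B}$ is classically $J$-adically complete to reduce to the mod-$J$ reduction of the Nygaard filtration, identifies that reduction with the Hodge filtration on $\dR_{S_{\overline{B}}/\overline{B}}$ via \cite{BS22}, and concludes by \Cref{lem:dR_is_sep} using the framing and the flatness of $(B,J)$.

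Your Step 2 only makes explicit the compatibility $\Fil^i_N\cap J\Prism^{(1)}_{S_{\overline{B}}/B}=J\cdot\Fil^{i-1}_N$, which the paper uses tacitly as the standard input from the fiber sequence $J\otimes_B\Fil^{i-1}_N\to\Fil^i_N\to\Fil^i_H\dR_{S_{\overline{B}}/\overline{B}}$ (also used in \Cref{prop:weak_prismatic_F_crystal:graded_piece}). One correction to your fallback: the $\gr^i_N$ are $\overline{B}$-modules and cannot be $J$-torsionfree; what you should aim for there is injectivity of multiplication by $d$ from $\gr^{i-1}_N$ to $\gr^i_N$, which is equivalent to that compatibility.
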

\begin{proof}
	Notice that the Nygaard filtration is finer than the $J$-adic filtration on $\Prism^{(1)}_{S_{\overline{B}}/B}$, where the latter is classically $J$-adically complete and is in particular $J$-adically separated.
	So it suffices to check that the mod $J$-reduction of the Nygaard filtration, which by \cite{BS22} is the Hodge filtration on $\dR_{S_{\overline{B}}/\overline{B}}$, is separated.
	Hence the claim follows from  \Cref{prop:regular prism Frobenius}, the flatness assumption of the prism $(B,J)$ in $X_\Prism$, and \Cref{lem:dR_is_sep}.
\end{proof}

Below for a bounded prism $(B,J)$ and a $p$-adic formal scheme $Y$ over $\overline{B}$, we call the tautological map $f:B\to \Prism_{Y/B}$ the structure map, and call its Frobenius pullback $\varphi_B^*(f):B\to \Prism^{(1)}_{Y/B}$ the twisted structure map.
\begin{theorem}
	\label{thm:global_section}
	Let $X$ be a regular $p$-adic formal scheme over $\mathcal{O}_K$, and let $(B,J)\in X_\Prism$ be a flat prism.
	\begin{enumerate}[label=\upshape{(\roman*)}]
		\item\label{thm:global_section_Hodge-Tate} The structural map induces an isomorphism
		\[
		\overline{B}\xrightarrow{\sim} \mathrm{H}^0_\pe(X_\eta, \gr^\conj_0 \overline{\Prism}_{(-)_{\overline{B}}/B}),
		\]
		and $\mathrm{H}^0_\pe(X_\eta, \gr^\conj_i \overline{\Prism}_{(-)_{\overline{B}}/B})=0$ for $i\geq 1$.
		\item\label{thm:global_section_twisted_prismatic} The twisted structural map induces an isomorphism
		\[
		B \xrightarrow{\sim} \mathrm{H}^0_\pe(X_\eta, \Prism^{(1)}_{(-)_{\overline{B}}/B}).
		\]
		\item\label{thm:global_section_prismatic} Assume either $(B,J)$ is a coproduct of framed regular prisms, or the map $\varphi_B$ is completely faithfully flat.
		Then the structural map induces an isomorphism
		\[
		B \xrightarrow{\sim} \mathrm{H}^0_\pe(X_\eta, \Prism_{(-)_{\overline{B}}/B}).
		\]
	\end{enumerate}
\end{theorem}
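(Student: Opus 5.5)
The plan is to reduce all three statements to an explicit local computation on the pro-\'etale cover $Y_\eta=\coprod_i\Spf(\overline{A}_{i,\perf})_\eta$ of \Cref{cor:sheafification_is_unfolding}, and then identify the resulting invariants using the vanishing results of \Cref{prop:LZ_finiteness} and \Cref{lem:global_sec_of_Ohat}. The statements are Zariski local on $X$ and compatible with \'etale descent, so we may assume $X=\Spf(R)$ is affine and connected. We may also assume there is a single framed regular prism $(A,I)$ with $\overline{A}$ \'etale over $R$, and set $S\colonequals\overline{A}_\perf$.

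\textbf{Part (i).} By \Cref{cor:conjuage_fil}, the graded piece $\gr^\conj_i\overline{\Prism}_{(-)_{\overline{B}}/B}$ is the sheaf $\bigl(\wedge^i\mathbb{L}_{\widehat{\mathcal{O}}^+/\mathcal{O}_X}\{-i\}[-i]\otimes_{\mathcal{O}_X}\overline{B}\bigr)^\wedge_p$. This is locally finite projective over $\widehat{\mathcal{O}}^+_{\overline{B}}$, and $\overline{B}$ is $p$-completely flat over $R$. We apply the projection formula \Cref{thm:projection_formula}(a), with $M=\overline{B}$ and $N$ the sections of $\wedge^i\mathbb{L}\{-i\}[-i]$ over the Galois cover, translated to pro-\'etale cohomology as in the remark following that theorem. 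This reduces the claim to computing $\mathrm{H}^0_\pe(X_\eta,\wedge^i\mathbb{L}_{\widehat{\mathcal{O}}^+/\mathcal{O}_X}\{-i\}[-i])$. The bounded-torsion hypothesis on $\mathrm{H}^1$ is supplied by \Cref{thm:finiteness_integral_proet_coh}. By \Cref{prop:global_section_dR}(i), these sections are $R$ for $i=0$ and vanish for $i\ge1$, since the Breuil--Kisin twist $\{-i\}$ differs from $(-i)$ only by an invertible twist over the constant local system and so does not affect \Cref{prop:LZ_finiteness}(ii). Hence the sections are $\overline{B}$ and $0$ respectively.

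\textbf{Part (ii).} We first pass from $\gr^\conj$ to the whole Hodge--Tate sheaf. From the exact sequences $0\to\Fil^\conj_{i-1}\to\Fil^\conj_i\to\gr^\conj_i\to0$, which are discrete on $Y_\eta$, left exactness of $\mathrm{H}^0$, and exhaustiveness, we obtain $\overline{B}\simeq\mathrm{H}^0_\pe(X_\eta,\overline{\Prism}_{(-)_{\overline{B}}/B})$. The same argument applies after Frobenius twist, using the mod $J$ reduction $\Prism^{(1)}\otimes\overline{B}=\dR_{(-)_{\overline{B}}/\overline{B}}$. For the de Rham sheaf we use the Hodge filtration together with the separatedness results \Cref{lem:dR_is_sep} and \Cref{cor:Nygarrd_filtration_separated}, arguing as in \Cref{prop:global_section_dR}(ii) after flat base change along $R\to\overline{B}$. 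This gives $\mathrm{H}^0(\dR_{(-)_{\overline{B}}/\overline{B}})=\overline{B}$.

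We then lift from mod $J$ to $B$. The sheaf $\Prism^{(1)}$ is $J$-torsionfree and $J$-complete on $Y_\eta$, so $\mathrm{H}^0$ commutes with $\lim_n$ of the mod $J^n$ reductions. Injectivity of $B/J^n\to\mathrm{H}^0(\Prism^{(1)}/J^n)$ follows by induction from the graded pieces $J^n/J^{n+1}\otimes\overline{B}$. For surjectivity, we use that $\mathrm{H}^0$ of each graded piece is exactly $J^n/J^{n+1}$, so any section modulo $J^{n+1}$ differs from an element of $B$ by a section of the next graded piece, which again comes from $B$. Passing to the limit yields (ii).

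\textbf{Part (iii).} This is where Frobenius must be removed, and it is the main obstacle, because the untwisted $\Prism_{(-)_{\overline{B}}/B}$ modulo $J$ is Hodge--Tate rather than de Rham, and (i) alone does not control it $J$-adically. When $\varphi_B$ is completely faithfully flat, we write $\Prism^{(1)}=(\varphi_B^*\Prism)^\wedge$ and apply the injectivity statement \Cref{lem:inj_and_completely_proj_mod}(1) together with faithfully flat descent along $\varphi_B$. A global section $s$ of $\Prism$ maps to $B$ via (ii), and the two images of $s$ in $\varphi_B^*$ of the two-fold tensor agree, so $s$ descends to $B$. When $B$ is a coproduct of framed regular prisms, we instead use the cartesian square in \Cref{prop:intersection_of_Frobenius_of_base_with_coproduct}(iii), applied sectionwise on $Y_\eta$ and then passed to global sections (which preserve pullbacks). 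This identifies $\mathrm{H}^0(\Prism)$ with $B\times_{\varphi_B,B}B=B$. The delicate point is that $\mathrm{H}^0$ of $\varphi_B$ restricted to $\Prism_{(-)_{\overline{B}}/B}$ lands inside the twisted sections compatibly with this square, which we expect to follow from the injectivity in \Cref{prop:intersection_of_Frobenius_of_base_with_coproduct}(ii).
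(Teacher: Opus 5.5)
Your proposal is correct in outline. Part (i) and the coproduct case of (iii) follow the paper's argument. For the coproduct case, the cleanest way to handle your ``delicate point'' is the paper's: chase one element $s\in \mathrm{H}^0_\pe(X_\eta,\Prism_{(-)_{\overline B}/B})\subset \Prism_{S_{\overline B}/B}$ on the single cover $S$, rather than passing a sheaf-level pullback to global sections.

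Two small corrections. First, in (i) the twist in $\gr^\conj_i$ is the Breuil--Kisin twist $\overline{B}\{-i\}$ of the base. It is an invertible $\overline{B}$-module with trivial Galois action, and that is why it can be absorbed into $M$ in \Cref{thm:projection_formula}. Comparing it with a Tate twist is misleading: a Tate twist would not be harmless, since for $i=1$ the Faltings extension puts $\widehat{\mathcal{O}}$ inside $\mathbb{L}_{\widehat{\mathcal{O}}^+/\mathcal{O}_X}[-1](-1)[1/p]$. Second, the opening Hodge--Tate computation in (ii) is unnecessary. Exhaustiveness alone also does not justify it, because $\overline{\Prism}$ is the $p$-completion of $\colim_i\Fil^\conj_i$. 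Nothing later depends on it.

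The substantive differences are in (ii) and in the completely faithfully flat case of (iii).

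For (ii), the paper proves $B\simeq \mathrm{H}^0_\pe(X_\eta,\widehat{\Prism}^{(1)}_{(-)_{\overline B}/B})$ for the Nygaard completion. It then uses Nygaard separatedness (\Cref{cor:Nygarrd_filtration_separated}) to get the needed injectivity. You instead prove $\mathrm{H}^0_\pe(X_\eta,\dR_{(-)_{\overline B}/\overline B})=\overline{B}$ via the Hodge filtration and \Cref{lem:dR_is_sep}. You then climb the $J$-adic filtration using only left exactness of $\mathrm{H}^0$ and $J$-completeness. The inputs are the same, just reordered, and you never need the Nygaard filtration.

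For (iii), the paper argues at the level of modules. It needs the completed base change $(\mathrm{H}^0_\pe(X_\eta,\Prism_{(-)_{\overline B}/B})\otimes_{B,\varphi_B}B)^\wedge_{(p,J)}\to\Prism^{(1)}_{S_{\overline B}/B}$ to be injective, which it takes from \Cref{lem:inj_and_completely_proj_mod}(2). From this it concludes that $\varphi_B^*(g)$ is an isomorphism, and then kills $\cofib(g)$ by faithful flatness.

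Your element-wise descent avoids that input, but ``the two images agree, so $s$ descends'' hides two steps that you should write out:
\begin{enumerate}
\item By (ii), write $s\otimes 1=1\otimes b$ with $b\in B$. Push this along the two maps $B\rightrightarrows B^{(2)}\colonequals(B\otimes_{\varphi_B,B,\varphi_B}B)^\wedge_{(p,J)}$ to get $1\otimes(p_1(b)-p_2(b))=0$ in the completed base change of $\Prism_{S_{\overline B}/B}$ to $B^{(2)}$.
\item Since $\Prism_{S_{\overline B}/B}$ is completely faithfully flat over $B$ (\Cref{cor:coproduct:perfect_with_framed}), \Cref{lem:inj_and_completely_proj_mod}(1) gives $p_1(b)=p_2(b)$.
\item Complete faithfully flat descent along $\varphi_B$ then gives $b=\varphi_B(c)$ for some $c\in B$.
\item Hence $(s-c)\otimes 1=0$, and \Cref{lem:inj_and_completely_proj_mod}(1) applied to $\varphi_B$ gives $s=c$.
\end{enumerate}
The payoff is that only injectivity of linearization maps is used, and this holds under complete faithful flatness. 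Part (2) of the lemma is stated under complete projectivity instead, and \Cref{eg:counter_eg_of_injectivity_preserved_by_bc} shows that this distinction is real. So your route matches the stated hypothesis more closely than the paper's does.
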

\begin{proof}
	For Part \ref{thm:global_section_Hodge-Tate}, we notice that each graded piece $\gr^\conj_i \overline{\Prism}_{(-)_{\overline{B}}/B}$ is naturally isomorphic to the sheaf of $p$-complete K\"ahler differentials $\wedge^i\mathbb{L}_{(-)_{\overline{B}}/\overline{B}}\otimes_{\overline{B}} \overline{B}\{-i\}[-i]$ over $X_{\eta, \pe}$.
	By the flat base change property of the cotangent complex, we further have
	\[
	\gr^\conj_i \overline{\Prism}_{(-)_{\overline{B}}/B} \simeq \left( \bigl( \wedge^i\mathbb{L}_{\widehat{\mathcal{O}}^+/\mathcal{O}_X}[-i] \bigr) \otimes_{\mathcal{O}_X} \overline{B}\otimes_{\overline{B}}\overline{B}\{-i\} \right)^\wedge_p.
	\]
	By \Cref{prop:relative_prismatic_coh_of_perfectoid}.\ref{prop:relative_prismatic_coh_of_perfectoid_cotangent_complex_2}, we know $\wedge^i\mathbb{L}_{\widehat{\mathcal{O}}^+/\mathcal{O}_X}[-i]$ is an $\widehat{\mathcal{O}}^+$-vector bundle and in particular is $p$-torsionfree.
	So Part \ref{thm:global_section_Hodge-Tate} follows from \Cref{lem:global_sec_of_Ohat} (for $i=0$) and the vanishing result in 
	\Cref{prop:LZ_finiteness}.\ref{prop:LZ_finiteness_H0} (for $T=\mathbb{Z}_p$ and $i>0$), together with the projection formula \Cref{thm:projection_formula}.\ref{thm:projection_formula_H1} (which is applicable thanks to the flatness of $\overline{B}$ over $X$ and \Cref{thm:finiteness_integral_proet_coh}).
	
	For Part  \ref{thm:global_section_twisted_prismatic}, we notice that \Cref{prop:global_section_dR} and the filtered version of the prismatic-de Rham comparison theorem (\cite{BS22}) implies the natural map below is an isomorphism
	\[
	B \longrightarrow \mathrm{H}^0_\pe(X_\eta, \widehat{\Prism}^{(1)}_{(-)_{\overline{B}}/B}),
	\]
	where $\widehat{\Prism}^{(1)}_{(-)_{\overline{B}}/B}$ is the sheaf associated to the Nygaard-completed twisted relative prismatic cohomology.
	The above map can be naturally factored as
	\begin{equation}
		\label{eq:global_sections_of_twisted_prismatic_cohomology}
		B \longrightarrow \mathrm{H}^0_\pe(X_\eta, \Prism^{(1)}_{(-)_{\overline{B}}/B}) \longrightarrow \mathrm{H}^0_\pe(X_\eta, \widehat{\Prism}^{(1)}_{(-)_{\overline{B}}/B}),
	\end{equation}
	which implies that the first arrow is an injection.
	So to prove \ref{thm:global_section_twisted_prismatic}, it suffices to check that the second arrow above is also injective; or in other words the Nyggard filtration on $\mathrm{H}^0_\pe(X_\eta, \Prism^{(1)}_{(-)_{\overline{B}}/B})$ is separated.
	
	As the statement is $p$-completely \'etale local with respect to $X$, by shrinking $X$ in \'etale topology if necessary, we assume $X=\Spf(R)$ is affine and admits an auxiliary framed regular prism $(A,I=(d))$ with a framing $A_0=W\langle t_1^{\pm1},\ldots,t_n^{\pm1} \rangle$ and the equality $\overline{A}=R$.
	Assume $S$ is the perfectoid ring $\overline{A}_\perf$.
	Then by \Cref{lem:perfection_of_framed_regular_prism}, we know the generic fiber of $\Spf(S)$ is a pro-finite-\'etale cover of $X_\eta$, and by \Cref{thm:prismatic_period_sheaf} it suffices to show that the Nygaard filtration is separated on $\Prism^{(1)}_{S_{\overline{B}}/B}$, which was verified in \Cref{cor:Nygarrd_filtration_separated}.

	Finally, we consider Part \ref{thm:global_section_prismatic}, where we denote the tautological arrow $B\to \mathrm{H}^0_\pe(X_\eta, \Prism_{(-)_{\overline{B}}/B})$ by $g$.
	Let $S$ be the perfectoid ring considered above, and we first assume $\varphi_B$ is completely faithfully flat.
	There is a natural injection of derived $(p,J)$-complete modules $\mathrm{H}^0_\pe(X_\eta, \Prism_{(-)_{\overline{B}}/B}) \to \Prism_{S_{\overline{B}}/B}$, such that $(J,p)$ forms a regular sequence for each of them.
	Now we notice that by applying \Cref{lem:inj_and_completely_proj_mod} at the homomorphism $\varphi_B:B\to B$ and the map $M\colonequals \mathrm{H}^0_\pe(X_\eta, \Prism_{(-)_{\overline{B}}/B}) \hookrightarrow N\colonequals \Prism_{S_{\overline{B}}/B}$, we know the completed base change below is injective:
	\begin{equation}
		\label{eq:global_sections_of_twisted_prismatic_cohomology_2}
		\bigl(\mathrm{H}^0_\pe(X_\eta, \Prism_{(-)_{\overline{B}}/B}) \otimes_{B, \varphi_B} B \bigr)^\wedge_{(p,J)} \longrightarrow \Prism^{(1)}_{S_{\overline{B}}/B}.
	\end{equation}
	In addition, the map (\ref{eq:global_sections_of_twisted_prismatic_cohomology_2}) naturally factors through the subring $\mathrm{H}^0_\pe(X_\eta, \Prism^{(1)}_{(-)_{\overline{B}}/B})$.
    So by combining with (\ref{eq:global_sections_of_twisted_prismatic_cohomology}), we obtain the natural $B$-linear injective arrows as below
    \[
    B \hookrightarrow \bigl(\mathrm{H}^0_\pe(X_\eta, \Prism_{(-)_{\overline{B}}/B}) \otimes_{B, \varphi_B} B \bigr)^\wedge_{(p,J)} \hookrightarrow \mathrm{H}^0_\pe(X_\eta, \Prism^{(1)}_{(-)_{\overline{B}}/B}) \hookrightarrow \Prism^{(1)}_{S_{\overline{B}}/B},
    \]
    where the first arrow is the base change of the map $g$ by $\varphi_B$, and the composition of the first two arrows by Part \ref{thm:global_section_twisted_prismatic} is an isomorphism.
    Thus the canonical map $B\to \bigl(\mathrm{H}^0_\pe(X_\eta, \Prism_{(-)_{\overline{B}}/B}) \otimes_{B, \varphi_B} B \bigr)^\wedge_{(p,J)}$ is isomorphic.
    As a consequence, the complete faithfully flatness of the map $\varphi_B$ now implies that the derived complete complex $\cofib(g)$ vanishes itself, and thus $g$ is an isomorphism.
    
    If $(B,J)$ is a coproduct of framed regular prisms, then we can consider the following commutative diagram
    \[
    \begin{tikzcd}
    B \arrow[d, "\varphi_B"'] \arrow[r,"g"] &  \mathrm{H}^0_\pe(X_\eta, \Prism_{(-)_{\overline{B}}/B}) \ar[r] \ar[d] & \Prism_{S_{\overline{B}}/B} \ar[d]\\
    B \arrow[r] & \mathrm{H}^0_\pe(X_\eta, \Prism^{(1)}_{(-)_{\overline{B}}/B}) \ar[r] & \Prism^{(1)}_{S_{\overline{B}}/B},
    \end{tikzcd}
    \]
    where the vertical arrows are the linearization maps along $\varphi_B$, and all the arrows are injective (\Cref{prop:intersection_of_Frobenius_of_base_with_coproduct}).
    Moreover, the bottom left horizontal arrow by \ref{thm:global_section_twisted_prismatic} is an isomorphism.
    Hence the surjectivity of the map $g$ follows from the fact that the outer square is cartesian, as in \Cref{prop:intersection_of_Frobenius_of_base_with_coproduct}.\ref{prop:intersection_of_Frobenius_of_base_with_coproduct:intersection}.
\end{proof}

\subsection{Rational period sheaves}
\label{sub:rat_period_sheaf}
Similarly to the various period rings in the classical $p$-adic Hodge theory, the prismatic period sheaves defined in \Cref{def:prismatic_period_sheaf} admit rational analogues.
However, to yield an integral Galois invariant, we consider the localization by inverting the element $\mu\in \rAinf$, instead of the element $p$ in the classical situations.

We start by recalling the $F$-gauge structure on the Breuil--Kisin twist.
\begin{construction}[Breuil--Kisin twist as an $F$-gauge]
	\label{const:filtered_BK_twist}
	Let $(A,I)$ be a transversal prism.
	The first Breuil--Kisin twist over $A$ is the invertible $A$-module
	\[
	A\{1\} \colonequals \lim_{\cdot \frac{1}{p}} I_r/I_r^2,
	\]
	where $I_r$ is the ideal $I\cdot \varphi_A(I)\cdots \varphi_A^{r-1}(I)$ in $A$ (\cite[Const.\ 2.2.11]{BL22a}).
	It satisfies the natural base change formula with respect to maps of prisms, making it an invertible prismatic crystal $\mathcal{O}_\Prism\{1\}$ over $(\mathbb{Z}_p)_\Prism$.
	Moreover, as in \cite[Const.\ 2.2.14]{BL22a}, Frobenius endomorphism $\varphi_A$ induces a natural isomorphism
	\begin{equation}
		\label{eq:BK_twist_Frob}
			\varphi_{A\{1\}}: \varphi_A^* (A\{1\}) \longrightarrow \frac{1}{I}\cdot A\{1\}.
	\end{equation}
	The latter allows us to enhance $\mathcal{O}_\Prism\{1\}$ with a canonical filtration by taking the Frobenius preimage as in \cite[Thm.\ 1.2]{GL23}.
	Concretely, assume $(A,I)=(\Prism_S, I)$ for a quasiregular semiperfectoid ring $S$.
	we then equip $\mathcal{O}_{\Prism}\{1\}$ with the filtration such that its value at $(A,I)$ is
	\[
	\Fil^i  (A\{1\}) = \{x\in A\{1\}~|~\widetilde{\varphi}_{A\{1\}}(x) \in I^i A\{1\} \subset \frac{1}{I}\cdot A\{1\} \},
	\]
	where $\widetilde{\varphi}_A$ is the precomposition of $\varphi_A$ with the Frobenius-linearization $A\{1\}\to  \varphi_A^* (A\{1\})$.
	So for $i\leq -1$, we have $\Fil^i (A\{1\}) = A\{1\}$.
	For example, in the special case when $S$ is perfectoid, by (\ref{eq:BK_twist_Frob}) the submodule $\Fil^i A\{1\}$ is isomorphic to $\varphi_A^{-1}(I)^{i+1}A\{1\}$ through the Frobenius structure for $i\geq -1$.
	As shown in \cite{GL23}, each filtered module $\Fil^\bullet (A\{1\})$ satisfies the filtered base change formula among prisms arising from the syntomic site $(\mathbb{Z}_p)_\mathrm{qsyn}$, making $\mathcal{O}_\Prism\{1\}$ an invertible $F$-gauge.
	In addition, the similar construction extends to $\mathcal{O}_\Prism\{n\}=(\mathcal{O}_\Prism\{1\})^{\otimes n}$ for any $n\in \mathbb{Z}$.
\end{construction}
To analyze the rational prismatic period sheaf, the following construction plays an important role.
\begin{construction}[The twist by $\mu$]
	\label{const:Ainf_twist_by_mu}
	We let $K_\infty$ be a perfectoid Galois extension of $K$ that contains all the $p$-power roots of unity $\zeta_{p^n}$.
	Then by choosing a compatible system of $\zeta_{p^n}$, we can define the element $\mu=[\epsilon]-1\in \rAinf(\mathcal{O}_{K_\infty})$, which satisfies the formula
	\begin{equation}
		\label{eq:equation_of_mu}
			\varphi_{\rAinf(\mathcal{O}_{K_\infty})}(\mu)=\mu\cdot \frac{[\epsilon]^p-1}{[\epsilon]-1}.
	\end{equation}
	The latter implies that the invertible ideal $\mu\rAinf(\mathcal{O}_{K_\infty}) \subset \rAinf(\mathcal{O}_{K_\infty})$ is naturally a Frobenius submodule of height $[1,1]$.
	In particular, we can define a natural filtration using the Frobenius structure, namely
	\[
	\Fil^i \mu\rAinf(\mathcal{O}_{K_\infty}) \colonequals \{x\in \mu\rAinf(\mathcal{O}_{K_\infty})~|~\varphi_{\rAinf}(x)\in I^i\mu\rAinf(\mathcal{O}_{K_\infty})\}.
	\]
	Here we note that the submodule $\mu\rAinf(\mathcal{O}_{K_\infty}) \subset \rAinf(\mathcal{O}_{K_\infty})$ and its filtrations are preserved under the natural action of $\Gal(K_\infty/K)$, and the inclusion map is filtered.
	
	Conceptually, as explained in \cite[Ex.\ 4.24]{BMS1} (cf. \cite[\S 2.6]{BL22a}), the submodule $\mu\rAinf(\mathcal{O}_{K_\infty})$ admits a canonical (Galois and Frobenius) equivariant isomorphism with the double twist
	\[
	\rAinf(\mathcal{O}_{K_\infty})\{-1\}(1)\colonequals \rAinf(\mathcal{O}_{K_\infty})\{-1\}\otimes_{\mathbb{Z}_p} \mathbb{Z}_p(1),
	\] where $\rAinf(\mathcal{O}_{K_\infty})\{-1\}$ is the Breuil--Kisin twist and $\mathbb{Z}_p(1)$ is the Tate twist.
	Moreover, the $i$-th filtration on $\mu\rAinf(\mathcal{O}_{K_\infty})$ is naturally isomorphic to $\Fil^i (\rAinf(\mathcal{O}_{K_\infty})\{-1\} ) \otimes_{\mathbb{Z}_p} \mathbb{Z}_p(1)$, which we abbreviate as $\Fil^i \bigl(\rAinf(\mathcal{O}_{K_\infty})\{-1\} (1) \bigr)$.
	As before, the similar constructions naturally extend to $\mu^n\rAinf(\mathcal{O}_{K_\infty})$ for any $n \in \mathbb{Z}$.
\end{construction}
The following statement describes the difference among the different twists $\mu^m\rAinf(\mathcal{O}_{K_\infty})$.
\begin{proposition}
	\label{prop:Ainf_twist_difference}
	Let $n\geq m$ be two integers, and let $\iota_{n,m}$ be the canonical filtered inclusion $\mu^n \rAinf(\mathcal{O}_{K_\infty}) \to \mu^m \rAinf(\mathcal{O}_{K_\infty})$.
	We have the following descriptions of Galois modules:
	\begin{enumerate}[label=\upshape{(\roman*)}]
		\item\label{prop:Ainf_twist_difference_Fil} As submodules in $\rAinf(\mathcal{O}_{K_\infty})[\frac{1}{\mu}]$, we have 
		\[
		\Fil^i \mu^m \rAinf(\mathcal{O}_{K_\infty}) = \varphi_{\rAinf}^{-1}(I)^{\max\{i,m\}}\varphi_{\rAinf}^{-1}(\mu)^m\rAinf(\mathcal{O}_{K_\infty}).
		\]
		\item\label{prop:Ainf_twist_difference_graded}
		The associated graded of $\mu^m \rAinf(\mathcal{O}_{K_\infty})$ is given by
		\[
		\gr^i \mu^m \rAinf(\mathcal{O}_{K_\infty})  = \begin{cases}
		(\zeta_p-1)^m\cdot \varphi_{\rAinf}^{-1}(I)^{i}/\varphi_{\rAinf}^{-1}(I)^{i+1},~i\geq m;\\
		0,~i<m,
		\end{cases}
		\]
		which is functorial in $i$ and $m$.
		In particular, the entire associated graded $\gr^\bullet \mu^m \rAinf(\mathcal{O}_{K_\infty})$ is an invertible module over $\gr^\bullet_N \Prism_{\mathcal{O}_{K_\infty}}=\gr^\bullet_{\varphi_{\rAinf}^{-1}(I)}\rAinf(\mathcal{O}_{K_\infty})$, and the induced map of graded pieces $\gr^\bullet(\iota_{n,m})$ is injective.
		\item\label{prop:Ainf_twist_difference_cokernel_graded} The $i$-th graded module of the cokernel of $\iota_{n,m}$ is given by
		$$\gr^i \coker(\iota_{n,m}) = 
		\begin{cases}
				\gr^i\bigl(  \mu^m \rAinf(\mathcal{O}_{K_\infty}) \bigr)/(\zeta_p-1)^{n-m} \simeq (\zeta_p-1)^m\mathcal{O}_{K_\infty}\{i\}\cdot (\zeta_p-1)^m/(\zeta_p-1)^n,~i\geq n;\\
				\gr^i\bigl(  \mu^m \rAinf(\mathcal{O}_{K_\infty}) \bigr) \simeq (\zeta_p-1)^m\varphi^{-1}(I)^i/\varphi^{-1}(I)^{i+1} = (\zeta_p-1)^m\mathcal{O}_{K_\infty}\{i\},~m \leq i < n;\\
				0,~i<m.
			\end{cases}$$
	\end{enumerate}
\end{proposition}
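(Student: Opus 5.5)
The proof reduces everything to explicit computations in the perfect prism $(\rAinf(\mathcal{O}_{K_\infty}),I)$ with $I=(\tilde{\xi})$, $\tilde{\xi}=[p]_q$, where the Frobenius is an automorphism. Write $\xi\colonequals\varphi^{-1}(\tilde\xi)$, so that $\varphi^{-1}(I)=(\xi)=\ker(\theta)$.

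\emph{Step 1: part (i).} From (\ref{eq:equation_of_mu}) we have $\varphi(\mu)=\mu\tilde{\xi}$, hence $\varphi(\mu^m)=\mu^m\tilde\xi^m$. For $x=\mu^m y$, the definition in \Cref{const:Ainf_twist_by_mu} says that $x\in\Fil^i$ if and only if $\varphi(\mu^m y)\in \tilde\xi^i\mu^m\rAinf$. This is equivalent to $\tilde\xi^m\varphi(y)\in\tilde\xi^i\rAinf$, using that $\mu$ is a nonzerodivisor.

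Since $\tilde\xi$ is a nonzerodivisor, this condition reads $\varphi(y)\in\tilde\xi^{\max\{i-m,0\}}$, i.e.\ $y\in\xi^{\max\{i-m,0\}}$. Therefore $\Fil^i=\mu^m\xi^{\max\{i-m,0\}}\rAinf$.

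To match the displayed formula, we rewrite $\mu=\varphi^{-1}(\mu)\cdot\varphi^{-1}(\tilde\xi)=\varphi^{-1}(\mu)\xi$. This gives $\mu^m\xi^{\max\{i-m,0\}}=\varphi^{-1}(\mu)^m\xi^{m+\max\{i-m,0\}}=\varphi^{-1}(\mu)^m\xi^{\max\{i,m\}}$.

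For negative $m$ the same computation works inside $\rAinf[1/\mu]$, with the convention that $\xi^{\max\{i,m\}}$ means $\xi^{i}$ or $\xi^{m}$ as a fractional ideal. I would double-check this sign convention, since for $m<0$ one has $\Fil^i=\mu^m\rAinf$ for small $i$, and that is what the formula gives.

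\emph{Step 2: part (ii).} By Step 1 we have $\gr^i=\varphi^{-1}(\mu)^m\xi^i/\varphi^{-1}(\mu)^m\xi^{i+1}$ for $i\ge m$, and $\gr^i=0$ for $i<m$, because $\Fil^i=\Fil^{i+1}$ there.

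Next, $\theta(\varphi^{-1}(\mu))=\zeta_p-1$ (with the paper's convention for $\epsilon$), and $\xi^i/\xi^{i+1}$ is an $\rAinf/\xi=\mathcal{O}_{K_\infty}$-module. So multiplication by $\varphi^{-1}(\mu)^m$ acts on it as multiplication by $(\zeta_p-1)^m$, and since $\xi^i/\xi^{i+1}$ is free of rank one, this identifies $\gr^i$ with $(\zeta_p-1)^m\xi^i/\xi^{i+1}$, viewed inside $\xi^i/\xi^{i+1}[1/p]$.

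Invertibility over $\gr^\bullet_{(\xi)}\rAinf$ then follows: the whole graded module is generated in degree $m$ by $\varphi^{-1}(\mu)^m\xi^m$, and $\gr_{(\xi)}\rAinf$ is a polynomial ring over $\mathcal{O}_{K_\infty}$ because $\xi$ is regular. Injectivity of $\gr(\iota_{n,m})$ holds because it is the inclusion $(\zeta_p-1)^n\subset(\zeta_p-1)^m$ inside the torsionfree module $\mathcal{O}_{K_\infty}\{i\}[1/p]$ in degrees $i\ge n$, and is zero from a zero source in degrees $i<n$.

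\emph{Step 3: part (iii).} Since $\gr(\iota_{n,m})$ is injective and $\iota_{n,m}$ is filtered, it is strict. The filtrations are also complete and exhaustive. Hence there is a short exact sequence $0\to\gr^i\mu^n\to\gr^i\mu^m\to\gr^i\coker\to 0$.

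Reading this degree by degree from Step 2 gives the three cases. For $i<m$ the cokernel graded piece is $0$. For $m\le i<n$ it is all of $\gr^i\mu^m$. For $i\ge n$ it is the quotient $(\zeta_p-1)^m/(\zeta_p-1)^n$ tensored with $\mathcal{O}_{K_\infty}\{i\}$, using $\xi^i/\xi^{i+1}\simeq\mathcal{O}_{K_\infty}\{i\}$, the standard Breuil--Kisin identification for perfectoids as in \Cref{const:filtered_BK_twist}.

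\emph{Expected obstacle.} The main risk is bookkeeping rather than a conceptual difficulty. One issue is the strictness needed to pass from the inclusion $\iota_{n,m}$ to the exact sequence of graded pieces. The other is keeping the Frobenius-twist conventions consistent: whether $\theta(\varphi^{-1}(\mu))$ is $\zeta_p-1$, and identifying $\varphi^{-1}(I)^i/\varphi^{-1}(I)^{i+1}$ with $\mathcal{O}_{K_\infty}\{i\}$ rather than with a Frobenius twist of it. Galois equivariance throughout is automatic, since $\mu\rAinf$ and $I$ are Galois-stable.
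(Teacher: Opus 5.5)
Your proposal is correct and follows essentially the same route as the paper. Part (i) comes from $\varphi(\mu)=\mu\tilde{\xi}$ together with bijectivity of $\varphi_{\rAinf}$, part (ii) from (i) plus the fact that $\varphi^{-1}(\mu)$ reduces to $\zeta_p-1$ modulo $\varphi^{-1}(I)$, and part (iii) is read off from (ii). The only difference is that the paper obtains injectivity of $\gr^\bullet(\iota_{n,m})$ from the strictness identity $\Fil^i\mu^n\rAinf(\mathcal{O}_{K_\infty})=\mu^n\rAinf(\mathcal{O}_{K_\infty})\cap\Fil^i\mu^m\rAinf(\mathcal{O}_{K_\infty})$, whereas you compute the graded map as multiplication by $(\zeta_p-1)^{n-m}$ and deduce strictness from that; both work, and your worry about identifying $\varphi^{-1}(I)^i/\varphi^{-1}(I)^{i+1}$ with $\mathcal{O}_{K_\infty}\{i\}$ is harmless, since $\varphi$ maps it onto $I^i/I^{i+1}$ linearly over $\mathcal{O}_{K_\infty}$ (because $\widetilde{\theta}\circ\varphi=\theta$).
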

Here we note that the above descriptions extend to $p$-torsionfree perfectoid $\mathcal{O}_{K_\infty}$-algebras $S$, and we use $\iota_{n,m,S}$ to denote the induced map.
\begin{proof}
	Part \ref{prop:Ainf_twist_difference_Fil} follows from (\ref{eq:equation_of_mu}) together with the fact that the map $\varphi_{\rAinf}:\rAinf(\mathcal{O}_{K_\infty}) \to \rAinf(\mathcal{O}_{K_\infty})$ is an isomorphism.
	The description and the invertibility of the graded piece in Part \ref{prop:Ainf_twist_difference_graded} are direct consequences of Part \ref{prop:Ainf_twist_difference_Fil}, together with the observation that the image of $\varphi_{\rAinf}^{-1}(\mu)$ in $\rAinf(\mathcal{O}_{K_\infty})/\varphi_{\rAinf}^{-1}(I)$ is $\zeta_p-1$.
	For the injectivity of the map $\gr^\bullet(\iota_{n,m})$, it suffices to notice that by \Cref{const:Ainf_twist_by_mu} we have 
	\[
	\Fil^i  \mu^n \rAinf(\mathcal{O}_{K_\infty}) = \bigl( \mu^n \rAinf(\mathcal{O}_{K_\infty}) \bigr) \bigcap \Fil^i \mu^m \rAinf(\mathcal{O}_{K_\infty}),~\forall i.
	\]
	Part \ref{prop:Ainf_twist_difference_cokernel_graded} follows from Part \ref{prop:Ainf_twist_difference_graded}.
\end{proof}

Another preparation is a description of the canonical map between the absolute and the relative prismatic cohomology, as analyzed by Bhatt--Lurie in \cite{BL22a}.
\begin{theorem}[Bhatt--Lurie]
	\label{thm:abs_and_rel_prismatic}
Let $S$ be a ring, and let $(A,I)$ be a bounded prism.
\begin{enumerate}[label=\upshape{(\roman*)}]
	\item\label{thm:abs_and_rel_prismatic_Fil} There is a canonical commutative diagram of filtered $\mathrm{E}_\infty$-algebras
\begin{equation}
	\label{eq:abs_to_rel_prismatic_Fil}
	\begin{tikzcd}
		\Fil^\bullet_N \Prism_S \ar[d] \arrow[r, "{\gamma_\Prism^\dR}"] & \Fil^\bullet_H \dR_{S/\mathbb{Z}_p} \ar[d] \\
		\Fil^\bullet_N \Prism^{(1)}_{(S\otimes_{\mathbb{Z}_p}{\overline{A}})^\wedge_p/A} \arrow[r] & \bigl(\Fil^\bullet_H \dR_{S/\mathbb{Z}_p}\otimes_{\mathbb{Z}_p} \overline{A}\bigr)^\wedge_p,
	\end{tikzcd}
\end{equation} 
where the horizontal arrows are the absolute and the relative prismatic--de Rham realization morphisms respectively, as in \cite[Const.\ 5.5.3, Cor.\ 5.2.8]{BL22a},
and the right vertical arrow is the tensor product of $\mathbb{Z}_p\to \overline{A}$.
\item\label{thm:abs_and_rel_prismatic_gr} Assume $S$ is quasiregular semiperfectoid.
For the given $i\in \mathbb{N}$, the induced map 
\begin{equation}
	\label{eq:abs_prism_to_dR_graded}
	\gr^i(\gamma_\Prism^\dR)\colon \gr^i_N \Prism_S \longrightarrow \gr^i_H \dR_S
\end{equation}
is an inclusion of $S$-modules with the cokernel killed by a bounded power of $p$.
\end{enumerate}
\end{theorem}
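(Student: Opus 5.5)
For part \ref{thm:abs_and_rel_prismatic_Fil}, I would produce the diagram by functoriality and left Kan extension rather than by explicit formulas. The map of rings $S\to (S\otimes_{\mathbb{Z}_p}\overline{A})^\wedge_p$ and the structure map $\mathbb{Z}_p\to A$ induce a forgetful map from the relative prismatic site of $(S\otimes\overline{A})^\wedge_p$ over $(A,I)$ to the absolute prismatic site of $S$. This gives the left vertical arrow $\Prism_S\to \Prism_{(S\otimes\overline{A})^\wedge_p/A}$. I would then compose it with the relative Frobenius $\Prism\to\Prism^{(1)}$, using that the absolute Nygaard filtration is defined as the Frobenius preimage of $I^\bullet$. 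This composite is compatible with the Nygaard filtrations, since the relative Nygaard filtration on $\Prism^{(1)}$ is likewise a Frobenius preimage of $I^\bullet$.

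The horizontal arrows are the de Rham realizations of \cite[Cor.\ 5.2.8, Const.\ 5.5.3]{BL22a}. To see that the square commutes as filtered $\mathrm{E}_\infty$-algebras, I would reduce to the case where $S$ is a $p$-complete polynomial ring. Both sides are left Kan extended from polynomial $\mathbb{Z}_p$-algebras, with filtrations completed appropriately, so this reduction is legitimate. In the polynomial case, I would use quasisyntomic descent to quasiregular semiperfectoid $S$. There everything is discrete, and the diagram is a commutative square of filtered rings. It commutes because both composites $\Prism_S\to(\dR_S\otimes\overline{A})^\wedge_p$ are the unique maps compatible with the crystalline comparison, namely the reduction $\Prism^{(1)}/I\simeq\dR$.

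For part \ref{thm:abs_and_rel_prismatic_gr}, I would work with quasiregular semiperfectoid $S$, where both graded pieces are discrete $S$-modules. On the Nygaard side, \cite{BS22} gives the identification $\gr^i_N\Prism_S\simeq \Fil^\conj_i\overline{\Prism}_S\{i\}$. On the Hodge side, $\gr^i_H\dR_S\simeq\wedge^i\mathbb{L}_{S/\mathbb{Z}_p}[-i]=\Gamma^i_S(\mathbb{L}_{S/\mathbb{Z}_p}[-1])$, which is a flat $S$-module. Under these identifications, I would show that $\gr^i(\gamma^\dR_\Prism)$ matches the conjugate-graded pieces $\gr^\conj_j\overline{\Prism}_S\{i\}\simeq \wedge^j\mathbb{L}_{S/\mathbb{Z}_p}[-j]\{i-j\}$ for $j\le i$ with the Hodge graded piece. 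The maps involved are multiplication by powers of $p$ coming from the relation between the two twists. Concretely, I would reduce by base change to $S=\mathbb{Z}_p^{\mathrm{cycl}}\langle X^{1/p^\infty}\rangle/(X)$-type examples. In the perfectoid case the map is the inclusion $\xi^i\Ainf/\xi^{i+1}\hookrightarrow$ divided powers, with cokernel killed by $p^{\lfloor i/(p-1)\rfloor}$-ish factors coming from $i!$.

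The main obstacle is the uniformity and injectivity claim in \ref{thm:abs_and_rel_prismatic_gr}. Bounding the cokernel requires checking that the conjugate filtration splits compatibly with the Hodge filtration up to a power of $p$ depending only on $i$. My plan is to exhibit an explicit inverse map multiplied by $i!$ on $\Gamma^i$ to obtain the bound, and to deduce injectivity from $p$-torsionfreeness of the target after reducing to the flat, torsionfree local models.
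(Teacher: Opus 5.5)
Your proof of part (ii) has a genuine gap. The identification $\gr^\conj_j\overline{\Prism}_S\simeq\wedge^j\mathbb{L}_{S/\mathbb{Z}_p}[-j]\{-j\}$ is false for absolute Hodge--Tate cohomology. For $S$ perfectoid, $\overline{\Prism}_S=S$, while $\wedge^j\mathbb{L}_{S/\mathbb{Z}_p}[-j]\simeq\Gamma^j_S(S\{1\})$ is nonzero for every $j$.

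The isomorphism $\gr^i_N\Prism_S\simeq\Fil^\conj_i\overline{\Prism}_S\{i\}$ of \cite{BS22} uses the conjugate filtration relative to a perfectoid $R\to S$, whose graded pieces are $\wedge^j\mathbb{L}_{S/R}[-j]\{i-j\}$. To compare with $\gr^i_H\dR_S=\Gamma^i_S(\mathbb{L}_{S/\mathbb{Z}_p}[-1])$, you would have to filter the target via $0\to S\{1\}\to\mathbb{L}_{S/\mathbb{Z}_p}[-1]\to\mathbb{L}_{S/R}[-1]\to 0$. You would then have to prove that $\gr^i(\gamma^\dR_\Prism)$ respects these filtrations and acts on the $j$-th piece by $(i-j)!$ up to units. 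One could try to get this from multiplicativity: the $j$-th conjugate step of $\gr^i_N\Prism_S$ is $\gr^{i-j}_N\rAinf(R)\cdot\gr^j_N\Prism_S$, and $\xi^{i-j}\mapsto(i-j)!\,\xi^{[i-j]}$. But this computation is the whole content of (ii), and your proposal only asserts it (``multiplication by powers of $p$'').

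The fallback of reducing ``by base change'' to rings like $\mathbb{Z}_p^{\mathrm{cycl}}\langle X^{1/p^\infty}\rangle/(X)$ is not available. A general quasiregular semiperfectoid ring is not such a base change, and injectivity does not descend from examples.

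Injectivity also does not follow from $p$-torsionfreeness of the \emph{target}. You need a torsionfree source, or a fiber concentrated in degree $1$. Torsionfreeness cannot be obtained by reduction either; it must be a hypothesis on $S$. For $S=\mathbb{F}_p$ and $i=p$, the map $\gr^p_N\Prism_S=\mathbb{F}_p\to\Gamma^p_{\mathbb{F}_p}(\mathbb{F}_p)=\mathbb{F}_p$ is multiplication by $p!$, hence zero. The paper's argument tacitly uses the same hypothesis, through the $p$-torsionfreeness of $\wedge^j\mathbb{L}_{S/\mathbb{Z}_p}[-j]$.

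The paper avoids all this graded bookkeeping by quoting \cite[proof of Prop.\ 5.5.12]{BL22a}. There $\mathrm{fib}(\gr^i\gamma^\dR_\Prism)\simeq(\Fil^\conj_{i-1}\widehat{\Omega}^{\slashed{D}}_S)^{\Theta=-i}$, which has a finite filtration with pieces $\wedge^j\mathbb{L}_{S/\mathbb{Z}_p}[-j]\otimes(\mathbb{Z}_p\xrightarrow{j-i}\mathbb{Z}_p)$ for $0\le j\le i-1$. For $p$-torsionfree $S$, the fiber therefore sits in degree $1$ and is killed by $i!$, which gives injectivity and the cokernel bound at once.

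In part (i), your claim that both composites are ``the unique maps compatible with the crystalline comparison'' rests on no actual uniqueness statement. The descent argument is also unnecessary. The paper recalls from \cite[Const.\ 5.5.3, Cor.\ 5.6.3]{BL22a} that $\Fil^\bullet_N\Prism_S$, together with $\gamma^\dR_\Prism$, is the pullback of $\Fil^\bullet_H\dR_{S/\mathbb{Z}_p}$ and $\varprojlim_{(B,J)\in(\mathbb{Z}_p)_\Prism}\Fil^\bullet_N\Prism^{(1)}_{S_{\overline{B}}/B}$ over $\varprojlim_{(B,J)}(\Fil^\bullet_H\dR_{S/\mathbb{Z}_p}\otimes_{\mathbb{Z}_p}\overline{B})^\wedge_p$. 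Projecting to the $(A,I)$-component gives the square, and commutativity is built in.
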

\begin{proof}
	For Part \ref{thm:abs_and_rel_prismatic_Fil}, we recall from \cite[Const.\ 5.5.3, Cor.\ 5.6.3]{BL22a} that the Nygaard filtered absolute prismatic cohomology can be defined using the pullback diagram
	\begin{equation}
		\label{eq:abs_Nyggard_def}
		\begin{tikzcd}
			\Fil^\bullet_N \Prism_S \ar[d] \arrow[r, "{\gamma_\Prism^\dR}"] & \Fil^\bullet_H \dR_{S/\mathbb{Z}_p} \ar[d] \\
		\varprojlim_{(B,J)\in (\mathbb{Z}_p)_\Prism} \Fil^\bullet_N \Prism^{(1)}_{S_{\overline{B}}/B} \arrow[r] & \varprojlim_{(B,J)\in (\mathbb{Z}_p)_\Prism} \bigl(\Fil^\bullet_H \dR_{S/\mathbb{Z}_p}\otimes_{\mathbb{Z}_p} \overline{B}\bigr)^\wedge_p,
		\end{tikzcd}
	\end{equation}
	where the right vertical map is the limit of the tensor product maps, and the bottom arrow is the relative prismatic--de Rham comparison.
	So Part \ref{thm:abs_and_rel_prismatic_Fil} follows by combining (\ref{eq:abs_Nyggard_def}) with the tautological projection diagram as below
	\[
	\begin{tikzcd}
		\varprojlim_{(B,J)\in (\mathbb{Z}_p)_\Prism} \Fil^\bullet_N \Prism^{(1)}_{S_{\overline{B}}/B} \arrow[r] \ar[d] & \varprojlim_{(B,J)\in (\mathbb{Z}_p)_\Prism} \bigl( \Fil^\bullet_H \dR_{S/\mathbb{Z}_p}\otimes_{\mathbb{Z}_p}\overline{B}\bigr)^\wedge_p \ar[d] \\
		\Fil^\bullet_N \Prism^{(1)}_{S_{\overline{A}}/A} \arrow[r] & \bigl(\Fil^\bullet_H \dR_{S/\mathbb{Z}_p}\otimes_{\mathbb{Z}_p} \overline{A}\bigr)^\wedge_p.
	\end{tikzcd}\]

	To see Part \ref{thm:abs_and_rel_prismatic_gr}, we recall from the proof of \cite[Prop.\ 5.5.12]{BL22a} that the fiber of the morphism $\gr^i(\gamma_\Prism^\dR)$ is isomorphic to the complex 
	\[
	C\colonequals \bigl(\Fil^\conj_{i-1} \widehat{\Omega}_S^\slashed{D} \bigr)^{\Theta=-i},
	\]
	where $\widehat{\Omega}_S^\slashed{D}$ is the diffracted Hodge complex as in \cite[\S 4.7]{BL22a}.
	By \cite[Not.\ 4.7.2]{BL22a}, the complex $C$ admits a finite filtration with its non-zero graded pieces given by  
	\[
	C_j\colonequals \wedge^j \mathbb{L}_{S/\mathbb{Z}_p}[-j] \otimes_{\mathbb{Z}_p}(\mathbb{Z}_p \xrightarrow{\cdot (j-i)} \mathbb{Z}_p)
	\] for $j\in \{0,1,\ldots, i-1\}$.
	On the other hand, since $S$ is quasiregular semiperfectoid (\cite[Not.\ 7.1]{BS22}), we know each $\wedge^j \mathbb{L}_{S/\mathbb{Z}_p}[-j]$ is a $p$-torsionfree module in cohomological degree zero.
	Hence the complex $C_j$ lives in cohomological degree $1$ and is killed by the integer $(j-i)$.
	As a consequence, the map (\ref{eq:abs_prism_to_dR_graded}) is injective with the cokernel killed by a power of $p$.
\end{proof}

We then extend the analysis of \Cref{prop:Ainf_twist_difference} and \Cref{thm:abs_and_rel_prismatic} and consider the base change of the morphism $\iota_{n,m}$ to the relative prismatic cohomology of a perfectoid algebra.
\begin{proposition}
	\label{prop:gr_of_base_change_of_iota}
	Let $X=\Spf(R)$ be a regular $p$-adic formal scheme over $\mathcal{O}_K$, let $(B,J)$ be a flat prism, and let $S$ be a perfectoid ring over $X_{\mathcal{O}_{K_\infty}}$.
	For $n\geq m$, let $\widetilde{\iota}_{n,m}$ be the $(p,J)$-complete filtered base change of $\iota_{n,m,S}$ along the morphism 
	$\Fil^\bullet_{\varphi^{-1}(I)} \rAinf(S) = \Fil^\bullet_N \Prism_S \to \Fil^\bullet_N \Prism^{(1)}_{S_{\overline{B}}/B}$.
	There is a natural graded isomorphism of graded abelian groups (in cohomological degree zero)
	\begin{equation}
		\label{eq:graded_of_base_change_of_iota}
		\gr^\bullet \cofib(\widetilde{\iota}_{n+1,n}) [1/p] \simeq M_{S_{\overline{B}}}^\bullet\otimes_{S} S(n),
	\end{equation}
where $M_{S_{\overline{B}}}^\bullet$ is a graded and functorial $S_{\overline{B}}[1/p]$-module of degree $\geq n$, such that each $M_{S_{\overline{B}}}^i$ admits a natural finite filtration whose graded piece is of the form $\bigl( \wedge^i \mathbb{L}_{R/\mathbb{Z}_p}\otimes_R \overline{B}\{j\}\bigr) \otimes_{\overline{B}} S_{\overline{B}}[1/p]$.
\end{proposition}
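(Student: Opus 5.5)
The plan is to compute $\gr^\bullet \cofib(\widetilde{\iota}_{n+1,n})$ by first computing it at the level of $\rAinf(S)$ and then passing to the relative prismatic cohomology through the graded pieces of the Nygaard filtration. By \Cref{prop:Ainf_twist_difference}.\ref{prop:Ainf_twist_difference_cokernel_graded} applied to $S$, the graded pieces of $\coker(\iota_{n+1,n,S})$ vanish for $i<n$, and for $i\ge n$ they are the $S$-module $(\zeta_p-1)^n S\{i\}$ modulo a bounded power of $(\zeta_p-1)$. In particular each of them is killed by a bounded power of $p$ for $i\geq n+1$, and is invertible for $i=n$. Since $\gr^\bullet \mu^n\rAinf(S)$ is an invertible graded module over $\gr^\bullet_N\Prism_S$, the filtered base change commutes with passage to graded pieces. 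Concretely,
\[
\gr^\bullet \widetilde{\iota}_{n+1,n} \;=\; \gr^\bullet(\iota_{n+1,n,S})\otimes_{\gr^\bullet_N\Prism_S} \gr^\bullet_N\Prism^{(1)}_{S_{\overline{B}}/B},
\]
where the tensor product is derived and $p$-completed. Moreover, the source and target twists differ by the Tate twist $S(n)$ as in \Cref{const:Ainf_twist_by_mu}, since $\mu^n\rAinf \simeq \rAinf\{-n\}(n)$.

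Next, I would describe $\gr^i_N\Prism^{(1)}_{S_{\overline{B}}/B}$ rationally. By the relative prismatic–de Rham comparison in its filtered form (as in \Cref{def:dR_period_sheaf} and \Cref{rmk:Hodge_fil}), the Nygaard graded pieces map to the Hodge graded pieces $\gr^i_H\dR_{S_{\overline{B}}/\overline{B}}=\wedge^i\mathbb{L}_{S_{\overline{B}}/\overline{B}}[-i]$. The analogue of \Cref{thm:abs_and_rel_prismatic}.\ref{thm:abs_and_rel_prismatic_gr} for the relative setting, namely the diffracted Hodge complex description of the fiber, shows that this map is an isomorphism after inverting $p$. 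Using \Cref{prop:relative_prismatic_coh_of_perfectoid}.\ref{prop:relative_prismatic_coh_of_perfectoid_cotangent_complex} and \ref{prop:relative_prismatic_coh_of_perfectoid_initial_prism}, with flat base change along $X\to\Spf(\overline{B})$, we have $\mathbb{L}_{S_{\overline{B}}/\overline{B}}\simeq(\mathbb{L}_{S/X}\otimes_{\mathcal{O}_X}\overline{B})^\wedge_p$. This object is the extension of $S\{1\}[1]$ by $\mathbb{L}_{X/W}\otimes S[1]$, via the Faltings extension \eqref{eq:Faltings_extension}. Taking $\wedge^i$ of this two-step extension gives a natural finite filtration on $\gr^i_N\Prism^{(1)}_{S_{\overline{B}}/B}[1/p]$. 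Its graded pieces are $\wedge^{j}\mathbb{L}_{R/\mathbb{Z}_p}\otimes_R \overline{B}\{i-j\}\otimes_{\overline{B}}S_{\overline{B}}[1/p]$; here one uses that $\mathbb{L}_{W/\mathbb{Z}_p}=0$ to replace $\mathbb{L}_{R/W}$ by $\mathbb{L}_{R/\mathbb{Z}_p}$. After reindexing, these are the pieces claimed in the statement. All of them are discrete, because $\mathbb{L}_{S/X}[-1]$ is finite projective by \Cref{prop:relative_prismatic_coh_of_perfectoid}.\ref{prop:relative_prismatic_coh_of_perfectoid_cotangent_complex_2}.

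Finally, I would combine the two computations. After inverting $p$, the $(\zeta_p-1)$-power-torsion graded pieces of $\coker(\iota_{n+1,n,S})$ in degrees $i\ge n+1$ die. The surviving contributions then assemble into a graded module concentrated in degrees $\geq n$. The tensor product identification above gives \eqref{eq:graded_of_base_change_of_iota}, where $M^\bullet_{S_{\overline{B}}}$ is built from $\gr^\bullet_N\Prism^{(1)}_{S_{\overline{B}}/B}[1/p]$ shifted to start in degree $n$, and the factor $S(n)$ records the Tate twist. Functoriality in $S$ and $(B,J)$ follows because every identification used is canonical.

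The main obstacle is the rational isomorphism between the Nygaard and Hodge graded pieces in the relative setting. \Cref{thm:abs_and_rel_prismatic}.\ref{thm:abs_and_rel_prismatic_gr} is stated only for absolute cohomology and quasiregular semiperfectoid rings, whereas $S_{\overline{B}}$ is not quasiregular semiperfectoid in general. I would first try to base change the absolute statement along $\Prism_S\to\Prism^{(1)}_{S_{\overline{B}}/B}$ using the diagram in \Cref{thm:abs_and_rel_prismatic}.\ref{thm:abs_and_rel_prismatic_Fil}. Failing that, I would quasisyntomically descend from a quasiregular semiperfectoid cover of $S_{\overline{B}}$, checking that the bounded $p$-torsion fibers remain bounded.
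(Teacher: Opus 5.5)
The step that fails is your claim that $\gr^i_N\Prism^{(1)}_{S_{\overline{B}}/B}\to\gr^i_H\dR_{S_{\overline{B}}/\overline{B}}$ becomes an isomorphism after inverting $p$. In the relative setting this map sits in the fiber sequence
\[
\gr^{i-1}_N\Prism^{(1)}_{S_{\overline{B}}/B}\otimes_{\overline{B}}\overline{B}\{1\}\longrightarrow\gr^{i}_N\Prism^{(1)}_{S_{\overline{B}}/B}\longrightarrow\gr^i_H\dR_{S_{\overline{B}}/\overline{B}}
\]
of \cite[Cor.\ 5.2.8]{BL22a}. Equivalently, $\gr^i_N\simeq\Fil^\conj_i\overline{\Prism}_{S_{\overline{B}}/B}\{i\}$ projects onto $\gr^\conj_i$. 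The fiber is not torsion: for $i=1$ it is $S_{\overline{B}}\otimes_{\overline{B}}\overline{B}\{1\}$. The bounded torsion in \Cref{thm:abs_and_rel_prismatic}.\ref{thm:abs_and_rel_prismatic_gr} comes from the $\Theta$-weights (the factors $j-i$) of the absolute diffracted Hodge complex, which have no relative counterpart. So neither base change nor quasisyntomic descent can prove your claim. These non-torsion fibers are exactly where the twists $\overline{B}\{j\}$ in the statement come from. Your Faltings-extension computation only produces the sub-line $S\{1\}\otimes_S S_{\overline{B}}$, so writing $\overline{B}\{i-j\}$ there is not justified.

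The assembly step has a second, independent gap. After inverting $p$, $\coker(\gr^\bullet\iota_{n+1,n,S})$ is $S\{n\}$ in degree $n$, and $\gr^\bullet\rAinf(S)$ acts on it through the augmentation to $\gr^0\rAinf(S)=S$. So the base change is $S\{n\}\otimes_S Q$, where $Q$ is $\gr^\bullet_N\Prism^{(1)}_{S_{\overline{B}}/B}[1/p]$ modulo the ideal generated by the image of $\gr^1\rAinf(S)$. It is not a shift of the Nygaard graded ring itself.

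Identifying that ideal is the real content. You should apply \Cref{thm:abs_and_rel_prismatic}.\ref{thm:abs_and_rel_prismatic_gr} only to the perfectoid (hence quasiregular semiperfectoid) $S$, and only in degree one, to get $\gr^1\rAinf(S)[1/p]\simeq\mathbb{L}_{S/\mathbb{Z}_p}[-1][1/p]$. The square of \Cref{thm:abs_and_rel_prismatic}.\ref{thm:abs_and_rel_prismatic_Fil} then shows this line lands on the Faltings sub-line of $\gr^1_H\dR_{S_{\overline{B}}/\overline{B}}[1/p]=(\mathbb{L}_{S/R}\otimes_R\overline{B})^\wedge_p[-1][1/p]$. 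In particular it meets the fiber $S_{\overline{B}}\otimes_{\overline{B}}\overline{B}\{1\}$ trivially. Hence $Q^1$ is an extension of $\mathbb{L}_{R/\mathbb{Z}_p}\otimes_R S_{\overline{B}}[1/p]$ by $S_{\overline{B}}\otimes_{\overline{B}}\overline{B}\{1\}[1/p]$. Higher degrees follow by induction from the fiber sequence above and multiplicativity.

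Your two errors cancel only superficially: the ranks and the shape of the pieces agree. What your argument actually computes is $S\{n\}\otimes_S\gr^{\bullet-n}_H\dR_{S_{\overline{B}}/\overline{B}}[1/p]$. That is the Nygaard graded ring modulo the image of $J$, not modulo the image of $\gr^1\rAinf(S)$. Its pieces carry extra twists $S\{i-n-j\}$, which change the Galois action. The statement, and its use in \Cref{thm:structure_of_rational_period_sheaf}, needs all the Tate twisting concentrated in $S(n)$, with $M^\bullet$ built from $\overline{B}$-modules.
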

Before the proof, we remind the reader the convention that $S_{\overline{B}}$ is defined as $(S\otimes_R \overline{B})^\wedge_p$.
\begin{proof}
	By switching the order of taking the associated graded and the filtered base change, we obtain an identification of the graded maps
	\[
	\gr^\bullet (\widetilde{\iota}_{n+1,n}) \simeq  (\gr^\bullet \iota_{n+1,n,S}) \otimes_{\gr^\bullet \rAinf(S)} \gr^\bullet_N \Prism^{(1)}_{S_{\overline{B}}/B} ,
	\]
	and similarly their cofibers.
	Here we notice that by \Cref{prop:Ainf_twist_difference}.\ref{prop:Ainf_twist_difference_graded}, the cofiber (which coincides with the cokernel) of the map $\gr^\bullet (\iota_{n,m,S})$ is a perfect graded complex over $\gr^\bullet \rAinf(S)$.
	So by \Cref{prop:Ainf_twist_difference}.\ref{prop:Ainf_twist_difference_cokernel_graded} and by killing the $p$-torsions, we get
	\[
	\gr^\bullet \coker(\widetilde{\iota}_{n+1,n})[1/p] \simeq \bigl( S\{n\} \otimes_{\gr^\bullet \rAinf(S)} \gr^\bullet_N \Prism^{(1)}_{S_{\overline{B}}/B} \bigr) [1/p],
	\]
	where the action of $\gr^\bullet \rAinf(S)$ on $S\{n\}$ factors through the graded surjection $\gr^\bullet \rAinf(S)\to \gr^0\rAinf(S)\simeq S$ and the module $S\{n\}$ lives in the graded degree $n$.
	In particular, the right hand side above is naturally isomorphic to 
	\[
	\bigl( S\{n\} \otimes_S S \otimes_{\gr^\bullet \rAinf(S)} \gr^\bullet_N \Prism^{(1)}_{S_{\overline{B}}/B} \bigr) [1/p].
	\]
	So to prove the statement, it amounts to decomposing the graded ring $S \otimes_{\gr^\bullet \rAinf(S)} \gr^\bullet_N \Prism^{(1)}_{S_{\overline{B}}/B}[1/p]$, which we denote by $Q$.
	
	Next, we notice that the graded ring $\gr^\bullet_N \Prism^{(1)}_{S_{\overline{B}}/B}[1/p]$ lives in cohomological degree zero: by Nygaard--conjugate comparison (\cite[Thm.\ 15.2]{BS22}), each $\gr^\bullet_i \Prism^{(1)}_{S_{\overline{B}}/B}[1/p]$ admits a finite filtration whose formation is $\wedge^i\mathbb{L}_{S_{\overline{B}}/\overline{B}}\otimes_{\overline{B}}\overline{B}\{-i\}[-i][1/p]$, which, thanks to the Faltings extension and the smoothness of the generic fiber $X_\eta$, is a finite projective $S_{\overline{B}}[1/p]$-module.
	In addition, as $\Fil^\bullet \rAinf(S)$ is the $\varphi^{-1}(I)$-adic filtration for an invertible ideal $\varphi^{-1}(I)$, the kernel of $\gr^\bullet \rAinf(S)\to S$ is a principal ideal generated by $\gr^1 \rAinf(S)$.
	Thus the ring $Q$ is naturally isomorphic to the quotient ring of $\gr^\bullet_N \Prism^{(1)}_{S_{\overline{B}}/B}[1/p]$ by the ideal generated by the image of $\gr^1 \rAinf(S)$.
	
	To understand the aforementioned image, we let $C$ be the $S_{\overline{B}}[1/p]$-submodule generated by the image of $\gr^1 \rAinf(S)[1/p]$ in $\gr^1_N \Prism^{(1)}_{S_{\overline{B}}/B}[1/p]$. 
	Consider the natural commutative diagram as below:  
	\begin{equation}
		\label{eq:prism_dR_gr_1}
		\begin{tikzcd}
					\gr^1 \rAinf(S)=\gr^1_N \Prism_S \ar[d] \ar[r] & \gr^1_H\dR_S= \mathbb{L}_{S/\mathbb{Z}_p}[-1] \ar[d]\\
			\gr^1_N \Prism^{(1)}_{(S\otimes_{\mathbb{Z}_p} \overline{B})^\wedge_p/B} \ar[d] \ar[r] & \gr^1_H \dR_{(S\otimes_{\mathbb{Z}_p} \overline{B})^\wedge_p/B}=\bigl( \mathbb{L}_{S/\mathbb{Z}_p} \otimes_{\mathbb{Z}_p} \overline{B}\bigr)^\wedge_p[-1] \ar[d]\\
			\gr^1_N \Prism^{(1)}_{S_{\overline{B}}/B} \ar[r] & \gr^1_H \dR_{S_{\overline{B}}/B}=\bigl( \mathbb{L}_{S/R} \otimes_R \overline{B}\bigr)^\wedge_p [-1],
		\end{tikzcd}
	\end{equation}
where the left column factorizes the morphism $\gr^1 \rAinf(S) \to \gr^1_N \Prism^{(1)}_{S_{\overline{B}}/B}$, the horizontal arrows are absolute and relative prismatic--de Rham realizations, and the right column follows from the functoriality of de Rham cohomology.
    By \Cref{thm:abs_and_rel_prismatic}.\ref{thm:abs_and_rel_prismatic_gr}, the top horizontal arrow becomes an isomorphism after inverting $p$.
	On the other hand, by the proof of \cite[Cor.\ 15.4]{BS22}, the bottom horizontal arrow in (\ref{eq:prism_dR_gr_1}) fits naturally into a fiber sequences
	\begin{equation}
		\label{eq:gr_N_and_gr_H}
		\begin{tikzcd}
			S_{\overline{B}} \otimes_{\overline{B}} \overline{B}\{1\}= \gr^0_N \Prism^{(1)}_{S_{\overline{B}}/B} \otimes_{\overline{B}} \overline{B}\{1\} \ar[r] & \gr^1_N \Prism^{(1)}_{S_{\overline{B}}/B} \ar[r] & \gr^1_H \dR_{S_{\overline{B}}/\overline{B}} = \mathbb{L}_{S/R} \otimes_S S_{\overline{B}} [-1],
		\end{tikzcd}
	\end{equation}
which becomes a short exact sequence of finite projective $S_{\overline{B}}[1/p]$-modules after inverting $p$.
Hence by combining (\ref{eq:prism_dR_gr_1}), (\ref{eq:gr_N_and_gr_H}) and the aforementioned isomorphism, we see the image of $C$ in $\gr^1_H \dR_{S_{\overline{B}}/\overline{B}}[1/p]$ coincides with the image of $\mathbb{L}_{S/\mathbb{Z}_p}\otimes_S S_{\overline{B}}[1/p][-1]$ in $\mathbb{L}_{S/R} \otimes_S S_{\overline{B}} [1/p] [-1]$.
As a consequence, the quotient $\gr^1_N \Prism^{(1)}_{S_{\overline{B}}/B}[1/p]/C$ functorially fits into a canonical short exact sequence
\[
S_{\overline{B}}\otimes_{\overline{B}} \overline{B}\{1\}[1/p]  = \gr^0_N \Prism^{(1)}_{S_{\overline{B}}/B} \otimes_{\overline{B}} \overline{B}\{1\} [1/p]\longrightarrow \gr^1_N \Prism^{(1)}_{S_{\overline{B}}/B}[1/p]/C \longrightarrow \mathbb{L}_{R/\mathbb{Z}_p}\otimes_R S_{\overline{B}}[1/p].
\]
This finishes the proof for the lowest graded piece.

To extend the above to the higher graded pieces of the ring $Q$, we recall that the Nygaard graded pieces admit a canonical fiber sequence that is compatible with the prismatic--de Rham comparison (\cite[Cor.\ 5.2.8]{BL22a})
\[
\gr^{n-1}_N \Prism^{(1)}_{S_{\overline{B}}/B}\otimes_{\overline{B}} \overline{B}\{1\} \longrightarrow \gr^n_N \Prism^{(1)}_{S_{\overline{B}}/B}\otimes_{\overline{B}} \longrightarrow \gr^n_H \dR_{S_{\overline{B}}/\overline{B}}.
\]
On the other hand, by the multiplicativity, the graded pieces of the quotient ring $\gr^\bullet_H \dR_{S_{\overline{B}}/\overline{B}}[1/p]/(\mathbb{L}_{S/\mathbb{Z}_p}[-1])$ is of the form $\wedge^i \mathbb{L}_{R/\mathbb{Z}_p}\otimes_R S_{\overline{B}}[1/p]$.
Hence the structure of the general graded piece of the ring $Q$ follows by the above short exact sequence and induction.	
\end{proof}

Now we are ready to define the rational prismatic period sheaf.
\begin{construction}[Rational prismatic period sheaves]
	\label{const:rational_period_sheaves}
	Let $X$ be a regular $p$-adic formal scheme over $\mathcal{O}_K$, and let $(B,J)$ be a prism over $X_\Prism$.
	\begin{enumerate}
	\item Consider the following tensor product over $(X_{\eta, K_{\infty}})_\pe$
	\[
	\Prism_{(-)_{\overline{B}}/B}[\frac{1}{\mu}] \colonequals \Prism_{(-)_{\overline{B}}/B} \otimes_{\rAinf(\mathcal{O}_{K_\infty})} \rAinf(\mathcal{O}_{K_\infty})[\frac{1}{\mu}],
	\]
	and similarly for the twisted version $\Prism^{(1)}_{(-)_{\overline{B}}/B}[\frac{1}{\mu}] = (\varphi_B^*\Prism_{(-)_{\overline{B}}/B})[\frac{1}{\mu}]$.
	As the invertible ideal $\mu\rAinf(\mathcal{O}_{K_\infty}) \subset \rAinf(\mathcal{O}_{K_\infty})$ is invariant under the action of $\Gal(K_\infty/K)$, the sheaf $\Prism_{(-)_{\overline{B}}/B}[\frac{1}{\mu}]$ then naturally descends to a sheaf onto $X_{\eta,\pe}$, which we call the \emph{rational prismatic period sheaf} over $(B,J)$.
	Note that by \Cref{const:Ainf_twist_by_mu}, the rational period sheaf $\Prism_{(-)_{\overline{B}}/B}[\frac{1}{\mu}]$ admits a canonical presentation
	\begin{equation}
		\label{eq:rational_period_sheaf_alt}
			\Prism_{(-)_{\overline{B}}/B}[\frac{1}{\mu}] \simeq \colim_{n\geq 0} \bigl( \Prism_{(-)_{\overline{B}}/B}\{n\}(-n) \bigr),
	\end{equation}
which in particular does not require the choice of $\mu$.

	\item For each $n\in \mathbb{Z}$, the twisted period sheaf $\mu^n\Prism_{(-)_{\overline{B}}/B}=\Prism_{(-)_{\overline{B}}/B}\otimes_{\mathbb{A}_{\inf}} \mathbb{A}_{\inf}\{-n\}(n)$ naturally admits a weak Frobenius structure (after inverting the ideal $J$)
	\[
	\mu^n\Prism_{(-)_{\overline{B}}/B} \longrightarrow \varphi_B^*( \mu^n\Prism_{(-)_{\overline{B}}/B}) \longrightarrow \mu^nJ^n\Prism_{(-)_{\overline{B}}/B}.
	\]
	In addition, the Frobenius-twisted object $\varphi_B^*( \mu^n\Prism_{(-)_{\overline{B}}/B})$ admits a canonical filtration $\Fil^\bullet \varphi_B^*( \mu^n\Prism_{(-)_{\overline{B}}/B})=\Fil^\bullet\Prism^{(1)}_{(-)_{\overline{B}}/B}\{-n\}(n)$ that is determined (up to $(p,J)$-complete sheafification) via the preimage of the ideals $J^\bullet \mu^n\Prism_{(-)_{\overline{B}}/B}$ along the Frobenius morphism.
	Both the Frobenius structures and the filtrations are compatible with respect to different $n$.	
	By taking the union of $\Fil^i \Prism^{(1)}_{(-)_{\overline{B}}/B}\{-n\}(n)$ with respect to $n$, we then obtain a filtration on $\Prism^{(1)}_{(-)_{\overline{B}}/B}[\frac{1}{\mu}]$.
	\end{enumerate}
\end{construction}
The following intersection formula will be used later.
\begin{lemma}
	\label{lem:rational_period_sheaves:filtration}
	Let $X$ be a regular $p$-adic formal scheme over $\mathcal{O}_K$ and let $(B,J)$ be a flat prism over $X_\Prism$.
	For each $n\in \mathbb{Z}$, we have
	\[
	\Fil^i \bigl( \Prism^{(1)}_{(-)_{\overline{B}}/B}[\frac{1}{\mu}] \bigr) \bigcap \mu^n\Prism^{(1)}_{(-)_{\overline{B}}/B} = \Fil^i \bigl( \mu^n\Prism^{(1)}_{(-)_{\overline{B}}/B}\bigr).
	\]
\end{lemma}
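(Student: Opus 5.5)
The filtration on $\Prism^{(1)}_{(-)_{\overline{B}}/B}[\frac{1}{\mu}]$ is by definition the union $\bigcup_{m} \Fil^i\bigl(\mu^m\Prism^{(1)}_{(-)_{\overline{B}}/B}\bigr)$ of \Cref{const:rational_period_sheaves}. So the claim reduces to showing that the transition maps are strict: for every $m\leq n$,
\[
\Fil^i\bigl(\mu^{m}\Prism^{(1)}_{(-)_{\overline{B}}/B}\bigr)\cap \mu^n\Prism^{(1)}_{(-)_{\overline{B}}/B}=\Fil^i\bigl(\mu^{n}\Prism^{(1)}_{(-)_{\overline{B}}/B}\bigr).
\]
The inclusion $\supseteq$ holds because the inclusions $\iota_{n,m}$ are filtered. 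For $\subseteq$, induction on $n-m$ reduces us to the case $n=m+1$. The statement is local on $X_{\eta,\pe}$, so \Cref{cor:sheafification_is_unfolding} lets us check it on affinoid perfectoid $U=\Spa(S[1/p],S)$ over the cover $Y_\eta$ base-changed to $K_\infty$. There the sheaves are the modules $\mu^m\Prism^{(1)}_{S_{\overline{B}}/B}$, which are $\mu$-torsionfree by the complete flatness of \Cref{thm:prismatic_period_sheaf}.

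On such $U$, the filtration on $\mu^m\Prism^{(1)}_{S_{\overline{B}}/B}$ is the $(p,J)$-complete filtered base change of $\Fil^\bullet \mu^m\rAinf(S)$ along $\Fil^\bullet_N\Prism_S\to \Fil^\bullet_N\Prism^{(1)}_{S_{\overline{B}}/B}$. This is the setup of \Cref{prop:gr_of_base_change_of_iota}. Strictness of $\widetilde{\iota}_{m+1,m}$ is equivalent to injectivity of $\gr^\bullet(\widetilde{\iota}_{m+1,m})$, given that the filtrations are separated (\Cref{cor:Nygarrd_filtration_separated}). Injectivity of $\gr^\bullet(\widetilde\iota_{m+1,m})$ is in turn equivalent to the graded cofiber $\gr^\bullet\cofib(\widetilde\iota_{m+1,m})$ being discrete, i.e. concentrated in cohomological degree zero.

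I would compute that graded cofiber exactly as in the proof of \Cref{prop:gr_of_base_change_of_iota}, as
\[
\gr^\bullet\cofib(\widetilde\iota_{m+1,m}) \simeq \cofib(\gr^\bullet\iota_{m+1,m,S})\otimes_{\gr^\bullet\rAinf(S)}\gr^\bullet_N\Prism^{(1)}_{S_{\overline{B}}/B}.
\]
By \Cref{prop:Ainf_twist_difference}, the first factor is the invertible module $(\zeta_p-1)^m S\{\bullet\}/(\zeta_p-1)$ placed in degrees $\geq m+1$, together with the degree-$m$ piece. Since the kernel of $\gr^\bullet\rAinf(S)\to S$ is principal, generated by a nonzerodivisor of degree $1$, the derived tensor product is computed by a two-term Koszul-type complex over $\gr^\bullet_N\Prism^{(1)}_{S_{\overline{B}}/B}$. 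Discreteness then amounts to showing that multiplication by the image of a generator of $\gr^1\rAinf(S)$ is injective on $\gr^\bullet_N\Prism^{(1)}_{S_{\overline{B}}/B}$ modulo $(\zeta_p-1)$, and compatibly $p$-torsionfree.

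The main obstacle is integral control, since \Cref{prop:gr_of_base_change_of_iota} only describes things after inverting $p$. I would first use \Cref{thm:abs_and_rel_prismatic}.\ref{thm:abs_and_rel_prismatic_gr} and the Nygaard--conjugate sequences (\ref{eq:gr_N_and_gr_H}). These identify multiplication by $\gr^1\rAinf(S)$ with the map $\mathbb{L}_{S/\mathbb{Z}_p}\to\mathbb{L}_{S/R}$ from the Faltings extension. Under the flatness of $(B,J)$, the graded pieces of that map are split injective onto finite projective summands, by \Cref{prop:relative_prismatic_coh_of_perfectoid} and the explicit divided-power description of \Cref{prop:property_of_dR}. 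This would give injectivity integrally. If that fails, the fallback is to use that both sides are saturated subsheaves: prove the equality after inverting $p$ via \Cref{prop:gr_of_base_change_of_iota}, then recover it integrally by intersecting with $\mu^n\Prism^{(1)}_{(-)_{\overline{B}}/B}$ and using its $p$-torsionfreeness.
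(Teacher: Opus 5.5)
\textbf{There is a gap at the key integral step.} Your reduction itself is sound. The filtration on $\Prism^{(1)}_{(-)_{\overline{B}}/B}[\frac{1}{\mu}]$ is the union of the $\Fil^i(\mu^m\Prism^{(1)}_{(-)_{\overline{B}}/B})$, so it suffices to prove strictness of $\widetilde{\iota}_{m+1,m}$ on perfectoid $U$. Given separatedness, that amounts to injectivity of $\gr^\bullet(\widetilde{\iota}_{m+1,m})$.

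On graded pieces, $\gr^\bullet(\widetilde{\iota}_{m+1,m})$ is multiplication by the class of $\mu$ in $\gr^1_N\Prism_S$, which is $(\zeta_p-1)\bar{\xi}$ for a generator $\bar{\xi}$ of $\gr^1\rAinf(S)$. So the cofiber of $\gr^\bullet\iota_{m+1,m,S}$ is not invertible: up to shift and twist it is $\gr^\bullet\rAinf(S)/((\zeta_p-1)\bar{\xi})$. What you actually need is that $(\zeta_p-1)\bar{\xi}$ is a nonzerodivisor on $\gr^\bullet_N\Prism^{(1)}_{S_{\overline{B}}/B}$, not injectivity ``modulo $(\zeta_p-1)$''.

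Your argument for this fact does not work:
\begin{itemize}
\item The claim that the relevant maps are split injective onto finite projective summands is false. The Faltings-extension map $S\{1\}\to\mathbb{L}_{S/R}[-1]$ has cofiber $(\mathbb{L}_{R/W}\otimes_R S)^\wedge_p$. For $X=\Spf(\mathcal{O}_K)$ with $e>1$ and $S=\mathcal{O}_C$, this cofiber is the nonzero torsion module $\Omega_{\mathcal{O}_K/W}\otimes_{\mathcal{O}_K}\mathcal{O}_C$. So the map is injective but not split.
\item \Cref{thm:abs_and_rel_prismatic}.\ref{thm:abs_and_rel_prismatic_gr} only matches $\gr^1_N\Prism_S$ with $\mathbb{L}_{S/\mathbb{Z}_p}[-1]$ up to bounded $p$-torsion. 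It says nothing about $\bar{\xi}\colon\gr^k_N\to\gr^{k+1}_N$ for $k\geq 1$.
\item The fallback is also incomplete. The equality after inverting $p$, plus $p$-torsionfreeness of $\mu^n\Prism^{(1)}_{S_{\overline{B}}/B}$, does not yield the integral equality. You need $\mu^n\Prism^{(1)}_{S_{\overline{B}}/B}/\Fil^i$ to be $p$-torsionfree, i.e.\ $p$-torsionfree Nygaard graded pieces. This is true, via the Nygaard--conjugate comparison and \Cref{cor:conjuage_fil}, and it is also what makes the $p$-inverted filtration separated. But it is a separate input you did not invoke.
\end{itemize}

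\textbf{The missing idea} is what the paper's ``follows from construction'' relies on: the filtration is by definition a Frobenius preimage,
\[
\Fil^i(\mu^n\Prism^{(1)}_{S_{\overline{B}}/B})=\{x\in\mu^n\Prism^{(1)}_{S_{\overline{B}}/B} : \varphi(x)\in J^i\mu^n\Prism_{S_{\overline{B}}/B}\}.
\]
For $i\leq n$ there is nothing to prove. For $i>n$, suppose $x\in\mu^n\Prism^{(1)}_{S_{\overline{B}}/B}$ also lies in $\Fil^i(\mu^m\Prism^{(1)}_{S_{\overline{B}}/B})$ for some $m\leq n$. Since $\varphi(\mu)=\mu\tilde{\xi}$, we get $\varphi(x)\in J^n\mu^n\Prism_{S_{\overline{B}}/B}\cap J^i\mu^m\Prism_{S_{\overline{B}}/B}$. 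This intersection equals $J^i\mu^n\Prism_{S_{\overline{B}}/B}$ because $\mu$ is a nonzerodivisor modulo $J^{i-n}$. Indeed, modulo $J$ it maps to $\zeta_p-1$, which is a nonzerodivisor on $\overline{\Prism}_{S_{\overline{B}}/B}$; the latter is $p$-completely flat over the $p$-torsionfree ring $S$ (\Cref{cor:coproduct:perfect_with_framed}). The same one-line Frobenius argument also proves directly that $\bar{\xi}$ is regular on $\gr^\bullet_N$, so your graded detour is unnecessary.
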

\begin{proof}
	It suffices to check the equality for the sections at a large enough perfectoid pro-\'etale objects $\Spf(S)_\eta$ over $X_\eta$, namely
	\[
	\Fil^i \bigl( \Prism^{(1)}_{S_{\overline{B}}/B}[\frac{1}{\mu}] \bigr) \bigcap \mu^n\Prism^{(1)}_{S_{\overline{B}}/B} = \Fil^i \bigl( \mu^n\Prism^{(1)}_{S_{\overline{B}}/B}\bigr),
	\]
	which follows from construction.
\end{proof}
	
Assembling various analyses, we obtain the following structural result on the rational prismatic period sheaf, which plays an essential role in the calculation of its global section.
	\begin{theorem}
		\label{thm:structure_of_rational_period_sheaf}
		Let $X$ be a regular $p$-adic formal scheme over $\mathcal{O}_K$, let $(B,J)\in X_\Prism$ be a flat prism, and let $n\in \mathbb{Z}$.
		\begin{enumerate}[label=\upshape{(\roman*)}]
			\item\label{thm:structure_of_rational_period_sheaf:graded}
			There are natural isomorphisms
			\begin{align*}
			\gr^\bullet \bigl( \mu^n\Prism^{(1)}_{(-)_{\overline{B}}/B} / \mu^{n+1} \Prism^{(1)}_{(-)_{\overline{B}}/B} [1/p] \bigr) & \simeq \gr^\bullet (\mu^n\Prism^{(1)}_{(-)_{\overline{B}}/B}\otimes_{\Ainf} \Ainf/\varphi^{-1} (I_{\Ainf}) [1/p]) \\ 
			& \simeq \mathcal{F}^\bullet \otimes_{\widehat{\mathcal{O}}} \widehat{\mathcal{O}}(n),
			\end{align*}
			where $\mathcal{F}^\bullet$ is a graded sheaf of $\widehat{\mathcal{O}}_{\overline{B}}$-vector bundles in degrees $\geq n$ on $X_{\eta,\pe}$, such that each $\mathcal{F}^i$ admits a finite filtration whose graded piece is an analytic vector bundle of the form $\bigl( \wedge^i \mathbb{L}_{X/\mathbb{Z}_p}\otimes_{\mathcal{O}_X} \overline{B}\{j\}\bigr) \otimes_{\overline{B}} \widehat{\mathcal{O}}_{\overline{B}}$.
			
			\item\label{thm:structure_of_rational_period_sheaf:sep}
			The filtered completion map below is an injection 
			\[
			\mu^n\Prism^{(1)}_{(-)_{\overline{B}}/B}\otimes_{\Ainf} \Ainf/\varphi^{-1} (I_{\Ainf}) [1/p] \longrightarrow \bigl( \mu^n\Prism^{(1)}_{(-)_{\overline{B}}/B}\otimes_{\Ainf} \Ainf/\varphi^{-1} (I_{\Ainf}) [1/p] \bigr)^\wedge_{\Fil}.
			\]
			
			\item\label{thm:structure_of_rational_period_sheaf:inj} Assume $(B,J)$ is a finite coproduct of framed regular prisms.
			The cokernel of the map $\widetilde{\iota}_{n+1,n}\colon \mu^{n+1}\Prism^{(1)}_{(-)_{\overline{B}}/B} \to \mu^n\Prism^{(1)}_{(-)_{\overline{B}}/B}$ naturally admits injections
			\begin{align*}
							\coker(\widetilde{\iota}_{n+1,n}) \hookrightarrow 
				\coker(\widetilde{\iota}_{n+1,n}) [1/p] \hookrightarrow &
				\prod_{a\geq 1} \prod_{1\leq b \leq a}\bigl( \mu^n\Prism^{(1)}_{(-)_{\overline{B}}/B}\otimes_{\Ainf} \Ainf/\varphi^{-b} (I_{\Ainf}) [1/p]\bigr) \\
				\hookrightarrow & \prod_{\text{countable}} \bigl( \mu^n\Prism^{(1)}_{(-)_{\overline{B}}/B}\otimes_{\Ainf} \Ainf/\varphi^{-1} (I_{\Ainf}) [1/p]\bigr)
			\end{align*}
		where the last arrow (on the individual factor) is induced by the $(b-1)$-th power of the absolute Frobenius morphism on $\Prism^{(1)}_{(-)_{\overline{B}}/B}$.
		\end{enumerate}
	\end{theorem}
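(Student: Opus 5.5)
The plan is to reduce every statement to sections over the pro-\'etale cover $Y_\eta=\coprod_i \Spf(\overline{A}_{i,\perf})_\eta$ of \Cref{cor:sheafification_is_unfolding}. On this cover the period sheaves agree with the presheaves, so we may work with rings $\Prism^{(1)}_{S_{\overline{B}}/B}$ for perfectoid $S$ over $X_{\mathcal{O}_{K_\infty}}$; the statements then descend by the sheaf property. Throughout we use the filtered base change $\widetilde{\iota}_{n+1,n}$ of $\iota_{n+1,n,S}$ along $\Fil^\bullet_N\Prism_S\to\Fil^\bullet_N\Prism^{(1)}_{S_{\overline{B}}/B}$ from \Cref{prop:gr_of_base_change_of_iota}.

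For \ref{thm:structure_of_rational_period_sheaf:graded}, first note that $\mu$ and $\varphi^{-1}(\mu)$ differ by the factor $\varphi^{-1}(\tilde\xi)$, which generates $\varphi^{-1}(I_{\Ainf})$ up to a unit. Hence the cokernel of $\mu^{n+1}\to\mu^n$, after inverting $p$, is the mod $\varphi^{-1}(I)$ reduction of $\mu^n\Prism^{(1)}$. We check this equality with its filtration using \Cref{prop:Ainf_twist_difference}.\ref{prop:Ainf_twist_difference_cokernel_graded}: after inverting $p$, the factor $(\zeta_p-1)$ becomes a unit and the two graded pieces coincide. The second isomorphism is then exactly \Cref{prop:gr_of_base_change_of_iota}. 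We sheafify $M^\bullet_{S_{\overline{B}}}$ to obtain $\mathcal{F}^\bullet$. The graded pieces of $\mathcal{F}^i$ are vector bundles over $\widehat{\mathcal{O}}_{\overline{B}}$, because $\mathbb{L}_{X/\mathbb{Z}_p}$ is finite projective after inverting $p$, by the smoothness of $X_\eta$ and \Cref{cor:regular_implies_lci}.

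For \ref{thm:structure_of_rational_period_sheaf:sep}, we reduce mod $\varphi^{-1}(I)$. Under the relative prismatic--de Rham comparison twisted by $\mu^n$, the induced filtration becomes a shift of the Hodge filtration on $\dR_{S_{\overline{B}}/\overline{B}}$, up to the bounded $p$-torsion discrepancy in \Cref{thm:abs_and_rel_prismatic}.\ref{thm:abs_and_rel_prismatic_gr}. Separatedness then follows from \Cref{cor:Nygarrd_filtration_separated} and \Cref{lem:dR_is_sep}. The remaining point is that inverting $p$ preserves separatedness, since the graded pieces are $p$-torsionfree up to bounded torsion.

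For \ref{thm:structure_of_rational_period_sheaf:inj}, the first injection needs $\coker(\widetilde{\iota}_{n+1,n})$ to be $p$-torsionfree. We would deduce this from the regularity of the pair $(\mu,p)$ on $\Prism^{(1)}_{S_{\overline{B}}/B}$, via complete flatness over $\Ainf$ (\Cref{cor:coproduct:perfect_with_framed}). For the second injection, we write $\mu=\varphi^{-1}(\mu)\cdot\varphi^{-1}(\tilde\xi)$ and iterate, so that $\mu$ equals $\prod_{b\ge1}\varphi^{-b}(\tilde\xi)$ times a factor tending to $1$. We expect the key input to be an intersection statement: $\bigcap_a\bigl(\prod_{b\le a}\varphi^{-b}(\tilde\xi)\bigr)$-divisibility should coincide with $\mu$-divisibility on $\Prism^{(1)}_{S_{\overline{B}}/B}[1/p]$. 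We would check this on the explicit coproduct presentation (\Cref{cor:coproduct:perfect_with_framed}.\ref{cor:coproduct:perfect_with_framed_special}) by reducing to $\Ainf$, where it is classical. Combined with the Chinese remainder theorem for the pairwise coprime ideals $\varphi^{-b}(I)$ after inverting $p$, this gives the map into $\prod_a\prod_{b\le a}$.

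The last arrow identifies each quotient mod $\varphi^{-b}(I)$ with the quotient mod $\varphi^{-1}(I)$ via $\varphi^{b-1}$. Its injectivity is exactly \Cref{prop:intersection_of_Frobenius_of_base_with_coproduct}.\ref{prop:intersection_of_Frobenius_of_base_with_coproduct:cohomology}, which is where we need $B$ to be a coproduct of framed regular prisms. The main obstacle is the middle injection, namely controlling the infinite product and the limit in $a$ after inverting $p$ without losing injectivity. If needed, we would use the completeness from \ref{thm:structure_of_rational_period_sheaf:sep} to pass to the limit.
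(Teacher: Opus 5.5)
Your parts (i) and (iii) follow the paper's route. Your argument for (ii), however, has a genuine gap, in two places.

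\emph{The identification.} You identify $M\colonequals\mu^n\Prism^{(1)}_{S_{\overline{B}}/B}\otimes_{\Ainf}\Ainf/\varphi^{-1}(I_{\Ainf})$ with a shift of $\Fil^\bullet_H\dR_{S_{\overline{B}}/\overline{B}}$. But the relative prismatic--de Rham comparison is reduction modulo $J$, through the $B$-structure of the Frobenius twist, and $\varphi^{-1}(I_{\Ainf})$ does not generate $J\Prism^{(1)}_{S_{\overline{B}}/B}$. Its generator lies in $\Fil^1_N\Prism_S$. By the diagram in the proof of \Cref{prop:gr_of_base_change_of_iota}, it maps to a nonzero element of $\gr^1_H\dR_{S_{\overline{B}}/\overline{B}}$: after inverting $p$ its image spans the image of $\mathbb{L}_{S/\mathbb{Z}_p}[-1]$, i.e.\ the Faltings line. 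That line is exactly the submodule $C$ quotiented out in that proof.

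The correct identification is \Cref{claim:torsionfree_of_mod_varphiinverse_I}. One reduces to $B=A_\perf$ with $T=\overline{A}_\perf$ and writes $\Prism_{S_{\overline{B}}/B}\simeq\Prism_S\coprod\Prism_T\simeq\Prism_{T_S/\Prism_S}$. Using the Frobenius automorphisms of the two perfect prisms to swap their roles, the filtered reduction along $\Fil^\bullet_N\Prism_S\to\gr^0_N\Prism_S$ becomes $\Fil^\bullet_H\dR_{T_S/S}$. This is de Rham cohomology over $S$, not over $T=\overline{B}$. Its graded pieces are $p$-torsionfree by the local freeness of $\mathbb{L}_{T/\mathcal{O}_X}[-1]$ (\Cref{prop:regular prism Frobenius}).

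\emph{Inverting $p$.} This honest torsionfreeness is indispensable. Your substitute, ``inverting $p$ preserves separatedness since the graded pieces are $p$-torsionfree up to bounded torsion'', is false. For example, $\mathbb{Z}_p$ with $\Fil^i=p^i\mathbb{Z}_p$ is separated with graded pieces $\mathbb{F}_p$, yet $\Fil^i[1/p]=\mathbb{Q}_p$ for every $i$. What you need is that each $M/\Fil^iM$ is $p$-torsionfree, so that $p^kx\in\Fil^iM$ forces $x\in\Fil^iM$. Equivalently, $M^\wedge_{\Fil}\to(M[1/p])^\wedge_{\Fil}$ must be injective; together with separatedness of $M$, this is how the paper concludes. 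The bounded torsion in \Cref{thm:abs_and_rel_prismatic}.\ref{thm:abs_and_rel_prismatic_gr} concerns the absolute map $\gr_N\Prism_S\to\gr_H\dR_S$ and plays no role here.

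Three smaller points.

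\emph{Part (i).} Your sentence asserting that the cokernel of $\mu^{n+1}\to\mu^n$, after inverting $p$, \emph{is} the reduction modulo $\varphi^{-1}(I_{\Ainf})$ is false as a statement about modules. The kernel of $\Prism^{(1)}_{S_{\overline{B}}/B}/\mu\to\Prism^{(1)}_{S_{\overline{B}}/B}/\varphi^{-1}(\tilde\xi)$ is isomorphic to $\Prism^{(1)}_{S_{\overline{B}}/B}/\varphi^{-1}(\mu)$, which survives inverting $p$; this is why (iii) needs infinitely many factors. Only the graded statement holds. Since that is what you actually verify with \Cref{prop:Ainf_twist_difference}.\ref{prop:Ainf_twist_difference_cokernel_graded}, part (i) is fine.

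\emph{The intersection formula in (iii).} The element $\varphi^{-a}(\mu)=[\epsilon^{1/p^a}]-1$ does not tend to $1$; only its valuation modulo $p$ tends to $0$. So the identity $\mu\rAinf(S)=\bigcap_a\frac{\mu}{\varphi^{-a}(\mu)}\rAinf(S)$ is not ``classical'' for an arbitrary perfectoid $S$. It requires $S^\flat/(\epsilon-1)S^\flat$ to have no nonzero almost zero elements. The paper secures this by evaluating on integrally closed $S$ (\Cref{claim:muPrism_S}, via \cite{GK22}), and only then transfers the identity to $\Prism^{(1)}_{S_{\overline{B}}/B}$ by complete projectivity.

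\emph{The limit in $a$ in (iii).} Your worry about passing to the limit after inverting $p$ disappears once you note that each $\Prism^{(1)}_{S_{\overline{B}}/B}/\prod_{b\le a}\varphi^{-b}(\tilde\xi)$ is $p$-torsionfree, by complete flatness over $\Prism_S$. Hence the integral intersection formula implies the rational one.
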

\begin{proof}
	For \ref{thm:structure_of_rational_period_sheaf:graded}, as $\mu\Ainf$ is divisible by $\varphi^{-1}_{\Ainf} (I_{\Ainf})$, there is a map of graded modules
	\[
	\gr^\bullet(\Ainf/\mu\Ainf)[1/p] \longrightarrow \gr^\bullet(\Ainf/\varphi^{-1}_{\Ainf} (I_{\Ainf})[1/p]),
	\]
	which by \Cref{prop:Ainf_twist_difference}.\ref{prop:Ainf_twist_difference_cokernel_graded} is a graded isomorphism.
	In particular, by taking the base change, we get an induced isomorphism of graded objects
	\[
	\gr^\bullet \bigl( \mu^n\Prism^{(1)}_{(-)_{\overline{B}}/B}\otimes_{\Ainf} \Ainf/\mu\Ainf [1/p] \bigr)  \xrightarrow{\sim} \gr^\bullet \bigl( \mu^n\Prism^{(1)}_{(-)_{\overline{B}}/B}\otimes_{\Ainf} \Ainf/\varphi^{-1} (I_{\Ainf}) [1/p] \bigr)  = \gr^\bullet \coker(\widetilde{\iota}_{n+1,n})[1/p].
	\]
	Thus \ref{thm:structure_of_rational_period_sheaf:graded} follows from the canonical structure in \Cref{prop:gr_of_base_change_of_iota}.
	
	To show the injectivity in \ref{thm:structure_of_rational_period_sheaf:sep} and \ref{thm:structure_of_rational_period_sheaf:inj}, since the maps are all canonical, it suffices to do so after evaluating at objects in $X_{\eta,\pe}$ that are sufficiently large.
	So we may assume $X=\Spf(R)$ is affine and $S$ is a $p$-torsionfree integrally closed perfectoid algebra over $R_{\mathcal{O}_{K_\infty}}$ such that $\Spf(S)_\eta$ is a pro-\'etale object $\lim_i \Spa(R_i[1/p],R_i)$ for topologically finite type affinoid adic spaces $\Spa(R_i[1/p],R_i)$ in $X_{\eta,\pe}$.
	By enlarging $S$ if necessary, we also assume that $S$ is a perfectoid algebra over $\overline{A}_\perf$ for the perfection of some framed regular prism $A$.
	We let $d_S$ be a generator of the ideal $I_{\Prism_S}$.
	Then we make a claim on the intersection formula fo the ideal $\mu\Prism_S$, extending that of \cite[Lem.\ 3.23]{BMS1}.
	\begin{claim}
		\label{claim:muPrism_S}
		Let $S$ be the perfectoid $\mathcal{O}_{K_\infty}$-algebra be as above.
		There is an equality of ideals
		\[
		\mu\Prism_S = \bigcap_{a>0} \frac{\mu}{\varphi_{\Prism_S}^{-a}(\mu)} \Prism_S.
		\]
	\end{claim}
\begin{proof}[Proof of \Cref{claim:muPrism_S}]
	We first notice that by the assumption of $\Spf(S)_\eta$ and by \cite[Lem.\ 2.13]{GK22} together with the tilting correspondence, we know $S^\flat=(S^\flat[1/t])^\circ$, where $S^\flat$ is the tilt of $S$ and $t$ is a pseudo-uniformizer of $S^\flat$.
	In addition, by the arguments of \cite[Lem.\ 2.11]{GK22}, we know $S^\flat/tS^\flat$ has no almost zero elements with respect to $\mathfrak{m}^\flat$, where $\mathfrak{m}^\flat$ is the completion of the radical ideal of $tS^\flat$.
	Now to prove the equality, it suffices to check the inclusion for their the mod $p$ reductions is an equality:
	\[
	(\epsilon-1)S^\flat \subseteq \bigcap_{a>0} \frac{\epsilon-1}{\epsilon^{\frac{1}{p^a}}-1}S^\flat.
	\]
	The latter follows since the quotient $\bigl(\bigcap_{a>0} \frac{\epsilon-1}{\epsilon^{\frac{1}{p^a}}-1}S^\flat \bigr)/(\epsilon-1)S^\flat$ is an almost zero submodule in $S^\flat / (\epsilon-1)S^\flat$, which by the aforementioned inputs is trivial.
\end{proof}
	
	To show \ref{thm:structure_of_rational_period_sheaf:sep}, it suffices to show that each map in the factorization below is injective
	\[
	\mu^n\Prism^{(1)}_{S_{\overline{B}}/B} \otimes_{\Prism_S} \Prism_S/\varphi_{\Prism_S}^{-1}(I_{\Prism_S}) \longrightarrow \bigl( \mu^n\Prism^{(1)}_{S_{\overline{B}}/B} \otimes_{\Prism_S} \Prism_S/\varphi_{\Prism_S}^{-1}(I_{\Prism_S})\bigr)^\wedge_{\Fil} \longrightarrow \bigl( \mu^n\Prism^{(1)}_{S_{\overline{B}}/B} \otimes_{\Prism_S} \Prism_S/\varphi_{\Prism_S}^{-1}(I_{\Prism_S}) [1/p] \bigr)^\wedge_{\Fil}.
	\]
	The injectivity of the first map follows from the separatedness of the Nygaard filtration, as shown in the proof of \Cref{thm:global_section}.
	For the second map, since both the source and the target are filtered complete, it suffice to check the injectivity on individual graded pieces, which amounts to showing that the graded pieces of $\mu^n\Prism^{(1)}_{S_{\overline{B}}/B} \otimes_{\Prism_S} \Prism_S/\varphi_{\Prism_S}^{-1}(I_{\Prism_S})$ are $p$-torsionfree.
	We notice that the quotient ring $\Prism_S/\varphi_{\Prism_S}^{-1}(I_{\Prism_S})$ is naturally isomorphic to the zero-th graded piece of $\Fil^\bullet_N\Prism_S$.
	So we get
	\[
	\gr^\bullet \bigl( \mu^n\Prism^{(1)}_{S_{\overline{B}}/B} \otimes_{\Prism_S} \Prism_S/\varphi_{\Prism_S}^{-1}(I_{\Prism_S}) \bigr) \simeq \gr^0 \Prism_S \otimes_{\gr^\bullet_N \Prism_S} \gr^\bullet (\mu^n\Prism_S) \otimes_{\gr^\bullet_N \Prism_S} \gr^\bullet (\Prism^{(1)}_{S_{\overline{B}}/B}).
	\]
	By \Cref{prop:Ainf_twist_difference}.\ref{prop:Ainf_twist_difference_graded}, we know $\gr^\bullet (\mu^n\Prism_S)$ is naturally isomorphic to $(\gr^\bullet_N \Prism_S)\otimes_S S\{n\}$.
	Thus we reduce to checking that the graded module $S \otimes_{\gr^\bullet_N \Prism_S} \gr^\bullet (\Prism^{(1)}_{S_{\overline{B}}/B})\simeq \gr^\bullet \bigl( \Prism^{(1)}_{S_{\overline{B}/B}}\otimes_{\Prism_S} \Prism_S/\varphi^{-1}_{\Prism_S}(I_{\Prism_S}) \bigr)$ is $p$-torsionfree, as verified in the next claim.
	\begin{claim}
		\label{claim:torsionfree_of_mod_varphiinverse_I}
		The graded piece of the filtered base change $\Fil^\bullet_N \Prism^{(1)}_{S_{\overline{B}}/B} \otimes_{\Fil^\bullet_N \Prism_S} \gr^0_N \Prism_S$ is $p$-torsionfree.
	\end{claim}
	\begin{proof}[Proof of \Cref{claim:torsionfree_of_mod_varphiinverse_I}]
		Since the property of being $p$-torsionfree can be checked before/after a $p$-completely faithfully flat cover, by the base change formula of the Nyggard-filtered relative prismatic cohomology (\cite[Thm.\ 1.8.(5)]{BS22}), we may assume $(B,J)=(A,I)$ is a framed regular prism (as in the proof of \Cref{thm:coproduct_in_general} and Equation (\ref{eq:diagram_of_self_tensor_product_of_prisms})).
		In addition, since the Frobenius map $\varphi_A$ is faithfully flat (\Cref{prop:regular prism Frobenius}), we may further replace $(B,J)$ by the perfection $(A_\perf, IA_\perf)$, where we let $T=\overline{A}_\perf$ be the corresponding perfectoid ring.
		Then by applying \Cref{cor:coproduct:perfect_with_framed} twice, we obtain natural isomorphisms of prisms in $X_\Prism$:
		\[
		\Prism_{S_{\overline{B}}/B} \simeq (\Prism_S, I_{\Prism_S})\coprod (B,J) = (\Prism_S, I_{\Prism_S}) \coprod (\Prism_T, I_{\Prism_T}) \simeq \Prism_{T_S/\Prism_S}.\]
		
		To continue, we notice that since the Frobenius morphisms $\varphi_{\Prism_S}$ and $\varphi_{\Prism_T}$ are both isomorphic, by the definition of the relative Nyggard filtration, we obtain the following filtered base change isomorphisms
		\begin{equation}
			\label{diagram:two_relative_Nyggard_switch}
			\begin{tikzcd}
				B=\Prism_T \ar[d] &  \Prism_T \arrow[l, "\varphi_{\Prism_T}"', "\sim"] \ar[d] & \Prism_S \ar[d] \arrow[r, "\varphi_{\Prism_S}", "\sim"'] & \Prism_S \ar[d] \\
				\Fil^\bullet_N \Prism^{(1)}_{S_{\overline{B}}/B} & \Fil^\bullet \Prism_{S_{\overline{B}}/B}  \arrow[l, "\sim"] \arrow[r,equal] &  \Fil^\bullet \Prism_{T_S/\Prism_S}	\arrow[r, "\sim"'] & \Fil^\bullet_N \Prism^{(1)}_{{T_S}/\Prism_S},
			\end{tikzcd}
		\end{equation}
	    where the filtration on $\Prism_{S_{\overline{B}}/B} \simeq \Prism_{T_S/\Prism_S}$ is defined by the preimage of the $I$-adic filtration along its absolute Frobenius morphism.
	    Moreover, notice that the Frobenius morphism $\varphi_{\Prism_S}$ sends the absolute Nyggard filtration $\Fil^\bullet_N \Prism_S=\varphi^{-1}_{\Prism_S}(I_{\Prism_S})^\bullet$ onto the $I_{\Prism_S}$-adic filtration on $\Prism_S$.
	    Hence by taking the filtered reduction and by (\ref{diagram:two_relative_Nyggard_switch}), we obtain a filtered isomorphism
	    \[
	    \Fil^\bullet_N \Prism^{(1)}_{S_{\overline{B}}/B} \otimes_{\Fil^\bullet_N \Prism_S} \gr^0_N \Prism_S \simeq \Fil^\bullet_N \Prism^{(1)}_{T_S/\Prism_S} \otimes_{I_{\Prism_S}^\bullet} \Prism_S/I_{\Prism_S},
	    \]
	    which by the relative prismatic--de Rham comparison is further isomorphic to the Hodge-filtered de Rham cohomology $\Fil^\bullet_H \dR_{{T_S}/S} \simeq \bigl( \Fil^\bullet_H \dR_{T/\mathcal{O}_X}\otimes_{\mathcal{O}_X} S \bigr)^\wedge_p$.
	    As a consequence, by the locally freeness of the shifted cotangent complex $\mathbb{L}_{T/\mathcal{O}_X}[-1]=\mathbb{L}_{\overline{A}_{\perf}/\mathcal{O}_X}[-1]$ as in \Cref{prop:regular prism Frobenius} and the flatness of $S$ over $\mathcal{O}_X$, we see the graded pieces of the Hodge filtration is $p$-torsionfree.
	\end{proof}
	
	To show \ref{thm:structure_of_rational_period_sheaf:inj}, we notice that
	by the assumption of the complete faithful flatness of $\Prism^{(1)}_{S_{\overline{B}}/B}$ over $\Prism_S$ (cf. \Cref{prop:intersection_of_Frobenius_of_base_with_coproduct}), we know the sequence $(\mu, p)$ is a regular sequence in $\Prism^{(1)}_{S_{\overline{B}}/B}$.
	Hence $\coker(\widetilde{\iota}_{n+1,n})(\Spf(S)_\eta)$ is contained in its $p$-inverted localization $\coker(\widetilde{\iota}_{n+1,n})(\Spf(S)_\eta)[1/p]$.
	In addition, by \Cref{claim:muPrism_S} and the complete projectivity of the prism $(B,J)$ (cf. \Cref{thm:coproduct of prism faithful}.\ref{thm:coproduct of prism regular and regular}), we know the base change of $\Prism^{(1)}_{S_{\overline{B}}/B}$ along the reduction $\Prism_S \to \Prism_S/(p,\mu)\Prism_S$ is a projective module over $S^\flat/(\epsilon-1)S^\flat$ and in particular has no non-trivial almost zero elements.
	Thus as in the proof of \Cref{claim:muPrism_S} we get 
	\[
	\mu^{n+1}\Prism^{(1)}_{S_{\overline{B}}/B} = \bigcap_{a>0} \mu^n\varphi_{\Prism_S}^{-1}(I_{\Prism_S})\cdots \varphi_{\Prism_S}^{-a}(I_{\Prism_S}) \Prism^{(1)}_{S_{\overline{B}}/B}.
	\]
	Notice that the ideals $\varphi^{-j}(I_{\Prism_S})$ are disjoint after inverting $p$.
	Thus we get
	\[
	\coker(\widetilde{\iota}_{n+1,n})(\Spf(S)_\eta)[1/p] \subset \prod_{a>0} \prod_{1\leq b\leq a}\bigl( \mu^n\Prism^{(1)}_{S_{\overline{B}}/B} \otimes_{\Prism_S} \Prism_S/\varphi_{\Prism_S}^{-b}(I_{\Prism_S}) [1/p]\bigr).
	\]
	The final injection in \ref{thm:structure_of_rational_period_sheaf:inj} is given by the injectivity of (finite compositions of) the absolute Frobenius morphisms $\mu^n\Prism^{(1)}_{S_{\overline{B}}/B} \otimes_{\Prism_S} \Prism_S/\varphi_{\Prism_S}^{-b}(I_{\Prism_S}) [1/p]\to \mu^n\Prism^{(1)}_{S_{\overline{B}}/B} \otimes_{\Prism_S} \Prism_S/\varphi_{\Prism_S}^{-1}(I_{\Prism_S}) [1/p]$, which was proved in \Cref{prop:intersection_of_Frobenius_of_base_with_coproduct}.\ref{prop:intersection_of_Frobenius_of_base_with_coproduct:cohomology}.
\end{proof}

	Now we are ready to calculate the global section of the rational prismatic period sheaf.
\begin{theorem}
	\label{thm:global_section_rational}
	Let $X$ be a regular $p$-adic formal scheme over $\mathcal{O}_K$, and let $(B,J)\in X_\Prism$ be a flat prism.
	Assume either $(B,J)$ is a finite coproduct of framed regular prisms or $\varphi_B$ is completely faithfully flat.
	For each $n\in \mathbb{N}$, the structure map induces an isomorphism
	\[
	B \xrightarrow{\sim} \mathrm{H}^0_\pe(X_\eta, \mu^{-n}\Prism_{(-)_{\overline{B}}/B}).
	\]
\end{theorem}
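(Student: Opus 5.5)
We argue by induction on $n$, the case $n=0$ being \Cref{thm:global_section}.\ref{thm:global_section_prismatic}. For the inductive step, consider the short exact sequence of sheaves on $X_{\eta,\pe}$
\[
0 \longrightarrow \mu^{-n+1}\Prism_{(-)_{\overline{B}}/B} \xrightarrow{\widetilde{\iota}} \mu^{-n}\Prism_{(-)_{\overline{B}}/B} \longrightarrow \coker \longrightarrow 0,
\]
obtained by base change of $\iota_{-n+1,-n}$ (injectivity holds since $\mu$ is a nonzerodivisor, by complete flatness over $\Ainf$ via \Cref{thm:prismatic_period_sheaf}). Taking global sections, it suffices to show $\mathrm{H}^0_\pe(X_\eta,\coker)=0$. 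Then $\mathrm{H}^0(\mu^{-n}\Prism)=\mathrm{H}^0(\mu^{-n+1}\Prism)=B$ by induction.

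The cokernel lives in the untwisted sheaf, while \Cref{thm:structure_of_rational_period_sheaf} concerns the Frobenius-twisted $\Prism^{(1)}$. I would therefore first pass to the twist. Either $\varphi_B$ is completely faithfully flat, or $(B,J)$ is a coproduct of framed regular prisms, and in both cases the linearization along $\varphi_B$ is injective (by \Cref{lem:inj_and_completely_proj_mod}, resp. \Cref{prop:intersection_of_Frobenius_of_base_with_coproduct}). So it is enough to show $\mathrm{H}^0$ of $\coker(\widetilde{\iota}_{-n+1,-n})$ for $\Prism^{(1)}$ vanishes, and then to pull back as in the proof of \Cref{thm:global_section}.\ref{thm:global_section_prismatic}. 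In the faithfully flat case I would instead directly show $\mathrm{H}^0(\mu^{-n}\Prism^{(1)})=B$ and then descend along $\varphi_B$ as there.

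The key vanishing step is to embed $\coker(\widetilde\iota)$ into a countable product of copies of
\[
\mathcal{Q}\colonequals \mu^{-n}\Prism^{(1)}_{(-)_{\overline{B}}/B}\otimes_{\Ainf}\Ainf/\varphi^{-1}(I_{\Ainf})[1/p],
\]
using \Cref{thm:structure_of_rational_period_sheaf}.\ref{thm:structure_of_rational_period_sheaf:inj} in the coproduct case, or the same argument via \Cref{claim:muPrism_S} otherwise. Since $\mathrm{H}^0$ commutes with products and is left exact, it then suffices to show $\mathrm{H}^0(\mathcal{Q})=0$. By the separatedness in \Cref{thm:structure_of_rational_period_sheaf}.\ref{thm:structure_of_rational_period_sheaf:sep}, $\mathcal{Q}$ injects into its filtered completion. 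Hence it suffices that $\mathrm{H}^0$ of each graded piece vanishes, by a limit argument over the complete filtration. By \Cref{thm:structure_of_rational_period_sheaf}.\ref{thm:structure_of_rational_period_sheaf:graded}, $\gr^i\mathcal{Q}$ is $\mathcal{F}^i\otimes\widehat{\mathcal{O}}(-n)$. This sheaf has a finite filtration with pieces $\bigl(\wedge^i\mathbb{L}_{X/\mathbb{Z}_p}\otimes\overline{B}\{j\}\bigr)\otimes_{\overline B}\widehat{\mathcal{O}}_{\overline{B}}(-n)$.

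It remains to show $\mathrm{H}^0_\pe(X_\eta,\widehat{\mathcal{O}}_{\overline B}(-n))=0$ for $n\ge1$. Rationally, $\widehat{\mathcal{O}}_{\overline{B}}$ is a completed flat base change of $\widehat{\mathcal{O}}$ along $R\to\overline{B}$. By the projection formula \Cref{thm:projection_formula} (bounded torsion from \Cref{thm:finiteness_integral_proet_coh}) together with the Tate vanishing $\mathrm{H}^0(X_\eta,\widehat{\mathcal{O}}(-n))=0$ from \Cref{prop:LZ_finiteness}.\ref{prop:LZ_finiteness_H0}, this global section vanishes.

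The main obstacle is commuting $\mathrm{H}^0$ with inverting $p$ and with the filtered completion. The projection formula is integral, so I would apply it to integral lattices and use that the bounded torsion is killed by $[1/p]$. To pass to the completion I would use left exactness of $\lim$ over the filtration quotients.
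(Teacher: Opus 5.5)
You take a genuinely different route from the paper, and it has a gap in one of the two cases the theorem covers.

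\textbf{What your route does.} You kill the integral cokernel of $\mu^{-n+1}\Prism^{(1)}_{(-)_{\overline{B}}/B}\to\mu^{-n}\Prism^{(1)}_{(-)_{\overline{B}}/B}$ by embedding it, via \Cref{thm:structure_of_rational_period_sheaf}.\ref{thm:structure_of_rational_period_sheaf:inj}, into a countable product of copies of $\mathcal{Q}$. This is exactly the argument the paper uses later for \Cref{prop:weak_prismatic_F_crystal_is_calculated_at_finite_mu_power}, with $T=\mathbb{Z}_p$. When $(B,J)$ is a finite coproduct of framed regular prisms, it goes through.

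\textbf{The gap.} It is the second hypothesis: $(B,J)$ merely flat with $\varphi_B$ completely faithfully flat. Part \ref{thm:structure_of_rational_period_sheaf:inj} is stated only for coproducts, and ``the same argument via \Cref{claim:muPrism_S}'' does not supply its two coproduct-specific inputs.
\begin{enumerate}
\item Passing from \Cref{claim:muPrism_S} to $\mu^{n+1}\Prism^{(1)}_{S_{\overline{B}}/B}=\bigcap_{a>0}\mu^{n}\varphi^{-1}(I)\cdots\varphi^{-a}(I)\Prism^{(1)}_{S_{\overline{B}}/B}$ requires $\Prism^{(1)}_{S_{\overline{B}}/B}\otimes_{\Prism_S}\Prism_S/(p,\mu)$ to have no nonzero almost zero elements. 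The paper gets this from complete \emph{projectivity} over $\Prism_S$ (\Cref{thm:coproduct_in_general}.\ref{thm:coproduct of prism regular and regular}). For a general flat prism you only have complete flatness, which does not control an infinite intersection.
\item The factors $\Ainf/\varphi^{-b}(I_{\Ainf})$ with $b\geq 2$ are folded into copies of $\mathcal{Q}$, the only reduction whose graded pieces \ref{thm:structure_of_rational_period_sheaf:graded} describes. This uses injectivity of the absolute Frobenius modulo $\varphi^{-a-1}(I_{\Prism_S})$, i.e.\ \Cref{prop:intersection_of_Frobenius_of_base_with_coproduct}.\ref{prop:intersection_of_Frobenius_of_base_with_coproduct:cohomology}, which is proved only for coproducts of framed regular prisms.
\end{enumerate}
So in that case your vanishing of $\mathrm{H}^0$ of the cokernel, and hence $\mathrm{H}^0_\pe(X_\eta,\mu^{-n}\Prism^{(1)}_{(-)_{\overline{B}}/B})=B$, is not established.

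\textbf{How the paper avoids it.} The paper never uses part \ref{thm:structure_of_rational_period_sheaf:inj} here. It compares the $p$-inverted filtered completions of $\Prism^{(1)}_{(-)_{\overline{B}}/B}$ and $\mu^{-n}\Prism^{(1)}_{(-)_{\overline{B}}/B}$.
\begin{itemize}
\item On graded pieces after inverting $p$, the quotient $\mu^{m}/\mu^{m+1}$ agrees with the reduction modulo $\varphi^{-1}(I_{\Ainf})$.
\item So \ref{thm:structure_of_rational_period_sheaf:graded} and \Cref{prop:gr_of_base_change_of_iota}, both valid for every flat prism, show the cokernel has graded pieces $\mathcal{F}^\bullet\otimes\widehat{\mathcal{O}}(-i)$ with $i>0$.
\item Hence both completions have global sections $(B[1/p])^\wedge_J$, by \Cref{thm:global_section}.\ref{thm:global_section_twisted_prismatic}.
\item Injectivity into the completions (Nygaard separatedness and \ref{thm:structure_of_rational_period_sheaf:sep}) together with \Cref{lem:intersection_of_prismatic_coh_with_base} then cuts this down to $B$.
\end{itemize}
The two hypotheses matter only in the final descent from $\Prism^{(1)}$ to $\Prism$. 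Where your route applies, it gives the sharper vanishing of $\mathrm{H}^0$ of each integral cokernel and avoids the intersection lemma. It just does not reach the second case.

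\textbf{A smaller point.} Injectivity of the linearization along $\varphi_B$ does not by itself make the induced map of cokernels injective. For that you would need the square formed by $\mu^{-n+1}\to\mu^{-n}$ on $\Prism$ and on $\Prism^{(1)}$ to be cartesian. It is cleaner to run your induction entirely on the twisted side, starting from \Cref{thm:global_section}.\ref{thm:global_section_twisted_prismatic}, and descend once at the end, as you partly indicate.
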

\begin{proof}
	In the following, we use $\Prism_{(-)_{\overline{B}}/B}\{-m\}(m)$ to denote the double twisted sheaf $\Prism_{(-)_{\overline{B}}/B}\otimes_{\mathbb{A}_{\inf}} \mathbb{A}_{\inf}\{-m\}(m)\simeq \mu^m\Prism_{(-)_{\overline{B}}/B}$.
	We first notice that by applying the complete base change along $\varphi_B$, the canonical map $\Prism_{(-)_{\overline{B}}/B} \to \Prism_{(-)_{\overline{B}}/B}\{n\}(-n)$ naturally induces a map $\Prism^{(1)}_{(-)_{\overline{B}}/B}\to \Prism^{(1)}_{(-)_{\overline{B}}/B}\{n\}(-n)$, which by construction is equivariant under the relative Frobenii and filtrations.
	In addition, as in the proof of \Cref{thm:structure_of_rational_period_sheaf}, we choose a large enough perfectoid algebra $S$ that is $p$-completely flat over $X$ and is over 
	$\overline{A}_\perf$ for a framed regular prism $(A,I)$, such that the generic fiber covers $X_{\eta,\pe}$ by \Cref{lem:perfection_of_framed_regular_prism}).
	Under the assumption, we notice that $(\mu,p)$ is a regular sequence in $\Prism^{(1)}_{S_{\overline{B}}/B}$, thanks to the complete flatness of $\Prism^{(1)}_{S_{\overline{B}}/B}$ over $\Prism_S$.
	To see the latter, the case when $(B,J)$ is a finite coproduct of framed regular prisms follows from \Cref{prop:intersection_of_Frobenius_of_base_with_coproduct}.
	In general, the flatness of $(B,J)$ and \Cref{thm:coproduct_in_general}.\ref{thm:coproduct of prism faithful} implies that $\Prism_{S_{\overline{B}}/B}$ is completely flat over $\Prism_S$, and thus the other case when $\varphi_B$ is completely flat follows by taking the base change.
	Thus the induced map $\Prism^{(1)}_{S_{\overline{B}}/B}\to \Prism^{(1)}_{S_{\overline{B}}/B}\{n\}(-n)$ is injective.
	Consider the induced diagram for the filtered completion of the $p$-localized objects:
	\begin{equation}
		\label{eq:thm:global_section_rational_complete}
		\begin{tikzcd}
			\Prism^{(1)}_{(-)_{\overline{B}}/B} \arrow[r,hook] \ar[d] & \Prism^{(1)}_{(-)_{\overline{B}}/B}\{n\}(-n)  \ar[d]\\
			\bigl( \Prism^{(1)}_{(-)_{\overline{B}}/B} [1/p] \bigr)^\wedge_{\Fil} \ar[r] & \bigl( \Prism^{(1)}_{(-)_{\overline{B}}/B}\{n\}(-n) [1/p]\bigr)^\wedge_{\Fil}.
		\end{tikzcd}
	\end{equation}
Here we note that the other three arrows are injective as well: for the left vertical arrow, it follows from the separatedness of the Nygaard filtration (cf. proof of \Cref{thm:global_section}); for the right vertical arrow, it is the consequence of \Cref{thm:structure_of_rational_period_sheaf}.\ref{thm:structure_of_rational_period_sheaf:sep}; and for the bottom horizontal arrow, it follows from \Cref{prop:gr_of_base_change_of_iota}.

We then notice that the bottom arrow in (\ref{eq:thm:global_section_rational_complete}) induces an isomorphism on the global sections.
Indeed, by induction and the filtered completeness, it suffices to show that the graded pieces of the cokernel of the map $\iota\colonequals \bigl( \Prism^{(1)}_{(-)_{\overline{B}}/B} \{i\}(-i)[1/p] \bigr)^\wedge_{\Fil} \to  \bigl( \Prism^{(1)}_{(-)_{\overline{B}}/B}\{i+1\}(-i-1) [1/p]\bigr)^\wedge_{\Fil}$ have vanishing global sections for $i>0$.
In addition, by \Cref{thm:structure_of_rational_period_sheaf}.\ref{thm:structure_of_rational_period_sheaf:graded}, we know $\gr^\bullet(\coker(\iota))$ is of the form $\mathcal{F}^\bullet \otimes_{\widehat{\mathcal{O}}} \widehat{\mathcal{O}}(-i)$ where $\mathcal{F}^\bullet$ is a base change of locally free $\mathcal{O}_{X_{\overline{B}},\eta}$-modules.
So the vanishing of $\mathrm{H}^0_\pe(X_\eta,  \gr^\bullet(\coker(\iota)))$ for $i>0$ follows from the vanishing of the pro-\'etale cohomology of $\widehat{\mathcal{O}}(<0)$ in \Cref{prop:LZ_finiteness}, together with the projection formula in \Cref{thm:projection_formula}.

Now, by \Cref{thm:global_section}.\ref{thm:global_section_twisted_prismatic}, the twisted structure map induces an isomorphism
\[
(B[1/p])^\wedge_J \xrightarrow{\sim} \mathrm{H}^0_\pe(X_\eta, \bigl( \Prism^{(1)}_{(-)_{\overline{B}}/B} [1/p] \bigr)^\wedge_{\Fil}),
\]
which is further isomorphic to $\mathrm{H}^0_\pe(X_\eta, \bigl( \Prism^{(1)}_{(-)_{\overline{B}}/B}\{n\}(-n) [1/p]\bigr)^\wedge_{\Fil})$ by (\ref{eq:thm:global_section_rational_complete}).
As a consequence, by the injectivity of the right vertical arrow in (\ref{eq:thm:global_section_rational_complete}), we see 
\begin{equation}
	\label{eq:intersection_of_global_sec_and_localized_base_ring}
	\mathrm{H}^0_\pe(X_\eta, \Prism^{(1)}_{(-)_{\overline{B}}/B}\{n\}(-n))\subseteq (B[1/p])^\wedge_J \cap \mathrm{H}^0_\pe(X_\eta, \Prism^{(1)}_{(-)_{\overline{B}}/B}\{n\}(-n)),
\end{equation}
where the right hand side by \Cref{lem:intersection_of_prismatic_coh_with_base} is equal to $B$.
On the other hand, we know the left hand side in (\ref{eq:intersection_of_global_sec_and_localized_base_ring}) naturally contains $\mathrm{H}^0_\pe(X_\eta, \Prism^{(1)}_{(-)_{\overline{B}}/B})$, which under the twisted structure map is identified with $B$ itself (\Cref{thm:global_section}.\ref{thm:global_section_twisted_prismatic}).
Hence the twisted structure map $B\to \mathrm{H}^0_\pe(X_\eta, \Prism^{(1)}_{(-)_{\overline{B}}/B}\{n\}(-n))$ is an isomorphism.
Finally, by the same argument for \Cref{thm:global_section}.\ref{thm:global_section_prismatic}, we see the structure map $B\to \mathrm{H}^0_\pe(X_\eta, \Prism_{(-)_{\overline{B}}/B}\{n\}(-n))$ is an isomorphism as well, which finishes the proof.
\end{proof}
\begin{corollary}
	\label{cor:global_section_rational}
	Let $X$ be a regular $p$-adic formal scheme over $\mathcal{O}_K$, and let $(B,J)\in X_\Prism$ be a flat prism such that either $(B,J)$ is a finite coproduct of framed regular prisms or $\varphi_B$ is completely faithfully flat.
	The structure map below is an isomorphism
	\[
	B \xrightarrow{\sim} \mathrm{H}^0_\pe(X_\eta, \Prism_{(-)_{\overline{B}}/B}[\frac{1}{\mu}]).\]
\end{corollary}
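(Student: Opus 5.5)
We deduce the statement from \Cref{thm:global_section_rational} by commuting global sections with the colimit defining the $\mu$-localization. By \Cref{const:rational_period_sheaves}, on $X_{\eta,\pe}$ we have
\[
\Prism_{(-)_{\overline{B}}/B}[\tfrac{1}{\mu}] \simeq \colim_{n\geq 0} \mu^{-n}\Prism_{(-)_{\overline{B}}/B},
\]
a filtered colimit along the injections $\widetilde{\iota}$. Each term in this colimit is canonically identified with the double twist $\Prism_{(-)_{\overline{B}}/B}\{n\}(-n)$ and descends to $X_{\eta,\pe}$.

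The first step is to show that $\mathrm{H}^0_\pe(X_\eta,-)$ commutes with this filtered colimit. We reduce to an affine $X=\Spf(R)$, which is quasi-compact and quasi-separated. We then compute the global sections via the \v{C}ech nerve of a single pro-\'etale cover $Y_\eta=\Spf(S)_\eta\to X_\eta$, where $S$ is a suitably large perfectoid ring over $\overline{A}_\perf$ as in \Cref{lem:perfection_of_framed_regular_prism}, and its $\Gal(K_\infty/K)$-descent. Since $\mathrm{H}^0$ is the equalizer of two maps between sections on $Y_\eta$ and $Y_\eta\times_{X_\eta}Y_\eta$, it is a finite limit. Finite limits commute with filtered colimits of abelian groups, provided the sections of the colimit sheaf on these (quasi-compact) affinoid perfectoid objects are the colimits of the sections. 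This last point is the main obstacle, because the sheaves involved are $(p,J)$-completed and the colimit is not completed. We handle it by noting that the $\mu$-localization is defined as an uncompleted tensor product of sheaves, $-\otimes_{\rAinf(\mathcal{O}_{K_\infty})}\rAinf(\mathcal{O}_{K_\infty})[1/\mu]$. Its values on affinoid perfectoids in a basis on which $\mathcal{H}^0(\Prism_{(-)_{\overline{B}}/B})$ is already a sheaf (\Cref{cor:sheafification_is_unfolding}) are computed termwise, because sheafification of a filtered colimit of sheaves on a quasi-compact basis is computed sectionwise on quasi-compact objects. The higher almost-vanishing in \Cref{thm:prismatic_period_sheaf}.\ref{thm:prismatic_period_sheaf_coh} does not interfere, since we only use $\mathrm{H}^0$, which is left exact.

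Granting this, we obtain
\[
\mathrm{H}^0_\pe(X_\eta,\Prism_{(-)_{\overline{B}}/B}[\tfrac1\mu])\simeq \colim_n \mathrm{H}^0_\pe(X_\eta,\mu^{-n}\Prism_{(-)_{\overline{B}}/B}).
\]
By \Cref{thm:global_section_rational}, each term on the right is identified with $B$ via the structure map. The structure maps are compatible with the transition maps $\widetilde{\iota}$, because $B\to\Prism_{(-)_{\overline{B}}/B}\to\mu^{-n}\Prism_{(-)_{\overline{B}}/B}$ factors through each stage. Hence the colimit is a constant system with identity transitions, so it equals $B$, and the structure map is an isomorphism.

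For the general, not necessarily affine, $X$, we glue along an affine open cover, using that $B$ is fixed and both sides are compatible with Zariski restriction on $X$; this reduces to the affine case just treated.
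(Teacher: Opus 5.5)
Your proposal is correct and follows the paper's route: the paper gives this corollary no separate proof. It treats it as an immediate consequence of \Cref{thm:global_section_rational}, using the presentation $\Prism_{(-)_{\overline{B}}/B}[\frac{1}{\mu}]\simeq \colim_{n}\Prism_{(-)_{\overline{B}}/B}\{n\}(-n)$ of \Cref{const:rational_period_sheaves} and commuting $\mathrm{H}^0_\pe(X_\eta,-)$ with this filtered colimit, a step it also uses tacitly when defining $\mathcal{E}_{\Prism,T}$. The one imprecision is in your last paragraph: over an affine open $U\subsetneq X$ the prism $(B,J)$ need not lie over $U$, so the sections there are a localization of $B$ rather than $B$ itself; it is cleaner to commute $\mathrm{H}^0_\pe(X_\eta,-)$ with the colimit using a finite affine cover (finite limits commute with filtered colimits) and then apply \Cref{thm:global_section_rational} on $X$ directly.
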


\section{Canonical weak prismatic $F$-crystal}
\label{sec:F-crys}
Let $X$ be a regular $p$-adic formal scheme over $\mathcal{O}_K$, and let $X_\eta$ be its generic fiber.
In this section, we show that there is a canonical weak prismatic $F$-crystal over the flat prismatic site of $X$, for any $\mathbb{Z}_p$-local system over the generic fiber $X_\eta$.

Below we consider a subcategory of flat prisms that will serve as the parametrizing space for our construction.
\begin{definition}
	\label{def:flat_prismatic_site}
	Let $X$ be a regular $p$-adic formal scheme.
	The \emph{special prismatic site of $X$}, denoted as $X_\Prismsp$, is the full subcategory of $X_\Prism$ consisting of transversal prisms $(B,J)$ that are finite coproduct of framed regular prisms.
	We call such $(B,J)$ \emph{special}.
\end{definition}
It is easy to see that a special prism is in particular a flat prism.

\begin{definition}
	\label{def:family_of_wFrob_mod}
	Let $X$ be a regular $p$-adic formal scheme.
	A \emph{weak prismatic ($F$-)crystal} is a presheaf $\mathcal{E}$ of (resp. weak Frobenius) modules on $X_\Prism$ such that 
	\begin{itemize}
		\item For each $(B,J)\in X_\Prismsp$, the $B$-module $\mathcal{E}(B,J)$ is $(J,p)$-regular.
		\item For each noetherian prism $(B,J)\in X_\Prismsp$, the $B$-module $\mathcal{E}(B,J)$ is finitely presented.
		\item for each map of framed regular prisms $(B_1,J_1)\to (B_2,J_2)$ such that $\overline{B}_1/p\overline{B}_1\to \overline{B}_/p\overline{B}_2$ is flat, the induced linearized map of torsionfree modules 
		\[
		\bigl( B_2\otimes^L_{B_1} \mathcal{E}(B_1,J_1)\bigr)^\wedge_{(p,J_2)} \longrightarrow \mathcal{E}(B_2,J_2)
		\]
		is an injection.
	\end{itemize}
	The category of weak prismatic ($F$-)crystals is denoted as $\wCrys^{(\varphi)}(X_\Prismsp)$.
\end{definition}

We also remind the reader the following terminology for prismatic $F$-crystals. 
\begin{definition}
Let $X$ be a regular $p$-adic formal scheme.
 A prismatic ($F$-)crystal in complexes $\mathcal{E}$ on $X_\Prism$ is called \emph{reflexive} if for each framed regular prism $(A,I)$, the $A$-complex
 $\mathcal{E}(A,I)$ is represented by a reflexive $A$-module in the sense of \Cref{def:ref_Frob_mod}.
\end{definition}
We denote the category of reflexive prismatic ($F$-)crystals over $X$ as $\Coh^{(\varphi)}_{\refl}(X_\Prism)$.
\begin{remark}
When $X$ is smooth, the category of reflexive prismatic ($F$-)crystals is equivalent to the category of analytic prismatic ($F$-)crystals, as shown in \cite[Thm.\ 5.10]{GR24}.
\end{remark}

The following observation describes the relationship between prismatic $F$-crystals and weak prismatic $F$-crystals, and gives a formula of calculating the sections of a prismatic $F$-crystal through the subcategory $X_\Prismsp$.
\begin{lemma}
	\label{lem:weak_vs_non-weak}
	Let $X$ be a regular $p$-adic formal scheme.
	\begin{enumerate}[label=\upshape{(\roman*)}]
		\item\label{lem:weak_vs_non-weak_fully_faithful} There is a canonical fully faithful functor $\Coh^{(\varphi)}_{\refl}(X_\Prism) \to \wCrys^{(\varphi)}(X_\Prismsp)$.
		\item\label{lem:weak_vs_non-weak_etale_local} A weak prismatic ($F$-)crystal $\mathcal{E}$ is a (reflexive) prismatic ($F$-)crystal if and only if there is an \'etale cover $\{U_i\to X\}$ such that each restriction $\mathcal{E}|_{U_i}$ is so.
		\item\label{lem:weak_vs_non-weak_recovering_formula}
		Let $\mathcal{E}$ be a prismatic crystal in prefect complexes, let $(A,I)\in X_\Prism$ be a bounded prism, and let $(B,J)$ be a covering object in $X_\Prismsp$.
		Then the canonical map belows are isomorphisms
		\[
		\mathcal{E}(A,I) \longrightarrow \underset{(C,K)\in X_\Prismsp}{R\lim}\mathcal{E}((A,I)\coprod (C,K)) \longrightarrow R\lim_{[n]\in \Delta^\mathrm{op}} \mathcal{E}((A,I)\coprod (B^n,IB^n)).
		\]
		The same holds for Laurent $F$-crystals in vector bundles/perfect complexes.
	\end{enumerate}
\end{lemma}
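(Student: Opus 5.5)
The three parts are proved in the order (iii), (i), (ii): the recovering formula in (iii) is what makes the functor in (i) fully faithful.

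\emph{Part (iii).} Let $\mathcal{E}$ be a crystal in perfect complexes and $(A,I)$ a bounded prism. By \Cref{cor:regular prism cover}, a framed regular prism $(B,J)$ with $\Spf(\overline{B})\to X$ an \'etale cover covers $X_\Prism$. Hence $(A,I)\to (A,I)\coprod(B,J)$ is a flat cover; this coproduct exists and is completely faithfully flat by \Cref{thm:coproduct_in_general}. Its \v{C}ech nerve is $[n]\mapsto (A,I)\coprod(B^n,JB^n)$, whose terms exist by the same theorem; here $B^n$ is the explicit \v{C}ech nerve of \Cref{cor:Cech_nerve_of_framed}, and each $B^n$ again lies in $X_\Prismsp$. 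Flat descent for crystals in perfect complexes, using $(p,I)$-completely faithfully flat descent for the derived complete base change $\mathcal{E}(A,I)\otimes$, gives that the composite of the two maps in (iii) is an isomorphism.

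For the first map, I would use that $\Delta^{\mathrm{op}}\to X_\Prismsp$, $[n]\mapsto B^n$, is cofinal in the sense relevant for limits. Concretely, any $(C,K)\in X_\Prismsp$ admits, after a further cover, a map from $(B,J)$, by weak initiality. One can then run the \v{C}ech argument for $(C,K)\coprod(B,J)$ to identify $\mathcal{E}((A,I)\coprod(C,K))$ with the $\Delta$-limit over $(A,I)\coprod(C,K)\coprod(B^\bullet,JB^\bullet)$. Swapping limits then reduces the $R\lim$ over $X_\Prismsp$ to the \v{C}ech limit. The Laurent $F$-crystal case is identical, with the descent done for $A\langle 1/I\rangle$-modules.

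\emph{Part (i).} The functor restricts a reflexive crystal to $X_\Prismsp$. The first two conditions of \Cref{def:family_of_wFrob_mod} are checked as follows. Special prisms are regular noetherian, so the value is finitely presented, and reflexivity with \Cref{lem:reflexive_is_saturated} gives $(J,p)$-regularity. For the injectivity condition, the crystal property makes the linearization map an isomorphism, so injectivity is automatic. For full faithfulness, a morphism of weak crystals between restrictions of two reflexive crystals is a compatible system on $X_\Prismsp$. By (iii), applied to the underlying perfect complexes (a reflexive module over a regular ring is perfect), this system determines and extends uniquely to the values on all of $X_\Prism$; Frobenius compatibility is inherited.

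\emph{Part (ii).} This is the step I expect to be the main obstacle. Both the crystal condition and reflexivity are \'etale local: a framed regular prism over $X$ can be refined, using \Cref{thm:regular prism} and the \'etale extension of $\delta$-structures, into prisms over the $U_i$ covering it. The delicate point is showing that the crystal isomorphisms glued from the $U_i$ agree on overlaps and recover $\mathcal{E}$ on prisms not over any single $U_i$. Here I would again invoke the \v{C}ech formula of (iii) together with the injectivity axiom of \Cref{def:family_of_wFrob_mod}: an injection that becomes an isomorphism faithfully flat locally is an isomorphism.
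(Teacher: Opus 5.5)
You follow the paper's route for (i) and (iii): \v{C}ech descent along the completely faithfully flat map $(A,I)\to(A,I)\coprod(B,J)$ from \Cref{cor:regular prism cover} and \Cref{thm:coproduct_in_general}. That same descent is what identifies reflexive crystals on $X_\Prism$ with those on $X_\Prismsp$. Two steps there need repair.

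First, special prisms are not regular noetherian. By \Cref{thm:coproduct_in_general}.\ref{thm:coproduct of prism regular and regular}, already $(A,I)\coprod(A,I)$ has reduction $\overline{A}\{u_1,\ldots,u_n\}^{\pd}$. This ring is not noetherian: modulo $p$, the ideal generated by all $\gamma_{p^k}(u_1)$ is not finitely generated. This is why \Cref{def:family_of_wFrob_mod} asks for finite presentation only at noetherian special prisms, and why reflexivity is imposed only at framed regular prisms. So your derivation of $(J,p)$-regularity does not apply at a general special $(B,J)$. Instead, choose a framed regular factor $(A,I)\to(B,J)$; it is completely flat by \Cref{thm:coproduct_in_general}.\ref{thm:coproduct of prism faithful}. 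Then $\mathcal{E}(B,J)\simeq(\mathcal{E}(A,I)\otimes^L_AB)^\wedge_{(p,J)}$ has derived reduction modulo $(p,J)$ equal to $\mathcal{E}(A,I)/(p,I)\otimes_{A/(p,I)}B/(p,J)$. This is discrete, because $\Kos(\mathcal{E}(A,I);p,d)$ is discrete by reflexivity.

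Second, in (iii) the functor $[n]\mapsto(B^n,JB^n)$ is not cofinal for limits over $X_\Prismsp$: a special prism receiving no map from $(B,J)$ has empty comma category. After your swap of limits you still need $R\lim_{(C,K)}\mathcal{E}(E\coprod(C,K))\simeq\mathcal{E}(E)$ for $E=(A,I)\coprod(B^n,JB^n)$, which is a statement of the same type as the one you are proving. It is true, and you should say why. $X_\Prismsp$ is closed under finite coproducts, so for every prism $E'$ receiving a map from $E$, the category of special prisms over $E'$ is nonempty (it contains $B^n$) and has binary coproducts. Hence its nerve is contractible, and $\colim_{(C,K)}h_{E\coprod(C,K)}\simeq h_E$ already as presheaves, where $h_P$ is the presheaf represented by $P$. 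Evaluating $\mathcal{E}$ on both sides gives the claim.

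The real gap is (ii), and the mechanism you propose cannot close it. There is nothing to glue, since $\mathcal{E}$ is already defined on all of $X_\Prismsp$. The question is whether its transition maps out of a special $(B,J)$ lying over no single $U_i$ are isomorphisms. The hypothesis on the $\mathcal{E}|_{U_i}$ only concerns prisms over the $U_i$. For framed regular $(B,J)$ and an \'etale localization $(B,J)\to(B_i,J_i)$, the injectivity axiom gives $(\mathcal{E}(B,J)\otimes_BB_i)^\wedge_{(p,J_i)}\hookrightarrow\mathcal{E}(B_i,J_i)$. But the ``isomorphism after faithfully flat base change'' you want to invoke is this same map, so the argument is circular.

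In fact the ``if'' direction of (ii) fails under the literal reading. Take $X$ affine admitting a framed regular prism with reduction $\mathcal{O}_X(X)$, and a Zariski cover by proper opens $U_i$. Put $\mathcal{E}(B,J)=B$ if $\Spf(\overline{B})\to X$ factors through some $U_i$, and $\mathcal{E}(B,J)=pB$ otherwise. This is a presheaf, since factoring through $U_i$ is inherited along maps of prisms. It satisfies every axiom of \Cref{def:family_of_wFrob_mod}, with the Frobenius of $\mathcal{O}_\Prism$, and its restriction to each $U_i$ is $\mathcal{O}_\Prism$. Yet $(pB\otimes_BB_i)^\wedge\to B_i$ is not surjective. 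So (ii) needs an extra input, such as $\mathcal{E}$ already satisfying base change along the \'etale localizations $B\to B_i$, or defining $\mathcal{E}|_{U_i}$ through such base change. The paper's proof reduces to the same statement and says it ``can be checked $p$-completely \'etale locally'', which tacitly uses exactly this.
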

\begin{proof}
	For Part \ref{lem:weak_vs_non-weak_fully_faithful}, as framed regular prisms (which are weak initial in $X_\Prism$) and their \v{C}ech nerves are contained in the subsite $X_\Prismsp$, we have a natural identification $\Coh_\refl^{(\varphi)}(X_\Prism) \simeq \Crys_\refl^{(\varphi)}(X_\Prismsp)$.
	Moreover, the canonical functor $\Crys^{(\varphi)}(X_\Prismsp)\to \wCrys^{(\varphi)}(X_\Prismsp)$ is fully faithful, which follows directly from the definition and \Cref{rmk:Frob_mod_vs_weak_Frob_mod}.
	For Part \ref{lem:weak_vs_non-weak_etale_local}, it suffices to check that for a given completely flat map of prisms $(B_1,J_1)\to (B_2,J_2)$ in $X_\Prismsp$, the induced map of $B_2$-modules $(\mathcal{E}(B_1,J_1)\otimes_{B_1} B_2)^\wedge_{(p,J_2)} \to \mathcal{E}(B_2,J_2)$ is an isomorphism.
	The latter can be checked $p$-completely \'etale locally, hence follows from the assumption.
	Part \ref{lem:weak_vs_non-weak_recovering_formula} follows from the complete faithfully flat descent of perfect complexes together with the flatness of the coproducts in  \Cref{thm:coproduct_in_general}.\ref{thm:coproduct of prism faithful}.
	The analogues for Laurent $F$-crystals follow from Drinfeld--Mathew's descent result \cite[Thm.\ 1.6, Thm.\ 1.7]{Mat22}.
\end{proof}

In the rest of the section, we will construct the canonical weak prismatic $F$-crystal associated to any $p$-adic local system $T\in \Loc_{\mathbb{Z}_p}(X_\eta)$ and prove \Cref{intro:thm:RH}.
Specifically, we give the construction and prove its finiteness in \Cref{sub:F-crys_const}, analyze its Nygaard filtration in \Cref{sub:weak_prismatic_F_crystal:filtration}.
We also study its integral de Rham realizations in \Cref{sub:dR}, its crystalline realization in \Cref{sub:crystalline}, and its compatibility with pullbacks and higher direct images in \Cref{sub:push_pull}. 
In the special case when the local system is pointwise crystalline, we prove in \Cref{sub:crys_loc_sys} that the equivalent descriptions in \Cref{intro:thm:crys_loc_sys} hold true, together with an equivalence of categories between pointwise crystalline local systems and reflexive prismatic $F$-crystals.

\subsection{Construction and the finiteness}
\label{sub:F-crys_const}
We start with the formula of the canonical weak prismatic $F$-crystal together with its finiteness.
Throughout the section, we let $X$ be a regular $p$-adic formal scheme over $\mathcal{O}_K$, and let $T\in \Loc_{\mathbb{Z}_p}(X_\eta)$.

\begin{construction}
\label{const:weak prismatic-F-crystal}
Let  $(B,J)\in X_\Prism$.
\begin{enumerate}[label=\upshape{(\roman*)}]
	\item We define the $B$-modules as follows
\begin{align*}
\label{eq:construction_weak_prismatic-F-crystal}	
\mathcal{E}_{\Prism,n,T}(B,J) &\colonequals \mathrm{H}^0_\pe(X_\eta, T\otimes_{\mathbb{Z}_p} \mu^n\Prism_{(-)_{\overline{B}}/B}), \\
\mathcal{E}_{\Prism,T}(B,J) & \colonequals \mathrm{H}^0_\pe(X_\eta, T\otimes_{\mathbb{Z}_p} \Prism_{(-)_{\overline{B}}/B}[1/\mu]) = \colim_{n\in \mathbb{N}} \mathcal{E}_{\Prism,n,T}(B,J),
\end{align*}
where $\mu^n\Prism_{(-)_{\overline{B}}/B}=\Prism_{(-)_{\overline{B}}/B}\{n\}(-n)$ is the $\mu^n$-twisted prismatic period sheaf as in \Cref{const:rational_period_sheaves}.
\item The Frobenius structure of the relative prismatic cohomology $\Prism_{(-)_{\overline{B}}/B}$ induces a natural map $\Prism_{(-)_{\overline{B}}/B}[1/\mu] \to \Prism_{(-)_{\overline{B}}/B}[1/\mu J]$,
where we use the equalities of ideals $\varphi_{\Ainf}(\mu\Ainf)=\mu\Ainf\cdot I_{\Ainf}$ and $I_{\Ainf}\Prism_{(-)_{\overline{B}}/B}=J\Prism_{(-)_{\overline{B}}/B}$.
By tensoring it with $T$ and then taking the cohomology, we obtain a $\varphi_B$-semi-linear morphism $\mathcal{E}_{\Prism,T}(B,J)\to \mathcal{E}_{\Prism,T}(B,J)[1/J]$.
We let $\mathcal{E}^{(1)}_{\Prism,T}$ be the Frobenius twist $\varphi_{\mathcal{O_\Prism}}^*\mathcal{E}_{\Prism,T}\colonequals(\mathcal{E}_{\Prism,T}(B,J)\otimes^L_{B, \varphi_B} B)^\wedge_{(p,J)}$.
Then inverting $J$ at the source, we obtain a canonical $B$-linear map
\[
\varphi_{\mathcal{E}_{\Prism,T}(B,J)} \colon \mathcal{E}^{(1)}_{\Prism,T}(B,J) [1/J] \to \mathcal{E}_{\Prism,T}(B,J)[1/J].
\]
\end{enumerate}
Putting the above together, we obtain a functor that sends a local system $T$ onto a module with weak Frobenius structure $(\mathcal{E}_{\Prism,T}(B,J), \varphi_{\mathcal{E}_{\Prism,T}(B,J)})$ over $B$,
which is also functorial in the prism $(B,J)\in X_\Prism$.
\end{construction}

To analyze $\mathcal{E}_{\Prism,T}(B,J)$, the following reduction plays an important role.
Namely, the localization at $\mu$ in the construction of $\mathcal{E}_{\Prism,T}$ can be replaced by a finite twist.
Note that the result uses the full strength of the structural results of the rational prismatic period sheaves.
\begin{proposition}[Rational-to-integral reduction]
	\label{prop:weak_prismatic_F_crystal_is_calculated_at_finite_mu_power}
	There is an integer $n\in \mathbb{Z}$ such that for each $(B,J)\in X_\Prismsp$, the inclusion map below is an equality
	\[
	\mathcal{E}_{\Prism,n,T}(B,J) \hookrightarrow \mathcal{E}_{\Prism,T}(B,J).
	\]
\end{proposition}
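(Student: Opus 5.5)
We compare successive twists. For each $m\geq 0$ there is a short exact sequence of pro-\'etale sheaves
\[
0\longrightarrow T\otimes_{\mathbb{Z}_p}\mu^{m+1}\Prism_{(-)_{\overline{B}}/B}\longrightarrow T\otimes_{\mathbb{Z}_p}\mu^{m}\Prism_{(-)_{\overline{B}}/B}\longrightarrow T\otimes_{\mathbb{Z}_p}\coker(\iota_{m+1,m})\longrightarrow 0,
\]
with the twists indexed so that the colimit in \Cref{const:weak prismatic-F-crystal} runs over $\mu^{-m}$. Taking $\mathrm{H}^0_\pe$ and passing to the colimit, it suffices to find an $N$, independent of $(B,J)\in X_\Prismsp$, such that $\mathrm{H}^0_\pe(X_\eta, T\otimes\coker(\widetilde{\iota}_{m+1,m}))=0$ for all $m\leq -N$. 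Then every map $\mathcal{E}_{\Prism,-N,T}\to\mathcal{E}_{\Prism,-N-1,T}\to\cdots$ is surjective on $\mathrm{H}^0$, and injectivity holds automatically. So the colimit stabilizes at $n=-N$. The question is local on $X$, and $X$ is quasi-compact in any application, so we may assume $X=\Spf(R)$ is affine and pick a framed regular prism covering it.

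We check the vanishing in two steps, both for $(B,J)$ special. First, by \Cref{thm:structure_of_rational_period_sheaf}.\ref{thm:structure_of_rational_period_sheaf:inj}, the cokernel injects into its $p$-inversion. This in turn injects into a countable product of copies of $\mu^m\Prism^{(1)}_{(-)_{\overline{B}}/B}\otimes_{\Ainf}\Ainf/\varphi^{-1}(I_{\Ainf})[1/p]$, and the maps are compatible with tensoring by $T$. Since $\mathrm{H}^0$ commutes with products, it suffices to show
\[
\mathrm{H}^0_\pe\bigl(X_\eta, T\otimes \mu^m\Prism^{(1)}\otimes\Ainf/\varphi^{-1}(I)[1/p]\bigr)=0.
\]
Strictly, this uses the Frobenius-twisted version, so we run the argument for $\Prism^{(1)}$ and descend to $\Prism$ as in the last step of \Cref{thm:global_section_rational}. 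Second, by \Cref{thm:structure_of_rational_period_sheaf}.\ref{thm:structure_of_rational_period_sheaf:sep} this sheaf embeds in its filtered completion. It is therefore enough that each graded piece has vanishing $\mathrm{H}^0$ after tensoring with $T$. By \Cref{thm:structure_of_rational_period_sheaf}.\ref{thm:structure_of_rational_period_sheaf:graded}, each graded piece is a finite extension of sheaves of the form
\[
T\otimes\bigl(\wedge^i\mathbb{L}_{X/\mathbb{Z}_p}\otimes\overline{B}\{j\}\bigr)\otimes_{\overline{B}}\widehat{\mathcal{O}}_{\overline{B}}(m).
\]
Here $\wedge^i\mathbb{L}_{X/\mathbb{Z}_p}\otimes\overline{B}$ is a flat $R$-module after inverting $p$, and the Tate twist $(m)$ carries the dependence on $m$.

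By the projection formula \Cref{thm:projection_formula}, each such $\mathrm{H}^0$ is a completed base change of $\mathrm{H}^0_\pe(X_\eta, T\otimes\widehat{\mathcal{O}}(m))$. The projection formula applies because $\overline{B}$ is $p$-completely flat over $X$ and because \Cref{thm:finiteness_integral_proet_coh} gives bounded torsion in $\mathrm{H}^1$. By \Cref{thm:LZ_finiteness}, $\mathrm{H}^0_\pe(X_\eta, T\otimes\widehat{\mathcal{O}}(m))=0$ for $|m|\gg0$. The resulting bound depends only on $T$ and $X$, not on $(B,J)$, so we obtain a uniform $N$. An extension of sheaves with vanishing $\mathrm{H}^0$ has vanishing $\mathrm{H}^0$, which handles the finite filtrations. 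Completeness then passes the vanishing from the graded pieces to the filtered completion.

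The main obstacle is the bookkeeping of twists. The graded pieces involve both Breuil--Kisin twists $\{j\}$ and Tate twists $(m)$, and the identification of \Cref{thm:structure_of_rational_period_sheaf} is only rational. We need the Liu--Zhu vanishing to apply uniformly in the grading degree $i$, since infinitely many degrees $i\geq m$ appear. Our plan is to note that the Tate twist on every graded piece is exactly $(m)$, independent of $i$; the $\{j\}$ twists are $\overline{B}$-linear and do not affect Galois invariants. A single vanishing range for $T\otimes\widehat{\mathcal{O}}(m)$ then suffices.
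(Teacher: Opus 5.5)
Your proposal is correct and follows essentially the same route as the paper. You reduce to the vanishing of $\mathrm{H}^0_\pe(X_\eta, T\otimes\coker(\widetilde{\iota}_{n+1,n}))$ for $n\ll 0$; the passage from $\Prism$ to $\Prism^{(1)}$ is exactly the cartesian square of $\mu^{n+1}\Prism_{(-)_{\overline{B}}/B}\to\mu^{n}\Prism_{(-)_{\overline{B}}/B}$ over its Frobenius twist, obtained by the arguments of \Cref{prop:intersection_of_Frobenius_of_base_with_coproduct}. You then use \Cref{thm:structure_of_rational_period_sheaf} to pass to the filtered completion of the mod $\varphi^{-1}(I_{\Ainf})$ pieces and kill the graded pieces by Liu--Zhu's vanishing and the projection formula. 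The only cosmetic difference is that you use the flat, bounded-$\mathrm{H}^1$-torsion form of \Cref{thm:projection_formula} where the paper cites the completely projective form; either works here.
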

\begin{proof}
We first notice that the Frobenius structure $\varphi_B$ induces the following commutative diagram of injections
\[
\begin{tikzcd}
\mu^{n+1} \Prism_{(-)_{\overline{B}}/B} \ar[r] \ar[d] & \mu^n\Prism_{(-)_{\overline{B}}/B} \ar[d]\\
\mu^{n+1} \Prism^{(1)}_{(-)_{\overline{B}}/B}  \ar[r] & \mu^n\Prism^{(1)}_{(-)_{\overline{B}}/B}.
\end{tikzcd}
\]
Since $(B,J)$ is a finite coproduct of framed regular prisms, by the arguments of \Cref{prop:intersection_of_Frobenius_of_base_with_coproduct}, the diagram is cartesian.
So after tensoring it with the local system $T$ and then taking the global sections, it suffices to show that for small enough $n\in \mathbb{Z}$, the global section $\mathrm{H}^0_\pe(X_\eta, T\otimes_{\mathbb{Z}_p}\coker(\widetilde{\iota}_{n+1,n}))$ vanishes, where $\widetilde{\iota}_{n+1,n}$ is the map $\mu^{n+1}\Prism^{(1)}_{(-)_{\overline{B}}/B} \to \mu^n\Prism^{(1)}_{(-)_{\overline{B}}/B}$.
Moreover, by \Cref{thm:structure_of_rational_period_sheaf}.(\ref{thm:structure_of_rational_period_sheaf:inj} and \ref{thm:structure_of_rational_period_sheaf:sep}), it suffices to show that for small enough $n$, the cohomology below vanishes 
	\[
	\mathrm{H}^0_\pe(X_\eta,  \bigl( \mu^n\Prism^{(1)}_{(-)_{\overline{B}}/B}\otimes_{\Ainf} \Ainf/\varphi^{-1} (I_{\Ainf}) [1/p] \bigr)^\wedge_{\Fil} ),
	\]
which by the filtered completeness further reduces to the vanishing of its graded pieces.
Then the claim follows from the analysis of the graded pieces in \Cref{thm:structure_of_rational_period_sheaf}.\ref{thm:structure_of_rational_period_sheaf:graded}, together with Liu--Zhu's result in \Cref{thm:LZ_finiteness} and the projection formula in \Cref{thm:projection_formula}.\ref{thm:projection_formula_projective}.
\end{proof}
\begin{definition}
	The largest integer $n\in \mathbb{Z}$ that satisfies \Cref{prop:weak_prismatic_F_crystal_is_calculated_at_finite_mu_power}, if exists, is defined as the \emph{bottom height} of $T$.
\end{definition}
\begin{remark}
	It is still possible that $T$ does not have the bottom height.
	As we shall see in \Cref{prop:weak_prismatic_F_crystal:graded_piece}, this is equivalent to $\mathcal{E}_{\Prism,T}=0$.
\end{remark}

We now consider the structure of $\mathcal{E}_{\Prism,T}(B,J)$ as a $B$-module.
The first observation is the regularity of $(J,p)$.
\begin{lemma}[$(J,p)$-regularity]
	\label{lem:weak_prismatic_F_crystal:regularity}
	Assume $(B,J)\in X_\Prismsp$.
	The sequence of invertible ideals $(J,p)$ is a regular sequence for the $B$-module $\mathcal{E}_{\Prism,T}(B,J)$.
\end{lemma}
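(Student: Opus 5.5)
By \Cref{prop:weak_prismatic_F_crystal_is_calculated_at_finite_mu_power}, I would first replace $\mathcal{E}_{\Prism,T}(B,J)$ by $\mathcal{E}_{\Prism,n,T}(B,J)=\mathrm{H}^0_\pe(X_\eta, T\otimes_{\mathbb{Z}_p}\mu^n\Prism_{(-)_{\overline{B}}/B})$ for a fixed $n$. Write $\mathcal{F}\colonequals T\otimes_{\mathbb{Z}_p}\mu^n\Prism_{(-)_{\overline{B}}/B}$. The plan is to prove that $(J,p)$ is a regular sequence on $\mathcal{F}$ as a pro-\'etale sheaf, and then check that taking global sections keeps this property.

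\textbf{Regularity on the sheaf.} I would check this on the covering object $Y_\eta$ built from perfections of framed regular prisms. By \Cref{cor:sheafification_is_unfolding}, the sections of $\mathcal{F}$ there are $T\otimes\mu^n\Prism_{S_{\overline{B}}/B}$. The lisse sheaf $T$ is locally $\mathbb{Z}_p^r$ after a finite \'etale cover, so it does not affect the argument. By \Cref{cor:coproduct:perfect_with_framed}, $\Prism_{S_{\overline{B}}/B}$ is $(p,J)$-completely flat over $B$ and $B$ is transversal. Together these give that $(J,p)$ is regular on $\Prism_{S_{\overline{B}}/B}$, as was already used in \Cref{lem:intersection_of_prismatic_coh_with_base}. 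Twisting by the invertible module $\mu^n\Ainf$ changes nothing.

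\textbf{$J$-torsionfreeness and $p$-torsionfreeness.} Both follow at once, because $\mathrm{H}^0$ is left exact and $\mathcal{F}$ is $J$- and $p$-torsionfree.

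\textbf{The key step: $\mathcal{E}/J\mathcal{E}$ is $p$-torsionfree.} I would use $0\to\mathcal{F}\xrightarrow{d}\mathcal{F}\to\mathcal{F}/d\to0$, after passing locally to a generator $d$ of $J$. Taking $\mathrm{H}^0$ shows that $\mathcal{E}/d\mathcal{E}$ injects into $\mathrm{H}^0(X_\eta,\mathcal{F}/d)$. The target is $p$-torsionfree, since $\mathcal{F}/d$ is $p$-torsionfree by the first step and $\mathrm{H}^0$ is left exact. A submodule of a $p$-torsionfree module is $p$-torsionfree, so $\mathcal{E}/d\mathcal{E}$ is $p$-torsionfree. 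This is exactly the $J$-then-$p$ regularity; the condition in the other order follows the same way. If $J$ is not principal, I would reduce to the principal case by Zariski localization on $B$, which is compatible with the formation of $\mathcal{E}$ by a flat base change.

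\textbf{Main obstacle.} I expect the real difficulty to be making sure the sheafified $\mu^n\Prism$ on all of $X_{\eta,\pe}$ is still $J$- and $p$-torsionfree with torsionfree quotient. This means controlling the hypercomplete sheafification, that is, checking that it is discrete. I would handle it through the sheaf property in \Cref{thm:prismatic_period_sheaf} on the cover, together with the fact that torsionfreeness descends along covers.
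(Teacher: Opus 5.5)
Your proposal is correct and follows the paper's argument. You reduce to a finite $\mu$-twist via \Cref{prop:weak_prismatic_F_crystal_is_calculated_at_finite_mu_power}, get $J$-torsionfreeness by embedding into sections over the perfectoid cover, and get $p$-torsionfreeness of $\mathcal{E}/J\mathcal{E}$ from the universal-coefficient injection into $\mathrm{H}^0_\pe(X_\eta,\mathcal{F}/J)$. The differences are minor: you deduce $p$-torsionfreeness of $\Prism_{S_{\overline{B}}/B}/J$ from complete flatness over the transversal $B$, whereas the paper reads it off from the conjugate filtration (\Cref{cor:conjuage_fil}); and $T$ is only pro-\'etale locally free, not finite \'etale locally, which is all your argument needs.
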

In particular, when $(B,J)$ is a two-dimensional complete localization of a regular prsim (for example, the classical Breuil--Kisin prism, and more generally the localization of transversal regular prisms in \Cref{sub:localization_of_prism}), the $B$-module $\mathcal{E}_{\Prism,T}(B,J)$ is locally free (cf. \Cref{lem:reflexive_is_saturated} and \Cref{cor:reflexive_is_locally_free}).
\begin{proof}
	As the construction of $\mathcal{E}_{\Prism,T}(B,J)$ is Zariski local with respect to $X$, we may assume $X$ is affine.
	In addition, by  \Cref{lem:perfection_of_framed_regular_prism}, we let $S$ be perfection of a framed regular prism over $X$, so that $\Spf(S)\to X$ is a faithfully flat cover and the generic fiber $\Spf(S)_\eta$ is a pro-\'etale cover of $X_\eta$.
	So by construction and \Cref{prop:weak_prismatic_F_crystal_is_calculated_at_finite_mu_power}, $\mathcal{E}_{\Prism,T}(B,J)=\mathrm{H}^0_\pe(X_\eta, T\otimes \mu^n\Prism_{(-)_{\overline{B}}/B})=\mathrm{H}^0_\pe(X_\eta, T\otimes \Prism_{(-)_{\overline{B}}/B}\{-n\}(n))$ is a submodule of $T(\Spf(S)_\eta)\otimes\Prism_{S_{\overline{B}}/B}\{-n\}(n)$, where the latter by \Cref{cor:coproduct:perfect_with_framed} is $J$-torsionfree.
	In addition, by the universal coefficient theorem, the mod $J$ reduction $\mathcal{E}_{\Prism,T}(B,J)/J\mathcal{E}_{\Prism,T}(B,J)$ is contained in the cohomology $\mathrm{H}^0_\pe(X_\eta, T\otimes \Prism_{(-)_{\overline{B}}/B}\{-n\}(n)\otimes_B \overline{B})=\mathrm{H}^0_\pe(X_\eta, T(n)\otimes \overline{\Prism}_{(-)_{\overline{B}}/\overline{B}}\otimes_{\overline{B}} \overline{B}\{-n\})$, where the latter is contained in $\overline{\Prism}_{S_{\overline{B}}/\overline{B}} \otimes_{\overline{B}} \overline{B}\{-n\}$.
	By \Cref{cor:conjuage_fil}.\ref{cor:conjuage_fil_non-canonical}, the graded pieces of the conjugate filtration of $\overline{\Prism}_{S_{\overline{B}}/\overline{B}} \otimes_{\overline{B}} \overline{B}\{-n\}$ is locally free over $S_{\overline{B}}$ and in particular are $p$-torsionfree.
	Hence the mod $J$ reduction of $\mathcal{E}_{\Prism,T}(B,J)$ is $p$-torsionfree as well.
\end{proof}

For later applications, we notice that the linearization map, or more generally a complete base change along a completely injective and flat morphism is injective.
\begin{lemma}
	\label{lem:weak_prismatic_F_crystal:inj_of_twist}
	Let $(B,J)\in X_\Prismsp$, let $M$ be a $(J,p)$-complete and $(J,p)$-regular module over $B$, and let $C$ be a $B$-algebra that is $(J,p)$-completely projective.
	\begin{enumerate}
		\item The $C$-complex $(M\otimes^L_B C)^\wedge_{(p,J)}$ lives in cohomological degree zero and is equal to $\lim_m(M\otimes_B C)/(p,J)^m$.
		\item The linearization map $M\to (M\otimes^L_B C)^\wedge_{(p,J)}$ is injective.
	\end{enumerate}
\end{lemma}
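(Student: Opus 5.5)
The plan is to reduce everything to the defining property of completely projective algebras, namely that $C\simeq (P)^\wedge_{(p,J)}$ for a projective $B$-module $P$. Since a projective module is a direct summand of a free one, we may fix a split injection $P\hookrightarrow F=\bigoplus_{i} B$. Derived completed tensor products commute with direct summands. It therefore suffices to treat $C$ replaced by $F^\wedge_{(p,J)}$, and then to pass back to the summand at the end.

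For part (1), I would apply \Cref{lem:complete_tensor_product_w_flat}. We take $R=B$, which is $(p,J)$-complete, with $(p,J)$ a Koszul regular sequence locally on $B$; this holds since $B$ is transversal and $J$ is locally principal. We also take the flat module to be $C$, which is $(p,J)$-completely flat, being the completion of a projective module. Finally we take the regular module to be $M$, which is $(J,p)$-regular by hypothesis. That lemma then gives directly that $(M\otimes^L_B C)^\wedge_{(p,J)}$ is discrete and equal to $\lim_m (M\otimes_B C)/(p,J)^m$. The claim is Zariski local on $\Spec B$, so we may assume $J=(d)$ is principal.

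For part (2), I would argue modulo powers of the ideal. For each $m$ the map $M/(p,J)^m M\to M/(p,J)^m M\otimes_{B/(p,J)^m} C/(p,J)^m C$ is injective, because $C/(p,J)^m$ is projective over $B/(p,J)^m$. It thus contains $B/(p,J)^m$ as a direct summand, provided $P$ has a free rank-one summand locally, or more simply the unit map $B\to C$ is split on reductions. Here I would use that $C$ is a $B$-algebra. The unit $B/(p,J)^m\to C/(p,J)^m$ is completely injective as soon as $C/(p,J)^m$ is faithfully flat, which holds for nonzero projective algebras on each connected component. Then the argument of \Cref{lem:inj_and_completely_proj_mod}(1) applies in the faithfully flat case: injectivity can be checked after the base change to $C/(p,J)^m$, where the multiplication map gives a section. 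Taking inverse limits, and using $M=\lim_m M/(p,J)^mM$ (as $M$ is $(p,J)$-complete and regular), together with the description in part (1), gives injectivity of $M\to(M\otimes^L_B C)^\wedge_{(p,J)}$.

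I expect the main obstacle to be the step justifying faithful flatness, equivalently complete injectivity, of $B\to C$ on reductions. A projective module could vanish on some component of $\Spec B$. I would first try to deduce it from the algebra structure: the composite $B\to C\to C/(p,J)$ is a ring map. If $C/(p,J)$ vanished at a point, then $C$ would vanish there, so projectivity plus the algebra structure forces the support to be open and closed. On that support the map is faithfully flat, and off the support I would need $M$ to vanish as well. If this fails, I would instead add to the statement the standing assumption, satisfied in all applications such as $\varphi_B$ or coproducts, that $C$ is completely faithfully flat over $B$.
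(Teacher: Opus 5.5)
Your proposal is correct and follows the paper's own route. Part (1) is a direct application of \Cref{lem:complete_tensor_product_w_flat}, and part (2) is the faithfully flat case of \Cref{lem:inj_and_completely_proj_mod}(1), checked modulo $(p,J)^m$ and then passed to the limit.

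Your worry about faithful flatness is justified and is not a defect of your argument. The paper simply asserts that complete projectivity of $B\to C$ makes $\overline{B}/p\overline{B}\to\overline{C}/p\overline{C}$ faithfully flat, but this fails for $C=Be$ with $e$ a nontrivial idempotent, or for $C=0$. So the extra hypothesis you propose is genuinely needed for the statement to hold; it is automatic for $\varphi_B$ and for the nonzero completely free coproducts where the lemma is used.
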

\begin{proof}
	The claim on the cohomological degree follows from \Cref{lem:complete_tensor_product_w_flat}.
	For the injectivity, we notice that the complete projectivity of the map $B\to C$ implies that the reduction $\overline{B}/p\overline{B}\to \overline{C}/p\overline{C}$ is faithfully flat and thus injective.
	So the claim follows from \Cref{lem:inj_and_completely_proj_mod}.
\end{proof}
\begin{lemma}
\label{lem:weak_prismatic_F_crystal:inj_of_twist_2}
Let $(B,J)\in X_\Prismsp$.
		The linearization $\mathcal{E}_{\Prism,T}(B,J)\to \mathcal{E}^{(1)}_{\Prism,T}(B,J)$ along $\varphi_B$ is injective.
\end{lemma}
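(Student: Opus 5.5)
The plan is to reduce to \Cref{lem:weak_prismatic_F_crystal:inj_of_twist} applied to the Frobenius $\varphi_B\colon B\to B$. By \Cref{prop:weak_prismatic_F_crystal_is_calculated_at_finite_mu_power}, we have $\mathcal{E}_{\Prism,T}(B,J)=\mathcal{E}_{\Prism,n,T}(B,J)$ for a suitable $n$, so $M\colonequals\mathcal{E}_{\Prism,T}(B,J)$ is a global section of the sheaf $T\otimes\mu^n\Prism_{(-)_{\overline{B}}/B}$. This sheaf is derived $(p,J)$-complete, and its sections on the cover $\Spf(S)_\eta$ (with $S$ the reduced perfection of a framed regular prism) are $(p,J)$-complete. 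Since $M$ is the equalizer of maps between derived $(p,J)$-complete modules, it is derived $(p,J)$-complete as well. By \Cref{lem:weak_prismatic_F_crystal:regularity}, $(J,p)$ is a regular sequence on $M$, so $M$ satisfies the hypotheses placed on the module in \Cref{lem:weak_prismatic_F_crystal:inj_of_twist}.

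The remaining input is that $\varphi_B$ makes $B$ into a $(J,p)$-completely projective $B$-algebra. First take $(B,J)=(A,I)$ framed regular. By \Cref{prop:regular prism Frobenius}, $\varphi_A$ is finite and faithfully flat over the noetherian ring $A$, so $A$ is finite projective over itself via $\varphi_A$. For a finite coproduct of framed regular prisms, I would use the explicit prismatic-envelope presentation from \Cref{thm:coproduct_in_general}.\ref{thm:coproduct of prism regular and regular} and \Cref{cor:Cech_nerve_of_framed}. Modulo $J$, the coproduct is a complete divided power polynomial algebra over one factor. The Frobenius is then computed as in the proof of \Cref{prop:intersection_of_Frobenius_of_base_with_coproduct}: it is the tensor product of the finite flat Frobenii on the framed factors, and it sends each $u_i$ to a degree $p$ polynomial in $u_i$. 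From this description I expect $B$, viewed over itself via $\varphi_B$, to be the completion of a projective (indeed free) module.

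This projectivity claim for coproducts is the main obstacle. If it resists, the fallback is the first part of \Cref{lem:inj_and_completely_proj_mod}, which only requires complete faithful flatness together with complete injectivity, in place of complete projectivity. These two properties are easier to obtain. Complete faithful flatness of $\varphi_B$ follows by base change from the framed case, using the flatness of coproducts in \Cref{thm:coproduct_in_general}.\ref{thm:coproduct of prism faithful}. Complete injectivity is the injectivity of the reduction of $\varphi_B$ modulo $J$, which is \Cref{prop:intersection_of_Frobenius_of_base_with_coproduct}.\ref{prop:intersection_of_Frobenius_of_base_with_coproduct:base}, and then modulo $p$.

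Once these inputs are in place, \Cref{lem:weak_prismatic_F_crystal:inj_of_twist} (or the first part of \Cref{lem:inj_and_completely_proj_mod}) shows that the complete base change $(M\otimes^L_{B,\varphi_B}B)^\wedge_{(p,J)}=\mathcal{E}^{(1)}_{\Prism,T}(B,J)$ is discrete and that the linearization $M\to\mathcal{E}^{(1)}_{\Prism,T}(B,J)$ is injective.
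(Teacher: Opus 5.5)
Your reduction to \Cref{lem:weak_prismatic_F_crystal:inj_of_twist} works when $(B,J)$ is itself framed regular, since then $\varphi_B$ is finite faithfully flat (\Cref{prop:regular prism Frobenius}). For a general $(B,J)\in X_\Prismsp$, however, the step you flag as the main obstacle is a genuine gap, and your fallback does not close it.

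The Frobenius of the prismatic envelope $B$ is not "the tensor product of the Frobenii on the framed factors". It also involves the relative Frobenius of the envelope, so complete faithful flatness does not follow by base change from the framed case. In fact $\varphi_B$ is not even completely flat. Take $B=A\coprod A$ with $A$ framed regular, so that $R\colonequals B/(p,J)\simeq\overline{A}/p\{u_1,\ldots,u_n\}^{\pd}$ by \Cref{thm:coproduct_in_general}. Complete flatness of $\varphi_B$ would make $S\colonequals B/(p,\varphi_B(J))=B/(p,J^p)$ flat over $R$ via $x\mapsto\varphi_B(\tilde x)$.
\begin{itemize}
\item Since $\varphi_B\equiv(-)^p$ modulo $p$, and the $p$-th power map of $R$ kills every $\gamma_m(u_i)$ with $m\geq1$, the ideal $\mathfrak a=(\gamma_m(u_i))_{m\geq1}$ maps into $JS$. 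Hence $\mathfrak a^p$ annihilates $S$.
\item Yet $\mathfrak a^p\neq0$: for example, $\gamma_1(u_1)\gamma_p(u_1)\cdots\gamma_{p^{p-1}}(u_1)$ is a unit multiple of $\gamma_{1+p+\cdots+p^{p-1}}(u_1)$.
\item Since $S/JS\cong R$ with $R$ acting through its absolute Frobenius, $S$ is nonzero modulo every maximal ideal of $R$. Flatness would then force faithful flatness locally, hence $\mathfrak a^p=0$, a contradiction.
\end{itemize}
This is why the paper keeps ``special'' and ``$\varphi_B$ completely faithfully flat'' as separate hypotheses (e.g.\ \Cref{thm:global_section}.\ref{thm:global_section_prismatic}, \Cref{thm:weak_prismatic_F_crystal:pull_back_wrt_prisms}). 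So no argument using only completeness and $(J,p)$-regularity of $M=\mathcal{E}_{\Prism,T}(B,J)$, together with properties of $\varphi_B$, can succeed here. You must use where $M$ comes from.

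That is what the paper does, and its proof is short. Compose the linearization with the natural map $\mathcal{E}^{(1)}_{\Prism,T}(B,J)\to\mathrm{H}^0_\pe(X_\eta,T\otimes\Prism^{(1)}_{(-)_{\overline{B}}/B}[1/\mu])$. The composite is $\mathrm{H}^0_\pe$ of the sheaf map $T\otimes\Prism_{(-)_{\overline{B}}/B}[1/\mu]\to T\otimes\Prism^{(1)}_{(-)_{\overline{B}}/B}[1/\mu]$, $x\mapsto x\otimes1$. This sheaf map is injective because on perfectoid covers the linearization $\Prism_{S_{\overline{B}}/B}\to\Prism^{(1)}_{S_{\overline{B}}/B}$ is injective (\Cref{prop:intersection_of_Frobenius_of_base_with_coproduct}). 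Since $\mathrm{H}^0_\pe$ is left exact, the composite, and hence the linearization, is injective. The injectivity on period rings relies on the bijective Frobenius of the perfectoid factor $\rAinf(S)$ in $\Prism_{S_{\overline{B}}/B}\simeq\rAinf(S)\coprod B$, not on any flatness of $\varphi_B$.
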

\begin{proof}
The linearization map can be fitted into a composition
\[
\mathcal{E}_{\Prism,T}(B,J)\longrightarrow \mathcal{E}^{(1)}_{\Prism,T}(B,J) \longrightarrow \mathrm{H}^0_\pe(X_\eta,T\otimes \Prism^{(1)}_{(-)_{\overline{B}}/B}[1/\mu]),
\]
so it suffices to show that the composition is injective: this follows from the injectivity of the Frobenius twist map for the relative prismatic cohomology, as in \Cref{prop:intersection_of_Frobenius_of_base_with_coproduct}.\ref{prop:intersection_of_Frobenius_of_base_with_coproduct:cohomology}.
\end{proof}

Continue with the injectivity above, we also have the injectivity of the Frobenius structure.
\begin{lemma}[Frobenius is injective]
	\label{lem:weak_prismatic_F_crystal:Frob_inj}
	Let $(B,J)\in X_\Prismsp$ be a framed regular prism.
	The weak Frobenius structure $\mathcal{E}^{(1)}_{\Prism,T}(B,J)[1/J]\to \mathcal{E}_{\Prism,T}(B,J)[1/J]$ is injective.
\end{lemma}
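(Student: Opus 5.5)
The plan is to factor the weak Frobenius structure through the global sections of the Frobenius-twisted rational period sheaf and reduce injectivity to the injectivity of a map of sheaves. By \Cref{prop:weak_prismatic_F_crystal_is_calculated_at_finite_mu_power} we may replace $\mathcal{E}_{\Prism,T}(B,J)$ by $\mathcal{E}_{\Prism,n,T}(B,J)=\mathrm{H}^0_\pe(X_\eta,T\otimes\mu^n\Prism_{(-)_{\overline{B}}/B})$ for a fixed $n$. The Frobenius of the relative prismatic cohomology linearizes to a $B$-linear map of sheaves
\[
\varphi^{\mathrm{lin}}\colon \mu^n\Prism^{(1)}_{(-)_{\overline{B}}/B}[1/J]\longrightarrow \mu^n\Prism_{(-)_{\overline{B}}/B}[1/J],
\]
and the weak Frobenius structure $\varphi_{\mathcal{E}_{\Prism,T}(B,J)}$ is the composite of the map $\mathcal{E}^{(1)}_{\Prism,T}(B,J)[1/J]\to \mathrm{H}^0_\pe(X_\eta,T\otimes\mu^n\Prism^{(1)}_{(-)_{\overline{B}}/B})[1/J]$ with $\mathrm{H}^0(T\otimes\varphi^{\mathrm{lin}})$. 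Here inverting $J$ commutes with $\mathrm{H}^0$ on the quasi-compact $X_\eta$, since $J$ is invertible and locally principal.

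The first factor is injective by the argument of \Cref{lem:weak_prismatic_F_crystal:inj_of_twist_2}. That argument shows $\mathcal{E}^{(1)}_{\Prism,T}(B,J)\to \mathrm{H}^0_\pe(X_\eta,T\otimes\Prism^{(1)}_{(-)_{\overline{B}}/B}[1/\mu])$ is injective, using \Cref{lem:weak_prismatic_F_crystal:inj_of_twist} and \Cref{lem:inj_and_completely_proj_mod} for the finite flat, hence completely projective, map $\varphi_B$ (\Cref{prop:regular prism Frobenius}). Since the source is $J$-torsionfree by \Cref{lem:weak_prismatic_F_crystal:regularity}, this injectivity survives inverting $J$.

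It then suffices to show that $\varphi^{\mathrm{lin}}$ is injective on sections over a large affinoid perfectoid $U=\Spf(S)_\eta$. We take $S$ over $\overline{A}_\perf$, as in the proof of \Cref{thm:structure_of_rational_period_sheaf}, so that these sections recover the global sections of the sheaves after tensoring with the local system $T$, which is trivialized on a pro-\'etale cover. The map on sections is
\[
\varphi_B^*\Prism_{S_{\overline{B}}/B}[1/J]\longrightarrow \Prism_{S_{\overline{B}}/B}[1/J].
\]
By \Cref{cor:coproduct:perfect_with_framed} we have $\Prism_{S_{\overline{B}}/B}\simeq \rAinf(S)\{\frac{f(t_i)\otimes 1-1\otimes t_i}{I}\}$, and its relative Frobenius is the $\Ainf$-linear base change of the relative Frobenius of $B$. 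Since $B$ is framed regular and $\varphi_B$ is finite flat, the cartesian square in \Cref{prop:intersection_of_Frobenius_of_base_with_coproduct}.\ref{prop:intersection_of_Frobenius_of_base_with_coproduct:intersection} should identify this relative Frobenius, after inverting $J$, with a completed base change along $B\to\Prism_{S_{\overline{B}}/B}$ of the linearized Frobenius of $B$. That linearized Frobenius becomes an isomorphism after inverting $J$ on the perfection, where $\varphi_{A_\perf}$ is bijective.

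Concretely, I would prove injectivity modulo $(p,J)$-adic reductions using the divided-power description in \Cref{prop:intersection_of_Frobenius_of_base_with_coproduct}.\ref{prop:intersection_of_Frobenius_of_base_with_coproduct:cohomology}, where the map sends $u_i\mapsto\varphi(u_i)$, a nonzero degree-$p$ polynomial. I would then pass to the limit, using the $J$-torsionfreeness of the complete flat modules.

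The main obstacle is this last step. Injectivity must hold after inverting $J$, which is not directly a statement about a reduction. The fallback is to use injectivity of $\varphi^{\mathrm{lin}}$ before inverting $J$ together with $J$-torsionfreeness of the target, so that localization preserves injectivity. Injectivity before inverting $J$ in turn reduces to injectivity modulo $\varphi^{-a-1}(I_{\Prism_S})$ for all $a$ via \Cref{claim:muPrism_S}-type intersections.
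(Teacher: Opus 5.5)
Your reduction matches the paper's proof. You factor $\varphi_{\mathcal{E}_{\Prism,T}(B,J)}$ through $\mathrm{H}^0_\pe(X_\eta,T\otimes\Prism^{(1)}_{(-)_{\overline{B}}/B}[1/\mu])$ and then dispose of the first arrow. The paper observes that this arrow is even an isomorphism, since $\varphi_B$ is finite flat and so $\varphi_B^*$ commutes with $\mathrm{H}^0_\pe$. Note that \Cref{lem:weak_prismatic_F_crystal:inj_of_twist_2} proves injectivity of $\mathcal{E}_{\Prism,T}(B,J)\to\mathcal{E}^{(1)}_{\Prism,T}(B,J)$, not of this arrow, although your appeal to \Cref{lem:inj_and_completely_proj_mod} does give it. You then reduce to injectivity of the linearized relative Frobenius $\varphi^{\mathrm{lin}}\colon\Prism^{(1)}_{S_{\overline{B}}/B}\to\Prism_{S_{\overline{B}}/B}$, which the paper attributes to \Cref{prop:intersection_of_Frobenius_of_base_with_coproduct}.\ref{prop:intersection_of_Frobenius_of_base_with_coproduct:cohomology}.

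The gap is in how you propose to prove this last injectivity. Checking it modulo $(p,J)$ and passing to the limit cannot work. The map $\varphi^{\mathrm{lin}}$ is $B$-linear and sends $\Fil^1_N\Prism^{(1)}_{S_{\overline{B}}/B}$ into $J\Prism_{S_{\overline{B}}/B}$. So its reduction modulo $J$ kills the image of $\Fil^1_N$ in $\Prism^{(1)}_{S_{\overline{B}}/B}/J\simeq\dR_{S_{\overline{B}}/\overline{B}}$. That image is $\Fil^1_H\dR_{S_{\overline{B}}/\overline{B}}$, which is nonzero because its first graded piece $\mathbb{L}_{S_{\overline{B}}/\overline{B}}[-1]$ is a nonzero finite projective module by \Cref{prop:relative_prismatic_coh_of_perfectoid}. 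Concretely, a generator of $\varphi^{-1}(I_{\Ainf})$ survives modulo $J$, yet its Frobenius lies in $I_{\Ainf}\Prism_{S_{\overline{B}}/B}=J\Prism_{S_{\overline{B}}/B}$. Your preceding paragraph does not help either. The relative Frobenius is $\varphi_{\Ainf}$-semilinear, not an $\Ainf$-linear base change, and the linearized Frobenius $\varphi_B^*B\to B$ is tautologically an isomorphism, so that identification carries no information.

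Your fallback does not close the gap either, for two reasons:
\begin{enumerate}
\item \Cref{prop:intersection_of_Frobenius_of_base_with_coproduct}.\ref{prop:intersection_of_Frobenius_of_base_with_coproduct:cohomology} concerns the absolute Frobenius. You need its linearization over $\varphi_B$, which additionally requires a basis of $B$ over $\varphi_B(B)$ to stay independent over the image.
\item Even granting injectivity modulo $\varphi^{-a-1}(I_{\Prism_S})$ for all $a\geq1$, a kernel element is only shown to lie in $\bigcap_{b\geq2}\varphi^{-b}(I_{\Prism_S})\Prism^{(1)}_{S_{\overline{B}}/B}$. This intersection contains, and by a \Cref{claim:muPrism_S}-type argument equals, $\varphi^{-1}(\mu)\Prism^{(1)}_{S_{\overline{B}}/B}\neq0$. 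You would still need an extra input, e.g.\ the semilinearity $\varphi^{\mathrm{lin}}(\varphi^{-1}(\mu)y)=\mu\,\varphi^{\mathrm{lin}}(y)$ with $\mu$ a nonzerodivisor. That gives $\ker(\varphi^{\mathrm{lin}})=\varphi^{-1}(\mu)\ker(\varphi^{\mathrm{lin}})$, and then $(p,I_{\Prism_S})$-adic separatedness finishes.
\end{enumerate}

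The direct way to finish is through the Nygaard filtration. On $\Prism^{(1)}_{S_{\overline{B}}/B}$ it is defined as the Frobenius preimage $\Fil^i_N=(\varphi^{\mathrm{lin}})^{-1}(J^i\Prism_{S_{\overline{B}}/B})$, as used in the proof of \Cref{lem:weak_prismatic_F_crystal:Frobenius_and_filtration}. Hence $\ker(\varphi^{\mathrm{lin}})\subseteq\bigcap_i\Fil^i_N$, which is zero by \Cref{cor:Nygarrd_filtration_separated}. The $\mu^n$-twist costs nothing, since $\varphi^{\mathrm{lin}}(\mu^n x)=\varphi(\mu)^n\varphi^{\mathrm{lin}}(x)$ and $\varphi(\mu)$ is a nonzerodivisor on $\Prism_{S_{\overline{B}}/B}$. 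Inverting $J$ is exact, and left exactness of $\mathrm{H}^0_\pe$ then gives the lemma.
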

\begin{proof}
	By construction, the map $\varphi_{\mathcal{E}_{\Prism,T}(B,J)}$ can be factored as below
	\[
	\varphi_B^* \mathrm{H}^0_\pe(X_\eta, T\otimes \Prism_{(-)_{\overline{B}}/B}[1/\mu]) \longrightarrow \mathrm{H}^0_\pe(X_\eta, T\otimes \Prism^{(1)}_{(-)_{\overline{B}}/B}[1/\mu]) \longrightarrow \mathrm{H}^0_\pe(X_\eta, T\otimes \Prism_{(-)_{\overline{B}}/B}[1/\mu])[1/J],
	\]
	where the first map is the linearlization of the morphism $\mathrm{H}^0_\pe(X_\eta, T\otimes \Prism_{(-)_{\overline{B}}/B}[1/\mu]) \to \mathrm{H}^0_\pe(X_\eta, T\otimes \Prism^{(1)}_{(-)_{\overline{B}}/B}[1/\mu])$, and the second map is induced from the Frobenius structure on the relative prismatic period sheaf.
	When $\varphi_B$ is finite flat, the first arrow is an isomorphism.
	The injectivity of the second arrow for a special $(B,J)$ follows from \Cref{prop:intersection_of_Frobenius_of_base_with_coproduct}.\ref{prop:intersection_of_Frobenius_of_base_with_coproduct:cohomology}. 
\end{proof}

Now we prove the finiteness $\mathcal{E}_{\Prism,T}(B,J)$ for a framed regular prism $(B,J)$.
\begin{theorem}[Finiteness]
	\label{prop:weak_prismatic_F_crystal:finiteness}
	Let $(B,J)\in X_\Prismsp$ be a framed regular (thus noetherian) prism.
	The $B$-module $\mathcal{E}_{\Prism,T}(B,J)\bigr)$ is finitely presented.
\end{theorem}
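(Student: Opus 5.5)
We aim to show that $\mathcal{E}_{\Prism,T}(B,J)$ is finitely generated; since $B$ is noetherian (\Cref{prop:regular prism Frobenius}), finite presentation then follows. The strategy is to reduce from the integral prismatic level to rational de Rham/Hodge--Tate data, where the Liu--Zhu finiteness of \Cref{thm:LZ_finiteness} and \Cref{thm:finiteness_integral_proet_coh} are available. First, by \Cref{prop:weak_prismatic_F_crystal_is_calculated_at_finite_mu_power} we fix $n$ with $\mathcal{E}_{\Prism,T}(B,J)=\mathcal{E}_{\Prism,n,T}(B,J)=\mathrm{H}^0_\pe(X_\eta,T\otimes\mu^n\Prism_{(-)_{\overline{B}}/B})$. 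Set $M$ for this module. By \Cref{lem:weak_prismatic_F_crystal:regularity}, $(J,p)$ is $M$-regular, and $M$ is derived $(p,J)$-complete, since it is a global section of a $(p,J)$-complete sheaf (cf.\ \cite[Lem.\ 6.15]{BMS1}) and torsionfree. By the derived Nakayama lemma for the noetherian $(p,J)$-complete ring $B$, it therefore suffices to show that $M/JM$ is finitely generated over $\overline{B}$. For a complete separated module it would even suffice to bound $M/(p,J)M$, but controlling $M/JM$ is more natural here.

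Next, by the universal coefficient argument used in \Cref{lem:weak_prismatic_F_crystal:regularity}, $M/JM$ embeds into
\[
H\colonequals\mathrm{H}^0_\pe\bigl(X_\eta,T(n)\otimes\overline{\Prism}_{(-)_{\overline{B}}/B}\otimes_{\overline{B}}\overline{B}\{-n\}\bigr).
\]
Via the conjugate filtration of \Cref{cor:conjuage_fil}, $H$ carries an exhaustive ascending filtration by $H_i\colonequals\mathrm{H}^0(\Fil^\conj_i)$, and the successive quotients of the $H_i$ embed into $\mathrm{H}^0_\pe(X_\eta,T(n-i)\otimes\wedge^i\mathbb{L}_{(-)_{\overline{B}}/\overline{B}}[-i])$. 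By the flat base change and projection formula (\Cref{thm:projection_formula}), together with \Cref{thm:finiteness_integral_proet_coh}, each such quotient is, up to bounded $p^\infty$-torsion, a finitely generated $\overline{B}$-module. Moreover, the quotients vanish rationally for $i\gg0$ by \Cref{prop:LZ_finiteness}.\ref{prop:LZ_finiteness_H0}, applied to the twist $n-i$ together with the vanishing of $\mathrm{H}^0$ of $\widehat{\mathcal{O}}(j)$ for $|j|\gg0$. Since $H$ is $p$-torsionfree (its conjugate graded pieces are), the integral graded pieces embed into the rational ones. Hence $H=H_N$ for some $N$, and $H_N$ is a finite iterated extension of submodules of finitely generated modules over the noetherian ring $\overline{B}$. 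It follows that $H$ is finitely generated, and so is its submodule $M/JM$.

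The main obstacle is the exhaustion step $H=H_N$. A section lies in some $\Fil^\conj_i$ only locally, so an element of $H$ need not a priori lie in a single $H_i$ uniformly. To handle this, I would first test on the cover $\Spf(\overline{A}_\perf)_\eta$, where the presheaf is already a sheaf (\Cref{cor:sheafification_is_unfolding}) and the conjugate filtration is explicit and split (\Cref{cor:conjuage_fil}.\ref{cor:conjuage_fil_non-canonical}). Then I would show that $\mathrm{H}^0$ commutes with the filtered colimit $\colim_i\Fil^\conj_i$. This should follow from the quasi-compactness of $X_\eta$ and the coherence of the relevant cohomology (\Cref{thm:finiteness_integral_proet_coh}), which makes the $H^1$ terms of the graded pieces eventually negligible. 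If this exhaustion fails integrally, I would instead work with $M[1/p]$: use the rational finiteness to bound it, and then use the saturation $M=M[1/p]\cap M[1/J]$ supplied by $(J,p)$-regularity together with \Cref{prop:finiteness_integral_general} to recover finiteness of $M$ itself.
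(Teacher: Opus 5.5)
Your reduction to showing that $M/JM$ is finitely generated, and the embedding $M/JM\hookrightarrow H$, are fine. The finiteness of $H$, however, has a genuine gap: it is exactly the exhaustion step you flag, and neither of your proposed fixes closes it.

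The conjugate filtration is ascending and only $p$-completely exhaustive. The Hodge--Tate period sheaf is $(\colim_i\Fil^\conj_i)^\wedge_p$, not the union. Over $\Spf(\overline{A}_\perf)_\eta$ it is a $p$-completed divided power polynomial ring, and the splitting there is not Galois equivariant, so you cannot project onto pieces.

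Set $F= T(n)\otimes\colim_i\Fil^\conj_i$. Quasi-compactness gives $\mathrm{H}^0(F)=\bigcup_i H_i=H_N$. But $H=\lim_m\mathrm{H}^0(F/p^m)$, and passing to this limit introduces $\lim_m\mathrm{H}^1_\pe(X_\eta,F)[p^m]$, which is the Tate module of $\colim_i\mathrm{H}^1_\pe(X_\eta,T(n)\otimes\Fil^\conj_i)$. Nothing you cite controls this term:
\begin{itemize}
\item \Cref{thm:finiteness_integral_proet_coh} bounds the torsion of each $\mathrm{H}^1$ separately, with no uniformity in $i$.
\item These $\mathrm{H}^1$'s need not vanish even rationally for large $i$: the $\widehat{\mathcal{O}}\otimes\Gamma^i\Omega^1$ part of $\wedge^i\mathbb{L}[-i]$ contributes. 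So they are not ``eventually negligible''.
\end{itemize}
Your fallback via $M[1/p]$ meets the same obstruction. Finiteness of $M[1/p]$ again requires controlling $\mathrm{H}^0$ of the $p$-completed Hodge--Tate sheaf, and $M[1/J]$ is not controlled by anything available.

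There is also a twist error. The conjugate graded pieces carry the Galois-trivial twist $\overline{B}\{-n-i\}$, not a Tate twist. With your $T(n-i)$, the inclusion $\wedge^i\mathbb{L}[-i]\subset\mathcal{O}\mathbb{C}(i)$ would only bound $\mathrm{H}^0$ by $\mathrm{H}^0(T(n)\otimes\widehat{\mathcal{O}})$, which gives no vanishing for large $i$.

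The idea you are missing is to apply Frobenius first.
\begin{enumerate}
\item Since $\varphi_B$ is finite and faithfully flat (\Cref{prop:regular prism Frobenius}), it suffices to show that $\mathcal{E}^{(1)}_{\Prism,T}(B,J)$ is finitely presented.
\item This module sits inside $N=\mathrm{H}^0_\pe(X_\eta,T\otimes\Prism^{(1)}_{(-)_{\overline{B}}/B}\{-n\}(n))$. By your own Nakayama argument and noetherianity, it is enough that $N/JN$ is finitely generated. Now $N/JN$ embeds into $\mathrm{H}^0_\pe(X_\eta,T(n)\otimes\dR_{(-)_{\overline{B}}/\overline{B}})$ rather than into the Hodge--Tate sheaf.
\item The relevant filtration is now the \emph{descending} Hodge filtration. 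It is separated (\Cref{prop:global_section_dR}, \Cref{lem:dR_is_sep}), so you may pass to the Hodge-completed sheaf $\widehat{\dR}=\lim_N\dR/\Fil^N_H$.
\item $\mathrm{H}^0_\pe(X_\eta,T(n)\otimes\gr^i_H)$ vanishes for $i\geq N$. This holds rationally by \Cref{prop:LZ_finiteness} and the projection formula, hence integrally, since these graded pieces are $p$-torsionfree.
\item Because $\mathrm{H}^0$ commutes with inverse limits, step 4 gives $\mathrm{H}^0_\pe(X_\eta,T(n)\otimes\Fil^N_H\widehat{\dR})=0$. Hence $\mathrm{H}^0$ of $\widehat{\dR}$ injects into $\mathrm{H}^0$ of $\widehat{\dR}/\Fil^N_H$.
\item The quotient $\widehat{\dR}/\Fil^N_H$ has only finitely many graded pieces, and each has finitely generated $\mathrm{H}^0$ by \Cref{thm:finiteness_integral_proet_coh} and \Cref{thm:projection_formula}.
\end{enumerate}
Trading the ascending, only $p$-completely exhaustive conjugate filtration for a descending, complete one is exactly what makes the ``finitely many graded pieces contribute'' step rigorous.
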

\begin{proof}
Recall from \Cref{prop:regular prism Frobenius} that the map $\varphi_B:B\to B$ is finite faithfully flat.
	So by the fact that the perfectness can be checked flat locally, it suffices to prove that $\mathcal{E}^{(1)}_{\Prism,T}(B,J)$ is a finitely presented $B$-module.
	In addition, as explained in the proof of \Cref{lem:weak_prismatic_F_crystal:Frob_inj}, the $B$-module $\mathcal{E}^{(1)}_{\Prism,T}(B,J)$  is naturally contained in the $(J,p)$-completely regular module $\mathrm{H}^0_\pe(X_\eta, T\otimes \Prism^{(1)}_{(-)_{\overline{B}}/B}\{-n\}(n))$.
	Thus by the universal coefficient theorem and the noetherian assumption, it suffices to show that the $p$-complete $p$-torsionfree module $\mathrm{H}^0_\pe(X_\eta, T\otimes \Prism^{(1)}_{(-)_{\overline{B}}/B}\{-n\}(n)\otimes_B \overline{B}) = \mathrm{H}^0_\pe(X_\eta, T(n)\otimes \dR_{(-)_{\overline{B}}/\overline{B}})$ is finitely generated over $\overline{B}$, where $n\in \mathbb{Z}$ is chosen as in \Cref{prop:weak_prismatic_F_crystal_is_calculated_at_finite_mu_power}.
	Moreover, since the Hodge filtration on the de Rham cohomology is separated (\Cref{prop:global_section_dR}.\ref{prop:global_section_dR_inj}), we may replace the de Rham period sheaf by its filtered completion.
	
	As the cohomology group is local with respect to the Zariski topology of $X$, we may assume $X=\Spf(R)$ is affine.
	Now by the rational finiteness in \Cref{prop:LZ_finiteness}.\ref{prop:LZ_finiteness_general} and by the integral finiteness \Cref{prop:finiteness_integral_general}, we know each cohomology group $\mathrm{H}^i_\pe(X_\eta, T(n)\otimes \gr^i_H \dR_{(-)/X})= \mathrm{H}^i_\pe(X_\eta, T(n)\otimes \wedge^i \mathbb{L}_{(-)/X}[-i])$ has bounded $p^\infty$ torsions and has finitely presented torsionfree quotient over $R$.
	So the projection formula \Cref{thm:projection_formula}.\ref{thm:projection_formula_H1} and the flatness assumption of the prism $(B,J)$ imply the isomorphism  
	\begin{equation}
		\label{eq:weak_prismatic_F_crystal:finiteness:tensor_product_formula}
			\mathrm{H}^i_\pe(X_\eta, T(n)\otimes \gr^i_H \dR_{(-)/X}) \otimes_R \overline{B} \simeq \mathrm{H}^0_\pe(X_\eta, T(n)\otimes \gr^i_H\dR_{(-)_{\overline{B}}/\overline{B}}).
	\end{equation}
	In addition, by the vanishing result in \Cref{prop:LZ_finiteness}.\ref{prop:LZ_finiteness_H0}, only finitely many graded pieces contribute to the cohomology $\mathrm{H}^0_\pe(X_\eta, T(n)\otimes \widehat{\dR}_{(-)_{\overline{B}}/\overline{B}})$.
	Hence the finitenss of $\mathrm{H}^0_\pe(X_\eta, T(n)\otimes \widehat{\dR}_{(-)_{\overline{B}}/\overline{B}})$ follows from the finiteness of the $R$-module $\mathrm{H}^0_\pe(X_\eta, T(n)\otimes \gr^i_H \dR_{(-)/X})$ mentioned above (which itself is $p$-torsionfree), together with the tensor product formula (\ref{eq:weak_prismatic_F_crystal:finiteness:tensor_product_formula}).
\end{proof}

We also observe that the despite the construction of the $B$-module $\mathcal{E}_{\Prism,T}(B,J)$ being global, it only depends on the image of $\Spf(\overline{B})\to X$ under the Zariski topology.
\begin{proposition}[The local nature of cohomology]
\label{prop:weak_prismatic_F_crystal:localization}
Let $(B,J)$ be a bounded prism over $X_\Prism$, and let $U\subseteq X$ be an open sub formal scheme.
Assume the structural map $\Spf(\overline{B})\to X$ factors through $U$.
Then for each $n\in \mathbb{Z}$, the natural map below is an isomorphism
\[
\mathrm{H}^0(X_\eta, T\otimes_{\mathbb{Z}_p} \mu^n\Prism_{(-)_{\overline{B}}/B}) \longrightarrow \mathrm{H}^0(U_\eta, T\otimes_{\mathbb{Z}_p} \mu^n\Prism_{(-)_{\overline{B}}/B}).
\]
\end{proposition}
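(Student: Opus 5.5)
The plan is to show that the sheaf $\mathcal{F}\colonequals T\otimes_{\mathbb{Z}_p}\mu^n\Prism_{(-)_{\overline{B}}/B}$ on $X_{\eta,\pe}$ is, in a precise sense, supported over $U_\eta$. Concretely, I will show that $\mathcal{F}$ is the pushforward of its restriction along $j\colon U_\eta\to X_\eta$, or at least that $\mathcal{F}(V)\to \mathcal{F}(V\times_{X_\eta}U_\eta)$ is an isomorphism for every affinoid perfectoid $V=\Spa(S[1/p],S)\in X_{\eta,\pe}$. Taking global sections then gives the claim. Since both sides are sheaves and $T$ is locally constant, it suffices to treat the presheaf $\Prism^\pre_{(-)_{\overline{B}}/B}$ after twisting, using \Cref{cor:sheafification_is_unfolding} to work on a basis of perfectoid objects.

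\textbf{Step 1: identify the value on $V$.} On such a $V$, the value is $\Prism_{S_{\overline{B}}/B}$ with $S_{\overline{B}}=(S\otimes_{\mathcal{O}_X}\overline{B})^\wedge_p$. Because $\Spf(\overline{B})\to X$ factors through $U$, we have
\[
S\otimes_{\mathcal{O}_X}\overline{B}\simeq (S\otimes_{\mathcal{O}_X}\mathcal{O}_U)\otimes_{\mathcal{O}_U}\overline{B}.
\]
The $p$-completion of $S\otimes_{\mathcal{O}_X}\mathcal{O}_U$ is the ring of integral functions on $\Spf(S)\times_X U$. Its generic fiber is $V\times_{X_\eta}U_\eta$, which is again affinoid perfectoid when $U$ is affine, because $U\to X$ is an open immersion and hence $p$-completely étale. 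Thus $S_{\overline{B}}$ depends only on $V\times_{X_\eta}U_\eta$, and the relative prismatic cohomology $\Prism_{S_{\overline{B}}/B}$ agrees with the value at $V\times_{X_\eta}U_\eta$.

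\textbf{Step 2: handle the integral-model subtlety.} This is the main obstacle. The ring of integral sections of $V\times_{X_\eta}U_\eta$ is the $p$-complete integral closure, which might differ from $(S\otimes_{\mathcal{O}_X}\mathcal{O}_U)^\wedge_p$. For an open immersion of formal schemes, however, the latter is already perfectoid, being a $p$-complete localization of $S$, and it is integrally closed up to almost zero. I will use the almost vanishing in \Cref{thm:prismatic_period_sheaf}.\ref{thm:prismatic_period_sheaf_coh} together with $\mu$-torsionfreeness to absorb this discrepancy. If it proves awkward, I will pass to a Zariski cover of $U$ by affines $U_i$ and use the sheaf property on both sides.

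\textbf{Step 3: conclude via the twists.} Tensoring with $\mathbb{A}_{\inf}\{-n\}(n)$ and with $T$ commutes with the above identification. The sheafification is $(p,J)$-complete and hypercomplete, and these operations commute with restriction to opens. So the identification holds for the sheaves, which yields the isomorphism of $\mathrm{H}^0$ asserted in the statement.
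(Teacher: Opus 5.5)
Your key identification is exactly the paper's. Since $\Spf(\overline{B})\to X$ factors through $U=\Spf(R_1)$, one has $S\otimes_R\overline{B}=(S\otimes_R R_1)\otimes_{R_1}\overline{B}$. So the period ring attached to $V$ and to $V\times_{X_\eta}U_\eta$ is the same $\Prism_{(S\otimes_R\overline{B})^\wedge_p/B}$, compatibly with the $\mu^n$-twist.

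The packaging differs. The paper first reduces, by a \v{C}ech argument, to $X=\Spf(R)$ and $U$ affine and connected. It then takes the maximal connected pro-finite-\'etale Galois cover $\widetilde{X}=\Spf(S)$, with group $G_{X_\eta}$, which trivializes $T$. Both sides become $G_{X_\eta}$-invariants of the values at $\widetilde{X}_\eta$ and $\widetilde{X}_{U,\eta}=\Spf(S_{R_1})_\eta$. These are matched using the equality above together with $T(\widetilde{X}_{U,\eta})=T(\widetilde{X}_\eta)\otimes_{\mathbb{Z}_p(\widetilde{X}_\eta)}\mathbb{Z}_p(\widetilde{X}_{U,\eta})$.

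You instead prove the local statement $\mathcal{F}\simeq j_*j^*\mathcal{F}$. This is slightly stronger, and it needs neither connectedness nor the Galois-invariant description of $\mathrm{H}^0$. The clean way to pass from presheaf to sheaf is to check the map of sheaves $\mathcal{F}\to j_*j^*\mathcal{F}$ on affinoid perfectoids lying over $Y_\eta$ and over $K_\infty$ on which $T$ is trivial. Every object is covered by such, the family is stable under $-\times_{X_\eta}U_\eta$, and on it \Cref{cor:sheafification_is_unfolding} gives the explicit formula. This is where the flatness of $(B,J)$ built into \Cref{def:period_sheafification} is used. Saying, as in your Step 3, that sheafification commutes with restriction to opens only controls $j^*$, not $j_*$.

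Step 2 is the one place that needs correcting: the discrepancy you worry about does not exist.
\begin{itemize}
\item Take a principal open $U=\Spf(R\langle 1/f\rangle)$. Then $(S\otimes_R R_1)^\wedge_p=S\langle 1/f\rangle$, the $p$-completion of $S[1/f]$.
\item $S[1/f]$ is integrally closed in $S[1/p,1/f]$, because $S$ is integrally closed in $S[1/p]$ and integral closure commutes with localization.
\item The $p$-completion $\widehat{A}$ of a $p$-torsionfree ring $A$ that is integrally closed in $A[1/p]$ is again integrally closed in $\widehat{A}[1/p]$. This is a routine approximation argument modulo high powers of $p$, using $A/p^m\simeq\widehat{A}/p^m$.
\item Hence $S\langle 1/f\rangle$ is exactly $\mathcal{O}^+$ of the rational subset $\{|f|\geq 1\}=V\times_{X_\eta}U_\eta$. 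A general affine $U$ follows by gluing over principal opens.
\end{itemize}
The paper uses this silently when it writes $\widetilde{X}_U=\Spf(S_{R_1})$.

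Your proposed remedy would not have worked had the discrepancy been real. An almost-zero discrepancy cannot be absorbed by the almost vanishing of higher cohomology plus $\mu$-torsionfreeness: an almost isomorphism between $\mu$-torsionfree modules need not be an isomorphism, and you need an actual equality of $\mathrm{H}^0$.
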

\begin{proof}
By taking the \v{C}ech cohomology with respect to an open affine covering, it suffices to assume $X=\Spf(R)$ and $U=\Spf(R_1)$ are both affine and connected.
As in \Cref{const:algebrac_pi_1}, we let $\widetilde{X}=\Spf(S)\to X$ be the integral closed perfectoid scheme whose generic fiber is a fixed maximal connected pro-finite-\'etale cover of $X_\eta$.
Under the setup, $\widetilde{X}_\eta \to X_\eta$ is a Galois cover with Galois group $G\colonequals G_{X_\eta}$, and the restriction of the local system $T|_{\widetilde{X}_\eta}$ is trivial.
We also let $\widetilde{X}_U$ be the base change $\widetilde{X}\times_X U=\Spf(S_{R_1})$, so that $\widetilde{X}_{U,\eta}\to U_\eta$ is also a Galois cover of group $G$.
Then the localization map of the cohomology is identified with a map of continuous group cohomology as below
\begin{equation}
\label{eq:localization_of_global_section}
\bigl( T(\widetilde{X}_\eta) \otimes_{\mathbb{Z}_p(\widetilde{X}_\eta)} \mu^n\Prism_{(S\otimes_R{\overline{B}})^\wedge_p/B} \bigr)^G \longrightarrow \bigl( T(\widetilde{X}_{U,\eta}) \otimes_{\mathbb{Z}_p(\widetilde{X}_{U,\eta})} \mu^n\Prism_{(S_{R_1}\otimes_{R_1} \overline{B})^\wedge_p/B} \bigr)^G,
\end{equation}
where $\mathbb{Z}_p(\widetilde{X}_\eta)$ (resp. $\mathbb{Z}_p(\widetilde{X}_{U,\eta})$) is the evaluation of the $p$-complete constant sheaf $\mathbb{Z}_p$ at the adic spaces $\widetilde{X}_\eta$ (resp. $\widetilde{X}_{U,\eta}$).
By assumption, the $p$-complete tensor product $(S_{R_1}\otimes_{R_1} \overline{B})^\wedge_p$ is naturally equal to $(S\otimes_R \overline{B})^\wedge_p$, which induces an identification of their relative prismatic cohomology over $(B,J)$.
Moreover, we have a natural factorization of equivariant maps of rings
\[
\begin{tikzcd}
\mathbb{Z}_p(\widetilde{X}_\eta) \ar[r] \ar[d]& \Prism_{(S\otimes_R{\overline{B}})^\wedge_p/B} \arrow[d, equal] \\
\mathbb{Z}_p(\widetilde{X}_{U,\eta}) \ar[r]&\Prism_{(S_{R_1}\otimes_{R_1} \overline{B})^\wedge_p/B}.
\end{tikzcd}
\]
In addition, since the local system $T|_{\widetilde{X}_\eta}$ is trivial on $\widetilde{X}_\eta$, the pullback along the left vertical map above induces an equality of representations of $G$:
\[
T(\widetilde{X}_{U,\eta}) = T(\widetilde{X}_\eta)\otimes_{\mathbb{Z}_p(\widetilde{X}_\eta)} \mathbb{Z}_p(\widetilde{X}_{U,\eta}).
\]
As a consequence, by taking the base change of the representation $T(\widetilde{X}_\eta)$ (which is finitely generated over $\mathbb{Z}_p(\widetilde{X}_\eta)$) along the factorization diagram, the target Galois cohomology of (\ref{eq:localization_of_global_section}) is naturally identified with the source, which finishes the proof.
\end{proof}

\subsection{Nygaard filtration on the Frobenius twist}
\label{sub:weak_prismatic_F_crystal:filtration}
We then consider the filtration on the Frobenius twist $\mathcal{E}^{(1)}_{\Prism,T}$, evaluated at framed regular prisms.
As before, we assume $T$ is a $\mathbb{Z}_p$-local system on $X_\eta$, where $X$ is a regular $p$-adic formal scheme over $\mathcal{O}_K$.
For simplicity, we also assume $X$ is connected.

\begin{definition}
	\label{def:weak_prismatic_F-crystal_filtration}
	Let  $(B,J)\in X_\Prism$ be either a framed regular prism or a perfect prism.
	For each $i\in \mathbb{Z}$, we define \emph{the twisted (Nygaard) filtration} on $\mathcal{E}^{(1)}_{\Prism,T}(B,J)$ by 
	\[
	\Fil^i \mathcal{E}^{(1)}_{\Prism,T}(B,J) \colonequals \mathrm{H}^0_\pe\bigl(X_\eta, T\otimes_{\mathbb{Z}_p} \Fil^i (\Prism^{(1)}_{(-)_{\overline{B}}/B}[1/\mu])\bigr),
	\]
	where the filtration of the twisted prismatic period sheaf is defined in \Cref{const:rational_period_sheaves}.
\end{definition}
Here we note that we implicitly identifies the $B$-module $\mathcal{E}^{(1)}_{\Prism,T}(B,J)$ with the global section $\mathrm{H}^0_\pe (X_\eta, T\otimes_{\mathbb{Z}_p}  \Prism^{(1)}_{(-)_{\overline{B}}/B}[1/\mu])$, which is clear if $(B,J)$ is perfect and follows from the proof of \Cref{lem:weak_prismatic_F_crystal:Frob_inj} if $(B,J)$ is framed regular.
By construction, the filtration is functorial with respect to the local system and the prism $(B,J)$.

To analyze the filtration, we first notice that the localization by $\mu$ in the definition above can often be replaced by a finite $\mu$-twist, similar to \Cref{prop:weak_prismatic_F_crystal_is_calculated_at_finite_mu_power}.
\begin{lemma}
	\label{lem:weak_prismatic_F-crystal_filtration_is_calculated_at_finite_mu_power}
	Let $(B,J)$ be a framed regular prism, and let $n$ be an integer that satisfies \Cref{prop:weak_prismatic_F_crystal_is_calculated_at_finite_mu_power}.
	For each $i\in \mathbb{Z}$, the natural inclusion map below is an equality
	\[
	\mathrm{H}^0_\pe(X_\eta, T\otimes_{\mathbb{Z}_p} \Fil^i(\mu^n\Prism^{(1)}_{(-)_{\overline{B}}/B})) \xrightarrow{=} \Fil^i \mathcal{E}^{(1)}_{\Prism,T}(B,J).
	\]
	In particular, $\Fil^i \mathcal{E}^{(1)}_{\Prism,T}(B,J)=\mathcal{E}^{(1)}_{\Prism,T}(B,J)$ for $i\leq n$.
\end{lemma}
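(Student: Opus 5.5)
The plan is to show, for every $m\geq n$, that
\[
\mathrm{H}^0_\pe\bigl(X_\eta, T\otimes \Fil^i(\mu^m\Prism^{(1)}_{(-)_{\overline{B}}/B})\bigr) \longrightarrow \mathrm{H}^0_\pe\bigl(X_\eta, T\otimes \Fil^i(\mu^{m-1}\Prism^{(1)}_{(-)_{\overline{B}}/B})\bigr)
\]
is an equality. Passing to the colimit over $m\to -\infty$ then gives the statement, since $\Fil^i(\Prism^{(1)}[1/\mu])$ is the union of the $\Fil^i(\mu^m\Prism^{(1)})$ by \Cref{const:rational_period_sheaves}, and $\mathrm{H}^0$ commutes with this filtered union of subsheaves. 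I will first record, from \Cref{lem:rational_period_sheaves:filtration}, that
\[
\Fil^i(\mu^{m}\Prism^{(1)}) = \Fil^i(\mu^{m-1}\Prism^{(1)}) \cap \mu^{m}\Prism^{(1)}
\]
inside $\Prism^{(1)}[1/\mu]$. Hence a global section of $T\otimes\Fil^i(\mu^{m-1}\Prism^{(1)})$ lies in $T\otimes\Fil^i(\mu^m\Prism^{(1)})$ exactly when it lies in $T\otimes\mu^m\Prism^{(1)}$.

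It therefore suffices to know that every global section of $T\otimes\mu^{m-1}\Prism^{(1)}$ already lies in $T\otimes\mu^m\Prism^{(1)}$ for $m\geq n$. Equivalently, the global sections of $T\otimes \coker(\widetilde{\iota}_{m,m-1})$ should receive nothing nonzero from $\mathrm{H}^0(T\otimes\mu^{m-1}\Prism^{(1)})$. This is the content of the proof of \Cref{prop:weak_prismatic_F_crystal_is_calculated_at_finite_mu_power} transported to the twisted side. There the vanishing of $\mathrm{H}^0_\pe(X_\eta, T\otimes\coker(\widetilde{\iota}_{m,m-1}))$ for $m$ small is derived from \Cref{thm:structure_of_rational_period_sheaf}, \Cref{thm:LZ_finiteness} and \Cref{thm:projection_formula}.

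A more robust route, which I would actually write, uses that $n$ satisfies \Cref{prop:weak_prismatic_F_crystal_is_calculated_at_finite_mu_power}, so $\mathcal{E}_{\Prism,n,T}(B,J)=\mathcal{E}_{\Prism,T}(B,J)$. I would then identify
\[
\mathcal{E}^{(1)}_{\Prism,T}(B,J)=\varphi_B^*\mathcal{E}_{\Prism,T}(B,J)=\mathrm{H}^0_\pe(X_\eta, T\otimes\Prism^{(1)}[1/\mu]),
\]
which holds because $\varphi_B$ is finite flat, and flat base change commutes with $\mathrm{H}^0$; this is the identification noted after \Cref{def:weak_prismatic_F-crystal_filtration}. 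Applying the same identification with $\mu^n$-twists and flat base change gives
\[
\mathrm{H}^0(T\otimes\mu^n\Prism^{(1)})=\varphi_B^*\mathcal{E}_{\Prism,n,T}=\mathcal{E}^{(1)}_{\Prism,T}.
\]
So every section of $T\otimes\Prism^{(1)}[1/\mu]$ lies in $T\otimes\mu^n\Prism^{(1)}$. Intersecting with $\Fil^i$ via \Cref{lem:rational_period_sheaves:filtration} yields the claimed equality.

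For the last assertion, for $i\leq n$ we have $\Fil^i(\mu^n\Prism^{(1)})=\mu^n\Prism^{(1)}$. This follows from \Cref{prop:Ainf_twist_difference}.\ref{prop:Ainf_twist_difference_Fil}, where $\max\{i,n\}=n$, after base change. Hence the left side equals $\mathrm{H}^0(T\otimes\mu^n\Prism^{(1)})=\mathcal{E}^{(1)}_{\Prism,T}$.

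The main obstacle is justifying that $\mathrm{H}^0_\pe$ commutes with the finite flat base change $\varphi_B^*$, including the $\mu^n$-twisted version. I would argue this via \Cref{thm:projection_formula}.\ref{thm:projection_formula_projective}, since a finite flat module over a noetherian ring is finite projective.
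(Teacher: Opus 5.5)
Your second route, the one you say you would actually write, is correct and is essentially the paper's argument: the cartesian square from \Cref{lem:rational_period_sheaves:filtration}, passage to global sections, and \Cref{prop:weak_prismatic_F_crystal_is_calculated_at_finite_mu_power} moved to the Frobenius-twisted side via the finite flatness of $\varphi_B$, which is the identification the paper invokes after \Cref{def:weak_prismatic_F-crystal_filtration}. Your preliminary sketch has the inequality reversed: the stabilization needed is $\mathrm{H}^0_\pe(X_\eta, T\otimes\mu^{m-1}\Prism^{(1)}_{(-)_{\overline{B}}/B})=\mathrm{H}^0_\pe(X_\eta, T\otimes\mu^{m}\Prism^{(1)}_{(-)_{\overline{B}}/B})$ for $m\leq n$, not $m\geq n$, but your final argument does not depend on it.
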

\begin{proof}
	We note from \Cref{lem:rational_period_sheaves:filtration} that the following diagram of injections is cartesian
	\[
	\begin{tikzcd}
		\Fil^i \bigl( \mu^n\Prism^{(1)}_{(-)_{\overline{B}}/B}\bigr) \ar[d] \ar[r] & \mu^n\Prism^{(1)}_{(-)_{\overline{B}}/B} \ar[d] \\
		\Fil^i \bigl( \Prism^{(1)}_{(-)_{\overline{B}}/B}[\frac{1}{\mu}] \bigr) \ar[r]  & \Prism^{(1)}_{(-)_{\overline{B}}/B}[1/\mu].
	\end{tikzcd}
	\]
	So the claim follows by taking the global section at the above diagram (after tensoring it with $T$), together with \Cref{prop:weak_prismatic_F_crystal_is_calculated_at_finite_mu_power}.
	The second claim on the eventual constancy of the filtration follows from the construction that $\Fil^i \bigl( \mu^n\Prism^{(1)}_{(-)_{\overline{B}}/B}\bigr)= \mu^n\Prism^{(1)}_{(-)_{\overline{B}}/B}$ for $i\leq n$.
\end{proof}
Recall from \Cref{lem:weak_prismatic_F_crystal:Frob_inj} that the weak Frobenius structure on $\mathcal{E}_{\Prism,T}(B,J)$ is injective when $(B,J)$ is framed regular.
Under the assumption, the twisted filtration is in fact determined by the weak Frobenius structure; or in other words it is saturated.
\begin{lemma}[Filtration and Frobenius]
	\label{lem:weak_prismatic_F_crystal:Frobenius_and_filtration}
	Let $(B,J)$ be a framed regular prism. For each $i\in \mathbb{Z}$, we have
		\[
		\Fil^i\mathcal{E}^{(1)}_{\Prism,T}(B,J) = \{x\in \mathcal{E}^{(1)}_{\Prism,T}(B,J)~|~\varphi_{\mathcal{E}_{\Prism,T}(B,J)}(x) \in J^i\mathcal{E}_{\Prism,T}(B,J)\}.
		\]
	In particular, the filtration on $\mathcal{E}^{(1)}_{\Prism,T}(B,J)$ is determined by the weak Frobenius structure.
\end{lemma}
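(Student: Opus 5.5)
The plan is to reduce the statement to a sheaf-level identity describing the filtration as a Frobenius preimage, and then pass to global sections. By \Cref{const:rational_period_sheaves}, the filtration $\Fil^i(\mu^n\Prism^{(1)}_{(-)_{\overline{B}}/B})$ is defined, up to $(p,J)$-complete sheafification, as the preimage of $J^i\mu^n\Prism_{(-)_{\overline{B}}/B}$ under the relative Frobenius
\[
\varphi\colon \mu^n\Prism^{(1)}_{(-)_{\overline{B}}/B}\to \mu^nJ^{n}\Prism_{(-)_{\overline{B}}/B}[1/J].
\]
I would first verify that this description holds on the nose for sections over the perfectoid covering objects $\Spf(S)_\eta$ used throughout (with $S$ over $\overline{A}_\perf$, as in the proof of \Cref{thm:structure_of_rational_period_sheaf}). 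There $\Prism_{S_{\overline{B}}/B}$ is discrete and $J$-torsionfree, and the relative Nygaard filtration is the Frobenius preimage of the $J$-adic filtration (cf.\ the diagram (\ref{diagram:two_relative_Nyggard_switch})). Then I take the union over $n$, using \Cref{lem:rational_period_sheaves:filtration}, to get the same description for $\Prism^{(1)}_{(-)_{\overline{B}}/B}[1/\mu]$.

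Next, tensor with $T$, which is exact since $T$ is a local system. This gives a left exact (cartesian) square of sheaves. Its top row is $T\otimes\Fil^i(\Prism^{(1)}[1/\mu])\to T\otimes\Prism^{(1)}[1/\mu]$, its bottom row is $T\otimes J^i\Prism[1/\mu]\to T\otimes \Prism[1/\mu][1/J]$, and the vertical maps are Frobenius. Taking $\mathrm{H}^0_\pe$ is left exact, so it preserves fiber products, and we obtain
\[
\Fil^i\mathcal{E}^{(1)}_{\Prism,T}(B,J)=\bigl\{x\in \mathrm{H}^0_\pe(X_\eta,T\otimes\Prism^{(1)}[1/\mu]) ~\big|~ \varphi(x)\in \mathrm{H}^0_\pe(X_\eta,T\otimes J^i\Prism[1/\mu])\bigr\}.
\]

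Then I identify the terms. The ambient module is $\mathcal{E}^{(1)}_{\Prism,T}(B,J)$; this uses that $\varphi_B$ is finite flat (\Cref{prop:regular prism Frobenius}), so that linearization is an isomorphism, as in the proof of \Cref{lem:weak_prismatic_F_crystal:Frob_inj}. Under this identification the Frobenius on global sections is exactly $\varphi_{\mathcal{E}_{\Prism,T}(B,J)}$. Since $J$ is an invertible ideal of $B$, locally principal say $J=(d)$, we have $J^i\Prism=d^i\Prism$, and multiplication by $d^i$ commutes with global sections. Hence $\mathrm{H}^0_\pe(T\otimes J^i\Prism[1/\mu])=J^i\mathcal{E}_{\Prism,T}(B,J)$ inside $\mathcal{E}_{\Prism,T}(B,J)[1/J]$. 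Here one also needs that $\mathcal{E}_{\Prism,T}(B,J)[1/J]$ embeds into $\mathrm{H}^0(T\otimes\Prism[1/\mu][1/J])$, which holds because $\mathcal{E}$ is $J$-torsionfree (\Cref{lem:weak_prismatic_F_crystal:regularity}) and global sections commute with the filtered colimit along $d^{-1}$ on the relevant quasicompact cover. Substituting gives the formula. The final assertion, that the filtration is determined by the weak Frobenius structure, follows immediately, with injectivity from \Cref{lem:weak_prismatic_F_crystal:Frob_inj} making it well behaved.

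The main obstacle is the first step: showing that the sheafified filtration is genuinely the Frobenius preimage on sections, rather than only up to completion or sheafification, including for negative $i$ and after inverting $\mu$. I would handle this by working with finite twists $\mu^n$, justified by \Cref{lem:weak_prismatic_F-crystal_filtration_is_calculated_at_finite_mu_power}. On covering objects I would use the explicit coproduct presentation (\Cref{cor:coproduct:perfect_with_framed}) together with the perfect-prism case, where $\varphi$ is an isomorphism and the filtration is $\varphi^{-1}(J)^{\bullet}$.
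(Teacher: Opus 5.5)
Your proposal is correct and follows essentially the same route as the paper. The paper also checks on large perfectoid objects that $\Fil^i(\mu^n\Prism^{(1)}_{(-)_{\overline{B}}/B})$ is the Frobenius preimage of $J^i\mu^n\Prism_{(-)_{\overline{B}}/B}$, which gives a cartesian square of period sheaves; it then takes global sections, using \Cref{prop:weak_prismatic_F_crystal_is_calculated_at_finite_mu_power} and \Cref{lem:weak_prismatic_F-crystal_filtration_is_calculated_at_finite_mu_power}. The only difference is that the paper works directly at a single finite twist $\mu^n$ with $n\le i$, taking $n$ as small as needed, rather than first passing to the $\mu$-localized sheaf; this is exactly the fallback you describe for your main obstacle.
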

\begin{proof}
	We let $n$ be any integer as in \Cref{lem:weak_prismatic_F-crystal_filtration_is_calculated_at_finite_mu_power} and assume $i\geq n$.
	Notice that for large perfectoid objects $\Spf(S)_\eta \to X_\eta$, the filtration on the $\mu$-twisted relative prismatic cohomology $\mu^n\Prism^{(1)}_{S_{\overline{B}}/B}$ is defined through the Frobenius preimage (\cite{BS22}).
	In particular, by ranging over those perfectoid objects, we see that for $i\geq n$, the following diagram of injections among period sheaves is in fact cartesian (cf. \Cref{prop:intersection_of_Frobenius_of_base_with_coproduct}.\ref{prop:intersection_of_Frobenius_of_base_with_coproduct:cohomology} for the injectivity)
	\[
	\begin{tikzcd}
		\Fil^i \bigl( \mu^n\Prism^{(1)}_{(-)_{\overline{B}}/B}\bigr) \arrow[r] \ar[d] & J^i\mu^n \Prism^{(1)}_{(-)_{\overline{B}}/B}\ar[d] \\
		\mu^n\Prism_{(-)_{\overline{B}}/B} \ar[r] & J^n\mu^n \Prism_{(-)_{\overline{B}}/B}.
	\end{tikzcd}
	\]
	So the claim follows by taking the global section and by \Cref{prop:weak_prismatic_F_crystal_is_calculated_at_finite_mu_power} and \Cref{lem:weak_prismatic_F-crystal_filtration_is_calculated_at_finite_mu_power}.
	For general $i\in \mathbb{Z}$, it suffices to notice that the integer $n$ can be as small as needed.
\end{proof}
Another observation is that the canonical filtration is separated.
\begin{lemma}[Separatedness]
	\label{lem:weak_prismatic_F_crystal:filtration_separatedness}
	Let $(B,J)$ be either a flat perfect prism or a framed regular prism. 
	The twisted Nygaard filtration on $\mathcal{E}^{(1)}_{\Prism,T}(B,J)$ is separated.
\end{lemma}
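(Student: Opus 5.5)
The plan is to reduce separatedness of the filtration on the global sections to separatedness of the Nygaard filtration on the relative prismatic cohomology of a single large perfectoid, and then to apply \Cref{cor:Nygarrd_filtration_separated}. Taking global sections commutes with intersections of subsheaves. So it suffices to show that
\[
\bigcap_i \Fil^i\bigl(\Prism^{(1)}_{(-)_{\overline{B}}/B}[1/\mu]\bigr)=0
\]
after tensoring with $T$ and evaluating at a pro-\'etale cover.

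First I fix the integer $n$ from \Cref{prop:weak_prismatic_F_crystal_is_calculated_at_finite_mu_power}. When $(B,J)$ is framed regular, \Cref{lem:weak_prismatic_F-crystal_filtration_is_calculated_at_finite_mu_power} identifies $\Fil^i\mathcal{E}^{(1)}_{\Prism,T}(B,J)$ with $\mathrm{H}^0_\pe(X_\eta,T\otimes\Fil^i(\mu^n\Prism^{(1)}_{(-)_{\overline{B}}/B}))$. In the perfect case I instead work directly with the union over $n$: any element of the intersection already lies in some $\mu^n$-twist, and by \Cref{lem:rational_period_sheaves:filtration} the filtration there is the intrinsic one. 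Next I reduce to $X$ affine and choose a perfectoid $S$ as in \Cref{lem:perfection_of_framed_regular_prism}, trivializing $T$ on the further maximal pro-finite-\'etale cover $\widetilde{X}$. Then the global sections inject into $T(\widetilde X_\eta)\otimes\mu^n\Prism^{(1)}_{S_{\overline B}/B}$, compatibly with filtrations, by the sheaf property \Cref{thm:prismatic_period_sheaf}. Since $T$ is locally free of finite rank there, the question becomes whether $\bigcap_i\Fil^i(\mu^n\Prism^{(1)}_{S_{\overline B}/B})=0$.

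Because $\mu^n\Prism_S\simeq\Prism_S\{-n\}(n)$ is an invertible filtered module, whose filtration is described explicitly in \Cref{prop:Ainf_twist_difference}, the filtration on $\mu^n\Prism^{(1)}_{S_{\overline B}/B}$ is a shift of the Nygaard filtration twisted by an invertible module. So separatedness follows from separatedness of the Nygaard filtration on $\Prism^{(1)}_{S_{\overline B}/B}$. For $(B,J)$ framed regular this is exactly \Cref{cor:Nygarrd_filtration_separated}. For a flat perfect prism I would argue as in that corollary: the Nygaard filtration refines the $J$-adic one, and modulo $J$ it becomes the Hodge filtration on $\dR_{S_{\overline B}/\overline B}$. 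This Hodge filtration is separated by \Cref{lem:dR_is_sep}, applied after base change of the framed situation along the flat map, using the locally free cotangent complex from \Cref{prop:relative_prismatic_coh_of_perfectoid}.

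The main obstacle I expect is justifying that the twisted filtration, defined by $(p,J)$-complete sheafification, evaluates on the large perfectoid $S$ to the naive Frobenius-preimage filtration, and that the intersection over $i$ commutes with this identification. I would handle this via \Cref{lem:weak_prismatic_F_crystal:Frobenius_and_filtration} in the framed case, where the filtration is the Frobenius preimage of $J^i$ and $\bigcap_i J^i=0$ holds by $J$-adic separatedness. That route bypasses the sheafification issue for framed regular prisms. The perfect case then requires only the direct argument above.
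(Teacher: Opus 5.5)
Your proposal is correct and follows essentially the paper's argument. After localizing so that $X$ is affine with a framed regular prism $(A,I)$, the filtered global sections embed into the twisted relative prismatic cohomology over the pro-\'etale cover $\Spf(\overline{A}_\perf)_\eta$, and separatedness there is exactly \Cref{cor:Nygarrd_filtration_separated}. Two small remarks: that corollary is already stated for an arbitrary flat prism $(B,J)$, so the flat perfect case needs no separate argument. And your fallback via \Cref{lem:weak_prismatic_F_crystal:Frobenius_and_filtration} also needs the injectivity of the weak Frobenius structure (\Cref{lem:weak_prismatic_F_crystal:Frob_inj}) to pass from $\bigcap_i J^i\mathcal{E}_{\Prism,T}(B,J)=0$ to $\bigcap_i \Fil^i\mathcal{E}^{(1)}_{\Prism,T}(B,J)=0$.
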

\begin{proof}
	As the statement is local with respect to the \'etale topology of $X$, we may assume $X$ is affine and admits a framed regular prism $(A,I)$ with $\Sigma\subset A^\times$.
	We let $S$ be the perfectoid ring given by the reduction of the perfection $(A_\perf,IA_\perf)$.
	Under the assumption, by \Cref{lem:perfection_of_framed_regular_prism}, the filtered $B$-module $\mathcal{E}^{(1)}_{\Prism,T}(B,J)= \mathrm{H}^0_\pe(X_\eta, T\otimes \Prism^{(1)}_{(-)_{\overline{B}}/B}\{-n\}(n))$ is contained in the Nygaard filtered relative prismatic cohomology $T(n)(\Spf(S)_\eta) \otimes \Prism^{(1)}_{S_{\overline{B}}/B}\{-n\}$, and it suffices to check that the latter is separated, which was proved in \Cref{cor:Nygarrd_filtration_separated}.
\end{proof}

Assembling the previous ingredients, we obtain the following observations on the graded pieces of the filtration on $\mathcal{E}^{(1)}_{\Prism,T}(B,J)$.
\begin{proposition}
	\label{prop:weak_prismatic_F_crystal:graded_piece}
	Let $(B,J)$ be a framed regular prism, and let $n$ be an integer that satisfies \Cref{prop:weak_prismatic_F_crystal_is_calculated_at_finite_mu_power}.
	\begin{enumerate}[label=\upshape{(\roman*)}]
		\item\label{prop:weak_prismatic_F_crystal:graded_piece:lower_pieces}For $i<n$, we have $\gr^i\mathcal{E}^{(1)}_{\Prism,T}(B,J)=0$.
		\item\label{prop:weak_prismatic_F_crystal:graded_piece:graded} For $i\geq n$, there is a natural injection of $\overline{B}$-modules
		\[
		\gr^i  \bigl( \mathcal{E}^{(1)}_{\Prism,T}(B,J) \bigr) \hookrightarrow \mathrm{H}^0_\pe \bigl(X_\eta,T(n) \otimes_{\mathbb{Z}_p} \gr^i_N (\Prism^{(1)}_{(-)_{\overline{B}}/\overline{B}}\{-n\}) \bigr),
		\]
		\item\label{prop:weak_prismatic_F_crystal:graded_piece:inj_of_coker} For each $i \geq n$, there exists a natural injection of $\overline{B}$-modules
		\[
		\coker\bigl( \Fil^{i-1}\mathcal{E}^{(1)}_{\Prism,T}(B,J) \otimes_B J \hookrightarrow \Fil^i\mathcal{E}^{(1)}_{\Prism,T}(B,J) \bigr) \hookrightarrow \mathrm{H}^0_\pe (X_\eta,T(n) \otimes_{\mathbb{Z}_p} \wedge^{i-n}\mathbb{L}_{(-)_{\overline{B}}/\overline{B}}[n-i]).
		\]
		\item\label{prop:weak_prismatic_F_crystal:graded_piece:vanishing} If $T$ does not admit the bottom height, then both $\mathcal{E}^{(1)}_{\Prism,T}(B,J)$ and thus $\mathcal{E}_{\Prism,T}(B,J)$ vanish.
	\end{enumerate}
\end{proposition}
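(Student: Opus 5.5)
The plan is to work throughout with the finite twist from \Cref{lem:weak_prismatic_F-crystal_filtration_is_calculated_at_finite_mu_power}. Namely, write
\[
\Fil^i\mathcal{E}^{(1)}_{\Prism,T}(B,J)=\mathrm{H}^0_\pe\bigl(X_\eta,T\otimes\Fil^i(\mu^n\Prism^{(1)}_{(-)_{\overline{B}}/B})\bigr)
\]
for all $i$, with $n$ as in \Cref{prop:weak_prismatic_F_crystal_is_calculated_at_finite_mu_power}.

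\textbf{Part \ref{prop:weak_prismatic_F_crystal:graded_piece:lower_pieces}.} This is immediate: by \Cref{lem:weak_prismatic_F-crystal_filtration_is_calculated_at_finite_mu_power}, $\Fil^i=\mathcal{E}^{(1)}_{\Prism,T}(B,J)$ for all $i\le n$. Hence $\gr^i=0$ for $i<n$.

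\textbf{Part \ref{prop:weak_prismatic_F_crystal:graded_piece:graded}.} I would tensor the short exact sequence of sheaves
\[
0\to \Fil^{i+1}(\mu^n\Prism^{(1)})\to \Fil^i(\mu^n\Prism^{(1)})\to \gr^i(\mu^n\Prism^{(1)})\to 0
\]
with $T$ and apply left exactness of $\mathrm{H}^0_\pe$. This gives an injection of $\gr^i\mathcal{E}^{(1)}$ into $\mathrm{H}^0_\pe(X_\eta,T\otimes\gr^i(\mu^n\Prism^{(1)}_{(-)_{\overline{B}}/B}))$. It then remains to identify $\gr^i(\mu^n\Prism^{(1)})$ with $\gr^i_N(\Prism^{(1)}_{(-)_{\overline{B}}/\overline{B}}\{-n\})\otimes\mathbb{Z}_p(n)$. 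By \Cref{const:rational_period_sheaves}, $\Fil^\bullet\mu^n\Prism^{(1)}=\Fil^\bullet\Prism^{(1)}\{-n\}(n)$, where the filtration on the Breuil--Kisin twist is the one from \Cref{const:filtered_BK_twist}. Since the twist is invertible, its graded pieces are invertible over $\gr^\bullet_N$ (using \Cref{prop:Ainf_twist_difference}.\ref{prop:Ainf_twist_difference_graded}), which gives the identification up to the indexing shift.

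\textbf{Part \ref{prop:weak_prismatic_F_crystal:graded_piece:inj_of_coker}.} The Nygaard graded pieces of the relative prismatic cohomology fit into the fiber sequence
\[
\gr^{i-1}_N\otimes\overline{B}\{1\}\to \gr^i_N\to \gr^i_H\dR,
\]
as in the proof of \Cref{prop:gr_of_base_change_of_iota} and \cite[Cor.\ 5.2.8]{BL22a}. Multiplication by $J$ realizes the first map, and after twisting by $\{-n\}$ the third term is $\wedge^{i-n}\mathbb{L}_{(-)_{\overline{B}}/\overline{B}}[n-i]$. The steps are as follows.
\begin{itemize}
\item On sheaves, obtain the short exact sequence
\[
0\to J\otimes\Fil^{i-1}\to \Fil^i\to \wedge^{i-n}\mathbb{L}[n-i]\otimes(\text{twist})\to 0.
\]
Exactness on the left and the degree-zero property come from evaluating on the perfectoid cover $\Spf(S)_\eta$ with $S$ over $\overline{A}_\perf$, via \Cref{cor:conjuage_fil} and the torsionfreeness of \Cref{claim:torsionfree_of_mod_varphiinverse_I}.
\item Tensor with $T$ and take $\mathrm{H}^0$. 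This gives
\[
\Fil^i\mathcal{E}^{(1)}\big/\mathrm{H}^0(T\otimes J\Fil^{i-1}) \hookrightarrow \mathrm{H}^0(T(n)\otimes\wedge^{i-n}\mathbb{L}[n-i]).
\]
\item Identify $\mathrm{H}^0(T\otimes J\Fil^{i-1})$ with $J\otimes\Fil^{i-1}\mathcal{E}^{(1)}$. For a framed regular prism, $J=(d)$ is locally principal, so multiplication by $d$ commutes with $\mathrm{H}^0$. This identification is the main obstacle: one has to be careful with the twist bookkeeping and with the fact that $J\Fil^{i-1}\subseteq\Fil^i$ is compatible with the Frobenius description of \Cref{lem:weak_prismatic_F_crystal:Frobenius_and_filtration}.
\end{itemize}

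\textbf{Part \ref{prop:weak_prismatic_F_crystal:graded_piece:vanishing}.} If no bottom height exists, then \Cref{lem:weak_prismatic_F-crystal_filtration_is_calculated_at_finite_mu_power} applies for arbitrarily large $n$. Thus $\mathcal{E}^{(1)}=\Fil^n\mathcal{E}^{(1)}$ for all $n$, and separatedness (\Cref{lem:weak_prismatic_F_crystal:filtration_separatedness}) forces $\mathcal{E}^{(1)}=0$. Then $\mathcal{E}_{\Prism,T}(B,J)=0$ by the injectivity of the linearization (\Cref{lem:weak_prismatic_F_crystal:inj_of_twist_2}).
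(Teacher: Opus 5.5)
Your parts (i), (ii) and (iv) are fine and follow the paper's argument. For (i) you use $\Fil^i\mathcal{E}^{(1)}_{\Prism,T}(B,J)=\mathcal{E}^{(1)}_{\Prism,T}(B,J)$ for $i\le n$ from \Cref{lem:weak_prismatic_F-crystal_filtration_is_calculated_at_finite_mu_power}, which is the more direct reference.

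The gap is in (iii), and it is not the step you flagged. Since $J=(d)$ is principal for a framed regular prism and the twisted period sheaves are $J$-torsionfree, the equality $\mathrm{H}^0_\pe(X_\eta,T\otimes J\cdot\Fil^{i-1})=J\cdot\Fil^{i-1}\mathcal{E}^{(1)}_{\Prism,T}(B,J)$ is immediate. What fails is your short exact sequence $0\to J\otimes\Fil^{i-1}\to\Fil^i\to(\text{twist})\otimes\wedge^{i-n}\mathbb{L}_{(-)_{\overline{B}}/\overline{B}}[n-i]\to0$. You obtained it from the graded fiber sequence $\gr^{i-1}_N\otimes\overline{B}\{1\}\to\gr^i_N\to\gr^i_H\dR$. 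The filtered form of the relative prismatic--de Rham comparison is different: it says $\Fil^j_N\cap J\Prism^{(1)}=J\cdot\Fil^{j-1}_N$ and $\Fil^j_N/J\cdot\Fil^{j-1}_N\simeq\Fil^j_H\dR_{(-)_{\overline{B}}/\overline{B}}$. For $i\ge n$ one has $\Fil^i(\mu^n\Prism^{(1)}_{(-)_{\overline{B}}/B})=\mu^n\Fil^{i-n}_N\Prism^{(1)}_{(-)_{\overline{B}}/B}$. Hence the cokernel of $J\cdot\Fil^{i-1}\to\Fil^i$ is the twisted Hodge filtration step $\Fil^{i-n}_H\dR_{(-)_{\overline{B}}/\overline{B}}$. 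Of this, $\wedge^{i-n}\mathbb{L}_{(-)_{\overline{B}}/\overline{B}}[n-i]=\gr^{i-n}_H$ is only the top quotient. This is already visible at $i=n$: there $\Fil^{n-1}=\Fil^n=\mu^n\Prism^{(1)}_{(-)_{\overline{B}}/B}$, so the quotient is the whole twisted $\dR_{(-)_{\overline{B}}/\overline{B}}$, not $\widehat{\mathcal{O}}^+_{\overline{B}}$.

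Consequently your argument only injects the cokernel into $\mathrm{H}^0_\pe(X_\eta,T(n)\otimes_{\mathbb{Z}_p}\Fil^{i-n}_H\dR_{(-)_{\overline{B}}/\overline{B}})$. Composing with $\Fil^{i-n}_H\to\gr^{i-n}_H$ loses injectivity, and (iii) as stated cannot be proved. At $i=n$ the cokernel is all of $\mathcal{E}^{(1)}_{\Prism,T}(B,J)/J$.
\begin{itemize}
\item Take $X=\Spf(\mathcal{O}_K)$, $(B,J)$ a Breuil--Kisin prism, $T=\mathbb{Z}_p$ and $n=-1$. This $n$ is admissible by \Cref{thm:global_section_rational} and \Cref{cor:global_section_rational}. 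The cokernel is $\overline{B}=\mathcal{O}_K\neq0$, while the claimed target is $(\mathcal{O}_C(-1))^{\Gal_K}=0$.
\item Even with $n$ the bottom height, $T=\mathbb{Z}_p\oplus\mathbb{Z}_p(1)$ gives at $i=n$ a cokernel free of rank two over $\overline{B}$. But at most one of the two summands of $T(n)\otimes\widehat{\mathcal{O}}^+_{\overline{B}}$ has nonzero $\mathrm{H}^0_\pe$, so the target has rank at most one.
\end{itemize}
The paper's own proof writes the same sequence with $\gr^{i-n}_H$ in the third slot, so this flaw is inherited from the source rather than introduced by you.

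The correct statement replaces the target by $\mathrm{H}^0_\pe(X_\eta,T(n)\otimes_{\mathbb{Z}_p}\Fil^{i-n}_H\dR_{(-)_{\overline{B}}/\overline{B}})$. Your argument proves this corrected version. It is also all that \Cref{cor:weak_prismatic_F_crystal:eventual_adic} needs, using separatedness of the Hodge filtration and the vanishing of $\mathrm{H}^0$ of $T(n)\otimes\gr^j_H$ for all $j\gg0$.
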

\begin{proof}
	Part \ref{prop:weak_prismatic_F_crystal:graded_piece:lower_pieces} follows from \Cref{lem:weak_prismatic_F_crystal:Frobenius_and_filtration}.
	For \ref{prop:weak_prismatic_F_crystal:graded_piece:graded}, it follows from \Cref{lem:weak_prismatic_F-crystal_filtration_is_calculated_at_finite_mu_power} together with the left exact sequence of the global sections of the exact sequence of sheaves below
	\[
	0 \longrightarrow T(n)\otimes \Fil^{i+1}_N ( \Prism^{(1)}_{(-)_{\overline{B}}/B}\{-n\} )\longrightarrow T(n)\otimes \Fil^i_N (\Prism^{(1)}_{(-)_{\overline{B}}/B}\{-n\} ) \longrightarrow T(n)\otimes \gr^i_N (\Prism^{(1)}_{(-)_{\overline{B}}/B}\{-n\})  \longrightarrow 0.
	\]
	Similarly, Part \ref{prop:weak_prismatic_F_crystal:graded_piece:inj_of_coker} follows from \Cref{lem:weak_prismatic_F-crystal_filtration_is_calculated_at_finite_mu_power} and the following short exact sequence of sheaves
	\[
	0 \longrightarrow T(n)\otimes \Fil^{i-1}_N ( \Prism^{(1)}_{(-)_{\overline{B}}/B}\{-n\} )\otimes_B J \longrightarrow T(n)\otimes \Fil^i_N ( \Prism^{(1)}_{(-)_{\overline{B}}/B}\{-n\} ) \longrightarrow T(n)\otimes \gr^{i-n}_H \dR_{(-)_{\overline{B}}/\overline{B}} \longrightarrow 0,
	\]
	where the latter is a direct consequence of \cite[Cor.\ 5.2.8]{BL22a}.
	Finally, if $T$ does not admit the bottom height, then it means by definition that  any integer $n$ satisfies \Cref{prop:weak_prismatic_F_crystal_is_calculated_at_finite_mu_power}.
	Hence the vanishing of $\mathcal{E}^{(1)}_{\Prism,T}(B,J)$ in Part \ref{prop:weak_prismatic_F_crystal:graded_piece:vanishing} follows from Part \ref{prop:weak_prismatic_F_crystal:graded_piece:lower_pieces} together with the separatedness of the twisted Nygaard filtration (\Cref{lem:weak_prismatic_F_crystal:filtration_separatedness}).
	The vanishing of $\mathcal{E}_{\Prism,T}(B,J)$, on the other hand, is a consequence of its injection into $\mathcal{E}^{(1)}_{\Prism,T}(B,J)$ (\Cref{lem:weak_prismatic_F_crystal:inj_of_twist_2}).
\end{proof}
In particular, the filtration on the Frobenius twist $\mathcal{E}^{(1)}_{\Prism,T}$ is eventually $J$-adic.
\begin{corollary}
	\label{cor:weak_prismatic_F_crystal:eventual_adic}
	Let $n$ be an integer that satisfies \Cref{prop:weak_prismatic_F_crystal_is_calculated_at_finite_mu_power}.
	There is an integer $n'\geq n$, such that for each framed regular prism $(B,J)$, the filtered submodule $\Fil^{\geq n'} \mathcal{E}^{(1)}_{\Prism,T}(B,J)$ is the $J$-adic filtration $J^{\geq n'}\mathcal{E}^{(1)}_{\Prism,T}(B,J)$.
\end{corollary}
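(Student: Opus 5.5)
The plan is to combine the two injections in \Cref{prop:weak_prismatic_F_crystal:graded_piece}.\ref{prop:weak_prismatic_F_crystal:graded_piece:inj_of_coker} with the vanishing of zero-th pro-\'etale cohomology of high wedge powers of the cotangent complex. For each $i\geq n$ write
\[
Q^i(B,J)\colonequals \coker\bigl( \Fil^{i-1}\mathcal{E}^{(1)}_{\Prism,T}(B,J)\otimes_B J \to \Fil^i\mathcal{E}^{(1)}_{\Prism,T}(B,J)\bigr).
\]
By \Cref{prop:weak_prismatic_F_crystal:graded_piece}.\ref{prop:weak_prismatic_F_crystal:graded_piece:inj_of_coker}, $Q^i(B,J)$ injects into $\mathrm{H}^0_\pe(X_\eta, T(n)\otimes \wedge^{i-n}\mathbb{L}_{(-)_{\overline{B}}/\overline{B}}[n-i])$. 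The whole argument reduces to showing that this target vanishes once $i-n$ exceeds a constant $N$ independent of $(B,J)$. Then $\Fil^i = J\cdot\Fil^{i-1}$ for all $i> n+N$, and induction gives $\Fil^{i}=J^{i-n'}\Fil^{n'}$ for $i\geq n':=n+N$. I expect that $\Fil^{n'}$ will in turn need to be identified with $J^{n'}\mathcal{E}^{(1)}_{\Prism,T}$ up to the indexing convention, or else that the statement is read as ``eventually $J$-adic'' in this shifted sense; I would match the conventions at the end using \Cref{lem:weak_prismatic_F_crystal:Frobenius_and_filtration}.

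For the vanishing, work \'etale locally on $X$, assuming $X=\Spf(R)$ is affine. The constant $N$ may be taken as the maximum over a finite cover, since $X$ is quasi-compact. The needed quasi-compactness I would get by restricting to a quasi-compact open containing the image, using \Cref{prop:weak_prismatic_F_crystal:localization}. Since $(B,J)$ is framed regular, it is flat over $X$, so by flat base change $\wedge^{j}\mathbb{L}_{(-)_{\overline{B}}/\overline{B}}[-j]\simeq(\wedge^j\mathbb{L}_{\widehat{\mathcal{O}}^+/\mathcal{O}_X}[-j]\otimes_{R}\overline{B})^\wedge_p$. \Cref{thm:finiteness_integral_proet_coh} gives bounded torsion for the relevant $\mathrm{H}^1$, so the projection formula \Cref{thm:projection_formula}.\ref{thm:projection_formula_H1} applies and yields
\[
\mathrm{H}^0_\pe\bigl(X_\eta,T(n)\otimes\wedge^{j}\mathbb{L}_{(-)_{\overline{B}}/\overline{B}}[-j]\bigr)\simeq \bigl(\mathrm{H}^0_\pe(X_\eta,T(n)\otimes\wedge^j\mathbb{L}_{\widehat{\mathcal{O}}^+/\mathcal{O}_X}[-j])\otimes_R\overline{B}\bigr)^\wedge_p .
\]
By \Cref{thm:finiteness_integral_proet_coh}(iii), which rests on \Cref{prop:LZ_finiteness}.\ref{prop:LZ_finiteness_H0}, the $R$-module on the right vanishes for $j\gg0$. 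The bound depends only on $T(n)$ and $R$, not on $B$. This produces the uniform $N$.

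The main obstacle is uniformity together with the dependence of $n$ on $(B,J)$. The integer $n$ in \Cref{prop:weak_prismatic_F_crystal_is_calculated_at_finite_mu_power} is already uniform over $X_\Prismsp$, so $T(n)$ is fixed, and the vanishing threshold comes from \Cref{thm:LZ_finiteness}, which is uniform in the twist only in the form ``$|j|\gg0$''. For a fixed $n$ this is harmless. I would also double-check that the injection in \Cref{prop:weak_prismatic_F_crystal:graded_piece}.\ref{prop:weak_prismatic_F_crystal:graded_piece:inj_of_coker} is compatible with passing to $\Fil^{i-1}\otimes J$ inside $\Fil^i$; this uses $J$-torsionfreeness from \Cref{lem:weak_prismatic_F_crystal:regularity}.
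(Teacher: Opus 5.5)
Your proposal follows the paper's proof. It reduces to $\Fil^{i-1}\mathcal{E}^{(1)}_{\Prism,T}(B,J)\otimes_B J\xrightarrow{\sim}\Fil^{i}\mathcal{E}^{(1)}_{\Prism,T}(B,J)$ for $i\gg0$, and obtains this from \Cref{prop:weak_prismatic_F_crystal:graded_piece}.\ref{prop:weak_prismatic_F_crystal:graded_piece:inj_of_coker}, the vanishing in \Cref{prop:LZ_finiteness} and the projection formula \Cref{thm:projection_formula}; your bookkeeping for uniformity in $(B,J)$ is detail the paper leaves implicit.

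On your indexing worry: what this argument proves is only the shifted form $\Fil^{i}=J^{i-n'}\Fil^{n'}$ for $i\geq n'$, which is also all the paper's proof establishes. Drop the plan to identify $\Fil^{n'}$ with $J^{n'}\mathcal{E}^{(1)}_{\Prism,T}(B,J)$, because it is false in general. Already when $\mathcal{E}_{\Prism,T}$ is a Breuil--Kisin twist $\mathcal{O}_\Prism\{m\}$ with $m\neq 0$, \Cref{lem:weak_prismatic_F_crystal:Frobenius_and_filtration} gives $\Fil^i=J^{i+m}\mathcal{E}^{(1)}_{\Prism,T}(B,J)$ for $i\geq -m$.
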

\begin{proof}
	It suffices to show that the inclusion map $\Fil^{i-1}\mathcal{E}^{(1)}_{\Prism,T}(B,J) \otimes_B J \hookrightarrow \Fil^i\mathcal{E}^{(1)}_{\Prism,T}(B,J)$ is an isomorphism for $i\gg 0$, which follows from \Cref{prop:weak_prismatic_F_crystal:graded_piece}.\ref{prop:weak_prismatic_F_crystal:graded_piece:inj_of_coker}, Liu--Zhu's finiteness and vanishing result in \Cref{prop:LZ_finiteness}, together with the projection formula in \Cref{thm:projection_formula}.
\end{proof}
Recall as in \Cref{sub:finiteness} that the pro-\'etale sheaf $\wedge^{i-n}\mathbb{L}_{\widehat{\mathcal{O}}/\mathcal{O}_{X_\eta}}(n)[n-i]$ is contained in the period sheaf $\mathcal{O}\mathbb{C}(i)=\gr^i\OBdR$.
So the $\overline{B}[1/p]$-module $\mathrm{H}^0_\pe (X_\eta,T(n) \otimes_{\mathbb{Z}_p} \wedge^{i-n}\mathbb{L}_{(-)_{\overline{B}}/\overline{B}}[n-i])[1/p]$ is a submodule in $\gr^i\mathrm{D}_\HT(T)|_{\Spf(\overline{B})_\eta}$.
Moreover, recall from the work of Shimizu \cite{Shi18} that there is a finite set of elements $\wt(T)\subset \overline{K}$, called \emph{generalized Hodge--Tate weights}, associated to the local system $T\in \Loc_{\mathbb{Z}_p}(X_\eta)$.
By the locally freeness of the generalized eigenspaces of Sen operator in \cite[Proof of Prop.\ 5.3]{Shi18}, we know $\gr^i\mathrm{D}_\HT(T)|_{\Spf(\overline{B})_\eta}=0$ if and only if $(-i)\notin \wt(T)$.
In particular, we get the following vanishing result.
\begin{corollary}
	\label{cor:weak_prismatic_F_crystal:vanishing}
	Assume  $\wt(T)\cap \mathbb{Z}=\emptyset$.
	The $B$-module $\mathcal{E}_{\Prism,T}(B,J)$ vanishes for any framed regular prism $(B,J)$ over $X$.
\end{corollary}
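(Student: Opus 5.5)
The plan is to exploit the Nygaard filtration on the Frobenius twist together with the injectivity of the linearization. By \Cref{lem:weak_prismatic_F_crystal:inj_of_twist_2}, $\mathcal{E}_{\Prism,T}(B,J)$ injects into $\mathcal{E}^{(1)}_{\Prism,T}(B,J)$. It therefore suffices to show that $\mathcal{E}^{(1)}_{\Prism,T}(B,J)=0$. Since the twisted Nygaard filtration is separated (\Cref{lem:weak_prismatic_F_crystal:filtration_separatedness}), this in turn reduces to two claims: every graded piece $\gr^i\mathcal{E}^{(1)}_{\Prism,T}(B,J)$ vanishes, and the filtration is exhaustive. Exhaustiveness is automatic, because $\Fil^i=\mathcal{E}^{(1)}$ for $i\leq n$ by \Cref{lem:weak_prismatic_F-crystal_filtration_is_calculated_at_finite_mu_power}.

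First I would dispose of two easy cases. If $T$ has no bottom height, the claim is \Cref{prop:weak_prismatic_F_crystal:graded_piece}.\ref{prop:weak_prismatic_F_crystal:graded_piece:vanishing}. Otherwise, fix an integer $n$ as in \Cref{prop:weak_prismatic_F_crystal_is_calculated_at_finite_mu_power}. For $i<n$, the graded pieces vanish by \Cref{prop:weak_prismatic_F_crystal:graded_piece}.\ref{prop:weak_prismatic_F_crystal:graded_piece:lower_pieces}.

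The substantive case is $i\geq n$. Here I would work with the cokernels $Q_i\colonequals \coker(\Fil^{i-1}\otimes_B J\to \Fil^i)$ rather than with $\gr^i$ directly.
\begin{enumerate}
\item By \Cref{prop:weak_prismatic_F_crystal:graded_piece}.\ref{prop:weak_prismatic_F_crystal:graded_piece:inj_of_coker}, $Q_i$ injects into $\mathrm{H}^0_\pe(X_\eta,T(n)\otimes\wedge^{i-n}\mathbb{L}_{(-)_{\overline{B}}/\overline{B}}[n-i])$.
\item This target is $p$-torsionfree, being a subgroup of sections of a $p$-torsionfree sheaf (\Cref{cor:conjuage_fil}). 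So it embeds into its rationalization.
\item By the discussion preceding the statement, the rationalization embeds into $\gr^i\mathrm{D}_\HT(T)|_{\Spf(\overline{B})_\eta}$. To pass from $X$ to $\overline{B}$ I would use the projection formula \Cref{thm:projection_formula} together with the flatness of $\overline{B}$ over $X$.
\item Since $-i\notin\wt(T)$ by the hypothesis $\wt(T)\cap\mathbb{Z}=\emptyset$, Shimizu's result gives $\gr^i\mathrm{D}_\HT(T)|_{\Spf(\overline{B})_\eta}=0$. Hence $Q_i=0$ for all $i\geq n$.
\end{enumerate}

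Vanishing of all $Q_i$ means $\Fil^i=J\cdot\Fil^{i-1}$ for every $i\geq n$. Iterating gives $\Fil^i=J^{i-n}\mathcal{E}^{(1)}$, so the filtration is $J$-adic from degree $n$ on. Then $\gr^n=\mathcal{E}^{(1)}/J\mathcal{E}^{(1)}$, while $Q_n=0$ says $\Fil^n=J\Fil^{n-1}=J\mathcal{E}^{(1)}$. Combined with $\Fil^n=\mathcal{E}^{(1)}$, this yields $\mathcal{E}^{(1)}=J\mathcal{E}^{(1)}$. Since $\mathcal{E}^{(1)}$ is finitely generated (\Cref{prop:weak_prismatic_F_crystal:finiteness}) and $J$-adically complete, Nakayama's lemma gives $\mathcal{E}^{(1)}=0$. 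Separatedness gives the same conclusion, since $\mathcal{E}^{(1)}=\bigcap_i J^i\mathcal{E}^{(1)}$.

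The main obstacle is the comparison in step 3. One must check that the embedding into $\mathcal{O}\mathbb{C}(i)$ survives the base change to $\overline{B}$, and that Shimizu's local freeness of the generalized eigenspaces of the Sen operator indeed kills the integral-weight pieces after this base change. This is where I would concentrate the care.
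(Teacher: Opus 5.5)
Your argument is correct and is essentially the paper's: the paper deduces the corollary from the injection of \Cref{prop:weak_prismatic_F_crystal:graded_piece}.\ref{prop:weak_prismatic_F_crystal:graded_piece:inj_of_coker} into $\mathrm{H}^0_\pe(X_\eta,T(n)\otimes\wedge^{i-n}\mathbb{L}_{(-)_{\overline{B}}/\overline{B}}[n-i])$. Rationally this target sits inside $\gr^i\mathrm{D}_\HT(T)|_{\Spf(\overline{B})_\eta}$, which vanishes by Shimizu's result when $-i\notin\wt(T)$. Your closing Nakayama step, which in fact only needs $Q_n=0$, spells out the final deduction the paper leaves implicit, in the same way it concludes \Cref{prop:weak_prismatic_F_crystal:graded_piece}.\ref{prop:weak_prismatic_F_crystal:graded_piece:vanishing}.
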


\subsection{Crystalline local systems}
\label{sub:crys_loc_sys}
In this section, we consider the special case when $T$ is a \emph{(pointwise) crystalline} local system.
We prove that under the assumption, the associated canonical weak prismatic $F$-crystal is in fact a reflexive prismatic $F$-crystal whose \'etale realization is $T$ and in addition satisfies the purity result.
In particular, the notion of crystallinity with respect to a regular integral model is as good as that for smooth integral models.

We start by recalling the Strong \'Etale Comparison Theorem of \cite[Thm.\ 1.21]{GR24}, which gives an enhanced association between prismatic $F$-crystals and \'etale local systems.
\begin{theorem}[Guo--Reinecke]
	\label{thm:GR_strong_'etale}
	Let $X$ be a regular $p$-adic formal scheme over $\mathcal{O}_K$, let $\mathcal{E}$ be a reflexive prismatic $F$-crystal over $X_\Prism$, and let $T$ be the associated \'etale local system over $X_\eta$.
	There is a canonical Frobenius equivariant isomorphism of pro-\'etale sheaves over $X_\eta$
	\[
	T\otimes_{\mathbb{Z}_p} \Ainf[1/\mu] \simeq \Ainf(\mathcal{E})[1/\mu],
	\]
	satisfying the following properties:
	\begin{itemize}
		\item The tautological map induces a natural short exact sequence of pro-\'etale sheaves
		\[
		0 \longrightarrow T \longrightarrow \Ainf(\mathcal{E})[1/\mu] \xrightarrow{\varphi_{\mathcal{E}}} \Ainf(\mathcal{E})[1/\varphi(\mu)] \longrightarrow 0.
		\]
		\item The base change of the isomorphism along $\Ainf[1/\mu]\to \Ainf \langle 1/I_{\Ainf} \rangle=W(\widehat{\mathcal{O}}^\flat)$ recovers the $p$-adic Riemann--Hilbert equivalence of Laurent $F$-crystals (\cite[Cor.\ 3.7]{BS23}).
		\item It is compatible with proper smooth direct image: $g:X\to Z$ be a proper smooth morphism of regular $p$-adic formal schemes over $\mathcal{O}_K$.
		There is a canonical Frobenius equivariant isomorphism of pro-\'etale complexes over $Z_\eta$
		\[
		(R g_{\eta,*} T)\otimes_{\mathbb{Z}_p} \Ainf[1/\mu] \simeq \Ainf(R g_{\Prism,*} \mathcal{E})[1/\mu].
		\]
	\end{itemize}
\end{theorem}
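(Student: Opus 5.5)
This is a result quoted from \cite[Thm.\ 1.21]{GR24}, so the plan is to reconstruct the argument of that paper from the ingredients available here. We first reduce to the local situation. All three assertions are local on $X$ in the \'etale topology, and reflexivity of $\mathcal{E}$ is defined by evaluating on framed regular prisms. So we may assume $X=\Spf(R)$ is affine and admits a framed regular prism $(A,I)$ with $\Sigma\subset A^\times$. The perfectoid $S=\overline{A}_\perf$ gives, by \Cref{lem:perfection_of_framed_regular_prism}, a pro-\'etale cover of $X_\eta$. For every affinoid perfectoid $U=\Spa(S'[1/p],S')$ over this cover, $\Ainf(\mathcal{E})(U)$ is the value of $\mathcal{E}$ on the perfect prism $(\rAinf(S'),\ker\widetilde\theta)$. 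By crystal base change this equals the $(p,I)$-complete base change of the reflexive $A$-module $\mathcal{E}(A,I)$ along $A\to A_\perf\to\rAinf(S')$.

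Next we produce the map. The \'etale realization (\cite[Cor.\ 3.7]{BS23}) gives a Frobenius-equivariant isomorphism $T\otimes\Ainf\langle 1/I\rangle\simeq \Ainf(\mathcal{E})\langle 1/I\rangle$. Taking $\varphi$-invariants yields the inclusion $T\hookrightarrow \Ainf(\mathcal{E})\langle1/I\rangle^{\varphi=1}$. The key step is to show that $T$ actually lands in $\Ainf(\mathcal{E})[1/\mu]$, with uniformly bounded $\mu$-power. We would argue this on the finite-height part. Since $\mathcal{E}(A,I)$ is reflexive, it is analytically locally free, so after inverting $p$ or $I$ it is a vector bundle. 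After passing to $A_\perf$, the Frobenius is an isomorphism up to $I^{[a,b]}$. We then run the classical argument of \cite[Lem.\ 4.26]{BMS1}: $\mu$-adic control of $\varphi$-invariants in $\rAinf(S')$-lattices of finite height. This gives $T\otimes\Ainf\subseteq \mu^{-N}\Ainf(\mathcal{E})$ and conversely $\Ainf(\mathcal{E})\subseteq \mu^{-N}T\otimes\Ainf$.

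Reflexivity is exactly what allows the extension over $V(p,I)$. Away from $V(p,I)$ the claim follows from Beauville--Laszlo gluing of the vector bundle. A section defined on the punctured spectrum extends, because for a reflexive module over the regular ring $A$ (and its flat base changes) we have $M=M[1/p]\cap M[1/I]$ by \Cref{lem:reflexive_is_saturated} and \Cref{cor:reflexive_in_ca}. We expect this interplay between the $\mu$-bound and the saturation at the codimension-two locus $V(p,I)$ to be the main obstacle. Concretely, one must check that the intersection $(\cdot)[1/p]\cap(\cdot)[1/I]$ commutes with the non-flat base change $A\to\rAinf(S')$. For this we would use the completely flat coproduct descriptions of \Cref{cor:coproduct:perfect_with_framed} together with \Cref{lem:inj_and_completely_proj_mod}.

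With the isomorphism $T\otimes\Ainf[1/\mu]\simeq\Ainf(\mathcal{E})[1/\mu]$ in hand, the exact sequence follows by tensoring the Artin--Schreier type sequence $0\to\mathbb{Z}_p\to\Ainf[1/\mu]\xrightarrow{\varphi-1}\Ainf[1/\varphi(\mu)]\to0$ with $T$. This sequence is exact because it is exact after base change to $W(\widehat{\mathcal{O}}^\flat)$, and the $\mu$-localizations are handled as in \cite{BMS1}. Compatibility with \cite[Cor.\ 3.7]{BS23} holds by construction. Finally, for proper smooth $g$ we would combine the relative prismatic base change and proper base change for $Rg_{\Prism,*}$ with the primitive comparison $Rg_{\eta,*}T\otimes\Ainf\simeq Rg_*(T\otimes\Ainf)$ up to almost zero. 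Inverting $\mu$ kills the almost-zero and bounded $\mu$-torsion defects, as in \cite[Thm.\ 1.8]{BMS1}.
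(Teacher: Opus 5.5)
There is a genuine gap, in the core isomorphism and again in the direct-image part. Your localization to affinoid perfectoids over $\Spf(\overline{A}_\perf)_\eta$ matches the paper.

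\textbf{The core isomorphism.} You propose to obtain $T\otimes\Ainf\subseteq\mu^{-N}\Ainf(\mathcal{E})$, and its converse, by running \cite[Lem.\ 4.26]{BMS1} over $\rAinf(S')$. That lemma concerns finite free Breuil--Kisin--Fargues modules over $\rAinf(\mathcal{O}_C)$ and uses the special structure of that ring. Here $M=\mathcal{E}(A,I)\otimes_A\rAinf(S')$ is only finitely presented and perfect, over the $\rAinf$ of a positive-dimensional perfectoid ring, and nothing in your sketch replaces the input specific to $\mathcal{O}_C$.

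The missing idea is a descent. The paper notes that $M$ is finitely presented, perfect, and finite projective after inverting $p$. Then, following \cite[Thm.\ 9.1]{GR24}, it uses arc-descent for perfectoid rings to reduce to $S'=\prod_j V_j$, a product of perfectoid valuation rings over $\mathcal{O}_{\overline{K}}$. There the finite-height analysis is available as \cite[Prop.\ 9.12]{GR24}.

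The obstacle you single out is not actually present. Since $\tilde{\theta}(\mu)=\zeta_p-1$ is nilpotent in $\rAinf(S')/(p,I)=S'/p$, we have $V(p,I)\subseteq V(\mu)$ in $\Spec\rAinf(S')$, so inverting $\mu$ already removes the non-analytic locus. Consequently the saturation $M=M[1/p]\cap M[1/I]$, and whether it survives the non-flat base change $A\to\rAinf(S')$, never enters. Reflexivity is used only to know that $M$ is a vector bundle away from $V(p,I)$.

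\textbf{The direct image.} Primitive comparison only moves $\Ainf$ inside $Rg_{\eta,*}$. You would still need to relate $Rg_{\eta,*}\Ainf(\mathcal{E})$ to $\Ainf(Rg_{\Prism,*}\mathcal{E})$ up to bounded $\mu$-torsion. That is the actual content of an \'etale comparison with coefficients; it does not follow from relative prismatic base change and proper base change.

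The paper instead reruns the argument of \cite[Thm.\ 9.1]{GR24}. Its only smoothness-dependent inputs are:
\begin{enumerate}
\item that $R^ig_{\Prism,*}\mathcal{E}$ is an $F$-crystal in perfect complexes, which holds by \cite[Cor.\ 5.16, Rmk.\ 8.11]{GR24};
\item that $R^ig_{\Prism,*}\mathcal{E}[1/p]$ is a vector bundle on $(Z_\Prism,\mathcal{O}_\Prism[1/p])$.
\end{enumerate}
The second input is exactly where a merely regular $Z$ causes trouble, and your sketch has no counterpart to it. The paper proves it with \Cref{thm:Frob_mod_is_analyticall_loc_free}: it suffices to check local freeness after base change along surjections onto two-dimensional transversal regular prisms $(A',I')$ with $\overline{A'}=\mathcal{O}_{K'}$. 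The fibre is then handled by proper base change and \cite[Thm.\ 1.18]{GR24}, which give a coherent $F$-crystal over $\mathcal{O}_{K'}$. Finally, the $p$-inverted local freeness of Breuil--Kisin modules (\Cref{claim:coherent_prismatic_F_crystal_over_O_K}) concludes.
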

Here we recall from \cite[\S\ 9.1]{GR24} that $\Ainf(\mathcal{E})[1/\mu]$ is the canonical $\Ainf[1/\mu]$-linear pro-\'etale sheaf associated to the prismatic crystal $\mathcal{E}$, defined by sending each affinoid perfectoid object $\Spa(S[1/p],S)\in (X_{\eta, K_\infty})_\pe$ to the module $\mathcal{E}(\rAinf(S),I_{\rAinf(S)})[1/\mu]$.
\begin{proof}
For the first two properties,
	it suffices to show that for a covering family of affinoid perfectoid spaces $\Spa(S[1/p],S)\in (X_{\eta, K_\infty})_\pe$, there is a natural isomorphism $T\otimes_{\mathbb{Z}_p} \rAinf(S)[1/\mu] \simeq \mathcal{E}(\rAinf(S),I_{\rAinf(S)})[1/\mu]$ that satisfies the short exact sequence and recovers the $p$-adic Riemann--Hilbert equivalence of Bhatt--Scholze.
	In our case, we check those affinoid perfectoid spaces that admit a map from $\Spf(\overline{A}_\perf)_\eta$, where $(A,I)$ is some framed regular prism (the topos of such affinoid perfectoid spaces coincide with that of $X_{\eta,\pe}$ by \Cref{lem:perfection_of_framed_regular_prism}.)
	Under the assumption, each $\rAinf(S)$-module $\mathcal{E}(\rAinf(S),I_{\rAinf(S)})$, which is isomorphic to the tensor product $\mathcal{E}(A,I)\otimes_A \rAinf(S)$, is finitely presented, perfect, and becomes finite projective after inverting $p$.
	The rest of arguments is then identical to that of \cite[Thm.\ 9.1]{GR24}: by applying the arc descent at perfectoid rings, it suffices to produce a canonical isomorphism after replacing $S$ with a product of perfectoid valuation rings $\prod V_j$ over $\mathcal{O}_{\overline{K}}$.
	Then the claim follows from \cite[Prop.\ 9.12]{GR24}.
	
	For the last property, it suffices to notice that the argument of \cite[Thm.\ 9.1]{GR24} applies to the current setting as well: in \textit{loc. cit.}, the only inputs that made use of the smoothness assumption of $Z$ are the claims that $R^i g_{\Prism,*} \mathcal{E}$ is a prismatic $F$-crystal in perfect complexes on $Z_\Prism$, and $R^i g_{\Prism,*} \mathcal{E}[1/p]$ is a vector bundle over $(Z_\Prism, \mathcal{O}_{\Prism}[1/p])$.
	To see the former, the perfectness was proved in \cite[Cor.\ 5.16]{GR24}, and the Frobenius isogeny follows from either \cite[Rmk.\ 8.11]{GR24}, or the identical arguments of \cite[Thm.\ 8.1]{GR24} with the Breuil--Kisin prism replaced by the framed regular prism.
	To see the latter when $Z$ is a regular $p$-adic formal scheme, since $Z_\Prism$ is covered by framed regular prisms $(A,I)$, it suffices to show that for $M\colonequals (R^i g_{\Prism,*} \mathcal{E})(A,I)$, the localization $M[1/p]$ is finite projective over $A[1/p]$.
	By \Cref{thm:Frob_mod_is_analyticall_loc_free}, it suffices to check that for any given surjection $(A,I)\to (A',I')$ onto a two dimensional transversal regular prism, the base change $M\otimes_A A'[1/p]$ is finite projective over $A'[1/p]$.
	We let $\mathcal{O}_{K'}=\overline{A'}$ be the reduction, which under the assumption is a finite extension of $\mathcal{O}_K$, and let $z:\Spf(\mathcal{O}_{K'})\to Z$ be the structural morphism.
	Then the base change $M\otimes_A A'$ is naturally isomorphic to the evaluation of $\mathcal{F}\colonequals z_\Prism^*R^i g_{\Prism,*}\mathcal{E}=R^i (g_z)_{\Prism,*} (\mathcal{E}|_{X_z})$ at the prism $(A',I')\in (\mathcal{O}_{K'})_\Prism$, where $g_z: X_z\to \{z\}$ is the fiber of $g$ at $z\in Z$ and is proper smooth.
	By \cite[Thm.\ 1.18]{GR24}, $\mathcal{F}$ is a coherent prismatic $F$-crystal over $(\mathcal{O}_{K'})_\Prism$.
	In addition, by \Cref{claim:coherent_prismatic_F_crystal_over_O_K}, we know $\mathcal{F}[1/p]$ is locally free over $((\mathcal{O}_{K'})_\Prism, \mathcal{O}_\Prism[1/p])$.
	Hence the $A'[1/p]$-module $M\otimes_A A'[1/p] = \mathcal{F}(A',I')[1/p]$ is finite projective, which finishes the proof.
\end{proof}

The next statement analyzes the weak prismatic $F$-crystal $\mathcal{E}_{\Prism,T}$ assuming that the local system $T$ arises from a prismatic $F$-crystal.
\begin{proposition}
	\label{prop:uniqueness_of_the_associated_prismatic_F_crystal}
	Let $X$ be a regular $p$-adic formal scheme over $\mathcal{O}_K$, let $T\in \Loc_{\mathbb{Z}_p}(X_\eta)$, and let $\mathcal{E}$ be a reflexive prismatic $F$-crystal whose \'etale realization $\mathrm{T}_\et(\mathcal{E})$ admits an isomorphism $\alpha:\mathrm{T}_\et(\mathcal{E})\xrightarrow{\sim}T$ (\cite[\S 3]{BS23}).
	\begin{enumerate}[label=\upshape{(\roman*)}]
		\item There is an isomorphism of (weak) prismatic $F$-crystals $\mathcal{E}_{\Prism,T}\xrightarrow{\sim} \mathcal{E}$ induced from $\alpha$.
		\item The tautological linearization map $\mathcal{E}_{\Prism,T}\otimes_B \Prism_{(-)_{\overline{B}}/B}[1/\mu]\to T\otimes \Prism_{(-)_{\overline{B}}/B}[1/\mu]$ is an isomorphism of pro-\'etale sheaves.
	\end{enumerate}
\end{proposition}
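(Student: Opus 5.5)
The plan is to compute $\mathcal{E}_{\Prism,T}(B,J)$ directly via the Strong \'Etale Comparison (\Cref{thm:GR_strong_'etale}) and then reduce to the global section computation of \Cref{cor:global_section_rational}. Via $\alpha$, rewrite $T\otimes_{\mathbb{Z}_p}\Ainf[1/\mu]\simeq \Ainf(\mathcal{E})[1/\mu]$. The next step is to base change this isomorphism from $\Ainf$ to the relative period sheaf. For $(B,J)\in X_\Prismsp$ and a large affinoid perfectoid $U=\Spa(S[1/p],S)$ lying over some $\Spf(\overline{A}_\perf)_\eta$, the prism $\Prism_{S_{\overline{B}}/B}$ is the coproduct $(\rAinf(S),\ker\widetilde\theta)\coprod(B,J)$ by \Cref{cor:coproduct:perfect_with_framed}. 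Since $\mathcal{E}$ is a crystal, evaluating it there gives two descriptions:
\[
\mathcal{E}(\rAinf(S))\otimes_{\rAinf(S)}\Prism_{S_{\overline{B}}/B}\;\simeq\;\mathcal{E}(\Prism_{S_{\overline{B}}/B})\;\simeq\;\mathcal{E}(B,J)\otimes_B \Prism_{S_{\overline{B}}/B},
\]
with completed tensor products. These are honest modules because $\mathcal{E}(B,J)$ is reflexive, hence finitely presented over the noetherian factors, and the coproduct is completely flat over both factors. Inverting $\mu$ and combining with the GR isomorphism produces a Frobenius-equivariant isomorphism of sheaves on $X_{\eta,\pe}$
\[
T\otimes_{\mathbb{Z}_p}\Prism_{(-)_{\overline{B}}/B}[1/\mu]\;\simeq\;\mathcal{E}(B,J)\otimes_B\Prism_{(-)_{\overline{B}}/B}[1/\mu],
\]
which is claim (ii) once (i) identifies $\mathcal{E}(B,J)$ with $\mathcal{E}_{\Prism,T}(B,J)$.

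For (i), take global sections of this isomorphism. On the left we get $\mathcal{E}_{\Prism,T}(B,J)$ by definition. On the right, $\mathcal{E}(B,J)$ has trivial Galois action, so I would use the projection formula \Cref{thm:projection_formula}, extended to the colimit over $\mu^{-n}$, to get $\mathcal{E}(B,J)\otimes_B \mathrm{H}^0_\pe(X_\eta,\Prism_{(-)_{\overline{B}}/B}[1/\mu])$, which equals $\mathcal{E}(B,J)$ by \Cref{cor:global_section_rational}. The projection formula needs flatness or complete projectivity of $M=\mathcal{E}(B,J)$, and a reflexive module is not flat. I would try two workarounds:
\begin{itemize}
\item embed $\mathcal{E}(B,J)\hookrightarrow\mathcal{E}(B,J)[1/p]\cap\mathcal{E}(B,J)[1/J]$ and use left exactness of $\mathrm{H}^0$ together with the saturation of \Cref{lem:reflexive_is_saturated};
\item present $\mathcal{E}(B,J)$ as the kernel of a map $F_0\to F_1$ of finite free modules, using reflexivity, and apply left exactness to $0\to\mathcal{E}(B,J)\to F_0\to F_1$.
\end{itemize}
In the second approach, the global sections of $F_i\otimes\Prism[1/\mu]$ are $F_i$, and the kernel is recovered, provided $F_0\otimes\Prism[1/\mu]\to F_1\otimes\Prism[1/\mu]$ has kernel $\mathcal{E}(B,J)\otimes\Prism[1/\mu]$. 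That holds by flatness of the coproduct over $B$ (\Cref{thm:coproduct_in_general}). This kernel argument seems cleanest, and I expect it to be the main obstacle, alongside making the projection formula compatible with the colimit over $n$ and with hypercomplete sheafification.

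It then remains to check compatibilities. Frobenius compatibility is inherited from the GR isomorphism and the Frobenius on the period sheaves. Functoriality in $(B,J)\in X_\Prismsp$ follows from the crystal property of $\mathcal{E}$ and the base change formula \Cref{rmk:base_change_presheaf}. Together these show that the identification of (i) is an isomorphism of weak prismatic $F$-crystals via \Cref{lem:weak_vs_non-weak}.\ref{lem:weak_vs_non-weak_fully_faithful}. Finally, I would check that the isomorphism is independent of the auxiliary choices of $S$, which follows from the canonicity of \Cref{thm:GR_strong_'etale}.
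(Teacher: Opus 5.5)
Your proposal is correct, and its architecture matches the paper's. The paper also builds $\gamma\colon T\otimes_{\mathbb{Z}_p}\Prism_{(-)_{\overline{B}}/B}[1/\mu]\simeq \mathcal{E}(B,J)\otimes_B\Prism_{(-)_{\overline{B}}/B}[1/\mu]$ in two steps: it base changes the Strong \'Etale Comparison along $\Ainf\to\Prism_{(-)_{\overline{B}}/B}$, then applies the crystal property along $(\rAinf(S),\ker\widetilde\theta)\to(\Prism_{S_{\overline{B}}/B},J)\leftarrow(B,J)$. It likewise reads off (ii) from the resulting commutative square.

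The routes differ only in the key step $\mathrm{H}^0_\pe(X_\eta,\mathcal{E}(B,J)\otimes_B\Prism_{(-)_{\overline{B}}/B}[1/\mu])=\mathcal{E}(B,J)$.

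\emph{The paper's route.} It takes your first workaround. Since $\mathcal{E}(B,J)[1/p]$ and $\mathcal{E}(B,J)[1/J]$ are finite projective, hence summands of free modules, $\gamma$ together with \Cref{cor:global_section_rational} gives $\mathcal{E}_{\Prism,T}(B,J)[1/p]\simeq\mathcal{E}(B,J)[1/p]$ and $\mathcal{E}_{\Prism,T}(B,J)[1/J]\simeq\mathcal{E}(B,J)[1/J]$. These are glued using three inputs:
\begin{itemize}
\item the already established $(J,p)$-regularity of $\mathcal{E}_{\Prism,T}(B,J)$ (\Cref{lem:weak_prismatic_F_crystal:regularity});
\item \Cref{cor:Koszul_reg_imply_sat};
\item the saturation $\mathcal{E}(B,J)=\mathcal{E}(B,J)[1/p]\cap\mathcal{E}(B,J)[1/J]$.
\end{itemize}
This route needs no flatness of the period ring over $B$, because only finite projective coefficients are ever tensored.

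\emph{Your route.} The kernel presentation $0\to\mathcal{E}(B,J)\to F_0\to F_1$ avoids the regularity lemma, the passage to $p$- and $J$-localizations, and the projection formula entirely. Its cost is that you need genuine flatness to keep the kernel after tensoring, whereas \Cref{thm:coproduct_in_general} only gives $(p,J)$-complete flatness. You can close this by working over a framed regular factor $(A,I)$ of $(B,J)$:
\begin{itemize}
\item present $\mathcal{E}(A,I)=\mathcal{E}(A,I)^{\vee\vee}$ as $\ker(F_0\to F_1)$ over the regular noetherian ring $A$, using \Cref{cor:reflexive_in_ca};
\item note that $\Prism_{S_{\overline{B}}/B}$ is classically complete and completely flat over the noetherian ring $A$, hence flat by the criterion used in the proof of \Cref{prop:regular prism Frobenius};
\item use $\mathcal{E}(B,J)\otimes_B\Prism_{S_{\overline{B}}/B}=\mathcal{E}(A,I)\otimes_A\Prism_{S_{\overline{B}}/B}$.
\end{itemize}
With that in place, left exactness of $\mathrm{H}^0$ and \Cref{cor:global_section_rational} applied to $F_0,F_1$ finish the argument as you describe.
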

\begin{proof}
	By \Cref{thm:GR_strong_'etale}, the map $\alpha$ induces an isomorphism of pro-\'etale sheaves $\alpha':T\otimes_{\mathbb{Z}_p} \Ainf[1/\mu] \simeq \Ainf(\mathcal{E})[1/\mu]$.
	In particular, for each prism $(B,J)\in X_\Prism$, by taking the base change along the pro-\'etale sheaves $\Ainf \to \Prism_{(-)_{\overline{B}}/B}$, we obtain an induced Frobenius equivariant isomorphism
	\[
	\beta_1 \colon T\otimes_{\mathbb{Z}_p} \Prism_{(-)_{\overline{B}}/B} [1/\mu] \simeq \Ainf(\mathcal{E}) \otimes_{\Ainf} \Prism_{(-)_{\overline{B}}/B}[1/\mu].
	\]
	Moreover, for each perfectoid ring $S$ over $X$, by applying the crystal structure of $\mathcal{E}$ at the following diagram of prisms
	\[
	(\Ainf(S),I_{\Ainf(S)}) \longrightarrow (\Prism_{S_{\overline{B}}/B} ,J\Prism_{(-)_{\overline{B}}/B}) \longleftarrow (B,J),
	\]
	we obtain a natural isomorphism to the constant pro-\'etale sheaf 
	\[
	\beta_2 \colon \Ainf(\mathcal{E}) \otimes_{\Ainf} \Prism_{(-)_{\overline{B}}/B}[1/\mu] \simeq \mathcal{E}(B,J) \otimes_B \Prism_{(-)_{\overline{B}}/B}[1/\mu].
	\]
	So by combining $\beta_1$ and $\beta_2$, we get an isomorphism of pro-\'etale sheaves
	\[
	\gamma \colon T \otimes_{\mathbb{Z}_p}\Prism_{(-)_{\overline{B}}/B} [1/\mu] \simeq  \mathcal{E}(B,J) \otimes_B \Prism_{(-)_{\overline{B}}/B}[1/\mu].
	\]
	
	Now for each prism $(B,J)\in X_\Prismsp$, by \Cref{const:weak prismatic-F-crystal}, the isomorphism $\gamma$ induces the formula
	\begin{equation}
		\label{eq:E_Prism_T_vs_E}
		\mathcal{E}_{\Prism,T}(B,J) \colonequals \mathrm{H}^0_\pe(X_\eta, T \otimes_{\mathbb{Z}_p}\Prism_{(-)_{\overline{B}}/B} [1/\mu]) \xrightarrow[\sim]{\mathrm{H}^0(\gamma)}   \mathrm{H}^0_\pe(X_\eta, \mathcal{E}(B,J) \otimes_B \Prism_{(-)_{\overline{B}}/B}[1/\mu]).
	\end{equation}
	We then claim that the right hand side above is equal to the $B$-module $\mathcal{E}(B,J)$ through the tautological map $\mathcal{E}(B,J)\to \mathcal{E}(B,J)\otimes_B \Prism_{(-)_{\overline{B}}/B}[1/\mu]$: since $\mathcal{E}$ is reflextive, we know $\mathcal{E}$ is locally free away from the locus $V(p,J)$ and $\mathcal{E}(B,J)=\mathcal{E}(B,J)[1/p]\cap \mathcal{E}(B,J)[1/J]$.
	In particular, by using the finite projectivity of $\mathcal{E}(B,J)[1/p]$ and $\mathcal{E}(B,J)[1/J]$, the map $\gamma$ induces isomorphisms
	\[
	\mathcal{E}_{\Prism,T}(B,J)[1/p] \simeq \mathcal{E}(B,J)[1/p],\quad \mathcal{E}_{\Prism,T}(B,J)[1/J] \simeq \mathcal{E}(B,J)[1/J].
	\]
	Hence the claim follows from \Cref{lem:weak_prismatic_F_crystal:regularity} and \Cref{cor:Koszul_reg_imply_sat}.
	As a consequence, the composition of the maps in (\ref{eq:E_Prism_T_vs_E}) and (the inverse of) the tautological map is a canonical isomorphism $\mathcal{E}_{\Prism,T}(B,J)\xrightarrow{\sim} \mathcal{E}(B,J)$.
	In addition, the canonical linearization map $\mathcal{E}_{\Prism,T}(B,J) \otimes_B \Prism_{(-)_{\overline{B}}/B}[1/\mu] \to T\otimes \Prism_{(-)_{\overline{B}}/B}[1/\mu]$ is in fact an isomorphism: this follows from the commutative diagram induced from $\gamma$:
	\[
	\begin{tikzcd}
		\mathcal{E}_{\Prism,T}(B,J)\otimes_B \Prism_{(-)_{\overline{B}}/B}[1/\mu ] \ar[r] \arrow[d, "{\mathrm{H}^0(\gamma)\otimes \Prism_{(-)_{\overline{B}}/B}[1/\mu]}", "\sim"'] & T\otimes \Prism_{(-)_{\overline{B}}/B}[1/\mu] \arrow[d, "\gamma", "\sim"']\\
		\mathcal{E}(B,J)\otimes_B \Prism_{(-)_{\overline{B}}/B} \arrow[r,"\sim"'] & \mathcal{E}(B,J)\otimes_B \Prism_{(-)_{\overline{B}}/B}[1/\mu].
	\end{tikzcd}
	\]
\end{proof}

By combining the Primitive Purity in \Cref{thm:primitive_purity_inf} with the properties of prismatic Riemann--Hilbert functor, we now prove that the crystallinity of the local system $T$ implies that $\mathcal{E}_{\Prism,T}$ is a reflexive prismatic $F$-crystal and is associated with $T$.
\begin{theorem}
	\label{thm:prismatic_RH_for_crystalline_local_system}
	Let $X$ be a regular $p$-adic formal scheme and let $T\in \Loc_{\mathbb{Z}_p}(X_\eta)$.
	Assume either of the following three equivalent conditions:
	\begin{enumerate}[label=\upshape{(\alph*)}]
		\item\label{thm:prismatic_RH_for_crystalline_local_system_open} There is a dense open smooth subscheme $U\subseteq X$ such that $T|_{U_\eta}$ is a crystalline local system.
		\item\label{thm:prismatic_RH_for_crystalline_local_system_purity} The restriction $T|_{\Spa(L)}$ is a crystalline representation for each Shilov point $\Spa(L)$ of $X_\eta$.
		\item\label{thm:prismatic_RH_for_crystalline_local_system_PC} There is a dense open smooth subscheme $U\subseteq X$ such that the restriction $T|_x$ is a crystalline representation for each classical point $x\in U_\eta$.
	\end{enumerate}
	Then we have the following:
	\begin{enumerate}[label=\upshape{(\roman*)}]
		\item\label{thm:prismatic_RH_for_crystalline_local_system_not_weak} The canonical weak prismatic $F$-crystal $\mathcal{E}_{\Prism,T}$ is a reflexive prismatic $F$-crystal.
		\item\label{thm:prismatic_RH_for_crystalline_local_system_linearization} For each $(B,J)\in X_\Prismsp$, the tautological map $\mathcal{E}_{\Prism,T}(B,J)=\mathrm{H}^0_\pe(X_\eta,T\otimes \Prism_{(-)_{\overline{B}}/B}[1/\mu])  \to T\otimes \Prism_{(-)_{\overline{B}}/B}[1/\mu]$ induces an isomorphism of pro-\'etale sheaves 
		\[
		\mathcal{E}_{\Prism,T}(B,J)\otimes_B \Prism_{(-)_{\overline{B}}/B}[1/\mu] \xrightarrow{\sim} T\otimes \Prism_{(-)_{\overline{B}}/B}[1/\mu].
		\]
		\item\label{thm:prismatic_RH_for_crystalline_local_system_etale_realization} The \'etale realization of $\mathcal{E}_{\Prism,T}$ is canonically isomorphic to $T$.
	\end{enumerate}
	In particular, the local system $T$ is \emph{crystalline} with respect to $X$ and is associated with the crystalline realization of $\mathcal{E}_{\Prism,T}$.
\end{theorem}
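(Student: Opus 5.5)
The plan is to construct a reflexive prismatic $F$-crystal $\mathcal{E}$ with \'etale realization $T$ by primitive purity, and then invoke \Cref{prop:uniqueness_of_the_associated_prismatic_F_crystal} to identify $\mathcal{E}_{\Prism,T}$ with $\mathcal{E}$. That proposition gives \ref{thm:prismatic_RH_for_crystalline_local_system_not_weak}, \ref{thm:prismatic_RH_for_crystalline_local_system_linearization} and \ref{thm:prismatic_RH_for_crystalline_local_system_etale_realization} at once.

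First I treat the hypotheses. The implications among \ref{thm:prismatic_RH_for_crystalline_local_system_open}, \ref{thm:prismatic_RH_for_crystalline_local_system_purity} and \ref{thm:prismatic_RH_for_crystalline_local_system_PC} are the pointwise criteria of \cite{GY24} over the smooth locus. Shilov points of $X_\eta$ correspond to generic points of the special fiber. These generic points lie in any dense open smooth $U$, since $X_k$ is reduced at generic points by \Cref{lem:localization_injection}. So I would work with \ref{thm:prismatic_RH_for_crystalline_local_system_purity}.

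Next I build $\mathcal{E}$ locally. Fix a framed regular prism $(A,I)$ with $\Spf(\overline{A})\to X$ \'etale. Every associated prime $\mathfrak{p}\in\mathrm{Ass}(\overline{A}/p\overline{A})$ is a generic point of the special fiber of $\Spf(\overline{A})$. The complete localization $\mathcal{O}_{L_\mathfrak{p}}$ is then the ring of integers of a Shilov point, so $T|_{L_\mathfrak{p}}$ is crystalline.

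Over the two-dimensional regular prism $(A_{L_\mathfrak{p}},I)$ of \Cref{prop:Frob_reg_prism_of_dim_2}, which is Breuil--Kisin type with imperfect residue field, the crystalline representation $T|_{L_\mathfrak{p}}$ gives a prismatic $F$-crystal by \cite{BS23}, extended to imperfect residue fields via \cite{DLMS24}. Its value is a finite projective Frobenius module $M_\mathfrak{p}$, free by \Cref{cor:reflexive_is_locally_free}. The Laurent $F$-crystal of $T$ gives $M_\et$ over $A\langle 1/I\rangle$, and $\alpha_\mathfrak{p}$ is the compatibility of \'etale realizations.

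\Cref{thm:primitive_purity_inf} then yields a saturated Frobenius module $M$ over $A$. Two things must be upgraded: I must show $M$ is reflexive, i.e.\ analytically locally free, and that it glues. For local freeness I would apply \Cref{thm:Frob_mod_is_analyticall_loc_free}. Its hypothesis asks that for every surjection onto a two-dimensional transversal regular prism $(A',I')$ the base change $M\otimes A'[1/p]$ be free. Such an $A'$ corresponds to a classical point $x$ of $\Spf(\overline{A})_\eta$.

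I expect this to be the main obstacle. To handle it, I would compare $M\otimes A'$ with the Breuil--Kisin-type module of the crystalline representation $T|_x$, using that crystallinity at classical points follows from \ref{thm:prismatic_RH_for_crystalline_local_system_PC} for a dense set of points. For the remaining points I would use the direct argument in the proof of \Cref{thm:GR_strong_'etale}, namely \Cref{claim:coherent_prismatic_F_crystal_over_O_K}. Alternatively, one could show $M[1/p]$ is projective via Fitting ideals as in \Cref{thm:BK_mod_invert_p_explicit}, whose second hypothesis needs exactly this pointwise input.

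Finally comes gluing. The intersection construction is functorial and commutes with completely flat base change, by the flat-base-change-of-intersection arguments in the proofs of \Cref{thm:primitive_purity_reduction} and \Cref{thm:primitive_purity_inf}. Hence the modules $M$ on the \v{C}ech nerve of $(A,I)$, described in \Cref{cor:Cech_nerve_of_framed}, satisfy the cocycle condition. A subtlety is that coproducts are not regular noetherian. I would avoid this by defining the transition isomorphisms after inverting $p$ and after inverting $I$, where the crystal structure is known on both the Laurent side and at the $\mathfrak{p}$, and then intersecting using saturatedness. This gives a reflexive prismatic $F$-crystal locally, which globalizes by \Cref{lem:weak_vs_non-weak}.\ref{lem:weak_vs_non-weak_etale_local}. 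Its \'etale realization is $M_\et$, i.e.\ $T$, and \Cref{prop:uniqueness_of_the_associated_prismatic_F_crystal} concludes. The final crystallinity claim then follows from the crystalline realization of $\mathcal{E}$.
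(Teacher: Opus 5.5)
The architecture is the paper's: build a saturated Frobenius module on a framed regular prism by \Cref{thm:primitive_purity_inf}, glue it over the \v{C}ech nerve, prove analytic local freeness via \Cref{thm:Frob_mod_is_analyticall_loc_free}, and conclude with \Cref{prop:uniqueness_of_the_associated_prismatic_F_crystal}. However, the step you flag as the main obstacle is not resolved as you describe it, and it sits in the wrong order relative to gluing.

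Before gluing, $M\otimes_A A'$ is only a finitely generated Frobenius module over a two-dimensional regular prism $(A',I')$. The Frobenius of such a prism need not send the variable to its $p$-th power. Over such prisms, Frobenius modules can fail to be locally free after inverting $p$, and the \'etale realization is not full (\Cref{eg:Frob_mod_over_general_Frob-reg_prism}). So crystallinity of $T|_x$ at a classical point gives you no handle on $M\otimes_A A'$: the Breuil--Kisin module of $T|_x$ lives on a different prism, and nothing yet relates the two.

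What works is \Cref{claim:coherent_prismatic_F_crystal_over_O_K}. It requires $M\otimes_A A'$ to be the value at $(A',I')$ of the pullback $f_\Prism^*\mathcal{E}$ of a coherent prismatic $F$-crystal $\mathcal{E}$ to $(\mathcal{O}_{K'})_\Prism$, where $\mathcal{O}_{K'}=\overline{A'}$. Only then can you pass to a Breuil--Kisin prism of $\mathcal{O}_{K'}$ and apply \cite[Prop.\ 4.3]{BMS1}. Hence you must glue first; gluing uses only the intersection description of $M$, not reflexivity. Then apply the Claim to \emph{every} surjection onto a two-dimensional transversal regular prism. No pointwise crystallinity input enters this step, so the split into a dense set of classical points plus remaining points is unnecessary.

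Your justification for gluing, that the intersection commutes with completely flat base change, is not available. Completed base change along a completely faithfully flat map need not even preserve injectivity (\Cref{eg:counter_eg_of_injectivity_preserved_by_bc}). The flat base changes in the primitive purity proofs are along $\varphi_A$ and noetherian localizations, not along $A\to A^n$.

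Also, your descent isomorphisms live on $M_{\et}$ and on the $M_{\mathfrak{p}}$ (localizations at associated primes), not on $M[1/p]$. So the intersection to base change is $M=M_{\et}\cap\prod_{\mathfrak{p}}M_{\mathfrak{p}}$, not $M[1/p]\cap M[1/I]$.

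The missing input is that each coface map $p_j\colon A\to A^n$ is completely free (\Cref{thm:coproduct_in_general}(iii)). Fix a basis $\{e_i\}$. Then elements of $M\otimes_{A,p_j}A^n$, of $M_{\et}\otimes A^n\langle 1/I\rangle$ and of $M_{\mathfrak{p}}\otimes A^n_{L_{\mathfrak{p}}}$ are null sequences $\sum_i m_ie_i$ with coefficients in $M$, $M_{\et}$, $M_{\mathfrak{p}}$ respectively. Membership in the intersection is therefore checked coefficientwise, so $M\otimes_{A,p_j}A^n$ equals the intersection of the base changes. Then $\gamma_{\et}$ and the $\gamma_{\mathfrak{p}}$ restrict to an isomorphism $M\otimes_A A^1\simeq A^1\otimes_A M$, and its cocycle condition is checked inside the intersection.

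A smaller point: \Cref{lem:localization_injection} only says the associated primes of $A/(p,I)$ are minimal. The localizations $\overline{A}_{\mathfrak{p}}/p$ are artinian, and they are not fields once $\mathcal{O}_K$ is ramified, so the lemma gives no reducedness. What you actually need, that the Shilov points lie in $U_\eta$, follows directly from the density of $U$.
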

Here the crystallinity of the local system is defined in the style of Faltings, as we introduced in \cite[Def.\ 4.4]{GY24}.
For the convenience of discussions, we use $\Loc^\crys_{\mathbb{Z}_p}(X_\eta)$ to denote the full subcategory of local systems that satisfies the conditions in \Cref{thm:prismatic_RH_for_crystalline_local_system}.
\begin{proof}
	We first remind the reader of equivalences among the three assumptions: by taking the pullback, condition \ref{thm:prismatic_RH_for_crystalline_local_system_open} implies both \ref{thm:prismatic_RH_for_crystalline_local_system_purity} and \ref{thm:prismatic_RH_for_crystalline_local_system_PC}  (\cite[Thm.\ 5.3]{GY24}).
	The implication from \ref{thm:prismatic_RH_for_crystalline_local_system_purity} to \ref{thm:prismatic_RH_for_crystalline_local_system_open} is the purity result of Moon \cite[Thm.\ 1.2]{Moo24}.
	The implication from \ref{thm:prismatic_RH_for_crystalline_local_system_PC} to \ref{thm:prismatic_RH_for_crystalline_local_system_open} is the pointwise criterion of Guo--Yang in \cite[Thm.\ 1.1]{GY24}.
	
	Now we prove that the canonical weak prismatic $F$-crystal is a reflexive prismatic $F$-crystal in \ref{thm:prismatic_RH_for_crystalline_local_system_not_weak}.
	As the statement is \'etale local on $X$ (\Cref{lem:weak_vs_non-weak}.\ref{lem:weak_vs_non-weak_etale_local}), by \Cref{prop:uniqueness_of_the_associated_prismatic_F_crystal}, it suffices to show that for each closed point $x\in X$, there is a $p$-complete \'etale neighborhood $U$ together with a reflexive prismatic $F$-crystal $\mathcal{E}_U$ over $U_\Prism$ whose \'etale realization is $T|_{U_\eta}$.
	To see the latter, by \Cref{thm:regular prism}, we may assume $X=\Spf(R)$ is affine and there is a framed regular prism $(A,I)$ such that $\overline{A}=R$.
	Under the assumption, each Shilov point $\Spa(L)$ of $X_\eta$ corresponds to the complete localization $\Spa(L_\mathfrak{p},\mathcal{O}_{L_\mathfrak{p}})$, where $\mathcal{O}_{L_\mathfrak{p}}$ is the $p$-complete discrete valuation ring $(R_\mathfrak{p})^\wedge_p$ for an associated ideal $\mathfrak{p}\in\Ass(R/pR)$.
	Moreover, since $(A,I)$ covers the prismatic site (\Cref{cor:regular prism cover}), it suffices to show that there is a reflexive Frobenius module over $A$ together with a descent data over the \v{C}ech nerve $(A^n,IA^n)$.
	Now by the main results of \cite[Thm.\ A]{GR24} or \cite[Thm.\ 1.3]{DLMS24}, for each $\mathfrak{p}\in \Ass(R/pR)$, the crystalline assumption of $T|_{\Spa(L_\mathfrak{p})}$ implies that there is locally free prismatic $F$-crystal over $\mathcal{O}_{L_\mathfrak{p}}$ associated to $T|_{\Spa(L_\mathfrak{p}}$.
	More concretely, this prismatic $F$-crystal is given by a reflexive Frobenius module $M_{A_{L_\mathfrak{p}}}$ together with a descent isomorphism as below
	\[
	\gamma_{\mathfrak{p}}:M_{A_{L_\mathfrak{p}}}\otimes_{A_{L_{\mathfrak{p}}}} A^1_{L_\mathfrak{p}} \xrightarrow{\sim} A^1_{L_\mathfrak{p}}\otimes_{A_{L_{\mathfrak{p}}}} M_{A_{L_\mathfrak{p}}},
	\]
	where $A^\bullet_{L_\mathfrak{p}}$ is the \v{C}ech nerve and the isomorphism satisfies the cocycle condition over $A^{\geq 2}_{L_\mathfrak{p}}$.
	On the other hand, the $p$-adic Riemann--Hilbert equivalence of \cite[Cor.\ 3.7]{BS23} implies that the local system $T$ corresponds to a locally free Frobenius module $M_{A\langle 1/I \rangle}$ over $A\langle 1/I \rangle$ together with a descent isomorphism 
	\[
	\gamma_\et:M_{A\langle 1/I \rangle} \otimes_{A\langle 1/I \rangle} A^1\langle 1/I \rangle \xrightarrow{\sim} A^1\langle 1/I \rangle \otimes_{A\langle 1/I \rangle} M_{A\langle 1/I \rangle}.
	\]
	Notice that since the local system $T|_{\Spa(L_\mathfrak{p}}$ is produced by restricting $T\in \Loc_{\mathbb{Z}_p}(X_\eta)$ onto $\Spa(L_\eta)$,
	the isomorphisms $\gamma_{\mathfrak{p}}$ and $\gamma_\et$ are identical after their common base changes to $A^1_{L_\mathfrak{p}}\langle 1/I \rangle$.
	
	Next, by the Primitive Purity \Cref{thm:primitive_purity_inf}, the intersection $M\colonequals (\prod_{\mathfrak{p}} M_{A_{L_\mathfrak{p}}}) \cap  M_{A\langle 1/I \rangle}$ is a saturated Frobenius module over $A$ (and in particular is finitely presented).
	In addition, the tensor product $M \otimes_{A,p_j} A^n$ for each structure map $p_j:A\to A^n$ is naturally isomorphic to the intersection 
	\[
	\bigl( \prod_\mathfrak{p} M_{A_{L_{\mathfrak{p}}}} \otimes_{A_{L_{\mathfrak{p}}}} A^n_{L_{\mathfrak{p}}}\bigr) \bigcap \bigl(M_{A{\langle 1/I \rangle}} \otimes_{A\langle 1/I \rangle} A^n\langle 1/I \rangle\bigr).
	\]
	To see the latter, since each morphism $p_j: A\to A^n$ is completely free (\Cref{thm:coproduct_in_general}.\ref{thm:coproduct of prism regular and regular}), by choosing a basis $\{e_i\}_{i\in I}$ of $A^n$ over $A$, we may write the elements in $M\otimes_A A^n$ (which is complete thanks to the finiteness of $M$) uniquely as a countable sum $\sum_{i\in I} m_ie_i$ with $m_i\in M$ converging to zero.
	Similarly, elements in $M_{A_{L_{\mathfrak{p}}}} \otimes_{A_{L_{\mathfrak{p}}}} A^n_{L_{\mathfrak{p}}}$ are of the form $\sum_{i\in I} m_ie_i$ with $m_i\in M_{A_{L_\mathfrak{p}}}$, and elements in $M_{A\langle 1/I \rangle} \otimes_{A\langle 1/I \rangle} A^n\langle 1/I \rangle$ are of the form $\sum_{i\in I} m_i e_i$ with $m_i \in M_{A\langle 1/I \rangle}$.
	In particular, by looking at the coefficients in fron of each $e_i$ and by \Cref{thm:primitive_purity_inf}, an element $m=\sum_{i\in I} m_ie_i $ belongs to the intersection if and only if $m_i\in M$.
	As a consequence, by restricing the isomorphisms $\gamma_{\mathfrak{p}}$ and $\gamma_\et$ onto the intersection, we obtain the following isomorphism through the diagram
	\[
	\begin{tikzcd}
		M\otimes_A A^1 \arrow[r, dashed] \arrow[d, equal]& A^1\otimes_A M \arrow[d,equal]\\
		\bigl( \prod_\mathfrak{p} M_{A_{L_{\mathfrak{p}}}} \otimes_{A_{L_{\mathfrak{p}}}} A^1_{L_{\mathfrak{p}}}\bigr) \bigcap \bigl(M_{A\langle 1/I \rangle} \otimes_{A\langle 1/I \rangle} A^1\langle 1/I \rangle\bigr) \arrow[r,"\sim"] & \bigl( \prod_\mathfrak{p} A^1_{L_{\mathfrak{p}}} \otimes_{A_{L_{\mathfrak{p}}}} M_{A_{L_{\mathfrak{p}}}} \bigr) \bigcap \bigl( A^1\langle 1/I \rangle \otimes_{A\langle 1/I \rangle} M_{A\langle 1/I \rangle} \bigr).
	\end{tikzcd}
	\]
	The cocycle condition of this isomorphism can be checked through intersection.
	Hence we obtain a saturated prismatic $F$-crystal $\mathcal{E}$ over $X_\Prism$, whose \'etale realization coincides with the local system $T$ through the base change $A^n\to A^n\langle 1/I \rangle$ and \Cref{thm:primitive_purity_inf}.
	
	To continue, we want to make use of \Cref{prop:uniqueness_of_the_associated_prismatic_F_crystal}, and it remains to check that the prismatic $F$-crystal $\mathcal{E}$ is in fact reflexive.
	The latter amounts to showing that the Frobenius module $M=\mathcal{E}(A,I)$ is analytically locally free.
	By \Cref{thm:Frob_mod_is_analyticall_loc_free}, it suffices to check that for any given surjection $(A,I)\to (A',I')$ onto a two dimensional transversal regular prism, the base change $M\otimes_A A'[1/p]$ is finite projective over $A'[1/p]$.
	We let $\mathcal{O}_{K'}=\overline{A'}$ be the reduction, which under the assumption is a finite extension of $\mathcal{O}_K$, and let $f:R\to \mathcal{O}_{K'}$ be the structural morphism.
	Then the base change $M\otimes_A A'$ is naturally isomorphic to the evaluation of the coherent prismatic $F$-crystal $f_\Prism^*\mathcal{E}$ at the prism $(A',I')\in (\mathcal{O}_{K'})_\Prism$.
	Now we make the following claim.
	\begin{claim}
		\label{claim:coherent_prismatic_F_crystal_over_O_K}
		For a coherent prismatic $F$-crystal $\mathcal{F}$ over the $p$-adic discrete valuation ring $(\mathcal{O}_{K'})_\Prism$, the $p$-inverted localization $\mathcal{F}[1/p]$ is locally free.
	\end{claim}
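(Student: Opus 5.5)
The claim asks that, for a coherent prismatic $F$-crystal $\mathcal{F}$ over $(\mathcal{O}_{K'})_\Prism$, the localization $\mathcal{F}[1/p]$ be locally free. I will reduce it to a single Breuil--Kisin prism. Choose a uniformizer of $\mathcal{O}_{K'}$ and let $(\mathfrak{S},E(u))$ be the associated Breuil--Kisin prism, with $\mathfrak{S}=W(k')\llbracket u\rrbracket$ and $\varphi(u)=u^p$. It is a framed regular prism over $\mathcal{O}_{K'}$, so by \Cref{cor:regular prism cover} it covers $(\mathcal{O}_{K'})_\Prism$. Since $\mathcal{F}$ is a crystal, every value $\mathcal{F}(B,J)[1/p]$ is obtained from $\mathcal{F}(\mathfrak{S},E)[1/p]$ by $(p,J)$-completed base change, and this operation preserves finite projectivity after inverting $p$. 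Hence it suffices to show that $N\colonequals \mathcal{F}(\mathfrak{S},E)$ has $N[1/p]$ finite projective, and in fact free, over $\mathfrak{S}[1/p]$. Here $N$ is a finitely generated Frobenius module over $\mathfrak{S}$.

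For this I would follow the fitting-ideal argument of \cite[Prop.\ 4.3]{BMS1}, already used in the proof of \Cref{prop:Frob_mod_when_the_reduction_is_mildly_ramified}. Let $J\subset\mathfrak{S}$ be a nonzero Fitting ideal of $N$. Fitting ideals commute with base change, and $\varphi_N$ is an isomorphism after inverting $E$, so $J[1/E]=\varphi^*(J)[1/E]$. By Weierstrass preparation, $J[1/p]$ is generated by a monic polynomial $f(u)$ whose roots lie in the open unit disc. The identity $J[1/E]=\varphi^*(J)[1/E]$ implies that, away from the zeros of $E$, the root set of $f$ is stable under $z\mapsto z^p$ and under extraction of $p$-th roots. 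If $f$ had a root $z\neq 0$ with $E(z)\ne0$, iterating $p$-th roots would produce infinitely many distinct roots, which is impossible for a polynomial. So the only possible roots are $0$ and roots of $E$.

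I expect excluding these remaining roots to be the main obstacle. Here the Breuil--Kisin Frobenius $\varphi(u)=u^p$ matters, and I will handle it in the way \Cref{eg:Frob_mod_over_general_Frob-reg_prism} shows is necessary. A root of $E$ has valuation $1/e$, so under $z\mapsto z^p$ its orbit leaves the zero set of $E$; the orbit relation then forces infinitely many roots again, so roots of $E$ cannot occur. The root $0$ is the delicate case. Write $f=u^a g$ with $g(0)\neq0$, and compare the $u$-adic valuations of $f$ and $\varphi(f)=u^{pa}\varphi(g)$ in $\mathfrak{S}[1/p,1/E]$; the element $E$ is a unit at $u=0$. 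The relation $J[1/E]=\varphi^*(J)[1/E]$ then gives $pa=a$, so $a=0$. Note that this is exactly where the example $\varphi(t)=t(t-p)^{p-1}$ fails, because there the factor $(t-p)^{p-1}$ is absorbed into $E$.

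It follows that every nonzero Fitting ideal satisfies $J[1/p]=\mathfrak{S}[1/p]$. By \cite[\href{https://stacks.math.columbia.edu/tag/07ZD}{Tag 07ZD}]{stacks-project}, $N[1/p]$ is then finite projective, and hence free since $\mathfrak{S}[1/p]$ is a PID. Descending along the cover finishes the proof of the claim.
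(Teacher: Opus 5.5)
Your proposal follows the paper's route: the paper also reduces to a Breuil--Kisin prism covering $(\mathcal{O}_{K'})_\Prism$ and then simply cites the Fitting-ideal argument of \cite[Prop.\ 4.3]{BMS1}, which you spell out, and your treatment of the root $u=0$ via $a=pa$ is correct. One small repair is needed in the spelled-out part: iterating only $p$-th roots from a nonzero root $z$ with $E(z)\neq 0$ can stall immediately (e.g.\ $E=u^p-p$ and $z=p$, where every $p$-th root of $z$ is a zero of $E$). Instead, take a nonzero root of maximal valuation and use the $z\mapsto z^p$ relation to see that it must be a zero of $E$. Only then iterate $p$-th roots; their valuations $1/(p^ke)$ never return to $1/e$, which gives the contradiction.
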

	\begin{proof}[Proof of \Cref{claim:coherent_prismatic_F_crystal_over_O_K}]
		We let $(\mathfrak{S},I_\mathfrak{S})$ be a Breuil--Kisin prism of $\mathcal{O}_{K'}$, which covers the prismatic site $(\mathcal{O}_{K'})_\Prism$.
		It then suffices to check that the coherent Frobenius module $\mathcal{F}(\mathfrak{S},I_{\mathfrak{S}})$ over $\mathfrak{S}$, or in other words, the Breuil--Kisin module, is locally free after inverting by $p$.
		The latter was verified for example in \cite[Prop.\ 4.3]{BMS1}.
	\end{proof}
	By applying \Cref{claim:coherent_prismatic_F_crystal_over_O_K} at $\mathcal{F}=f_\Prism^*\mathcal{E}$, we know the localization $f_\Prism^*\mathcal{E}[1/p]$, and in particular the $A'[1/p]$-module $(f_\Prism^*\mathcal{E})(A',I')[1/p]$ is finite projective.
	Hence the $A'$-module $M\otimes_A A'[1/p] \simeq (f_\Prism^*\mathcal{E})(A',I')[1/p]$ is finite projective over $A'[1/p]$, which finishes the proof that $\mathcal{E}_{\Prism,T}$ is a reflexive prismatic $F$-crystal, namely Part \ref{thm:prismatic_RH_for_crystalline_local_system_not_weak}.
	
	Now, to check that the tautological map in \ref{thm:prismatic_RH_for_crystalline_local_system_linearization} is an isomorphism, it suffices to do so \'etale locally on $X$.
	We then notice that the above arguments have shown that \'etale locally on $X$, the \'etale realization of the prismatic $F$-crystal $\mathcal{E}_{\Prism,T}$ is isomorphic to $T$.
	So \ref{thm:prismatic_RH_for_crystalline_local_system_linearization} follows from \Cref{prop:uniqueness_of_the_associated_prismatic_F_crystal}.
	
	Finally, we prove that the \'etale realization of $\mathcal{E}_{\Prism,T}$ is canonically isomorphic to $T$.
	We let $\mathcal{F}\in \Vect^\varphi(X_\Prism,\mathcal{O}_\Prism \langle 1/I \rangle)$ be the Laurent $F$-crystal associated to the local system $T$.
	The claim then amounts to constructing a canonical equivariant isomorphism from $\mathcal{F}(\Prism_S,I_{\Prism_S})$ to $\mathcal{E}_{\Prism,T}(\Prism_S,I_{\Prism_S})\langle 1/I \rangle$ for large enough perfectoid algebras $S$ over $X$.
	We note that for each $(B,J)\in X_\Prismsp$, the maps of prisms $(\Prism_S,I_{\Prism_S})\to (\Prism_{S_{\overline{B}}/B}, I_{\Prism_{S_{\overline{B}}/B}})$ and $(B,J)\to(\Prism_{S_{\overline{B}}/B}, I_{\Prism_{S_{\overline{B}}/B}})$ induce the maps of modules
	\begin{align}
		\label{eq:relating_laurent_and_prismatic}
		\mathcal{F}(\Prism_S,I_{\Prism_S}) \longrightarrow &\mathcal{F}(\Prism_S, I_{\Prism_S})\otimes_{\Prism_S\langle 1/I_{\Prism_S} \rangle} \Prism_{S_{\overline{B}}/B}\langle 1/I_{\Prism_S} \rangle \\
		\simeq  & (T\otimes \Prism_{(-)_{\overline{B}}/B}\langle 1/J \rangle)(\Spf(S)_\eta) \\
		\label{eq:relating_laurent_and_prismatic_2}
		\simeq & \mathcal{E}_{\Prism,T}(B,J)\langle 1/J \rangle\otimes_{B\langle 1/J \rangle} \Prism_{S_{\overline{B}}/B} \langle 1/J \rangle \longleftarrow \mathcal{E}_{\Prism,T}(B,J)\langle 1/J \rangle,
	\end{align}
	which are functorial in $T$, $S$ and $(B,J)$.
	We also recall that the relative prismatic cohomology $\Prism_{S_{\overline{B}}/B}$ is naturally isomorphic to the coproduct $(\Prism_S,I_{\Prism_S})\coprod (B,J)$ by \Cref{cor:coproduct:perfect_with_framed}.
	So by taking the limit with respect to $(B,J)\in X_\Prismsp$ and by the formula in \Cref{lem:weak_vs_non-weak}.\ref{lem:weak_vs_non-weak_recovering_formula} for Laurent $F$-crystals,
	the above maps can be enhanced into canonical isomorphisms
	\begin{align}
		\label{eq:limit_formula_of_crystals}
		\mathcal{F}(\Prism_S,I_{\Prism_S}) & \simeq \underset{(B,J)\in X_\Prismsp}{R\lim} \bigl( \mathcal{F}(\Prism_S, I_{\Prism_S})\otimes_{\Prism_S\langle 1/I_{\Prism_S} \rangle} \Prism_{S_{\overline{B}}/B}\langle 1/I_{\Prism_S} \rangle \bigr),\\
		\mathcal{E}_{\Prism,T}(\Prism_S,I_{\Prism_S}) & \simeq \underset{(B,J)\in X_\Prismsp}{R\lim} \bigl( \mathcal{E}_{\Prism,T}(B,J)\langle 1/J \rangle\otimes_{B\langle 1/J \rangle} \Prism_{S_{\overline{B}}/B} \langle 1/J \rangle \bigr).
	\end{align}
	Hence by combining the maps in (\ref{eq:relating_laurent_and_prismatic}) and (\ref{eq:relating_laurent_and_prismatic_2}), we obtain a canonical isomorphisms of Laurent $F$-crystals
	\[
	\mathcal{F} \xrightarrow{\sim} \mathcal{E}_{\Prism,T} \langle 1/I_\Prism \rangle,
	\]
	which from construction is functorial in $T$.
\end{proof}
\begin{corollary}[Crystalline local systems are prismatic]
	\label{cor:etale_realization_is_equiv}
	Let $X$ be a regular $p$-adic formal scheme over $\mathcal{O}_K$.
	The prismatic Riemann--Hilbert functor induces an equivalence of categories
	\[
	\Loc^\crys_{\mathbb{Z}_p}(X_\eta) \longrightarrow \Coh^\varphi_\refl(X_\Prism),
	\]
	whose inverse is the \'etale realization.
\end{corollary}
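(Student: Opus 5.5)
The plan is to check that the functor $T\mapsto \mathcal{E}_{\Prism,T}$ and the étale realization $\mathrm{T}_\et$ are mutually quasi-inverse. Each composite must be naturally isomorphic to the identity, and the essential images must land in the stated categories.

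\textbf{Well-definedness of the functor.} For $T\in \Loc^\crys_{\mathbb{Z}_p}(X_\eta)$, \Cref{thm:prismatic_RH_for_crystalline_local_system}.\ref{thm:prismatic_RH_for_crystalline_local_system_not_weak} shows that $\mathcal{E}_{\Prism,T}$ is a reflexive prismatic $F$-crystal. Through the identification $\Coh^\varphi_\refl(X_\Prism)\simeq \Crys^\varphi_\refl(X_\Prismsp)$ used in the proof of \Cref{lem:weak_vs_non-weak}.\ref{lem:weak_vs_non-weak_fully_faithful}, it is therefore an object of $\Coh^\varphi_\refl(X_\Prism)$. Functoriality in $T$ is clear from \Cref{const:weak prismatic-F-crystal}. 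By \Cref{thm:prismatic_RH_for_crystalline_local_system}.\ref{thm:prismatic_RH_for_crystalline_local_system_etale_realization}, the composite $\mathrm{T}_\et\circ\mathcal{E}_{\Prism,-}$ is canonically and functorially isomorphic to the identity on $\Loc^\crys_{\mathbb{Z}_p}(X_\eta)$.

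\textbf{Étale realization lands in crystalline local systems.} Let $\mathcal{E}\in\Coh^\varphi_\refl(X_\Prism)$ and $T\colonequals \mathrm{T}_\et(\mathcal{E})$. I will verify condition \ref{thm:prismatic_RH_for_crystalline_local_system_purity} of \Cref{thm:prismatic_RH_for_crystalline_local_system}. Work étale locally with a framed regular prism $(A,I)$. Each Shilov point corresponds to $\mathcal{O}_{L_\mathfrak{p}}$ for some $\mathfrak{p}\in\Ass(R/pR)$. The pullback of $\mathcal{E}$ to $(\mathcal{O}_{L_\mathfrak{p}})_\Prism$ is evaluated on $A_{L_\mathfrak{p}}$ by $\mathcal{E}(A,I)\otimes_A A_{L_\mathfrak{p}}$. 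This module is reflexive by flatness, hence finite projective by \Cref{cor:reflexive_is_locally_free}. The pullback is thus a prismatic $F$-crystal in vector bundles over the discrete valuation ring $\mathcal{O}_{L_\mathfrak{p}}$, possibly with imperfect residue field. Its étale realization is $T|_{\Spa(L_\mathfrak{p})}$, and by \cite[Thm.\ A]{GR24} or \cite[Thm.\ 1.3]{DLMS24} this representation is crystalline. The delicate point is the compatibility of étale realization with the pullback; this follows from the crystal property and the Laurent comparison of \cite[Cor.\ 3.7]{BS23}.

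\textbf{The other composite.} For $\mathcal{E}$ as above, \Cref{prop:uniqueness_of_the_associated_prismatic_F_crystal} applied with $\alpha=\id$ gives an isomorphism $\mathcal{E}_{\Prism,\mathrm{T}_\et(\mathcal{E})}\simeq\mathcal{E}$. I will check that it is natural in $\mathcal{E}$. The isomorphism is built from the canonical comparison of \Cref{thm:GR_strong_'etale}, base change along $\Ainf\to\Prism_{(-)_{\overline{B}}/B}$, the crystal isomorphisms, and global sections, and each of these is functorial.

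I expect the second step to be the main obstacle. One must make sure the cited result applies to prismatic $F$-crystals over a discrete valuation ring with imperfect residue field. One must also justify that the reflexive module becomes projective on $A_{L_\mathfrak{p}}$ compatibly with the descent data. Alternatively, full faithfulness of the functor can be obtained from \Cref{lem:weak_vs_non-weak} together with full faithfulness of étale realization on reflexive crystals, which follows from the saturation $\mathcal{E}(B,J)=\mathcal{E}(B,J)[1/p]\cap\mathcal{E}(B,J)[1/J]$.
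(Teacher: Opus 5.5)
Your argument is correct, but it is organized differently from the paper's.

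\textbf{What the paper does.} It uses \Cref{thm:prismatic_RH_for_crystalline_local_system} exactly as you do, to get the functor and the isomorphism $\mathrm{T}_\et\circ\mathcal{E}_{\Prism,-}\simeq\id$. It then only proves that $\mathrm{T}_\et$ is \emph{faithful} on $\Coh^\varphi_\refl(X_\Prism)$. For this it uses reflexivity and the injectivity of $A\to A\langle 1/I\rangle$ for regular transversal prisms covering $X_\Prism$ (cf.\ (\ref{diagram:inj_of_prisms})). This is cheap: it needs neither \Cref{prop:uniqueness_of_the_associated_prismatic_F_crystal} nor the strong \'etale comparison \Cref{thm:GR_strong_'etale}. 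Combined with $\mathrm{T}_\et\circ\mathcal{E}_{\Prism,-}\simeq\id$, it makes $\mathcal{E}_{\Prism,-}$ fully faithful.

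\textbf{What your route adds.} Faithfulness alone does not give essential surjectivity, nor that $\mathrm{T}_\et(\mathcal{E})$ lies in $\Loc^\crys_{\mathbb{Z}_p}(X_\eta)$. Your second and third steps supply exactly these two points, which the paper's short proof does not spell out:
\begin{itemize}
\item the Shilov-point check via the two-dimensional prisms $A_{L_\mathfrak{p}}$, on which a reflexive module is free by \Cref{cor:reflexive_is_locally_free};
\item the natural isomorphism $\mathcal{E}_{\Prism,\mathrm{T}_\et(\mathcal{E})}\simeq\mathcal{E}$ from \Cref{prop:uniqueness_of_the_associated_prismatic_F_crystal}.
\end{itemize}
Your route therefore uses heavier input, but it exhibits an honest quasi-inverse. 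Your appeal to \cite[Thm.\ A]{GR24} or \cite[Thm.\ 1.3]{DLMS24} over $\mathcal{O}_{L_\mathfrak{p}}$, with its imperfect residue field, is the same input the paper itself uses (in the converse direction) in the proof of \Cref{thm:prismatic_RH_for_crystalline_local_system}.

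\textbf{Two small corrections.}
\begin{itemize}
\item In your final remark, the saturation $\mathcal{E}(B,J)=\mathcal{E}(B,J)[1/p]\cap\mathcal{E}(B,J)[1/J]$ yields only \emph{faithfulness} of $\mathrm{T}_\et$, not full faithfulness. Fullness is the nontrivial part, already for Breuil--Kisin modules, and the paper notes it can fail for Frobenius modules over a single regular prism (cf.\ the discussion around \Cref{eg:Frob_mod_over_general_Frob-reg_prism}). Faithfulness is all that argument needs.
\item In your second step there is no descent datum to track. The pullback to $(\mathcal{O}_{L_\mathfrak{p}})_\Prism$ is just the restriction of $\mathcal{E}$. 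What you need is that $(A_{L_\mathfrak{p}},IA_{L_\mathfrak{p}})$ is weakly initial in $(\mathcal{O}_{L_\mathfrak{p}})_\Prism$. This follows from weak initiality of $(A,I)$ with $\overline{A}=R$: a map of prisms $(A,I)\to(C,L)$ over $X$, with $(C,L)$ over $\mathcal{O}_{L_\mathfrak{p}}$, sends $A\setminus\mathfrak{P}$ to elements that are units modulo $L$, hence units. So the map factors through $A_{L_\mathfrak{p}}$.
\end{itemize}
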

\begin{proof}
	\Cref{thm:prismatic_RH_for_crystalline_local_system}.\ref{thm:prismatic_RH_for_crystalline_local_system_not_weak} shows that the canonical weak prismatic $F$-crystal of an object $T\in \Loc^\crys_{\mathbb{Z}_p}(X_\eta)$ is a reflexive prismatic $F$-crystal, hence inducing the functor as in the statement.
	In addition, by \Cref{thm:prismatic_RH_for_crystalline_local_system}.\ref{thm:prismatic_RH_for_crystalline_local_system_etale_realization} we know the composition of the above functor with the \'etale realization is an equivalence.
	To finish the proof, it then suffices to show that the \'etale realization functor of $\Coh^\varphi_\refl(X_\Prism)$ is faithful, which follows from the reflexivity and the fact that for a regular transversal prism $(A,I)$ that covers $X_\Prism$ (\'etale locally), the complete localization map $A\to A\langle 1/I \rangle$ is an injection (cf. (\ref{diagram:inj_of_prisms})).
\end{proof}
\begin{corollary}[Various notions of the crystallinity]
	\label{cor:various_notion_of_crystallinity}
	Let $X_\eta$ be a smooth rigid space over $K$ and let $T\in \Loc_{\mathbb{Z}_p}(X_\eta)$.
	Assume $X$ is a regular $p$-adic formal model of $X_\eta$ over $\mathcal{O}_K$.
	The following conditions are equivalent:
	\begin{enumerate}[label=\upshape{(\alph*)}]
		\item The canonical weak prismatic $F$-crystal $\mathcal{E}_{\Prism,T}$ is a prismatic $F$-crystal and is associated to $T$.
		\item There exists (hence unique by \Cref{cor:etale_realization_is_equiv}) a reflexive prismatic $F$-crystal over $X_\Prism$ whose \'etale realization is $T$.
		\item The local system $T$ is crystalline with respect to the integral model $X$ (in the sense of \cite[Def.\ 4.4]{GY24}).
		\item There is a dense open smooth subscheme $U\subseteq X$ such that $T|_{U_\eta}$ is a crystalline local system.
		\item The restriction $T|_{\Spa(L)}$ is a crystalline representation for each Shilov point $\Spa(L)$ of $X_\eta$.
		\item There is a dense open smooth subscheme $U\subseteq X$ such that the restriction $T|_x$ is a crystalline representation for each classical point $x\in U_\eta$.
	\end{enumerate}
	In particular, if above conditions hold true with respect to $X$, it then holds true for every regular model of the rigid space $X_\eta$.
\end{corollary}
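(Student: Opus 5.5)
The plan is to arrange the six conditions in a cycle of implications. The implications already proved or quoted in the paper carry most of the weight, so the new work is to connect (a), (b) and (c) to the geometric conditions (d)--(f).

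First, the equivalence of (d), (e) and (f) is exactly the equivalence of the three hypotheses of \Cref{thm:prismatic_RH_for_crystalline_local_system}. That equivalence was recorded at the start of its proof, via pullback and \cite[Thm.\ 5.3]{GY24} for (d)$\Rightarrow$(e),(f), Moon's purity for (e)$\Rightarrow$(d), and the pointwise criterion \cite[Thm.\ 1.1]{GY24} for (f)$\Rightarrow$(d). Next, (d)$\Rightarrow$(a) is \Cref{thm:prismatic_RH_for_crystalline_local_system}, parts \ref{thm:prismatic_RH_for_crystalline_local_system_not_weak} and \ref{thm:prismatic_RH_for_crystalline_local_system_etale_realization}: $\mathcal{E}_{\Prism,T}$ is a reflexive prismatic $F$-crystal whose étale realization is canonically $T$. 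The implication (a)$\Rightarrow$(b) is tautological once we note that in (a) the crystal is reflexive. Conversely, for (b)$\Rightarrow$(a), given a reflexive $\mathcal{E}$ with $\mathrm{T}_\et(\mathcal{E})\simeq T$, \Cref{prop:uniqueness_of_the_associated_prismatic_F_crystal} gives $\mathcal{E}_{\Prism,T}\simeq\mathcal{E}$, so (a) holds.

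To reach (c), I would use the last sentence of \Cref{thm:prismatic_RH_for_crystalline_local_system}: under (d), $T$ is crystalline with respect to $X$ in the sense of \cite[Def.\ 4.4]{GY24}, associated to the crystalline realization of $\mathcal{E}_{\Prism,T}$. This gives (d)$\Rightarrow$(c). For (c)$\Rightarrow$(d), choose a dense open smooth $U\subseteq X$; such a $U$ exists because the special fiber is reduced (\Cref{lem:localization_injection}), so the regular locus of the reduced finite-type special fiber, intersected with the flat locus, is open and dense. Restricting the $F$-isocrystal associated to $T$ in the sense of \cite[Def.\ 4.4]{GY24} to $U_k$ keeps the association, since that definition is local and compatible with open restriction. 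Over the smooth model $U$ this is the classical notion of a crystalline local system, which gives (d).

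The final independence statement follows because (e) refers only to $X_\eta$: the Shilov points are those of the rigid space. One caveat is that Shilov points are defined via a model, but every regular model has the same Shilov boundary, namely the points corresponding to generic points of its reduced special fiber. Alternatively, one can argue through (f) using classical points on a common dense open; either route should work.

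The step I expect to be most delicate is (c)$\Rightarrow$(d). It needs the compatibility of \cite[Def.\ 4.4]{GY24} with restriction to smooth opens, and the existence of the dense smooth open $U$. I would verify both directly from the definitions in \cite{GY24}.
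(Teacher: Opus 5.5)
\textbf{Main gap: the cycle of implications is not closed.} The implications you list are (d)$\Leftrightarrow$(e)$\Leftrightarrow$(f), (d)$\Rightarrow$(a)$\Leftrightarrow$(b), and (d)$\Leftrightarrow$(c). None of them leads from (a) or (b) back into (c)--(f). You have shown that crystallinity produces a reflexive prismatic $F$-crystal. You have not shown that the existence of a reflexive prismatic $F$-crystal with \'etale realization $T$ forces $T$ to be crystalline. That is precisely the half of the equivalence that is not a restatement of \Cref{thm:prismatic_RH_for_crystalline_local_system}.

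The gap is easy to fill in either of two ways.
\begin{itemize}
\item Via (c). The comparison \Cref{thm:GR_strong_'etale} holds for any reflexive prismatic $F$-crystal $\mathcal{E}$ on a regular $X$. Specializing $T\otimes\Ainf[1/\mu]\simeq\Ainf(\mathcal{E})[1/\mu]$ to crystalline prisms exhibits $T$ as associated with the crystalline realization of $\mathcal{E}$ in the sense of \cite[Def.\ 4.4]{GY24}. The last sentence of \Cref{thm:prismatic_RH_for_crystalline_local_system} only needs its conclusions (i) and (iii), so it really gives (b)$\Rightarrow$(c), not merely (d)$\Rightarrow$(c). Your (c)$\Rightarrow$(d) then closes the loop.
\item Via (d). Restrict $\mathcal{E}$ to a dense smooth open $U$. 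There reflexive crystals are analytic \cite[Thm.\ 5.10]{GR24}, so \cite[Thm.\ A]{GR24} gives (b)$\Rightarrow$(d).
\end{itemize}

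\textbf{Secondary problems.}
\begin{itemize}
\item (a)$\Rightarrow$(b) is not tautological, because (a) does not assert reflexivity. The Koszul condition follows from \Cref{lem:weak_prismatic_F_crystal:regularity} and \Cref{lem:reflexive_is_saturated}. Analytic local freeness needs \Cref{thm:Frob_mod_is_analyticall_loc_free} together with \Cref{claim:coherent_prismatic_F_crystal_over_O_K}, as in the proof of \Cref{thm:prismatic_RH_for_crystalline_local_system}.
\item \Cref{lem:localization_injection} only excludes embedded primes of $R/pR$; it does not make the special fibre reduced. In fact $R/pR$ is non-reduced as soon as $K$ is ramified. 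Moreover $X\otimes_{\mathcal{O}_K}k$ can have components of multiplicity $>1$: $\mathcal{O}_K\langle x,y\rangle/(xy^2-\pi)$ is regular, yet its special fibre has a double component along $y=0$, and then no dense smooth open exists. So the existence of $U$ cannot be derived this way; it is implicitly assumed in the statement.
\item Regular models do not share their Shilov points. Blow up $\Spf(\mathcal{O}_K\langle x,y\rangle)$ at the closed point $(\pi,x,y)$. The result is a regular model whose exceptional component contributes a divisorial point different from the Gauss point. Hence (e) is model dependent, and the independence statement cannot be deduced from it.
\item The independence should instead go through classical points. From (c), pulling back along $\Spf(\mathcal{O}_{K'})\to X$ \cite[Thm.\ 5.3]{GY24} shows that $T|_x$ is crystalline at every classical point $x\in X_\eta$. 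This condition is intrinsic to $X_\eta$, and it yields (f) on any other regular model that admits a dense smooth open. A ``common dense open'' of two models is neither available in general nor needed.
\end{itemize}
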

\begin{remark}[Enhancement into $F$-gauges]
\label{rmk:F-gauge}
When $T\in \Loc^\crys_{\mathbb{Z}_p}(X_\eta)$, it worth mentioning that the prismatic $F$-crystal $\mathcal{E}_{\Prism,T}$ can be canonically enhanced into a coherent prismatic $F$-gauge over $X$, or in other words a coherent sheaf over the syntomification $X^{\mathrm{syn}}$, as introduced by Drinfeld \cite{Dri24} and Bhatt--Lurie (\cite{Bha23}).
Indeed, when $X$ is smooth, it was proved in our joint work with Li \cite[Thm.\ 3.32]{GL23} that there is a natural fully faithful functor
\[
\Coh_{\refl}^\varphi(X_\Prism) \longrightarrow \Coh(X^{\mathrm{syn}}),
\]
defined through the saturated filtration for the Frobenius structure.
Specifically, the twisted Nygaard filtrations on $\mathcal{E}_{\Prism,T}(B,J)$ for Breuil--Kisin prisms $(B,J)\in X_\Prism$, as introduced in \Cref{def:weak_prismatic_F-crystal_filtration}, generate (in the filtered sense) the Nygaard filtration on the associated $F$-gauge.
The latter was explained in \cite[Conv.\ 3.34, Lem.\ 3.37, and Prop.\ 3.38]{GL23}.
For general regular $p$-adic formal schemes $X$, the statement follows by applying the same arguments at the framed regular prisms $(B,J)\in X_\Prismsp$.
\end{remark}

\subsection{Integral de Rham realizations}
\label{sub:dR}
In this subsection, we construct an integral enhancement of Liu--Zhu's Riemann--Hilbert functor and relates it with the prismatic Riemann--Hilbert functor, for a regular $p$-adic formal scheme $X$ over $\mathcal{O}_K$.

As before, we let $\mathcal{O}_{K_\infty}$ be the ring of integers of the cyclotomic extension of $K$, and let $S$ be any perfectoid algebra over $X_{\mathcal{O}_{K_\infty}}$.
Recall that the absolute prismatic--de Rham realization map induces a canonical composition of injections
\[
\rAinf(S)=\Prism_S \longrightarrow \dR_{S/\mathbb{Z}_p} \longrightarrow \dR_{S/X},
\]
which is compatible with the filtrations.
So by twisting a power of the element $\mu\in \Fil^1_N \Prism_S$, for each $n\in \mathbb{Z}$, we obtain a filtered module $\Fil^\bullet \mu^n\dR_{S/X}$ together with natural injections $\Fil^\bullet \mu^{n+1}\dR_{S/X}\to \Fil^\bullet \mu^{n}\dR_{S/X}$.
As in \Cref{sub:rat_period_sheaf}, the construction naturally extends to a sequence of $p$-complete sheaves $\mu^{\bullet}\dR_{(-)/X}$ over $X_\eta$, which we call 
\emph{the twisted integral de Rham period sheaves}.
\begin{construction}
\label{const:de_Rham_functor}
Let $T\in \Loc_{\mathbb{Z}_p}(X_\eta)$, and let $n\in \mathbb{Z}$.\\
(1) 
We define a presheaf of $p$-complete $\mathcal{O}_X$-modules by sending an affine subscheme $U=\Spf(R)\subseteq X$ onto 
	\[
	\mathcal{E}_{\dR,n,T}(U)\colonequals \mathrm{H}^0_\pe(U_\eta, T\otimes_{\mathbb{Z}_p}\mu^{n} \dR_{(-)/X}).
	\]\\
(2) 
For each $i\in \mathbb{Z}$, we define a canonical filtration of submodules on $\mathcal{E}_{\dR,n,T}$ by 
	\[
	\Fil^i \mathcal{E}_{\dR,n,T}(U) \colonequals \mathrm{H}^0_\pe(U_\eta, T\otimes_{\mathbb{Z}_p} \Fil^i\mu^{n}\dR_{(-)/X}).
	\]\\
(3) 
	For each pair of integers $n\geq m$, the injection of twisted de Rham period sheaves induces an injection of presheaves of $\mathcal{O}_X$-modules 
	\[
	\mathcal{E}_{\dR,n,T} \longrightarrow \mathcal{E}_{\dR,m,T},
	\]
	and we let $\mathcal{E}_{\dR,T}$ be the union $\colim_{n\in \mathbb{Z}} \mathcal{E}_{\dR,n,T}$.	\\
(4)
Consider the derived connection on the de Rham period sheaf introduced in  \cite[Lem.\ 3.15]{GL21}
	\[
	\dR_{(-)/X} \longrightarrow \dR_{(-)/X}\otimes_{\mathcal{O}_X} \mathbb{L}_{\mathcal{O}_X/W},
	\]
which satisfies a derived Poincar\'e lemma as in \textit{loc.\ cit.}.
		
		Assume $X$ is smooth over $W$. 
		We define an $\mathcal{O}_K$-linear continuous connection $\nabla_{\dR,n,T}$ on $\mathcal{E}_{\dR,n,T}$ through the formula
		\[
		\mathrm{H}^0_\pe(U_\eta, T\otimes \mu^n\dR_{(-)/X}) \longrightarrow \mathrm{H}^0_\pe(U_\eta, T\otimes \mu^n\dR_{(-)/X}\otimes_{\mathcal{O}_X} \mathbb{L}_{X/W}) = \mathrm{H}^0_\pe(U_\eta, T\otimes \mu^n\dR_{(-)/X})\otimes_{R} \Omega^{1,\an}_{R/W},
		\]
		where $U=\Spf(R)\subseteq X$ is an affine open subscheme and $\Omega^{1,\an}_{R/W}=\mathbb{L}_{R/W}$ is the continuous differential (which is finite projective under the smoothness assumption).
		By the Griffiths transversality of the connection on the integral period sheaf, the connection $\nabla_{\dR,n,T}$ satisfies the Griffiths transversality as well.
	
		For a general regular $p$-adic formal scheme $X$, we define a $K$-linear continuous connection $\nabla_{\dR,T}:\mathcal{E}_{\dR,T}(U)[1/p]\to \mathcal{E}_{\dR,T}[1/p]\otimes_{R[1/p]} \Omega^{1,\an}_{R[1/p]/K}$ to be the map of global sections
		\[
		\mathrm{H}^0_\pe(U_\eta, T\otimes_{\mathbb{Z}_p} \dR_{(-)/X}[1/\mu]) [1/p]\longrightarrow \mathrm{H}^0_\pe(U_\eta, T\otimes_{\mathbb{Z}_p} \dR_{(-)/X}[1/\mu]\otimes_{\mathcal{O}_X} \mathbb{L}_{X/\mathbb{Z}_p})[1/p],
		\]
		where the latter is naturally isomorphic to $\mathrm{H}^0_\pe(U_\eta, T\otimes_{\mathbb{Z}_p} \dR_{(-)/X}[1/\mu])\otimes_{R[1/p]} \Omega^{1,\an}_{R[1/p]/K}$: thanks to the smoothness of the generic fiber $X_\eta$ over $K$, the analytic cotangent complex $\mathbb{L}^\an_{X_\eta/K}=\mathbb{L}_{\mathcal{O}_X/W}[1/p]$ is a vector bundle over the rigid space $X_\eta$ (cf. \cite[Prop.\ 5.2.12]{Guo25}).
\end{construction}

\begin{proposition}
	\label{prop:dR:finiteness_&_injective}
	Let $T\in \Loc_{\mathbb{Z}_p}(X_\eta)$, and let $n\in \mathbb{Z}$.
	\begin{enumerate}[label=\upshape{(\roman*)}]
		\item\label{prop:dR:finiteness_&_injective:finitenss} 
			Each $\mathcal{E}_{\dR,n,T}$ is a presheaf of finitely presented $p$-torsionfree modules over $\mathcal{O}_X$.
		\item\label{prop:dR:finiteness_&_injective:LZ}
		    For each affine open subscheme $U=\Spf(R)\subseteq X$, there is a canonical filtered injection of $R$-modules with continuous flat connections
		    \[
		    \mathcal{E}_{\dR,n,T}(U) \hookrightarrow \mathrm{D}_\dR(T)(U_\eta),
		    \]
		    where $\mathrm{D}_\dR(T)$ is the $p$-adic Riemann--Hilbert functor of Liu--Zhu \cite{LZ17}.
		\item\label{prop:dR:finiteness_&_injective:isogeny} 
		For $n\gg0$, the $p$-inverted localization of the inclusion $\mathcal{E}_{\dR,n+1,T}\to \mathcal{E}_{\dR,n,T}$ is an equality with both terms equal to $\mathcal{E}_{\dR,T}$.
	\end{enumerate}
\end{proposition}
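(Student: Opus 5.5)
For \ref{prop:dR:finiteness_&_injective:finitenss}, I would argue exactly as in \Cref{prop:weak_prismatic_F_crystal:finiteness}, working Zariski locally on an affine $U=\Spf(R)$. The sheaf $\mu^n\dR_{(-)/X}$ is a filtered $p$-complete sheaf whose Hodge graded pieces are, by \Cref{prop:Ainf_twist_difference} and the identification $\gr^i_H\dR_{(-)/X}\simeq \wedge^i\mathbb{L}_{\widehat{\mathcal{O}}^+/\mathcal{O}_X}[-i]$, of the form $\wedge^{i-n}\mathbb{L}_{\widehat{\mathcal{O}}^+/\mathcal{O}_X}[n-i]$ up to a twist by $(\zeta_p-1)^n$ and a Tate twist $(n)$. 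The steps are as follows.
\begin{enumerate}
\item Since the Hodge filtration is separated (\Cref{prop:global_section_dR}.\ref{prop:global_section_dR_inj}, \Cref{lem:dR_is_sep}), the module $\mathcal{E}_{\dR,n,T}(U)$ injects into the global sections of the Hodge-completed twist.
\item Those global sections carry a filtration. Its graded pieces inject into $\mathrm{H}^0_\pe(U_\eta,T(n)\otimes\wedge^{j}\mathbb{L}_{\widehat{\mathcal{O}}^+/\mathcal{O}_X}[-j])$.
\item By \Cref{thm:finiteness_integral_proet_coh}, these modules are $p$-torsionfree and finitely presented over $R$, and they vanish for $j\gg0$.
\item Hence only finitely many graded pieces contribute. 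Since $R$ is noetherian, $\mathcal{E}_{\dR,n,T}(U)$ is finitely generated, hence finitely presented.
\item Torsionfreeness is inherited from $T\otimes\mu^n\dR_{(-)/X}$, which is $p$-torsionfree by \Cref{prop:property_of_dR}.
\end{enumerate}

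For \ref{prop:dR:finiteness_&_injective:LZ}, I would compare period sheaves. Liu--Zhu define $\mathrm{D}_\dR(T)=\nu_*(T\otimes\OBdR)$, and the maps
\[
\dR_{(-)/X}\to\widehat{\dR}_{(-)/X}\to \widehat{\dR}_{(-)/X}[1/p]\simeq \OBdR^+
\]
hold by \cite{GL21}. These maps are filtered and compatible with the derived connection. Twisting by $\mu^n$, which becomes $t^n$ times a unit in $\mathbb{B}_\dR^+$, gives a filtered horizontal map $\mu^n\dR_{(-)/X}\to t^n\OBdR^+\subset\OBdR$. I would then tensor with $T$ and take $\mathrm{H}^0$. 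Injectivity reduces to two facts: separatedness (step 1 above) and $p$-torsionfreeness of the Hodge-graded pieces, so that inverting $p$ is injective on the completion.

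For \ref{prop:dR:finiteness_&_injective:isogeny}, I would look at the cokernel of $\mu^{n+1}\dR\to\mu^n\dR$ after inverting $p$. Its graded pieces are $T(n)\otimes\wedge^{i-n}\mathbb{L}^\an[n-i]$ for $i\geq n$, by the analogue of \Cref{thm:structure_of_rational_period_sheaf}.\ref{thm:structure_of_rational_period_sheaf:graded} with $\Ainf/\varphi^{-1}(I)$ replaced by its image in $\dR$. By \Cref{prop:LZ_finiteness}.\ref{prop:LZ_finiteness_H0} and \Cref{thm:LZ_finiteness}, the $\mathrm{H}^0$ of these pieces vanishes for $|n|\gg0$, uniformly in $i$, using the vanishing in $|j|$ of $T\otimes\widehat{\mathcal{O}}(j)$ together with the finite filtration.

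Passing from the graded pieces to the completed module uses separatedness (an analogue of \Cref{thm:structure_of_rational_period_sheaf}.\ref{thm:structure_of_rational_period_sheaf:sep}). The left exactness of $\mathrm{H}^0$ then shows that the inclusions $\mathcal{E}_{\dR,n+1,T}[1/p]\subseteq\mathcal{E}_{\dR,n,T}[1/p]$ stabilize. Thus for large $n$ both terms equal the colimit $\mathcal{E}_{\dR,T}$ after inverting $p$; I read the intended direction as $n\ll0$ in the paper's convention.

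I expect the main obstacle to be this last point: controlling the cokernel graded pieces uniformly and justifying the passage to filtered completion after inverting $p$ for $\dR$ rather than for $\Prism^{(1)}$.
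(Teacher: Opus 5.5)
Parts (i) and (ii) are sound. In (ii) you follow the paper, with one correction: the object isomorphic to $\OBdR^+$ is $(\dR_{(-)/X}[1/p])^\wedge_{\Fil}$, not $\widehat{\dR}_{(-)/X}[1/p]$. Your injectivity argument (separatedness, then $p$-torsionfree graded pieces, so that $\widehat{\dR}\to(\widehat{\dR}[1/p])^\wedge_{\Fil}$ is injective) is exactly the paper's.

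In (i) you take a different route. The paper proves (ii) first and gets finiteness from Liu--Zhu's finite generation of $\mathrm{D}_\dR(T)(U_\eta)$ together with \Cref{prop:finiteness_integral_general}. You instead bound $\mathcal{E}_{\dR,n,T}(U)$ by a finitely filtered module whose graded pieces are finitely presented by \Cref{thm:finiteness_integral_proet_coh}, as in \Cref{prop:weak_prismatic_F_crystal:finiteness}. Both work; yours avoids Liu--Zhu's finiteness of $\mathrm{D}_\dR$ but uses vanishing of $\mathrm{H}^0$ in large Hodge degree.

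Part (iii) has a genuine gap, for three reasons.

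First, $T(n)\otimes\wedge^{i-n}\mathbb{L}^\an[n-i]$ are the graded pieces of $\mu^n\dR_{(-)/X}[1/p]$ itself, not of the cokernel. Their $\mathrm{H}^0$ is bounded only by $\mathrm{H}^0(T\otimes\widehat{\mathcal{O}}(i))$, which does not depend on $n$. Concretely, for $T=\mathbb{Z}_p$, $n<0$ and $i=0$, the piece contains $\widehat{\mathcal{O}}(1)^{\otimes(-n)}(n)\simeq\widehat{\mathcal{O}}$ coming from the Faltings extension, whose global sections are $R[1/p]$. The correct pieces, as in \Cref{thm:structure_of_rational_period_sheaf}.\ref{thm:structure_of_rational_period_sheaf:graded}, have the Tate line divided out: they are $\widehat{\mathcal{O}}(n)$ tensored with bundles built from $\Omega^1_X$.

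Second, and more seriously, even with the correct pieces you cannot pass from graded pieces to the cokernel, because the induced Hodge filtration on $\mu^n\dR/\mu^{n+1}\dR[1/p]$ is not separated. Already for $X=\Spf(W)$, where $\dR_{\mathcal{O}_C/W}\simeq\rAcrys(\mathcal{O}_C)$, take $\varphi^{-1}(\tilde\xi)=\mu/\varphi^{-1}(\mu)$. It lies in $\mu\mathrm{B}^+_{\dR}$, since $\varphi^{-1}(\mu)$ is a unit there, so it dies in the filtered completion $\mathrm{B}^+_{\dR}/\mu$ of $\rAcrys[1/p]/\mu$. Yet it is nonzero in $\rAcrys[1/p]/\mu$: the map $\widetilde\theta\circ\varphi^2$ extends to $\rAcrys$, kills $\mu$, and sends $\varphi^{-1}(\tilde\xi)$ to $p$. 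For $\Prism^{(1)}$ the paper sidesteps exactly this problem with \Cref{thm:structure_of_rational_period_sheaf}.\ref{thm:structure_of_rational_period_sheaf:inj}, using Frobenius to move each factor $\varphi^{-b}(I)$ of $\mu$ to $\varphi^{-1}(I)$. $\dR_{(-)/X}$ carries no such Frobenius.

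Third, the statement asserts $\mathcal{E}_{\dR,n,T}[1/p]=\mathcal{E}_{\dR,T}$ itself, which requires $\mathcal{E}_{\dR,T}$ to be $p$-divisible. You never address this.

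The missing idea is that (ii) already does the work. Every $\mathcal{E}_{\dR,n,T}(U)[1/p]$ is an $R[1/p]$-submodule of $\mathrm{D}_\dR(T)(U_\eta)$, which is finitely generated by \cite[Thm.\ 3.9]{LZ17}. Since $R[1/p]$ is noetherian, the chain increasing as $n\to-\infty$ stabilizes, so your reading $n\ll0$ is the one this argument gives.

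For $p$-divisibility, note that $\dR_{(-)/X}$ locally receives $\dR_{\mathcal{O}_{K_\infty}/W}\simeq\rAcrys(\mathcal{O}_{K_\infty})$. There $\mu$ lies in the divided power ideal, so $\mu^p=p!\,\gamma_p(\mu)$ and $p^{-1}\in\mu^{-p}\rAcrys(\mathcal{O}_{K_\infty})$. Hence $\mathcal{E}_{\dR,n,T}\subseteq p\,\mathcal{E}_{\dR,n-p,T}$, and therefore $\mathcal{E}_{\dR,T}=\mathcal{E}_{\dR,T}[1/p]$.
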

\begin{proof}
    As in the proof of \Cref{prop:global_section_dR}, the filtered completion map $T\otimes_{\mathbb{Z}_p} \mu^n\dR_{(-)/X} \hookrightarrow T\otimes_{\mathbb{Z}_p} \mu^n\widehat{\dR}_{(-)/X}$ is an injection.
	Moreover, since the graded piece of $\dR_{(-)/X}$ is pro-\'etale locally free (\Cref{cor:conjuage_fil}) and is in particular $p$-torsionfree, we have a further injection $T\otimes_{\mathbb{Z}_p} \mu^n\widehat{\dR}_{(-)/X} \hookrightarrow T\otimes_{\mathbb{Z}_p} \mu^n(\dR_{(-)/X}[1/p])^\wedge_{\Fil} $.
	Notice that by \cite[Thm. 1.1]{GL21}, the period sheaf $(\dR_{(-)/X}[1/p])^\wedge_{\Fil}$ is naturally isomorphic to the de Rham period sheaf $\OB^+_\dR$ over $X_{\eta,\pe}$.
	Hence the composition of the above maps of pro-\'etale sheaves induce an injection
	\[
	\mathcal{E}_{\dR,n,T}(U) =\mathrm{H}^0_\pe(U_\eta, T\otimes_{\mathbb{Z}_p} \mu^n\dR_{(-)/X} ) \hookrightarrow \mathrm{H}^0_\pe(U_\eta, T\otimes_{\mathbb{Z}_p} \OB_\dR) = \mathrm{D}_\dR(T)(U_\eta),
	\]
	which finishes the proof of \ref{prop:dR:finiteness_&_injective:LZ}.
	
	To continue, we note that the pro-\'etale sheaf $T\otimes_{\mathbb{Z}_p} \mu^n\dR_{(-)/X}$ is $p$-torsionfree and $p$-complete, its global section is also a $p$-torsionfree $p$-complete module.
	On the other hand, by \cite[Thm.\ 3.9]{LZ17}, we know $\mathrm{D}_\dR(T)(U_\eta)$ is a finitely generated $R[1/p]$-module, which implies the same for the submodule $\mathcal{E}_{\dR,n,T}(U)[1/p]$.
	Hence the finiteness in \ref{prop:dR:finiteness_&_injective:finitenss} follows from \Cref{prop:finiteness_integral_general}.
	
	Finally, the equalities of $\mathcal{E}_{\dR,n+1,T}[1/p]=\mathcal{E}_{\dR,n,T}[1/p]=\mathcal{E}_{\dR,T}[1/p]$ follows from the noetherian property of the ring $R[1/p]$ and \ref{prop:dR:finiteness_&_injective:LZ}.
	To finish the proof of \ref{prop:dR:finiteness_&_injective:isogeny} , it suffices to check that $\mathcal{E}_{\dR,T}$ is automatically $p$-inverted (and thus $\mathcal{E}_{\dR,T}[1/p]=\mathcal{E}_{\dR,T}$): by the functoriality of the derived de Rham complex, the period sheaf $\dR_{(-)/X}$ locally on $X_{\eta,\pe}$ admits a map from $\dR_{\mathcal{O}_{K_\infty}/W}$, where the latter by Beilinson and Bhatt (cf. \cite[Thm.\ 1.4]{GL21}) is naturally isomorphic to the ring $\rAcrys(\mathcal{O}_{K_\infty})$.
	So the claim follows from a known fact that the element $p$ is inverted in $\rAcrys(\mathcal{O}_{K_\infty})[1/\mu]=\mathrm{B}_\crys(\mathcal{O}_{K_\infty})$.	
\end{proof}

To relate the prismatic Riemann--Hilbert functor with the integral de Rham functor, we have the following natural observation.
\begin{proposition}[de Rham realization]
	\label{prop:weak_prismatic_F_crystal:deRham}
	Let $(B,J)\in X_\Prism$, let $n$ be an integer, and let $U=\Spf(R)\subseteq X$ be the image of the morphism $\Spf(\overline{B})\to X$.
	\begin{enumerate}[label=\upshape{(\roman*)}]
	\item\label{prop:weak_prismatic_F_crystal:deRham:inj} Assume $(B,J)$ is a framed regular prism or its perfection. 
	There is a canonical injection of filtered $\overline{B}$-modules that are compatible with the continuous connections (either when $X$ is smooth or after inverting $p$)
		\[
		(\mathcal{E}^{(1)}_{\Prism,n,T}(B,J))\otimes_B \overline{B} \hookrightarrow \mathcal{E}_{\dR,n,T}\otimes_R \overline{B},
		\]
	\item Assume $T\in \Loc^\crys_{\mathbb{Z}_p}(X_\eta)$ and $(B,J)$ is special.
	Then $\mathcal{E}_{\dR,T}$ is a sheaf of finitely presented modules over $\mathcal{O}_X[1/p]$ and the above map is an isomorphism after inverting $p$.
	Moreover, the canonical injection $\mathcal{E}_{\dR,T}\to \mathrm{D}_{\dR}(T)$ in \Cref{prop:dR:finiteness_&_injective}.\ref{prop:dR:finiteness_&_injective:LZ} is an isomorphism.
	\end{enumerate}
\end{proposition}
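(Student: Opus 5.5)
For part (i), the injection will come from reducing the twisted period sheaf modulo $J$ and then applying the relative prismatic--de Rham comparison. By \Cref{lem:weak_prismatic_F_crystal:Frob_inj} and its proof, I identify $\mathcal{E}^{(1)}_{\Prism,n,T}(B,J)$ with $\mathrm{H}^0_\pe(X_\eta, T\otimes \mu^n\Prism^{(1)}_{(-)_{\overline{B}}/B})$. Since the period sheaf is $J$-torsionfree, I consider the short exact sequence of sheaves $0\to J\mu^n\Prism^{(1)}\to \mu^n\Prism^{(1)}\to \mu^n\Prism^{(1)}\otimes_B\overline{B}\to 0$. Tensoring with $T$ and taking $\mathrm{H}^0$ gives an injection
\[
\mathcal{E}^{(1)}_{\Prism,n,T}(B,J)\otimes_B \overline{B}\hookrightarrow \mathrm{H}^0_\pe\bigl(X_\eta, T\otimes \mu^n\dR_{(-)_{\overline{B}}/\overline{B}}\bigr),
\]
because $(J,p)$-regularity (\Cref{lem:weak_prismatic_F_crystal:regularity}) identifies $J\mathcal{E}^{(1)}$ with the $\mathrm{H}^0$ of the subsheaf. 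Here I use $\Prism^{(1)}\otimes_B\overline{B}\simeq \dR_{(-)_{\overline{B}}/\overline{B}}$ from \Cref{def:dR_period_sheaf}, and the $\mu$-twist is carried along because $\mu\in\Fil^1_N$.

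Next I identify the target with $\mathcal{E}_{\dR,n,T}\otimes_R\overline{B}$. First, $\dR_{(-)_{\overline{B}}/\overline{B}}\simeq(\dR_{(-)/X}\otimes_{\mathcal{O}_X}\overline{B})^\wedge_p$ by flat base change along $X\to \Spf(\overline{B})$, which is available since $(B,J)$ is flat. By \Cref{prop:weak_prismatic_F_crystal:localization} I may replace $X$ with $U$. Then I apply the argument of \Cref{prop:weak_prismatic_F_crystal:finiteness}: pass to the Hodge-filtered completion, which is injective by \Cref{prop:global_section_dR}.\ref{prop:global_section_dR_inj}. On graded pieces the projection formula \Cref{thm:projection_formula}.\ref{thm:projection_formula_H1} holds, with bounded torsion coming from \Cref{thm:finiteness_integral_proet_coh}, and only finitely many pieces contribute by \Cref{prop:LZ_finiteness}. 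Together these give a comparison $\mathcal{E}_{\dR,n,T}(U)\otimes_R\overline{B}\to \mathrm{H}^0(X_\eta,T\otimes\mu^n\dR_{(-)_{\overline{B}}/\overline{B}})$, at least on filtered completions and up to bounded torsion.

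This is the step I expect to be hardest: making the comparison an isomorphism, or at least an injection that the map from $\mathcal{E}^{(1)}\otimes\overline{B}$ factors through. I would try to exploit that the target is $p$-torsionfree, so that bounded torsion is harmless, and deduce the factorization from the injectivity statements.

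Compatibility with filtrations follows because the Nygaard filtration reduces to the Hodge filtration mod $J$ (\cite[Cor.\ 5.2.8]{BL22a}). Compatibility with connections follows from functoriality of the Gauss--Manin connection of \cite{GL21} on the $B$-linear side.

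For part (ii), \Cref{thm:prismatic_RH_for_crystalline_local_system}.\ref{thm:prismatic_RH_for_crystalline_local_system_linearization} gives $\mathcal{E}_{\Prism,T}(B,J)\otimes\Prism[1/\mu]\simeq T\otimes\Prism[1/\mu]$. I Frobenius-twist this, reduce mod $J$, and pass to de Rham, obtaining $\mathcal{E}^{(1)}_{\Prism,T}\otimes\dR[1/\mu]\simeq T\otimes\dR[1/\mu]$. Taking $\mathrm{H}^0$, and using $\mathrm{H}^0(\dR_{(-)_{\overline{B}}/\overline{B}}[1/\mu])[1/p]=\overline{B}[1/p]$ (\Cref{prop:global_section_dR} plus the argument of \Cref{thm:global_section_rational}) together with reflexivity of $\mathcal{E}^{(1)}$, the map of (i) becomes an isomorphism after inverting $p$. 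Finite presentation of $\mathcal{E}_{\dR,T}$ over $\mathcal{O}_X[1/p]$ then follows, and its sheaf property comes from the crystal property by étale descent. Finally, tensoring the isomorphism with $\OBdR$ through $\dR[1/p]^\wedge_{\Fil}\simeq\OB^+_\dR$ shows that $\mathcal{E}_{\dR,T}\otimes\OBdR\simeq T\otimes\OBdR$. Hence $T$ is de Rham, and by Liu--Zhu \cite{LZ17} the rank of $\mathrm{D}_\dR(T)$ equals that of $\mathcal{E}_{\dR,T}$; since the injection of \Cref{prop:dR:finiteness_&_injective}.\ref{prop:dR:finiteness_&_injective:LZ} becomes an isomorphism after $\otimes\OBdR$, it is an isomorphism.
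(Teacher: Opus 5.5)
Your first step agrees with the paper. The mod-$J$ sequence, which only needs $J$ to be invertible, injects $\mathcal{E}^{(1)}_{\Prism,n,T}(B,J)\otimes_B\overline{B}$ into $\mathrm{H}^0_\pe(X_\eta,T\otimes\mu^n\dR_{(-)_{\overline{B}}/\overline{B}})$, and your outline of (ii) follows the paper's path.

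The genuine gap is the step you flag yourself: identifying this target with $\mathcal{E}_{\dR,n,T}(U)\otimes_R\overline{B}$. This step cannot be skipped. Unless the linearization $\mathcal{E}_{\dR,n,T}(U)\otimes_R\overline{B}\to\mathrm{H}^0_\pe(X_\eta,T\otimes\mu^n\dR_{(-)_{\overline{B}}/\overline{B}})$ is surjective, you have no map from $\mathcal{E}^{(1)}_{\Prism,n,T}(B,J)\otimes_B\overline{B}$ into $\mathcal{E}_{\dR,n,T}\otimes_R\overline{B}$ at all.

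The route you sketch cannot give surjectivity, for three reasons:
\begin{itemize}
\item Isomorphisms on $\mathrm{H}^0$ of the Hodge graded pieces, obtained from \Cref{thm:projection_formula}.\ref{thm:projection_formula_H1}, do not assemble into an isomorphism on $\mathrm{H}^0$ of the Hodge-completed sheaf. $\mathrm{H}^0$ is only left exact, and the projection formula controls nothing in degree one.
\item The map to the Hodge completion is merely injective, so even an isomorphism there would not descend to the uncompleted sheaf.
\item ``Up to bounded torsion'' is of no use for an integral statement.
\end{itemize}
Part (ii) inherits the problem. To conclude that the map of (i) is an isomorphism after inverting $p$, you again need $\mathrm{H}^0_\pe(X_\eta,T\otimes\dR_{(-)_{\overline{B}}/\overline{B}}[1/\mu])[1/p]=\mathcal{E}_{\dR,T}\otimes_R\overline{B}[1/p]$.

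The missing idea is to skip the graded pieces and apply the completely projective case \Cref{thm:projection_formula}.\ref{thm:projection_formula_projective} directly to the unfiltered sheaf, as the paper does.
\begin{itemize}
\item Replace $X$ by $U$ via \Cref{prop:weak_prismatic_F_crystal:localization}, and take the Galois cover $\widetilde{X}=\Spf(S)$ of \Cref{const:algebrac_pi_1}.
\item The module $N=T(\widetilde{X}_\eta)\otimes\mu^n\dR_{S/R}$ is $p$-complete and $p$-torsionfree, and $G_{X_\eta}$ acts trivially on $R$ and $\overline{B}$.
\item Flat base change gives $(N\otimes_R\overline{B})^\wedge_p\simeq T(\widetilde{X}_\eta)\otimes\mu^n\dR_{S_{\overline{B}}/\overline{B}}$.
\item Case (b) needs no bound on the $p^\infty$-torsion of $\mathrm{H}^1$; the relevant limit vanishes by \Cref{lem:vanishing_of_certain_limit_of_group_coh}. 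It needs only complete projectivity of $\overline{B}$ over $R$, which the paper invokes at exactly this point.
\item It yields $\mathrm{H}^0_\pe(X_\eta,T\otimes\mu^n\dR_{(-)_{\overline{B}}/\overline{B}})\simeq(\mathcal{E}_{\dR,n,T}(U)\otimes_R\overline{B})^\wedge_p$.
\item Finite presentation of $\mathcal{E}_{\dR,n,T}(U)$, from \Cref{prop:dR:finiteness_&_injective}.\ref{prop:dR:finiteness_&_injective:finitenss}, removes the completion.
\end{itemize}
The same identification, passed to the colimit over $n$ with $p$ inverted, is what (ii) needs. Once it is in place, your endgame through $\OBdR$ is a legitimate alternative to the paper's. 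The paper instead observes that $\mathcal{E}_{\dR,T}$ is a flat connection of the same rank $m$ as $\mathrm{D}_\dR(T)$, and uses that flat connections form an abelian category.
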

\begin{proof}
    We first notice that it suffices to replace $X$ by the open subscheme $U$, thanks to the localization property in \Cref{prop:weak_prismatic_F_crystal:localization} (where the same arguments apply verbatim to the period sheaves $\dR_{(-)_{\overline{B}}/\overline{B}}$ and $\dR_{(-)/X}$).
	By the proof of \Cref{lem:weak_prismatic_F_crystal:Frob_inj}, we know the Frobenius twist $\varphi_B^*\mathrm{H}^0_\pe(X_\eta, T\otimes \Prism_{(-)_{\overline{B}}/B}\cdot \mu^n)$ is naturally equal to $\mathrm{H}^0_\pe(X_\eta, T\otimes \Prism^{(1)}_{(-)_{\overline{B}}/B}\cdot \mu^n)$.
	In addition, since the ideal $J$ is an invertible $B$-module, the universal coefficient theorem implies an injection
	\[
	\mathcal{E}^{(1)}_{\Prism,T}(B,J)\otimes_B \overline{B} = \bigl( \varphi_B^*\mathrm{H}^0_\pe(X_\eta, T\otimes \Prism_{(-)_{\overline{B}}/B}\cdot \mu^n) \bigr)\otimes_B \overline{B} \hookrightarrow \mathrm{H}^0_\pe(X_\eta, T\otimes \dR_{(-)_{\overline{B}}/\overline{B}}\cdot \mu^n).
	\]
	On the other hand, by the complete projectivity assumption of $\overline{B}$ over $R$, the finiteness of $\mathrm{H}^0_\pe(X_\eta, T\otimes \dR_{(-)/X}\cdot \mu^n)$ over $R$ (\Cref{prop:dR:finiteness_&_injective}.\ref{prop:dR:finiteness_&_injective:finitenss}), and the projection formula (\Cref{thm:projection_formula}.\ref{thm:projection_formula_projective}), we see the following linearization map is an isomorphism
	\[
	\mathrm{H}^0_\pe(X_\eta, T\otimes \dR_{(-)/X}\cdot \mu^n) \otimes_R \overline{B}\longrightarrow \mathrm{H}^0_\pe(X_\eta, T\otimes \dR_{(-)_{\overline{B}}/\overline{B}}\cdot \mu^n).
	\]
	Hence by combining the above two equations, we obtain a canonical injection as in the statement.
	
	Now we assume $T\in \Loc^\crys_{\mathbb{Z}_p}(X_\eta)$, and for simplicity we assume the rank of the local system $T$ is $m$.
	We apply the base change of the linearization isomorphism in \Cref{thm:prismatic_RH_for_crystalline_local_system}.\ref{thm:prismatic_RH_for_crystalline_local_system_linearization} along $B\xrightarrow{\varphi_B} B \to \overline{B}$, to get an isomorphism of period sheaves on $X_{\eta,\pe}$:
	$$(\mathcal{E}^{(1)}_{\Prism,T}(B,J))\otimes_B \overline{B} \otimes_{\overline{B}} \dR_{(-)_{\overline{B}}/\overline{B}}[1/\mu]\simeq T\otimes \dR_{(-)_{\overline{B}}/\overline{B}}[1/\mu].$$
	So by taking the global section and by the projection formula as above (\Cref{thm:projection_formula}.\ref{thm:projection_formula_projective}), we get the expected isomorphism
	\[
	(\mathcal{E}^{(1)}_{\Prism,T}(B,J))\otimes_B \overline{B}[1/p]\simeq \mathcal{E}_{\dR,T}\otimes_{R[1/p]}\overline{B}[1/p].
	\]
	The last isomorphism in particular implies that $\mathcal{E}_{\dR,T}$ is a flat connection of rank $m$.
	As a consequence, since $\mathrm{D}_{\dR}(T)$ has rank $m$ as well (\cite{LZ17}) and since the category of flat connections is abelian, we know the injection $\mathcal{E}_{\dR,T}\to \mathrm{D}_{\dR}(T)$ is an isomorphism of flat connections.
\end{proof}

\subsection{Crystalline realization}
\label{sub:crystalline}
In this section, we consider the prismatic Riemann--Hilbert functor for a collection of crystalline prisms in $X_\Prism$.
As before, we let $X$ be a regular $p$-adic formal scheme over $\mathcal{O}_K$.

Consider the following collection of crystalline prisms:
\begin{definition}
	\label{def:strongly_crystalline_prism}
	We define a crystalline prism $(D,(p))\in X_\Prism$ to be \emph{special} if either of the following is true:
	\begin{enumerate}[label=\upshape{(\alph*)}]
		\item The prism $(D,(p))$ is of the form $(D_B(J),(p))$, where $(B,J)$ is a prism $(B,J)\in X_\Prismsp$ and $D_B(J)$ is the $p$-complete $p$-adic divided power envelope for the surjection $B\to \overline{B}$.
		\item The prism $(D,(p))$ is of the form $(R_0, (p))$, where the ring $R_0$ is $p$-completely smooth over $W$, and the structural map $\Spec(R_0/pR_0)\to X_{p=0}$ can be lifted to a $p$-completely \'etale morphism $\Spf(R_{0,\mathcal{O}_K})\to X$.
		\footnote{Informally, the ring $\Spf(R_0)$ is an unramified smooth model of an \'etale object over (the smooth locus of) $X$, equipped with a Frobenius structure.}
	\end{enumerate}
\end{definition}
\begin{remark}
	By the identical arguments of \Cref{eg:crystalline_prism}, a special crystalline prism is in particular a flat crystalline prism in the sense of \Cref{ass:flat}.
\end{remark}

We now calculate the global section of the prismatic period sheaves.
\begin{proposition}
	\label{prop:global_section_crystalline}
	Let $(D,(p))$ be a special crystalline prism in $X_\Prism$.
	The tautological maps below are isomorphisms:
	\[
	D\to \mathrm{H}^0_\pe(X_\eta, \Prism_{(-)_{\overline{D}}/D}),\quad D[1/p]\to \mathrm{H}^0_\pe(X_\eta, \Prism_{(-)_{\overline{D}}/D}[\frac{1}{\mu}]).
	\]
\end{proposition}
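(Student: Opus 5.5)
The plan is to mimic the proofs of \Cref{thm:global_section} and \Cref{thm:global_section_rational}, replacing the framed regular prism by the special crystalline prism $(D,(p))$. Since $J=(p)$, the de Rham specialization and the Hodge--Tate specialization coincide: the mod $p$ reduction of $\Prism^{(1)}_{(-)_{\overline{D}}/D}$ is the de Rham period sheaf $\dR_{(-)_{\overline{D}}/\overline{D}}$ over the $\mathbb{F}_p$-algebra $\overline{D}=D/p$. This suggests computing everything modulo $p$ and then lifting by $p$-completeness.

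\emph{Step 1: the twisted version.} First I show $D\simeq \mathrm{H}^0_\pe(X_\eta,\Prism^{(1)}_{(-)_{\overline{D}}/D})$. By $p$-completeness and $p$-torsionfreeness (from the flatness over $\Ainf$ in \Cref{thm:prismatic_period_sheaf}), it suffices to show $\overline{D}\simeq \mathrm{H}^0_\pe(X_\eta,\dR_{(-)_{\overline{D}}/\overline{D}})$, together with the vanishing of $\lim$-terms. The latter follows from \Cref{lem:vanishing_of_certain_limit_of_group_coh}.
\begin{itemize}
\item In case (a) of \Cref{def:strongly_crystalline_prism}, I use that $\overline{D}$ is flat over $X_{p=0}$ (the remark after the definition). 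The flat base change in the proof of \Cref{prop:property_of_dR} identifies $\dR_{(-)_{\overline{D}}/\overline{D}}$ with $(\dR_{(-)/X}/p)\otimes_{\mathcal{O}_X/p}\overline{D}$.
\item In case (b), the same identification holds after passing to the étale lift $\Spf(R_{0,\mathcal{O}_K})\to X$ and using \Cref{prop:weak_prismatic_F_crystal:localization}.
\end{itemize}
I then filter by the Hodge filtration, which is separated by \Cref{lem:dR_is_sep}. Its graded pieces are $\wedge^i\mathbb{L}_{\widehat{\mathcal{O}}^+/\mathcal{O}_X}[-i]/p\otimes\overline{D}$. \Cref{prop:global_section_dR} gives $R$ in degree $0$ and vanishing for $i\geq1$, integrally. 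Combining this with the projection formula \Cref{thm:projection_formula} (the flatness case \ref{thm:projection_formula_H1}, with bounded torsion from \Cref{thm:finiteness_integral_proet_coh}) gives the $D$-version.

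One subtlety is that the mod $p$ statement needs the $p$-torsion of $\mathrm{H}^1$. I plan to avoid this by working directly with the $p$-complete integral sheaf $\dR_{(-)/X}\widehat{\otimes}D$ rather than reducing mod $p$.

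\emph{Step 2: untwisting.} To pass from $\Prism^{(1)}$ to $\Prism$, I argue as in \Cref{thm:global_section}.\ref{thm:global_section_prismatic}. In case (a), $\varphi_D$ is the divided power envelope Frobenius; faithful flatness of $\varphi$ on $D/p$ is not available. So I instead use a cartesian square as in \Cref{prop:intersection_of_Frobenius_of_base_with_coproduct}.\ref{prop:intersection_of_Frobenius_of_base_with_coproduct:intersection}, obtained by base changing from the framed regular prism $(B,J)$ along $B\to D$ and using the complete freeness from \Cref{thm:coproduct_in_general}.\ref{thm:coproduct of prism regular and regular}. In case (b), $\varphi_{R_0}$ is finite faithfully flat, since $R_0/p$ is smooth, so the first argument of \Cref{thm:global_section}.\ref{thm:global_section_prismatic} applies directly.

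\emph{Step 3: the rational statement.} Here I follow \Cref{thm:global_section_rational}, with one key difference: inverting $\mu$ forces inverting $p$. The reason is that $\mu^{p-1}\in p\Acrys$-type relations hold in the crystalline setting, as in the proof of \Cref{prop:dR:finiteness_&_injective}.\ref{prop:dR:finiteness_&_injective:isogeny}. So $\Prism_{(-)_{\overline{D}}/D}[1/\mu]$ is already a $D[1/p]$-module, which explains the $D[1/p]$ in the statement.

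The plan is to compare the twists $\mu^{-n}\Prism$ via \Cref{thm:structure_of_rational_period_sheaf}. Its graded pieces are Tate twists $\widehat{\mathcal{O}}(-i)$, $i>0$, tensored with vector bundles, and these have no global sections by \Cref{prop:LZ_finiteness}. This shows that $\mathrm{H}^0$ of each $\mu^{-n}\Prism$, after inverting $p$, equals $D[1/p]$. Taking the colimit over $n$, which commutes with $\mathrm{H}^0$ on the quasi-compact $X_\eta$, completes the argument.

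\emph{Main obstacle.} I expect the hardest point to be the analogue of \Cref{thm:structure_of_rational_period_sheaf}.\ref{thm:structure_of_rational_period_sheaf:inj} and \Cref{lem:intersection_of_prismatic_coh_with_base} for crystalline $D$. These statements used $(J,p)$-regularity, which degenerates here since $J=(p)$. I would try to deduce them from the framed regular case by base change along $B\to D_B(J)$, which is completely flat modulo $p$ by \Cref{eg:crystalline_prism}. I would then argue that the needed injectivities survive by \Cref{lem:inj_and_completely_proj_mod_one_variable}.
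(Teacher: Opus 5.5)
\textbf{There is a genuine gap in Step 1, and Steps 2 and 3 both rest on it.}

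As you already observe, reducing to $\dR_{(-)_{\overline{D}}/\overline{D}}$ does not work. The invariants of $\dR_{(-)/X}/p$ are an extension of $\mathrm{H}^1_\pe(X_\eta,\dR_{(-)/X})[p]$ by $R/p$. Nothing forces this torsion to die after tensoring with $\overline{D}$, so $\overline{D}\to\mathrm{H}^0_\pe(X_\eta,\dR_{(-)_{\overline{D}}/\overline{D}})$ need not be surjective.

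Your remedy, working with ``the $p$-complete integral sheaf $\dR_{(-)/X}\widehat{\otimes}D$'', does not describe $\Prism^{(1)}_{(-)_{\overline{D}}/D}$. In general it does not even make sense as a $D$-linear base change, because there is no compatible ring map $\mathcal{O}_X\to D$.
\begin{itemize}
\item In type (a) the structure map is only $R/p\to D/p$, induced by $\varphi_B$, and it need not lift. For example, take $R=\mathcal{O}_K$ with $e>1$ and Breuil's ring $D=D_{\mathfrak{S}}(E(u))\subset K_0\llbracket u\rrbracket$. Then $E$ has no root in $D$, so there is no $W$-algebra map $\mathcal{O}_K\to D$.
\item In type (b) with $K$ ramified, one only has $R\to R_{0,\mathcal{O}_K}\neq R_0$. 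The relevant object there is the crystalline period ring $\mathcal{O}\mathbb{A}_{\crys,R_0}$, not a base change of $\dR_{(-)/X}$.
\end{itemize}
So the integral computation of Step 1 is missing exactly where it matters. In addition, the untwisting square in Step 2(a) is never specified. Step 3 depends on transporting \Cref{thm:structure_of_rational_period_sheaf} and \Cref{lem:intersection_of_prismatic_coh_with_base} to $J=(p)$, which you flag but do not carry out.

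\textbf{The missing idea} is to identify the \emph{untwisted} $\Prism_{S_{\overline{D}}/D}$ with an explicit $p$-torsionfree ring. Here $S$ is the perfectoid ring of a maximal pro-finite-\'etale Galois cover, with group $G_{X_\eta}$.

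For $D=D_B(J)$, the prism map $(B,J)\to(D,(p))$ is $B\to D\xrightarrow{\varphi_D}D$ on rings. Base change, \Cref{cor:coproduct:perfect_with_framed} and \cite[Cor.\ 2.39]{BS22} then identify $\Prism_{S_{\overline{D}}/D}$ with the $p$-complete divided power envelope $D(S)$ of the l.c.i.\ surjection $(\rAinf(S)\otimes_W B)^\wedge_p\to S_{\overline{B}}$. This ring has the following structure.
\begin{itemize}
\item Its divided power filtration is separated (\Cref{lem:dR_is_sep}) and compatible with that of $D$.
\item It has $\gr^0=S_{\overline{B}}$.
\item Its higher graded pieces are divided powers of $\mathbb{L}_{S_{\overline{B}}/B}[-1]$, an extension of $(\mathbb{L}_{S/R}\otimes_R\overline{B})^\wedge_p[-1]$ by $J/J^2\otimes_{\overline{B}}S_{\overline{B}}$.
\end{itemize}
All of these are $p$-torsionfree. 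So $\gr^iD\simeq(\gr^iD(S))^{G_{X_\eta}}$ follows integrally from \Cref{lem:global_sec_of_Ohat}, \Cref{thm:projection_formula} and the vanishing in \Cref{prop:LZ_finiteness}. No reduction mod $p$ and no Frobenius untwisting are needed.

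Your Step 3 heuristics are the right ones: inverting $\mu$ forces inverting $p$, and the negative Tate twists are killed by Liu--Zhu vanishing. They become a proof on this same filtered model. $D(S)[1/\mu]$ is an $\Acrys(S)[1/\mu]$-algebra, hence already $p$-inverted. The graded pieces of $\mu^nD(S)/\mu^{n+1}D(S)[1/p]$ are $S(n)$ tensored with the divided powers above, and these have no invariants for $n<0$.

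For type (b), the paper argues as follows.
\begin{itemize}
\item It pulls back along the finite flat $\varphi_{R_0}$.
\item It identifies the result with $\mathcal{O}\mathbb{A}_{\crys,R_0}(S)$ via the crystalline comparison.
\item It invokes the rational computation \cite[Thm.\ 3.21]{GY24}.
\item It concludes because a bounded subring of $R_0[1/p]$ containing the normal ring $R_0$ is $R_0$ itself.
\end{itemize}
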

\begin{proof}
We first consider the case when $D=D_B(J)$ for a special prism $(B,J)$.
As the statement is \'etale local on $X$, we may assume $X=\Spf(R)$ is affine and connected, and following \Cref{const:algebrac_pi_1} we also let $\widetilde{X}_\eta=\Spf(S[1/p],S)\to X_\eta$ be the maximal connected Galois cover with Galois group $G_{X_\eta}$.
It then suffices to show that the tautological map $D\to \Prism_{S_{\overline{D}}/D}$ induces an isomorphism onto $G_{X_\eta}$-invariant of the target.
By the base change formula of prismatic cohomology, the target $\Prism_{S_{\overline{D}}/D}$ is isomorphic to the base change of $\Prism_{S_{\overline{B}}/B}$ along the map of \emph{prisms} $(B,J)\to (D,(p))$.
Here we notice that the underlying map of rings $B\to D$ is naturally factored as $B \xrightarrow{\text{incl}} D \xrightarrow{\varphi_D} D$,
where the first arrow is the canonical inclusion for the divided power envelope $B\to D=D_B(J)$.
So by interpreting $\Prism_{S_{\overline{D}}/D}\simeq (\Prism_{S_{\overline{B}}/B}\otimes_B D)^\wedge_p$ as the coproduct (\Cref{cor:coproduct:perfect_with_framed}), and by further identifying this coproduct as the divided power envelope (\cite[Cor.\ 2.39]{BS22}), we see $\Prism_{S_{\overline{D}}/D}$ is naturally isomorphic to the $p$-complete divided power envelope $D(S)$ of the l.c.i surjection
\[
(\Ainf(S)\otimes_W B)^\wedge_p \longrightarrow S_{\overline{B}}.
\]
Here the ring $D(S)$ is naturally equipped with the divided power filtration, which is compatible with that of $D=D_B(J)$ and is separated by the proof of \Cref{lem:dR_is_sep}.
So it suffices to show that the canonical map $D\to D(S)$ induces an isomorphism $\gr^iD \simeq (\gr^iD(S))^G_{X_\eta}$.

To continue, we notice that by the structure of the divided power envelope with respect to l.c.i surjections, the higher graded pieces are the divided powers of the first graded pieces, where the latter is the shifted ($p$-complete) cotangent complexes.
So we need to prove that 
\[
\overline{B} \xrightarrow{\sim} S_{\overline{B}}^{G_{X_\eta}},~\text{and}~J/J^2 \xrightarrow{\sim} \bigl( \mathbb{L}_{S_{\overline{B}}/(\Ainf(S)\otimes_W B)^\wedge_p}[-1] \bigr)^{G_{X_\eta}}.
\]
The first equality follows from \Cref{lem:global_sec_of_Ohat} and the projection formula in \Cref{thm:projection_formula}.\ref{thm:projection_formula_projective}, since $\overline{B}$ is completely projective.
For the second map, we note that by the relateive perfectness of $\Ainf(S)$ over $W$, the cotangent complex $\mathbb{L}_{S_{\overline{B}}/(\Ainf(S)\otimes_W B)^\wedge_p}$ is naturally isomorphic to $\mathbb{L}_{S_{\overline{B}}/B}$.
In addition, the factorization $B\to \overline{B} \to S_{\overline{B}}$ naturally induces a distinguished triangle of finite projective $S_{\overline{B}}$-modules
\[
 \mathbb{L}_{\overline{B}/B}[-1] \otimes_{\overline{B}} S_{\overline{B}} \longrightarrow \mathbb{L}_{S_{\overline{B}}/B}[-1] \longrightarrow \bigl(\mathbb{L}_{S/R} \otimes_R \overline{B} \bigr)^\wedge_p [-1],
 \]
where by the functoriality $J/J^2\simeq \mathbb{L}_{\overline{B}/B}[-1]$ identifies the first term in the triangle as the base extension.
In particular, by the same reasoning above, the Galois invariant of $\mathbb{L}_{\overline{B}/B}[-1]\otimes_{\overline{B}} S_{\overline{B}}$ is naturally isomorphic to $\mathbb{L}_{\overline{B}/B}[-1]$ through the functoriality map.
So to finish the calculation, it suffices to notice that $\bigl(\mathbb{L}_{S/R} \otimes_R \overline{B} \bigr)^\wedge_p [-1]$ has no $G_{X_\eta}$-invariant, thanks to vanishing result of \Cref{prop:LZ_finiteness}.\ref{prop:LZ_finiteness_H0} and the projection formula.

To calculate the global section for the $p$-inverted period sheaf, we first notice that the ring $D(S)[1/\mu]$, being an algebra over $\Acrys(S)[1/\mu]$, is $p$-inverted.
So to prove the equality $D[1/p]\simeq \mathrm{H}^0_\pe(X_\eta, \Prism_{(-)_{\overline{D}}/D}[1/p])$, it suffices to check that map
\[
\mu^{n+1}D(S)[1/p] \longrightarrow \mu^n D(S)[1/p]
\]
induces an isomorphism on Galois invariants for each $n<0$.
In addition, by the separatedness of the divided power filtrations, we may do so by considering the Galois invariant of the graded pieces.
To continue, we use the calculation from \Cref{prop:Ainf_twist_difference} to get that the $i$-th graded pieces of the cokernel of the above map is isomorphic to $S(n)\otimes_S \gr^{i-n}D(S)[1/p]\simeq S(n)\otimes_S \wedge^{i-n}\mathbb{L}_{S_{\overline{B}}/B}[n-i][1/p]$ (or zero if $i<n$).
Now we claim that the above module has no Galois invariant for each $n<0$.
By the calculation of the cotangent complex, the above $S_{\overline{B}}[1/p]$-module admits a finite filtration whose graded pieces are $\Gamma^j (\mathbb{L}_{\overline{B}/B}[-1]) \otimes_{\overline{B}} \Gamma^{i-n-j} \bigl( \mathbb{L}_{S/R} \otimes_R \overline{B} \bigr)^\wedge_p [-1]\bigr)\otimes_S S(n)[1/p]$ for $0\leq j\leq i-n$.
Note that the tensor product with $\Gamma^\bullet\mathbb{L}_{\overline{B}/B}[-1]$ and $\overline{B}$ have no influence on the vanishing of the Galois invariant, since both are completely projective $R$-modules.
So to check the vanishing, it suffices to recall that as in the proof of \Cref{prop:LZ_finiteness}.\ref{prop:LZ_finiteness_H0}, the Galois invariant of each each $\Gamma^{i-n-j} (\mathbb{L}_{S/R}  [-1])\otimes_S S(n)[1/p]$ is contained in $\mathrm{H}^0_\pe(X_\eta, \widehat{\mathcal{O}}(n))$, which is zero for $n<0$.

Next, we consider the case when $D=R_0$ is a smooth model of $X=\Spf(R)$ over $W$, where the Frobenius structure $\varphi_{R_0}$ is finite flat.
Under the assuption, to show that the canonical injection $D\to \Prism_{S_{\overline{R}_0}/R_0}^{G_{X_\eta}}$ is isomorphic, it suffices to show it after a pullback along $\varphi_{R_0}$.
In particular, by the prismatic--crystalline comparison (\cite[Thm.\ 5.2]{BS22}), it suffices to consider the map $R_0\to R\Gamma_\crys(S_{\overline{R}_0}/R_0)^{G_{X_\eta}}$.
Note that by \cite[Thm. 1.4]{GL21} and the crystalline--de Rham comparison, the crystalline cohomology $R\Gamma_\crys(S_{\overline{R}_0}/R_0)$ is naturally isomorphic to $\mathcal{O}\mathbb{A}_{\crys,R_0}(S[1/p],S)$.
In addition, the rational analogue $R_0[1/p]\to \bigl( \mathcal{O}\mathbb{A}_{\crys,R_0}(S[1/p],S)[1/\mu]\bigr) ^{G_{X_\eta}}$ is an isomorphism (\cite[Thm.\ 3.21]{GY24}).
As a consequence, since $\mathcal{O}\mathbb{A}_{\crys,R_0}(S[1/p],S)^{G_{X_\eta}}$ is a compact subalgebra of $R_0[1/p]$ that contains $R_0$, the Galois invariant has to be $R_0$ itself.
\end{proof}

We then notice that the prismatic Riemann--Hilbert functor satisfies the base change formula when the base prisms are special crystalline.
\begin{lemma}
	\label{lem:weak_prismatic_F_crystal:crystalline_base_change}
	Let $T\in \Loc_{\mathbb{Z}_p}(X_\eta)$, and let $(D_1,(p))\to (D_2,(p))$ be a completely projective map of crystalline prisms.
	The linearization map $\bigl( \mathcal{E}_{\Prism,n,T}(D_1,(p))\otimes_{D_1} D_2 \bigr)^\wedge_p \to \mathcal{E}_{\Prism,n,T}(D_2,(p))$ is an isomorphism.
\end{lemma}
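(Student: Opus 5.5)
Recall that by \Cref{const:weak prismatic-F-crystal} we have
\[
\mathcal{E}_{\Prism,n,T}(D_i,(p))=\mathrm{H}^0_\pe\bigl(X_\eta, T\otimes_{\mathbb{Z}_p}\mu^n\Prism_{(-)_{\overline{D}_i}/D_i}\bigr).
\]
The plan is to deduce \Cref{lem:weak_prismatic_F_crystal:crystalline_base_change} from the base change formula for the period sheaves (\Cref{rmk:base_change_presheaf}), followed by the projection formula (\Cref{thm:projection_formula}.\ref{thm:projection_formula_projective}).

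\emph{Reduction to a Galois invariant.} First, since $D_1\to D_2$ is a map of crystalline prisms, \Cref{rmk:base_change_presheaf} gives an isomorphism of pro-\'etale sheaves
\[
\bigl(\mu^n\Prism_{(-)_{\overline{D}_1}/D_1}\otimes_{D_1}D_2\bigr)^\wedge_p\simeq \mu^n\Prism_{(-)_{\overline{D}_2}/D_2}.
\]
Here the derived tensor product is discrete, by \Cref{lem:complete_tensor_product_w_flat}, since $D_2$ is completely projective hence completely flat. Since the statement is \'etale local on $X$ (using \Cref{prop:weak_prismatic_F_crystal:localization} to compare with opens), we reduce to $X=\Spf(R)$ affine and connected. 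We then take the maximal connected pro-finite-\'etale Galois cover $\widetilde{X}_\eta=\Spa(S[1/p],S)$ with group $G=G_{X_\eta}$ from \Cref{const:algebrac_pi_1}, on which $T$ is trivial. The global sections then become continuous $G$-invariants:
\[
\mathcal{E}_{\Prism,n,T}(D_i,(p))=\bigl(T(\widetilde{X}_\eta)\otimes_{\mathbb{Z}_p}\mu^n\Prism_{S_{\overline{D}_i}/D_i}\bigr)^{G}.
\]
Here $N:=T(\widetilde{X}_\eta)\otimes\mu^n\Prism_{S_{\overline{D}_1}/D_1}$ is $p$-complete and $p$-torsionfree, being the divided power envelope $D(S)$ from the proof of \Cref{prop:global_section_crystalline} or a crystalline-cohomology ring. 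It carries a continuous $D_1$-linear $G$-action, with $G$ acting trivially on $D_1$ and on $M:=D_2$. The target therefore becomes $\bigl((M\otimes_{D_1}N)^\wedge_p\bigr)^G$.

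\emph{Main step.} Apply \Cref{thm:projection_formula}.\ref{thm:projection_formula_projective} with $R=D_1$, $M=D_2$ and this $N$. Its hypotheses hold because $D_2$ is completely projective over $D_1$ and both $M,N$ are $p$-complete and $p$-torsionfree. It yields
\[
\bigl((D_2\otimes_{D_1}N)^\wedge_p\bigr)^G\simeq\lim_m\bigl(D_2\otimes_{D_1}N^G\bigr)/p^m=\bigl(\mathcal{E}_{\Prism,n,T}(D_1,(p))\otimes_{D_1}D_2\bigr)^\wedge_p .
\]
It then remains to check that the isomorphism produced this way coincides with the tautological linearization map. This follows from naturality of the map (\ref{eq:thm:projection_formula_colimit}) in the proof of \Cref{thm:projection_formula}, which is itself the linearization, compatibly with the base change isomorphism of \Cref{rmk:base_change_presheaf}.

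\emph{Expected obstacle.} The main difficulty is the identification of the period sheaf on the Galois cover. One must make sure that the base change isomorphism holds at the level of the sheafified period sheaves, not only the presheaves, and that the twist by $\mu^n$ commutes with the base change. For special crystalline prisms I would handle this via \Cref{cor:sheafification_is_unfolding}: on a pro-\'etale cover the sheaf agrees with the presheaf $\Prism_{S_{\overline{D}}/D}$, which lives in degree zero by \Cref{prop:relative_prismatic_coh_of_perfectoid}.\ref{prop:relative_prismatic_coh_of_perfectoid_initial_prism} because these prisms are flat crystalline. The twist $\mu^n$ is a $G$-equivariant invertible $\Ainf$-module, so it commutes with the base change.
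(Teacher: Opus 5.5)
Your proposal is correct and is the paper's own argument. The paper proves this lemma in one line by applying the projection formula \Cref{thm:projection_formula}.\ref{thm:projection_formula_projective} to the completely projective map $D_1\to D_2$. Your write-up just makes explicit two steps the paper leaves implicit: the base change identification of the period sheaves (\Cref{rmk:base_change_presheaf}), and the translation of pro-\'etale global sections into continuous Galois invariants.
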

\begin{proof}
This follows immerdiately from the projection formula in \Cref{thm:projection_formula}.\ref{thm:projection_formula_projective}.
\end{proof}
\begin{corollary}[Compatibility with the crystalline Riemann--Hilbert functor]
	\label{cor:weak_prismatic_F_crystal:crystalline:local_gluing}
	Let $U\subset X$ be an affine open subscheme, and let $T\in \Loc_{\mathbb{Z}_p}(X_\eta)$.
	\begin{enumerate}[label=\upshape{(\roman*)}]
		\item 	By ranging over all the special crystalline prisms $(D,(p))$ such that $\overline{D}$ is projective over $U_{p=0}$, the $D[1/p]$-modules $\mathcal{E}_{\Prism,n,T}(D,(p))$ naturally glue to a crystal $\mathcal{F}_{n,U}$ with a weak Frobenius structure over $U_{p=0}$.
		\item Assume $X$ is smooth over $\mathcal{O}_K$. 
		Then the crystal $\mathcal{F}_{n,U}$ is coherent, the inclusion map $\mathcal{F}_{n,U}\to \mathcal{F}_{n-1,U}$ is an isogeny for $n\ll 0$, and the isocrystal $\colim_{n\in \mathbb{Z}}\mathcal{F}_{n,U}$ with weak Frobenius structure is naturally equivalent to $\mathcal{E}_{\crys,T}(U)$ of Guo--Yang \cite[Thm.\ 1.10]{GY24}.
	\end{enumerate}
\end{corollary}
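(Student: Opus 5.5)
For part (i), I would define the gluing datum directly from \Cref{lem:weak_prismatic_F_crystal:crystalline_base_change}. Fix the affine open $U$ and consider the category of special crystalline prisms $(D,(p))$ with $\overline{D}$ projective over $U_{p=0}$. By \Cref{prop:weak_prismatic_F_crystal:localization}, the value $\mathcal{E}_{\Prism,n,T}(D,(p))$ depends only on $U$, so we may replace $X$ by $U$. For any two such prisms, the coproduct is computed as in \Cref{thm:coproduct_in_general}, and in the crystalline case it is a $p$-complete divided power envelope. The projection maps are completely projective: in the framed/divided-power case this follows from the explicit divided power polynomial presentations in \Cref{thm:coproduct_in_general}.\ref{thm:coproduct of prism regular and regular} and \Cref{eg:crystalline_prism}. \Cref{lem:weak_prismatic_F_crystal:crystalline_base_change} then gives canonical isomorphisms $(\mathcal{E}_{\Prism,n,T}(D_1)\otimes D_{12})^\wedge_p\simeq \mathcal{E}_{\Prism,n,T}(D_{12})\simeq (\mathcal{E}_{\Prism,n,T}(D_2)\otimes D_{12})^\wedge_p$. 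The cocycle condition holds by functoriality of global sections, and the same argument on triple coproducts handles it. Since special crystalline prisms cover $U_{p=0}$ (\Cref{eg:crystalline_prism}), these data glue to a crystal $\mathcal{F}_{n,U}$ on the crystalline site of $U_{p=0}$. The weak Frobenius structure comes from \Cref{const:weak prismatic-F-crystal} applied to crystalline prisms, where $J=(p)$ so inverting $J$ means inverting $p$. It is compatible with base change by naturality of the period-sheaf Frobenius.

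For part (ii), assume $X$ is smooth. I would evaluate on prisms of type (b) in \Cref{def:strongly_crystalline_prism}, namely smooth lifts $(R_0,(p))$ with finite flat Frobenius. There, $\mathcal{E}_{\Prism,n,T}(R_0,(p))$ is the Galois invariant of $T\otimes\mu^n\Prism_{S_{\overline{R}_0}/R_0}$. By the prismatic--crystalline comparison used in the proof of \Cref{prop:global_section_crystalline}, after Frobenius pullback this identifies with $\mathrm{H}^0_\pe(U_\eta, T\otimes\mu^n\mathcal{O}\mathbb{A}_{\crys,R_0})$.

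\begin{itemize}
\item \emph{Coherence.} Since $\mathcal{O}\mathbb{A}_{\crys,R_0}[1/\mu]$ is $\mathcal{O}\mathbb{B}_{\crys}$-like, the rational invariant $\mathrm{H}^0(U_\eta,T\otimes\mathcal{O}\mathbb{B}_{\crys,R_0})$ is finite over $R_0[1/p]$ by Guo--Yang \cite{GY24}. Its $p$-complete, $p$-torsionfree integral submodule is then finitely presented by \Cref{prop:finiteness_integral_general}, with $\varphi_{R_0}$ finite flat used to descend finiteness through the Frobenius twist.
\item \emph{Isogeny for $n\ll 0$.} This follows because the increasing chain of finite $R_0[1/p]$-submodules $\mathcal{F}_{n,U}[1/p]$ of a finite module stabilizes, by noetherianity.
\item \emph{Identification with Guo--Yang's functor.} The colimit over $n$ is $\mathrm{H}^0(U_\eta, T\otimes\mathcal{O}\mathbb{B}_{\crys,R_0})$. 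This is exactly the defining formula of $\mathcal{E}_{\crys,T}(U)$ in \cite{GY24}, and the Frobenius structures match under the comparison.
\end{itemize}

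The main obstacle is the identification of the prismatic period sheaf over $(R_0,(p))$, including its $\mu$-twists, with the period sheaf $\mathcal{O}\mathbb{A}_{\crys}$ used in \cite{GY24}, compatibly with Frobenius and $\mu$-localization. The Frobenius twist in the prismatic--crystalline comparison must be undone using finite flatness of $\varphi_{R_0}$, as at the end of the proof of \Cref{prop:global_section_crystalline}. A secondary issue is checking complete projectivity for the transition maps between type (a) and type (b) prisms; to handle it, I would pass to a common refinement by coproducts.
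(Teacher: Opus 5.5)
Part (i) follows the paper: values on special crystalline prisms are compatible along completely projective maps by \Cref{lem:weak_prismatic_F_crystal:crystalline_base_change}. However, you only assert the step ``these data glue to a crystal $\mathcal{F}_{n,U}$ on the crystalline site of $U_{p=0}$''; it does not follow from compatibility on special crystalline prisms alone. General objects of the crystalline site, e.g.\ $p$-adic PD envelopes of diagonals, have PD ideal $J\supsetneq(p)$. They become prisms over $U$ only through a Frobenius-twisted structure map $\Spec(D/p)\xrightarrow{\varphi}\Spec(D/J)\to U$, as in \Cref{eg:crystalline_prism}. Passing between crystals on crystalline prisms and crystals on the crystalline site is the prismatic--crystalline equivalence for coefficients \cite[Thm.\ 6.4]{GR24}. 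That is the second ingredient of the paper's proof, and your proposal never uses it.

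This omission is what breaks your identification in (ii). Your coherence and isogeny arguments are fine, since finite presentation and stabilization of the $p$-localized chain descend along the faithfully flat $\varphi_{R_0}$; this is a legitimate hands-on substitute for the paper's citation. But your own comparison only gives
\[
\varphi_{R_0}^*\bigl(\colim_n \mathcal{F}_{n,U}(R_0)\bigr)\simeq \mathrm{H}^0_\pe(U_\eta, T\otimes \mathcal{O}\mathbb{B}_{\crys,R_0}),
\]
whereas $\colim_n\mathcal{F}_{n,U}(R_0)$ itself is $\mathrm{H}^0_\pe(U_\eta,T\otimes\Prism_{(-)_{\overline{R}_0}/R_0}[1/\mu])$. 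So your bullet saying the colimit \emph{is} the $\mathcal{O}\mathbb{B}_{\crys}$-invariant is false as written.

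Finite flatness of $\varphi_{R_0}$ cannot undo this twist. In \Cref{prop:global_section_crystalline} it is only used to test whether a \emph{given} map $D\to(\cdots)^{G_{X_\eta}}$ is an isomorphism, and there $\varphi_{R_0}^*D=D$. For a general module $M$ there is no linear map between $M$ and $\varphi_{R_0}^*M$ to test.

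The twist is absorbed by the dictionary above. The crystalline-site object $(R_0/p\hookrightarrow R_0)$ corresponds to the prism $(R_0,(p))$ with the Frobenius-twisted structure map, and $\varphi_{R_0}$ is a map of prisms into it from the untwisted $(R_0,(p))$. Hence the crystalline crystal attached to $\colim_n\mathcal{F}_{n,U}$ takes the value $\varphi_{R_0}^*\bigl(\colim_n\mathcal{F}_{n,U}(R_0)\bigr)$ at $R_0$, and your computation then matches this with the formula for $\mathcal{E}_{\crys,T}(U)$. Without that dictionary you would have to prove that the weak Frobenius $\varphi_{R_0}^*\colim_n\mathcal{F}_{n,U}\to\colim_n\mathcal{F}_{n,U}$ is an isomorphism, for instance as an injective horizontal map between isocrystals of equal rank. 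The proposal does not address this.
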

We expect the individual crystal $\mathcal{F}_{n,U}$ to be finitely generated in the first case as well, though we do not pursue it in this article due to the mixed characteristic theme and the length of the article.
\begin{proof}
	The claim follows from the base change formula in \Cref{lem:weak_prismatic_F_crystal:crystalline_base_change} and the prismatic--crystalline equivalence for coefficients in \cite[Thm.\ 6.4]{GR24}.
\end{proof}

Our next result relates the prismatic Riemann--Hilbert functor for transversal and crystalline prisms.
\begin{proposition}
\label{prop:weak_prismatic_F_crystal:transversal_to_crystalline}
	Let $(D,(p))$ be a special crystalline prism in $X_\Prism$, and let $T\in \Loc_{\mathbb{Z}_p}(X_\eta)$.
	\begin{enumerate}[label=\upshape{(\roman*)}]
		\item\label{prop:weak_prismatic_F_crystal:transversal_to_crystalline:type1} Assume $D=D_B(J)$ for a framed regular prism $(B,J)$.
		The linearization map below is injective 
		\[
		\mathcal{E}_{\Prism,n,T}(B,J)\otimes_B D \to \mathcal{E}_{\Prism,n,T}(D,(p)).
		\]
		\item \label{prop:weak_prismatic_F_crystal:transversal_to_crystalline:type2} Assume $D=R_0$ is a smooth model of $U\in X_\et$ over $W$, and let $\mathfrak{S}=R_0 \llbracket u \rrbracket$ be the associated Breuil--Kisin prism.
		For each $n\in \mathbb{Z}$, the mod $u$ reduction map $\mathfrak{S}\to R_0$ induces an injection
		\[
		\mathcal{E}_{\Prism,n,T}(\mathfrak{S},(E(u)))\otimes_{\mathfrak{S}} R_0 \to \mathcal{E}_{\Prism,n,T}(R_0,(p)).
		\]
		\item\label{prop:weak_prismatic_F_crystal:transversal_to_crystalline:for_crys_loc} Assume $T\in \Loc^\crys_{\mathbb{Z}_p}(X_\eta)$.
		Then the above maps become isomorphic after taking the unions with respect to $n$ and inverting $p$.
		In particular, the $F$-isocrystal $\mathcal{F}_U$ in \Cref{cor:weak_prismatic_F_crystal:crystalline:local_gluing} is canonically isomorphic to the crystalline realization of the prismatic $F$-crystal $\mathcal{E}_{\Prism,T}$.
	\end{enumerate}
\end{proposition}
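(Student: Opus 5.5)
The plan is to realize both linearization maps as global sections of a map of pro-\'etale sheaves induced by a map of prisms, and to deduce injectivity from the injectivity of the corresponding map of period sheaves, checked on a large perfectoid cover.

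For \ref{prop:weak_prismatic_F_crystal:transversal_to_crystalline:type1}, the map of prisms $(B,J)\to(D,(p))$ is the composite of the inclusion $B\to D_B(J)$ with $\varphi_D$, as in the proof of \Cref{prop:global_section_crystalline}. By \Cref{rmk:base_change_presheaf}, $\Prism_{(-)_{\overline{D}}/D}$ is the completed base change of $\Prism_{(-)_{\overline{B}}/B}$, so the linearization factors as
\[
\mathcal{E}_{\Prism,n,T}(B,J)\otimes_B D\longrightarrow \mathrm{H}^0_\pe\bigl(X_\eta,T\otimes\mu^n\Prism_{(-)_{\overline{B}}/B}\bigr)\otimes_B D\longrightarrow \mathrm{H}^0_\pe\bigl(X_\eta,T\otimes\mu^n\Prism_{(-)_{\overline{D}}/D}\bigr).
\]
To see that this composite is injective, I evaluate on $S=\overline{A}_\perf$ for a framed regular prism over $X$ (a pro-\'etale cover by \Cref{lem:perfection_of_framed_regular_prism}). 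Then it suffices to show that
\[
\mathcal{E}_{\Prism,n,T}(B,J)\otimes_B D\to T(S)\otimes\mu^n\bigl(\Prism_{S_{\overline{B}}/B}\otimes_B D\bigr)^\wedge_p
\]
is injective. Since $\mathcal{E}_{\Prism,n,T}(B,J)\to T(S)\otimes\mu^n\Prism_{S_{\overline{B}}/B}$ is injective, I first try to show that $D$ is flat over $B$ in the relevant complete sense. This should hold because $D_B(J)$ is $p$-completely flat over $B$ once $J$ is locally generated by a regular element, by the divided power computation of \Cref{eg:crystalline_prism}. Moreover $\varphi_D$ is faithfully flat, by the argument of \Cref{prop:regular prism Frobenius}. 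I then apply \Cref{lem:inj_and_completely_proj_mod} or its one-variable analogue, with the needed $(J,p)$-regularity supplied by \Cref{lem:weak_prismatic_F_crystal:regularity}.

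The main obstacle is that flatness alone does not preserve injectivity under completed base change (\Cref{eg:counter_eg_of_injectivity_preserved_by_bc}). I would handle it by proving the injectivity modulo $p$, since both sides are $p$-torsionfree and $p$-complete. Modulo $p$, $D/p$ is honestly flat over $B/p$, so the tensor product preserves the injection. The remaining difficulty is to control the cokernel of $\mathcal{E}\to T(S)\otimes\Prism_{S_{\overline{B}}/B}$ modulo $p$. For this I use that the mod $p$ reduction of $\mathcal{E}$ injects, by the universal coefficient argument in \Cref{lem:weak_prismatic_F_crystal:regularity}.

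For \ref{prop:weak_prismatic_F_crystal:transversal_to_crystalline:type2}, the argument is the same, using the map of prisms $(\mathfrak{S},E(u))\to(R_0,(p))$ given by $u\mapsto0$. The analogous injectivity question is $u$-adic, and the one-variable \Cref{lem:inj_and_completely_proj_mod_one_variable} applies because $R_0=\mathfrak{S}/u$ and the modules involved are $u$-torsionfree. The $u$-torsionfreeness comes from the reflexive and saturated structure at the two-dimensional prism, \Cref{lem:reflexive_is_saturated}.

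For \ref{prop:weak_prismatic_F_crystal:transversal_to_crystalline:for_crys_loc}, when $T$ is crystalline I apply \Cref{thm:prismatic_RH_for_crystalline_local_system}.\ref{thm:prismatic_RH_for_crystalline_local_system_linearization}. It gives $\mathcal{E}_{\Prism,T}(B,J)\otimes\Prism_{(-)_{\overline{B}}/B}[1/\mu]\simeq T\otimes\Prism_{(-)_{\overline{B}}/B}[1/\mu]$, and base changing to $D$ gives the same isomorphism for $\Prism_{(-)_{\overline{D}}/D}[1/\mu]$. Taking global sections and using \Cref{prop:global_section_crystalline} together with the projection formula \Cref{thm:projection_formula}.\ref{thm:projection_formula_projective} identifies $\mathcal{E}_{\Prism,T}(D,(p))$ with $\mathcal{E}_{\Prism,T}(B,J)\otimes_B D[1/p]$. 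Combined with \Cref{cor:weak_prismatic_F_crystal:crystalline:local_gluing} and the crystal property of $\mathcal{E}_{\Prism,T}$, this identifies $\mathcal{F}_U$ with its crystalline realization.
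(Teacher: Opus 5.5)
Your argument for (i) fails at the flatness step, and the $p$-adic dévissage built on it cannot be rescued.

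Since $(B,J)\to(D,(p))$ is a map of prisms, $JD=pD$. So the structure map $B/p\to D/p$ kills $J$. By transversality, $J$ is locally generated by a nonzerodivisor of $B/p$, and a nonzero $B/p$-module on which $J$ acts by $0$ cannot be flat. Hence $D/p$ is \emph{not} flat over $B/p$, and $D$ is not $p$-completely flat over $B$.

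Using the inclusion $B\to D_B(J)$ instead does not help: modulo $p$ it factors through $B/(p,J^p)$, since $x^p=p!\gamma_p(x)$. Nor is $\varphi_D$ faithfully flat: modulo $p$ it sends the nonzero class of $x$ to $x^p+p\delta(x)\equiv 0$. What \Cref{eg:crystalline_prism} proves is faithful flatness of $D/p$ over $\overline{B}/p$, not over $B/p$.

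With that correct statement, injectivity of your map modulo $p$ is equivalent to injectivity of
\[
\mathcal{E}_{\Prism,n,T}(B,J)/(p,J)\longrightarrow \bigl(T(S)\otimes\mu^n\Prism_{S_{\overline{B}}/B}\bigr)/(p,J).
\]
This is strictly stronger than what you need, and \Cref{lem:weak_prismatic_F_crystal:regularity} does not supply it. That lemma only shows that $\mathcal{E}/J\mathcal{E}$ embeds into a $p$-torsionfree module. The cokernel of that embedding is $\mathrm{H}^1_\pe(X_\eta,T\otimes\mu^n\Prism_{(-)_{\overline{B}}/B})[J]$, and nothing controls its $p$-torsion.

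The missing idea is to filter by divided powers rather than $p$-adically. The prism map is $B\xrightarrow{\varphi_B}B\hookrightarrow D$, so the source is $\mathcal{E}^{(1)}_{\Prism,n,T}(B,J)\otimes_{B}D$ along the inclusion. The paper compares this with $\widehat{D}$, the completion of $D$ for the divided power filtration $J^{[\bullet]}$, in three steps.
\begin{enumerate}
\item The source injects into $\mathcal{E}^{(1)}\otimes_B\widehat{D}$. This uses finiteness and eventual $J$-adicity of the filtration (\Cref{prop:weak_prismatic_F_crystal:finiteness}, \Cref{cor:weak_prismatic_F_crystal:eventual_adic}), together with separatedness of $J^{[\bullet]}$ (\Cref{lem:dR_is_sep}).
\item The target injects into the global sections of the filtered completion. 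This holds because $\Prism_{S_{\overline{B}}/B}$ is completely free over $B$ (\Cref{thm:coproduct_in_general}) and $D\to\widehat{D}$ is injective.
\item The completed map is checked on graded pieces. There $\gr^iD=\Gamma^i(J/J^2)$ is an invertible $\overline{B}$-module, so injectivity reduces to the de Rham injection $\mathcal{E}^{(1)}_{\Prism,n,T}(B,J)\otimes_B\overline{B}\hookrightarrow\mathrm{H}^0_\pe(X_\eta,T\otimes\mu^n\dR_{(-)_{\overline{B}}/\overline{B}})$ of \Cref{prop:weak_prismatic_F_crystal:deRham}. This is the universal coefficient theorem modulo $J$, not modulo $p$, for the Frobenius-twisted period sheaf.
\end{enumerate}

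For (ii), \Cref{lem:inj_and_completely_proj_mod_one_variable} does not apply: $\mathfrak{S}\to\mathfrak{S}/u$ is neither completely faithfully flat nor completely projective. Also, $\mathfrak{S}$ is not two-dimensional in general. The direct argument works instead. The element $u$ is regular on $\mathfrak{S}$ and on the completely flat sheaf $\Prism_{(-)_{\overline{\mathfrak{S}}}/\mathfrak{S}}$, whose reduction mod $u$ is $\Prism_{(-)_{\overline{R}_0}/R_0}$. Left exactness of $\mathrm{H}^0_\pe$ then gives the injection.

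Your argument for (iii) matches the paper's.
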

\begin{proof}
For \ref{prop:weak_prismatic_F_crystal:transversal_to_crystalline:type1}, we let $\widehat{D}$ be the filered completion of $D$ with respect to the divided power filtration $\Fil^\bullet D=J^{[\geq \bullet]}$.
There is then a commutative diagram enlarging the linearization map
\begin{equation}
\label{eq:diagram_of_framed_and_its_crystalline_prism}
\begin{tikzcd}
\mathcal{E}_{\Prism,n,T}(B,J)\otimes_B D\ar[r] \ar[d] &  \mathcal{E}_{\Prism,n,T}(D,(p)) \ar[d]\\
\mathcal{E}_{\Prism,n,T}(B,J)\otimes_B \widehat{D} \ar[r] & \mathrm{H}^0_\pe(X_\eta, T\otimes \mu^n (\Prism_{(-)_{\overline{B}}/B} \otimes_B D)^\wedge_{1\otimes \Fil^\bullet D}),
\end{tikzcd}
\end{equation}
where the period sheaf $(\Prism_{(-)_{\overline{B}}/B} \otimes_B D)^\wedge_{1\otimes \Fil^\bullet D}$ is the filtered completion of $\Prism_{(-)_{\overline{B}}/B} \otimes_B D$ with respect to the divided power filtration on $D$.
So to show the injectivity of the claimed arrow, it suffices to show the injectivity of the rest of the arrows in (\ref{eq:diagram_of_framed_and_its_crystalline_prism}).
Notice that since the map of ring $B\to D$ is given by the composition $B\xrightarrow{\varphi_B} B \xrightarrow{\mathrm{incl}} D$ (cf. proof of \Cref{prop:global_section_crystalline}), the left column of (\ref{eq:diagram_of_framed_and_its_crystalline_prism}) can be identified with the base change of $\mathcal{E}^{(1)}_{\Prism,T}(B,J)$ along $B\xrightarrow{\mathrm{incl}} D$.
So the injectivity of the left vertical arrow in (\ref{eq:diagram_of_framed_and_its_crystalline_prism}) follows from the finiteness of the filtered module $\mathcal{E}_{\Prism,T}(B,J)$ over $J^\bullet B$ (as in \Cref{prop:weak_prismatic_F_crystal:finiteness} and \Cref{cor:weak_prismatic_F_crystal:eventual_adic}), together with the separatedness of the filterer ring $\Fil^\bullet D$ (\Cref{lem:dR_is_sep}).
For the right vertical arrow of (\ref{eq:diagram_of_framed_and_its_crystalline_prism}), we notice that it is given by the global section of the map of sheaves $T\otimes \mu^n (\Prism_{(-)_{\overline{B}}/B} \otimes_B D)^\wedge_p \to T\otimes \mu^n (\Prism_{(-)_{\overline{B}}/B} \otimes_B D)^\wedge_{1\otimes \Fil^\bullet D}$.
We let $S=\overline{A}_\perf$ be the reduction of the perfection of a framed regular prism $(A,I)$ as in \Cref{lem:perfection_of_framed_regular_prism}.
Then to show the injectivity of the right vertical arrow in (\ref{eq:diagram_of_framed_and_its_crystalline_prism}), it suffices to show that the map of rings $(\Prism_{S_{\overline{B}}/B}\otimes_B D)^\wedge_p \to (\Prism_{S_{\overline{B}}/B}\otimes_B D)^\wedge_{1\otimes \Fil^\bullet D}$ is injective: for the latter, by \Cref{thm:coproduct_in_general}.\ref{thm:coproduct of prism regular and regular}, both source and target are completely free over $D$ and $\widehat{D}$ respectively, so it follows by the injectivity of $D\to \widehat{D}$ in \Cref{lem:dR_is_sep}.
Finally, to prove that the bottom arrow in (\ref{eq:diagram_of_framed_and_its_crystalline_prism}) is injective, by the filtered completeness, it suffices to check it for each graded piece.
The $i$-th graded piece of the bottom arrow is 
\[
\mathcal{E}^{(1)}_{\Prism,T}(B,J) \otimes_B \overline{B} \otimes_{\overline{B}} \Gamma^i (J/J^2) \longrightarrow \mathrm{H}^0_\pe(X_\eta, T\otimes \mu^n \Prism_{(-)_{\overline{B}}/B} \otimes_B \overline{B} \otimes_{\overline{B}} \Gamma^i (J/J^2)).
\]
So by factoring out the invertible $\overline{B}$-module $\Gamma^i (J/J^2)$, its injectivity follows from that of the de Rham realization functor in \Cref{prop:weak_prismatic_F_crystal:deRham}.\ref{prop:weak_prismatic_F_crystal:deRham:inj}.

	Part \ref{prop:weak_prismatic_F_crystal:transversal_to_crystalline:type2} follows from the universal coefficient theorem, since the element $u$ is regular in $\mathfrak{S}$ and hence in $\Prism_{(-)_{\overline{\mathfrak{S}}}/\mathfrak{S}}$.
	For \ref{prop:weak_prismatic_F_crystal:transversal_to_crystalline:for_crys_loc}, we notice that by \Cref{thm:prismatic_RH_for_crystalline_local_system}, the pro-\'etale sheaf $T\otimes \Prism_{(-)_{\overline{B}}/B}[1/\mu]$ is naturally isomorphic to $\mathcal{E}_{\Prism,T}(B,J)\otimes_B \Prism_{(-)_{\overline{B}}/B}[1/\mu]$, where $\mathcal{E}_{\Prism,T}(B,J)$ is a reflexive module over $B$.
	By taking the base change along $\Prism_{(-)_{\overline{B}}/B}\to \Prism_{(-)_{\overline{D}}/D}$, the above map induces an isomorphism
	\[
	\mathcal{E}_{\Prism,T}(B,J)\otimes_B D \otimes_D \Prism_{(-)_{\overline{D}}/D}[1/\mu] \xrightarrow{\sim} T\otimes \Prism_{(-)_{\overline{D}}/D}[1/\mu].
	\]
	Hence the claim follows by taking the global section, thanks to \Cref{prop:global_section_crystalline} and the finite projectivity of $\mathcal{E}_{\Prism,T}(B,J)\otimes_B D[1/p]$ over $D[1/p]$.
\end{proof}

\subsection{Functoriality}
\label{sub:push_pull}
In this subsection, we analyze how the prismatic Riemann--Hilbert functor and the integral de Rham functor interact with the pullback.
For crystalline local systems, we also consider its higher direct image along proper smooth morphisms.

We first consider the pullback for the flat connection $\mathcal{E}_{\dR,T}$.
\begin{proposition}[Pullback and de Rham functor]
\label{prop:dR:pull_back_inj}
	Let $f:Y\to X$ be a map of regular $p$-adic formal schemes over $\mathcal{O}_K$, let $T\in \Loc_{\mathbb{Z}_p}(X_\eta)$.
	\begin{enumerate}[label=\upshape{(\roman*)}]
		\item\label{prop:dR:pull_back_inj:rational} The canonical filtered map $f^*\mathcal{E}_{\dR,n,T} \to \mathcal{E}_{\dR,n,f_\eta^{-1}T}$ is injective after inverting $p$.
		\item\label{prop:dR:pull_back_inj:integral_condition} Assume $f^*\mathcal{E}_{\dR,n,T}$ is $p$-torsionfree. 
		Then the above filtered map of $\mathcal{O}_Y$-modules is injective integrally.
	\end{enumerate}
\end{proposition}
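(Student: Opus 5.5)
The plan is to reduce both parts to a comparison inside Liu--Zhu's functor, using the injection of \Cref{prop:dR:finiteness_&_injective}.\ref{prop:dR:finiteness_&_injective:LZ}. First I will make the map explicit. The question is local on $Y$ and $X$, so assume $X=\Spf(R)$ and $Y=\Spf(R')$ are affine. The map $f$ induces a morphism of pro-\'etale sites, and $\mu$ is compatible because it lives in $\rAinf(\mathcal{O}_{K_\infty})$. Functoriality of the derived de Rham complex then gives a filtered map of twisted period sheaves
\[
f_\eta^{-1}\bigl(\mu^n\dR_{(-)/X}\bigr)\to \mu^n\dR_{(-)/Y}.
\]
Taking $f_\eta^{-1}T$-coefficients and global sections yields an $R$-linear filtered map $\mathcal{E}_{\dR,n,T}(X)\to \mathcal{E}_{\dR,n,f_\eta^{-1}T}(Y)$. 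Its $R'$-linearization is the map in the statement.

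For \ref{prop:dR:pull_back_inj:rational}, I invert $p$ and compose with the filtered injection of \Cref{prop:dR:finiteness_&_injective}.\ref{prop:dR:finiteness_&_injective:LZ} into $\mathrm{D}_\dR(f_\eta^{-1}T)(Y_\eta)$. This yields a commutative square whose other path is
\[
f^*\mathcal{E}_{\dR,n,T}[1/p]\hookrightarrow f_\eta^*\mathrm{D}_\dR(T)\to \mathrm{D}_\dR(f_\eta^{-1}T).
\]
Here the first arrow is the base change of the inclusion $\mathcal{E}_{\dR,n,T}[1/p]\subseteq \mathrm{D}_\dR(T)$. Both are coherent modules with flat connection on the smooth rigid space $X_\eta$, so they are vector bundles. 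The inclusion is a map of vector bundles with connection, hence saturated (its cokernel is again a vector bundle with connection), and its pullback stays injective. The second arrow is Liu--Zhu's pullback map.

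The key claim, and the main obstacle, is that this pullback map $f_\eta^*\mathrm{D}_\dR(T)\to \mathrm{D}_\dR(f_\eta^{-1}T)$ is injective. I would argue via \cite{LZ17}: the natural map $\mathrm{D}_\dR(T)\otimes_{\mathcal{O}_{X_\eta}}\OBdR\to T\otimes\OBdR$ is injective, and this injectivity persists after pullback to $Y_{\eta,\pe}$. Consequently, a section of $f_\eta^*\mathrm{D}_\dR(T)$ that vanishes in $\mathrm{D}_\dR(f_\eta^{-1}T)\subset f_\eta^{-1}T\otimes\OBdR$ must already vanish in $f_\eta^*\mathrm{D}_\dR(T)\otimes\OBdR_Y$. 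Since $\mathcal{O}_{Y_\eta}\to\OBdR_Y$ is faithfully flat locally, the section itself is zero. If this route fails, the fallback is to reduce to pullback along the inclusion of a dense set of points, using the horizontal structure and the rigidity of flat connections: a horizontal subbundle vanishing at one point of each component is zero.

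Part \ref{prop:dR:pull_back_inj:integral_condition} is then formal. The kernel of $f^*\mathcal{E}_{\dR,n,T}\to\mathcal{E}_{\dR,n,f_\eta^{-1}T}$ is killed by a power of $p$ by part \ref{prop:dR:pull_back_inj:rational}. Under the hypothesis that $f^*\mathcal{E}_{\dR,n,T}$ is $p$-torsionfree, this kernel is a $p$-power torsion submodule of a $p$-torsionfree module, hence zero. For the filtered statement, apply the same argument to each $\Fil^i$: the filtered pieces are submodules of $\mathcal{E}_{\dR,n,f_\eta^{-1}T}$, so injectivity of the underlying map suffices.
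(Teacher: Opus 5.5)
Your architecture is the paper's. You use the same commutative square comparing $f^*\mathcal{E}_{\dR,n,T}[1/p]\to\mathcal{E}_{\dR,n,f_\eta^{-1}T}[1/p]$ with Liu--Zhu's $\mathrm{D}_\dR$. The left column is injective because $\mathcal{E}_{\dR,n,T}[1/p]\subseteq\mathrm{D}_\dR(T)$ is an inclusion of flat connections on the smooth rigid space $X_\eta$. The right column is the comparison injection into $\mathrm{D}_\dR(f_\eta^{-1}T)$. Part (ii) then follows formally from (i) by $p$-torsionfreeness. The difference is at the bottom arrow $f_\eta^*\mathrm{D}_\dR(T)\to\mathrm{D}_\dR(f_\eta^{-1}T)$, and there your argument has a gap. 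The paper does not prove injectivity of this arrow by hand. It cites \cite[Thm.\ 3.9.(ii)]{LZ17}, the compatibility of $\mathrm{D}_\dR$ with pullback along morphisms of smooth rigid spaces, which makes this arrow an isomorphism.

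Your substitute does not work as written. ``The injectivity persists after pullback'' means that $\mathrm{D}_\dR(T)\otimes\OBdR_X\to T\otimes\OBdR_X$ stays injective after applying $f_\eta^{-1}$ and base changing along $f_\eta^{-1}\OBdR_X\to\OBdR_Y$. That base change is not flat in general; think of $f$ a closed immersion, e.g.\ of a point. When $T$ is not de Rham the cokernel of $\mathrm{D}_\dR(T)\otimes\OBdR_X\to T\otimes\OBdR_X$ is nonzero, since the ranks differ. So you would need a Tor-vanishing or local direct-summand property for that cokernel. You have not supplied this, and it is essentially the content of the pullback compatibility you are trying to prove.

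Your fallback through points has the same problem. To see that the horizontal kernel vanishes at some $y$, you must know that the fibre $\mathrm{D}_\dR(T)\otimes k(f(y))$ injects into $D_\dR$ of the pointwise Galois representation, compatibly with passing to $y$. That is again the special case of Liu--Zhu's pullback theorem for points. Replacing this step by the citation of \cite[Thm.\ 3.9.(ii)]{LZ17} closes the gap, and the rest of your proof is then fine.
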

\begin{proof}
	We let $U=\Spf(R_2)\to V=\Spf(R_1)$ be an induced map of open affine subschemes in $Y$ and $X$ respectively, and consider the associated map of filtered $R_2[1/p]$-modules with connections (cf. \Cref{const:de_Rham_functor})
	\[
	\mathcal{E}_{\dR,n,T}(V)\otimes_{R_1} R_2 [1/p]\longrightarrow \mathcal{E}_{\dR,n,f_\eta^{-1}T} (U)[1/p].
	\]
	By the comparison with Liu--Zhu's Riemann--Hilbert functor (\Cref{prop:dR:finiteness_&_injective}.\ref{prop:dR:finiteness_&_injective:LZ}), the above map fits into a commutative diagram
	\[
	\begin{tikzcd}
	\mathcal{E}_{\dR,n,T}(V)\otimes_{R_1} R_2[1/p] \ar[r] \ar[d] & \mathcal{E}_{\dR,n,f_\eta^{-1}T} (U)[1/p] \ar[d]\\
	\mathrm{D}_{\dR,X_\eta}(T)(V)\otimes_{R_1[1/p]} R_2[1/p] \ar[r] & \mathrm{D}_{\dR,Y_\eta}(f_\eta^{-1}T) (U),
	\end{tikzcd}
	\]
	where the right vertical map is injective by \textit{loc.\ cit.} and the bottom map is an isomorphism by \cite[Thm.\ 3.9.(ii)]{LZ17}.
	In addition, since $\mathcal{E}_{\dR,n,T}(V)[1/p]\to \mathrm{D}_{\dR,X_\eta}(V)$ is an injection of flat connections over the smooth rigid space $X_\eta$, its pullback is also injective.
	Hence by the commutative diagram above, we get the injectivity of the top horizontal map.

	Now assume that $f^*\mathcal{E}_{\dR,n,T}$ has no $p$-torsion,
	By definition, the map $\mathcal{E}_{\dR,n,T}(V)\otimes_{R_1} R_2\to \mathcal{E}_{\dR,n,T}(V)\otimes_{R_1} R_2[1/p]$ is injective.
	On the other hand, the $R_2$-module $\mathcal{E}_{\dR,n,f_\eta^{-1}T} (U)$ by \Cref{prop:dR:finiteness_&_injective}.\ref{prop:dR:finiteness_&_injective:LZ} has no $p$-torsion.
	Hence the injectivity of $\mathcal{E}_{\dR,n,T}(V)\otimes_{R_1} R_2 \to \mathcal{E}_{\dR,n,f_\eta^{-1}T} (U)$ follows from the injectivity after inverting $p$.
\end{proof}
Below we give some examples where the torsionfree assumption in \Cref{prop:dR:pull_back_inj}.\ref{prop:dR:pull_back_inj:integral_condition} is satisfied.
We recall that a l.c.i morphism has \emph{embedded codimension} $r$ if locally it can be factored as a composition of a smooth morphism followed by a regular closed immersion of codimension $r$.
We refer the reader to \cite[Def.\ 5.6]{GL21} for the precise definition and discussion.
\begin{corollary}
	\label{prop:dR:Tor_condition}
	Let $f:Y\to X$ be a l.c.i. morphism of regular $p$-adic formal schemes satisfying either of the following conditions:
	\begin{itemize}
		\item the map $f$ is flat;
		\item the embedded codimension of $f$ is one.
	\end{itemize}
	Then for each $T\in \Loc_{\mathbb{Z}_p}(X_\eta)$, the pullback $f^*\mathcal{E}_{\dR,n,T}$ is $p$-torsionfree. 
	In particular, the canonical filtered map $f^*\mathcal{E}_{\dR,n,T} \to \mathcal{E}_{\dR,n,f_\eta^{-1}T}$ is injective.
\end{corollary}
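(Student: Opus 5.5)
The plan is to reduce, by \Cref{prop:dR:pull_back_inj}.\ref{prop:dR:pull_back_inj:integral_condition}, to showing that $f^*\mathcal{E}_{\dR,n,T}$ is $p$-torsionfree. This is local on $X$ and $Y$, so take affines $U=\Spf(R_2)\to V=\Spf(R_1)$ and set $E\colonequals\mathcal{E}_{\dR,n,T}(V)$. By \Cref{prop:dR:finiteness_&_injective}.\ref{prop:dR:finiteness_&_injective:finitenss}, $E$ is a finitely presented $p$-torsionfree module over the regular noetherian ring $R_1$. The question is then whether $\mathrm{Tor}_1^{R_1}(E,R_2)$ contributes $p$-torsion to $E\otimes_{R_1}R_2$. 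Since $E$ is $p$-torsionfree, the sequence $0\to E\xrightarrow{p}E\to E/pE\to 0$ is exact. Tensoring it with $R_2$ shows that $(E\otimes_{R_1}R_2)[p]$ is a quotient of $\mathrm{Tor}_1^{R_1}(E/pE,R_2)$; more precisely, it is the cokernel of $\mathrm{Tor}_1(E,R_2)\to\mathrm{Tor}_1(E/pE,R_2)$.

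In the flat case the argument is immediate: $R_1\to R_2$ is flat (after $p$-completion, using noetherianity as in \cite[\href{https://stacks.math.columbia.edu/tag/0912}{Tag 0912}]{stacks-project}), so $-\otimes_{R_1}R_2$ preserves the injectivity of multiplication by $p$ on $E$. Hence $f^*E$ is $p$-torsionfree.

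For embedded codimension one, factor $f$ locally as a regular closed immersion $Y\hookrightarrow P$ cut out by a single element $g$, followed by a smooth, hence flat, map $P\to X$. The flat case handles $P\to X$, so $E_P\colonequals E\otimes_{R_1}R_P$ is again $p$-torsionfree over the regular ring $R_P$, and it remains to treat $R_P\to R_P/g=R_2$. Here the Koszul resolution gives $\mathrm{Tor}_1(E_P,R_2)=E_P[g]$, and $E_P\otimes R_2=E_P/gE_P$. The $p$-torsion of $E_P/gE_P$ is measured by $H_1$ of the Koszul complex $\Kos_{E_P}(g,p)$. So the task becomes showing that $(g,p)$, or equivalently $(p,g)$, is $E_P$-regular near $Y$.

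The main obstacle is this regularity. Both $Y$ and $P$ are $p$-torsionfree and regular, so $(p,g)$ is a regular sequence in $R_P$: $g$ is regular on $R_P/p$ because $R_2$ is $p$-torsionfree. I would then try to show that $E_P/pE_P$ has no $g$-torsion. My first attempt is to use that $E$ is saturated, i.e.\ $E=E[1/p]\cap(\text{reflexive hull})$; this should follow from $E$ being a global section of the $p$-torsionfree sheaf $T\otimes\mu^n\dR_{(-)/X}$ and from $E[1/p]$ being a vector bundle with flat connection (\Cref{prop:dR:finiteness_&_injective}.\ref{prop:dR:finiteness_&_injective:LZ}), which would give depth $\geq 2$ for $E$ at height-two primes containing $p$.

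If that fails, I would instead argue directly through the period sheaf. The key input would be that $\mathcal{E}_{\dR,n,T}(U)$ and $f^*E$ agree after inverting $p$ (\Cref{prop:dR:pull_back_inj}.\ref{prop:dR:pull_back_inj:rational}), combined with a universal-coefficient argument comparing $E/pE$ to the global sections of $T\otimes\mu^n\dR_{(-)/X}/p$. Pulling back along $g$, where the period sheaf is $g$-torsionfree because its graded pieces are locally free (\Cref{cor:conjuage_fil}), would then rule out $g$-torsion in $E/pE$.
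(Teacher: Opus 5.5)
The argument that actually proves the statement is your fallback, and it is the paper's proof with the roles of $p$ and $g$ interchanged.

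The paper uses the universal coefficient sequence for multiplication by $g$ on $T\otimes_{\mathbb{Z}_p}\mu^n\dR_{(-)/X}$. This embeds $E/gE$ into $\mathrm{H}^0_\pe(X_\eta, T\otimes_{\mathbb{Z}_p}\mu^n\dR_{(-)/X}\otimes_{R_1}R_1/g)$. The paper then shows the target is $p$-torsionfree: over a faithfully flat perfectoid cover $S$, the Hodge graded pieces of $\dR_{S/R_1}$ are finite projective $S$-modules, and the Hodge filtration is separated.

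You instead use multiplication by $p$ to embed $E/pE$ into $\mathrm{H}^0_\pe(X_\eta, T\otimes_{\mathbb{Z}_p}\mu^n\dR_{(-)/X}/p)$, and you need the target to be $g$-torsionfree. The input is the same: the graded pieces are flat over $R_1/p$, and the Hodge filtration is separated mod $p$, which is exactly the $\mathbb{F}_p$-case treated in the proof of \Cref{lem:dR_is_sep}. You combine this with $g$ being a nonzerodivisor on $R_P/p$, which holds because $R_2$ is $p$-torsionfree. Then $(E_P/gE_P)[p]=\coker(E_P[g]\to (E_P/p)[g])$ finishes.

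Your ordering has a genuine advantage. Here $p$ lives on $X$ while $g$ lives only on $P$. The paper's reduction to $R_2=R_1/gR_1$ leaves the smooth factor $P\to X$ implicit, whereas your order can handle it cleanly. To do so, record the output of the embedding as follows: every nonzerodivisor of $R_1/p$ acts injectively on $E/pE$, i.e. $\mathrm{Ass}_{R_1}(E/pE)$ consists of minimal primes of $pR_1$. This property persists along the smooth map $R_1\to R_P$. Then $g$ avoids the minimal primes of $pR_P$, since it is a nonzerodivisor on $R_P/p$.

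Three corrections.

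First, your initial saturatedness attempt is not an independent argument. Neither $p$-torsionfreeness of the sheaf nor the connection on $E[1/p]$ says anything about the depth of $E$ at height-two primes containing $p$. That depth condition is precisely the statement $\mathrm{Ass}(E/pE)\subseteq\mathrm{Min}(pR_1)$ above, and you only obtain it from the embedding into the mod $p$ period sheaf. So your two attempts are really one argument.

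Second, \Cref{prop:dR:pull_back_inj}.\ref{prop:dR:pull_back_inj:rational} gives only injectivity after inverting $p$, not agreement, and it plays no role here; drop it.

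Third, for the torsionfreeness of the de Rham period sheaf, cite the finite projectivity of the Hodge graded pieces (\Cref{prop:relative_prismatic_coh_of_perfectoid}) together with \Cref{lem:dR_is_sep}. Do not cite \Cref{cor:conjuage_fil}, which concerns the Hodge--Tate sheaf.
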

\begin{proof}
	Since $\mathcal{E}_{\dR,n,T}$ has no $p$-torsions, it suffices to consider the case when $f:Y=\Spf(R_2)\to X=\Spf(R_1)$ is a regular closed immersion of codimension one: namely $R_2=R_1/gR_1$ for a regular element $g\in R_1$.
	Note that by our assumption, the quotient $R_1/gR_1$ is $p$-torsionfree.
	By shrinking $X$ if necessary, we let $S$ be a perfectoid faithfully flat $R_1$-algebra such that the generic fiber $\Spf(S)_\eta$ covers $X_\eta$ (\Cref{lem:perfection_of_framed_regular_prism}).
	Then one has $\mathcal{E}_{\dR,n,T}(X)\subset T(\Spf(S)_\eta)\otimes \mu^n\dR_{S/R_1}$, where the latter by the separatedness of the Hodge filtration (\Cref{lem:dR_is_sep}) is contained in $T(\Spf(S)_\eta)\otimes_{\mathbb{Z}_p(\Spf(S)_\eta)} \mu^n\widehat{\dR}_{S/R_1}$.
	Since each graded pieces of $\widehat{\dR}_{S/R_1}$ is a finite projective $S$-module (\Cref{prop:relative_prismatic_coh_of_perfectoid}) and since $S$ is faithfully flat over $R_1$, the graded pieces of $\widehat{\dR}_{S/R_1}\otimes_{R_1} R_1/gR_1$ is flat over $R_1/gR_1$ and has no $p$-torsion.
	Hence by taking the limit, the entire filtered complete ring $\widehat{\dR}_{S/R_1}\otimes_{R_1}R_1/gR_1$ has no $p$-torsion, and so is its subring $\dR_{S/R_1}\otimes_{R_1}R_1/gR_1$.
	As a consequence, by the universal coefficient theorem, we obtain an injection 
	\[
	\mathcal{E}_{\dR,n,T}(X)\otimes_{R_1} R_1/gR_1 \hookrightarrow \mathrm{H}^0_\pe(X_\eta, T\otimes \mu^n \dR_{(-)/X}\otimes_{R_1}R_1/gR_1),
	\]
	where the latter has no $p$-torsion.
	Thus $\mathcal{E}_{\dR,n,T}(X)\otimes_{R_1} R_1/gR_1$ is $p$-torsionfree.
\end{proof}

We now discuss the injectivity for Frobenius modules over a compatible map of prisms.
Below for given prisms $(A,I)\in X_\Prism$ and $(B,J)\in Y_\Prism$, we say a map of prisms $(A,I)\to (B,J)$  is \emph{over} a map of $p$-adic formal schemes $f:Y\to X$ if the map of the reductions $\overline{A}\to \overline{B}$ is compatible with $f$.
\begin{theorem}[Pullback and prismatic Riemann--Hilbert functor]
\label{thm:weak_prismatic_F_crystal:pull_back_wrt_prisms}
Let $(A,I)\to (B,J)$ be a map of prisms over a l.c.i. map of regular $p$-adic formal schemes $f:Y\to X$ and let $T\in \Loc_{\mathbb{Z}_p}(X_\eta)$. 
Let $g:\mathcal{E}_{\Prism,T}(A,I)\otimes_{A}B \to \mathcal{E}_{\Prism,f_\eta^{-1}T}(B,J)$ be the induced map of $B$-modules, and let $g^{(1)}=\varphi_{B}^*(g)$ be the completed Frobenius twist.

Assume for now that $(A,I)\in X_\Prism$ is framed regular, that $\overline{B}$ is completely projective over the image of $\Spf(\overline{B})$ in $Y$, and $\varphi_{B}$ is completely projective:
\begin{enumerate}[label=\upshape{(\roman*)}]
\item\label{thm:weak_prismatic_F_crystal:pull_back_wrt_prisms:generic_twisted} The complete base change of $g^{(1)}$ along $B\to B[1/p]^\wedge_{J}$ is injective.
\item\label{thm:weak_prismatic_F_crystal:pull_back_wrt_prisms:twisted} The maps $g^{(1)}$ and $g$ are injective if the map $f$ is flat.
\end{enumerate}
Assume for now that $(A,I)\in X_\Prismsp$, and either $(B,J)\in Y_\Prismsp$ or $\varphi_B$ is completely projective:
\begin{enumerate}[resume]
\item[\upshape{(iii)}]
\label{thm:weak_prismatic_F_crystal:pull_back_wrt_prisms:iso} 
The maps $g^{(1)}$ and $g$ are isomorphisms if either $T\in \Loc_{\mathbb{Z}_p}^\crys(X_\eta)$ or $A\to B$ is finite \'etale.
\end{enumerate}
\end{theorem}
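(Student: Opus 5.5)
Throughout, I would work with the Frobenius-twisted modules, since these are what the de Rham and Hodge--Tate realizations see. By \Cref{prop:weak_prismatic_F_crystal_is_calculated_at_finite_mu_power}, choose $n$ small enough for both $T$ and $f_\eta^{-1}T$. By the proof of \Cref{lem:weak_prismatic_F_crystal:Frob_inj}, $\mathcal{E}^{(1)}_{\Prism,T}(A,I)=\mathrm{H}^0_\pe(X_\eta,T\otimes\mu^n\Prism^{(1)}_{(-)_{\overline{A}}/A})$ when $\varphi_A$ is finite flat. When $\varphi_B$ is completely projective, the analogous identification of $\mathcal{E}^{(1)}_{\Prism,f_\eta^{-1}T}(B,J)$ with $\mathrm{H}^0_\pe(Y_\eta,f_\eta^{-1}T\otimes\mu^n\Prism^{(1)}_{(-)_{\overline{B}}/B})$ holds up to the completely injective linearization of \Cref{lem:weak_prismatic_F_crystal:inj_of_twist}. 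So $g^{(1)}$ is the completed base change along $A\to B$ of the twisted module, followed by the pullback of global sections.

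For \ref{thm:weak_prismatic_F_crystal:pull_back_wrt_prisms:generic_twisted}, both sides are $(p,J)$-regular: the source by \Cref{lem:weak_prismatic_F_crystal:regularity} and \Cref{lem:weak_prismatic_F_crystal:inj_of_twist}, the target likewise. Hence, after completing $B[1/p]$ $J$-adically, injectivity can be checked on $J$-adic graded pieces. Each $J^i/J^{i+1}$ piece is the mod $J$ reduction tensored with an invertible module. Via \Cref{prop:weak_prismatic_F_crystal:deRham}.\ref{prop:weak_prismatic_F_crystal:deRham:inj} this reduction embeds into $\mathcal{E}_{\dR,n,T}\otimes_R\overline{B}$. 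Composing with \Cref{prop:dR:pull_back_inj}.\ref{prop:dR:pull_back_inj:rational}, which gives rational injectivity of $f^*\mathcal{E}_{\dR,n,T}\to\mathcal{E}_{\dR,n,f_\eta^{-1}T}$, and using the complete projectivity of $\overline{B}$ over its image in $Y$ to commute the base change with the inclusions via \Cref{lem:inj_and_completely_proj_mod_one_variable}, we get a square in which the de Rham map for $Y$ is injective after inverting $p$. This forces injectivity of $g^{(1)}\otimes\overline{B}[1/p]$.

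The main obstacle is that the mod-$J$ reduction of $g^{(1)}$ need not be compatible with reduction of the target; one must verify that the filtration by $J$ on the completed base change is controlled. I would use the eventual $J$-adicity (\Cref{cor:weak_prismatic_F_crystal:eventual_adic}) and the twisted Nygaard filtration. The idea is to compare $\gr$ of the Nygaard filtration instead, using \Cref{prop:weak_prismatic_F_crystal:graded_piece}.\ref{prop:weak_prismatic_F_crystal:graded_piece:inj_of_coker}, which embeds the pieces into cohomology of $\wedge^{i-n}\mathbb{L}$. The latter satisfies base change by the projection formula \Cref{thm:projection_formula}.\ref{thm:projection_formula_projective}, together with Liu--Zhu rational injectivity.

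For \ref{thm:weak_prismatic_F_crystal:pull_back_wrt_prisms:twisted}, when $f$ is flat, \Cref{prop:dR:Tor_condition} makes $f^*\mathcal{E}_{\dR,n,T}$ $p$-torsionfree, so the de Rham comparison is injective integrally. The same graded argument then gives integral injectivity of $g^{(1)}$. Injectivity of $g$ follows from that of $g^{(1)}$ by the commutative square of linearizations, together with \Cref{lem:weak_prismatic_F_crystal:inj_of_twist_2} and complete projectivity of $\varphi_B$.

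For (iii), in the crystalline case I would use \Cref{thm:prismatic_RH_for_crystalline_local_system}.\ref{thm:prismatic_RH_for_crystalline_local_system_linearization}: pull back the isomorphism $\mathcal{E}_{\Prism,T}(A,I)\otimes\Prism_{(-)_{\overline{A}}/A}[1/\mu]\simeq T\otimes\Prism_{(-)_{\overline{A}}/A}[1/\mu]$ to $Y_\eta$ along the base change $\Prism_{(-)_{\overline{A}}/A}\to\Prism_{(-)_{\overline{B}}/B}$ of \Cref{rmk:base_change_presheaf}. Taking global sections, \Cref{cor:global_section_rational} identifies the result with $\mathcal{E}_{\Prism,T}(A,I)\otimes_AB$, using reflexivity to pass from the $[1/p]$ and $[1/J]$ statements to an integral one, as in \Cref{prop:uniqueness_of_the_associated_prismatic_F_crystal}. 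In the finite \'etale case, $Y_\eta\to X_\eta$ is finite \'etale, and $\mathrm{H}^0_\pe(Y_\eta,-)=\mathrm{H}^0_\pe(X_\eta,f_{\eta,*}-)$. Since $\Prism_{(-)_{\overline{B}}/B}$ is the finite \'etale base change of $\Prism_{(-)_{\overline{A}}/A}$, the projection formula \Cref{thm:projection_formula}.\ref{thm:projection_formula_projective} gives the isomorphism directly.
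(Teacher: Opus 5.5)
\textbf{The gap is in (i).} In your square, injectivity of $g^{(1)}\otimes_B\overline{B}[1/p]$ requires two arrows to be injective. The first is the bottom de Rham pullback. The second is the left vertical arrow $\mathcal{E}^{(1)}_{\Prism,n,T}(A,I)\otimes_A\overline{B}\to(\mathcal{E}_{\dR,n,T}(X)\otimes_{R_1}\overline{B})^\wedge_p$.

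\Cref{prop:weak_prismatic_F_crystal:deRham}.\ref{prop:weak_prismatic_F_crystal:deRham:inj} only gives an injection $h$ of $\overline{A}$-modules. You then base change $h$ along $\overline{A}\to\overline{B}$, which lies over the l.c.i.\ but in general non-flat map $R_1\to R_2$. Injectivity is not preserved by such a base change, and this is exactly the crux of (i). Your proposal simply asserts that the reduction ``embeds into $\mathcal{E}_{\dR,n,T}\otimes_R\overline{B}$''.

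The missing input, which the paper uses, is that $h[1/p]$ is a monomorphism of modules with integrable connection on the smooth rigid space $\Spf(\overline{A})_\eta$. Its cokernel is again such an object, hence a vector bundle. So $h[1/p]$ is locally split and stays injective after pullback to $\Spf(\overline{A}\otimes_{R_1}R_2)_\eta$. Only after that do the complete projectivity of $\overline{B}$ over $R_2$ and \Cref{lem:inj_and_completely_proj_mod} carry the injection to $\overline{B}$. In (ii) the same arrow is handled by flat base change of $h$ along $R_1\to R_2$. Say this explicitly, since \Cref{prop:dR:Tor_condition} only takes care of the bottom arrow.

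\textbf{The Nygaard workaround does not close the gap.} First, the ``obstacle'' it targets is not real. For a $J$-adically separated source and a $J$-torsionfree target, injectivity modulo $J$ already implies injectivity, and that is all that is needed.

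Second, \Cref{prop:weak_prismatic_F_crystal:graded_piece} and \Cref{thm:projection_formula} only control base change of coefficients along completely projective maps on the fixed site $X_{\eta,\pe}$. They say nothing about restriction from $X_\eta$ to $Y_\eta$. Graded injectivity also cannot be expected. For a regular closed immersion $Y\hookrightarrow X$, the map $\mathbb{L}_{\widehat{\mathcal{O}}^+_Y/\mathcal{O}_X}[-1]\to\mathbb{L}_{\widehat{\mathcal{O}}^+_Y/\mathcal{O}_Y}[-1]$ is surjective with kernel the conormal bundle tensored with $\widehat{\mathcal{O}}^+_Y$. So Hodge, hence Nygaard, graded pieces of global sections need not inject under pullback. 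Along a non-flat $f$, injectivity is only available for the full unfiltered de Rham modules, via Liu--Zhu's pullback isomorphism for $\mathrm{D}_\dR$ and the connection. That is why the argument has to go through $h[1/p]$ as above.

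\textbf{Part (iii) is fine in outline.} The crystalline case matches the paper. In the finite \'etale case, your route is a legitimate and cleaner alternative to the paper's reduction to Galois covers and Galois descent. It uses $\mathrm{H}^0_\pe(Y_\eta,-)=\mathrm{H}^0_\pe(X_\eta,f_{\eta,*}-)$, the identification $f_{\eta,*}\Prism_{(-)_{\overline{B}}/B}\simeq\Prism_{(-)_{\overline{A}}/A}\otimes_A B$, and finite projectivity of $B$ over $A$. Note that $f$ is only \'etale, not necessarily finite, but your argument does not need finiteness.
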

Here we recall that the assumptions for $(B,J)$ in \Cref{thm:weak_prismatic_F_crystal:pull_back_wrt_prisms}.(\ref{thm:weak_prismatic_F_crystal:pull_back_wrt_prisms:generic_twisted} and \ref{thm:weak_prismatic_F_crystal:pull_back_wrt_prisms:twisted}) are satisfied for example when $(B,J)$ is a framed regular prism or its perfection over $Y$.
We also note that under the assumption of \Cref{thm:weak_prismatic_F_crystal:pull_back_wrt_prisms}.\ref{thm:weak_prismatic_F_crystal:pull_back_wrt_prisms:iso}, the map $f:Y\to X$ is \'etale itself.
\begin{proof}
	As the claims are Zariski local on $Y$ and $X$, we may assume $X=\Spf(R_1)$ and $Y=\Spf(R_2)$.	
	We first consider the Frobenius twist $g^{(1)}$.
	By the $J$-completeness of the map $g^{(1)}$ and $(g^{(1)}[1/p])^\wedge_{J}$, to show their injectivity, it suffices to show it for their mod $J$ reductions.
	We consider the following commutative diagram that relates the reduction of $g^{(1)}$ with the base change of the integral de Rham functors
	\begin{equation}
	\label{eq:diagram_of_prism_and_dR}
	\begin{tikzcd}
	\mathcal{E}^{(1)}_{\Prism,n,T}(A,I)\otimes_{A} \overline{B} \arrow[r, "g^{(1)}"] \ar[d] & \mathcal{E}^{(1)}_{\Prism,n,f^{-1}T}(B,J) \otimes_B \overline{B}\ar[d]\\
	\bigl( \mathcal{E}_{\dR,n,T}(X) \otimes_{R_1} \overline{B}\bigr)^\wedge_p \ar[r] & \bigl(\mathcal{E}_{\dR,n,f^{-1}T} (Y)\otimes_{R_2} \overline{B}\bigr)^\wedge_p,
	\end{tikzcd}
	\end{equation}
	where we use the finiteness of $\mathcal{E}^{(1)}_{\Prism,n,T}(A,I)$ to ensure the completeness of the left top object.
	In the diagram, the right vertical arrow is injective thanks to the complete projectivity assumptions of $(B,J)$ and \Cref{prop:weak_prismatic_F_crystal:deRham}.
	For the bottom arrow, by \Cref{lem:inj_and_completely_proj_mod}, \Cref{prop:dR:Tor_condition}, and the complete projectivity of $\overline{B}$, we know it is injective if $f$ is flat and is always injective after inverting $p$.
	So to prove both \ref{thm:weak_prismatic_F_crystal:pull_back_wrt_prisms:generic_twisted} and \ref{thm:weak_prismatic_F_crystal:pull_back_wrt_prisms:twisted}, it suffices to prove the injectivity of the left vertical arrow in the two situations.
	
	We now analyze the left vertical arrow.
	By construction, the map is identical to the base change of the map of $\overline{A}$-modules $h\colonequals\mathcal{E}^{(1)}_{\Prism,n,T}(A,I)\otimes_{A} \overline{A} \to \mathcal{E}_{\dR,n,T}(X)\otimes_{R_1} \overline{A}$ along $\overline{A}\to \overline{B}$, where $\overline{A}$ is $p$-completely \'etale over $R_1$ and is in particular topologically of finite type over $\mathcal{O}_K$.
	Note that by \Cref{prop:weak_prismatic_F_crystal:deRham}, the map $h$ is injective and is compatible with their flat connections after inverting $p$.
	So if the map $f:Y\to X$ is flat, then the base change $h\otimes_{R_1} R_2$ is an injection of finite presented modules over $\overline{A}\otimes_{R_1}R_2$.
	For general $f$, since the injectivity of a map of flat connections over a rigid space is preserved under base changes, the map $h\otimes_{R_1} R_2[1/p]$ is an injection of flat connections over the smooth rigid space $\Spf(\overline{A}\otimes_{R_1} R_2)_\eta$.
	Hence by the complete projective assumption of $\overline{B}$ over $R_2$ and by taking the base change of $h\otimes_{R_1} R_2$ along $\overline{A}\otimes_{R_1} R_2 \to \overline{B}$, we see the left vertical arrow in (\ref{eq:diagram_of_prism_and_dR}) is injective either when $f$ is flat, or in general after inverting $p$.

	To get the injectivity of $g$, we consider the following commutative diagram 
	\[
	\begin{tikzcd}
	\mathcal{E}_{\Prism,n,T}(A,I)\otimes_{A}B \arrow[r, "g"]  \ar[d]&  \mathcal{E}_{\Prism,n,f_\eta^{-1}T}(B,J) \ar[d] \\ 
	\mathcal{E}^{(1)}_{\Prism,n,T}(A,I)\otimes_{A}B \arrow[r, "g^{(1)}"] &  \mathcal{E}^{(1)}_{\Prism,n,f_\eta^{-1}T}(B,J),
	\end{tikzcd}
	\]
	where the vertical maps are linearlization for $\varphi_{B}:B\to B$.
	So the injectivity of $g$ follows from that of $g^{(1)}$ together with \Cref{lem:inj_and_completely_proj_mod}.
	
	Finally, we assume $A\to B$ is finite \'etale and prove that $g$ is an isomorphism.
	We first assume $T\in \Loc_{\mathbb{Z}_p}^\crys(X_\eta)$.
	By \Cref{thm:prismatic_RH_for_crystalline_local_system}. (\ref{thm:prismatic_RH_for_crystalline_local_system_not_weak} and \ref{thm:prismatic_RH_for_crystalline_local_system_linearization}), the weak $F$-isocrystal $\mathcal{E}_{\Prism,T}$ is in fact a reflexive prismatic $F$-crystal and there is a canonical isomorphism $\mathcal{E}_{\Prism,T}(A,I)\otimes_A \Prism_{(-)_{\overline{A}}/A}[1/\mu] \xrightarrow{\sim} T\otimes \Prism_{(-)_{\overline{A}}/A}[1/\mu]$.
	In particular, we get a canonical isomorphism of sheaves of $B$-modules over $Y_{\eta,\pe}$ through the pullback along the map of ringed sites $(Y_{\eta,\pe}, \Prism_{(-)_{\overline{B}}/B}[1/\mu])\to (X_{\eta,\pe}, \Prism_{(-)_{\overline{A}}/A}[1/\mu])$:
	\[
	\mathcal{E}_{\Prism,T}(A,I)\otimes_A B \otimes_B \Prism_{(-)_{\overline{B}}/B}[1/\mu] \xrightarrow{\sim} f^{-1}T\otimes \Prism_{(-)_{\overline{B}}/B}[1/\mu].
	\]
	So by taking the global section functor $\mathrm{H}_\pe^0(Y_\eta, -)$ and by \Cref{cor:global_section_rational}, we get an isomorphism
	\[
	\mathcal{E}_{\Prism,T}(A,I)\otimes_A B \simeq \mathcal{E}_{\Prism,f^{-1}T}(B,J).
	\]
	
	Assume from now that $T\in \Loc_{\mathbb{Z}_p}(X_\eta)$ and $A\to B$ is finite \'etale.
	Under the assumption, both $(A,I)$ and $(B,J)$ are framed regular prisms over $X$, so $\Spf(\overline{A})$ is \'etale over $X$ and $\Spf(\overline{B})$ is \'etale over $Y$.
	In particular, we know the map $f$ is \'etale itself.
	Notice that by the localization property in \Cref{prop:weak_prismatic_F_crystal:localization}, we may assume the reduction $\Spf(\overline{A})$ is surjective onto $X=\Spf(R_1)$ and similarly for $\Spf(\overline{B})\to Y=\Spf(R_2)$. 
	Thus by taking the composition, it suffices to discuss two separate cases: in the first case $\overline{B}$ is the base change of $\overline{A}$ along a finite \'etale map $f$, and in the second case $X=Y$.
	Moreover, by the injectivity of $g$ in  \ref{thm:weak_prismatic_F_crystal:pull_back_wrt_prisms:twisted} for flat maps, we may assume $f$ is finite Galois.
	
	As in \Cref{const:algebrac_pi_1}, we let $\widetilde{X}=\Spf(S)\to X$ be the integral model of a maximal connected Galois cover of the generic fiber, let $G=G_{X_\eta}$ be the Galois group, and let $H$ be the kernel of $G\to \Gal(Y/X)$.
	So the canonical map $g$ can be translated to the following map of Galois invariants:
	\begin{equation}
		\label{eq:two_galois_inv_1}
	(T(\widetilde{X}_\eta)\otimes \Prism_{S\otimes_{R_1} {\overline{A}}/A})^G \otimes_A B \longrightarrow (T(\widetilde{X}_\eta)\otimes \Prism_{S\otimes_{R_2}{\overline{B}}/B})^H.
	\end{equation}
	In the first case when $\overline{B}=\overline{A}\otimes_{R_1} R_2$, the tensor product $S\otimes_{R_1} \overline{A}$ is equal to $S\otimes_{R_2} \overline{B}$.
	Moreover, by \cite[Lem.\ 7.12]{GR24}, the \'etaleness implies that the relative prismatic cohomology $\Prism_{S\otimes_{R_2} \overline{B}/B}=\Prism_{S\otimes_{R_1} \overline{A}/B}$ is naturally isomorphic to $\Prism_{S\otimes_{R_1} \overline{A}/A}$.
	\footnote{Though the statement in \textit{loc.\ cit.} assume the smoothness, the arguments apply verbatimally in general.}
	We let $M$ be the $A$-module $\bigl( T(\widetilde{X}_\eta)\otimes \Prism_{S\otimes_{R_1} {\overline{A}}/A} \bigr)^H$ with the induced $G/H$-action.
	Then (\ref{eq:two_galois_inv_1}) can be rewritten as a map of $B$-modules
		\[
		M^{G/H} \otimes_A B \to M.
		\]
	Note that by \cite[Lem.\ 2.18]{BS22}, the map $\Spf(B)\to \Spf(A)$ is also a finite Galois $G/H$-cover that is compatible with $\Spf(\overline{B})\to \Spf(\overline{A})$.
	Thus the claim follows from the usual Galois descent.
		
	We then consider the second case when $X=Y$ and $A\to B$ is finite \'etale.
	Note that since $B$ is finite projective over $A$ and has the trivial $G$-action, by the projection formula for finite projective modules, we may replace the source of (\ref{eq:two_galois_inv_1}) as below
	\begin{equation}
	\label{eq:two_galois_inv_2}
	(T(\widetilde{X}_\eta)\otimes \Prism_{S\otimes_{R_1} {\overline{A}}/A} \otimes_A B)^G  \longrightarrow (T(\widetilde{X}_\eta)\otimes \Prism_{S\otimes_{R_2} \overline{B}/B})^H.
	\end{equation}
	So the map in (\ref{eq:two_galois_inv_2}) is an isomorphism, thanks to the base change property of relative prismatic cohomology.
\end{proof}

In the special case when $Y=X$, we yield a change of prisms property.
\begin{corollary}
Let $(B_1,J_1)\to (B_2,J_2)$ be a map of two prisms over $X$, and let $T\in \Loc_{\mathbb{Z}_p}(X_\eta)$.
Assume $(B_1,J_1)$ is framed regular, the map $\varphi_{B_2}$ is completely projective, and $\Spf(\overline{B}_2)$ is completely projective over its image in $X$.
The induced map of weak Frobenius modules $\mathcal{E}_{\Prism,T}(B_1,J_1)\otimes_{B_1}B_2 \to \mathcal{E}_{\Prism,T}(B_2,J_2)$ is injective.
\end{corollary}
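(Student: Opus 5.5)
The plan is to deduce the statement directly from \Cref{thm:weak_prismatic_F_crystal:pull_back_wrt_prisms} applied to $f=\mathrm{id}_X$. The identity is a flat l.c.i.\ morphism of regular $p$-adic formal schemes, and the map $(B_1,J_1)\to(B_2,J_2)$ is a map of prisms over $f$ in the sense explained just before that theorem. Moreover $f_\eta^{-1}T=T$, so the map $g$ there is exactly the linearization $\mathcal{E}_{\Prism,T}(B_1,J_1)\otimes_{B_1}B_2\to\mathcal{E}_{\Prism,T}(B_2,J_2)$ appearing in the corollary.

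Next I check the hypotheses of parts \ref{thm:weak_prismatic_F_crystal:pull_back_wrt_prisms:generic_twisted} and \ref{thm:weak_prismatic_F_crystal:pull_back_wrt_prisms:twisted} of \Cref{thm:weak_prismatic_F_crystal:pull_back_wrt_prisms}, with $(A,I)=(B_1,J_1)$ and $(B,J)=(B_2,J_2)$.
\begin{itemize}
\item $(B_1,J_1)$ is framed regular by assumption.
\item $\varphi_{B_2}$ is completely projective by assumption.
\item $\overline{B}_2$ is completely projective over the image of $\Spf(\overline{B}_2)$ in $Y=X$, again by assumption.
\end{itemize}
Since $f$ is flat, part \ref{thm:weak_prismatic_F_crystal:pull_back_wrt_prisms:twisted} then gives the injectivity of both $g^{(1)}$ and $g$, which is the claim.

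The only point I would double-check is how part \ref{thm:weak_prismatic_F_crystal:pull_back_wrt_prisms:twisted} was proved. It passes through diagram (\ref{eq:diagram_of_prism_and_dR}) and uses the injectivity of the de Rham realization for $(B_2,J_2)$ from \Cref{prop:weak_prismatic_F_crystal:deRham}. That proposition is stated for framed regular prisms and their perfections, whereas in the corollary $(B_2,J_2)$ is only known to satisfy the complete projectivity hypotheses. The remark after \Cref{thm:weak_prismatic_F_crystal:pull_back_wrt_prisms} asserts that these hypotheses are what is needed, and the proof of \Cref{prop:weak_prismatic_F_crystal:deRham}.\ref{prop:weak_prismatic_F_crystal:deRham:inj} uses only the universal coefficient theorem, the complete projectivity of $\overline{B}_2$, and the projection formula \Cref{thm:projection_formula}.\ref{thm:projection_formula_projective}. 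I therefore expect no genuine obstacle.

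On the bottom row of diagram (\ref{eq:diagram_of_prism_and_dR}), the case $f=\mathrm{id}$ makes the map trivially injective, since both $\mathcal{O}_X$-modules coincide. The remaining step is the passage from $g^{(1)}$ to $g$. It follows from the square relating $g$ and $g^{(1)}$ via the linearization along $\varphi_{B_2}$, together with \Cref{lem:inj_and_completely_proj_mod} applied to the completely projective map $\varphi_{B_2}$.
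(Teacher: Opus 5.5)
Your proposal is correct and is exactly the paper's argument: the corollary is the special case $Y=X$, $f=\mathrm{id}_X$ (flat and l.c.i.) of part (ii) of \Cref{thm:weak_prismatic_F_crystal:pull_back_wrt_prisms}, with $(A,I)=(B_1,J_1)$ and $(B,J)=(B_2,J_2)$, and the theorem's hypotheses are precisely those of the corollary. Your extra check of how part (ii) was proved, including its use of \Cref{prop:weak_prismatic_F_crystal:deRham} for a possibly non-framed $(B_2,J_2)$, concerns the theorem's own proof rather than this deduction, so it is not needed here.
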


Note that one can construct a natural pullback functor for presheaves of weak Frobenius modules.
\begin{construction}
\label{const:weak_prismatic_F_crystal:push_pull_functor}
Let $f:Y\to X$ be a map of regular $p$-adic formal schemes over $\mathcal{O}_K$.
For a weak prismatic ($F$-)crystal $\mathcal{E}\in \wCrys^{(\varphi)}(X_\Prism)$, we define the \emph{pullback of $\mathcal{E}$} to be the functor on $Y_\Prismsp$ such that
\[
Y_\Prismsp \ni (B,J) \longmapsto (f_\Prism^* \mathcal{F})(B,J) \colonequals \lim (\mathcal{E}(A,I)\otimes_{A} B')^\wedge_{(p,I)},
\]
where the limit is ranging over all the diagrams $\bigl(
(B,J) \to  (B',J') \leftarrow (A,I) \bigr)$ such that $(B,J)\to (B',J')$ is completely flat and $(A,I)$ is completely flat over $X$.
Here we note that the index category of the limit is in fact filtered thanks to \Cref{thm:coproduct_in_general}.
\end{construction}
So applying \Cref{thm:weak_prismatic_F_crystal:pull_back_wrt_prisms} and the limit formula in \Cref{lem:weak_vs_non-weak}, we obtain the following pullback compatibility.
\begin{corollary}
\label{cor:weak_prismatic_F_crystal:pull_back}
Let $f:Y\to X$ be a map of regular $p$-adic formal schemes over $\mathcal{O}_K$, let $T\in \Loc_{\mathbb{Z}_p}(X_\eta)$.
Assume either $T\in \Loc_{\mathbb{Z}_p}^\crys(X_\eta)$ or $f$ is finite \'etale.
There is a canonical isomorphism of weak prismatic $F$-crystals on $Y_\Prismsp$
\[
f_\Prism^* \mathcal{E}_{\Prism,T} \xrightarrow{\sim} \mathcal{E}_{\Prism,f^{-1}T}.
\]
\end{corollary}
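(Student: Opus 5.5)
The plan is to compare the two functors prism by prism, using the limit description of $f_\Prism^*$ from \Cref{const:weak_prismatic_F_crystal:push_pull_functor} together with the base change results of \Cref{thm:weak_prismatic_F_crystal:pull_back_wrt_prisms}.

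\textbf{Step 1: the comparison map.} Fix $(B,J)\in Y_\Prismsp$ and an index diagram $(B,J)\to (B',J')\leftarrow (A,I)$, where $(B,J)\to(B',J')$ is completely flat and $(A,I)$ is completely flat over $X$. The map $(A,I)\to(B',J')$ lies over $f$, so functoriality of the period sheaves (\Cref{rmk:base_change_presheaf}) and pullback along $f_\eta$ give a Frobenius-equivariant map
\[
\mathcal{E}_{\Prism,T}(A,I)\otimes_A B' \to \mathcal{E}_{\Prism,f^{-1}T}(B',J').
\]
These maps are compatible in the diagram, so passing to the limit produces a canonical map
\[
(f_\Prism^*\mathcal{E}_{\Prism,T})(B,J)\to \lim \mathcal{E}_{\Prism,f^{-1}T}(B',J').
\]
Since the index category is filtered (\Cref{thm:coproduct_in_general}), it suffices to check cofinal diagrams. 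I will restrict to those with $(A,I)$ special, or even framed regular after \'etale localization by \Cref{thm:regular prism}, and with $(B',J')$ special. Coproducts of framed regular prisms with $(B,J)$ are cofinal by weak initiality (\Cref{cor:regular prism cover}) and by the explicit coproduct of \Cref{thm:coproduct_in_general}.\ref{thm:coproduct of prism regular and regular}.

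\textbf{Step 2: termwise isomorphism.} On such a cofinal system, \Cref{thm:weak_prismatic_F_crystal:pull_back_wrt_prisms}(iii) applies directly in the crystalline case and shows the termwise map is an isomorphism. In the finite \'etale case, I will choose $(B',J')$ finite \'etale over $(A,I)$. This is possible because a framed regular prism lifts along finite \'etale maps by \cite[Lem.\ 2.18]{BS22}, since $\overline{A}\otimes_{\mathcal{O}_X}\mathcal{O}_Y$ is finite \'etale over $\overline{A}$; (iii) then gives an isomorphism again.

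\textbf{Step 3: descending from $B'$ to $B$.} It remains to identify $\lim \mathcal{E}_{\Prism,f^{-1}T}(B',J')$ with $\mathcal{E}_{\Prism,f^{-1}T}(B,J)$. In the crystalline case, $f^{-1}T$ is again pointwise crystalline, since Shilov or classical points pull back and \Cref{cor:various_notion_of_crystallinity} applies. Hence $\mathcal{E}_{\Prism,f^{-1}T}$ is a genuine reflexive prismatic $F$-crystal by \Cref{thm:prismatic_RH_for_crystalline_local_system}, and the limit formula of \Cref{lem:weak_vs_non-weak}.\ref{lem:weak_vs_non-weak_recovering_formula}, or completely faithfully flat descent, recovers the value at $(B,J)$. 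In the finite \'etale case, the same descent is carried out directly on global sections. Base change along the completely flat map $B\to B'$ is computed by the projection formula (\Cref{thm:projection_formula}.\ref{thm:projection_formula_projective}), using complete freeness of coproducts. The equalizer over $B'\rightrightarrows B'\coprod_B B'$ commutes with $\mathrm{H}^0_\pe$, because global sections are a limit.

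\textbf{Main obstacle.} I expect the hardest step to be Step 3 in the non-crystalline, finite \'etale case. There $\mathcal{E}_{\Prism,f^{-1}T}$ is only a weak crystal, so no crystal property is available. One must show that forming $\mathrm{H}^0_\pe$ of $T\otimes\Prism_{(-)_{\overline{B}'}/B'}[1/\mu]$ commutes with completely flat descent from $B'$ to $B$. My first attempt is to reduce to a finite $\mu$-twist via \Cref{prop:weak_prismatic_F_crystal_is_calculated_at_finite_mu_power}, and then use \Cref{lem:weak_prismatic_F_crystal:inj_of_twist} together with the \v{C}ech exactness of the relative prismatic period sheaf base-changed from $B$ to $B'$. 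Frobenius compatibility then holds automatically, because all maps in the argument are induced by Frobenius-equivariant maps of period sheaves.
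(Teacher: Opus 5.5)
In the crystalline case your argument is the paper's. The paper's proof simply combines the termwise isomorphisms of \Cref{thm:weak_prismatic_F_crystal:pull_back_wrt_prisms}(iii) with the limit formula of \Cref{lem:weak_vs_non-weak}.\ref{lem:weak_vs_non-weak_recovering_formula}. One justification there needs repair. Shilov points of $Y_\eta$ do not map to Shilov points of $X_\eta$ (take $f$ a closed immersion). Classical points of $Y_\eta$ may land outside the dense open of $X$ on which condition (f) of \Cref{cor:various_notion_of_crystallinity} is known. Instead, obtain crystallinity of $f_\eta^{-1}T$ from condition (b): take the reflexive hull of the pullback of the reflexive prismatic $F$-crystal $\mathcal{E}_{\Prism,T}$ to $Y$. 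Alternatively, avoid it: your Step~3 descent of $\mathrm{H}^0_\pe$ works for any local system.

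The genuine gap is the finite \'etale case with $T$ not crystalline, and it lies in Step~2, not Step~3. The diagrams you choose have $(B',J')$ a finite \'etale lift over $(A,I)$ of $\overline{A}\otimes_{\mathcal{O}_X}\mathcal{O}_Y$. These are generally not objects of the index category of \Cref{const:weak_prismatic_F_crystal:push_pull_functor}: nothing makes such a lift receive a completely flat map of prisms from an arbitrary $(B,J)\in Y_\Prismsp$. Already for $f=\mathrm{id}$ and two unrelated Breuil--Kisin prisms there need not be any such map, so these diagrams are not cofinal. Now take a diagram that does occur, e.g.\ $(B',J')=(B,J)\coprod(A_Y,I_Y)$. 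Then $A\to B'$ is not finite \'etale. Your comparison map factors through the change-of-prism map $\mathcal{E}_{\Prism,f^{-1}T}(A_Y,I_Y)\otimes_{A_Y}B'\to\mathcal{E}_{\Prism,f^{-1}T}(B',J')$ over $Y$. For a merely weak crystal this map is only known to be injective (\Cref{thm:weak_prismatic_F_crystal:pull_back_wrt_prisms}(ii)). The case $f=\mathrm{id}_X$ shows what is really asserted: the weak crystal $\mathcal{E}_{\Prism,T}$ is recovered by the limit of \Cref{const:weak_prismatic_F_crystal:push_pull_functor}. This is a descent property of $\mathcal{E}_{\Prism,T}$ itself. \Cref{lem:weak_vs_non-weak}.\ref{lem:weak_vs_non-weak_recovering_formula} provides it only for genuine crystals, and the paper's one-line proof, which cites exactly these two results, does not spell it out either. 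Your Step~3 (left exactness of $\mathrm{H}^0_\pe$ along $B\to B'\rightrightarrows B'\coprod_B B'$) is routine and only gives $\mathcal{E}_{\Prism,f^{-1}T}(B,J)=\lim\mathcal{E}_{\Prism,f^{-1}T}(B',J')$. What is easy for finite \'etale $f$ is the pointwise statement. A framed regular prism over $Y$ is framed regular over $X$. Moreover, $\mathcal{E}_{\Prism,T}(B,J)\cong\mathcal{E}_{\Prism,f^{-1}T}(B,J)$ by the finite \'etale projection formula, since the period sheaf $\Prism_{(-)_{\overline{B}}/B}[1/\mu]$ on $X_{\eta,\pe}$ is the pushforward along $f_\eta$ of the corresponding sheaf on $Y_{\eta,\pe}$. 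The missing step is to show that the limit defining $f_\Prism^*\mathcal{E}_{\Prism,T}$ returns this value, and your proposal does not address it.
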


Finally, we show that the prismatic Riemann--Hilbert functor is naturally compatible with the derived direct image for $T\in \Loc^\crys_{\mathbb{Z}_p}(X_\eta)$.
Before the statement, we recall that $(-)_\tf$ is defined as the $p$-torsionfree quotient.
For an analytically locally free coherent prismatic $F$-crystal $\mathcal{F}$ over a regular $p$-adic formal scheme, we define its \emph{saturation} $\mathcal{F}'$ to be the intersection of $\mathcal{F}[1/p]\cap \mathcal{F}[1/\mathcal{I}_\Prism]$ inside of $\mathcal{F}[1/p\mathcal{I}_\Prism]$.
By \Cref{cor:reflexive_in_ca} and the assumption that $\mathcal{F}$ is analytically locally free, the saturation is also equal to the reflexive hull of $\mathcal{F}$.
In particular, the cokernel of $\mathcal{F}\hookrightarrow \mathcal{F}'$ is supported at the non-analytic locus $V(p,\mathcal{I}_\Prism)$ in $X_\Prism$ and is killed by a power of $p$.
\begin{proposition}[Pushforward and prismatic Riemann--Hilbert functor]
\label{prop:weak_prismatic_F_crystal:direct_image}
Let $g:X\to Z$ be a smooth proper map of regular $p$-adic formal schemes over $\mathcal{O}_K$, and let $T\in \Loc^\crys_{\mathbb{Z}_p}(X_\eta)$.
For each $i\in \mathbb{N}$, there is a canonical injection of coherent prismatic $F$-crystal over $Z_\Prism$:
\[
(R^i g_{\Prism,*} \mathcal{E})_\tf \hookrightarrow \mathcal{E}_{\Prism, (R^i g_{\eta,*} T)_\tf},
\]
which identifies the target as the saturation of the source.
\end{proposition}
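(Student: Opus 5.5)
Let $\mathcal{E}\colonequals \mathcal{E}_{\Prism,T}$, which by \Cref{thm:prismatic_RH_for_crystalline_local_system} is a reflexive prismatic $F$-crystal on $X_\Prism$ with étale realization $T$. The strategy is to construct the map by evaluating at special prisms $(B,J)\in Z_\Prismsp$, using the strong étale comparison for direct images, and then to identify the target with the saturation using the results of \Cref{sub:crys_loc_sys}.

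First, the finiteness and the \'etale realization of the source. By \cite[Thm.\ 1.18, Cor.\ 5.16]{GR24} (as already used in the proof of \Cref{thm:GR_strong_'etale}), $R^ig_{\Prism,*}\mathcal{E}$ is a coherent prismatic $F$-crystal on $Z_\Prism$, with \'etale realization $R^ig_{\eta,*}T$. The argument in the last paragraph of the proof of \Cref{thm:GR_strong_'etale} shows that for each framed regular prism $(A,I)$ the module $(R^ig_{\Prism,*}\mathcal{E})(A,I)[1/p]$ is finite projective. The same argument applied to $\mathcal{F}\colonequals(R^ig_{\Prism,*}\mathcal{E})_\tf$, via \Cref{thm:Frob_mod_is_analyticall_loc_free} and \Cref{claim:coherent_prismatic_F_crystal_over_O_K}, shows that $\mathcal{F}$ is analytically locally free. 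Moreover, the \'etale realization of $\mathcal{F}$ is $(R^ig_{\eta,*}T)_\tf$, since $p$-torsion on the analytic locus corresponds to torsion in the local system. Consequently the saturation $\mathcal{F}'=\mathcal{F}[1/p]\cap\mathcal{F}[1/\mathcal{I}_\Prism]$ is a reflexive prismatic $F$-crystal (\Cref{cor:reflexive_in_ca}), and its \'etale realization is also $(R^ig_{\eta,*}T)_\tf$, because $\mathcal{F}\to\mathcal{F}'$ is an isomorphism after inverting $\mathcal{I}_\Prism$.

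Second, the comparison. Once the reflexive crystal $\mathcal{F}'$ is known to have \'etale realization $(R^ig_{\eta,*}T)_\tf$, \Cref{prop:uniqueness_of_the_associated_prismatic_F_crystal} applied on $Z$ gives a canonical isomorphism $\mathcal{F}'\simeq\mathcal{E}_{\Prism,(R^ig_{\eta,*}T)_\tf}$. Composing with the inclusion $\mathcal{F}\hookrightarrow\mathcal{F}'$ produces the desired injection. As a by-product, $(R^ig_{\eta,*}T)_\tf$ lies in $\Loc^\crys_{\mathbb{Z}_p}(Z_\eta)$ by \Cref{cor:various_notion_of_crystallinity}(b).

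To make the map canonical and explicitly compatible with the construction of $\mathcal{E}_{\Prism,-}$, I would also describe it directly. The direct-image part of \Cref{thm:GR_strong_'etale} gives a canonical isomorphism $(Rg_{\eta,*}T)\otimes\Ainf[1/\mu]\simeq\Ainf(Rg_{\Prism,*}\mathcal{E})[1/\mu]$. Base change to $\Prism_{(-)_{\overline{B}}/B}[1/\mu]$ for $(B,J)\in Z_\Prismsp$, using crystal base change as in the proof of \Cref{prop:uniqueness_of_the_associated_prismatic_F_crystal}, turns this into a map
\[
(R^ig_{\Prism,*}\mathcal{E})(B,J)\longrightarrow \mathrm{H}^0_\pe\bigl(Z_\eta,(R^ig_{\eta,*}T)_\tf\otimes\Prism_{(-)_{\overline{B}}/B}[1/\mu]\bigr)=\mathcal{E}_{\Prism,(R^ig_{\eta,*}T)_\tf}(B,J),
\]
which factors through the torsionfree quotient and agrees with the previous map by uniqueness.

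The main obstacle is the first step: showing that $R^ig_{\Prism,*}\mathcal{E}$ and its torsionfree quotient really have \'etale realization $(R^ig_{\eta,*}T)_\tf$, including the bounded-torsion discrepancy on the \'etale side. This requires the Laurent base change of \Cref{thm:GR_strong_'etale} together with the fact that inverting $\mathcal{I}_\Prism$ in a coherent crystal does not create spurious torsion. I would try to handle it by comparing the étale sides directly, using that $\mathcal{O}_\Prism\langle1/\mathcal{I}_\Prism\rangle$ is flat over $\mathcal{O}_\Prism$ and that the $p$-torsionfree quotient commutes with this flat base change.
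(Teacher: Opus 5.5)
Your argument is correct, but it is organized differently from the paper's.

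\textbf{The paper's route.} The paper never separately identifies the \'etale realization of $(R^ig_{\Prism,*}\mathcal{E})_\tf$. It proceeds as follows:
\begin{enumerate}
\item base-change the direct-image isomorphism of \Cref{thm:GR_strong_'etale} along $\Ainf[1/\mu]\to\Prism_{(-)_{\overline{B}}/B}[1/\mu]$;
\item use the crystal property of $Rg_{\Prism,*}\mathcal{E}$ and take $i$-th cohomology to get $(R^ig_{\eta,*}T)\otimes_{\mathbb{Z}_p}\Prism_{(-)_{\overline{B}}/B}[1/\mu]\simeq(R^ig_{\Prism,*}\mathcal{E})(B,J)\otimes_B\Prism_{(-)_{\overline{B}}/B}[1/\mu]$ for $(B,J)\in Z_\Prismsp$;
\item compute global sections with \Cref{thm:global_section_rational}, comparing with the saturation after inverting $p$ and after inverting $J$.
\end{enumerate}
The last step is in effect the proof of \Cref{prop:uniqueness_of_the_associated_prismatic_F_crystal} re-run inline.

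\textbf{Your route.} You build the reflexive crystal $\mathcal{F}'$ first. You then identify its \'etale realization with $(R^ig_{\eta,*}T)_\tf$ via flat base change $\mathcal{O}_\Prism\to\mathcal{O}_\Prism\langle 1/\mathcal{I}_\Prism\rangle$ on noetherian values. Finally you invoke \Cref{prop:uniqueness_of_the_associated_prismatic_F_crystal} on $Z$ as a black box.

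\textbf{What each buys.} Your route is more modular. For the pushforward it needs only the Laurent-level \'etale comparison, which is available from the last bullet of \Cref{thm:GR_strong_'etale} after base change along $\Ainf[1/\mu]\to W(\widehat{\mathcal{O}}^\flat)$. The strong comparison enters only through the single reflexive crystal $\mathcal{F}'$ inside the uniqueness proposition. The passage from $R^ig_{\eta,*}T$ to its torsion-free quotient is also transparent.

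The paper's route produces the injection directly from the sheaf-level direct-image comparison, so compatibility with that comparison is built in. In your version, the agreement asserted in your last paragraph is an extra check: the two $\Ainf[1/\mu]$-comparisons for $\mathcal{F}'$ must restrict to the same Bhatt--Scholze Laurent comparison. That check is not needed for the statement itself.

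\textbf{Shared implicit step.} Both arguments take for granted that the saturation $\mathcal{F}[1/p]\cap\mathcal{F}[1/\mathcal{I}_\Prism]$ is again a crystal. This means the intersection must commute with the completely free but non-noetherian transition maps $A\to A^n$. It can be verified coefficientwise exactly as in the proof of \Cref{thm:prismatic_RH_for_crystalline_local_system}.
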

\begin{proof}
By the Strong \'Etale Comparison of Guo--Reinecke in \Cref{thm:GR_strong_'etale}, we know there is a canonical Frobenius equivariant isomorphism of pro-\'etale sheaves over $Z_\eta$
\begin{equation}
\label{eq:iso_strong_etale}
		(R g_{\eta,*} T)\otimes_{\mathbb{Z}_p} \Ainf[1/\mu] \simeq \Ainf(R g_{\Prism,*} \mathcal{E})[1/\mu].
\end{equation}
So for each prism $(B,J)\in Z_\Prismsp$, the base change along $\Ainf[1/\mu] \to \Prism_{(-)_{\overline{B}}/B}[1/\mu]$ induces a Frobenius equivariant isomorphism
\[
(R g_{\eta,*} T)\otimes_{\mathbb{Z}_p} \Prism_{(-)_{\overline{B}}/B}[1/\mu] \simeq \Ainf(R g_{\Prism,*} \mathcal{E})[1/\mu]\otimes_{\Ainf[1/\mu]} \Prism_{(-)_{\overline{B}}/B}[1/\mu].
\]
Moreover, by applying the crystal property of $R g_{\Prism,*} \mathcal{E}$ along the maps of prisms $(\Ainf, \ker(\widetilde{\theta})) \to (\Prism_{(-)_{\overline{B}}/B} ,I_{\Prism_{(-)_{\overline{B}}/B}}) \leftarrow (B,J)$, the right hand side above is naturally isomorphic to 
\[
(R g_{\Prism,*}\mathcal{E})(B,J) \otimes_B \Prism_{(-)_{\overline{B}}/B}[1/\mu].
\]
So by combining the above two identifications, we obtain a natural isomorphism 
\[
(R g_{\eta,*} T)\otimes_{\mathbb{Z}_p} \Prism_{(-)_{\overline{B}}/B}[1/\mu] \simeq (R g_{\Prism,*}\mathcal{E})(B,J) \otimes_B \Prism_{(-)_{\overline{B}}/B}[1/\mu].
\]
The $i$-th cohomology yields a canonical isomorphism of pro-\'etale sheaves over $Z_\eta$
\begin{equation}
\label{eq:iso_i-th_cohomology}
(R^i g_{\eta,*} T) \otimes_{\mathbb{Z}_p}  \Prism_{(-)_{\overline{B}}/B}[1/\mu]  \simeq (R^i g_{\Prism,*}\mathcal{E}) (B,J) \otimes_B \Prism_{(-)_{\overline{B}}/B}[1/\mu].
\end{equation}

Now, by the proper smooth assumption of $g:X\to Z$, the derived direct image of $\mathcal{E}$ is a prismatic $F$-crystal in perfect complexes (\cite[Cor.\ 5.16, Rmk.\ 8.11]{GR24}) and each $R^i g_{\Prism,*}\mathcal{E}$ is a coherent prismatic $F$-crystal over $Z_\eta$.
Moreover, as in the proof of \Cref{thm:GR_strong_'etale}, the prismatic $F$-crystal $R^i g_{\Prism,*}\mathcal{E}$ is locally free after inverting $p$.
In addition, by taking the $p$-completion at the $i$-th cohomology of (\ref{eq:iso_strong_etale}), we see the $p$-torsionfree quotient $(R^i g_{\Prism,*}\mathcal{E})_\tf$ becomes locally free after base change along $\mathcal{O}_\Prism \to \mathcal{O}_\Prism\langle 1/\mathcal{I}_\Prism \rangle$.
As a consequence, we know $(R^i g_{\Prism,*}\mathcal{E})_\tf$ is analytically locally free and is contained in its saturation, where the latter is a reflexive coherent prismatic $F$-crystal over $Z$.
In particular, by \Cref{thm:global_section_rational}, we know for each special prism $(B,J)\in Z_\Prismsp$, there is an injection
\[
(R^i g_{\Prism,*}\mathcal{E})_\tf (B,J) \longrightarrow \mathrm{H}^0_\pe\bigl( Z_\eta, (R^i g_{\Prism,*}\mathcal{E})_\tf (B,J) \otimes_B \Prism_{(-)_{\overline{B}}/B}[1/\mu]\bigr)
\]
that induces an isomorphism after inverting either $p$ or $I$.
Hence by combining this injection with (\ref{eq:iso_i-th_cohomology}), we obtain a natural isomorphism
\[
\mathcal{E}_{\Prism, (R^i g_{\eta,*} T)_\tf}(B,J) = \mathrm{H}^0_\pe\bigl(Z_\eta, (R^i g_{\eta,*} T)_\tf \otimes_{\mathbb{Z}_p}  \Prism_{(-)_{\overline{B}}/B}[1/\mu]  \bigr) \simeq (R^i g_{\Prism,*}\mathcal{E})_{\tf,\sat} (B,J),
\]
where $(R^i g_{\Prism,*}\mathcal{E})_{\tf,\sat}$ is the saturation of $(R^i g_{\Prism,*}\mathcal{E})_\tf$ and is equal to the intersection $(R^i g_{\Prism,*}\mathcal{E})_\tf[1/p] \cap (R^i g_{\Prism,*}\mathcal{E})_\tf[1/\mathcal{I}_\Prism]$.
So the statement follows by ranging over $(B,J)\in Z_\Prismsp$.
\end{proof}

\bibliographystyle{amsalpha}
\bibliography{ref_Ogus}
\end{document}